\newtheorem{theorem}{Theorem}[section]
\newtheorem{lemma}[theorem]{Lemma}
\newtheorem{proposition}[theorem]{Proposition}
\newtheorem{corollary}[theorem]{Corollary}
\theoremstyle{definition}
\theoremstyle{remark}
\newtheorem*{remark}{Remark}
\def\paragraph#1{\noindent \textbf{#1}}
\numberwithin{equation}{section}
\def\dist{\mathop{\rm dist}\nolimits}
\def\d{\mathrm{d}}
\def\<{\langle}
\def\>{\rangle}
\def\a{\alpha}
\def\b{\beta}
\def\e{\epsilon}
\def\g{\gamma}
\def\l{\lambda}
\def\s{\sigma}
\def\t{\tau}
\def\o{\omega}
\def\L{\Lambda}
\def\G{\Gamma}
\def\O{\Omega}
\def\del{\partial}
\def\R{{\Bbb R}}  
\def\N{{\Bbb N}}  
\def\P{{\Bbb P}}  
\def\Q{{\Bbb Q}}  
\def\E{{\Bbb E}}  
\let\cal=\mathcal
\def\AA{{\cal A}}
\def\CC{{\cal C}}
\def\EE{{\cal E}}
\def\FF{{\cal F}}
\def\GG{{\cal G}}
\def\II{{\cal I}}
\def\LL{{\cal L}}
\def\OO{{\cal O}}
\def\PP{{\cal P}}
\def\QQ{{\cal Q}}
\def\RR{{\cal R}}
\def\SS{{\cal S}}
\def\TT{{\cal T}}
\def\VV{{\cal V}}
\def\VV{{\cal V}}
\def\WW{{\cal W}}
 \def \G {{\Gamma}}
 \def \L {{\Lambda}}
 \def \b {{\beta}}
\def \e {{\epsilon}}
 \def \s {{\sigma}}
 \def \t {{\tau}}
 \def \g {{\gamma}}
 \def \l {{\lambda}}
 \def \d {{\delta}}
 \def \a {{\alpha}}
 \def \o {{\omega}}
 \def \O {{\Omega}}
 \def \del {{\partial}}
 \def \ba {\begin{array}}
 \def \ea {\end{array}}
 \newcommand{\be}{\begin{equation}}
 \newcommand{\ee}{\end{equation}}
\newcommand{\bea}{\begin{eqnarray}}
 \newcommand{\eea}{\end{eqnarray}}
\def\TH(#1){\label{#1}}\def\thv(#1){\ref{#1}}
\def\Eq(#1){\label{#1}}\def\eqv(#1){(\ref{#1})}
\def\sfrac#1#2{{\textstyle{#1\over #2}}}
 \def \1{\mathbbm{1}}
\def\wt {\widetilde}
\def\wh{\widehat}
\begin{document}

 \title[Aging in Metropolis dynamics]
{Convergence of clock processes 
 and aging in Metropolis dynamics of a truncated REM}
\author[V. Gayrard]{V\'eronique Gayrard}
 \address{V. Gayrard\\I2M, Aix-Marseille Universit\'e\\
39, rue F. Joliot Curie\\13453 Marseille cedex 13, France}
\email{veronique.gayrard@math.cnrs.fr, veronique@gayrard.net}

\subjclass[2000]{82C44,60K35,60G70} \keywords{random dynamics,
random environments, clock process, L\'evy processes,
spin glasses, ageing, Metropolis dynamics
}
\date{\today}

 \begin{abstract}
 
 We study the aging behavior of a truncated version of the Random Energy Model evolving under Metropolis dynamics. 
 We prove that the natural time-time correlation function defined through the overlap function converges
 to an arcsine law distribution function, almost surely in the random environment and in the full range of 
 time scales and temperatures for which such a result can be expected to hold.
This establishes that the dynamics ages in the same way as Bouchaud's REM-like trap model, thus extending the universality class of the latter model.   The proof relies on a clock process convergence result of a new type where the number of summands is itself 
  a clock process.
 This 
 reflects 
 the fact that the exploration process of Metropolis dynamics is itself an aging process, 
 governed by its own clock.
 Both clock processes are shown to converge to stable subordinators below certain critical lines in their time-scale and temperature domains, almost surely in the random environment.
 

%
%
 %







 \end{abstract}

\thanks{
I would like to thank the Hausdorff Research Institute for Mathematics and the Institut Henri Poincar\'e where part of this work was carried out.}

 \maketitle




 \section{Introduction and main results}
 \label{1}
 


As evidenced by an extensive body of experiments, glassy systems
are never in equilibrium on laboratory time scales \cite{BCKM98}, \cite{Vin07}; instead, their dynamics become increasingly slower as time elapses. Termed \emph{aging}, this pattern of behavior was most successfully accounted for, at a theoretical level, by 
Bouchaud's 
phenomenological trap models \cite{Bou92}, \cite{BD95}.
These are effective dynamics that, reviving ideas of Goldstein {\it{et al.}} \cite{Gold69},  model the long time behavior of spin glass dynamics in terms of thermally activated
barrier crossing in a state space reduced to  the configurations of lowest energy  (see \cite{BCKM98} for a review). 
Main examples of  microscopic systems that trap models aim to describe are Glauber dynamics on state spaces 
$\{-1,1\}^n$ reversible with respect to the
Gibbs measures associated to random Hamiltonians of mean-field spin glasses, such as the Random Energy Model (REM) and $p$-spin SK models \cite{De1}, 
\cite{D85}. The link between such dynamics and their associated trap models is, however, simply postulated. 

When trying to establish this link rigorously, a main question that arises is what Glauber dynamics to choose.
While classical
choices are {Metropolis} \cite{Metro} or {Heath-Bath} dynamics \cite{Glaub}, most of the focus so far was on the so-called \emph{Random Hopping} dynamics whose transition rates do not depend on the variation of energy along a given transition but only on the energy of its starting point \cite{BBG03a}, \cite{BBG03b}, \cite{BC06b}, \cite{BBC08}, 
\cite{G10b},  \cite{BG13}, \cite{BGS13}, \cite{BGu12}, \cite{FL09}. 
Although physically unrealistic, the relative simplicity of this choice
allowed important insights to be gained:
a rigorous justification of the connection between the REM dynamics and trap models 
was given, first on times scales close to equilibrium \cite{BBG02,BBG03a, BBG03b},  later also on shorter 
(but still exponential in $n$) 
 time scales \cite{BC06b}, and these results were partially extended to the $p$-spin SK models \cite{BBC08} on a sub-domain
  of times scales, albeit only in law with respect to the random environment and for $p\geq 3$.  
The SK model itself ($p=2$) could be dealt with on time scales that are sub-exponential in $n$ and
again in law with respect to the random environment \cite{BGu12}. 
A variant of the so-called Bouchaud's asymmetric dynamics in which the asymmetry parameter tends to zero as $n\uparrow\infty$  is considered in \cite{MM13} for the REM.

Beyond model-based analysis, a general aging mechanism was isolated that linked
aging to the arcsine law for subordinators through the asymptotic behavior
 of a partial sum process called \emph{clock process}. 
First implemented 
in \cite{BC06b} in the setting of Random Hopping dynamics
this mechanism was
revisited in \cite{G12} and \cite{BG13} where,
using a method developed by Durrett and Resnick \cite{DR78} to prove functional limit theorems for dependent random variables, simple and robust criteria for convergence of clock processes to subordinators were given, suited for dealing
with general Glauber dynamics.
Applied to the Random Hopping dynamics of the
REM \cite{G10b} and $p$-spin SK models \cite{BG13},  \cite{BGS13},  these criteria allowed to improve all earlier results,
turning statements previously obtained in law into almost sure statements in the random environment.
In the present paper the approach of \cite{G12}  is applied to Metropolis dynamics of the REM for which it was primarily intended, although only for a truncated version of the REM Hamiltonian. While the ultimate goal is of course to deal with the full REM, 
the truncated model  
does captures a number of features that are present in
 the activated dynamics of the full model,
 and enables us to clarify a number of issues 
 on a problem for which nothing is 
 known 
 at a theoretical level and no computer simulations are available.
%
%
%

\subsection{The setting} 
\TH(1.1)
Before entering into the details,
let us specify the model. Denote by $\VV_n=\{-1,1\}^n$ the n-dimensional discrete cube
and let $(g(x), x\in\VV_n)$ be a collection of independent standard Gaussian random variables, defined on a common probability space $(\O, \FF, \P)$. We will refer to these Gaussians as to the \emph{random environment.}
The Hamiltonian or energy function of the standard REM simply is the random funtion
\be
H_n^{\scriptscriptstyle{\textsf{REM}}}(x)\equiv\sqrt n g(x)\,,\quad x\in\VV_n.
\Eq(1.1.1)
\ee
Given a sequence $u_n>0$ (our \emph{truncation level}) the  \emph{truncated} REM Hamiltonian then is
\be
H_n(x)\equiv
\begin{cases}
\sqrt n g(x),&\hbox{\rm if}\,\,\,\, 
g(x)\leq 
-u_n,\\
0,&\hbox{\rm else};
\end{cases}\,,\quad x\in\VV_n.
\Eq(10.1.2)
\ee
Here we follow the physical convention that the configurations of minimal
energy are the most stable ones,
that is to say,  Gibbs measure at inverse temperature $\b>0$ is defined as
\be
\textstyle
G_{\b,n}(x)
=e^{-\b H_n(x)}/(\sum_{x\in\VV_n}e^{-\b H_n(x)})\,,\quad x\in \VV_n\,.
\Eq(1.1.3)
\ee

We are interested in the single spin-flip continuous time Metropolis dynamics for this model. This is a
Markov jump process $(X_{n}(t), t>0)$ 
on $\VV_n$
that is usually defined through its jump rates, given by 
\be
\l_n(x,y)=
\begin{cases}
\frac{1}{n}
e^{-\b\left[H_n(y)-H_n(x)\right]^+},
&\hbox{\rm if}\, (x,x')\in\EE_n,\\
0,&\hbox{\rm else};
\end{cases}
\Eq(1.1.10)
\ee
where $a+=\max\{a,0\}$,
$
\EE_n=\left\{(x,y)\in\VV_n\times\VV_n\,:\, \dist(x,y)=1
\right\}
$
is the set of edges of $\VV_n$, and
$\dist(x,x')\equiv\frac 12 \sum_{i=1}^n |x_i-x'_i|$ is  the graph distance on $\VV_n$.

Equivalently, $X_n$ can be defined as a time change of its  \emph{jump chain}, 
namely, 
the discrete time chain, $J_n$,
that describes the trajectories of  $X_n$, through the relation
\be
X(t)=J(\wt S_n^{\leftarrow}(t)), \quad t\geq 0,
\Eq(1.1.14)
\ee
where 
$\wt S_n^{\leftarrow}$ denotes the generalized right continuous inverse of $\wt S_n$, and
$\wt S_n$, the  so-called 
\emph{clock process},
 is the partial sum process that records the total time spent by $X_n$ along the trajectories of $J_n$. 
Spelling out these objects explicitly,
the jump chain is the 
Markov chain  $(J_n(i), i\in\N)$ on $\VV_n$
with 
one-step transition probabilities
\be
p_n(x,y)=
\frac
{
e^{-\b\left[H_n(y)-H_n(x)\right]^+}
}{
\sum_{y:(x,y)\in\EE_n}
e^{-\b\left[H_n(y)-H_n(x)\right]^+}
}\,,\quad\hbox{\rm if}\, (x,y)\in\EE_n\,,
\Eq(1.1.12)
\ee
and $p_n(x,y)=0$ otherwise, and the clock process is given by
\be
\wt S_n(k)=\sum_{i=0}^{k-1}\l^{-1}_n(J_n(i))e_{n,i}\,,\quad k\geq 1,
\Eq(1.1.13)
\ee
where $(e_{n,i}\,,n\in\N, i\in\N)$ is a collection of independent
mean one exponential random variables, independent of $J_n$, and
the $\l_n(\cdot)$'s are the classical holding time parameters 
\be
\l_n(x)\equiv
\frac{1}{n}
\sum_{y:(x,y)\in\EE_n}
e^{-\b\left[H_n(y)-H_n(x)\right]^+}
\,,\quad\forall x\in\VV_n.
\Eq(1.1.11)
\ee

In the clock process-based aging mechanism, one aims to infer knowledge of the aging behavior of $X_n$
 as $n\uparrow\infty$ from the asymptotic behavior of the properly rescaled clock process, using relation \eqv(1.1.14).
To formulate this more precisely
let $K_n(t)$ be a nondecreasing right continuous function with range $\{0,1,2,\dots\}$ and let $c_n$ be a nondecreasing sequence. Both $K_n(t)$ and $c_n$ are time scales. 
Consider the re-scaled clock process
\be
S_n(t)=c_n^{-1}\wt S_n(K_n(t))\,,\quad t\geq 0.
\Eq(2.1.0)
\ee
This is a doubly stochastic object: one the one hand,  for each fixed realization of the random environment  (that is, of the random Hamiltonian $H_n$), $S_n$ is a partial sum process with increasing paths that increase only by jumps
and whose  increments depend
 on the $J_n(i)$'s and the $e_{n,i}$'s; on the other hand,  both the $\l_n(\cdot)$'s and the law of $J_n$ depend on the random environment.
%
%
One then asks whether there exist time scales
$K_n(t)$ and $c_n$ that make $S_n$ converge weakly, as $n\uparrow\infty$, as a sequence of random elements
 in Skorokhod's space $D((0,\infty])$, $\P$-almost surely in the random environment.
%
%
Such a result will be useful for deriving aging information
if it enables one to control the behavior of 
the two-time correlation functions 
that are used in theoretical physics to quantify this phenomenon,
the natural choice in mean-field models being the two-time overlap function
%
\be
\CC_n(t,s)=n^{-1}\big(X_n(c_n t), X_n(c_n (t+s)\big)
\Eq(intro.0)
\ee
where $(\cdot,\cdot)$ denotes the inner product in $\R^n$. 
Clearly, how successful this can be strongly depends on the topology on the space $D((0,\infty])$ in which weak convergence of $S_n$ is obtained.
\emph{Normal} aging is then said to occur if, 
for some convergence mode,
\be
\lim_{n\rightarrow\infty}\CC_n(t,s)=\CC_\infty(t/s)
\Eq(intro.1)
\ee
 for some non trivial function $\CC_\infty$ (see \cite{G12}
 for more general aging behaviors).
The key idea put forward in \cite{BC06b} is that if
$S_n$ converges to an $\a$-stable subordinator with $\a\in (0,1)$ then \eqv(intro.1) is nothing but a manifestation of the self-similarity of such subordinators, as  captured by
the Dynkin-Lamperti arcsine law Theorem. 


For future reference, we denote $\FF^J$ and $\FF^X$, respectively, the $\s$-algebra generated by the
variables $J_n$ and $X_n$. We  write $P$ for the law of the process $J_n$, conditional on
the $\s$-algebra $\FF$, i.e. for fixed realizations of the random environment.
Likewise we  call $\PP$ the law of $X_n$ conditional on $\FF$.
If the initial distribution, say $\mu_n$, has to be specified we write $\PP_{\mu_n}$
and $P_{\mu_n}$. Expectation with respect to $\P$, $P$, and $\PP$ are denoted  by $\E$, $E$, and $\EE$, respectively.

\subsection{Results.}
\TH(1.2)
We must now specify the truncation level in \eqv(10.1.2).
Given $c_{\star}>0$, we let $u_n\equiv u_n(c_{\star})$ be the sequence defined through
\be
\Eq(intro.3)
\P(
g(x)\leq -u_n(c_{\star}))=n^{-c_{\star}}.
\ee
Viewing the vertices of $\VV_n$ as independently occupied with probability \eqv(intro.3), one sees that
this probability  increases from $0$ to $1$ as $c_{\star}$ decreases from $+\infty$ to $0$, and so,
the set of occupied vertices evolves from the empty set to the entire $\VV_n$.
Set 
\be
\VV_n^{\star}\equiv \left\{x\in\VV_n\mid \text{$x$ is occupied}\right\}\setminus I^{\star}_n,
\Eq(intro.4)
\ee
where $I^{\star}_n$ is the set of isolated occupied vertices, namely, $x\in I^{\star}_n$ if it is occupied but none of its $n$ neighbors is. 
Our results
are closely tied to the graph properties of this set. 
Let us only mention here
that $c_{\star}$ must be chosen such that $c_{\star}>c^{\scriptstyle{\textsf{crit}}}\geq 2$. This precludes the emergence of a giant connected component and guarantees that, $\P$- almost surely,
the graph of $\VV_n^{\star}$ is made of an exponentially large number ($\approx \OO(2^n/n^{2c_{\star}-1})$) of small, disjoint connected components of size smaller than $n$.
%
In explicit form, 
the sequence
 $u_n$ obeys
\be
\textstyle
u_n(c_{\star})
=
\sqrt{2c_{\star}\log n}
-
\left(\frac{\log\log n+\log 4\pi}{2\sqrt{2 c_{\star}\log n}}
+
\OO\Bigl(\frac{1}{\sqrt{\log n}}\Bigr)\right).
\Eq(10.1.3)
\ee
Hence, the truncation only prunes energies
 such that $-H_n^{\scriptscriptstyle{\textsf{REM}}}(x)\lesssim\sqrt{2c_{\star}n\log n}$, while activated
aging typically involves energies of size $-H_n^{\scriptscriptstyle{\textsf{REM}}}(x)\geq \gamma n$, $\gamma>0$, that is to say, of the order of $\max_{x\in\VV_n}( -H_n^{\scriptscriptstyle{\textsf{REM}}}(x))$.

We are  concerned with finding 
sequences $c_n$ and $K_n$
 for which the rescaled clock process \eqv(2.1.0) converges
for some (ideally, the smallest possible) $c_{\star}>c^{\scriptstyle{\textsf{crit}}}$.
Note that in physical terms, $c_n$ is the time scale on which the continuous time process $X_n$ is observed, while  $K_n(t)$ is the total number of steps the 
discrete time chain $J_n$ takes during the period of observation.
 %
In all previously mentioned works on mean-field spin glasses
(that is, the REM and $p$-spin SK models with $p\geq 3$)
where convergence of \eqv(2.1.0) could be proved, this was 
on time scales of the form $c_n\sim \exp(\b\gamma n)$, $\gamma>0$.
%
Furthermore,
$K_n$  invariably had to be chosen
of the form $K_n(t)=\lfloor a_n t\rfloor$, where $a_n$ is defined through 
$
a_n\P(w_n(x)\geq c_n)\sim 1
$, 
and where $w_n(x)$ denotes the Boltzmann weight of the 
considered model; 
in the standard REM, this is
\be
w_n(x)\equiv
\exp\{-\b H_n^{\scriptscriptstyle{\textsf{REM}}}(x))\}\,,\quad  x\in\VV_n.
\Eq(1.1.2)
\ee
Finally, a common $\a$-stable subordinator emerged as the limit of the clock processes.

As might reasonably be expected, the physical time scale, $c_n$, on which activated aging occurs  in Metropolis dynamics 
is the same as in the Random Hopping dynamics.  
What does differ, however, is the choice of $K_n$. Given  a sequence $a_n$, we now set
\be
K_n(t)\equiv\min\left\{k\geq 1 \,\Big|\, \textstyle\sum_{i=0}^{k-1}
{
\displaystyle
\1_{\{J_n(i)\in \VV_n\setminus \VV_n^{\star}\}}
}
=\lfloor a_n t\rfloor\right\}
\,,\quad t\geq 0.
\Eq(1.3.1)
\ee
This is the number of steps $J_n$ must take in order to take $\lfloor a_n t\rfloor$ steps
 outside $\VV_n^{\star}$.
Our first theorem states that the resulting 
rescaled 
clock process \eqv(2.1.0)
converges to the same limiting subordinator as in the Random Hopping dynamics, for  the very same 
sequences $a_n$ and $c_n$, and in the same $\b$ range.
For  $0\leq\varepsilon\leq 1$ and  $0<\beta<\infty$, set
\bea
\Eq(1.3.3)
&\b_c(\varepsilon)=\sqrt{\varepsilon2\log 2},
\\
\Eq(1.3.4)
&\a(\varepsilon)=\b_c(\varepsilon)/\b.
\eea
Throughout this paper the 
 initial distribution is  the uniform distribution on $\VV_n\setminus\VV_n^{\star}$.

\begin{theorem}
     \label{1.theo1.Main} 

Assume that $c_{\star}>3$. Given  $0<\varepsilon<1$ let $a_n$ and $c_n$ be defined through
\be
\lim_{n\rightarrow\infty}\frac{\log a_n}{n\log 2}=\varepsilon,\quad a_n\P(w_n(x)\geq c_n)\sim 1.
\Eq(1.theo1.M1)
\ee
%
%
Then, for all $0<\varepsilon<1$ and all $\b>\b_c(\varepsilon)$, $\P$-almost surely,
\be
 S_n\Rightarrow_{J_1}  S_{\infty}
 \Eq(1.theo1.M2)
\ee
where $S_{\infty}$ is  a stable subordinator with zero drift and L\'evy measure
$\nu$ defined through
\be
\nu(u,\infty)= u^{-\a(\varepsilon)}\a(\varepsilon)\G(\a(\varepsilon)),\quad u>0,
\Eq(1.theo1.M3)
\ee 
and where
$\Rightarrow_{J_1}$ denotes weak convergence in the space
$D([0,\infty))$ of c\`adl\`ag functions equipped with the Skorokhod $J_1$-topology. 
\end{theorem}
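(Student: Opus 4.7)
The plan is to apply the Durrett--Resnick style criterion for convergence of clock processes to subordinators developed in \cite{G12}, \cite{BG13}, conditionally on the random environment. Given $\FF^J$, the clock process $\wt S_n$ is a sum of independent exponentials with $\FF^J$-measurable parameters, so to obtain $J_1$-convergence of $S_n$ to the subordinator $S_\infty$ it suffices to verify, $\P$-a.s.\ for each $t>0$ and $u>0$,
\begin{equation*}
\sum_{i=0}^{K_n(t)-1} P\bigl(c_n^{-1} \lambda_n^{-1}(J_n(i)) e_{n,i} > u \,\big|\, \FF^J\bigr) \;\longrightarrow\; t\,\nu(u,\infty),
\end{equation*}
together with a second-moment control on the conditional increments and a negligibility condition for moderate jumps. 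The target L\'evy measure $\nu$ is the same as for Random Hopping, so the goal reduces to showing that the effective contribution of each excursion of $J_n$ into $\VV_n^\star$ has the same regularly varying tail, of index $\a(\varepsilon)$, as the Boltzmann weight $w_n(x)$.

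The first step is an excursion decomposition. The assumption $c_\star>3>c^{\scriptstyle{\textsf{crit}}}$ guarantees that, $\P$-a.s., $\VV_n^\star$ is a disjoint union of exponentially many connected components of size at most $n$, with quantitative control on their diameter and boundary structure. Indices $i$ with $J_n(i)\in\VV_n\setminus\VV_n^\star$ contribute holding-time means of order $1$, which after division by $c_n\asymp e^{\b\gamma n}$ are negligible; the leading contribution comes from indices lying inside excursions into components of $\VV_n^\star$. Fixing a component $\CC$, I would analyze the Metropolis chain killed on exit from $\CC$, whose quasi-equilibrium measure concentrates at the deepest vertex $x^\star(\CC)$, and show that the total holding time accumulated during a single excursion into $\CC$ is, to leading order, $q(\CC)\cdot w_n(x^\star(\CC))\cdot\mathbf{e}$, with $\mathbf{e}$ a unit exponential and $q(\CC)$ a component-dependent hitting/exit factor. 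Each excursion thus reduces to one heavy-tailed effective summand.

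The next step is to verify that the sequence of components visited along successive excursions behaves asymptotically as an i.i.d.\ sample. By the very definition \eqv(1.3.1) of $K_n(t)$, the number of entries into $\VV_n^\star$ before time $K_n(t)$ is a fixed positive fraction of $\lfloor a_n t\rfloor$; using mixing estimates for the chain $J_n$ restricted to $\VV_n\setminus\VV_n^\star$ (whose transition probabilities are essentially uniform off the boundary of $\VV_n^\star$) one can establish $\P$-a.s.\ equidistribution of the visited components. The conditional sum above then reduces to a sum over an approximately i.i.d.\ sample of $\lfloor a_n t\rfloor$ effective weights $w_n(x^\star(\CC))$. Combining the defining relation $a_n\P(w_n(x)\geq c_n)\sim 1$ with the extreme value asymptotics of the truncated Gaussian tails produces the limiting L\'evy measure \eqv(1.theo1.M3) with stability index $\a(\varepsilon)=\b_c(\varepsilon)/\b$, in the prescribed range $\b>\b_c(\varepsilon)$.

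The principal obstacle is that $K_n(t)$ is itself a clock process: a random, $\FF^J$-measurable stopping index built from the very trajectory being summed over. Standard clock-process convergence results, which assume a deterministic number of summands, must therefore be strengthened to handle two coupled partial sum processes simultaneously --- the exploration clock counting steps outside $\VV_n^\star$, and the physical clock recording accumulated holding time inside traps. Moreover, upgrading convergence in law (as in earlier $p$-spin results) to almost sure convergence in the random environment requires pathwise, rather than annealed, estimates on the empirical visit measure of $J_n$ to the components of $\VV_n^\star$; this is presumably why the hypothesis is tightened from $c_\star>c^{\scriptstyle{\textsf{crit}}}$ to $c_\star>3$, the larger threshold affording the concentration bounds needed uniformly over the relevant time window.
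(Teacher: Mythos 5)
The high-level architecture you describe---a Durrett--Resnick criterion verified $\P$-a.s.\ via drift, variance, and small-jump controls, an excursion decomposition of the trajectory, mixing estimates for the jump chain, and a separate treatment of the random index set $K_n(t)$---is the right one and matches the paper's (Sections \ref{6}, \ref{7}, \ref{8}). However, your identification of \emph{where} the heavy tails live is inverted, and this error propagates through the entire argument.

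You write that indices $i$ with $J_n(i)\in\VV_n\setminus\VV_n^\star$ contribute holding-time means of order $1$, and that the leading contribution comes from excursions into the connected components of $\VV_n^\star$. This is false. Recall from \eqv(intro.4) that $\VV_n^\star$ \emph{excludes} the isolated occupied vertices $I^\star_n$, which therefore lie in $\VV^\circ_n=\VV_n\setminus\VV_n^\star$. For $x\in I^\star_n$ all $n$ neighbours of $x$ are unoccupied, so $H_n(y)=0>H_n(x)$ for each neighbour $y$, and \eqv(1.1.11) gives $\l_n(x)=e^{\b H_n(x)}$, hence $\l_n^{-1}(x)=w_n(x)$, which is exactly the heavy-tailed Boltzmann weight; this is the content of \eqv(6.2.8). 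The extreme holding times are produced at isolated occupied vertices \emph{outside} $\VV_n^\star$, not inside it, and the $\a(\varepsilon)$-stable tail of $S_n$ is generated by the front end clock process $S_n^\circ$ (Theorem \thv(2.theo1)).

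Conversely, the analysis you sketch for an excursion into a component $\CC\subset\VV_n^\star$---the killed Metropolis chain, quasi-equilibrium at the deepest vertex, an effective exponential exit time of mean roughly $q(\CC)\,w_n(x^\star(\CC))$---is precisely what the paper carries out for the \emph{remainder} term $\wh S_n$ (Section \ref{9} and Theorem \thv(2.theo3)), and the conclusion is the opposite of yours: after rescaling by $c_n$ and accounting for the very small frequency with which $J_n^\dagger$ enters $\cup_l C^\star_{n,l}$ (which is what makes Theorem \thv(2.theo4) necessary), this contribution \emph{vanishes}. The sojourn times inside $\VV_n^\star$ become dominant only for the exploration clock $K_n$, i.e.\ in Theorem \thv(1.theo2.Main); your sketch is, in effect, a sketch of the proof of the wrong theorem. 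The correct route, following the paper, is the decomposition \eqv(2.3.13), $S_n\stackrel{d}{=}S_n^\circ+\wh S_n$: run Durrett--Resnick on the front end chain $J_n^\circ$, using its fast mixing (Proposition \thv(4.prop1)) and the local-time bounds of Proposition \thv(4.prop6) to obtain an ergodic theorem (Theorem \thv(6.theo2)) and a law of large numbers for the environment functional $\nu^\circ_n$ (Proposition \thv(6.prop1)); and separately show that $\sup_{t\le 1}\wh S_n(t)\to 0$ in probability, $\P$-a.s.
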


In the rest of the paper the symbol $\Rightarrow_{J_1}$ (sometimes only $\Rightarrow$) has the same meaning as in Theorem \thv(1.theo1.Main).

Let us now elucidate the meaning of $K_n$.
There is a clear parallel between the definitions \eqv(1.3.1) and  \eqv(1.1.13) of $K_n$ and $\wt S_n$.
Like $\wt S_n$, $K_n$ is similar to a time, each step of the chain $J_n$ lasting one time unit.
Just like $\wt S_n$  also, it is a function of an underlying 'faster chain', 
namely, the chain $J_n$ observed only at 
 its visits to $\VV_n\setminus\VV_n^{\star}$.
Thus $K_n$ can be viewed as  the total time spent by the chain $J_n$ along the first $\lfloor a_n t\rfloor$ steps of that  fast chain -- in other words, as a clock process for $J_n$.
One may probe this parallel further by asking if there exist sequences $b_n$
for which the rescaled process  $b_n^{-1}K_n$ converges.  As the next theorem shows, the nature of the limit
undergoes a transition at the critical value $\b=2\b_c(\varepsilon/2)$.

\begin{theorem}
  \label{1.theo2.Main} 
Assume that  $c_{\star}>3$ and, given  $0<\varepsilon<1$, let  $a_n$ be as in Theorem \thv(1.theo1.Main).
\item{(i)} If $\b> 2\b_c(\varepsilon/2)$, let $b_n$ be  defined through
$
\sqrt{na_n}\P(w_n(x)\geq (n-1) b_n)\sim 1.
$
Then, for all $0<\varepsilon<1$ and all $\b> 2\b_c(\varepsilon/2)$, $\P$-almost surely,
\be
b_n^{-1}K_n \Rightarrow_{J_1}  S^\dagger_{\infty},
\Eq(1.theo2.M1)
\ee
where $S^\dagger_{\infty}$ is a stable subordinator with zero drift and L\'evy measure
$\nu^\dagger$ defined through
\be
\nu^\dagger(u,\infty)=
u^{-2\a(\varepsilon/2)}2\a(\varepsilon/2)\G(2\a(\varepsilon/2)),\quad u>0.
\Eq(1.theo2.M2)
\ee 

\item{(ii)} If $0<\b< 2\b_c(\varepsilon/2)$, set
$
b_n=a_n\exp(n(\b/2)^2)/(\b\sqrt{\pi n})
$.
Then, for all $0<\varepsilon<1$ and all $\b< 2\b_c(\varepsilon/2)$, $\P$-almost surely,
\be
(b_n^{-1}K_n(t), t\geq 0)  \underset{n\rightarrow\infty}{\overset{\PP - a.s.}{\longrightarrow}} (t,  t\geq 0),
\Eq(1.theo2.M3)
\ee
where convergence holds in the space $C([0,\infty))$ of continuous functions equipped 
with the topology of the uniform convergence on compact sets.
%
%
%
%
%
\end{theorem}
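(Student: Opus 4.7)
The strategy is to exploit the structural parallel (stressed in the introduction) between $K_n$ and the clock process $\widetilde S_n$: $K_n(t)$ is itself a clock process, namely that of the chain $J_n$ observed at its successive visits to $\VV_n\setminus\VV_n^{\star}$. Writing $\tau_j$ for the time of the $j$-th such visit (with $\tau_0=0$, since $\mu_n$ is supported on $\VV_n\setminus\VV_n^{\star}$) and $T_j:=\tau_{j+1}-\tau_j$, one has the decomposition
\[
K_n(t)=\sum_{j=0}^{\lfloor a_n t\rfloor -1}T_j.
\]
By the strong Markov property, the $T_j$'s are conditionally independent given the induced chain $\widehat J_n(j):=J_n(\tau_j)$ on $\VV_n\setminus\VV_n^{\star}$. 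A short analysis of the Metropolis rates shows that either $T_j=1$ (the chain steps to another vertex of $\VV_n\setminus\VV_n^{\star}$) or $T_j$ is of order $e^{\beta|H^{\min}_C|}/n$, with $|H^{\min}_C|:=\min_{y\in C}|H_n(y)|$ the depth of the \emph{shallowest} vertex of the entered cluster $C\subset\VV_n^{\star}$ --- this vertex being the escape bottleneck of the two-cycle $y_\ast\leftrightarrow y'$ in which the chain is trapped inside $C$.

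\textbf{Part (i).} The goal is to apply the Durrett--Resnick type clock process convergence criteria of \cite{G12}, \cite{BG13}. Verification reduces to the one-jump tail statement
\[
\sum_{j=0}^{\lfloor a_nt\rfloor -1} P_{\mu_n}\bigl(T_j>b_n u\mid \FF\bigr)\;\longrightarrow\;t\,\nu^\dagger(u,\infty),\qquad \P\text{-a.s.},
\]
together with a vanishing pairwise-correlation bound ruling out clustering of large jumps. The tail computation uses that, under $c_{\star}>3$, only $2$-clusters contribute to leading order; for these $|H^{\min}_C|$ is the minimum of two truncated Gaussians each $\geq u_n$, and $\P(Y_{(1)}>s)=\P(Y>s)^2$ for a single truncated Gaussian $Y$ --- this \emph{squaring} is exactly what produces the $\sqrt{na_n}$ factor in the definition of $b_n$, once one multiplies by the $\sim na_n$ available cluster-visit opportunities and matches the Boltzmann-weight tail $\P(w_n(x)\geq (n-1)b_n u)$. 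Weak dependence will be handled through the fast mixing of $\widehat J_n$ on the bulk $\VV_n\setminus\VV_n^{\star}$ (a small perturbation of simple random walk on the hypercube for $c_{\star}>c^{\mathrm{crit}}$), and $\P$-a.s.\ control via standard Gaussian concentration for the cluster functionals.

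\textbf{Part (ii).} For $\beta<2\beta_c(\varepsilon/2)$ the stable index $2\alpha(\varepsilon/2)$ exceeds $1$, placing $T_j$ in the domain of attraction of a law with finite first moment. Kac's formula applied to $J_n$ with stationary measure $\pi_J\propto G_{\beta,n}\lambda_n$ yields
\[
E_{\mu_n}[T_0]\;\sim\;\pi_J(\VV_n^{\star})/\pi_J(\VV_n\setminus\VV_n^{\star})\;\sim\;\frac{e^{n\beta^2/4}}{\beta\sqrt{\pi n}}\;=\;b_n/a_n,
\]
the exponent $n\beta^2/4$ being the saddle-point value of the Gaussian integral $\E[e^{\beta\sqrt{n}Y_{(1)}}]$ over $2$-clusters. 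Truncating $T_j$ at a suitable level $M=M_n$ and applying Chebyshev to the truncated sum (whose variance is $o(b_n^2/a_n)$ once $2\alpha(\varepsilon/2)>1$), together with the tail estimate $\sum_j P_{\mu_n}(T_j>M\mid\FF)\to 0$ from part (i), gives $b_n^{-1}(K_n(t)-a_n E_{\mu_n}[T_0])\to 0$ in $P_{\mu_n}$-probability, $\P$-a.s. A Dini-type argument, using that $K_n$ is monotone and the limit $t\mapsto t$ continuous, then upgrades pointwise convergence to uniform convergence on compacts.

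\textbf{Main obstacle.} The technically hardest part is the sharp asymptotic analysis of $E_{\mu_n}[T_0]$ and of the tail of $T_j$, together with their $\P$-almost-sure concentration in the random environment. The hypothesis $c_{\star}>3$ is used here exactly to guarantee that clusters of size $\geq 3$ (expected density $\sim 2^n n^{2-3c_{\star}}$) are subdominant to $2$-clusters (density $\sim 2^n n^{1-2c_{\star}}$), making the $2$-cluster calculation sharp and allowing a first/second-moment Gaussian-concentration argument to transfer annealed estimates to quenched ones. A secondary difficulty is taming the dependence among the $T_j$'s, which relies on the (polynomial) mixing of $\widehat J_n$ on the well-connected bulk $\VV_n\setminus\VV_n^{\star}$ available in the regime $c_{\star}>c^{\mathrm{crit}}$.
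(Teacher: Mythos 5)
Your high-level decomposition $K_n(t)=\sum_{j<\lfloor a_n t\rfloor}T_j$, with $T_j$ the inter-visit time of $J_n$ to $\VV_n\setminus\VV_n^{\star}$, is exactly the paper's starting point (it corresponds to the identity $K_n(t)=\lfloor a_nt\rfloor+b_nS_n^\dagger(t)$ of Subsection \ref{2.3}). Your identification of the escape mechanism (sojourn time $\sim \min_{z\in C}w_n(z)/(n-1)$ governed by the shallowest vertex of the cluster, hence tail squaring $\P(\cdot)^2$ for $2$-clusters, hence the $\sqrt{na_n}$ normalization and the index $2\alpha(\varepsilon/2)$), the role of $c_{\star}>3$ to make $2$-clusters dominant, and the exponent $n\beta^2/4$ in part (ii) are all the right heuristics, matching the paper's Proposition \thv(5.prop1), Lemma \thv(12.lem6), and the L\'evy measure \eqv(1.theo2.M2). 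The Kac-formula route to the constant in part (ii) is a nice shortcut to the answer, though the paper extracts the same scale from a direct computation of $\E_{\CC}E_G(G_{n,i}\1_{\{G_{n,i}<s_n\}})$.

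The genuine gap is in the quenched dependence control, which is where most of the paper's work goes. Conditional on the environment, the $T_j$'s are far from i.i.d.: the chain can revisit the same cluster many times, and, more importantly, the random functionals $T_j$ take values governed by a \emph{fixed} finite (exponentially large, but fixed) set of cluster weights, so a second-moment/Chebyshev bound on $\sum T_j\1_{\{T_j\le M\}}$ is not valid without first decoupling consecutive visits. The paper handles this via three steps that your outline does not supply: (a) splitting the index set into $\ell^\circ_n\sim n^3$ interleaved subsequences along which $J^\circ_n$ mixes (Proposition \thv(4.prop1)) so that the cluster-visit indicators become nearly i.i.d.; (b) a random-allocation (balls-in-urns) argument (Lemma \thv(12.lem4) and Proposition \thv(12.prop1)) showing that on each subsequence the number of \emph{repeated} visits to the same cluster is negligible compared to the number of distinct clusters visited; and (c) only then a truncated Chebyshev/Bennett bound on the resulting near-i.i.d. sums (Lemmata \thv(12.lem6)--\thv(12.lem8)). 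The same issue underlies part (i): verifying Conditions (A1)--(A2) of Theorem \thv(7.theo1) requires the ergodic Theorem \thv(7.theo2) (which again uses the $\ell^\circ_n$-block decomposition and the mean local-time bound of Proposition \thv(4.prop6)) followed by a law of large numbers in the environment for $\bar\nu^\circ_n$ and $\bar\s^\circ_n$ (Proposition \thv(7.prop1)), which in turn needs a rather delicate edge-coloring of $\GG_2$ to apply Bennett's inequality to a sum of \emph{dependent} cluster functionals. Your ``vanishing pairwise-correlation bound'' and ``standard Gaussian concentration'' hide precisely this machinery; without it, the argument does not close.
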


\begin{remark} A transition similar to that of Theorem \thv(1.theo2.Main)
is present in $S_n$ at the critical value $\b= \b_c(\varepsilon)$.
Since in the region $\b< \b_c(\varepsilon)$ activated aging is interrupted
(and in order not to make the paper longer) we leave out the explicit statement.


\end{remark}

The occurence
 of stable subordinators as limits of both $S_n$ and $b_n^{-1}K_n$ 
above the critical lines $\b= \b_c(\varepsilon)$ and $\b= 2\b_c(\varepsilon/2)$, $0<\varepsilon<1$, respectively, 
can be explained through
 a single, universal mechanism 
which is best described as
 an exploration 
 mechanism of a set of \emph{extreme accessible states}
 whose effective waiting times are \emph{heavy tailed}. 
What gives rise to this mechanism, however, is very different
depending on whether one considers $S_n$ or $b_n^{-1}K_n$.
Let us briefly explain this.

When dealing with $S_n$, the processes at work are analogous to those 
already present in the Random Hopping dynamic of the REM: the set of extreme accessible states identifies with the vertices such that $w_n(x)\sim c_n$,
and most such vertices belong to the set $\II_n^{\star}$ of isolated occupied vertices
of \eqv(intro.4), but $J_n$ typically does not revisit  the elements of $\II_n^{\star}$  twice so that 
the associated effective waiting times typically coincide with the exponential holding times 
$\l^{-1}_n(x)e_{n,i}=w_n(x)e_{n,i}$ (see \eqv(1.1.13)) and these, scaled down by $c_n$,  are asymptotically 
heavy tailed with parameter $\a(\varepsilon)$. 

This is in sharp contrast with the mechanisms that govern the behavior of $b_n^{-1}K_n$.
Viewing the set $\VV_n^{\star}\cup \II_n^{\star}$
as the level set of the REM's landscape, and  its disjoint components as separated valleys, $K_n$ can be interpreted as the sum of the sojourn times in
the valleys of size $\geq 2$ that $J_n$  visits along its path.
Thus holding times now arise dynamically from metastable trapping times in local valleys.
The analysis of these times reveals 
that the set of extreme accessible states is the set of pairs  
$(x,y)\in\EE_n$ such that $\min(w_n(x),w_n(y))\sim b_n$, that
their effective waiting times have exponential tails of mean value $\min(w_n(x),w_n(y))$, and that,
scaled down by $b_n$, these waiting times are asymptotically heavy tailed with parameter $2\a(\varepsilon/2)$.

Below the critical line $\b= 2\b_c(\varepsilon/2)$, $0<\varepsilon<1$, this picture breaks down. The leading contributions to $b_n^{-1}K_n$ no longer come from 
%
extreme events but from \emph{typical} events
that
 consist of visits 
to valleys whose effective mean waiting times have finite mean values.
%
Note that even here,
the jump chain  does not resemble the symmetric random walk.
In fact, our results show that on the time scales of activated aging,
 Metropolis dynamics never can be reduced to 
 the Random Hopping dynamics, just as the latter
cannot be reduced to Bouchaud's phenomenological trap model.
%
%
%
%
%
%
Despite this Bouchaud's  trap model does correctly predict the aging behavior of 
both dynamics:


\begin{theorem}[Correlation function]
  \label{1.theo3.Aging} 
 
Let $\CC_n(t,s)$ be defined in \eqv(intro.0). Under the hypothesis of Theorem \thv(1.theo1.Main), for all $\rho \in (0,1)$, $t>0$ and $s>0$, $\P$-almost surely,
\be
\lim_{n\to \infty }\PP\bigl(\CC_n(t,s)\ge 1-\rho\bigr)
= \frac{\sin\alpha \pi }{\pi }\int_0^{t/(t+s )} u^{\a(\varepsilon) -1}(1-u)^{-\a(\varepsilon) }\,d u.
\Eq(1.theo3.Aging.1)
\ee
\end{theorem}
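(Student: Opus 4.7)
The plan is to express the correlation function in terms of the rescaled clock process $S_n$ of Theorem \thv(1.theo1.Main), transfer to the limiting subordinator $S_\infty$ by the continuous mapping theorem, and invoke the Dynkin--Lamperti arcsine law. The driving intuition is that on the time scale $c_n$ the continuous time process $X_n$ spends asymptotically all of its time at \emph{extreme} vertices, i.e.\ those $x\in\VV_n$ with $w_n(x)$ of order $c_n$: these are exactly the vertices producing the macroscopic jumps of $\wt S_n$ and hence of $S_n$ and $S_\infty$. The overlap of such an $x$ with itself equals $1$, whereas the overlap between two distinct extreme vertices typically concentrates near $0$; the event $\{\CC_n(t,s)\ge 1-\r\}$ is therefore asymptotically identical to $\{X_n(c_n t)=X_n(c_n(t+s))\}$, i.e.\ to the event that the clock process ``sits'' at a single extreme vertex throughout $(c_n t,c_n(t+s)]$.

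Two preparatory estimates would make this precise. The \emph{truncation estimate}: for every $\h>0$ and every $r\in[0,t+s]$, $\PP(w_n(X_n(c_n r))<\h c_n)\le C\h^{1-\a(\ve)}+\so$, $\P$-a.s. This follows from the Durrett--Resnick tail bounds underlying Theorem \thv(1.theo1.Main), applied to the thresholded process obtained from $S_n$ by retaining only the increments with $w_n(J_n(i))<\h c_n$, and letting $\h\downarrow 0$ after $n\uparrow\infty$. The \emph{separation estimate}: the number of distinct extreme vertices visited by $X_n$ up to $c_n(t+s)$ is tight (for each threshold $\h$ it is dominated by the number of jumps of $S_\infty$ of size $\ge\h$ on $[0,t+s]$, which is a.s.\ finite), and any two such vertices have overlap at most $1-\r$ with $\PP$-probability $1-\so$, $\P$-a.s. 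The latter is a first-moment estimate: the number of ordered pairs $(x,y)\in\VV_n^2$ with $n^{-1}(x,y)\ge 1-\r$ is exponentially smaller than $|\VV_n|^2$, while the locations of the low-energy sites of the REM are, up to the Gaussian biasing, essentially uniform over $\VV_n$.

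Setting $\wh u_n(r):=\inf\{u\ge 0:S_n(u)>r\}$, the preparatory estimates yield
\be
\PP\bigl(\CC_n(t,s)\ge 1-\r\bigr)=\PP\bigl(\wh u_n(t)=\wh u_n(t+s)\bigr)+\so=\PP\bigl(S_n(\wh u_n(t+s)-)\le t\bigr)+\so,
\Eq(plan1)
\ee
$\P$-a.s., the second equality reflecting that $\wh u_n(t)=\wh u_n(t+s)$ iff the single increment of $S_n$ that carries the clock past $t+s$ already started below $t$. By Theorem \thv(1.theo1.Main), $S_n\Rightarrow_{J_1}S_\infty$, a pure-jump $\a(\ve)$-stable subordinator. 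The functional $S\mapsto S(T_S(r)-)$, with $T_S(r):=\inf\{u:S(u)>r\}$, is $J_1$-continuous at every path $S$ that is not constant through $r$, a $\P_{S_\infty}$-a.s.\ property at each fixed $r>0$. The continuous mapping theorem then gives $\PP(S_n(\wh u_n(t+s)-)\le t)\to \PP(S_\infty(T(t+s)-)\le t)$, where $T$ is the first-passage time of $S_\infty$. The classical Dynkin--Lamperti arcsine theorem asserts that $S_\infty(T(r)-)/r$ has the $\mathrm{Beta}(\a,1-\a)$ law with density $\tfrac{\sin\a\pi}{\pi}u^{\a-1}(1-u)^{-\a}$ on $(0,1)$; taking $r=t+s$ yields exactly the right-hand side of \eqv(1.theo3.Aging.1).

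The main obstacle is the separation estimate. The asymmetry of $J_n$ and its ability to dwell in small connected components of $\VV_n^{\star}$ prevent us from treating successive extreme-vertex visits as independent uniform draws. One therefore exploits that, under $c_{\star}>3$, the extreme vertices responsible for the jumps of $S_\infty$ are typically isolated sites of $I_n^{\star}$ and that between two successive such visits $J_n$ makes $\gg n$ steps, enough for the spatial location to randomise before the environment can select another deep site. A careful union-bound argument, in the spirit of the Gaussian tail estimates underlying Theorem \thv(1.theo1.Main), then justifies the independent-uniform-draw heuristic and hence the Hamming separation.
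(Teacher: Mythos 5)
Your proposal takes essentially the same route as the paper: reduce the overlap event to the event that the range of the rescaled clock process avoids $(t,t+s)$, transfer to the limit $S_\infty$ by $J_1$-convergence and the continuous mapping theorem applied to the undershoot functional, and conclude with the Dynkin--Lamperti arcsine law. The paper does exactly this, working with the range of $c_n^{-1}\wt S_n$ (reduced to the range of $c_n^{-1}\wt S_n^\circ$ via Theorem~\thv(2.theo3)) and invoking the arcsine law from \cite{G12}.

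However, there is a genuine gap in your separation estimate. Your first-moment argument bounds the probability that two \emph{distinct} extreme vertices are within Hamming distance $n\rho/2$, appealing to the near-uniform distribution of low-energy sites. But the critical scenario to exclude is $X_n(c_nt)=X_n(c_n(t+s))=z_-$ with the chain \emph{leaving and returning to} $z_-$ in between: then $\CC_n(t,s)=1$ even though the range of the clock does intersect $(t,t+s)$. Your static pair-counting estimate is vacuous in this case (a site is at Hamming distance $0$ from itself), and since the chain was just at $z_-$ it is hardly an "independent uniform draw" which of the deep sites it hits next. The paper closes this hole with a genuinely dynamical input: by Proposition~\thv(4.prop1) the chain $J_n^\circ$ reaches stationarity in $\ell_n^\circ\sim n^3$ steps after leaving $z_-$, by Proposition~\thv(4.prop2) the invariant measure of the $\rho/2$-ball around $z_-$ is exponentially small, and -- decisively -- by Proposition~\thv(4.prop5) (an Aldous--Brown hitting-time lower bound under $\pi^\circ_n$) the chain does not hit $\{z\in\TT_n(\e):\dist(z_-,z)\le n\rho/2\}$, \emph{which contains $z_-$}, in the ensuing $\lfloor Ca_n\rfloor$ steps. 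You gesture at "randomisation" and a "careful union-bound argument in the spirit of the Gaussian tail estimates underlying Theorem~\thv(1.theo1.Main)", but the needed bound is a statement about the trajectory of $J_n^\circ$, not about the Gaussian disorder; the Gaussian tail bounds cannot supply it. Without the hitting-time estimate (or an equivalent non-return bound) the proof is incomplete.
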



\begin{remark} The convergence statement  of Theorem \thv(1.theo2.Main), (i), is of course
is a manifestation of the fact that above the critical line the jump chain is itself an aging process. This can be quantified using e.g.~the function
$\CC'_n(t,s)=n^{-1}\big(J_n(\lfloor  b_n t\rfloor), J_n(\lfloor  b_n (t+s)\rfloor\big)$ for which a 
 statement similar to \eqv(1.theo3.Aging.1) can be proved
with $2\a(\varepsilon/2)$ subsituted for $\a(\varepsilon)$.
\end{remark} 

Let us 
highlight
the content of the next two sections. What we need to know about the random graph induced by the truncation \eqv(intro.3) is collected in Section \thv(3). In Section \thv(2) we 
isolate 
two processes, called the \emph{front end} and \emph{back end clock processes} (hereafter  \textsc{fecp} and \textsc{becp}), that are central to the proofs of 
Theorem \thv(1.theo1.Main) and Theorem \thv(1.theo2.Main). 
We show that the processes $S_n$, respectively $K_n$, can be written as the sum of
\textsc{fecp}, respectively \textsc{becp},  and remainders. Based on this we decompose the proofs of 
Theorem \thv(1.theo1.Main) and Theorem \thv(1.theo2.Main)
into proving  on the one hand that \textsc{fecp} and \textsc{becp} converge, and showing on the other hand that the remainders are asymptotically negligeable. 
This strategy strongly relies on two abstract theorems (Theorem \thv(6.theo2) in Section  \ref{6} and Theorem \thv(7.theo1)  in Section  \ref{7}) that give sufficient conditions for  \textsc{fecp} and \textsc{becp} to converge to L\'evy subordinators.
The organisation of the rest of the paper is detailed at the end of Section \thv(2).








\section{Random Graph properties of the REM's landscape}
 \label{3}


Given $V\subseteq\VV_n$ we denote by $G\equiv G(V)$ the undirected graph which has vertex set $V$ and edge set consisting of pairs of vertices
$\{x,y\}$ in $V$ with $\dist(x,y)=1$.
This short section is concerned with the graph properties of the level sets
\be
V_n(\rho)=\left\{x\in\VV_n\mid w_n(x)\geq r_n(\rho)\right\},
\Eq(3.0.1)
\ee
where, given  $\rho>0$,
the truncation level $r_n(\rho)$ is the sequence defined through
\be
\Eq(3.0.2)
2^{\rho n}\P(w_n(x)\geq r_n(\rho))=1.
\ee
This is a 
convenient 
reparametrization of \eqv(intro.3), that is,  \eqv(intro.3) follows from \eqv(3.0.2) by taking
\be
\Eq(10.1.1)
\rho=\rho_n^{\star}
\equiv\frac{c_{\star}\log n}{n\log 2},\quad  r_n(\rho_n^{\star})\equiv \exp(\b \sqrt n u_n(c_{\star})).
\ee
Viewing the vertices of $\VV_n$ as independently occupied with probability  $ 2^{-\rho n}$,
questions on $G(V_n(\rho))$ reduce to
questions on random subgraphs of the hypercube graph 
$\QQ_n\equiv G(\VV_n)$.

\subsection{Component structure of $V_n(\rho)$.}
 \label{3.1}
%
The set $V_n(\rho)$ of occupied vertices can be decomposed into components that we classify according to their connectedness and size.
We call $C\subseteq V_n(\rho)$ a connected component of size $|C|$
if the subgraph $G(C)\subseteq G(V_n(\rho))$ is connected.
All connected components have size $\geq 2$.
We call isolated occupied vertices of $V_n(\rho)$ components of size 1.
Given $V_n(\rho)$, $\VV_n$ can uniquely be decomposed into
\be
\VV_n= N_n(\rho)\cup I_n(\rho) \cup\bigl(\cup_{l=1}^L C_{n,l}(\rho)\bigr),\quad L\equiv L_n(\rho),
\Eq(3.0.4)
\ee
where $ N_n(\rho)$ is the set of all non occupied vertices,
$I_n(\rho)$ is the set of all isolated occupied ones, and $C_{n,l}(\rho)$, $1\leq l\leq L$,
is a collection of disjoint connected components satisfying
\be
\Eq(3.0.3)
G(V_n(\rho))=\cup_{l=1}^L G(C_{n,l}(\rho)),\quad C_{n,l}(\rho)\cap C_{n,k}(\rho) \,\,\,\forall  l\neq k.
\ee
As $\rho$ decreases, the set  $V_n(\rho)$ grows and the graph $G(V_n(\rho))$ potentially acquires new edges. Little is known about such graphs compared to those obtained by selecting edges independently.
It is chiefly known
\cite{BKL91} that the size of the largest $C_{n,l}(\rho)$
undergoes a transion near the value
$
\rho\approx\frac{\log n}{n\log 2}
$,
with a unique ``giant'' componant of size $\OO(n^{-1}2^n)$ emerging slightly below this value.
We are interested here in choosing $\rho$ in such a way as to garantee that  
 the size of the largest $C_{n,l}(\rho)$ remains small compared to $n$.

\begin{lemma}
  \TH(3.lem2)
Let $0<\eta_n\downarrow 0$ be
such that $\eta_n\log n\uparrow\infty$ as $n\uparrow\infty$.
Set
\be
\Eq(3.lem2.1)
\rho_n^{\scriptstyle{\textsf{crit}}}\equiv c^{\scriptstyle{\textsf{crit}}}\sfrac{\log n}{n\log 2},
\quad
c^{\scriptstyle{\textsf{crit}}}\equiv
c^{\scriptstyle{\textsf{crit}}}_n=
2\left[1+\sfrac{\log 2}{2\eta_n\log n}\left(1+\sfrac{3\log n}{n{\log 2}}\right)\right].
\ee
There exists $\O^{\scriptscriptstyle{\textsf{CRIT}}}\subset \O$ with $\P\left(\O^{\scriptscriptstyle{\textsf{CRIT}}}\right)=1$
such that on $\O^{\scriptscriptstyle{\textsf{CRIT}}}$, for all but a finite number of indices $n$,
for all $\rho\geq \rho_n^{\scriptstyle{\textsf{crit}}}$, the connected components in \eqv(3.0.4) satisfy
\be
\Eq(3.lem2.2)
2\leq\max_{1\leq l\leq L}\left|C_{n,l}(\rho)\right|\leq n\eta_n\ll n.
\ee
%
%
\end{lemma}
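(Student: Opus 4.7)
\emph{Plan.} The random set $V_n(\rho)$ defined in \eqv(3.0.1) is exactly a Bernoulli site percolation on the hypercube graph $\QQ_n=G(\VV_n)$ with parameter $p_n(\rho):=2^{-\rho n}$, by \eqv(3.0.2). The family $\{V_n(\rho)\}_\rho$ is monotone decreasing in $\rho$, and so are its connected components, so for the upper bound in \eqv(3.lem2.2) it is enough to work at the single value $\rho=\rho_n^{\scriptstyle{\textsf{crit}}}$. The lower bound $2\leq\max_l|C_{n,l}(\rho)|$ is, by the convention in \eqv(3.0.4) that each $C_{n,l}$ has size $\geq 2$, nothing but the assertion that $L_n(\rho)\geq 1$; since the expected number of edges of $G(V_n(\rho_n^{\scriptstyle{\textsf{crit}}}))$ is $\sim n2^{n-1}p_n^2\to\infty$, a first/second moment argument combined with Borel--Cantelli yields this almost surely.

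The upper bound is obtained via a union bound over \emph{animals}. Observe that if some $C_{n,l}(\rho_n^{\scriptstyle{\textsf{crit}}})$ has size exceeding $n\eta_n$, then it contains an entirely-occupied connected subgraph (e.g.\ a spanning subtree) of size exactly $k_0:=\lceil n\eta_n\rceil+1$. The classical deterministic bound on the number of connected subgraphs of size $k$ in a graph of maximum degree $\Delta$ containing a fixed vertex is $(e\Delta)^{k-1}$; applied to $\QQ_n$ (of max degree $n$) and summed over the $2^n$ possible starting vertices, this gives
\begin{equation*}
\P\bigl(\max_l|C_{n,l}(\rho_n^{\scriptstyle{\textsf{crit}}})|>n\eta_n\bigr)\;\leq\;2^n(en)^{k_0-1}p_n^{k_0}.
\end{equation*}
Substituting $p_n=n^{-c^{\scriptstyle{\textsf{crit}}}}$, which is exactly the content of the definition of $\rho_n^{\scriptstyle{\textsf{crit}}}$ in \eqv(3.lem2.1), and passing to logarithms, the log of the right-hand side equals $n\log 2+(k_0-1)(1+\log n)-c^{\scriptstyle{\textsf{crit}}}k_0\log n$.

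The specific form of $c^{\scriptstyle{\textsf{crit}}}$ in \eqv(3.lem2.1) is engineered precisely so that the $-c^{\scriptstyle{\textsf{crit}}}k_0\log n$ term kills both $n\log 2$ and $(k_0-1)\log n$, leaving a polynomial margin. Indeed, writing $c^{\scriptstyle{\textsf{crit}}}=2+\frac{\log 2}{\eta_n\log n}+\frac{3}{n\eta_n}$ and using $k_0\geq n\eta_n$, the correction term contributes $-\frac{k_0\log 2}{\eta_n}-\frac{3k_0\log n}{n\eta_n}\leq -n\log 2-3\log n$. Collecting, the log of the bound is dominated by $-4\log n-k_0(\log n-1)$, giving $\P(\cdot)\leq n^{-4}$ for $n$ sufficiently large. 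This is summable, so Borel--Cantelli produces the full-measure event $\Omega^{\scriptscriptstyle{\textsf{CRIT}}}$ on which only finitely many $n$ violate the upper bound. The main obstacle is not conceptual but arithmetical: the two corrections $\frac{\log 2}{2\eta_n\log n}$ and $\frac{3\log n}{n\log 2}$ appearing in $c^{\scriptstyle{\textsf{crit}}}$ must be calibrated sharply enough that the $2^n$ factor from the size of $\QQ_n$ and the $(en)^{k_0-1}$ factor from the animal count are both absorbed by $p_n^{k_0}$ while leaving a summable-in-$n$ tail for Borel--Cantelli.
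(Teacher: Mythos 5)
Your proposal is correct and follows essentially the same line as the paper's proof: a union bound over connected occupied vertex-sets (``animals'') of the critical size $\approx n\eta_n$, with the form of $c^{\scriptstyle{\textsf{crit}}}$ calibrated so that the resulting probability bound decays polynomially in $n$, hence summable, followed by Borel--Cantelli; monotonicity in $\rho$ then extends the conclusion to all $\rho\geq\rho_n^{\scriptstyle{\textsf{crit}}}$. The paper factors this through an intermediate result (Lemma \thv(3.lem1)) giving the animal bound $m!n^m 2^n$ and the condition $\rho\geq\rho_n^+(m)$, then specializes with $m=n\eta_n$ and checks $\rho_n^{\scriptstyle{\textsf{crit}}}>\rho_n^+(n\eta_n)$; you go directly using the slightly sharper count $2^n(en)^{k_0-1}$, but the arithmetic calibration is the same. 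You also spell out the lower bound $2\leq\max_l|C_{n,l}(\rho)|$ by a second-moment estimate on the edge count at $\rho=\rho_n^{\scriptstyle{\textsf{crit}}}$, a point the paper leaves implicit (and which, as stated ``for all $\rho\geq\rho_n^{\scriptstyle{\textsf{crit}}}$'', is in fact only meaningful for $\rho$ in the range actually used, e.g.\ $\rho=\rho_n^{\star}$, since for $\rho$ near $1$ there are no components of size $\geq 2$).
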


%

\begin{remark}
Although $c^{\scriptstyle{\textsf{crit}}}$ in \eqv(3.lem2.1) is very likely not optimal, we clearly must have
$c^{\scriptstyle{\textsf{crit}}}>1$.
\end{remark}

\begin{proof}[Proof of Lemma \thv(3.lem2)] 
The proof relies on a lemma that we first state and prove.
Define
\be
\Eq(3.1.3)
\wt\O_n(m)=\left\{\o\in\O\, \big|\textstyle \max_{1\leq l\leq L}\left|C_{n,l}(\rho)\right|<m
\right\}\,,\quad m=2,3,\dots
\ee
In what follows $\rho\equiv \rho_n>0$ and $m\equiv m_n>1$ are, respectively, positive and integer valued sequences. To keep the notation simple we do not make this dependence explicit.
\begin{lemma}
  \TH(3.lem1)
If
$\rho\geq \rho_n^+(m)\equiv \frac{1}{m}\bigl(1+\frac{(m+2)\log n+\log m!}{n\log 2}\bigr)$
then 
$\P\bigl(\liminf_{n\rightarrow\infty}\wt\O_n(m)\bigr)=1$.
\end{lemma}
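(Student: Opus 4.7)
\smallskip
\noindent\textbf{Proof plan for Lemma \thv(3.lem1).} The plan is a standard first-moment/Borel--Cantelli argument applied to the number of fully-occupied connected subsets of fixed size $m$ in the hypercube $\QQ_n$. If the event $\widetilde\O_n(m)^c=\{\max_{1\leq l\leq L}|C_{n,l}(\rho)|\geq m\}$ occurs, then some connected component of $G(V_n(\rho))$ has at least $m$ vertices, hence contains a connected subset of cardinality exactly $m$, all of whose vertices lie in $V_n(\rho)$. Letting
\[
N_n(m)=\#\bigl\{A\subseteq \VV_n\,:\,|A|=m,\ G(A)\subseteq\QQ_n\text{ is connected, }A\subseteq V_n(\rho)\bigr\},
\]
this gives $\P(\widetilde\O_n(m)^c)\leq \E[N_n(m)]$.

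Since the vertices of $\VV_n$ are occupied independently with probability $p_n=2^{-\rho n}$ (by \eqv(3.0.2)), the expectation factorises as $\E[N_n(m)]=A_n(m)\cdot p_n^m$, where $A_n(m)$ is the number of connected $m$-subsets of $\QQ_n$. The key combinatorial input is the classical ``lattice animal'' bound: in a graph of maximum degree $d$, the number of connected subgraphs of size $m$ containing any given vertex is at most $(ed)^{m-1}$ (this follows from a DFS/BFS exploration argument, counting incremental edges). Since $\QQ_n$ is $n$-regular with $2^n$ vertices, and since each connected $m$-subset has $m$ possible roots, we obtain
\be
A_n(m)\leq \frac{2^n}{m}(en)^{m-1}.
\Eq(plan.1)
\ee

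It remains to combine this with the hypothesis $\rho\geq\rho_n^+(m)$. By the definition of $\rho_n^+(m)$,
\be
p_n^m=2^{-\rho nm}\leq 2^{-n-(m+2)\log n/\log 2-\log m!/\log 2}=2^{-n}\cdot n^{-(m+2)}\cdot \frac{1}{m!},
\Eq(plan.2)
\ee
where we used $2^{\log x/\log 2}=x$ on both the $\log n$ and $\log m!$ terms. Plugging \eqv(plan.2) and \eqv(plan.1) together,
\be
\E[N_n(m)]\leq \frac{2^n(en)^{m-1}}{m}\cdot 2^{-n}n^{-(m+2)}\frac{1}{m!}=\frac{e^{m-1}}{m\cdot m!\cdot n^3}.
\Eq(plan.3)
\ee
The right-hand side is $O(n^{-3})$ and hence summable in $n$. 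By the Borel--Cantelli lemma, $\P\bigl(\widetilde\O_n(m)^c\text{ i.o.}\bigr)=0$, which is precisely $\P\bigl(\liminf_n \widetilde\O_n(m)\bigr)=1$.

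The only non-routine step is step \eqv(plan.1), the animal bound. Everything else is calibration: the specific form of $\rho_n^+(m)$ is engineered to cancel the $2^n$ factor from the vertex count and leave exactly an $n^{-3}$ surplus after absorbing $(en)^{m-1}/m!$, providing a summable first moment with room to spare (the extra $1/n^2$ giving the almost-sure, not merely in-probability, version). No sharp combinatorial input beyond the standard animal bound is needed, which is why the numerical constant $c^{\scriptstyle\textsf{crit}}$ in Lemma \thv(3.lem2) can reasonably be suspected not to be optimal.
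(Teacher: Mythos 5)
Your proof is correct and follows essentially the same route as the paper: a first-moment (union) bound on the number of fully-occupied connected $m$-subsets, calibrated by the definition of $\rho_n^+(m)$ to yield a summable bound, followed by Borel--Cantelli. The only difference is the combinatorial estimate: you invoke the lattice-animal bound $A_n(m)\leq \frac{2^n}{m}(en)^{m-1}$, whereas the paper uses the cruder count $m!\,n^m 2^n$, whose $m!$ factor is exactly what the $\log m!$ term in $\rho_n^+(m)$ is tuned to absorb (giving the paper a final bound of $n^{-2}$ rather than your $\tfrac{e^{m-1}}{m\cdot m!}n^{-3}$). Both are valid; your sharper bound would permit a slightly smaller $\rho_n^+(m)$, consistent with the remark that $c^{\scriptstyle\textsf{crit}}$ is not optimal.
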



\begin{proof}[Proof of Lemma \thv(3.lem1)]
Call $\left(\chi_n(x), x\in\VV_n\right)$, $\chi_n(x)\equiv\1_{\left\{w_n(x)\geq r_n(\rho)\right\}}$,
the occupancy variables. These are i.i.d.~Bernoulli r.v.'s with
$
\P\left(\chi_n(x)=1\right)=1-\P\left(\chi_n(x)=0\right) =2^{-\rho n}
$.
Set 
$
\P\bigl(\wt\O^c_n(m)\bigr)
=1-\P\bigl(\wt\O_n(m)\bigr)
=\P\bigl(\exists_{\CC_n\subseteq V_n(\rho) : |\CC_n|=m} \,G(\CC_n)\, \text{is connected}\, \bigr)
$.
The number of connected components of size $m$ is at most $m!n^m2^n$, and by independence, if $|\CC_n|=m$ then
$
\P\bigl(G(\CC_n)\, \text{is connected}\, \bigr)
=\P\bigl(\prod_{x\in\CC_n} \chi_n(x)=1\bigr)
=(2^{-\rho n})^{m}
$.
Thus, for $\rho\geq \rho_n^+(m)$,
$
\P\bigl(\wt\O^c_n(m)\bigr)
\leq m!n^m2^{(1-m\rho)n}
\leq m!n^m2^{(1-m\rho_n^+(m))n}
\leq n^{-2},
$
so that $\sum_{n\geq 1}\P\bigl(\wt\O^c_n(m)\bigr)<\infty$. The lemma
now follows from the first Borel-Cantelli Lemma.\end{proof}


 Lemma \thv(3.lem2) follows from Lemma \thv(3.lem1) by taking
$
m=n\eta_n
$,
observing that for this choice
$
\rho_n^{\scriptstyle{\textsf{crit}}}>\rho^+_n(m)
$,
and setting
$
\O^{\scriptscriptstyle{\textsf{CRIT}}}\equiv\liminf_{n\rightarrow \infty}\wt\O_n(m)
$.\end{proof}

\subsection{Truncation and related quantities.}
 \label{3.3}



Throughout the rest of the paper we assume that $c_{\star}>2$ in \eqv(intro.3)
so that $c_{\star}\geq c^{\scriptstyle{\textsf{crit}}}_n$  for  large enough $n$.
According to \eqv(3.0.4)-\eqv(3.0.3), for $\rho=\rho_n^{\star}$ as in \eqv(10.1.1), $\VV_n$ be decomposed in a unique way into
\be
\VV_n= N^{\star}_n\cup I^{\star}_n\cup\bigl(\cup_{l=1}^{L^{\star}} C^{\star}_{n,l}\bigr),\quad {L^{\star}}\equiv L(\rho^{\star}_n),
\Eq(10.1.4)
\ee
where $ N^{\star}_n\equiv N_n(\rho^{\star}_n)$,
$I^{\star}_n\equiv I_n(\rho^{\star}_n)$, 
and $C^{\star}_{n,l}\equiv C_{n,l}(\rho^{\star}_n)$, $1\leq l\leq L^{\star}$. 
By construction $H_n(x)=0$ if and only if $x\in  N^{\star}_n$ (see \eqv(10.1.2) and \eqv(intro.3)). Furthermore  $\VV_n^{\star}$ in \eqv(intro.4) becomes
\be
\VV_n^{\star}=\cup_{l=1}^{L^{\star}} C^{\star}_{n,l}.
\Eq(10.1.4bis)
\ee

\begin{lemma}
  \TH(10.lem1)
Assume that $c_{\star}>2$. 
There exists $\O^{\star}\subset \O$ with $\P\left(\O^{\star}\right)=1$
such that on $\O^{\star}$, for all but a finite number of indices $n$ the following holds:
 \be
2\leq\left|C^{\star}_{n,l}\right|
\leq\{\rho^{\star}_n[1-2c_{\star}^{-1}(1+\OO(\log n/n))]\}^{-1},\quad1\leq l\leq {L^{\star}}.
\Eq(10.lem1.4)
\ee
Furthermore,
\bea
\Eq(10.lem1.2)
\left|I^{\star}_n\right|&=&2^n n^{-c_{\star}}(1-n^{-(c_{\star}-1)})(1+\OO(n^{-2(c_{\star}-1)})+o(n^{-c_{\star}})),
\\
\Eq(10.lem1.1)
\left|V_n(\rho^{\star}_n)\right|
&=&\bigl|\VV_n\setminus  N^{\star}_n\bigr|
=2^n n^{-c_{\star}}(1-n^{-c_{\star}})(1+o(n^{-c_{\star}})),
\\
\Eq(10.lem1.3)
\textstyle
\sum_{l=1}^{{L^{\star}}} \left|C^{\star}_{n,l}\right|
&=&\left|V_n(\rho^{\star}_n)\setminus I^{\star}_n\right|
=2^n n^{-2c_{\star}+1}(1+\OO(n^{-(c_{\star}-1)})),
\eea
and, denoting by
$
\del A\equiv\{y\in \VV_n\setminus A : \dist(y,A)=1\}
$
the boundary of the set $A\subset\VV_n$,
\bea
\Eq(10.lem1.6)
\textstyle
 n \left|C^{\star}_{n,l}\right|(1-\OO(\frac{1}{\log n}))
 \leq \left|\del C^{\star}_{n,l}\right| \leq n \left|C^{\star}_{n,l}\right|,
 \quad \quad \quad \quad
\\
\Eq(10.lem1.7)
\textstyle
|\del C^{\star}_{n,l}\cap\del x|
\geq n (1-\OO(\frac{1}{\log n}))
\,\,\,\text{for all}\,\,\,  x\in  C^{\star}_{n,l}, \quad \quad \quad\,\,
\\
\Eq(10.lem1.5)
\textstyle
 n \left|C^{\star}_{n,l}\right|(1-\OO(\frac{1}{\log n}))
 \leq
\textstyle
\sum_{x\in C^{\star}_{n,l}}\sum_{y\in\del C^{\star}_{n,l}:\{x,y\}\in\EE_n}1
\leq n \left|C^{\star}_{n,l}\right|. 
\eea
\end{lemma}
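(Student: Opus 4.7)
The plan is to split the lemma into four separate assertions: (a) the bound on the connected components in \eqv(10.lem1.4); (b) the sharp asymptotics for $|V_n(\rho_n^\star)|$ and $|I_n^\star|$ in \eqv(10.lem1.1) and \eqv(10.lem1.2); (c) the identity \eqv(10.lem1.3); and (d) the purely combinatorial boundary estimates \eqv(10.lem1.5)--\eqv(10.lem1.7). For (a), the lower bound $|C^{\star}_{n,l}|\geq 2$ is built into the decomposition \eqv(10.1.4). For the upper bound I would apply Lemma \thv(3.lem1) with $\rho=\rho_n^\star$ and with $m=m_n^\star:=\lceil\{\rho_n^\star[1-2c_\star^{-1}(1+\OO(\log n/n))]\}^{-1}\rceil$, and verify the hypothesis $\rho_n^\star\geq \rho_n^+(m_n^\star)$. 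Since $\rho_n^\star=c_\star\log n/(n\log 2)$ and $m_n^\star\sim n\log 2/((c_\star-2)\log n)$, one has $\log m_n^\star\sim \log n$ and $\log(m_n^\star!)\sim m_n^\star\log m_n^\star$, so the hypothesis reduces to $(c_\star-2)\log n/(n\log 2)\geq 1/m_n^\star$ up to lower-order corrections, which is exactly how $m_n^\star$ was chosen. This produces a probability-one set on which the component-size upper bound holds for all large $n$ and, in passing, fixes the constant and the $\OO(\log n/n)$ correction in \eqv(10.lem1.4).

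For (b), I would first compute the expectations using the independence of the occupancy variables $\chi_n(x):=\1\{w_n(x)\geq r_n(\rho_n^\star)\}$, which are i.i.d.\ Bernoulli with parameter $n^{-c_\star}$. One has $\E|V_n(\rho_n^\star)|=2^n n^{-c_\star}$, and since $\{x\in I_n^\star\}=\{\chi_n(x)=1,\ \chi_n(y)=0\ \forall y\in\del x\}$, $\E|I_n^\star|=2^n n^{-c_\star}(1-n^{-c_\star})^n=2^n n^{-c_\star}(1-n^{-(c_\star-1)}+\OO(n^{-2(c_\star-1)}))$, matching the main terms of \eqv(10.lem1.1) and \eqv(10.lem1.2). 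To pass from expectations to almost-sure asymptotics I would combine Chebyshev's inequality with the first Borel--Cantelli lemma. The variance of $|V_n(\rho_n^\star)|$ is $2^n n^{-c_\star}(1-n^{-c_\star})$ by independence. For $|I_n^\star|$, the defining indicators are two-dependent on the hypercube: they multiply to zero identically when $\dist(x,x')=1$, they are independent when $\dist(x,x')\geq 3$, and at distance $2$ two neighbors are shared; splitting the double sum according to Hamming distance then gives $\Var|I_n^\star|=\OO(2^n n^{-c_\star})$. In both cases the resulting fluctuations are of relative order $2^{-n/2}n^{\OO(1)}$, comfortably negligible compared to the $o(n^{-c_\star})$ tolerance. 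Assertion (c) then follows from (b) by subtracting the asymptotics in \eqv(10.lem1.1) and \eqv(10.lem1.2), since the decomposition \eqv(10.1.4) is disjoint.

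For (d), the crucial input is the size bound $|C^{\star}_{n,l}|\leq M_n=\OO(n/\log n)\ll n$ provided by (a). Fix $x\in C_{n,l}^\star$; of its $n$ hypercube neighbors, at most $|C_{n,l}^\star|-1\leq M_n-1$ lie in $C_{n,l}^\star$, while the remainder lie in $\del C_{n,l}^\star\cap\del x$, so $|\del C_{n,l}^\star\cap\del x|\geq n-M_n+1=n(1-\OO(1/\log n))$, which is \eqv(10.lem1.7). Summing over $x\in C_{n,l}^\star$ and using the trivial degree upper bound $n|C_{n,l}^\star|$ yields \eqv(10.lem1.5). For \eqv(10.lem1.6) the upper bound is equally trivial, and for the lower bound I would use that the hypercube is bipartite: two distinct vertices of $C_{n,l}^\star$ share a common neighbor only if they are at Hamming distance $2$, and then share exactly two. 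Hence by inclusion--exclusion $|\del C_{n,l}^\star|\geq \sum_{x\in C_{n,l}^\star}|\del x\setminus C_{n,l}^\star|-2\binom{|C_{n,l}^\star|}{2}$, and the negative term is $\OO(|C_{n,l}^\star|\cdot n/\log n)$, which yields the required $n|C_{n,l}^\star|(1-\OO(1/\log n))$ lower bound. Setting $\O^\star:=\O^{\scriptscriptstyle{\textsf{CRIT}}}\cap\{\text{the almost-sure events produced in (a) and (b)}\}$ completes the proof.

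The main obstacle is the quantitative bookkeeping: on the one hand, verifying that Chebyshev combined with Borel--Cantelli genuinely delivers the relative $o(n^{-c_\star})$ error tolerance in \eqv(10.lem1.1) and \eqv(10.lem1.2) rather than merely a constant relative error; on the other hand, tracking precisely the contribution of $\log m!$ in the threshold $\rho_n^+(m)$ of Lemma \thv(3.lem1) so as to pin down the constant $2c_\star^{-1}$ and the correction $\OO(\log n/n)$ in \eqv(10.lem1.4). All other steps reduce to routine counting on the hypercube graph.
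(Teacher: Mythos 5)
Your proposal is correct and follows essentially the same route as the paper's proof: \eqv(10.lem1.4) via Lemma \thv(3.lem1) with $m$ tuned to $\{\rho^\star_n[1-2c_\star^{-1}]\}^{-1}$, the counting identities for $|V_n(\rho^\star_n)|$, $|I^\star_n|$, and $\sum_l|C^\star_{n,l}|$ with the i.i.d.~Bernoulli occupancy variables followed by concentration plus Borel--Cantelli (a step the paper compresses into ``easily obtained''), and the same inclusion--exclusion argument on the hypercube for the boundary estimates, where your correction term $-2\binom{|C|}{2}=-|C|(|C|-1)$ is algebraically identical to the paper's $\sum_x(n-d_n(x)-(|C|-1))$. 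You merely make explicit the two-dependence of the isolated-vertex indicators and the variance bound, both of which the paper treats as routine.
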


\begin{proof} 
The claim of \eqv(10.lem1.4)  follows from Lemma \thv(3.lem1). Next,
\be
\textstyle
\left|V_n(\rho^{\star}_n)\right|=\sum_{x\in\VV_n}\chi_n(x), \,\,\,
\left|I^{\star}_n\right|=\sum_{x\in\VV_n}\chi_n(x)\prod_{y\in\VV_n:(x,y)\in\EE_n}(1-\chi_n(y)),
\ee
and
$
\sum_{l=1}^{{L^{\star}}} \left|C^{\star}_{n,l}\right|
=\sum_{x\in\VV_n}\chi_n(x)[1-\prod_{y\in\VV_n:(x,y)\in\EE_n}(1-\chi_n(y))]
$,
where, as in the proof of Lemma \thv(3.lem1), $\chi_n(x)\equiv\1_{\left\{w_n(x)\geq r_n(\rho^{\star}_n)\right\}}$
are i.i.d.~Bernoulli r.v.~with 
$
\P\left(\chi_n(x)=1\right)
=n^{-c_{\star}}
$.
From these expressions 
\eqv(10.lem1.1), \eqv(10.lem1.2), and \eqv(10.lem1.3) are easily obtained.
Turning to \eqv(10.lem1.5) note that the sum therein can be written as
$
\sum_{x\in C^{\star}_{n,l}}(n-d_n(x))
$
where $d_n(x)$ denotes the connectivity of the vertex $x$ in the graph $G(V_n(\rho^{\star}_n))$. 
This, the bound $1\leq d_n(x)\leq \left|C^{\star}_{n,l}\right|$,
and \eqv(10.lem1.4) yield the desired result. 
To prove the lower bound of  \eqv(10.lem1.6) reason that 
each vertex $x$ in $C^{\star}_{n,l}$ has at least $n-d_n(x)$ nearest neighbors vertices in $\del C^{\star}_{n,l}$,
and that no two vertices in $C^{\star}_{n,l}$ can have more than one common nearest neighbor vertex 
in $\del C^{\star}_{n,l}$. 
Hence
$
\left|\del C^{\star}_{n,l}\right| 
\geq 
\sum_{x\in C^{\star}_{n,l}}(n-d_n(x)-( \left|C^{\star}_{n,l}\right|-1))
\geq 
\sum_{x\in C^{\star}_{n,l}}(n-2\left|C^{\star}_{n,l}\right|)
$
and the lower bound in  \eqv(10.lem1.6) follows from  \eqv(10.lem1.4).
Eq.~\eqv(10.lem1.7) is proved in the same way since 
$|\del C^{\star}_{n,l}\cap\del x|=n-d(x)$ for $x\in C^{\star}_{n,l}$.
Finally, the upper bound of \eqv(10.lem1.6)  is immediate.
\end{proof}


We conclude this section with two elementary lemmata that are repeatedly needed.
The first expresses the function $r_n(\rho)$ defined through \eqv(3.0.2).

\begin{lemma}
  \TH(9.lem4')
  For all $\rho>0$, possibly depending on $n$, such that $\rho n\uparrow \infty$ as $n\uparrow \infty$,
\be
\Eq(A1.lem1.1)
r_n(\rho)=\exp\left\{n\b\b_c(\rho)-(\b/2\b_c(\rho))\left[\log(\b^2_c(\rho)n/2)+\log 4\pi\right]+o(\b/\b_c(\rho))\right\}
\,.
\ee
In particular, for $\rho^{\star}_n$ as in \eqv(10.1.1) and $c_{\star}>2$,
\be
\textstyle
r_n\bigl(\rho^{\star}_n\bigr)
=\exp\Bigl\{\b\Bigl(
\sqrt{2c_{\star}n\log n}
-
\sqrt{\frac{ n}{\log n}}
\bigl(\frac{\log\log n}{2\sqrt{2 c_{\star}}}
+
\OO(1)\bigr)\bigr)\Bigr\}.
\Eq(9.lem4'.2)
\ee
\end{lemma}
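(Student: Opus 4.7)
The plan is to invert the defining relation \eqv(3.0.2) using the classical Mills-ratio asymptotic for the Gaussian tail. Writing $w_n(x)=\exp(-\b\sqrt n g(x))$ and setting $u\equiv u_n(\rho):=\log r_n(\rho)/(\b\sqrt n)$, the event $\{w_n(x)\geq r_n(\rho)\}$ becomes $\{g(x)\leq -u\}$, so that \eqv(3.0.2) is equivalent to
\be
\P(g\leq -u_n(\rho))=2^{-\rho n}.
\Eq(pp.1)
\ee
The condition $\rho n\uparrow\infty$ forces $u_n(\rho)\uparrow\infty$, which puts us squarely in the regime where Mills' ratio applies: $\P(g\leq -u)=(u\sqrt{2\pi})^{-1}e^{-u^2/2}(1+O(u^{-2}))$.

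First I would take logarithms of \eqv(pp.1). This gives the implicit equation
\be
\tfrac12 u^2+\log u+\tfrac12\log(2\pi)=\rho n\log 2+O(u^{-2}).
\Eq(pp.2)
\ee
To leading order the $\log u$ term is negligible, so $u\sim\sqrt{2\rho n\log 2}=\b_c(\rho)\sqrt n$. Feeding this back into the $\log u$ term gives $\log u=\tfrac12\log(\b_c(\rho)^2 n)+o(1)$, so that \eqv(pp.2) becomes
\be
u^2=2\rho n\log 2-\log(2\pi\b_c(\rho)^2 n)+o(1).
\ee
Extracting a square root via $\sqrt{1-x}=1-x/2+O(x^2)$ and using again that $2\rho n\log 2=\b_c(\rho)^2 n$ yields
\be
u_n(\rho)=\b_c(\rho)\sqrt n-\frac{\log(2\pi\b_c(\rho)^2 n)}{2\b_c(\rho)\sqrt n}+o\!\left(\frac{1}{\b_c(\rho)\sqrt n}\right).
\ee
Since $\log r_n(\rho)=\b\sqrt n\,u_n(\rho)$ and $\log(2\pi\b_c(\rho)^2 n)=\log(\b_c(\rho)^2 n/2)+\log 4\pi$, this is exactly \eqv(A1.lem1.1). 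One would carry out one more iteration to confirm that the error of order $O(u^{-2})$ pushed through \eqv(pp.2) generates a contribution of size $o(\b/\b_c(\rho))$ in $\log r_n(\rho)$, which is harmless.

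For the specialization \eqv(9.lem4'.2), I would just substitute $\rho=\rho_n^{\star}=c_{\star}\log n/(n\log 2)$, which gives $\b_c(\rho_n^{\star})=\sqrt{2c_{\star}\log n/n}$, hence $n\b\b_c(\rho_n^{\star})=\b\sqrt{2c_{\star}n\log n}$; the corrective bracket becomes $\log(c_{\star}\log n)+\log 4\pi=\log\log n+O(1)$, and dividing by $2\b_c(\rho_n^{\star})=2\sqrt{2c_{\star}\log n/n}$ yields the stated factor $\sqrt{n/\log n}$ in front of $\log\log n/(2\sqrt{2c_{\star}})+O(1)$.

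There is no real obstacle here; the whole proof is a standard Gaussian-tail inversion. The only point requiring some care is bookkeeping: checking that the $O(u^{-2})$ error in Mills' ratio propagates to an $o(\b/\b_c(\rho))$ remainder in $\log r_n(\rho)$ uniformly in $\rho$ satisfying $\rho n\to\infty$, so that the formula remains valid in the degenerating regime $\rho=\rho_n^{\star}\to 0$ relevant to \eqv(9.lem4'.2).
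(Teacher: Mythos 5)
Your proof is correct and follows essentially the same route as the paper: both invert the defining relation \eqv(3.0.2) by Gaussian tail asymptotics (Mills' ratio) applied to $\overline B_n=\log r_n(\rho)/(\beta\sqrt n)$. The only difference is cosmetic: where you carry out the iterative inversion by hand, the paper instead introduces the auxiliary $B_n$ solving $b_n\phi(B_n)/B_n=1$, cites \cite{G10b} for the comparison $(\overline B_n-B_n)B_n=o(1)$, and then cites Cram\'er for the explicit expansion of $B_n$; your version is slightly more self-contained but not substantively different.
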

\begin{proof}
 Denote by  $\Phi$ and $\phi$ the standard Gaussian distribution function and density, respectively.
Setting  $b_n=2^{\rho n}$ and $\overline B_n={\log r_n(\rho)}/{\b\sqrt n}$, \eqv(3.0.2) becomes
$
b_n\bigl(1-\Phi(\overline B_n)\bigr)=1
$.
It is shown in \cite{G10b} (see  paragraph below (2.20)) that
$
(\overline B_n-B_n)B_n=o(1)
$
where $B_n$ is defined through
$
b_n\frac{\phi(B_n)}{B_n}=1
$.
Eq.~\eqv(A1.lem1.1) then readily follows from the well known fact that
(see \cite{Cr}, p. 374)
$
B_n=(2\log b_n)^{\frac{1}{2}}-\sfrac{1}{2}(\log\log b_n +\log 4\pi)/(2\log b_n)^{\frac{1}{2}}+\OO(1/\log b_n)
$.
\end{proof}

\begin{lemma}
  \TH(9.lem7)
There exists a subset $\O_0\subseteq\O$ with $\P\bigl(\O_0\bigr)=1$ such that on $\O_0$,
for all but a finite number of indices $n$ the following holds: for all $1\leq l\leq L^{\star}$
\bea
\Eq(9.lem7.1)
&e^{-\b\min\left\{\max\left(H_n(y),H_n(x)\right)\,|\, \{x,y\}\in G(C^{\star}_{n,l})\right\}}
\leq 
e^{\b n\sqrt{\log 2}(1+2\log n/n\log 2)},&\\
\Eq(9.lem7.2)
&e^{-\b\min\left\{H_n(x)\,|\, x\in C^{\star}_{n,l}\right\}}\leq e^{\b n\sqrt{2\log 2}(1+2\log n/n)}.&
\eea
\end{lemma}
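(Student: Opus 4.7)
The two inequalities are quantitative upper envelope bounds on the Gaussian field $(g(x))$ restricted to the truncated landscape, and both will follow from the elementary Gaussian tail $\P(|g|>t)\leq 2e^{-t^2/2}$ combined with a union bound and the first Borel--Cantelli lemma. The plan is therefore to recast each statement as a deterministic bound on a maximum of $g(x)$'s (respectively of pairs of $g(x),g(y)$'s), show the two corresponding exceedance probabilities are summable in $n$, and then intersect the two almost sure events to obtain $\O_0$.

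The first reduction is to get rid of the truncation and of the indexing by $l$. Since every $x\in C^\star_{n,l}$ lies in $V_n(\rho^\star_n)$ we have $H_n(x)=\sqrt n\, g(x)$ with $g(x)\leq -u_n<0$, hence $-H_n(x)=\sqrt n\, |g(x)|$. Writing $\VV_n^\star=\cup_l C^\star_{n,l}\subseteq\VV_n$ and $\cup_l G(C^\star_{n,l})\subseteq\EE_n$, inequality \eqv(9.lem7.2) becomes
$$
\max_{x\in\VV_n}|g(x)| \leq \sqrt{2n\log 2}\bigl(1+2\log n/n\bigr),
$$
while \eqv(9.lem7.1) becomes
$$
\max_{\{x,y\}\in\EE_n}\min(|g(x)|,|g(y)|) \leq \sqrt{n\log 2}\bigl(1+2\log n/(n\log 2)\bigr).
$$
Both enlargements are safe because what we discard (the vertices of $N_n^\star\cup I_n^\star$ and the edges outside the truncated graph) can only decrease the maxima.

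For the first inequality I would take $t_n=\sqrt{2n\log 2}(1+2\log n/n)$ and apply a union bound over the $2^n$ vertices of $\VV_n$ to $\P(|g|>t_n)\leq 2e^{-t_n^2/2}$. Expanding $(1+2\log n/n)^2$ one finds $t_n^2/2=n\log 2+4\log 2\cdot\log n+o(1)$, so the union bound is of order $n^{-4\log 2}$. For the second inequality I would take $s_n=\sqrt{n\log 2}(1+2\log n/(n\log 2))$ and use independence of neighbors in $\VV_n$ to get $\P(\min(|g(x)|,|g(y)|)>s_n)\leq 4e^{-s_n^2}$; a union bound over the $n\,2^{n-1}$ edges of $\EE_n$ yields an upper bound of order $n\cdot 2^n\cdot e^{-n\log 2-4\log n+o(1)}=O(n^{-3})$. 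Both series are summable, so the Borel--Cantelli lemma produces full-measure sets $\O_0',\O_0''$ on which each bound holds for all but finitely many $n$; setting $\O_0=\O_0'\cap\O_0''$ concludes the proof.

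There is really no serious obstacle here: the content is just a careful bookkeeping in the exponent so that the two union-bound exponents, namely $4\log 2\approx 2.77$ and $3$, both strictly exceed $1$. The chosen slacks $2\log n/n$ and $2\log n/(n\log 2)$ are calibrated precisely to make this happen while still being $o(1)$, so that the bounds are essentially sharp at the scales $\sqrt{2n\log 2}$ (max of $2^n$ Gaussians) and $\sqrt{n\log 2}$ (min of a pair of Gaussians, maximized over $n2^{n-1}$ edges). Uniformity in $l$ is automatic since the maxima are taken over the whole hypercube.
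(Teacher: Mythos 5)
Your proof is correct and follows essentially the same route as the paper: a union bound over all vertices (resp.\ all edges of $\QQ_n$) combined with the first Borel--Cantelli lemma, after converting the bound on $H_n$ to a bound on $|g|$. The only cosmetic difference is that you compute the Gaussian tail probabilities directly from $\P(|g|>t)\le 2e^{-t^2/2}$, whereas the paper parametrizes the thresholds as $r_n(\rho(1))$, $r_n(\rho(2))$ via \eqv(3.0.2) so that the union-bound exponents read off immediately, and then invokes Lemma \thv(9.lem4') to convert $r_n(\rho(i))$ into the explicit exponentials appearing in \eqv(9.lem7.1)--\eqv(9.lem7.2).
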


\begin{proof}
Set $\rho(1)\equiv1+2\log n/n\log 2$,
$
\O_n(1)\equiv\{\o\in\O \mid \max_{ x\in \VV_n}w_n(x)\leq r_n(\rho(1))\}
$,
and $\O_\infty(1)\equiv\liminf_{n\rightarrow\infty}\O_n(1)$. Further set
$\rho(2)\equiv \frac{1}{2}(1+3\log n/n\log 2)$,
$
\O_n(2)\equiv\{\o\in\O \mid \max_{ (x,y)\in\EE_n}\min(w_n(x),w_n(y))\leq r_n(\rho(2))\}
$,
and $\O_\infty(2)\equiv\liminf_{n\rightarrow\infty}\O_n(2)$. 
By independence and \eqv(3.0.2),
$
\P(\O^c_n(1))=2^n 2^{-n} n^{-2}= n^{-2}
$
which is summable, hence 
$
\P(\O_\infty(1))=1
$.
Next,
$
\P(\O^c_n(2))\leq n2^{n-1} 2^{-n} (r_n(\rho(2)))^2\leq  n^{-2}
$
which is also summable, and so 
$
\P(\O_\infty(2))=1
$.
Taking
$\O_0\equiv\O_\infty(1)\cap\O_\infty(2)$ and
using \eqv(A1.lem1.1) to bound $r_n(\rho(1))$ and $r_n(\rho(2))$ yields the claim of the lemma.
\end{proof}



\section{
Front end and back end clock processes, 
and proofs of the theorems of Section \thv(1).
}
\label{2}

In this section we formally define
the \emph{front end} and \emph{back end} clock processes, and show how they 
relate to the clock processes $S_n$ and $K_n$. These relations are then used to decompose the proofs of 
Theorem \thv(1.theo1.Main) and Theorem \thv(1.theo2.Main) into five main steps.
Let $C^{\star}_{n,l}$, $1\leq l\leq L^{\star}$, be the collection of connected components
defined through \eqv(10.1.4) and set
\be
\VV^\circ_n\equiv\VV_n\setminus\left(\cup_{1\leq l\leq L^{\star}}C^{\star}_{n,l}\right).
\Eq(2.1.1)
\ee

\subsection{Front end clock process.}
\label{2.1}
We call \emph{front end clock process} the process defined through
\be
\wt S_n^\circ (k^\circ)=\sum_{i=0}^{k^\circ-1}\l^{-1}_n(J^\circ_n(i))e^\circ_{n,i},\quad k^\circ\in \N,
\Eq(2.1.2)
\ee
where $(e^\circ_{n,i}\,,n\in\N, i\in\N)$ are independent
mean one exponential random variables and where, introducing the times of  consecutive visits of $J_n$ to $\VV^\circ_n$,
\bea
\Eq(2.1.3)
T_{n,0}^\circ
&=&
\inf\{i\geq 0 \mid J_n(i)\in\VV^\circ_n\}\,,
\\
\Eq(2.1.4)
T_{n,j+1}^\circ
&=&
\inf\{i> T_{n,j}^\circ \mid J_n(i)\in\VV^\circ_n\},\, j=0,1,2,\dots,
\eea
$(J_n^\circ(i), i\in\N)$ is the reversible Markov chain on $\VV^\circ_n$ obtained by setting 
$
J_n^\circ(i)\equiv J_n(T_{n,i}^\circ)
$.
Note that $J_n^\circ$ has  transition matrix elements
\be
p^\circ_n(x,y)=P_x\left(J_n(T_{n,1}^\circ)=y\right),\, x,y\in\VV^\circ_n,
\Eq(2.1.5)
\ee
and invariant measure  
\be
\pi^\circ_n(x)=\frac{\pi_n(x)}{\sum_{x'\in \VV^\circ_n}\pi_n(x')}, \quad x\in\VV^\circ_n,
\Eq(2.1.6)
\ee
where $\pi_n$ denotes the invariant measure of $J_n$ (see \eqv(1.1.15) for its expression). We call $J_n^\circ$ the front chain and denote by  $(\O^{J^\circ}, \FF^{J^\circ}, P^\circ)$ its probability space.
The associated graph, $G^\circ(\VV^\circ_{n})$,  is described in \eqv(4.0.1). 

\subsection{Back end clock process.}
\label{2.2}
The description of this process involves three time sequences.
The first two are the intertwined sequences of  consecutive hitting times of 
 $\VV_n\setminus \VV^\circ_n$ and their ensuing exit times.
Namely, set 
\be
\overline T_{n,0}=0, \quad
\overline T'_{n,0}=
\begin{cases}
\inf\{i>  0 \mid J_n(i)\notin\VV^\circ_n\},
&\hbox{\rm if}\, J_n(0)\in\VV^\circ_n,\\
0,&\hbox{\rm if}\, J_n(0)\notin\VV^\circ_n,
\end{cases}
\Eq(2.2.2)
\ee
and, for $ j=0,1,2,\dots,$ 
\bea
\Eq(2.2.3)
\overline T_{n,j+1}
&=&
\inf\{i> \overline T'_{n,j}\mid J_n(i)\in\VV^\circ_n\}\,,
\\
\Eq(2.2.3')
\overline T'_{n,j+1}
&=&
\inf\{i> \overline  T_{n,j+1} \mid J_n(i)\notin\VV^\circ_n\}.
\eea
Clearly,
$
0
= \overline T_{n,0}\leq \overline T'_{n,0}
<\overline T_{n,1}\leq \overline T'_{n,1}
<\dots
<\overline T_{n,j}\leq \overline T'_{n,j}
<\dots
$.
Clearly also, to each $j$ there corresponds an $i$ such that 
$
T_{n,i-1}^\circ< \overline T'_{n,j}=T_{n,i-1}^\circ+1<T_{n,i}^\circ
$.
Merging 
$
\bigl(
T_{n,i}^\circ
\bigr)_{ i\geq 0}
$
and
$
\bigl(
\overline T'_{n,j}
\bigr)_{ j\geq 0}
$
into a single sequence, 
$
\bigl(
T^\dagger_{n,j}
\bigr)_{ j\geq 0} 
$,
and arranging its elements in increasing order of magnitude,
\be
0\leq T_{n,0}^\dagger<T_{n,1}^\dagger<\dots<T_{n,j}^\dagger<\dots.
\Eq(2.2.6)
\ee
we define the \emph{back end clock process} through
\be
\wt S_n^\dagger (k^\dagger)=\sum_{i=0}^{k^\dagger-1}\L_n^\dagger(J_n^\dagger(i)),\quad k^\dagger\in \N,
\Eq(2.2.7)
\ee
where $(J_n^\dagger(i), i\in\N)$ is the chain on $\VV_n$ obtained by setting 
$
J_n^\dagger(i)\equiv J_n(T_{n,i}^\dagger)
$,
and where
\be
\L_n^\dagger(J_n^\dagger(i)) 
=
\begin{cases}
\overline T_{n,j+1}- \overline T'_{n,j},
&
\hbox{\rm if}\, J_n^\dagger(i)\notin\VV^\circ_n\,\hbox{\rm and}\, 
\sum_{k=0}^{i}\1_{\{ J_n^\dagger(k)\notin\VV^\circ_n\}}=j,\\
0,&\hbox{\rm if}\, J_n^\dagger(i)\in\VV^\circ_n.
\end{cases}
\Eq(2.2.8)
\ee
Clearly,  $J_n^\dagger$ is Markovian with one-step transitions probabilities, $p_n^\dagger(x,y)$, as follows:
when it is at
$
x\in\VV^\circ_n
$,
$J_n^\dagger$
chooses its next step according to the transition probabilities of $J_n$,
\be
p_n^\dagger(x,y)=p_n(x,y), \quad x\in\VV^\circ_n, y\in\VV_n,
\Eq(2.2.9)
\ee
and when it enters 
$
\cup_{1\leq l\leq L^{\star}}C^{\star}_{n,l}
$,  
say at a vertex of $C^{\star}_{n,l}$, it exits in  just one step
through one of the boundary points $\del C^{\star}_{n,l}$; that is, for all
$x\in C^{\star}_{n,l}$, $y\in\del C^{\star}_{n,l}$, and $1\leq l\leq L^{\star}$, 
\be
p_n^\dagger(x,y)=P_x(J(T^{\star}_{n,l})=y),
\quad 
\Eq(2.2.10)
\ee 
where
$
 T^{\star}_{n,l}=\inf\{i>0 \mid J_n(i)\in\del C^{\star}_{n,l}\}
$.
Clearly also, the increments $\L_n^\dagger(J_n^\dagger(i))$
of the clock at the times of the visits of $J_n^\dagger(i)$ to $\cup_{1\leq l\leq L^{\star}}C^{\star}_{n,l}$
are the sojourn times of $J_n$ in the sets $C^{\star}_{n,l}$ being visited.
 In other words,
 $\L_n^\dagger(J_n^\dagger(i))$ is
equal in distribution to some $T^{\star}_{n,l}$. 


\smallskip
Summarizing our definitions,
\textsc{fecp} \eqv(2.1.2)
records the total time spent by the proces $X_n$ in $\VV^\circ_n$ along the first $k^\circ$ 
steps of $J_n^\circ$ whereas
\textsc{becp} \eqv(2.2.7) 
records the total time spent by the chain $J_n$ in   
$\cup_{1\leq l\leq L^{\star}}C^{\star}_{n,l}$
along the first   $k^\dagger$ steps of $J_n^\dagger$. The
chains $J_n^\dagger$ and $J_n^\circ$ differ in that $J_n^\dagger$ does visit the sets $C^{\star}_{n,l}$, and 
steps out of these sets
right after stepping in,  while  $J_n^\circ$ straddles over the $C^{\star}_{n,l}$'s, never entering them. 
 Technically, this makes the two chains
very different objects. In particular, 
$J_n^\circ$  is reversible but $J_n^\dagger$ isn't.

\subsection{Rewriting the clock process.}
\label{2.3}
Our aim is to express the processes $K_n$ and $S_n$ defined in \eqv(1.3.1) and \eqv(2.1.0), respectively, 
using \textsc{fecp} and \textsc{becp}.
We first deal with $K_n$. For $a_n$  as in \eqv(1.3.1) let $k^\dagger_n(t)$ be defined through
\be
k^\dagger_n(t)=\min\left\{k\geq 1 \,\Big|\, \textstyle\sum_{i=0}^{k-1}\1_{\{J_n^\dagger(i)\in \VV^\circ_n\}}=\lfloor a_n t\rfloor\right\}
\,,\quad t\geq 0,
\Eq(2.3.0)
\ee
and, taking
$
k^\dagger=k^\dagger_n(t)
$
in \eqv(2.2.7), set
\be
S_n^\dagger(t) = b_n^{-1}\wt S_n^\dagger (k^\dagger_n(t)),\quad t\geq 0.
\Eq(2.3.1)
\ee
Then $K_n(t)$ can be writen as
\be
K_n(t)= \lfloor a_n t\rfloor+ b_n S_n^\dagger (t),\quad t\geq 0.
\Eq(2.3.2)
\ee
To see this  write
$
K_n(t)=\sum_{i=0}^{K_n(t)-1}\1_{\{J_n(i)\in \VV^\circ_n\}}+\sum_{i=0}^{K_n(t)-1}\1_{\{J_n(i)\notin \VV^\circ_n\}}
$
and note that
\bea
\Eq(2.3.4)
&&\textstyle
\sum_{i=0}^{K_n(t)-1}\1_{\{J_n(i)\notin \VV^\circ_n\}}
=\sum_{i=0}^{k^\dagger_n(t)-1}\L_n^\dagger(J_n^\dagger(i))
=b_n S_n^\dagger (t),
\\
\Eq(2.3.3)
&&\textstyle
\sum_{i=0}^{K_n(t)-1}\1_{\{J_n(i)\in \VV^\circ_n\}}
=\sum_{i=0}^{k^\dagger_n(t)-1}\1_{\{J_n^\dagger(i)\in \VV^\circ_n\}}
=\lfloor a_n t\rfloor \equiv k^\circ_n(t),
\eea
where we introduced the notation $k^\circ_n(t)$ for later convenience.
%
In  words,
when $J_n$ takes $K_n(t)$ steps, $J_n^\dagger$ takes $k^\dagger_n(t)$ steps, of which $k^\circ_n(t)$  
are visits of $J_n^\dagger$ to $\VV^\circ_n$.

To deal with the clock process $S_n$ we likewise split the sum in \eqv(2.1.0)
in two terms according to whether $J_n(i)\in \VV^\circ_n$ or $J_n(i)\notin \VV^\circ_n$.
From the above 
definitions and those of $J_n^\dagger$ and $J_n^\circ$ we have that on the one hand, writing $\stackrel{d}{=}$ for equality in distribution,
\bea
\Eq(2.3.5)
\textstyle\sum_{i=0}^{K_n(t)-1}\l^{-1}_n(J_n(i))e_{n,i}\1_{\{J_n(i)\in \VV^\circ_n\}}
\Eq(2.3.5')
&\stackrel{d}{=}&
\textstyle\sum_{j=0}^{k^\dagger_n(t)-1}\l^{-1}_n(J^\dagger_n(j))e^\dagger_{n,j}\1_{\{J^\dagger_n(i)\in \VV^\circ_n\}}\quad
\\
\Eq(2.3.6)
&\stackrel{d}{=}&
\textstyle\sum_{j=0}^{k^\circ_n(t)-1}\l^{-1}_n(J^\circ_n(j))e^\circ_{n,j}\1_{\{J^\circ_n(i)\in \VV^\circ_n\}}\quad
\\
\Eq(2.3.7)
&=&
\wt S_n^\circ (\lfloor a_n t\rfloor),\quad
\eea
where $(e^\dagger_{n,j})$ and $(e^\circ_{n,j})$  are families of independent mean one exponential random variables, and
$\wt S_n^\circ $ is the front end clock process \eqv(2.1.2).
On the other hand,
\bea
\Eq(2.3.8)
&&
\textstyle\sum_{i=0}^{K_n(t)}\l^{-1}_n(J_n(i))e_{n,i}\1_{\{J_n(i)\notin \VV^\circ_n\}}
\\
\Eq(2.3.9)
&=&
\textstyle\sum_{j=0}^{k^\dagger_n(t)-1}
\left(\sum_{i=0}^{\L_n^\dagger(J_n^\dagger(j))-1}\l^{-1}_n(J_n(\overline T'_{n,j}+i))e_{n,i}\right)
\1_{\{J^\dagger_n(j)\notin \VV^\circ_n\}}
\\
\Eq(2.3.10)
&\equiv&
\textstyle\sum_{j=0}^{k^\dagger_n(t)-1}\wh\L_n^\dagger(J_n^\dagger(j))
\eea
where the last line defines $\wh\L_n^\dagger(J_n^\dagger(j))$. If we now set, for $t\geq 0$,
\bea
\Eq(2.3.11)
S_n^\circ(t)&\equiv&c_n^{-1}\wt S_n^\circ (\lfloor a_n t\rfloor),
\\
\Eq(2.3.12)
\wh S_n(t)&\equiv&c_n^{-1}\textstyle\sum_{j=0}^{k^\dagger_n(t)-1}\wh\L_n^\dagger(J_n^\dagger(j)),
\eea
the rescaled clock process \eqv(2.1.0) can be rewritten as
\be
S_n(t)\stackrel{d}{=} S_n^\circ (t)+\wh S_n(t).
\Eq(2.3.13)
\ee
Here the rescaled front end  clock process, $S_n^\circ (t)$, records the time spent \emph{by the process $X_n$} during its visits 
to the set $\VV^\circ_n$, while the remainder term, $\wh S_n(t)$,
records the time spent in its complement. The back end  clock process 
$b_n S_n^\dagger (t)$ is the time needed to actually be able to observe a transition \emph{ of the chain $J_n$} 
from one vertex of $\VV^\circ_n$ to the next.
%

\subsection{Proofs of the theorems of Section \thv(1).}
\label{2.4}

The proofs of the theorems of Section \thv(1.2)  rely on five theorems stated below.
Each of them controls one of the 
processes $k^\dagger_n(t)$, $S_n^\circ (t)$, $\wh S_n(t)$,  and $S_n^\dagger(t)$ above, respectively below, the critical line $\b= 2\b_c(\varepsilon/2)$, $0<\varepsilon<1$. 
As in Section \thv(1.2) the initial distribution of  $J_n$ is the
 uniform distribution on $\VV_n^{\circ}$. By \eqv(4.prop2.1), this is nothing but
the invariant measure, $\pi_n^\circ$, of $J^\circ_n$.
Hence $J^\circ_n$ and $J^\dagger_n$ also start in $\pi_n^\circ$. 

The first theorem  shows that $k^\dagger_n(t)$ behaves like $k^\circ_n(t)=\lfloor a_n t\rfloor$ for large $n$.

\begin{theorem}
  \label{2.theo4}
  Assume that $c_{\star}>2$. 
For all $0< t<\infty$, any constant $c_\circ>0$,  and any sequence $a_n>0$ we have that
on $\O^{\star}$, for all but a finite number of indices $n$,
\be
P_{\pi^\circ_n}\left(
1
\leq
k^\dagger_n(t)/k^\circ_n(t)
\leq 1+n^{-c_\circ}\right) 
\geq 1- n^{-2(c_{\star}-1)+c_\circ}(1+\OO(n^{-(c_{\star}-1)})).
\Eq(2.theo4.1)
\ee
\end{theorem}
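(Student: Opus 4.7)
The lower bound $k_n^\dagger(t)/k_n^\circ(t)\geq 1$ is a deterministic triviality: by \eqv(2.3.0), $k_n^\circ(t)=\lfloor a_n t\rfloor$ counts exactly the visits of $J_n^\dagger$ to $\VV_n^\circ$ made during its first $k_n^\dagger(t)$ steps, so the number of such visits cannot exceed the total step count. Setting $M_n:=k_n^\dagger(t)-k_n^\circ(t)$, what remains is the upper tail bound
\[
P_{\pi_n^\circ}\bigl(M_n>n^{-c_\circ}k_n^\circ(t)\bigr)\leq n^{-2(c_\star-1)+c_\circ}\bigl(1+\OO(n^{-(c_\star-1)})\bigr)
\]
on $\O^\star$, which I propose to prove by a one-line first-moment Markov bound.

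The structural input is that, by \eqv(2.2.9)--\eqv(2.2.10), each excursion of $J_n^\dagger$ into $\cup_{l}C_{n,l}^\star$ lasts exactly one step and is flanked by $\VV_n^\circ$-visits. Consequently $M_n$ is the number of transitions of $J_n^\dagger$ from $\VV_n^\circ$ directly into $\cup_{l}C_{n,l}^\star$: writing $V_j:=J_n^\circ(j-1)$ for the $j$-th $\VV_n^\circ$-state visited by $J_n^\dagger$, one has
\[
M_n\leq \sum_{j=1}^{k_n^\circ(t)}B_j, \qquad B_j:=\1_{\{\text{step of }J_n^\dagger\text{ right after }V_j\text{ lies in }\cup_l C_{n,l}^\star\}},
\]
with $P(B_j=1\mid V_j=x)=p_n(x,\cup_l C_{n,l}^\star)$. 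By \eqv(4.prop2.1), $\pi_n^\circ$ is the invariant measure of the reversible front chain $J_n^\circ$, so starting $J_n^\dagger$ at $\pi_n^\circ$ yields $V_j\sim\pi_n^\circ$ for every $j\geq 1$. This gives $E_{\pi_n^\circ}[B_j]=\bar p$ with $\bar p:=\sum_{x\in\VV_n^\circ}\pi_n^\circ(x)p_n(x,\cup_l C_{n,l}^\star)$, and hence $E_{\pi_n^\circ}[M_n]\leq k_n^\circ(t)\bar p$.

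The bound on $\bar p$ is where Lemma \thv(10.lem1) enters. Vertices $x\in I_n^\star$ are isolated, so none of their neighbors lies in any $C_{n,l}^\star$ and they contribute nothing. For $x\in N_n^\star$ one has $H_n(x)=0\geq H_n(y)$ for every neighbor $y$, so every Metropolis exponent in \eqv(1.1.12) vanishes and $p_n(x,y)=1/n$. Thus $\sum_{x\in\VV_n^\circ}p_n(x,\cup_l C_{n,l}^\star)$ is $n^{-1}$ times the number of edges joining $\VV_n^\circ$ to $\cup_l C_{n,l}^\star$, which by \eqv(10.lem1.5) is at most $\sum_{l=1}^{L^\star}|C_{n,l}^\star|$. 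Combining with \eqv(10.lem1.3) and the ensuing estimate $|\VV_n^\circ|=2^n(1+\OO(n^{-(2c_\star-1)}))$ gives $\bar p\leq n^{-2c_\star+1}(1+\OO(n^{-(c_\star-1)}))$ on $\O^\star$.

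Markov's inequality then closes the argument:
\[
P_{\pi_n^\circ}\bigl(M_n>n^{-c_\circ}k_n^\circ(t)\bigr)\leq \frac{E_{\pi_n^\circ}[M_n]}{n^{-c_\circ}k_n^\circ(t)}\leq \bar p\cdot n^{c_\circ}\leq n^{-2(c_\star-1)+c_\circ-1}\bigl(1+\OO(n^{-(c_\star-1)})\bigr),
\]
which is in fact sharper than \eqv(2.theo4.1) by a factor of $n$. I do not expect any genuine obstacle: the proof is essentially mechanical, and the only ingredients of substance are the stationarity identity $V_j\sim\pi_n^\circ$ (supplied by Proposition \thv(4.prop2) through \eqv(4.prop2.1)) together with the boundary count \eqv(10.lem1.5) and the component-mass estimate \eqv(10.lem1.3) of Lemma \thv(10.lem1).
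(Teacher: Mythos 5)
Your argument is correct and follows the same overall strategy as the paper's proof (Subsection \ref{8.2}): express $k^\dagger_n(t)-k^\circ_n(t)$ as a count over visits of the front chain, exploit stationarity of $J^\circ_n$ under $\pi^\circ_n$, and close with a first-order Markov inequality on $\O^\star$. The one genuine difference is a mild sharpening at the first-moment step. The paper bounds each excursion indicator above by $\1_{\{J^\circ_n(i-1)\in\del(\cup_l C^\star_{n,l})\}}$ and applies $\pi^\circ_n\bigl(\del(\cup_l C^\star_{n,l})\bigr)\leq n^{-2(c_\star-1)}(1+\OO(n^{-(c_\star-1)}))$ from Lemma \thv(8.3lem2), i.e.\ it counts every visit to the boundary as a potential excursion. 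You instead condition on the current state $V_j$ and keep the actual one-step transition probability, obtaining $\bar p=\sum_x\pi^\circ_n(x)p_n(x,\cup_l C^\star_{n,l})\leq n^{-2c_\star+1}(1+\OO(n^{-(c_\star-1)}))$, which is one power of $n$ smaller -- in effect $\pi^\circ_n(\cup_l C^\star_{n,l})$ replaces $\pi^\circ_n(\del(\cup_l C^\star_{n,l}))$. That is exactly where your extra factor of $n^{-1}$ comes from; both bounds imply \eqv(2.theo4.1), so the gain is cosmetic here, but it is a genuine tightening. The one step that deserves a sentence more in the write-up is the identity $P(B_j=1\mid V_j=x)=p_n(x,\cup_l C^\star_{n,l})$: it holds, but it rests on the strong Markov property of $J^\dagger_n$ at the stopping times of successive visits to $\VV^\circ_n$ together with \eqv(2.2.9), and on the identification of these successive visits with $J^\circ_n$ (Corollary \thv(8.0cor3)); making that explicit would close the argument cleanly.
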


The next two theorems are the building blocks of the proof of Theorem \thv(1.theo1.Main).
The first establishes convergence of the front end clock process, $S_n^\circ$. The second implies, in particular, that 
the contribution of $\wh S_n$ to \eqv(2.3.13)
vanishes as $n$ diverges.

\begin{theorem}[Front end clock process]
  \label{2.theo1} 
Assume that $c_{\star}>3$.
Let the sequences $a_n$ and $c_n$ be as in Theorem \thv(1.theo1.Main).
Then, for all $0<\varepsilon<1$ and $\b> \b_c(\varepsilon)$,  $\P$-almost surely,
\be
S^\circ_n\Rightarrow_{J_1}  S^\circ_{\infty},
\Eq(2.theo1.1)
\ee
 where $S^\circ_{\infty}$ is a subordinator with zero drift and L\'evy measure
$\nu^\circ=\nu$ defined in \eqv(1.theo1.M3).
\end{theorem}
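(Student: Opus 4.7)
The plan is to verify the hypotheses of the abstract front-end convergence result, Theorem \thv(6.theo2). Conditional on the chain $J_n^\circ$, the increments $c_n^{-1}\l^{-1}_n(J_n^\circ(i))e^\circ_{n,i}$ are independent exponentials with means $c_n^{-1}\l^{-1}_n(J_n^\circ(i))$, so the tail criterion of Theorem \thv(6.theo2) reduces to controlling, for each continuity point $u>0$ of $\nu$, the compensator-type sum
\be
\sum_{i=0}^{\lfloor a_n t\rfloor-1}\exp\bigl(-c_n u\,\l_n(J_n^\circ(i))\bigr),
\Eq(plan.1)
\ee
together with an analogous second-moment (Lindeberg-type) expression.

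I would first replace the time average \eqv(plan.1) by the corresponding space average under the invariant measure $\pi_n^\circ$, which, by \eqv(4.prop2.1), is the uniform law on $\VV_n^\circ$. Since $J_n^\circ$ is reversible, a Dirichlet-form comparison with the simple random walk on the hypercube---justified by the fact that $\VV_n^\circ$ differs from $\VV_n$ only by the removal of the small connected components $C_{n,l}^\star$, each of size at most $n\eta_n$ by Lemma \thv(3.lem2) and Lemma \thv(10.lem1)---should give a polynomial lower bound on the spectral gap of $J_n^\circ$. Combined with a second moment estimate and Chebyshev's inequality, this would yield, $\P$-almost surely,
\be
a_n^{-1}\sum_{i=0}^{\lfloor a_n t\rfloor-1}\exp(-c_n u\,\l_n(J_n^\circ(i)))=t\,\E_{\pi_n^\circ}\bigl[\exp(-c_n u\,\l_n(x))\bigr](1+\so).
\Eq(plan.2)
\ee

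Next I would evaluate the space average by splitting $\VV_n^\circ=N_n^\star\cup I_n^\star$. For $x\in N_n^\star$ one has $H_n(x)=0$ and, by \eqv(10.1.2), every neighbor $y$ satisfies $H_n(y)\leq 0$, hence $[H_n(y)-H_n(x)]^+=0$ and $\l_n(x)=1$; the contribution $\exp(-c_n u)$ then decays superpolynomially and is negligible. For $x\in I_n^\star$, isolation forces every neighbor to be unoccupied, so $\l_n(x)=e^{\b\sqrt n g(x)}=w_n^{-1}(x)$. Using $|I_n^\star|/|\VV_n^\circ|\sim n^{-c_{\star}}$ from Lemma \thv(10.lem1), the scaling $a_n\P(w_n(x)\geq c_n)\sim 1$ from \eqv(1.theo1.M1), and the Gaussian asymptotics of Lemma \thv(9.lem4'), a direct computation yields
\be
a_n\,\E_{\pi_n^\circ}\bigl[\exp(-c_n u\,\l_n(x))\bigr]\longrightarrow u^{-\a(\varepsilon)}\a(\varepsilon)\G(\a(\varepsilon))=\nu(u,\infty),
\Eq(plan.3)
\ee
which is precisely the L\'evy measure of Theorem \thv(1.theo1.Main). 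The truncated second-moment hypothesis of Theorem \thv(6.theo2) is handled along the same route, using $\l_n^{-2}(x)(1-e^{-c_n\d\l_n(x)})^2$ in place of $e^{-c_n u\l_n(x)}$ and absorbing the contribution of atypically large $w_n$ via the a.s.\ bound of Lemma \thv(9.lem7).

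The main obstacle I anticipate is upgrading \eqv(plan.2) from convergence in probability to $\P$-almost sure convergence in the random environment. The strategy is to establish a polynomial variance bound of order $a_n^{-\g}$, some $\g>0$, using the spectral gap estimate for $J_n^\circ$ alluded to above; the hypothesis $c_{\star}>3$ together with the almost sure control on $|I_n^\star|$ and $|\VV_n^\circ|$ provided by Lemma \thv(10.lem1), and on $\max_{x\in\VV_n}w_n(x)$ provided by Lemma \thv(9.lem7), should then make the tail sums $\sum_n\P(|\cdot|>\d)$ summable, so that Borel--Cantelli yields $\P$-a.s.\ convergence on the full-measure event $\O^{\star}\cap\O^{\scriptscriptstyle{\textsf{CRIT}}}\cap\O_0$.
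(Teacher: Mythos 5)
Your overall architecture is the right one and matches the paper's: an abstract Durrett--Resnick criterion, verified by an ergodic theorem for the chain $J_n^\circ$ in a fixed environment together with a law of large numbers for the environment-dependent averages. But the route you sketch through the ergodic step has a real gap, and there is a mislabeling worth fixing.

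The main gap is at the ergodic step. You propose that a Dirichlet-form comparison with SRW gives a polynomial spectral gap for $J_n^\circ$, and that this plus a second-moment Chebyshev argument yields \eqv(plan.2). Two problems. First, the comparison is not naive: the graph $G^\circ(\VV_n^\circ)$ in \eqv(4.0.1) contains the complete graphs $G^\star(\del C^{\star}_{n,l})$ as new edges, and the paper's Proposition \thv(4.prop3) needs the canonical-paths construction of Proposition 4.1 of \cite{FIKP} (paths that avoid bad vertices) precisely to handle this; the hypothesis $c_{\star}>1+\log 4$ enters there. Second, and more seriously, a spectral gap $\t^\circ_n=\OO(n^2)$ is not by itself enough. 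Inspecting the variance of \eqv(plan.1), after isolating the contributions of blocks of length $\ell^\circ_n\sim n^3$, the diagonal piece is governed by the summed return probabilities $\sum_{l<\ell^\circ_n}p_n^{\circ,l+2}(z,z)$ for $z\in I^{\star}_n$. With only the spectral gap one can bound each $p_n^{\circ,l}(z,z)$ trivially for $l\lesssim\t^\circ_n$ and by $\approx\pi^\circ_n(z)$ thereafter, which gives a variance contribution of order $\t^\circ_n\sim n^2$ rather than $\so$. What makes the argument work in the paper is the mean local time estimate of Proposition \thv(4.prop6), $\sum_{l=1}^{\ell^\circ_n-1}P^\circ(J^\circ_n(l+2)=z\mid J^\circ_n(0)=z)\leq C_\circ/(n\log n)$, together with the off-diagonal control of Lemma \thv(6.lem1); these supply the term $\nu^\circ_n(2u,\infty)/(n\log n)$ in \eqv(6.theo2.2). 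Your sketch is silent on both, and without them \eqv(plan.2) does not follow from mixing alone.

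Two smaller points. The abstract convergence theorem you want to invoke is Theorem \thv(6.theo1) (Conditions (C0)--(C3)); Theorem \thv(6.theo2) \emph{is} the ergodic theorem. And for the LLN step, the paper does not compute $\nu^\circ_n$ and $\s^\circ_n$ directly; it compares them to the random-hopping REM quantities $\nu^{\scriptscriptstyle{\textsf{REM}}}_n$, $\s^{\scriptscriptstyle{\textsf{REM}}}_n$ via Lemma \thv(6.lem2) and cites Proposition \thv(6.prop2). Your direct split $\VV_n^\circ=N^{\star}_n\cup I^{\star}_n$ captures the right limit but would require redoing the concentration estimates; the comparison trick buys you those for free. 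Finally, the quantity you write for the Lindeberg/(C3) check is not quite what appears in \eqv(6.1.7); the relevant object after integrating the exponential is the truncated \emph{first} moment $\eta^\circ_n(\e)$ of \eqv(6.4.3), controlled by comparison with $\eta^{\scriptscriptstyle{\textsf{REM}}}_n(\e)$.
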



\begin{theorem}
[Remainder]
  \label{2.theo3}
Assume that $c_{\star}>2$. 
Let the sequences $a_n$ and $c_n$ be as in Theorem \thv(1.theo1.Main).
Then, for all $0<\varepsilon<1$ and $\b> \b_c(\varepsilon)$,
$\P$-almost surely, 
\be
\limsup_{n\rightarrow\infty}\PP_{\pi_n^\circ}\left(\rho_\infty\bigl(S_n(\cdot),S_n^\circ(\cdot)\bigr)>\e\right)=0,
\quad\forall\e>0,
\Eq(2.theo3.1)
\ee
where $\rho_\infty$ is Skorohod metric on $D([0,\infty))$.
\end{theorem}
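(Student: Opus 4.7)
The plan is to reduce the statement to showing that $\widehat S_n(T)\to 0$ in $\PP_{\pi^\circ_n}$-probability for every finite $T$, and to establish this via Theorem \thv(2.theo4) and a first-moment Markov-type bound on the total sojourn time spent inside the valleys $C^\star_{n,l}$.

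By \eqv(2.3.13), $S_n\stackrel{d}{=} S_n^\circ+\widehat S_n$ in the natural coupling induced by $J_n$, and since $S_n^\circ$ and $\widehat S_n$ are both non-decreasing non-negative c\`adl\`ag processes, the uniform distance between $S_n$ and $S_n^\circ$ on $[0,T]$ is bounded by $\widehat S_n(T)$. As the Skorokhod $J_1$ distance is dominated by the uniform distance, and $\rho_\infty$-convergence is equivalent to convergence on every compact interval, it suffices to show that $\widehat S_n(T)\to 0$ in $\PP_{\pi^\circ_n}$-probability for each fixed $T>0$, $\P$-a.s. Theorem \thv(2.theo4) next provides, on an event of $P_{\pi^\circ_n}$-probability at least $1-n^{-2(c_\star-1)+c_\circ}$, the deterministic replacement $k_n^\dagger(T)\le (1+n^{-c_\circ})\lfloor a_n T\rfloor =:K_n^+$. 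Writing $Y_j:=\widehat\Lambda_n^\dagger(J_n^\dagger(j))\,\1_{\{J_n^\dagger(j)\notin\VV_n^\circ\}}$, this yields
$$
c_n\,\widehat S_n(T)\;\le\;\sum_{j=0}^{K_n^+}Y_j.
$$

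Each $Y_j$ is the total continuous-time sojourn of $X_n$ during one excursion of $J_n^\dagger$ into some component $C^\star_{n,l}$. Starting from $x\in C^\star_{n,l}$, reversibility of the Metropolis chain yields
$$
\EE_x[Y_j]\;=\;\sum_{y\in C^\star_{n,l}}G_{C^\star_{n,l}}(x,y)\,\lambda_n^{-1}(y),
$$
where $G_{C^\star_{n,l}}$ is the Green's function of $J_n$ killed at exit from $C^\star_{n,l}$. Combining Lemma \thv(10.lem1) (size and boundary bounds on valleys), Lemma \thv(9.lem7) (weight bounds inside valleys), detailed balance, and the fact that $J_n^\dagger$ enters $C^\star_{n,l}$ at a vertex adjacent to $\VV_n\setminus\VV_n^\star$ (hence of small weight), I expect a deterministic bound $\EE_x[Y_j]\le M_n$ uniform in the entry vertex. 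Markov's inequality then gives
$$
\PP_{\pi^\circ_n}\bigl(\widehat S_n(T)>\varepsilon\bigr)\;\le\;\frac{K_n^+M_n}{\varepsilon\,c_n}\;=\;(1+o(1))\frac{a_n T\,M_n}{\varepsilon\,c_n},
$$
which vanishes provided $a_n M_n/c_n\to 0$. Since Lemma \thv(9.lem4') gives $\log c_n\sim\beta n\beta_c(\varepsilon)$ and $\log a_n\sim\varepsilon n\log 2$, this reduces to checking that $\log M_n<n(\beta\beta_c(\varepsilon)-\varepsilon\log 2)$; for $\beta>\beta_c(\varepsilon)$ this provides an exponentially large budget for $M_n$.

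The main technical obstacle is producing such a bound on $\EE_x[Y_j]$. A crude product estimate via (polynomially many discrete steps in $C^\star_{n,l}$)$\times$(maximum holding time $\le e^{-\beta\min_{C^\star_{n,l}}H_n}$) is, by Lemma \thv(9.lem7), of order $e^{\beta n\sqrt{2\log 2}(1+o(1))}$, which exceeds the allowed budget because $\beta_c(\varepsilon)<\sqrt{2\log 2}$. The sharpening must exploit (i) that $J_n^\dagger$ enters a component at a shallow vertex near its boundary, and (ii) that a typical excursion exits $C^\star_{n,l}$ well before the killed chain equilibrates to its (deeply concentrated) local stationary distribution. This decoupling of the entry distribution from the invariant measure, combined with the uniformly bounded component sizes from Lemma \thv(10.lem1) under the hypothesis $c_\star>3$, is what should deliver $M_n$ within the required exponential budget.
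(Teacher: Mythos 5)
Your reduction to showing $\widehat S_n(T)\to 0$ in $\PP_{\pi^\circ_n}$-probability, and the invocation of Theorem~\thv(2.theo4) to replace the random horizon $k_n^\dagger(T)$ by the deterministic $(1+n^{-c_\circ})\lfloor a_n T\rfloor$, both parallel the paper's Lemma~\thv(8.3lem4). The central step, however, is where your plan breaks. You propose a deterministic bound $\EE_x[Y_j]\le M_n$ uniform over components and entry points, followed by a Markov inequality. Such an $M_n$ cannot be small enough across the whole regime $\b>\b_c(\e)$. Consider the deepest non--isolated pair $C^{\star}_{n,l}=\{x,y\}$: by extreme--value statistics over the $\approx 2^n n^{-2c_\star+1}$ vertices of $\cup_l C^{\star}_{n,l}$, $\P$-a.s.\ $\max_x w_n(x)\sim e^{\b n\sqrt{2\log 2}(1+o(1))}$ on this set, and the expected sojourn in that pair, even starting from the shallower endpoint, is of order $w_n(x)/n$ (there is an $\Theta(1/n)$ chance of stepping into the deep vertex before exiting). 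Neither the entry point nor the ``exit before equilibration'' heuristic rescues this: the mean is genuinely of the order of the maximal weight. Plugging such an $M_n$ into the Markov bound and accounting for the $\OO(a_n/2^n)$ expected number of excursions into that pair gives a quantity $\asymp a_n e^{\b n\sqrt{2\log 2}}/(2^n c_n)$, whose exponent is $n\bigl(\e\log 2+\b\sqrt{2\log 2}-\log 2-\b\b_c(\e)\bigr)$; this is positive as soon as $\b>(1+\sqrt\e)\sqrt{\log 2/2}$, so the first moment diverges in part of the aging regime $\b>\b_c(\e)=\sqrt\e\,\sqrt{2\log 2}$.

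The paper avoids the first moment of the total sojourn time entirely. It uses Proposition~\thv(9.prop1)/Corollary~\thv(9.cor1) to replace, with probability $1-o(1)$, each excursion's sojourn by the \emph{pathwise} bound $n\bar\varrho_{n,l}(0)$ (this is what the exponential concentration buys), then splits off the rare event $\wt\AA$ that the walk ever visits a component $C^{\star}_{n,l}$ containing a vertex with $w_n\ge r_n(\e_n-\rho^{\star}_n/2)$ — this event has probability $\OO\bigl(n\,2^{-n\rho^{\star}_n/2}\bigr)$ by a Tchebychev bound on the number of visits (eqs.\ \eqv(8.1.9')--\eqv(8.1.9)). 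The surviving components are then decomposed into shells $\II_k$ by the depth of their heaviest vertex, and Lemma~\thv(8.3lem1) yields a geometrically summable bound (eqs.\ \eqv(8.1.19)--\eqv(8.1.24)) that converges precisely when $\a(\e)=\b_c(\e)/\b<1$. So the real work — separating the rare deep excursions from the typical shallow ones, and controlling the shallow ones by depth layer — is exactly the part your sketch leaves as an ``obstacle,'' and it is not a refinement of the first moment argument but a replacement for it.
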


We now turn to the back end clock process. The next result parallels Theorem \thv(2.theo1).

\begin{theorem}[Back end clock process above the critical line]
  \label{2.theo2}
Assume 
that $c_{\star}>3$.
Let  the sequence $a_n$ and $b_n$ be as in Theorem \thv(1.theo1.Main) 
and Theorem \thv(1.theo2.Main), (i), respectively.
Then, for all $0<\varepsilon<1$ and $\b>2\b_c(\varepsilon/2)$,  $\P$-almost surely,
\be
S_n^\dagger \Rightarrow_{J_1}  S^\dagger_{\infty},
\Eq(2.theo2.1)
\ee
 where $S^\dagger_{\infty}$ is a stable subordinator with zero drift and L\'evy measure
$\nu^\dagger$ defined in \eqv(1.theo2.M2).
\end{theorem}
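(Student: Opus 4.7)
The strategy is to apply the abstract back end convergence criterion Theorem \thv(7.theo1) to the partial sum
\be
\overline S_n^\dagger(t)\equiv b_n^{-1}\sum_{i=0}^{\lfloor a_n t\rfloor-1}\L_n^\dagger(J_n^\dagger(i)),
\Eq(plan.0)
\ee
and then to transfer convergence to $S_n^\dagger(t)=b_n^{-1}\wt S_n^\dagger(k^\dagger_n(t))$ via Theorem \thv(2.theo4): by \eqv(2.theo4.1), with $\PP_{\pi^\circ_n}$-probability tending to one, $k^\dagger_n(t)$ exceeds $k^\circ_n(t)=\lfloor a_n t\rfloor$ by a factor at most $1+n^{-c_\circ}$, so by the deterministic bound \eqv(9.lem7.1) on individual increments the difference $S_n^\dagger(t)-\overline S_n^\dagger(t)$ is controlled by $b_n^{-1}n^{-c_\circ}\lfloor a_n t\rfloor\cdot e^{\b n\sqrt{\log 2}(1+o(1))}$, which tends to zero for $c_\circ$ chosen large enough.

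The hypotheses of Theorem \thv(7.theo1), in the Durrett--Resnick spirit, amount to checking a convergence of accumulated conditional tails,
\be
\nu^\dagger_n(t,u)\equiv\sum_{i=0}^{\lfloor a_n t\rfloor-1}
P\bigl(\L_n^\dagger(J_n^\dagger(i))>u b_n\,\big|\,J_n^\dagger(i)\bigr)
\;\underset{n\to\infty}{\longrightarrow}\; t\,\nu^\dagger(u,\infty),
\Eq(plan.1)
\ee
for each $u>0$, $\P$-almost surely, together with a matching Lindeberg-type bound on truncated conditional second moments and a decorrelation condition on pairs of correlated summands.

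The heart of the proof is the identification of the conditional tail in \eqv(plan.1). Fixing $x\in\VV^\circ_n$ and conditioning on $J_n^\dagger(i)=x$, Lemma \thv(3.lem2) together with \eqv(10.lem1.3) shows that the dominant contribution comes from the event that the next step of $J_n^\dagger$ enters a two-vertex component $C^{\star}_{n,l}=\{y,y'\}$ with $\{x,y\}\in\EE_n$, in which case $\L_n^\dagger$ equals the sojourn time of $J_n$ in $\{y,y'\}$. An explicit first-passage computation based on the Metropolis transition probabilities \eqv(1.1.12) shows that this sojourn time is, up to sub-leading corrections, exponentially distributed with rate parameter of order $1/\min(w_n(y),w_n(y'))$. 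Summing over the $(n-1)$ valley-entering edges incident to $x$ (which accounts for the factor $(n-1)$ in the defining relation $\sqrt{na_n}\P(w_n(x)\ge(n-1)b_n)\sim 1$, as permitted by \eqv(10.lem1.6)), using the Gaussian tail asymptotics \eqv(A1.lem1.1), and exploiting the independence of $g(y)$ and $g(y')$ to produce the doubled exponent $2\a(\varepsilon/2)$, the L\'evy measure \eqv(1.theo2.M2) arises at the level of the $\P$-expectation.

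The main obstacle is to lift the convergence in \eqv(plan.1) from its expected form to a quenched $\P$-almost sure statement; it is here that the hypothesis $c_\star>3$, strictly stronger than the $c_\star>2$ used for Theorem \thv(2.theo1), is required. One needs a concentration inequality for $\nu^\dagger_n(t,u)$ viewed as a function of the independent Gaussian family $(g(x))_{x\in\VV_n}$, exploiting the fact that each $g(x)$ appears in only order $n$ of the summands of \eqv(plan.1) and that each summand has $\P$-expected value of order $\P(w_n(x)\ge(n-1)b_n)^2\sim (na_n)^{-1}$; an $n$-summable bound on the resulting variance, combined with a countable dense set of $u$-values and the Borel--Cantelli lemma, then yields the almost sure version of \eqv(plan.1), but this summability requires $c_\star$ to be sufficiently large. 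The Lindeberg and decorrelation conditions are then handled respectively by a direct truncated second-moment computation and by a mixing argument based on the bijective correspondence between the steps of the reversible front chain $J_n^\circ$ and the excursion endpoints of $J_n^\dagger$.
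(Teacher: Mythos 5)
Your overall plan — identify the conditional tail, show the two-vertex components dominate, get the $(n-1)$ factor from the edges incident to each $x$, and exploit the independence of the two occupying Gaussians to double the tail exponent to $2\a(\varepsilon/2)$ — correctly captures the mechanism behind $\nu^\dagger$. The concentration/Borel--Cantelli plan and the role of $c_\star>3$ are also in the right spirit. However, the way you set up the partial-sum process and propose to return to $S_n^\dagger$ contains a genuine gap.

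You first analyze $\overline S_n^\dagger(t)=b_n^{-1}\sum_{i<\lfloor a_nt\rfloor}\L_n^\dagger(J_n^\dagger(i))$ and then try to pass to $S_n^\dagger(t)$ by invoking Theorem~\thv(2.theo4) and \emph{a deterministic bound on $\L_n^\dagger$}. This transfer step fails on two counts. First, $\L_n^\dagger(J_n^\dagger(i))$ is a random discrete sojourn time, not a quantity bounded a.s.~by $\varrho_{n,l}(0)$; Eq.~\eqv(9.lem7.1) bounds the mean scale of the sojourn time, not its realizations, so there is no ``deterministic bound on individual increments'' of the kind you invoke. Second, even if one replaces $\L_n^\dagger$ by its mean scale $e^{\b n\sqrt{\log 2}(1+o(1))}$, the quantity $b_n^{-1}n^{-c_\circ}\lfloor a_nt\rfloor\,e^{\b n\sqrt{\log 2}(1+o(1))}$ has exponent of order $n\bigl[\b\sqrt{\log 2}(1-\sqrt{\varepsilon})+\varepsilon\log2\bigr]$, which is strictly positive for every $0<\varepsilon<1$ and $\b>0$; it grows exponentially and a polynomial factor $n^{-c_\circ}$ cannot save it. Worse, Theorem~\thv(2.theo4) forces $c_\circ<2(c_\star-1)$ for its probability estimate to be nontrivial, so you cannot ``choose $c_\circ$ large enough''. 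The deeper issue is conceptual: the significant increments of $S_n^\dagger$ are precisely the rare $O(b_n)$ ones, and one of them may well land in the index window $[\lfloor a_nt\rfloor,k^\dagger_n(t))$; controlling this event needs a probabilistic argument (or a truncation) rather than a worst-case deterministic bound.

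The paper avoids this transfer entirely: Theorem~\thv(7.theo1) is proved by applying the Durrett--Resnick criterion (Theorem~2.1 of~\cite{G12}) directly to the stopped sum $S_n^\dagger(t)=\sum_{i<k^\dagger_n(t)}Z_{n,i}$, $Z_{n,i}=b_n^{-1}\L_n^\dagger(J_n^\dagger(i))$, exploiting that $k^\dagger_n(t)$ is a stopping time. The crucial observation (Eqs.~\eqv(7.theo1.8)--\eqv(7.theo1.11)) is that when one rewrites the accumulated conditional tails $\sum_{i=1}^{k^\dagger_n(t)}\PP^\dagger_{\pi^\circ_n}(Z_{n,i}>u\mid\FF^\dagger_{n,i-1})$, the terms are nonzero only when $J_n^\dagger(i-1)\in\VV^\circ_n$, and the chain $J_n^\dagger$ observed only at those times is by construction $J_n^\circ$ with exactly $k^\circ_n(t)=\lfloor a_nt\rfloor$ steps. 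This converts the \emph{random} stopping time $k^\dagger_n(t)$ into a \emph{deterministic} upper summation limit as a distributional identity, with no error term to estimate. If you want to keep your decomposition, you would have to replace the deterministic-bound argument by a truncation of the large increments and a probabilistic estimate that with probability $1-o(1)$ no $O(b_n)$-sized increment occurs among the $\leq n^{-c_\circ}\lfloor a_nt\rfloor$ extra terms; this can be done but is exactly the sort of work the paper's direct stopping-time argument circumvents.
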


Finally, the last result covers the high temperature domain.

\begin{theorem}[Back end clock process below the critical line]
  \label{2.theo5}
Assume that $c_{\star}>3$.  
Let  the sequence $a_n$ and $b_n$ be as in Theorem \thv(1.theo1.Main) 
and Theorem \thv(1.theo2.Main), (ii), respectively.
Then, for all $0<\varepsilon<1$, all $0<\b< 2\b_c(\varepsilon/2)$, and  all $0<T<\infty$, $\P$-almost surely,
\be
\PP_{\pi_n^\circ}\Bigl(\lim_{n\rightarrow\infty}\sup_{t\in[0,T]}\left|b_n^{-1}S_n^\dagger(t) -t \right|=0\Bigr)=1.
\ee
\end{theorem}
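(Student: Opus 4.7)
The plan is to express $\wt S_n^\dagger(k_n^\dagger(t))$ as a sum of cluster-sojourn times indexed by the steps of the front chain $J_n^\circ$, and to apply a quenched weak law of large numbers that yields the deterministic drift $b_n t$. (Note that by \eqv(2.3.2), $b_n S_n^\dagger(t)=K_n(t)-\lfloor a_n t\rfloor$, so in view of Theorem \thv(1.theo2.Main)(ii) the statement is naturally read as $\sup_{t\in[0,T]}|S_n^\dagger(t)-t|\to 0$, $\PP_{\pi_n^\circ}$-a.s.) By the construction of $J_n^\dagger$ in Section \thv(2.2), the nonzero increments $\L_n^\dagger(J_n^\dagger(j))$ are precisely the excursions of $J_n$ into the union of clusters $\cup_l C^\star_{n,l}$. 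I would first pair each such excursion with the step of $J_n^\circ$ that launches it, and invoke Theorem \thv(2.theo4) to replace $k_n^\dagger(t)$ by $k_n^\circ(t)=\lfloor a_n t\rfloor$ up to a negligible error, rewriting
\[
\wt S_n^\dagger(k_n^\dagger(t))=\sum_{i=0}^{\lfloor a_n t\rfloor-1}\sigma_{n,i}+o(b_n),
\]
where $\sigma_{n,i}$ is the time $J_n$ spends in $\VV_n\setminus\VV_n^\circ$ between $J_n^\circ(i)$ and $J_n^\circ(i+1)$ (and $\sigma_{n,i}=0$ when that transition stays inside $\VV_n^\circ$). Under $\pi_n^\circ$ the sequence $(\sigma_{n,i})_i$ is stationary with respect to the ergodic reversible chain $J_n^\circ$.

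The key quantity is then $\E_{\pi_n^\circ}[\sigma_{n,0}]$, which the plan requires to equal $(b_n/a_n)(1+o(1))$ $\P$-a.s. For a size-two cluster $\{x,y\}=C^\star_{n,l}$ a direct computation from \eqv(1.1.12) gives exit probability $(n-1)/(\min(w_n(x),w_n(y))+n-1)$ at either vertex, hence expected sojourn time $1+\min(w_n(x),w_n(y))/(n-1)$; analogous expressions for larger clusters are bounded using the size bound $|C^\star_{n,l}|\le O(1/\rho_n^\star)$ of Lemma \thv(10.lem1) and shown to be of lower order. Averaging against $\pi_n^\circ$ (uniform on $\VV_n^\circ$), the attack probability of cluster $C^\star_{n,l}$ equals $|\del C^\star_{n,l}|/(n|\VV_n^\circ|)$ up to constants, so the first moment reduces to an empirical sum of $\min(w_n(x),w_n(y))/(n|\VV_n^\circ|)$ over size-two clusters. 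A second-moment argument using the Gaussian tail asymptotics of Lemma \thv(9.lem4') would then yield the required limit; the factor $(\beta\sqrt{\pi n})^{-1}e^{n\beta^2/4}$ in the definition of $b_n$ is precisely the leading-order value of $n\,\E[\min(w_n(x),w_n(y))\mid g(x),g(y)\le -u_n]$, so this choice of $b_n$ centres the sum correctly.

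Concentration of $b_n^{-1}\sum_{i<\lfloor a_n t\rfloor}\sigma_{n,i}$ around $t$ would follow by truncation plus Chebyshev. Setting $\sigma_{n,i}^A=\sigma_{n,i}\wedge Ab_n$, the assumption $\beta<2\beta_c(\varepsilon/2)$, equivalently $2\alpha(\varepsilon/2)>1$, places us in the finite-mean regime for $\min(w_n(x),w_n(y))$ on the scale $b_n$, so that $a_n\E_{\pi_n^\circ}[(\sigma_{n,i}^A)^2]=o(b_n^2)$ and the truncation remainder $a_n\PP_{\pi_n^\circ}(\sigma_{n,0}>Ab_n)$ vanishes as first $n\to\infty$ then $A\to\infty$. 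Cross-correlations of $(\sigma_{n,i})_i$ decay quickly via the spectral gap of $J_n^\circ$, which on the time scale $a_n$ acts essentially instantaneously, so Chebyshev combined with a Borel--Cantelli argument gives $\PP_{\pi_n^\circ}$-a.s.\ pointwise convergence on a countable dense subset of $[0,T]$. Since $t\mapsto b_n^{-1}\sum_{i<\lfloor a_n t\rfloor}\sigma_{n,i}$ and the identity map are both nondecreasing, pointwise convergence upgrades to uniform convergence on $[0,T]$ by a Dini-type argument. The hardest step will be the first-moment calculation: identifying that size-two clusters give the leading contribution (consistent with the critical line being precisely $2\beta_c(\varepsilon/2)$) and extracting the sharp Gaussian tail constant matching $b_n$. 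Once the finite-mean regime is established, the variance control and the passage to the uniform limit are standard.
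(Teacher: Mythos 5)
Your high-level plan is the same as the paper's (Section~\ref{12}): write $S_n^\dagger$ as a sum of cluster-sojourn times indexed along the steps of $J_n^\circ$, show the truncated mean sojourn time matches $b_n/a_n$, concentrate by a second-moment argument, and upgrade to uniform convergence on $[0,T]$ by monotonicity. Your identification of the size-two clusters as the leading contribution, the explicit geometric law for those sojourn times, and the matching of $b_n$ to the truncated Gaussian mean are all correct and are precisely what Lemma~\ref{12.lem6} and equation~\eqref{12.2.5} establish.

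The genuine gap is in the concentration step, specifically in your treatment of dependence. You assert that ``cross-correlations of $(\sigma_{n,i})_i$ decay quickly via the spectral gap of $J_n^\circ$,'' but the spectral gap only kills \emph{chain} correlations across $\ell^\circ_n\sim n^3$ steps; it does nothing about the fact that the chain can return to the boundary of the \emph{same} cluster $C^\star_{n,l}$ many times within a window of $a_n$ steps, and any two excursions into the same cluster carry the same environmental weight $\min(w_n(x),w_n(y))$ and are therefore highly correlated in the quenched setting. A naive Chebyshev bound on $\sum_i\sigma^A_{n,i}$ has a covariance contribution from these repeat visits that is not controlled by the spectral gap and is not $o(b_n^2)$ in general. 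This is exactly why the paper spends all of Subsection~\ref{10.1} on Proposition~\ref{12.prop1}: the index set is sliced into $\ell^\circ_n$ interleaved subsequences $I_j$ (along each of which $J_n^\circ$ is essentially i.i.d.\ by Proposition~\ref{4.prop1}), and then a balls-into-urns argument (Lemma~\ref{12.lem4}) bounds, for each $j$, the number $M_{n,j}(k)-N_{n,j}(k)$ of repeat visits and the maximal multiplicity $m_{n,j}$, showing they are subleading before any Chebyshev is attempted. Without this, your variance estimate does not go through.

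A secondary point: the statement that $\beta<2\beta_c(\varepsilon/2)$ ``places us in the finite-mean regime'' so the truncation remainder vanishes is right in spirit, but the argument is more delicate than stated. The \emph{raw} second moment of the sojourn time is too large (it scales like $e^{n\beta^2}$, which beats $b_n^2/a_n$ for part of the range $\beta_c(\varepsilon)<\beta<2\beta_c(\varepsilon/2)$), so the truncation level must be chosen carefully as in Lemma~\ref{12.lem6} ($s_n=n^{2\bar\kappa}e^{n\bar\beta\beta_c(\bar\varepsilon_n)}$), and the mismatch between truncated and untruncated mean has to be shown negligible via the Gaussian tail estimates — the ``high-temperature partition function'' computation the paper alludes to. Your proposal recognizes the truncation is needed but does not specify a truncation level that actually makes the second-moment bound close; a constant multiple of $b_n$ is not obviously the right cutoff in the regime just below $2\beta_c(\varepsilon/2)$.
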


Assuming these theorems we may prove Theorem \thv(1.theo1.Main) and Theorem \thv(1.theo2.Main). The proof of Theorem \thv(1.theo3.Aging) also uses properties of the chain $J_n^\circ$ that are taken from Section \thv(4).

\begin{proof}[Proof of Theorem \thv(1.theo1.Main)] In view of \eqv(2.3.13) Theorem \thv(1.theo1.Main) is an 
immediate consequence of Theorem \thv(2.theo1) and Theorem \thv(2.theo3)
\end{proof}

\begin{proof}[Proof of Theorem \thv(1.theo2.Main)] Recall the expression \eqv(2.3.2) of $K_n$ and 
notice that $a_n/b_n\downarrow 0$ both under the assumptions
 on $a_n$ and $b_n$ 
of Theorem \thv(2.theo5) and Theorem \thv(2.theo2) 
(use \eqv(A1.lem1.1) to check this in the latter case). 
 Thus the first assertion of Theorem \thv(1.theo2.Main) is a an immediate consequence of
 Theorem \thv(2.theo2) while the second assertion follows from Theorem \thv(2.theo5).
\end{proof}

\begin{proof}[Proof of Theorem \thv(1.theo3.Aging)] The proof follows classical arguments.
Let $A_n^\rho (t,s)$ be the event $A_n^\rho (t,s)\equiv \{ \CC_n(t,s)\ge 1-\rho\}$. Denote respectively by $\RR_n$ and $\RR^\circ_n$ 
the ranges of the processes $c_n^{-1}\wt S_n$ and $c_n^{-1}\wt S_n^\circ$.
Clearly, for all $\rho \in (0,1)$, 
$
A_n^\rho (t,s)\supseteq\{\RR_n\cap(t,t+s)=\emptyset\}
$.
Now
$
\lim_{n\rightarrow \infty}\PP_{\pi_n^\circ}(\RR_n\cap(t,t+s)=\emptyset)
=\lim_{n\rightarrow \infty}\PP_{\pi_n^\circ}(\RR^\circ_n\cap(t,t+s)=\emptyset)
$
as follows from Theorem \thv(2.theo3), and by Theorem \thv(2.theo1), proceeding as in the proof of Theorem 1.6 in \cite{G12}, 
$
\lim_{n\rightarrow \infty}\PP_{\pi_n^\circ}(\RR^\circ_n\cap(t,t+s)=\emptyset)
=\PP(\{S^\circ_{\infty}(u), u>0\}\cap(t,t+s)=\emptyset)
$,
but by the arcsine law for stable subordinators (see e.g.~Theorem 1.8 of \cite{G12}) the last probability is equal to the right hand side of \eqv(1.theo3.Aging.1).

It thus remains to show that
$
\lim_{n\rightarrow \infty}\PP_{\pi_n^\circ}\bigl(A_n^\rho (t,s)\cap\{\RR_n\cap(t,t+s)\neq\emptyset\}\bigr)=0
$.
Invoking as before Theorem \thv(2.theo3), we can substitute $\RR^\circ_n$ for $\RR_n$ in the probability.
Consider the set 
$
\TT_n(\e)\equiv\{x\in I^{\star}_n\mid w_n(x)\geq \e c_n\}
$, 
$\e>0$.
By Theorem \thv(2.theo1), if $\RR^\circ_n\cap(t,t+s)\neq\emptyset$ then with a probability that tends to one 
as $n\uparrow \infty$ and $\e\downarrow 0$ there exists  $u_-\leq u_+$ such that on the one hand
$c_n^{-1}\wt S_n^\circ(\lfloor a_nu_-\rfloor-1)<t<c_n^{-1}\wt S_n^\circ(\lfloor a_nu_-\rfloor)$
while
$c_n^{-1}\wt S_n^\circ(\lfloor a_nu_+\rfloor)<t+s<c_n^{-1}\wt S_n^\circ(\lfloor a_nu_+\rfloor+1)$
on the other,
and  these two increments correspond to visits to vertices $z_-$ and $z_+$ in $\TT_n(\e)$
(that is to say, with probability one, the points $t$ and $t+s$ lie in constancy intervals of the process, and such intervals are produced, asymptotically, by visits to  $\TT_n(\e)$).
Let us now argue that, firstly,  that starting from a given vertex $z_-\in\TT_n(\e)$, the chain $J_n^\circ$ quickly moves at a distance greater than $n\rho/2 $ from it, and secondly, that it does not visit any vertex in
$
\{z\in\TT_n(\e)\mid \dist(z_-,z)\leq n\rho/2\}
$
in the ensuing
 $\lfloor C a_n \rfloor$ steps, for any $0<C<\infty$, $0<\rho<1$, and small $\e>0$.
For this we use three results of Section \thv(4). By Proposition \thv(4.prop1), the chain $J_n^\circ$ started in $z_-$ reaches stationarity in $\ell^\circ_n\sim n^3$ steps, and by Proposition \thv(4.prop2), 
$
\pi^\circ_n(\{z\in\VV_n^\circ \mid \dist(z_-,z)> n\rho/2\})\geq 1-\exp\{-n\II(\rho)\}
$, 
where $\II(\rho)>0$ if $0<\rho<1$. This proves the first claim. The second claim is an immediate consequence of
Proposition \thv(4.prop5).
%
%
%
%
 The proof of Theorem \thv(1.theo3.Aging) is done.
\end{proof}



 
The rest of this paper is organized as follows. 
 In  Section \ref{9} we focus on the increments of the process $\wh S_n$ and prove an upper bound on their tail distribution.
A similar analysis is carried out in Section  \ref{5}
for the increments of the back end clock process  $\wt S_n^\dagger$; an explicit expression is also obtained for the distribution of the sojourn times of $J_n$ in sets $C^{\star}_{n,l}$ of size 2.
%
The properties of $J_n^\circ$ (invariant measure, mixing time through spectral gap, mean local times) are studied in 
Section  \ref{4}, where it is shown that $J_n^\circ$ has several of the attributes of the symmetric random walk.
Using these preparations, the proofs of Theorem \thv(2.theo4) and Theorem \thv(2.theo3) are carried out in Section  \ref{8}.
Those of Theorem \thv(2.theo1), Theorem \thv(2.theo2), and Theorem \thv(2.theo5) are carried out in Section \ref{6}, Section  \ref{7}, and Section \ref{12}, respectively.
Finally, Appendix \ref{B}  gathers needed results on the speed of convergence to Perron projector for nonnegative and primitive matrices (Subsection \ref{B.1}), and for irreducible and periodic matrices with period 2  (Subsection \ref{B.2}).



\section{Distribution of the increments of the process $\wh S_n$.}
 \label{9}

In this section we focus on the increments of the process $\wh S_n$,
that is to say, on the quantities defined through  \eqv(2.3.9)-\eqv(2.3.10) by
\be
\wh\L_n^\dagger(J_n^\dagger(j))
\equiv
\sum_{i=0}^{\L_n^\dagger(J_n^\dagger(j))-1}\l^{-1}_n(J_n(\overline T'_{n,j}+i))e_{n,i}
\Eq(9.prop1.00)
\ee
if $J^\dagger_n(j)\in\cup_{1\leq l\leq L^{\star}}C^{\star}_{n,l}$, 
and $\wh\L_n^\dagger(J_n^\dagger(j))=0$  otherwise. 
These are the sojourn times of the process  $X_n$ in the sets $C^{\star}_{n,l}$
(we may think of them as ``effective holding times'' in those sets).
As expected, these times have exponential tails.
For $1\leq l\leq L^{\star}$, set
\bea
\Eq(9.lem1.0)
&\bar\varrho_{n,l}(0)=
e^{-\b\min\left\{H_n(x)\,|\,x\in C^{\star}_{n,l}\right\}},
\\
\Eq(9.lem1.0')
&\bar\varrho_{n,l}(1)=
e^{
-\b\min\left\{H_n(x)\,|\,x\in C^{\star}_{n,l}\right\}
+\b\max\left\{H_n(y)\,|\,y\in  C^{\star}_{n,l}\right\}},
\eea
and
\be
\Eq(9.prop1.0)
\bar\theta^{\star}_{n,l}\equiv3\b n^2|C^{\star}_{n,l}|^3\bar\varrho_{n,l}(1).
\ee

\begin{proposition}
  \TH(9.prop1)
Let $\O^{\star}$ and $\O_0$ be as in Lemma \thv(10.lem1) and Lemma \thv(9.lem7), respectively.
On $\O^{\star}\cap\O_0$, for all but a finite number of indices $n$, the following holds for all $1\leq l\leq L^{\star}$.
For all $t\geq \bar\theta^{\star}_{n,l}$ and all $x$ in $C^{\star}_{n,l}$,
\be
\Eq(9.prop1.1)
\PP\left(\wh\L_n^\dagger(J_n^\dagger(j))>t\mid J^\dagger_n(j)=x\right)
\leq e^{-t/\t^{\star}_{n,l}}(1+\bar\epsilon^{\star}_n),
\ee
where
$
0\leq \bar\epsilon^{\star}_n\leq \sfrac{\bar\theta^{\star}_{n,l}|C^{\star}_{n,l}|}{\bar\varrho_{n,l}(0)}(1+o(1))
$
and
\be
\Eq(9.prop1.14)
\sfrac{1}{|C^{\star}_{n,l}|}\bar\varrho_{n,l}(0)
\leq
\t^{\star}_{n,l}
\leq 
(1+\OO(\sfrac{1}{\log n}))
\bar\varrho_{n,l}(0).
\ee
Moreover $(\t^{\star}_{n,l}, 1\leq l\leq L^{\star})$ are independent random variables on 
$(\O,\FF, \P)$.
\end{proposition}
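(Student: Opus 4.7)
The plan is to identify $\wh\L_n^\dagger(J_n^\dagger(j))$, on the event $\{J_n^\dagger(j)=x\}$ with $x\in C^{\star}_{n,l}$, with the continuous-time sojourn of $X_n$ in $C^{\star}_{n,l}$ starting at $x$ and ending at the first visit to $\partial C^{\star}_{n,l}$, and then to control the law of this exit time through the spectral theory of the sub-Markovian generator $\bar L_l$ of $X_n$ restricted to $C^{\star}_{n,l}$ with absorption on $\partial C^{\star}_{n,l}$. By reversibility of Metropolis dynamics with respect to $w_n\propto e^{-\b H_n}$, $\bar L_l$ is self-adjoint on $\ell^2(w_n)$, and
\[
\PP_x\bigl(\wh\L_n^\dagger(J_n^\dagger(j))>t\bigr)=\sum_{y\in C^{\star}_{n,l}}\bigl(e^{t\bar L_l}\bigr)_{x,y}.
\]

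Next, I would apply Perron--Frobenius in the form developed in Appendix~\ref{B} (which handles the period-2 structure inherited from the bipartite hypercube): $-\bar L_l$ has a simple smallest eigenvalue $\mu_l^\star>0$ with strictly positive left and right eigenvectors, and setting $\t^{\star}_{n,l}:=1/\mu_l^\star$ one gets a spectral decomposition
\[
e^{t\bar L_l}=e^{-t\mu_l^\star}\Pi_l^\star+R_l(t),\qquad \|R_l(t)\|_\infty\leq C_l\,e^{-t\mu_l^{(2)}},
\]
where $\mu_l^{(2)}$ is the second smallest eigenvalue of $-\bar L_l$ and $C_l$ comes from entrywise Perron projector bounds. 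The tail bound \eqref{9.prop1.1} then follows provided $\bar\theta^{\star}_{n,l}$ is chosen so that the remainder satisfies $\|R_l(t)\mathbf{1}\|_\infty\leq \bar\epsilon_n^\star\,e^{-t\mu_l^\star}$ whenever $t\geq\bar\theta^{\star}_{n,l}$.

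The two bounds \eqref{9.prop1.14} on $\t^{\star}_{n,l}$ come from the variational characterization $\mu_l^\star=\min_{f\neq 0}(f,-\bar L_l f)_{w_n}/(f,f)_{w_n}$. For the lower bound $\t^{\star}_{n,l}\geq \bar\varrho_{n,l}(0)/|C^{\star}_{n,l}|$, the test function $f\equiv\mathbf{1}$, the reversibility identity $w_n(x)\lambda_n(x,y)=n^{-1}e^{-\b\max(H_n(x),H_n(y))}$, the fact that $\partial C^{\star}_{n,l}\subset N_n^\star$ so that $H_n\equiv 0$ on $\partial C^{\star}_{n,l}$, and the edge count \eqref{10.lem1.5}, together give $(f,-\bar L_l f)_{w_n}\leq |C^{\star}_{n,l}|$ while $(f,f)_{w_n}\geq \bar\varrho_{n,l}(0)$, hence $\mu_l^\star\leq |C^{\star}_{n,l}|/\bar\varrho_{n,l}(0)$. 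For the matching upper bound on $\t^{\star}_{n,l}$ one must show that the Perron eigenvector concentrates at a minimizer $x_l^\star$ of $H_n$ on $C^{\star}_{n,l}$: for any $x\neq x_l^\star$ the holding rate $\lambda_n(x)$ is exponentially larger than $\lambda_n(x_l^\star)$, so the chain is quickly driven to $x_l^\star$. The escape rate from $x_l^\star$ equals $\lambda_n(x_l^\star,\partial C^{\star}_{n,l})=n^{-1}|\partial x_l^\star\cap\partial C^{\star}_{n,l}|\,e^{\b H_n(x_l^\star)}$, which by \eqref{10.lem1.7} is at least $(1-O(1/\log n))/\bar\varrho_{n,l}(0)$; making the concentration of the eigenvector rigorous via a perturbative argument then yields the claimed upper bound.

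The hardest step will be the quantitative control of the spectral gap $\mu_l^{(2)}$, which must be large enough to produce the specific threshold $\bar\theta^{\star}_{n,l}=3\b n^2|C^{\star}_{n,l}|^3\bar\varrho_{n,l}(1)$. The scale $\bar\varrho_{n,l}(1)=e^{\b(H_n^{\max}-H_n^{\min})}$ is the Boltzmann weight ratio between the ground state and the highest-energy vertex of $C^{\star}_{n,l}$, and its reciprocal should govern the relaxation within $C^{\star}_{n,l}$ once the slow Perron mode has been projected out; the combinatorial factor $n^2|C^{\star}_{n,l}|^3$ will come from entrywise norm bounds on Perron projectors derived in Appendix~\ref{B}, balanced against matrix size. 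Finally, independence of $(\t^{\star}_{n,l})_{1\leq l\leq L^\star}$ follows by observing that $\t^{\star}_{n,l}$ is a deterministic function of $(g(x))_{x\in C^{\star}_{n,l}}$ alone --- since $H_n\equiv 0$ on $\partial C^{\star}_{n,l}$ is forced by the component construction --- while distinct components are vertex-disjoint, so the $\t^{\star}_{n,l}$ are measurable with respect to disjoint subfamilies of the i.i.d.~environment.
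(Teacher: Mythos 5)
Your general strategy — reduce $\wh\L_n^\dagger$ to the absorption time of $X_n$ killed on $\del C^{\star}_{n,l}$, apply Perron--Frobenius to the reversible sub-Markovian generator, and read off the exponential rate as $1/\varsigma_{n,l}(0)$ — is the right one and matches the paper's. But several key steps are either missing or aimed in the wrong direction.

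\textbf{Periodicity is the wrong concern here.} You invoke ``the period-2 structure inherited from the bipartite hypercube'', but the matrix the paper works with is $A_{n,l}=e^{\LL^{\star}_{n,l}}$ (equivalently $R_{n,l}=\LL^{\star}_{n,l}+I_N$), which is nonnegative and \emph{primitive}: the continuous-time exponential destroys any bipartite periodicity. The period-2 apparatus of Subsection~\ref{B.2} is needed in Section~\ref{5} for the discrete-time absorbing chain $J^{\star}_{n,l}$, not here. For this proposition it is Lemma~\thv(B1.lem1) (the primitive case) that applies.

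\textbf{The error term needs a lower bound on the Perron eigenvector, which you do not supply.} After the spectral split $\PP_x(\L^\star_{n,l}>m)=v^{\star}_{n,l}(x)e^{-m\varsigma_{n,l}(0)}\{1+\RR_n e^{-m(\varsigma_{n,l}(1)-\varsigma_{n,l}(0))}\}$, the bound $|\RR_n|\leq (\min_x v^{\star}_{n,l}(x)\sqrt{\mu^{\star}_{n,l}(x)})^{-1}$ makes a lower bound on $\min_x v^{\star}_{n,l}(x)$ essential. The paper proves (Lemma~\thv(9.lem3)) $v^{\star}_{n,l}(x)\geq(1/\bar\varrho_{n,l}(0))^{|C^{\star}_{n,l}|}$ by an iteration-and-contradiction argument using the eigenvalue equation and the lower bound $r_n(\rho_n^\star)/(n\bar\varrho_{n,l}(0))$ on entries of $R_{n,l}$; without this, your $C_l$ is not under control and the threshold $\bar\theta^{\star}_{n,l}$ cannot be extracted. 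Related: the paper absorbs the prefactor $v^{\star}_{n,l}(x)$ into $\bar\epsilon^{\star}_n$ by the trivial observation $\PP_x(\L^\star_{n,l}>\bar\theta^{\star}_{n,l})\leq 1$, which your formulation does not account for.

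\textbf{Your proposed proof of the upper bound on $\t^{\star}_{n,l}$ takes a harder route.} The paper gets $\varsigma_{n,l}(0)\geq(1-\OO(1/\log n))/\bar\varrho_{n,l}(0)$ by one application of Weyl's inequality to $R_{n,l}=R^{\star}_{n,l}-D_{n,l}$ ($R^{\star}_{n,l}$ stochastic, $D_{n,l}$ the diagonal of exit rates), which needs no information about the shape of the eigenvector at all. Your plan to first prove concentration of the Perron eigenvector at the minimizer of $H_n$ and then perturb is plausible but substantially more work, and the ``perturbative argument'' you gesture at is exactly where the proof would have to be done. Finally, the spectral gap bound $\varsigma_{n,l}(1)\geq 1/(n|C^{\star}_{n,l}|^2\bar\varrho_{n,l}(1))$, which fixes the numerology of $\bar\theta^{\star}_{n,l}$, is obtained in the paper by a canonical-paths argument (Diaconis--Stroock, with another use of Weyl to first bound $\vartheta_{n,l}(1)\leq\vartheta^{\star}_{n,l}(1)$); you correctly flag this as the hard step but give no argument.
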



The next corollary is a crucial ingredient of the proof of Theorem \thv(2.theo3).

\begin{corollary}
  \TH(9.cor1)
  Assume that $a_n\leq 2^n$.
On $\O_0\cap\O^{\star}$, for all but a finite number of indices $n$,
\be
\Eq(9.cor1.1)
\hspace{-5pt}\PP_{\pi_n^\circ}\left(\exists_{0\leq j\leq k^\dagger_n(t)-1}\exists_{1\leq l\leq L^{\star}}
\wh\L_n^\dagger(J_n^\dagger(j))\1_{\{J_n^\dagger(j)\in C^{\star}_{n,l}\}}
>n\bar\varrho_{n,l}(0)
\right)
\leq t e^{-n}+n^{-2(c_{\star}-1)+c_\circ}
\ee
where $c_\circ>0$ is a constant that can be chosen arbitrarily small.
\end{corollary}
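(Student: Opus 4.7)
My plan is a straightforward union bound: combine the one-step tail estimate of Proposition \thv(9.prop1) with the deterministic control on $k_n^\dagger(t)$ provided by Theorem \thv(2.theo4). The second summand $n^{-2(c_\star-1)+c_\circ}$ of the target will come from the failure probability of the latter, while $te^{-n}$ will come from summing the single-step estimate.

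\textbf{Execution.} First I apply Theorem \thv(2.theo4) with the same constant $c_\circ$: outside an event of $P_{\pi_n^\circ}$-probability at most $n^{-2(c_\star-1)+c_\circ}(1+O(n^{-(c_\star-1)}))$, one has $k_n^\dagger(t)\le (1+n^{-c_\circ})\lfloor a_nt\rfloor\le 2a_nt$ for large $n$, and so it is enough to bound the probability in \eqv(9.cor1.1) intersected with this favourable event. A union bound over $0\le j\le \lfloor 2a_nt\rfloor-1$ and $1\le l\le L^\star$ reduces the task to
$$\sum_{j=0}^{\lfloor 2a_nt\rfloor-1}\sum_{l=1}^{L^\star}\E_{\pi_n^\circ}\Bigl[\PP\bigl(\wh\L_n^\dagger(J_n^\dagger(j))>n\bar\varrho_{n,l}(0)\,\big|\,J_n^\dagger(j)\bigr)\,\1_{\{J_n^\dagger(j)\in C_{n,l}^\star\}}\Bigr].$$
For $x\in C_{n,l}^\star$, Proposition \thv(9.prop1) applied at threshold $t'=n\bar\varrho_{n,l}(0)$, together with the upper bound $\t_{n,l}^\star\le (1+O(1/\log n))\bar\varrho_{n,l}(0)$, yields
$$\PP(\wh\L_n^\dagger>n\bar\varrho_{n,l}(0)\mid J_n^\dagger(j)=x)\le e^{-n(1-o(1))}(1+o(1)).$$
Because the indicators $\1_{\{J_n^\dagger(j)\in C_{n,l}^\star\}}$ partition the event $\{J_n^\dagger(j)\notin\VV_n^\circ\}$ as $l$ varies, summing over $l$ gives the same bound uniformly in $j$ and in the law of $J_n^\dagger(j)$. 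Summing over $j$ and invoking $a_n\le 2^n$ produces a total of order $t\cdot e^{-n(1-\log 2 - o(1))}$, which is absorbed in $te^{-n}$. Reinstating the failure probability of Theorem \thv(2.theo4) gives \eqv(9.cor1.1).

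\textbf{Main verification / obstacle.} The sole hypothesis of Proposition \thv(9.prop1) that needs checking is $n\bar\varrho_{n,l}(0)\ge\bar\theta_{n,l}^\star=3\b n^2|C_{n,l}^\star|^3\bar\varrho_{n,l}(1)$, equivalently
$$\frac{\bar\varrho_{n,l}(1)}{\bar\varrho_{n,l}(0)}=e^{\b\max_{C_{n,l}^\star}H_n}\le \frac{1}{3\b n\,|C_{n,l}^\star|^3}.$$
But by \eqv(10.1.2)--\eqv(intro.3) every $x\in\VV_n^\star$ satisfies $H_n(x)\le -\sqrt n\,u_n$, with $u_n\sim\sqrt{2c_\star\log n}$ by \eqv(10.1.3), so the left-hand side is super-polynomially small; on $\O^\star$, Lemma \thv(10.lem1) gives $|C_{n,l}^\star|=O(n/\log n)$, so the inequality holds with an enormous margin for $n$ large. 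The same computation makes $\bar\epsilon_n^\star$ negligible. The only delicate point is exponent bookkeeping at the union-bound stage: the single-step exponent must remain $-n(1-o(1))$ (not $-n(1-c)$ for a fixed $c>0$) so that the at most $2a_n\le 2\cdot 2^n$ summands are overcome; this is automatic from the explicit $O(1/\log n)$ slack in Proposition \thv(9.prop1)'s bound on $\t_{n,l}^\star$, and is the reason why the threshold in \eqv(9.cor1.1) carries a factor of $n$ (not a constant) in front of $\bar\varrho_{n,l}(0)$.
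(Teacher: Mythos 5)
Your strategy and decomposition are the same as the paper's (truncate $k_n^\dagger(t)$ via Theorem \thv(2.theo4), apply Proposition \thv(9.prop1) at threshold $n\bar\varrho_{n,l}(0)$, union bound over steps), and your verification that $n\bar\varrho_{n,l}(0)\geq\bar\theta_{n,l}^\star$ is fine. But the final arithmetic step breaks. You derive a single-step tail bound $e^{-n(1-o(1))}$, and after multiplying by the $\lesssim a_n\leq 2^n$ terms in the union you correctly arrive at a total of order $t\,e^{-n(1-\log 2-o(1))}$. You then claim this is ``absorbed in $te^{-n}$''. That inequality is backwards: since $1-\log 2\approx 0.307<1$, we have $e^{-n(1-\log 2-o(1))}\gg e^{-n}$. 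Your bound is therefore strictly weaker than the stated $te^{-n}$; the $O(1/\log n)$ slack you cite cannot rescue this, because it only affects lower-order terms in the exponent, not the leading coefficient.

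To recover the stated bound $te^{-n}$, the single-step estimate needs an exponent of at least $-(1+\log 2)n$ so that the factor $2^n=e^{n\log 2}$ can be absorbed. The paper's proof achieves this by claiming $t/\t_{n,l}^\star\geq 2n$, hence a single-step bound $e^{-2n}(1+o(1))$, and then $2^n e^{-2n}=e^{-(2-\log 2)n}\leq e^{-n}$. In your write-up you only extracted $t/\t_{n,l}^\star\geq n(1-o(1))$ from the upper bound in \eqv(9.prop1.14). So the gap in your proof is concrete: the exponent $-n(1-o(1))$ is not strong enough to overcome an exponentially large union bound of size $2^n$. You need either a sharper lower bound on $t/\t_{n,l}^\star$ (ideally $\gtrsim 2n$ or at least $\gtrsim(1+\log 2)n$), or a different argument that avoids taking the naive union over all $O(a_n)$ steps.
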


Let us sketch the proof of Proposition \thv(9.prop1). Suppose that $J^\dagger_n(j)\in C^{\star}_{n,l}$. Then 
$\wh\L_n^\dagger(J_n^\dagger(j))$ is equal in distribution to the absorption time of the restriction
of the process $X_n$ to $C^{\star}_{n,l}$, killed on the boundary $\del C^{\star}_{n,l}$ and started in $J^\dagger_n(j)$. 
It is well known that its law 
is governed by the spectral characteristics of  the associated infinitesimal generator (more specifically by the corresponding Dirichlet eigenvalues) and is, on suitable time scales, approximated to very good precision by an exponential law.
This section is thus organized as follows. In Subsection \thv(9.1) we introduce the absorbing processes of interest, study their low lying Dirichlet eigenvalues in Subsection \thv(9.2) and, in Subsection \thv(9.3), deduce from this the distribution of the absorption times, using a result from  Appendix \thv(B).  
The proof of Corollary \thv(9.cor1) is done in Subsection \thv(9.4). 






\subsection{The `starred' absorbing processes}
 \label{9.1}

Let $C^{\star}_{n,l}$, $1\leq l\leq L^{\star}$, be the collection of connected components defined through \eqv(10.1.4).
To each component $C^{\star}_{n,l}$ we associate an absorbing Markov process $X^{\star}_{n,l}$ with state space $C^{\star}_{n,l}\cup\Delta$, where the absorbing point, $\Delta$, represents the boundary $\del C^{\star}_{n,l}$;
its infinitesimal generator $\overline\LL^{\star}_{n,l}=\left(\bar\l^{\star}_{n,l}(x,y)\right)$ has entries
$\bar\l^{\star}_{n,l}: \{C^{\star}_{n,l}\cup\Delta\}\times \{C^{\star}_{n,l}\cup\Delta\} \rightarrow \R$,
given by
\be
\bar\l^{\star}_{n,l}(x,y)=
\begin{cases}
\l_n(x,y) &\hbox{\rm if}\quad (x,y)\in G(C^{\star}_{n,l}),\\
\sum_{y'\notin C^{\star}_{n,l}}\l_n(x,y')
&\hbox{\rm if}\quad x\in C^{\star}_{n,l},y=\Delta,\\
-\sum_{y'\in \VV_n}\l_n(x,y')&\hbox{\rm if}\quad x=y\in C^{\star}_{n,l} ,\\
0&\hbox{\rm else}.\\
\end{cases}
\Eq(9.prop1.3)
\ee
Thus $X^{\star}_{n,l}$ can be viewed as the restriction of $X_n$
to $C^{\star}_{n,l}$, killed on the boundary $\del C^{\star}_{n,l}$.
We also call $\LL^{\star}_{n,l}=\left(\l^{\star}_{n,l}(x,y)\right)$
the sub-Markovian restriction of
$\overline\LL^{\star}_{n,l}$ to $C^{\star}_{n,l}$, namely
$\l_{n,l} : C^{\star}_{n,l}\times C^{\star}_{n,l} \rightarrow \R$,
\be
\l^{\star}_{n,l}(x,y)=
\begin{cases}
\l_n(x,y) &\hbox{\rm if}\quad (x,y)\in G(C^{\star}_{n,l})\\
-\sum_{y'\in \VV_n}\l_n(x,y')&\hbox{\rm if}\quad x=y\in C^{\star}_{n,l}.\\
\end{cases}
\Eq(9.prop1.4)
\ee
With this notation $\wh\L_n^\dagger(J_n^\dagger(j))$ in \eqv(9.prop1.00) is nothing but the absorption time 
\be
\Eq(9.prop1.5)
\L^{\star}_{n,l}\equiv\inf\{t>0 \mid X^{\star}_{n,l}(t)=\Delta\}
\ee
of the process $X^{\star}_{n,l}$ started in $X^{\star}_{n,l}(0)=J^\dagger_n(j)$.
  
\subsection{Spectrum of  $-\LL^{\star}_{n,l}$ and other properties.}
 \label{9.2}
 It follows from the properties of the sets $C^{\star}_{n,l}$ that $\{\Delta\}$ is the unique recurrence class 
 of $\overline\LL^{\star}_{n,l}$ and that $\LL^{\star}_{n,l}$ is irreducible (in fact primitive) and non periodic. 
 Also note that $\LL^{\star}_{n,l}$ is reversible with respect to the restriction of Gibbs measure (see \eqv(1.1.3)) to 
$C^{\star}_{n,l}$,
\be
\Eq(9.prop1.6)
\mu^{\star}_{\b,n,l}(x)
=\frac{G_{\b,n}(x)}{\sum_{x'\in C^{\star}_{n,l}}G_{\b,n}(x')}, \quad x\in C^{\star}_{n,l}.
\ee
Let $I_d$ denote the identity matrix in $\R^d$ and set $N\equiv N_{n,l}=|C^{\star}_{n,l}|$. In view of \eqv(1.1.10) the matrix
$
\overline R_{n,l}\equiv\overline\LL^{\star}_{n,l}+I_{N+1}
$
is stochastic (its entries are in $[0,1]$ and its rows sum up to one) and $\{\Delta\}$ is its unique recurrence class. 
From this and Perron's theorem we deduce  (see \eqv(B1.lem1.1))  that the eigenvalues of $-\LL^{\star}_{n,l}$ satisfy
\be
\Eq(9.prop1.7)
0<\varsigma_{n,l}(0)<\varsigma_{n,l}(1)\leq \varsigma_{n,l}(2)\dots\leq \varsigma_{n,l}(N-1).
\ee

The next two Lemmata state bounds on the eigenvalues
$\varsigma_{n,l}(0)$ and $\varsigma_{n,l}(1)$. 
Let  $\bar\varrho_{n,l}(0)$ and $\bar\varrho_{n,l}(1)$ 
be given by
\eqv(9.lem1.0) and \eqv(9.lem1.0') respectively.

\begin{lemma}
  \TH(9.lem1)
  On $\O^{\star}$, for all but a finite number of indices $n$,
$
\frac{1-\OO({1}/{\log n})}{\bar\varrho_{n,l}(0)}
\leq
\varsigma_{n,l}(0)
\leq
\frac{|C^{\star}_{n,l}|}{\bar\varrho_{n,l}(0)}.
$
\end{lemma}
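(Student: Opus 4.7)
The plan is to apply the Rayleigh--Ritz variational principle to $-\LL^{\star}_{n,l}$, which is self-adjoint on $L^2(\mu^{\star}_{\b,n,l})$ by reversibility, using the standard Dirichlet form decomposition
\begin{equation*}
\EE(f,f) = \tfrac{1}{2}\sum_{x,y\in C^{\star}_{n,l}}\mu^{\star}_{\b,n,l}(x)\l_n(x,y)(f(x)-f(y))^2 + \sum_{x\in C^{\star}_{n,l}}\mu^{\star}_{\b,n,l}(x) k_n(x) f(x)^2,
\end{equation*}
where $k_n(x) \equiv \sum_{y\in\partial x\cap \partial C^{\star}_{n,l}}\l_n(x,y)$ is the killing rate at $x$, so that $\varsigma_{n,l}(0) = \inf_{f\neq 0}\EE(f,f)/\|f\|_{\mu^{\star}_{\b,n,l}}^2$. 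A preliminary observation used in both bounds: since $C^{\star}_{n,l}$ is a maximal connected component of $V_n(\rho^{\star}_n)$, every neighbor of $x\in C^{\star}_{n,l}$ lying outside $C^{\star}_{n,l}$ must lie in $N^{\star}_n$, where $H_n\equiv 0$ by \eqv(10.1.2). Hence the Metropolis rate on such edges is $\l_n(x,y)=n^{-1}e^{\b H_n(x)}=(n\, w_n(x))^{-1}$, so $k_n(x)=n^{-1}|\partial x\cap\partial C^{\star}_{n,l}|\,w_n(x)^{-1}$.

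For the upper bound I would test against $f\equiv 1$. The symmetric part of $\EE$ vanishes identically, $\|1\|^2_{\mu^{\star}_{\b,n,l}}=1$, and the Rayleigh quotient collapses to $\sum_{x} \mu^{\star}_{\b,n,l}(x) k_n(x)$. Writing $\mu^{\star}_{\b,n,l}(x)=w_n(x)/Z_{n,l}$ with $Z_{n,l}=\sum_{x\in C^{\star}_{n,l}} w_n(x)$, the factors of $w_n(x)$ cancel, and combining the trivial bound $|\partial x\cap\partial C^{\star}_{n,l}|\leq n$ with $Z_{n,l}\geq w_n(x^*)=\bar\varrho_{n,l}(0)$ (where $x^*$ minimizes $H_n$ on $C^{\star}_{n,l}$) yields the claimed $|C^{\star}_{n,l}|/\bar\varrho_{n,l}(0)$.

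For the lower bound I would discard the nonnegative symmetric Dirichlet part and keep only the killing term, giving $\EE(f,f)\geq \bigl(\min_{x\in C^{\star}_{n,l}} k_n(x)\bigr)\|f\|^2_{\mu^{\star}_{\b,n,l}}$. The task thus reduces to a uniform lower bound on $k_n(x)$. Here the decisive geometric input is \eqv(10.lem1.7), which provides $|\partial x\cap\partial C^{\star}_{n,l}|\geq n(1-\OO(1/\log n))$ for every $x\in C^{\star}_{n,l}$; combined with $w_n(x)\leq w_n(x^*)=\bar\varrho_{n,l}(0)$ this gives $k_n(x)\geq (1-\OO(1/\log n))/\bar\varrho_{n,l}(0)$, and the lower bound follows.

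I do not anticipate any substantial obstacle: both inequalities are consequences of the geometric fact that, on $\O^{\star}$, the components $C^{\star}_{n,l}$ are so thin that almost every neighbor of every interior vertex already sits on the boundary, so the killing term in $\EE$ already captures the correct order of $\varsigma_{n,l}(0)$. The only mildly delicate point is identifying the right test function for the upper bound: $f\equiv 1$ produces the claimed bound, whereas the naive choice $f=\delta_{x^*}$ only yields $\varsigma_{n,l}(0)\leq\l_n(x^*)\leq 1$, which is much too weak.
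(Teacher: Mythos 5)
Your proof is correct, and while the upper bound is essentially the paper's own argument in disguise, your lower bound takes a cleaner route. For the upper bound, testing the Rayleigh quotient of $-\LL^{\star}_{n,l}$ against $f\equiv 1$ produces exactly the quantity $\sum_x\mu^{\star}_{\b,n,l}(x)k_n(x)=A/B$ that the paper obtains by evaluating the minimax formula \eqv(9.lem1.2) for $R_{n,l}=\LL^{\star}_{n,l}+I_N$ at constant functions; your bounds $|\partial x\cap\partial C^{\star}_{n,l}|\leq n$ and $Z_{n,l}\geq\bar\varrho_{n,l}(0)$ reproduce the paper's $A\leq n|C^{\star}_{n,l}|$ and $B\geq n\bar\varrho_{n,l}(0)$. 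For the lower bound the paper writes $R_{n,l}=R^{\star}_{n,l}-D_{n,l}$ with $R^{\star}_{n,l}$ stochastic and $D_{n,l}$ the diagonal killing matrix, then invokes Weyl's eigenvalue inequality to deduce $\vartheta_{n,l}(0)\leq 1-\min_x\sum_{y'\in\partial C\cap\partial x}\lambda_n(x,y')$. You instead discard the nonnegative symmetric part of the Dirichlet form and bound $\varsigma_{n,l}(0)$ below by $\min_x k_n(x)$ directly from the variational principle; the two are mathematically equivalent (dropping a nonnegative quadratic form and subtracting a positive semidefinite matrix are the same move), but your version avoids passing through $R^{\star}_{n,l}$ and Weyl's theorem, which makes the dependence on \eqv(10.lem1.7)---the genuinely nontrivial geometric input, identical in both arguments---more transparent. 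One thing worth stating explicitly in your write-up is the identification of $\partial C^{\star}_{n,l}\subseteq N^{\star}_n$ (i.e.\ $H_n\equiv 0$ there), since it is what makes every boundary rate equal to $(nw_n(x))^{-1}$; you use it but only gesture at it.
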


\begin{lemma}
  \TH(9.lem2)
$
\displaystyle
\varsigma_{n,l}(1)\geq
1/\left(n|C^{\star}_{n,l}|^2\bar\varrho_{n,l}(1)\right).
$
\end{lemma}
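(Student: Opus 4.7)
My plan is to obtain the lower bound on $\varsigma_{n,l}(1)$ in two steps: a variational comparison of the sub-Markovian generator $\LL^{\star}_{n,l}$ with a naturally associated honest Markov generator on $C^{\star}_{n,l}$, followed by a Diaconis--Stroock canonical-paths estimate on the resulting reversible chain.

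For the first step, define $\LL^{\text{res}}_{n,l}$ to be the generator on $C^{\star}_{n,l}$ with the same off-diagonal entries as $\LL^{\star}_{n,l}$ but with diagonal chosen so that rows sum to zero; equivalently $\LL^{\star}_{n,l}=\LL^{\text{res}}_{n,l}-D$, where $D$ is the nonnegative diagonal matrix with entries $D(x,x)=\sum_{y\notin C^{\star}_{n,l}}\l_n(x,y)$. Both operators are reversible with respect to $\mu^{\star}_{\b,n,l}$, hence self-adjoint on $\ell^2(\mu^{\star}_{\b,n,l})$, and their Dirichlet forms differ by the nonnegative boundary term $\sum_x\mu^{\star}_{\b,n,l}(x)f(x)^2D(x,x)$. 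By the Courant--Fischer min-max principle this gives $\varsigma_{n,l}(k)\geq\lambda_{n,l}(k)$ for every $k$, where $0=\lambda_{n,l}(0)<\lambda_{n,l}(1)\leq\dots$ denote the eigenvalues of $-\LL^{\text{res}}_{n,l}$. It therefore suffices to lower-bound the spectral gap $\lambda_{n,l}(1)$ of the reversible chain driven by $\LL^{\text{res}}_{n,l}$.

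For the second step, observe that since $G(C^{\star}_{n,l})$ is connected, for each ordered pair $(a,b)\in C^{\star}_{n,l}\times C^{\star}_{n,l}$ with $a\neq b$ one may choose a path $\gamma_{ab}$ in $G(C^{\star}_{n,l})$ of length $|\gamma_{ab}|\leq N$, where $N\equiv|C^{\star}_{n,l}|$. Setting $Q(x,y)\equiv\mu^{\star}_{\b,n,l}(x)\l_n(x,y)$ for directed edges of $G(C^{\star}_{n,l})$, the Diaconis--Stroock inequality for continuous-time reversible chains yields
\be
\lambda_{n,l}(1)\geq\Bigl(\max_{e=(x,y)}\frac{1}{Q(e)}\sum_{\gamma_{ab}\ni e}|\gamma_{ab}|\,\mu^{\star}_{\b,n,l}(a)\mu^{\star}_{\b,n,l}(b)\Bigr)^{-1}.
\ee
The sum on the right is at most $N\cdot\sum_{a,b}\mu^{\star}_{\b,n,l}(a)\mu^{\star}_{\b,n,l}(b)\leq N$, reducing the task to a lower bound on $Q(e)$. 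A direct computation with the Metropolis rates gives $Q(x,y)=(nZ_l)^{-1}\exp\{-\b\max(H_n(x),H_n(y))\}$, with $Z_l=\sum_{x'\in C^{\star}_{n,l}}e^{-\b H_n(x')}$. Bounding $Z_l\leq Ne^{-\b H^{\min}_l}$ with $H^{\min}_l=\min_{C^{\star}_{n,l}}H_n$, and using that every vertex of $C^{\star}_{n,l}$ lies on some edge of $G(C^{\star}_{n,l})$ (so that $\max_e\max(H_n(x),H_n(y))=\max_{C^{\star}_{n,l}}H_n$), I deduce $\min_e Q(e)\geq 1/(nN\bar\varrho_{n,l}(1))$ with $\bar\varrho_{n,l}(1)$ as in \eqv(9.lem1.0'). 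Assembling the estimates produces $\lambda_{n,l}(1)\geq 1/(nN^2\bar\varrho_{n,l}(1))$, whence the claimed bound on $\varsigma_{n,l}(1)$.

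The only genuine subtlety is the first step: making sure the sub-Markovian character of $\LL^{\star}_{n,l}$ does not disrupt the spectral-gap comparison. This is handled cleanly by the nonnegativity of the boundary Dirichlet-form term. The canonical-paths estimate itself is a standard application whose combinatorial inputs ($|\gamma_{ab}|\leq N$ and $\sum_{a,b}\mu^{\star}_{\b,n,l}(a)\mu^{\star}_{\b,n,l}(b)\leq 1$) are crude but entirely sufficient for the target bound.
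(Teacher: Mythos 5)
Your proof is correct and follows essentially the same route as the paper's. The monotonicity-of-Dirichlet-forms argument in your first step (writing $\LL^{\star}_{n,l}=\LL^{\text{res}}_{n,l}-D$ with $D\geq 0$ and invoking Courant--Fischer) is just a variational rephrasing of the paper's application of Weyl's eigenvalue inequality to $R_{n,l}=R^{\star}_{n,l}-D_{n,l}$, and both arrive at comparing $\varsigma_{n,l}(1)$ with the spectral gap of the same honest chain $R^{\star}_{n,l}=\LL^{\text{res}}_{n,l}+I$; the Diaconis--Stroock canonical-paths estimate and the explicit bound on $\min_e Q(e)$ via $Z_l\leq N e^{-\b H^{\min}_l}$ then match the paper's computation step for step.
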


\begin{proof}[Proof of Lemma \thv(9.lem1)] 
Rather than considering the matrix $-\LL^{\star}_{n,l}$, it is convenient to consider
\be
\Eq(9.prop1.0')
R_{n,l}\equiv\LL^{\star}_{n,l}+I_{N}
\ee
that is to say, the nonnegative and substochastic matrix $R_{n,l}=\left(r_{n,l}(x,y)\right)$ with entries
\be
r^{\star}_{n,l}(x,y)=
\begin{cases}
\l_n(x,y), &\hbox{\rm if}\quad (x,y)\in G(C^{\star}_{n,l}),\\
1-\sum_{y'\in \VV_n}\l_n(x,y'),&\hbox{\rm if}\quad x=y\in C^{\star}_{n,l} \\
\end{cases}
\Eq(9.lem1.1)
\ee
and eigenvalues
$
1>\vartheta_{n,l}(0)>\vartheta_{n,l}(1)\geq \dots\geq \vartheta_{n,l}(N-1)>-1
$,
\be
\Eq(9.lem1.1')
\vartheta_{n,l}(i)\equiv 1-\varsigma_{n,l}(i).
\ee
To bound $\vartheta_{n,l}(0)$ from below note that since $R_{n,l}$ is reversible with respect to $\mu^{\star}_{\b,n,l}$, it is similar to the matrix
with entries
$
r_{n,l}(x,y)\sqrt{{\mu^{\star}_{n,l}(x)}/{\mu^{\star}_{n,l}(y)}}
$,
and the latter being symmetric, 
the minimax characterization of its largest eigenvalue (see e.g. \cite{HJ}, p. 176) gives
\be
\Eq(9.lem1.2)
\vartheta_{n,l}(0)=\sup_{u\neq 0}\frac{
\sum_{\{x,y\}\in C^{\star}_{n,l}\times C^{\star}_{n,l}}u(x)u(y)\mu^{\star}_{n,l}(x)r_{n,l}(x,y)
}
{
\sum_{x\in C^{\star}_{n,l}}u^2(x)\mu^{\star}_{n,l}(x)
},
\ee
where supremum is taken over all non zero functions
real valued function $u$ on $C^{\star}_{n,l}$.
Thus $\vartheta_{n,l}(0)$ is bounded below by the ratio in the r.h.s. of \eqv(9.lem1.2) evaluated at
constant functions,
\be
\Eq(9.lem1.3)
\vartheta_{n,l}(0)
\geq
\textstyle
\sum_{\{x,y\}\in C^{\star}_{n,l}\times C^{\star}_{n,l}}
\mu^{\star}_{n,l}(x)r_{n,l}(x,y)
=1-A/B
\ee
where, by  \eqv(9.prop1.6), \eqv(9.lem1.1), and the definition \eqv(1.1.10) of $\l_n(x,y)$,
\bea
\Eq(9.lem1.4)
&A\equiv\sum_{x\in C^{\star}_{n,l}}\sum_{y\in\del C^{\star}_{n,l}:\{x,y\}\in\EE_n}
e^{-\b\max\left(H_n(y),H_n(x)\right)},
\\
\Eq(9.lem1.5)
&B\equiv n\sum_{x\in C^{\star}_{n,l}}
e^{-\b H_n(x)}.
\eea
Now by construction (see \eqv(10.1.4) in Subsection \thv(3.3)) the maximum in \eqv(9.lem1.4) is equal to zero. Hence
$A=\sum_{x\in C^{\star}_{n,l}}\sum_{y\in\del C^{\star}_{n,l}:\{x,y\}\in\EE_n}1\leq n|C^{\star}_{n,l}|$.
This and the bound
\be
\Eq(9.lem1.6)
n\leq B e^{\b\min\left\{H_n(x)\,|\, x\in C^{\star}_{n,l}\right\}}\leq n|C^{\star}_{n,l}|
\ee
yields 
$
\vartheta_{n,l}(0)=1-\varsigma(0)
\geq
1-\frac{|C^{\star}_{n,l}|}{\bar\varrho_{n,l}(0)}
$,
which is the desired lower bound.

We now turn to the upper bound. Denote by $R^{\star}_{n,l}$ the stochastic matrix
$R^{\star}_{n,l}\equiv R_{n,l}+D_{n,l}$ where
$D_{n,l}=\left(d_{n,l}(x,y)\right)$ is the diagonal matrix with entries
$d_{n,l} : C^{\star}_{n,l}\times C^{\star}_{n,l} \rightarrow [0,1]$,
\be
d_{n,l}(x,y)=
\begin{cases}
\sum_{y'\notin C^{\star}_{n,l}}\l_n(x,y'), &\hbox{\rm if}\quad x=y, x\in C^{\star}_{n,l},\\
0,&\hbox{\rm else}.\\
\end{cases}
\Eq(9.lem1.20)
\ee
For $0\leq k\leq N-1$ denote by $\vartheta^{\star}_{n,l}(k)$ and $\vartheta^{d}_{n,l}(k)$
the eigenvalues of $R^{\star}_{n,l}$ and $D_{n,l}$, respectively, arranged in decreasing order of magnitude.
Then, since $R_{n,l}=R^{\star}_{n,l}-D_{n,l}$, by Weyl's theorem on eigenvalues (see e.g. \cite{HJ} p. 181),
\be
\Eq(9.lem1.21)
\vartheta_{n,l}(k)\leq \vartheta^{\star}_{n,l}(k)-\vartheta^{d}_{n,l}(N-1),\quad 0\leq k\leq N-1.
\ee
Here $\vartheta^{\star}_{n,l}(0)=1$ and
$
\vartheta^{d}_{n,l}(N-1)=\min_{x\in C^{\star}_{n,l}}\sum_{y'\in\del C^{\star}_{n,l}:\{x,y'\}\in\EE_n}\l_n(x,y')
$.
Thus,
\be
\Eq(9.lem1.9)
\vartheta_{n,l}(0)
\leq 1-\min_{x\in C^{\star}_{n,l}}\sum_{y'\in\del C^{\star}_{n,l}:\{x,y'\}\in\EE_n}\l_n(x,y')
=1-\frac{|\del C^{\star}_{n,l}\cap\del x|}
{n\bar\varrho_{n,l}(0)},
\ee
where we used that
$
0=H_n(y')\geq H_n(x)
$
for all $\{x,y'\}\in\EE_n$ with $x\in C^{\star}_{n,l}$ and $y'\in\del C^{\star}_{n,l}$.
Inserting the bound \eqv(10.lem1.7) of  Lemma \thv(10.lem1) we obtain that, on $\O^{\star}$, for all but a finite number of indices $n$,
$
\vartheta_{n,l}(0)=1-\varsigma(0)\leq 1- (1-\OO(\frac{1}{\log n}))\frac{1}{\bar\varrho_{n,l}(0)}
$ 
for all $x\in C^{\star}_{n,l}$. The proof of Lemma \thv(9.lem1) is done.
\end{proof}

\begin{proof}[Proof of Lemma \thv(9.lem2)] Applying the bound \eqv(9.lem1.21) with $k=1$ gives
$\vartheta_{n,l}(1)\leq \vartheta^{\star}_{n,l}(1)$ since, clearly, $\vartheta^{d}_{n,l}(N-1)\geq 0$.
$R^{\star}_{n,l}$ being a stochastic matrix, questions on $\vartheta^{\star}_{n,l}(1)$ then
reduce to classical questions on the spectral gap $\t_{n,l}^{-1}\equiv 1-\vartheta^{\star}_{n,l}(1)$.
It is in particular well known that
\be
\Eq(9.lem2.2)
\t_{n,l}=\sup\left\{\frac{\EE(u,u)}{Var(u)}\,:\,u \,\,\hbox{\rm is nonconstant}\right\}\,,
\ee
where, denoting by $u$ a real valued function on $C^{\star}_{n,l}$,
\bea
\Eq(9.lem2.3)
&Var(u)=\frac{1}{2}\sum_{x,y}\left(u(x)-u(y)\right)^2\mu^{\star}_{n,l}(x)\mu^{\star}_{n,l}(y),
\\
\Eq(9.lem2.4)
&\EE(u,u)=\frac{1}{2}\sum_{x,y}\left(u(x)-u(y)\right)^2\mu^{\star}_{n,l}(x)r_{n,l}(x,y).
\eea
It is also well known  that based on \eqv(9.lem2.2), one may derive bounds on $\t_{n,l}$
in terms of ``canonical paths'' (see e.g. \cite{SJ}).
The bound stated below is taken from \cite{DS} (see Proposition 1' p. 38).
For each pair of distinct vertices $x,y\in C^{\star}_{n,l}$, choose a path $\g_{x,y}$ going from $x$ to $y$ in the graph $G(C^{\star}_{n,l})$. Paths may have repeated vertices but a given edge appears at most once in a  given path. Let $\G_{n,l}$ denote a collection of paths (one for each pair $\{x,y\}$).
Then
\be
\Eq(9.lem2.5)
\t_{n,l}\leq \max_{e}\rho^{-1}_n(e)\sum_{\g_{x,y}\ni e}\left|\g_{x,y}\right|\mu^{\star}_{n,l}(x)\mu^{\star}_{n,l}(y),
\ee
where the max is over all edges $e=\{x',y'\}$ of $G(C^{\star}_{n,l})$,
$\rho_n(e)\equiv\mu^{\star}_{n,l}(x')r_{n,l}(x',y')$, and the summation is over all paths in $\G_{n,l}$
that pass through $e$.
The quality of this bound usually  depends on the choice of $\G_{n,l}$. Here however we simply
bound the length of the longest path by the total number of edges in the graph,
that is  $\left|\g_{x,y}\right|\leq |C^{\star}_{n,l}|$. Eq. \eqv(9.lem2.5) then immediately gives
$
\t_{n,l}
\leq
\left|C^{\star}_{n,l}\right|\max_{e}\rho^{-1}_n(e)
$,
and so
\be
\Eq(9.lem2.7)
\vartheta_{n,l}(1)\leq 1-\left|C^{\star}_{n,l}\right|^{-1}\min_{\{x,y\}\in G(C^{\star}_{n,l})}\mu^{\star}_{n,l}(x)r_{n,l}(x,y).
\ee
Proceeding as in \eqv(9.lem1.3) to evaluate the r.h.s. above, we obtain
\be
\Eq(9.lem2.8)
\vartheta_{n,l}(1)\leq 1-\left|C^{\star}_{n,l}\right|^{-1}
\min_{\{x,y\}\in G(C^{\star}_{n,l})}B^{-1}
e^{-\b\max\left(H_n(y),H_n(x)\right)},
\ee
%
where  $B$ is defined in  \eqv(9.lem1.5). Finally, plugging in the upper bound of \eqv(9.lem1.6)
we arrive at
$
\vartheta_{n,l}(1)=1-\varsigma_{n,l}(1)\leq 1-(n|C^{\star}_{n,l}|^2\bar\varrho_{n,l}(1))^{-1}
$.
The proof of  Lemma \thv(9.lem2) is complete.
\end{proof}

We close this subsection with bounds 
 that will be needed in several places.
\begin{lemma}
  \TH(9.lem4)
For
$
r_n\bigl(\rho^{\star}_n\bigr)
$
as in \eqv(9.lem4'.2),
\be
\nonumber
\textstyle
e^{\b n\sqrt{2\log 2}(1+2\log n/n)}
\geq
\bar\varrho_{n,l}(0)
\geq 
\varrho_{n,l}(1)
\frac{\bar\varrho_{n,l}(0)}{\bar\varrho_{n,l}(1)}
=e^{-\b\max\left\{H_n(x)\,|\,x\in C^{\star}_{n,l}\right\}}
\geq  r_n\left(\rho^{\star}_n\right),
\ee
where
the first inequality holds on $\O_0$,
for all but a finite number of indices $n$.
\end{lemma}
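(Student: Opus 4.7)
The statement collects three inequalities (with one intermediate rewriting), each of which is either a direct quotation of an earlier lemma or a one-line consequence of the definitions of the quantities involved. My plan is to treat them in the order they appear in the display.

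For the upper bound I would simply invoke inequality \eqv(9.lem7.2) of Lemma \thv(9.lem7): unfolding the definition \eqv(9.lem1.0) of $\bar\varrho_{n,l}(0)$, that lemma gives
\[
\bar\varrho_{n,l}(0)=e^{-\b\min\{H_n(x)\,:\,x\in C^{\star}_{n,l}\}}\leq e^{\b n\sqrt{2\log 2}(1+2\log n/n)}
\]
on $\O_0$ for all but finitely many $n$, uniformly in $1\leq l\leq L^{\star}$. This is exactly the first assertion.

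For the middle step I would unfold the definitions \eqv(9.lem1.0) and \eqv(9.lem1.0'): the quotient $\bar\varrho_{n,l}(0)/\bar\varrho_{n,l}(1)$ equals $e^{-\b\max\{H_n(x)\,:\,x\in C^{\star}_{n,l}\}}$, so the displayed equality is tautological. The remaining inequality $\bar\varrho_{n,l}(0)\geq e^{-\b\max\{H_n(x)\,:\,x\in C^{\star}_{n,l}\}}$ is then immediate from $\b>0$ together with $\min\{H_n(x)\,:\,x\in C^{\star}_{n,l}\}\leq\max\{H_n(x)\,:\,x\in C^{\star}_{n,l}\}$, which holds on the finite nonempty set $C^{\star}_{n,l}$ (equivalently, $\bar\varrho_{n,l}(1)\geq 1$). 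For the last inequality $e^{-\b\max\{H_n(x)\,:\,x\in C^{\star}_{n,l}\}}\geq r_n(\rho_n^{\star})$ I would appeal to the very construction of the components: by \eqv(10.1.4bis), $C^{\star}_{n,l}\subseteq\VV_n^{\star}\subseteq V_n(\rho_n^{\star})$, so the definition \eqv(3.0.1) of $V_n(\rho_n^{\star})$ forces $w_n(x)=e^{-\b H_n(x)}\geq r_n(\rho_n^{\star})$ for every $x\in C^{\star}_{n,l}$. Taking the minimum of the left-hand side over such $x$, which is precisely $e^{-\b\max\{H_n(x)\,:\,x\in C^{\star}_{n,l}\}}$, yields the desired bound.

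Since each step is either a one-line reference or a direct use of $\min\leq\max$, there is no substantive obstacle. The only point requiring mild care is bookkeeping on the exceptional set: the upper bound is probabilistic and holds on $\O_0$ for all but finitely many $n$, whereas both lower bounds are deterministic consequences of the graph decomposition of $V_n(\rho_n^{\star})$ and therefore hold on all of $\O$. I would accordingly state the final conclusion on $\O_0$ for all but finitely many $n$, matching the form already imposed by Lemma \thv(9.lem7).
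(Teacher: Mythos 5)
Your proof is correct and follows essentially the same route as the paper's, which simply cites \eqv(9.lem7.2) for the first inequality, calls the middle one immediate, and invokes the truncation construction for the last; you have merely spelled out the one-line arguments that the paper leaves to the reader (noting in passing that $w_n$ and $e^{-\b H_n}$ agree on $C^{\star}_{n,l}\subseteq V_n(\rho^{\star}_n)$, which is the point of the truncation). The only wrinkle you silently step around is the apparent product $\varrho_{n,l}(1)\tfrac{\bar\varrho_{n,l}(0)}{\bar\varrho_{n,l}(1)}$ in the displayed chain, which would equal $e^{-2\b\max H_n}$ rather than $e^{-\b\max H_n}$; this is almost certainly a missing $=$ sign in the source (since $\varrho_{n,l}(1)=\bar\varrho_{n,l}(0)/\bar\varrho_{n,l}(1)$), and your reading matches the intended statement and the way the lemma is later used.
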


\begin{proof}
The first inequality
is \eqv(9.lem7.2).
The second is immediate. The third  follows
by construction from the truncation  (see \eqv(10.1.2), \eqv(intro.3), and \eqv(10.1.4))
 and relation \thv(10.1.1). \end{proof}


\subsection{
Absorption times: proof  of Proposition \thv(9.prop1).}
 \label{9.3}


\begin{proof}[Proof of Proposition \thv(9.prop1)] Assume that \eqv(9.prop1.1)
holds for integer values of $t$  in $[\bar\theta^{\star}_{n,l}-1,\infty)$. Then, writing $t-1<\lfloor t\rfloor \leq t$ 
where $\lfloor t\rfloor$ is the largest integer smaller than or equal to $t$, we have, for all $t\geq \bar\theta^{\star}_{n,l}$,
%
\be
\Eq(9.prop1.???)
\textstyle
\PP_x\left(\L^{\star}_{n,l}>t\right)
\leq 
\PP_x\left(\L^{\star}_{n,l}>\lfloor t\rfloor \right)
\leq 
e^{-\lfloor t\rfloor/\t^{\star}_{n,l}}(1+\bar\epsilon^{\star}_n)
\leq 
e^{-t/\t^{\star}_{n,l}}e^{1/\t^{\star}_{n,l}}(1+\bar\epsilon^{\star}_n),
\ee
where by \eqv(9.prop1.14),
$
e^{1/\t^{\star}_{n,l}}(1+\bar\epsilon^{\star}_n)
\leq  \sfrac{\bar\theta^{\star}_{n,l}|C^{\star}_{n,l}|}{\bar\varrho_{n,l}(0)}(1+o(1))
$.

It thus suffices to prove  \eqv(9.prop1.1) for integer $t$'s  in $[\bar\theta^{\star}_{n,l}-1,\infty)$.
Assume from now on that
$t=m>1$ is an integer and set $A\equiv e^{\LL^{\star}_{n,l}}$. 
Writing $A^m_{n,l}=(a^{(m)}_{n,l}(x,y))$, it follows from \eqv(9.prop1.3)-\eqv(9.prop1.5) and the semigroup property that
\be
\Eq(9.prop1.17)
\textstyle
\PP_x\left(\L^{\star}_{n,l}>m\right)
=\sum_{y\in C^{\star}_{n,l}}(\d_x,e^{m\LL^{\star}_{n,l}}\d_y)
=\sum_{y\in C^{\star}_{n,l}}a^{(m)}_{n,l}(x,y),
\ee
where $(\cdot,\cdot)$ denotes the inner product in $\R^N$, $N\equiv |C^{\star}_{n,l}|$, and $\d_x$ is the vector with components $\d_x(x')=1$ if $x'=x$ and  zero otherwise.
Clearly, $A_{n,l}$ is a nonnegative and primitive matrix,  and is reversible with respect to $\mu^{\star}_{\b,n,l}$.
We can therefore use Lemma  \thv(B1.lem1) to evaluate the right hand side of \eqv(9.prop1.17). 
For this note that $A_{n,l}$ has eigenvalues $\exp\{-\varsigma_{n,l}(i)\}$, $0\leq i\leq N-1$, 
where the $\varsigma_{n,l}(i)$'s obey \eqv(9.prop1.7),
and denote by $u^{\star}_{n,l}$ and $v^{\star}_{n,l}$ the left and right Perron eigenvectors of $A_{n,l}$ 
(see \eqv(B1.lem1.1)-\eqv(B1.lem1.2)), 
\bea
\Eq(9.prop1.21)
&&u^{\star}_{n,l}A_{n,l}=e^{-\varsigma_{n,l}(0)}u^{\star}_{n,l},\quad  u^{\star}_{n,l}>0,
\\
&&\Eq(9.prop1.22)
A_{n,l}v^{\star}_{n,l}=e^{-\varsigma_{n,l}(0)}v^{\star}_{n,l},\quad v^{\star}_{n,l}>0,
\eea
normalized to make  
\be
\Eq(9.prop1.18)
\textstyle
\sum_{x\in C^{\star}_{n,l}}u^{\star}_{n,l}(x)=1,\quad \sum_{x\in C^{\star}_{n,l}}u^{\star}_{n,l}(x)v^{\star}_{n,l}(x)=1.
\ee
Then, by \eqv(B1.lem1.5) of Lemma \thv(B1.lem1),
\be
\Eq(9.prop1.9)
\PP_x\left(\L^{\star}_{n,l}>m\right)=v^{\star}_{n,l}(x)e^{-m\varsigma_{n,l}(0)}\left\{1+\RR_ne^{-m(\varsigma_{n,l}(1)-\varsigma_{n,l}(0))}\right\},
\ee
where
\be
\Eq(9.prop1.19)
|\RR_n|
\leq 
\Bigl(\min_{x\in C^{\star}_{n,l}}\bigl\{v^{\star}_{n,l}(x)(\mu^{\star}_{n,l}(x))^{1/2}\bigr\}\Bigr)^{-1}.
\ee

In order to control $|\RR_n|$ we need a lower bound on $v^{\star}_{n,l}(x)$.

\begin{lemma}
  \TH(9.lem3)
For all $x\in C^{\star}_{n,l}$, 
$
v^{\star}_{n,l}(x)\geq (1/\bar\varrho_{n,l}(0))^{|C^{\star}_{n,l}|}
$.
\end{lemma}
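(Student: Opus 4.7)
\emph{Plan.} My approach is to obtain the lower bound by iterating the Perron eigenvalue relation $R_{n,l}\, v^{\star}_{n,l} = \vartheta_{n,l}(0)\, v^{\star}_{n,l}$ along a path in the graph $G(C^{\star}_{n,l})$. For any two adjacent vertices $x,y$ in $G(C^{\star}_{n,l})$, this relation immediately gives the one-step inequality
\be
v^{\star}_{n,l}(x) \;\geq\; \vartheta_{n,l}(0)^{-1}\, R_{n,l}(x,y)\, v^{\star}_{n,l}(y).
\nonumber
\ee
Let $x^\star\in C^{\star}_{n,l}$ achieve the maximum of $v^{\star}_{n,l}$. Since $G(C^{\star}_{n,l})$ is connected with diameter at most $|C^{\star}_{n,l}|-1$, I fix a path $x=x_0,x_1,\dots,x_k=x^\star$ with $k\leq |C^{\star}_{n,l}|-1$ and iterate, obtaining
\be
v^{\star}_{n,l}(x) \;\geq\; \vartheta_{n,l}(0)^{-k}\prod_{i=0}^{k-1}R_{n,l}(x_i,x_{i+1})\cdot v^{\star}_{n,l}(x^\star).
\nonumber
\ee

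Next I bound the three factors. First, $R_{n,l}$ is substochastic so $\vartheta_{n,l}(0)\le 1$, hence $\vartheta_{n,l}(0)^{-k}\geq 1$ and this factor can be dropped. Second, from the construction of the truncation (Subsection \thv(3.3)) one has $H_n(x)\leq 0$ throughout $C^{\star}_{n,l}$, so that for any edge $\{x_i,x_{i+1}\}$ of $G(C^{\star}_{n,l})$,
\be
[H_n(x_{i+1})-H_n(x_i)]^+ \;\leq\; -\min_{x\in C^{\star}_{n,l}}H_n(x),
\nonumber
\ee
and therefore, by \eqv(9.lem1.1) and \eqv(1.1.10),
\be
R_{n,l}(x_i,x_{i+1}) \;=\; \tfrac{1}{n}e^{-\b[H_n(x_{i+1})-H_n(x_i)]^+} \;\geq\; \tfrac{1}{n\,\bar\varrho_{n,l}(0)}.
\nonumber
\ee
Third, the two normalizations in \eqv(9.prop1.18) force
\be
1 \;=\;\sum_{x\in C^{\star}_{n,l}} u^{\star}_{n,l}(x)v^{\star}_{n,l}(x) \;\leq\; v^{\star}_{n,l}(x^\star)\sum_{x\in C^{\star}_{n,l}} u^{\star}_{n,l}(x) \;=\; v^{\star}_{n,l}(x^\star),
\nonumber
\ee
so $v^{\star}_{n,l}(x^\star)\geq 1$. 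Assembling these three ingredients yields the intermediate bound
\be
v^{\star}_{n,l}(x) \;\geq\; \bigl(1/(n\,\bar\varrho_{n,l}(0))\bigr)^{|C^{\star}_{n,l}|-1}.
\nonumber
\ee

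The claimed form $(1/\bar\varrho_{n,l}(0))^{|C^{\star}_{n,l}|}$ then follows by absorbing the polynomial prefactor $n^{|C^{\star}_{n,l}|-1}$ into a single further factor $1/\bar\varrho_{n,l}(0)$. This is legitimate on $\O_0\cap\O^{\star}$: Lemma \thv(9.lem4) together with \eqv(9.lem4'.2) gives $\log\bar\varrho_{n,l}(0)\geq \b\sqrt{2c_{\star}n\log n}$ up to lower-order terms, while Lemma \thv(3.lem2) yields $|C^{\star}_{n,l}|=o(n)$ with a control on the growth of $|C^{\star}_{n,l}|$ governed by the choice of $\eta_n$ behind $c^{\scriptstyle{\textsf{crit}}}$. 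The main obstacle in this plan is not the iteration itself but precisely this final absorption step: one has to verify that in the regime $c_{\star}>3$ of Theorem \thv(1.theo1.Main) the size constraint on $|C^{\star}_{n,l}|$ is compatible with $\log\bar\varrho_{n,l}(0)$ dominating $(|C^{\star}_{n,l}|-1)\log n$. Once this routine (but delicate) bookkeeping is carried out, the stated lemma follows at once.
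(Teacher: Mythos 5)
Your overall strategy (iterating the Perron relation along a path in $G(C^{\star}_{n,l})$, normalizing via $\sum u^{\star}_{n,l}=1$ to get $v^{\star}_{n,l}(x^\star)\geq 1$) is exactly the right idea, and is the direct (non-contrapositive) form of the paper's contradiction argument. However, there is a real gap in your estimate for $R_{n,l}(x_i,x_{i+1})$, and it makes your final ``absorption'' step fail in the allowed range of parameters.

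Your one-step bound uses only $[H_n(x_{i+1})-H_n(x_i)]^+\leq -\min_{z\in C^{\star}_{n,l}}H_n(z)$, which follows from $H_n\leq 0$ on the component, and gives $R_{n,l}(x_i,x_{i+1})\geq (n\bar\varrho_{n,l}(0))^{-1}$. This is correct but too weak. The resulting intermediate bound is
$v^{\star}_{n,l}(x)\geq\bigl(n\bar\varrho_{n,l}(0)\bigr)^{-(|C^{\star}_{n,l}|-1)}$, and to upgrade it to $\bigl(\bar\varrho_{n,l}(0)\bigr)^{-|C^{\star}_{n,l}|}$ you must have $\bar\varrho_{n,l}(0)\geq n^{|C^{\star}_{n,l}|-1}$, i.e.\ $\log\bar\varrho_{n,l}(0)\geq(|C^{\star}_{n,l}|-1)\log n$. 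But Lemma \thv(9.lem4) and \eqv(9.lem4'.2) only guarantee $\log\bar\varrho_{n,l}(0)\gtrsim\b\sqrt{2c_{\star}n\log n}$, while on $\O^{\star}$ (Lemma \thv(10.lem1)) one only has $|C^{\star}_{n,l}|\leq\{\rho^{\star}_n[1-2c_{\star}^{-1}+\OO(\log n/n)]\}^{-1}=\Theta(n/\log n)$, so $(|C^{\star}_{n,l}|-1)\log n$ can be as large as $\Theta(n)\gg\sqrt{n\log n}$. Thus the claimed domination fails for the largest allowed components and no amount of ``routine bookkeeping'' will save it.

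The fix is to use the sharper one-step bound that the paper itself uses: since every $z\in C^{\star}_{n,l}$ satisfies $w_n(z)\geq r_n(\rho^{\star}_n)$, i.e.\ $H_n(z)\leq-\b^{-1}\log r_n(\rho^{\star}_n)$, one has
$[H_n(x_{i+1})-H_n(x_i)]^+\leq\max_{z}H_n(z)-\min_{z}H_n(z)\leq -\b^{-1}\log r_n(\rho^{\star}_n)+\b^{-1}\log\bar\varrho_{n,l}(0)$,
whence
\be
R_{n,l}(x_i,x_{i+1})\;\geq\;\frac{r_n(\rho^{\star}_n)}{n\,\bar\varrho_{n,l}(0)}\,. \nonumber
\ee
This extra factor $r_n(\rho^{\star}_n)$ is precisely what makes the cancellation work: your iteration then gives $v^{\star}_{n,l}(x)\geq\bigl(r_n(\rho^{\star}_n)/(n\bar\varrho_{n,l}(0))\bigr)^{|C^{\star}_{n,l}|-1}$, and the reduction to $(1/\bar\varrho_{n,l}(0))^{|C^{\star}_{n,l}|}$ only requires $\bar\varrho_{n,l}(0)\,\bigl(r_n(\rho^{\star}_n)/n\bigr)^{|C^{\star}_{n,l}|-1}\geq 1$, which is immediate from $\bar\varrho_{n,l}(0)\geq r_n(\rho^{\star}_n)$ and $r_n(\rho^{\star}_n)/n>1$. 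With this correction your argument closes and is essentially the one in the paper.
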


\begin{proof}[Proof of Lemma \thv(9.lem3)] 
Write
$
A_{n,l}=e^{\LL^{\star}_{n,l}}
=e^{-I_{N}+ (\LL^{\star}_{n,l}+I_{N})}
=e^{-1}e^{R_{n,l}}
$
and recall from \eqv(9.prop1.0') that $R_{n,l}\equiv\LL^{\star}_{n,l}+I_{N}$ is a nonnegative and primitive matrix.
By the spectral decomposition the left and right Perron eigenvectors  
of $A_{n,l}$ and $R_{n,l}$ coincide, and thus
\bea
\Eq(9.prop1.23)
&&u^{\star}_{n,l}R_{n,l}=(1-\varsigma_{n,l}(0))u^{\star}_{n,l},\quad  u^{\star}_{n,l}>0,
\\
&&\Eq(9.prop1.24)
R_{n,l}v^{\star}_{n,l}=(1-\varsigma_{n,l}(0))v^{\star}_{n,l},\quad v^{\star}_{n,l}>0,
\eea
where $u^{\star}_{n,l}$ and $v^{\star}_{n,l}$ obey \eqv(9.prop1.18).
This implies in particular that 
\be
\Eq(9.prop1.25)
\textstyle
\sum_{x\in C^{\star}_{n,l}}v^{\star}_{n,l}(x)\geq 1.
\ee
Indeed, by \eqv(9.prop1.18), $u^{\star}_{n,l}(x)\leq 1$ for all  $x\in C^{\star}_{n,l}$ and so
$
1=\sum_{x\in C^{\star}_{n,l}}u^{\star}_{n,l}(x)v^{\star}_{n,l}(x)\leq \sum_{x\in C^{\star}_{n,l}}v^{\star}_{n,l}(x)
$.
Equipped with \eqv(9.prop1.24) and \eqv(9.prop1.25) we prove the lemma by contradiction.
Assume that $v^{\star}_{n,l}(x)<\varepsilon_0\equiv (1/\bar\varrho_{n,l}(0))^{|C^{\star}_{n,l}|}$ for some  $x\in C^{\star}_{n,l}$.
Then, by \eqv(9.prop1.24),
\be 
\Eq(9.lem5.3)
\textstyle
\sum_{x\in C^{\star}_{n,l}}r_{n,l}(y,x)v^{\star}_{n,l}(x)=(1-\varsigma_{n,l}(0))v^{\star}_{n,l}(y)\leq \varepsilon_0,
\ee
and since
$
\min\left\{r_{n,l}(y,x)\,|\, x,y\in C^{\star}_{n,l}\right\}
\geq 
\frac{ r_n\left(\rho^{\star}_n\right)}{n\bar\varrho_{n,l}(0)}
$,
\be 
\Eq(9.lem5.4)
\textstyle
v^{\star}_{n,l}(x)\leq \varepsilon_1\equiv\varepsilon_0\frac{ n\bar\varrho_{n,l}(0)}{r_n\left(\rho^{\star}_n\right)}\,\,\,\text{for each}\,\,\, x\in C^{\star}_{n,l}\cap\del y.
\ee
Repeating this reasoning for each $x\in C^{\star}_{n,l}\cap\del y$, then for each $x\in C^{\star}_{n,l}\cap\del_2 y$, and so on and so forth, we arrive at
\be 
\Eq(9.lem5.6)
\textstyle
\max\left\{v^{\star}_{n,l}(x)\,|\, x\in C^{\star}_{n,l}\right\}
\leq \varepsilon_0 
\bigl(
\frac{n\bar\varrho_{n,l}(0)}{r_n\left(\rho^{\star}_n\right)}
\bigr)^{|C^{\star}_{n,l}|}
<
\frac{n}{r_n\left(\rho^{\star}_n\right)}.
\ee
In view of \eqv(9.lem4'.2) this implies that
$\sum_{x\in C^{\star}_{n,l}}v^{\star}_{n,l}(x)\ll1$,
contradicting \eqv(9.prop1.25). Therefore 
$v^{\star}_{n,l}(x)\geq\varepsilon_0$ for all  $x\in C^{\star}_{n,l}$. 
\end{proof}

Clearly, $\mu^{\star}_{n,l}(x)\geq 1/(\bar\varrho_{n,l}(1)|C^{\star}_{n,l}|)$ for all $x\in C^{\star}_{n,l}$. 
Using this bound and Lemma \thv(9.lem3) in \eqv(9.prop1.19), 
it follows from   \eqv(10.lem1.4) and the first and last inequality 
of Lemma \thv(9.lem4)  
%
%
that on $\O_0\cap \O^{\star}$, for all but a finite number of indices $n$,
\be
|\RR_n|\leq \sqrt{n/r_n\left(\rho^{\star}_n\right)}
e^{\b n(|C^{\star}_{n,l}|+1/2)\sqrt{2\log 2}(1+2\log n/n)}.
\Eq(9.prop1.11)
\ee
Next, by  Lemma \thv(9.lem1),  Lemma \thv(9.lem2), and 
Lemma \thv(9.lem4) 
%
%
we have that on $\O^{\star}$, for all but a finite number of indices $n$,
\be
\varsigma_{n,l}(1)-\varsigma_{n,l}(0)
\geq
(1-o(1))/n|C^{\star}_{n,l}|^2\bar\varrho_{n,l}(1)
\equiv 3\b n|C^{\star}_{n,l}|(1-o(1))/\bar\theta^{\star}_{n,l},
\Eq(9.prop1.10)
\ee
where $\bar\theta^{\star}_{n,l}$ is defined in \eqv(9.prop1.0).
Inserting \eqv(9.prop1.11) and \eqv(9.prop1.10) in \eqv(9.prop1.9) we obtain that  on $\O_0\cap\O^{\star}$,  
for all 
$
m\geq
\bar\theta^{\star}_{n,l}-1
$
and all but a finite number of indices $n$.
\be
\Eq(9.prop1.12)
\PP_x\left(\L^{\star}_{n,l}>m\right)=v^{\star}_{n,l}(x)e^{-m\varsigma_{n,l}(0)}(1+\bar\epsilon'_n),
\ee
where $\bar\epsilon'_n=\OO(\exp(-\b n))$.
$
 \PP_x\left(\L^{\star}_{n,l}>\bar\theta^{\star}_{n,l}\right)\leq 1
$
yields 
$(1+\bar\epsilon'_n)v^{\star}_{n,l}(x)\leq e^{\bar\theta^{\star}_{n,l}\varsigma_{n,l}(0)}
$. 
Plugging in the upper bound on $\varsigma_{n,l}(0)$ of Lemma \thv(9.lem1) and inserting the result in \eqv(9.prop1.12) 
we obtain that
\be
\Eq(9.prop1.20)
\PP_x\left(\L^{\star}_{n,l}>m\right)=e^{-m\varsigma_{n,l}(0)}(1+\bar\epsilon^{\star}_n),
\ee
where $0\leq \bar\epsilon^{\star}_n\leq \sfrac{\bar\theta^{\star}_{n,l}|C^{\star}_{n,l}|}{\bar\varrho_{n,l}(0)}(1+o(1))$.
Taking
$
\t^{\star}_{n,l}=1/\varsigma_{n,l}(0)
$
in \eqv(9.prop1.12) now yields the statement of  \eqv(9.prop1.1) 
for all integer values of $t$  in $[\bar\theta^{\star}_{n,l}-1,\infty)$. 
The claimed independance of the $\t^{\star}_{n,l}$'s is a direct consequence of the definition of the $\varsigma_{n,l}(0)$'s. The proof of Proposition \thv(9.prop1) is complete.
\end{proof}

\subsection{Proof  of Corollary  \thv(9.cor1).}
 \label{9.4}

First note that the choice  
$
t=n\bar\varrho_{n,l}(0)
$
%
%
in Proposition \thv(9.prop1) is admissible since by  \eqv(9.lem1.0), \eqv(9.lem1.0'), \eqv(9.prop1.0),  \eqv(10.lem1.4),
Lemma \thv(9.lem4), and \eqv(9.lem4'.2) of Lemma \thv(9.lem4'), on $\O^{\star}$, for all $n$ large enough,
\be
\sfrac{n\bar\varrho_{n,l}(0)}{\bar\theta^{\star}_{n,l}}
=
[3\b n|C^{\star}_{n,l}|^3]^{-1} \sfrac{\bar\varrho_{n,l}(0)}{\bar\varrho_{n,l}(1)}
\geq 
[3\b n|C^{\star}_{n,l}|^3]^{-1} \varrho_{n,l}(1)
\geq  
n^{-4} r_n\left(\rho^{\star}_n\right)
\gg 1.
\Eq(9.cor1.2)
\ee
Moreover by \eqv(9.prop1.14), for this choice, 
$
t/\t^{\star}_{n,l}
=n\bar\varrho_{n,l}(0)/\t^{\star}_{n,l}\geq n|C^{\star}_{n,l}|
\geq 2n
$.
Hence  Proposition \thv(9.prop1) applies so that on $\O_0\cap\O^{\star}$, for all but a finite number of indices $n$,
\be
\Eq(9.cor1.3)
\PP\bigl(\wh\L_n^\dagger(J_n^\dagger(j))>n\bar\varrho_{n,l}(0)\mid J^\dagger_n(j)=x\bigr)
\leq e^{-2n}(1+o(1))
\ee
for all $1\leq l\leq L^{\star}$ and all $x\in C^{\star}_{n,l}$. 
Let $\AA$ be the event  in the left hand side of \eqv(9.cor1.1).
By Theorem  \thv(2.theo4),
$
\PP_{\pi_n^\circ}(\AA)\leq \PP_{\pi_n^\circ}(\AA, \{k^\dagger_n(t)\leq \lfloor a_n t \rfloor (1+n^{-c_\circ})\})
+2n^{-2(c_{\star}-1)+c_\circ}
$,
and by \eqv(9.cor1.3), on $\O_0\cap\O^{\star}$,  
$
\PP_{\pi_n^\circ}(\AA, \{k^\dagger_n(t)\leq \lfloor a_n t \rfloor (1+n^{-c_\circ})\})
\leq 
2 \lfloor a_n t \rfloor (1+n^{-c_\circ})e^{-2n}
$
for all but a finite number of indices $n$.
Since  $a_n< 2^n$,
\eqv(9.cor1.1) follows.



\section{Distribution of the increments of the back end clock process  $\wt S_n^\dagger$.}
 \label{5}

 This section parallels Section \thv(9), focusing this time on the increments, $\L_n^\dagger$, 
 of the process $\wt S_n^\dagger$.
Just as the $\wh\L_n^\dagger$'s are the sojourn times of the process  $X_n$  in the sets $C^{\star}_{n,l}$,
the $\L_n^\dagger$'s are the sojourn times of the chain $J_n$ in those sets.   
Indeed, by \eqv(2.2.8), 
\be
\L_n^\dagger(J_n^\dagger(j))
\equiv
\inf\{i>j \mid J_n(i)\in\del C^{\star}_{n,l}\}-j
\Eq(5.prop1.00)
\ee
if $J^\dagger_n(j)\in C^{\star}_{n,l}$ for some ${1\leq l\leq L^{\star}}$,
and $\wh\L_n^\dagger(J_n^\dagger(j))=0$  otherwise. 
For $1\leq l\leq L^{\star}$, set
\bea
\Eq(5.lem1.0)
&\varrho_{n,l}(0)=
e^{-\b\min\left\{\max\left(H_n(y),H_n(x)\right)\,|\, \{x,y\}\in G(C^{\star}_{n,l})\right\}},
\\
\Eq(5.lem1.0')
&\varrho_{n,l}(1)=
e^{-\b\max\left\{H_n(x)\,|\,x\in C^{\star}_{n,l}\right\}},
\eea
and
\be
\Eq(5.prop1.1)
%
%
\theta^{\star}_{n,l}=2\b n|C^{\star}_{n,l}|^5\left({\varrho_{n,l}(0)}/{\varrho_{n,l}(1)}\right).
\ee

\begin{proposition}
 \TH(5.prop1)
{(i)} For each 
 $1\leq l\leq L^{\star}$ such that 
 $|C^{\star}_{n,l}|=2$ we have, for all $ i>0$ and all $x$ in $C^{\star}_{n,l}$,
 \be
\Eq(5.prop1.4)
\PP\left(\L_n^\dagger(J_n^\dagger(j))>i\mid J^\dagger_n(j)=x\right)
=\left(1-\sfrac{1}{1+\varrho_{n,l}(0)/(n-1)}\right)^i.
\ee
{(ii)} Furthermore,
on $\O^{\star}\cap\O_0$ (for $\O^{\star}$ and $\O_0$ as in Lemma \thv(10.lem1) and Lemma \thv(9.lem7),  respectively), for all but a finite number of indices $n$, 
the following holds for all $1\leq l\leq L^{\star}$: for all $ i\geq \theta^{\star}_{n,l}$ 
and all $x$ in $C^{\star}_{n,l}$,
\be
\Eq(5.prop1.0')
\PP\left(\L_n^\dagger(J_n^\dagger(j))>i\mid J^\dagger_n(j)=x\right)
\leq e^{-i(n/\varrho_{n,l}(0)|C^{\star}_{n,l}|)(1-o(1))}(1+\hat\epsilon_n),
\ee
where
$
0\leq \hat\epsilon_n\leq \sfrac{\theta^{\star}_{n,l}n}{\varrho_{n,l}(0)|C^{\star}_{n,l}|}(1+o(1))
$.
\end{proposition}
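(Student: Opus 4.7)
The plan is to follow closely the template of Section \thv(9), adapting its continuous-time spectral arguments to the discrete-time setting of the jump chain $J_n$. The key observation is that, conditional on $J_n^\dagger(j) = x \in C^{\star}_{n,l}$, the sojourn time $\L_n^\dagger(J_n^\dagger(j))$ is the hitting time of $\del C^{\star}_{n,l}$ by $J_n$ started at $x$, so for every integer $i \geq 0$,
\be
\PP_x\bigl(\L_n^\dagger(J_n^\dagger(j)) > i\bigr) = \sum_{y \in C^{\star}_{n,l}} (Q^{\star}_{n,l})^i(x,y),
\ee
where $Q^{\star}_{n,l}$ is the substochastic matrix obtained by restricting the one-step transition matrix $p_n$ of $J_n$ to $C^{\star}_{n,l}\times C^{\star}_{n,l}$. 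Once in this form, the asymptotic behavior of the right-hand side is governed by the leading spectrum of $Q^{\star}_{n,l}$, in direct analogy with the role played by $A_{n,l} = e^{\LL^{\star}_{n,l}}$ in Subsection \thv(9.3).

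For part (i), when $|C^{\star}_{n,l}| = 2$, say $C^{\star}_{n,l} = \{x,y\}$, I would compute $p_n(x,y)$ directly. Because the component has size $2$, the $n-1$ neighbors of $x$ distinct from $y$ must all lie in $N^{\star}_n$ (else the connected component containing $x$ would have size $\geq 3$), so that $H_n = 0$ on each of them. A short calculation using \eqv(1.1.12) and the identity $-[H_n(y)-H_n(x)]^+ - H_n(x) = -\max(H_n(x),H_n(y))$, together with the definition \eqv(5.lem1.0) of $\varrho_{n,l}(0)$, yields $p_n(x,y) = \varrho_{n,l}(0)/(\varrho_{n,l}(0) + (n-1))$, and the same computation gives the same value for $p_n(y,x)$. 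Since $\L_n^\dagger(J_n^\dagger(j)) > i$ occurs if and only if $J_n$ remains inside $\{x,y\}$ for $i$ consecutive steps after time $j$, formula \eqv(5.prop1.4) follows at once.

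For part (ii) I would analyze $Q^{\star}_{n,l}$ along the same lines as in Subsection \thv(9.2). The matrix $Q^{\star}_{n,l}$ is nonnegative, irreducible and reversible with respect to the restriction of $\pi_n$ to $C^{\star}_{n,l}$, but bipartite (the hypercube is bipartite), hence periodic with period $2$. The spectral decomposition from Appendix \ref{B} then yields, after summing over $y$, an expansion of the form $\PP_x(\L_n^\dagger > i) = v^{\star}_{n,l}(x)\,\vartheta_{n,l}(0)^i(1+R_n)$, where $\vartheta_{n,l}(0)$ is the Perron eigenvalue of $Q^{\star}_{n,l}$, $v^{\star}_{n,l}$ its right Perron eigenvector, and $R_n$ a remainder controlled by the ratio of the subleading eigenvalues to $\vartheta_{n,l}(0)$. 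To reach the exponential rate $n/(|C^{\star}_{n,l}|\varrho_{n,l}(0))(1-o(1))$ required in \eqv(5.prop1.0'), I would bound $1-\vartheta_{n,l}(0)$ from below in the spirit of Lemma \thv(9.lem1), by decomposing $Q^{\star}_{n,l}$ as the difference of a stochastic matrix and a diagonal matrix of boundary leakages and invoking Weyl's theorem together with \eqv(10.lem1.7) to lower bound the minimal exit probability. The spectral gap $\vartheta_{n,l}(0)-\vartheta_{n,l}(1)$ would be bounded below by the canonical paths method of Diaconis-Stroock exactly as in Lemma \thv(9.lem2), and $v^{\star}_{n,l}(x)$ from below by a contradiction argument paralleling Lemma \thv(9.lem3), using the Perron equation for $Q^{\star}_{n,l}$ and the normalization of the left eigenvector.

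The main obstacle is the periodicity of $Q^{\star}_{n,l}$: unlike the matrix $A_{n,l}$ of Section \thv(9), whose spectrum consists of the positive numbers $e^{-\varsigma_{n,l}(i)}$, the second largest eigenvalue of $Q^{\star}_{n,l}$ in absolute value is typically close to $-\vartheta_{n,l}(0)$, so that the naive Perron expansion of Lemma \thv(B1.lem1) is not directly usable. This is precisely what Appendix \ref{B} is designed to handle: the projector attached to the eigenvalue $-\vartheta_{n,l}(0)$ carries a sign pattern dictated by the bipartition of $G(C^{\star}_{n,l})$, and its contribution is either killed by oscillation when summed over $y \in C^{\star}_{n,l}$ or cleanly absorbed into the Perron term. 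Making this step quantitative, and tracking carefully how the combinatorial factors $|C^{\star}_{n,l}|$, $n$ and the ratio $\varrho_{n,l}(0)/\varrho_{n,l}(1)$ that appear in the threshold $\theta^{\star}_{n,l}$ of \eqv(5.prop1.1) enter the gap estimate, is where the bulk of the technical work lies.
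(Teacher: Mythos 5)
Your part (i) is essentially the paper's argument stated more explicitly: the paper observes that for $|C^{\star}_{n,l}|=2$ the $2\times 2$ restricted matrix $Q_{n,l}$ has constant row sums equal to its Perron eigenvalue, so $P_x(T^{\star}_{n,l}>i)=\vartheta_{n,l}^i(0)$, while you compute the two transition probabilities directly and observe they are equal; these are the same calculation, and your $p_n(x,y)=\varrho_{n,l}(0)/(\varrho_{n,l}(0)+(n-1))$ indeed matches $1-\bigl(1+\varrho_{n,l}(0)/(n-1)\bigr)^{-1}$.

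For part (ii) your road map (periodic spectral expansion from Appendix B, Perron eigenvalue bound via Weyl plus \eqv(10.lem1.7), spectral gap via canonical paths, lower bound on $v^{\star}_{n,l}$ by the Lemma 9.lem3-type contradiction) is faithful to the paper's Lemmata \thv(5.lem1), \thv(5.lem2), \thv(5.lem5), and \thv(5.lem3). But there is a genuine gap: you never explain how to control the prefactor $v^{\star}_{n,l}(x)$ from \emph{above}. The expansion you write, $P_x(\Lambda^\dagger_n>i)=v^{\star}_{n,l}(x)\vartheta^i_{n,l}(0)(1+R_n)$, carries a factor $v^{\star}_{n,l}(x)$ which is not a priori close to $1$ (the normalization $\sum 2u^{\star}v^{\star}=1$ allows $v^{\star}_{n,l}(x)$ to be large wherever $u^{\star}_{n,l}(x)$ is small), and the lower bound $v^{\star}_{n,l}(x)\geq (1/\varrho_{n,l}(0))^{|C^{\star}_{n,l}|}$ that you propose to prove is used only to tame the remainder $R_n$; it says nothing about the size of the leading term. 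The paper's decisive step --- which your proposal omits --- is to evaluate the expansion at $i=\theta^{\star}_{n,l}$, use the trivial fact $P_x(T^{\star}_{n,l}>\theta^{\star}_{n,l})\leq 1$, and solve for $v^{\star}_{n,l}(x)$ to obtain $v^{\star}_{n,l}(x)\leq\vartheta_{n,l}^{-\theta^{\star}_{n,l}}(0)(1+\epsilon^{\star}_n)^{-1}$; plugging this back in and expanding $\vartheta_{n,l}^{-\theta^{\star}_{n,l}}(0)$ with Lemma \thv(5.lem1) is precisely what produces the small correction $\hat\epsilon_n\leq \frac{\theta^{\star}_{n,l}n}{\varrho_{n,l}(0)|C^{\star}_{n,l}|}(1+o(1))$ in \eqv(5.prop1.0'). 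Without this bootstrap your estimate would read $P_x\leq v^{\star}_{n,l}(x)(1+R_n)e^{-i\cdot\text{rate}}$ with an uncontrolled constant, and the statement would not follow.
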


This section is organized as Section \thv(9). 
In Subsection \thv(5.3) we introduce an absorbing Markov chain $J^{\star}_{n,l}$, defined as the restriction of $J_n$
to $C^{\star}_{n,l}$, killed on the boundary $\del C^{\star}_{n,l}$.
In Subsection \thv(5.1) we establish properties of the spectrum 
of an associated sub-Markovian transition matrix, $Q_{n,l}$,
needed in Subsection \thv(5.2) to prove bounds on the time to  absorption.

\subsection{The `starred' absorbing chains}
 \label{5.3}
Let $C^{\star}_{n,l}$, $1\leq l\leq L^{\star}$, be the collection of connected components defined through \eqv(10.1.4).
To each component $C^{\star}_{n,l}$ we associate an absorbing Markov chain $J^{\star}_{n,l}$ with state space
$C^{\star}_{n,l}\cup\Delta$, where the absorbing point $\Delta$ represents the boundary $\del C^{\star}_{n,l}$;
its transition matrix $P^{\star}_{n,l}=\left(p^{\star}_{n,l}(x,y)\right)$ has entries
$p^{\star}_{n,l}: \{C^{\star}_{n,l}\cup\Delta\}\times \{C^{\star}_{n,l}\cup\Delta\} \rightarrow [0,1]$,
\be
p^{\star}_{n,l}(x,y)=
\begin{cases}
p_n(x,y) &\hbox{\rm if}\quad (x,y)\in G(C^{\star}_{n,l}),\\
1-\sum_{y'\in C^{\star}_{n,l}}p_n(x,y')
,&\hbox{\rm if}\quad x\in C^{\star}_{n,l},y=\Delta,\\
1,&\hbox{\rm if}\quad x=y=\Delta,\\
0,&\hbox{\rm else}.\\
\end{cases}
\Eq(5.1.1)
\ee
Thus $J^{\star}_{n,l}$ can be viewed as the restriction of $J_n$
to $C^{\star}_{n,l}$, killed on the boundary $\del C^{\star}_{n,l}$.
We also call $Q_{n,l}=\left(q_{n,l}(x,y)\right)$ the sub-Markovian restriction of
$P^{\star}_{n,l}$ to $C^{\star}_{n,l}$, namely
$q_{n,l} : C^{\star}_{n,l}\times C^{\star}_{n,l} \rightarrow [0,1]$,
\be
q_{n,l}(x,y)=
\begin{cases}
p_n(x,y) &\hbox{\rm if}\quad (x,y)\in G(C^{\star}_{n,l}),\\
0,&\hbox{\rm else}.\\
\end{cases}
\Eq(5.1.2)
\ee
Then, $\L_n^\dagger(J_n^\dagger(j))$ in \eqv(5.prop1.00) is equal in distribution
to the absorption time
\be
\Eq(5.2.1)
T^{\star}_{n,l}=\inf\{i\in\N \mid J^{\star}_{n,l}(i)=\Delta\}
\ee
 of the process $J^{\star}_{n,l}$ started in $J^{\star}_{n,l}(0)=J^\dagger_n(j)$.

\subsection{Spectrum of $Q_{n,l}$.}
 \label{5.1}

Clearly, 
$Q_{n,l}$ is reversible with respect to the 
 measure
$\pi^{\star}_{n,l}$,
\be
\Eq(5.1.3)
\pi^{\star}_{n,l}(x)
=\frac{\pi_n(x)}{\sum_{x'\in C^{\star}_{n,l}}\pi_n(x')}, \quad x\in C^{\star}_{n,l},
\ee
where $\pi_n$ denotes the invariant measure of $J_n$ (see \eqv(1.1.15)).
From the properties of the sets $C^{\star}_{n,l}$ it follows that $\{\Delta\}$ is the unique recurrence class 
of $P^{\star}_{n,l}$ and that $Q_{n,l}$ is irreducible and periodic with period 2 (indeed
$G(C^{\star}_{n,l})\subseteq\QQ_n$ is a connected sub-graph of the bipartite graph $\QQ_n$).
Therefore, by \eqv(B2.1) and \eqv(B2.2), the eigenvalues of $Q_{n,l}$ satisfy
\be
\Eq(5.1.4)
1>\vartheta_{n,l}(0)> \vartheta_{n,l}(1)\geq \vartheta_{n,l}(2)\dots\geq \vartheta_{n,l}(N-1)>-1,
\ee
where $N\equiv N_{n,l}=|C^{\star}_{n,l}|$.
%
%
The next two lemmata give bounds on the first and second eigenvalues, respectively, in terms of the quantities
\eqv(5.lem1.0) and \eqv(5.lem1.0').

\begin{lemma}
  \TH(5.lem1)  
  If $|C^{\star}_{n,l}|=2$ then
\be
\vartheta_{n,l}(0)=1-[1+\varrho_{n,l}(0)/(n-1)]^{-1}.
\Eq(5.lem1bis.1)
\ee
Furthermore on $\O^{\star}$, for all but a finite number of indices $n$, for all $1\leq l\leq L^{\star}$,
\be
\Eq(5.lem1.1)
\displaystyle
1-\frac{ 1+\OO({1}/{\log n})}{1+2\varrho_{n,l}(0)/(n |C^{\star}_{n,l}|)}
\leq
\vartheta_{n,l}(0)
\leq
1-\frac{1-\OO({1}/{\log n})}{1+\varrho_{n,l}(0)|C^{\star}_{n,l}|/n}.
\ee
\end{lemma}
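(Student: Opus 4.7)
The plan is to combine a direct $2\times 2$ spectral calculation for (i) with the Rayleigh-quotient plus Weyl-inequality scheme already used in the proof of Lemma \thv(9.lem1), suitably adapted to the sub-stochastic jump-chain matrix $Q_{n,l}$. The key identity throughout is the reversibility relation $\pi_n(x)p_n(x,y)\propto n^{-1}e^{-\b\max(H_n(x),H_n(y))}$, which follows from \eqv(1.1.10) and the invariance of Gibbs measure for $X_n$.

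For (i), when $|C^{\star}_{n,l}|=2$ the matrix $Q_{n,l}$ is $2\times 2$ with zero diagonal, so its Perron eigenvalue equals $\sqrt{p_n(x_1,x_2)p_n(x_2,x_1)}$. Each $x_i$ has one neighbor in $C^{\star}_{n,l}$ and $n-1$ neighbors in $\del C^{\star}_{n,l}$ of energy zero, so, taking WLOG $H_n(x_1)\le H_n(x_2)$, a short computation via \eqv(1.1.11) gives $n\l_n(x_i)=e^{\b H_n(x_i)}[\varrho_{n,l}(0)+n-1]$. Substituting in \eqv(1.1.12), the product $p_n(x_1,x_2)p_n(x_2,x_1)$ telescopes to $\varrho_{n,l}(0)^2/(\varrho_{n,l}(0)+n-1)^2$, and \eqv(5.lem1bis.1) follows by taking the square root and rearranging.

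For the lower bound in (ii), I use the variational characterization of $\vartheta_{n,l}(0)$ (for the matrix similar to $Q_{n,l}$ via the transform $\mathrm{diag}(\sqrt{\pi^{\star}_{n,l}})$) with the constant test function, exactly as in \eqv(9.lem1.2)-\eqv(9.lem1.3). The reversibility identity turns the Rayleigh quotient into the closed form $2S_E/(S_V+2S_E)$, where $S_E=\sum_{\{x,y\}\in G(C^{\star}_{n,l})}e^{-\b\max(H_n(x),H_n(y))}$ and $S_V=\sum_{x\in C^{\star}_{n,l}}(n-d_n(x))$, with $d_n(x)$ the degree of $x$ in $G(C^{\star}_{n,l})$. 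The bounds $S_V\le n|C^{\star}_{n,l}|$ and $S_E\ge\varrho_{n,l}(0)$ (the latter attained on the edge realizing the min-max in \eqv(5.lem1.0)) yield the lower half of \eqv(5.lem1.1). For the upper bound I mimic the Weyl argument: setting $D_{n,l}(x,x):=\sum_{y\in\del C^{\star}_{n,l}\cap\del x}p_n(x,y)$, the matrix $\tilde P_{n,l}:=Q_{n,l}+D_{n,l}$ is stochastic and $\pi^{\star}_{n,l}$-reversible, so Weyl applied to the symmetrizations gives $\vartheta_{n,l}(0)\le 1-\min_{x}D_{n,l}(x,x)$. Since $H_n(y)=0\ge H_n(x)$ on the boundary forces $p_n(x,y)=e^{\b H_n(x)}/n\l_n(x)$, the factorization from (i) generalizes to $D_{n,l}(x,x)=(n-d_n(x))/\bigl[(n-d_n(x))+\sum_{y\in\del x\cap C^{\star}_{n,l}}e^{-\b\max(H_n(x),H_n(y))}\bigr]$; bounding each of the at most $|C^{\star}_{n,l}|-1$ summands in the denominator by $\varrho_{n,l}(0)$, and using \eqv(10.lem1.4)-\eqv(10.lem1.7) to obtain $n-d_n(x)\ge n(1-O(1/\log n))$ on $\O^{\star}$, finishes the upper half of \eqv(5.lem1.1).

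The main subtlety, distinguishing this lemma from Lemma \thv(9.lem1), is the appearance of the \emph{edge}-based quantity $\varrho_{n,l}(0)$ rather than its vertex cousin $\bar\varrho_{n,l}(0)$: the Metropolis jump probability across $\{x,y\}$ is governed by $\max(H_n(x),H_n(y))$, so both $S_E$ and the denominator of $D_{n,l}(x,x)$ naturally assemble as sums of $e^{-\b\max(H_n(x),H_n(y))}$ over edges, reflecting that the discrete jump chain pays the full Arrhenius barrier to cross each upward edge.
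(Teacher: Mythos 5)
Your proposal is correct and follows essentially the same approach the paper (implicitly) prescribes: the paper's proof of Lemma~\thv(5.lem1) simply refers the reader to the Rayleigh-quotient-with-constant-test-function argument for the lower bound and the Weyl-with-diagonal-completion argument for the upper bound from Lemma~\thv(9.lem1), plus a direct hand calculation for the $|C^{\star}_{n,l}|=2$ case, and you have carried out exactly those three steps (indeed your bounds are slightly sharper than \eqv(5.lem1.1), which is fine since they imply it). The only blemish is the stray $n^{-1}$ in your displayed reversibility relation $\pi_n(x)p_n(x,y)\propto e^{-\b\max(H_n(x),H_n(y))}$, but this is an inert constant that does not affect the Rayleigh quotient and everything downstream is correct.
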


\begin{remark} Note that the bounds of Lemma
\thv(5.lem1) are rather sharp, i.e.~the prefactors of $\varrho_{n,l}(0)$  only differ through the terms
$|C^{\star}_{n,l}|^{\pm1}$, and $2\leq |C^{\star}_{n,l}|\leq cnst. n/\log n$. 
\end{remark}

\begin{lemma}
  \TH(5.lem2)
 On $\O^{\star}$, for all but a finite number of indices $n$, for all $1\leq l\leq L^{\star}$,
\be
\Eq(5.lem2.1)
\displaystyle
\vartheta_{n,l}(1)\leq 1
-\varrho_{n,l}(1)\left[n|C^{\star}_{n,l}|^3(1+\varrho_{n,l}(0)|C^{\star}_{n,l}|/n)\right]^{-1}.
\ee
\end{lemma}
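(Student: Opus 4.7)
The plan is to mimic the proof of Lemma \thv(9.lem2) in the discrete-time setting of the jump chain. First I would form the stochastic extension $Q^{\star}_{n,l}=Q_{n,l}+D_{n,l}$, where $D_{n,l}$ is the diagonal matrix whose entries record the one-step probabilities of escape from $C^{\star}_{n,l}$, namely $d_{n,l}(x,x)=\sum_{y'\in\del C^{\star}_{n,l}\cap\del x}p_n(x,y')$ for $x\in C^{\star}_{n,l}$. Since $D_{n,l}$ is diagonal with non-negative entries, Weyl's theorem (applied exactly as in \eqv(9.lem1.21)) gives $\vartheta_{n,l}(1)\leq\vartheta^{\star}_{n,l}(1)$, where $\vartheta^{\star}_{n,l}(1)$ is the second eigenvalue of $Q^{\star}_{n,l}$. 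By the lower bound \eqv(10.lem1.7), every $x\in C^{\star}_{n,l}$ has at least one neighbor in $\del C^{\star}_{n,l}$, so the diagonal entries $d_{n,l}(x,x)$ are strictly positive; this makes $Q^{\star}_{n,l}$ aperiodic, and it inherits reversibility with respect to $\pi^{\star}_{n,l}$ from $Q_{n,l}$.

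Next, I would apply the Diaconis-Stroock canonical-paths bound \cite{DS} to the spectral gap $\tau_{n,l}^{-1}\equiv 1-\vartheta^{\star}_{n,l}(1)$ as in \eqv(9.lem2.5). For each unordered pair $\{x,y\}$ I pick an arbitrary simple path $\gamma_{x,y}$ in $G(C^{\star}_{n,l})$, and bound crudely $|\gamma_{x,y}|\leq|C^{\star}_{n,l}|$. This reduces the task to lower-bounding $\rho_n(e)=\pi^{\star}_{n,l}(x')q_{n,l}(x',y')$ uniformly over edges $e=\{x',y'\}\in G(C^{\star}_{n,l})$.

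The main computation is the following. Using the expression $\pi_n(x)\propto\lambda_n(x)e^{-\b H_n(x)}$ for the invariant measure of $J_n$ together with the identity $e^{-\b H_n(x)}e^{-\b[H_n(y)-H_n(x)]^+}=e^{-\b\max(H_n(x),H_n(y))}$, I would obtain
\[
\pi^{\star}_{n,l}(x')q_{n,l}(x',y')=\frac{e^{-\b\max(H_n(x'),H_n(y'))}/n}{\sum_{z\in C^{\star}_{n,l}}\lambda_n(z)e^{-\b H_n(z)}}.
\]
Because $C^{\star}_{n,l}$ is connected of size at least two, the minimum of the numerator over $e\in G(C^{\star}_{n,l})$ equals $\varrho_{n,l}(1)/n$, cf.~\eqv(5.lem1.0'). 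For the denominator I split $\sum_{x'}\sum_{y'\sim x'}e^{-\b\max(H_n(x'),H_n(y'))}$ according to whether $y'\in\del C^{\star}_{n,l}$ or $y'\in C^{\star}_{n,l}$. On $\O^{\star}$ every boundary vertex $y'\in\del C^{\star}_{n,l}$ is non-occupied (otherwise it would be forced either into $C^{\star}_{n,l}$ by connectedness or into $I^{\star}_n$, contradicting that $y'\sim x'\in V_n(\rho^{\star}_n)$), hence $H_n(y')=0$ and each boundary term contributes $1$; by \eqv(10.lem1.6) the total boundary contribution is at most $n|C^{\star}_{n,l}|$. Each interior term contributes at most $\varrho_{n,l}(0)$ by definition \eqv(5.lem1.0), and there are at most $|C^{\star}_{n,l}|^2$ directed interior edges. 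Combining these bounds with the canonical-paths inequality yields the stated estimate on $\vartheta_{n,l}(1)$.

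The main technical point is verifying that on $\O^{\star}$ every boundary vertex of $C^{\star}_{n,l}$ is genuinely non-occupied, so that the boundary contributions in the denominator collapse to a simple edge count controlled by Lemma \thv(10.lem1); beyond this, the proof consists of routine computations strictly parallel to those already carried out in Section \thv(9). Mild over-estimation in the path-length bound (for instance replacing $|\gamma_{x,y}|$ by $|C^{\star}_{n,l}|^{3/2}$ or absorbing an extra $|C^{\star}_{n,l}|$ in the normalization) accounts for the factor $|C^{\star}_{n,l}|^3$ appearing in the statement of the lemma.
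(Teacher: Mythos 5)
Your proposal follows precisely the route the paper indicates (the paper's proof of Lemmata \thv(5.lem1)--\thv(5.lem2) is a one-line statement that they are established ``in exactly the same way as'' Lemmata \thv(9.lem1)--\thv(9.lem2)), and the computation you carry out is correct: the Weyl step gives $\vartheta_{n,l}(1)\leq\vartheta^{\star}_{n,l}(1)$, reversibility of $Q^{\star}_{n,l}$ is inherited because $D_{n,l}$ is diagonal, the expression $\pi^{\star}_{n,l}(x')q_{n,l}(x',y')=e^{-\b\max(H_n(x'),H_n(y'))}\bigl/\bigl(n\sum_{z\in C^{\star}_{n,l}}\l_n(z)e^{-\b H_n(z)}\bigr)$ is right, and the boundary/interior split of the denominator using $H_n=0$ on $\del C^{\star}_{n,l}$ and $e^{-\b\max(H_n(z),H_n(y'))}\leq\varrho_{n,l}(0)$ inside yields the bound $n|C^{\star}_{n,l}|(1+\varrho_{n,l}(0)|C^{\star}_{n,l}|/n)$. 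One remark: your calculation actually produces $\vartheta_{n,l}(1)\leq 1-\varrho_{n,l}(1)[n|C^{\star}_{n,l}|^2(1+\varrho_{n,l}(0)|C^{\star}_{n,l}|/n)]^{-1}$, with exponent $2$ rather than the $3$ in the statement; since this is strictly stronger, the lemma follows a fortiori, and the closing paragraph about ``accounting for'' the exponent $3$ via over-estimation is unnecessary --- better to simply say your bound dominates the claimed one.
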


\begin{proof}[Proof of Lemma \thv(5.lem1) and Lemma \thv(5.lem2)] Bounds on the first two eigenvalues of $Q_{n,l}$
are established in exactly the same way as in the case of the matrix $R_{n,l}$ defined in \eqv(9.prop1.0')
(see the proof of Lemma \thv(9.lem1) and Lemma \thv(9.lem2)). The case  $|C^{\star}_{n,l}|=2$ is easily
worked out by hand. We skip the details.
\end{proof}


%
%


\begin{corollary}
  \TH(5.lem4)
On $\O^{\star}$, for all but a finite number of indices $n$,
\be
\Eq(5.lem4.2)
{\vartheta_{n,l}(1)}/{\vartheta_{n,l}(0)}
\leq
\exp\left\{-|C^{\star}_{n,l}|^{-4}({\varrho_{n,l}(1)}/{\varrho_{n,l}(0)})(1+o(1))\right\}.
\ee
\end{corollary}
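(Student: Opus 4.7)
The plan is to take the lower bound on $\vartheta_{n,l}(0)$ from Lemma \thv(5.lem1) together with the upper bound on $\vartheta_{n,l}(1)$ from Lemma \thv(5.lem2), form the ratio, and apply the elementary inequality $1-y\leq e^{-y}$. All error terms will be absorbed using the fact that $\varrho_{n,l}(0)$ and $\varrho_{n,l}(1)$ are both (super-)exponentially large in $n$, while $|C^{\star}_{n,l}|$ is at most polynomial; the needed estimates are furnished by Lemma \thv(9.lem4) and Lemma \thv(10.lem1).

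Concretely, I would set $\beta_n\equiv |C^{\star}_{n,l}|^{-4}\varrho_{n,l}(1)/\varrho_{n,l}(0)$ and $\epsilon_n\equiv n|C^{\star}_{n,l}|/\varrho_{n,l}(0)$. Since $2\varrho_{n,l}(0)/(n|C^{\star}_{n,l}|)$ is exponentially large, the lower bound of Lemma \thv(5.lem1) simplifies to $\vartheta_{n,l}(0)\geq 1-O(\epsilon_n)$, so that $1/\vartheta_{n,l}(0)=1+O(\epsilon_n)$ with $\epsilon_n$ exponentially small. Similarly, since $\varrho_{n,l}(0)|C^{\star}_{n,l}|/n$ is exponentially large, the denominator in Lemma \thv(5.lem2) satisfies $n|C^{\star}_{n,l}|^3(1+\varrho_{n,l}(0)|C^{\star}_{n,l}|/n)=\varrho_{n,l}(0)|C^{\star}_{n,l}|^4(1+o(1))$, so $\vartheta_{n,l}(1)\leq 1-\beta_n(1-o(1))$. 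Forming the ratio gives
$$\frac{\vartheta_{n,l}(1)}{\vartheta_{n,l}(0)}\leq \bigl[1-\beta_n(1-o(1))\bigr]\bigl(1+O(\epsilon_n)\bigr)\leq 1-\beta_n(1-o(1))+O(\epsilon_n),$$
and applying $1-y\leq e^{-y}$ yields the claimed bound, provided $\epsilon_n=o(\beta_n)$.

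The one point requiring verification is therefore $\epsilon_n/\beta_n=n|C^{\star}_{n,l}|^5/\varrho_{n,l}(1)=o(1)$. By Lemma \thv(9.lem4), $\varrho_{n,l}(1)\geq r_n(\rho^{\star}_n)$, and by \eqv(9.lem4'.2) the latter grows super-polynomially in $n$, while $|C^{\star}_{n,l}|^5\leq (n\eta_n)^5$ is merely polynomial by Lemma \thv(10.lem1); hence the ratio tends to zero on $\O^{\star}$ for all but finitely many $n$. This is the main (though routine) bookkeeping obstacle: once it is in place the exponential estimate $\vartheta_{n,l}(1)/\vartheta_{n,l}(0)\leq \exp\{-|C^{\star}_{n,l}|^{-4}(\varrho_{n,l}(1)/\varrho_{n,l}(0))(1+o(1))\}$ is immediate.
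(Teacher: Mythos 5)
Your proof is correct and takes essentially the same route as the paper: combine the lower bound on $\vartheta_{n,l}(0)$ from Lemma \thv(5.lem1) with the upper bound on $\vartheta_{n,l}(1)$ from Lemma \thv(5.lem2), observe that the dominant terms collapse to $1-|C^{\star}_{n,l}|^{-4}(\varrho_{n,l}(1)/\varrho_{n,l}(0))(1+o(1))$, and apply $1-x\leq e^{-x}$. Your bookkeeping of the absorbable error $\epsilon_n$ versus the leading term $\beta_n$ is precisely the content the paper leaves implicit, and the verification that $n|C^{\star}_{n,l}|^5/\varrho_{n,l}(1)=o(1)$ on $\O^{\star}$ is the right point to check. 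The only small slip is the citation: the bound $\varrho_{n,l}(1)\geq r_n(\rho^{\star}_n)$ (for the unbarred quantities of Section \ref{5}) is Lemma \thv(5.lem7), not Lemma \thv(9.lem4), which concerns the barred quantities $\bar\varrho_{n,l}$ of Section \ref{9}; the conclusion is unaffected.
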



\begin{proof}
By Lemmata \thv(5.lem1) and \thv(5.lem2), 
$
\vartheta_{n,l}(1)/\vartheta_{n,l}(0)\leq |C^{\star}_{n,l}|^{-4}({\varrho_{n,l}(1)}/{\varrho_{n,l}(0)})(1+o(1))
$,
and since $1-x\leq e^{-x}$ for $0<x<1$, \eqv(5.lem4.2) follows.
\end{proof}



Finally, the lemma below collects useful bounds.

\begin{lemma}
  \TH(5.lem7)
For each $1\leq l\leq L^{\star}$ and for
$
r_n\bigl(\rho^{\star}_n\bigr)
$
as in \eqv(9.lem4'.2),
\be
e^{\b n\sqrt{\log 2}(1+o(1))}\geq 
\varrho_{n,l}(0)
\geq \varrho_{n,l}(1)
\geq  r_n\left(\rho^{\star}_n\right),
\Eq(5.lem7.0)
\ee
where
the first inequality holds on $\O_0$,
for all but a finite number of indices $n$.
\end{lemma}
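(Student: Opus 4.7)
The plan is to establish the three inequalities in \eqv(5.lem7.0) separately, as each is either a direct quotation of an earlier result or a one-line observation.

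For the leftmost inequality, I would simply invoke the bound \eqv(9.lem7.1) of Lemma \thv(9.lem7), which gives, on $\O_0$ and for all but finitely many $n$,
\[
\varrho_{n,l}(0)=e^{-\b\min\left\{\max\left(H_n(y),H_n(x)\right)\,|\, \{x,y\}\in G(C^{\star}_{n,l})\right\}}
\leq e^{\b n\sqrt{\log 2}(1+2\log n/n\log 2)},
\]
so that the right-hand side is of the form $e^{\b n\sqrt{\log 2}(1+o(1))}$. Nothing more is needed for this step.

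For the middle inequality $\varrho_{n,l}(0)\geq \varrho_{n,l}(1)$, I would argue that for every edge $\{x,y\}\in G(C^{\star}_{n,l})$ both $x$ and $y$ lie in $C^{\star}_{n,l}$, hence
\[
\max\left(H_n(y),H_n(x)\right)\leq \max\left\{H_n(z)\,|\,z\in C^{\star}_{n,l}\right\}.
\]
Taking the minimum over edges on the left keeps the bound, and exponentiating with the sign flip built into $\varrho_{n,l}(\cdot)$ reverses the inequality to give $\varrho_{n,l}(0)\geq \varrho_{n,l}(1)$.

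For the rightmost inequality, I would use the truncation structure. By the construction \eqv(10.1.4) of the starred components, every $x\in C^{\star}_{n,l}$ is occupied, i.e.\ satisfies $w_n(x)\geq r_n(\rho^{\star}_n)$ by \eqv(10.1.2), \eqv(intro.3) and the reparametrization \eqv(10.1.1). Since $\varrho_{n,l}(1)=\min_{x\in C^{\star}_{n,l}} e^{-\b H_n(x)}=\min_{x\in C^{\star}_{n,l}} w_n(x)$, this minimum is bounded below by $r_n(\rho^{\star}_n)$, which is exactly the claimed inequality.

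There is no real obstacle here: the leftmost bound is a direct appeal to Lemma \thv(9.lem7), and the middle and rightmost bounds are elementary consequences of the monotonicity of $\max$ and of the occupancy condition defining $\VV_n^{\star}$. The only mild point of care is to confirm that $\O_0$ (on which the first inequality holds) has been fixed in Lemma \thv(9.lem7), which it has.
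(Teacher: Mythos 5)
Your proof is correct and follows exactly the same route as the paper's, which simply states "The first inequality is \eqv(9.lem7.1). The remaining ones are immediate." You have merely filled in what "immediate" means, and the fleshed-out reasoning (monotonicity of $\max$ over a subset, and the occupancy condition $w_n(x)\geq r_n(\rho^{\star}_n)$ on $C^{\star}_{n,l}$) is exactly what the paper has in mind.
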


\begin{proof}
The first inequality is \eqv(9.lem7.1). The remaining ones are immediate.
\end{proof}

\subsection{Absorption time: proof of Proposition \thv(5.prop1).}
 \label{5.2}

%
%
This section and the next draw on Appendix \thv(B) where needed results on irreducible and periodic matrices are collected. 
We begin  with a little notation.
Recall that ${\bf 1}$ denotes the vertex of $\VV_n$ whose coordinates are identically 1. Write
$\VV_n\equiv \VV_n^{-}\cup\VV_n^{+}$ where $\VV_n^{-}$ and $\VV_n^{+}$ are, respectively, the subsets 
of vertices that are at odd and even distance of the vertex ${\bf 1}$.
Every edge of the hypercube graph $\QQ_n$ connects  a vertex in $\VV_n^{-}$ to one in $\VV_n^{+}$
and so, every edge of the 
graph $G(C^{\star}_{n,l})$ connects
a vertex in $C^{\star}_{n,l}\cap\VV_n^{-}$ to one in  $C^{\star}_{n,l}\cap\VV_n^{+}$.
The matrices $Q_{n,l}$  can thus be written in the canonical form  \eqv(B2.0).
Denote by $u^{\star}_{n,l}$ and $v^{\star}_{n,l}$ the left and right Perron eigenvectors of $Q_{n,l}$ (see \eqv(B2.3)-\eqv(B2.4')), 
\bea
\Eq(5.2. PerrLeft)
&&u^{\star}_{n,l}Q_{n,l}=\vartheta_{n,l}(0)u^{\star}_{n,l},\quad  u^{\star}_{n,l}>0,
\\
&&\Eq(5.2. PerrRight)
Q_{n,l}v^{\star}_{n,l}=\vartheta_{n,l}(0)v^{\star}_{n,l},\quad v^{\star}_{n,l}>0,
\eea
normalized to make  
$
\sum_{x\in C^{\pm}_{n,l}}2u^{\star}_{n,l}(x)=1
$
and
$
\sum_{x\in C^{\pm}_{n,l}}2u^{\star}_{n,l}(x)v^{\star}_{n,l}(x)=1
$,
where $C^{\pm}_{n,l}\equiv C^{\star}_{n,l}\cap\VV_n^{\pm}$.
The next lemma is the key ingredient to the proof Proposition \thv(5.prop1)

\begin{lemma}
  \TH(5.lem3)
 %
%
On $\O_0\cap \O^{\star}$, for all but a finite number of indices $n$
we have, for each $x$ in $C^{\star}_{n,l}$ and  all $i\geq \theta^{\star}_{n,l}$,
\be
\Eq(5.lem3.3)
P_{x}\left(T^{\star}_{n,l}>i\right)
=v^{\star}_{n,l}(x)\vartheta_{n,l}^{i}(0)(1+\epsilon^{\star}_n),
\ee
where $|\epsilon^{\star}_n|\leq \exp(-\b n/4)$. 
%
%
%
\end{lemma}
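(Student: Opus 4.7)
The plan is to mirror the proof of Proposition \thv(9.prop1), replacing $A_{n,l}=\exp(\LL^{\star}_{n,l})$ by $Q_{n,l}$ and accommodating the fact that $Q_{n,l}$ is irreducible and periodic with period $2$, so that the Perron-projector estimate to invoke is the one supplied by Appendix \thv(B.2) rather than \thv(B.1). First, by the definition \eqv(5.2.1) of the absorption time,
\be
P_x(T^{\star}_{n,l}>i)=\textstyle\sum_{y\in C^{\star}_{n,l}}q^{(i)}_{n,l}(x,y),
\ee
where $q^{(i)}_{n,l}(x,y)$ denote the entries of $Q_{n,l}^i$. The period-$2$ spectral decomposition then produces a representation of the form
\be
q^{(i)}_{n,l}(x,y)=\vartheta_{n,l}(0)^i v^{\star}_{n,l}(x)u^{\star}_{n,l}(y)\bigl[1+(-1)^{i}\s(x)\s(y)\bigr]+\mathcal{R}_n^{(i)}(x,y),
\ee
with $\s(x)=\pm 1$ according to whether $x$ lies in $C^{\pm}_{n,l}=C^{\star}_{n,l}\cap\VV_n^{\pm}$, and a remainder satisfying $|\mathcal{R}_n^{(i)}(x,y)|\leq |\vartheta_{n,l}(1)|^i\bigl(\min_z v^{\star}_{n,l}(z)\pi^{\star}_{n,l}(z)^{1/2}\bigr)^{-1}$.

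Next I would sum the leading term over $y\in C^{\star}_{n,l}$. The normalization $\sum_{y\in C^{\pm}_{n,l}}2u^{\star}_{n,l}(y)=1$ stated just before the lemma yields $\sum_{y\in C^{+}_{n,l}}u^{\star}_{n,l}(y)=\sum_{y\in C^{-}_{n,l}}u^{\star}_{n,l}(y)=\tfrac{1}{2}$, so the parity contribution $(-1)^i\s(x)\s(y)$ cancels between $C^{+}_{n,l}$ and $C^{-}_{n,l}$, producing
\be
\textstyle\sum_{y\in C^{\star}_{n,l}}q^{(i)}_{n,l}(x,y)=v^{\star}_{n,l}(x)\vartheta_{n,l}(0)^i\bigl(1+\epsilon^{\star}_n\bigr),
\ee
with $\epsilon^{\star}_n$ equal to the ratio of the summed remainder to $v^{\star}_{n,l}(x)\vartheta_{n,l}(0)^i$. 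This is exactly the identity \eqv(5.lem3.3).

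The main obstacle is to show $|\epsilon^{\star}_n|\leq e^{-\b n/4}$ whenever $i\geq\theta^{\star}_{n,l}$. By Corollary \thv(5.lem4),
\be
(|\vartheta_{n,l}(1)|/\vartheta_{n,l}(0))^i\leq\exp\bigl\{-i|C^{\star}_{n,l}|^{-4}(\varrho_{n,l}(1)/\varrho_{n,l}(0))(1+o(1))\bigr\},
\ee
and evaluating at $i=\theta^{\star}_{n,l}=2\b n|C^{\star}_{n,l}|^5(\varrho_{n,l}(0)/\varrho_{n,l}(1))$ produces a decay of order $\exp(-2\b n|C^{\star}_{n,l}|)$. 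To verify that this decay dominates the prefactor $(\min_z v^{\star}_{n,l}(z)\pi^{\star}_{n,l}(z)^{1/2})^{-1}$, I would first establish a lower bound $v^{\star}_{n,l}(x)\geq (1/\varrho_{n,l}(0))^{|C^{\star}_{n,l}|}$ by an argument verbatim to the one used in Lemma \thv(9.lem3): assuming that $v^{\star}_{n,l}(x_0)$ falls below this threshold at some vertex, propagate the smallness along the connected graph $G(C^{\star}_{n,l})$ through the eigen-relation $Q_{n,l}v^{\star}_{n,l}=\vartheta_{n,l}(0)v^{\star}_{n,l}$, using the edge bound $q_{n,l}(x,y)\geq r_n(\r^{\star}_n)/(n\varrho_{n,l}(0))$, and contradict $\sum_x v^{\star}_{n,l}(x)\geq 1$ via Lemma \thv(5.lem7) and \eqv(9.lem4'.2). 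Combined with the trivial lower bound $\pi^{\star}_{n,l}(z)\geq 1/(\varrho_{n,l}(1)|C^{\star}_{n,l}|)$ and the size estimate from Lemma \thv(10.lem1), the prefactor is at most $\exp(\b n|C^{\star}_{n,l}|\sqrt{\log 2}(1+o(1)))$ on $\O_0$, which is dwarfed by the decay factor and yields $|\epsilon^{\star}_n|\leq e^{-\b n/4}$ with room to spare.
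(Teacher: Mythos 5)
Your proposal follows the same route as the paper: reduce $P_x(T^{\star}_{n,l}>i)$ to the matrix entries of $Q^i_{n,l}$, invoke the period-$2$ Perron-projector estimate of Appendix B.2 (Lemma \thv(B2.lem1)) to get the leading term $v^{\star}_{n,l}(x)\vartheta^i_{n,l}(0)$ plus a remainder controlled by $(\vartheta_{n,l}(1)/\vartheta_{n,l}(0))^i(\min_z v^{\star}_{n,l}(z)\pi^{\star}_{n,l}(z)^{1/2})^{-1}$, lower-bound $v^{\star}_{n,l}$ via the propagation argument (exactly Lemma \thv(5.lem5)), lower-bound the reversible measure, and use Corollary \thv(5.lem4) at $i=\theta^{\star}_{n,l}$ to kill the prefactor. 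Your explicit $(-1)^i\s(x)\s(y)$ parity decomposition and the observation that summing over $C^{\star}_{n,l}$ hits only one parity class is the content of \eqv(B2.lem1.2)–\eqv(B2.lem1.4), so these are the same steps phrased differently.

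One small technical slip: you quote $\pi^{\star}_{n,l}(z)\geq 1/(\varrho_{n,l}(1)|C^{\star}_{n,l}|)$, which is the lower bound for the Gibbs-type measure $\mu^{\star}_{\b,n,l}$ used in Section \thv(9). The reversible measure for $Q_{n,l}$ is $\pi^{\star}_{n,l}$ from \eqv(5.1.3), built from \eqv(1.1.15), and the paper instead derives $\pi^{\star}_{n,l}(x)\geq r_n(\rho^{\star}_n)/(n|C^{\star}_{n,l}|\varrho_{n,l}(0))$ via Lemma \thv(5.lem7). Since either bound is sub-exponentially small in the relevant scale and is dwarfed by the factor $(1/\varrho_{n,l}(0))^{|C^{\star}_{n,l}|}$ coming from $v^{\star}_{n,l}$, this does not change the final estimate $|\epsilon^{\star}_n|\leq e^{-\b n/4}$, but the quantity you should bound is $\pi^{\star}_{n,l}$, not $\mu^{\star}_{\b,n,l}$.
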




The proof of this result relies on the following a priori lower bound on $v^{\star}_{n,l}$. 

\begin{lemma}
 \TH(5.lem5)
 
For all $x\in C^{\star}_{n,l}$, $v^{\star}_{n,l}(x)\geq (1/\varrho_{n,l}(0))^{|C^{\star}_{n,l}|}$. 
\end{lemma}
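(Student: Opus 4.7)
The plan is to mimic the proof of Lemma \thv(9.lem3) closely, replacing the matrix $R_{n,l}$ by the periodic sub-stochastic matrix $Q_{n,l}$, and argue by contradiction via iterated propagation of a smallness assumption across the connected graph $G(C^{\star}_{n,l})$. The only real novelty is to account for the bipartite structure encoded in the two normalizations
\[
\textstyle
\sum_{x\in C^{\pm}_{n,l}}2u^{\star}_{n,l}(x)=1,\quad \sum_{x\in C^{\pm}_{n,l}}2u^{\star}_{n,l}(x)v^{\star}_{n,l}(x)=1.
\]

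First I would establish an a priori lower bound on the sum $\sum_{x\in C^{\star}_{n,l}}v^{\star}_{n,l}(x)$. Since the normalization forces $2u^{\star}_{n,l}(x)\le 1$ on each of the bipartite pieces $C^{\pm}_{n,l}$, the inner-product normalization gives $1=\sum_{C^{\pm}_{n,l}}2u^{\star}_{n,l}v^{\star}_{n,l}\le\sum_{C^{\pm}_{n,l}}v^{\star}_{n,l}$ on each piece, so altogether $\sum_{x\in C^{\star}_{n,l}}v^{\star}_{n,l}(x)\ge 2$. This is the analogue of \eqv(9.prop1.25).

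Next, I would assume for contradiction that $v^{\star}_{n,l}(y_0)<\varepsilon_0:=(1/\varrho_{n,l}(0))^{|C^{\star}_{n,l}|}$ for some $y_0\in C^{\star}_{n,l}$. Using the right-Perron eigenvalue equation $Q_{n,l}v^{\star}_{n,l}=\vartheta_{n,l}(0)v^{\star}_{n,l}$ and $\vartheta_{n,l}(0)\le 1$, one has $\sum_{x\sim y_0}q_{n,l}(y_0,x)v^{\star}_{n,l}(x)\le\varepsilon_0$; since every term is non-negative, $v^{\star}_{n,l}(x)\le\varepsilon_0/q_{n,l}(y_0,x)$ for each neighbour $x\in C^{\star}_{n,l}$ of $y_0$ in $G(C^{\star}_{n,l})$. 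The key quantitative ingredient is a uniform lower bound on $q_{n,l}(y,x)$ for edges $(y,x)\in G(C^{\star}_{n,l})$, obtained by writing $p_n(y,x)\ge \lambda_n(y,x)=n^{-1}e^{-\beta[H_n(x)-H_n(y)]^+}=n^{-1}e^{\beta H_n(y)}e^{-\beta\max(H_n(x),H_n(y))}$ (which is valid since the total rate out of $y$ is at most $1$), and then bounding $e^{-\beta\max(H_n(x),H_n(y))}\ge r_n(\rho^{\star}_n)$ via the truncation and $\max(H_n(x),H_n(y))\ge -\beta^{-1}\log\varrho_{n,l}(0)$ along the critical edge. This delivers $q_{n,l}(y,x)\ge r_n(\rho^{\star}_n)/(n\varrho_{n,l}(0))$ on $\O^{\star}$ for all $n$ large enough.

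Then I would iterate the propagation: at each step the smallness gets multiplied by the factor $n\varrho_{n,l}(0)/r_n(\rho^{\star}_n)$, and after at most $|C^{\star}_{n,l}|$ steps, by connectedness of $G(C^{\star}_{n,l})$, every vertex $x$ of $C^{\star}_{n,l}$ is reached, yielding
\[
\max_{x\in C^{\star}_{n,l}}v^{\star}_{n,l}(x)\le\varepsilon_0\bigl(n\varrho_{n,l}(0)/r_n(\rho^{\star}_n)\bigr)^{|C^{\star}_{n,l}|}=\bigl(n/r_n(\rho^{\star}_n)\bigr)^{|C^{\star}_{n,l}|}\le n/r_n(\rho^{\star}_n).
\]
Because $r_n(\rho^{\star}_n)$ grows faster than any polynomial in $n$ by Lemma \thv(9.lem4'), summing over $C^{\star}_{n,l}$ gives $\sum_{x\in C^{\star}_{n,l}}v^{\star}_{n,l}(x)\le|C^{\star}_{n,l}|n/r_n(\rho^{\star}_n)\ll 1$, in flat contradiction with the lower bound $\sum v^{\star}_{n,l}\ge 2$ from the first step. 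Hence $v^{\star}_{n,l}(x)\ge\varepsilon_0$ for every $x$ and the lemma is established.

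The main obstacle is choosing the correct uniform lower bound on $q_{n,l}(y,x)$ that meshes with $\varepsilon_0=(1/\varrho_{n,l}(0))^{|C^{\star}_{n,l}|}$ so as to produce a contradiction: the natural pointwise bounds involve the vertex-based quantity $\bar\varrho_{n,l}(0)$, which is weaker, and one has to exploit the edge-based definition \eqv(5.lem1.0) of $\varrho_{n,l}(0)$ together with the truncation lower bound $e^{-\beta\max}\ge r_n(\rho^{\star}_n)$ to get the right balance. Everything else (the sum lower bound, the iterative propagation along the connected component, the final contradiction) is routine once this quantitative edge bound is in hand.
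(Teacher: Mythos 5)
Your overall strategy is the right one and closely tracks the paper's proof of Lemma~\thv(9.lem3): derive $\sum_{x}v^{\star}_{n,l}(x)\geq 2$ from the bipartite normalizations, assume $v^{\star}_{n,l}(y_0)<\varepsilon_0$, propagate the smallness across the connected graph $G(C^{\star}_{n,l})$ via the eigenvalue equation, and clash with the sum bound. But the ``key quantitative ingredient'' you flag is precisely where the argument breaks. Writing $p_n(y,x)\geq\l_n(y,x)=n^{-1}e^{\b H_n(y)}e^{-\b\max(H_n(x),H_n(y))}$ and then combining $e^{-\b\max}\geq r_n(\rho^{\star}_n)$ with $\max(H_n(x),H_n(y))\geq-\b^{-1}\log\varrho_{n,l}(0)$ does \emph{not} give $q_{n,l}(y,x)\geq r_n(\rho^{\star}_n)/(n\varrho_{n,l}(0))$. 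The second inequality is information about $\max(H_n(x),H_n(y))$, but since $H_n(y)\leq\max(H_n(x),H_n(y))$, it only provides an \emph{upper} bound on $e^{\b H_n(y)}$, whereas your product needs a \emph{lower} bound. The only uniform lower bound over $y\in C^{\star}_{n,l}$ is $e^{\b H_n(y)}\geq 1/\bar\varrho_{n,l}(0)$, which returns the same bound $q_{n,l}(y,x)\geq r_n(\rho^{\star}_n)/(n\bar\varrho_{n,l}(0))$ as in Lemma~\thv(9.lem3). With this and $\varepsilon_0=(1/\varrho_{n,l}(0))^{|C^{\star}_{n,l}|}$ the propagation yields $\max v^{\star}_{n,l}\leq\bigl(n\bar\varrho_{n,l}(0)/(\varrho_{n,l}(0)r_n(\rho^{\star}_n))\bigr)^{|C^{\star}_{n,l}|}$, and since $\bar\varrho_{n,l}(0)/\varrho_{n,l}(0)$ can be of order $e^{\b n\sqrt{2\log 2}(1+o(1))}/r_n(\rho^{\star}_n)$ — exponentially large — this is not small and no contradiction is reached. (It \emph{would} give the lemma with $\bar\varrho_{n,l}(0)$ in place of $\varrho_{n,l}(0)$, which, as it happens, is still strong enough for the downstream use in Lemma~\thv(5.lem3), but it is not the stated statement.)

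The repair is not to discard the denominator of $p_n$. Since $Q_{n,l}$ is built from transition \emph{probabilities} and not rates, the holding-time factor $\l_n(y)$ carries the same $e^{\b H_n(y)}$ that is killing your numerator, and it cancels. Concretely, for $y\in C^{\star}_{n,l}$ every neighbour $x'\in\del C^{\star}_{n,l}$ has $H_n(x')=0>H_n(y)$, so contributes $e^{\b H_n(y)}$ to $n\l_n(y)$, while each of the $d(y)\leq|C^{\star}_{n,l}|$ neighbours $x'\in C^{\star}_{n,l}$ contributes
\[
e^{-\b[H_n(x')-H_n(y)]^+}=e^{\b H_n(y)}e^{-\b\max(H_n(x'),H_n(y))}\leq e^{\b H_n(y)}\varrho_{n,l}(0).
\]
Hence $n\l_n(y)\leq e^{\b H_n(y)}\bigl[n+|C^{\star}_{n,l}|\varrho_{n,l}(0)\bigr]$, while the numerator satisfies $e^{-\b[H_n(x)-H_n(y)]^+}\geq e^{\b H_n(y)}r_n(\rho^{\star}_n)$, so the $e^{\b H_n(y)}$ cancels and, using $|C^{\star}_{n,l}|\varrho_{n,l}(0)\geq 2r_n(\rho^{\star}_n)\gg n$,
\[
q_{n,l}(y,x)\geq\frac{r_n(\rho^{\star}_n)}{n+|C^{\star}_{n,l}|\varrho_{n,l}(0)}\geq\frac{r_n(\rho^{\star}_n)}{2|C^{\star}_{n,l}|\varrho_{n,l}(0)}.
\]
This is the bound that meshes with $\varepsilon_0=(1/\varrho_{n,l}(0))^{|C^{\star}_{n,l}|}$: the propagation now gives $\max v^{\star}_{n,l}\leq(2|C^{\star}_{n,l}|/r_n(\rho^{\star}_n))^{|C^{\star}_{n,l}|}\ll 1/|C^{\star}_{n,l}|$, contradicting $\sum v^{\star}_{n,l}\geq 2$, and the rest of your argument goes through unchanged.
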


\begin{proof}  The proof is a simple adaptation of the proof of Lemma \thv(9.lem3). 
\end{proof}

\begin{proof}[Proof of Lemma  \thv(5.lem3)] 
By \eqv(B2.lem1.2) and \eqv(B2.lem1.4) of Lemma \thv(B2.lem1)  applied to $Q_{n,l}$ we have, 
for all $x\in C^{\star}_{n,l}$ and all $i\in\N$,
\be
\Eq(5.lem3.10)
P_{x}\left(T^{\star}_{n,l}>i\right)
=v^{\star}_{n,l}(x)\vartheta_{n,l}^{i}(0)\Bigl\{
1+\left(\sfrac{\vartheta_{n,l}(1)}{\vartheta_{n,l}(0)}\right)^{i}\wh R_n\Bigr\},
\ee
where
$
|\wh  R_n|
\leq 
\Bigl(\min_{x\in C^{\star}_{n,l}}\bigl\{v^{\star}_{n,l}(x)(\pi^{\star}_{n,l}(x))^{1/2}\bigr\}\Bigr)^{-1}
$.
The lemma will be proved if we can show that on $\O_0\cap \O^{\star}$, 
for all but a finite number of indices $n$,  
\be
\Eq(5.lem3.14)
%
%
\left(\sfrac{\vartheta_{n,l}(1)}{\vartheta_{n,l}(0)}\right)^{\theta^{\star}_{n,l}}|\wh  R_n|\leq\exp(-\b n).
\ee
To this end note that, by \eqv(5.1.3), \eqv(1.1.15), and Lemma \thv(5.lem7), for all $x\in C^{\star}_{n,l}$,
$
\pi^{\star}_{n,l}(x)\geq r_n\left(\rho^{\star}_n\right)/n|C^{\star}_{n,l}|\varrho_{n,l}(0)
$.
Inserting this bound and that of Lemma \thv(5.lem5) in our bound on $|\wh  R_n|$, and using \eqv(10.lem1.4), Lemma \thv(5.lem7), and \eqv(9.lem4'.2)  we get that on $\O_0\cap \O^{\star}$, for all but a finite number of indices $n$, 
$
| \wh R_n|
\leq e^{\b n(|C^{\star}_{n,l}|+1/2)\sqrt{\log 2}(1+o(1))}
$.
Using \eqv(5.lem4.2) of Corollary \thv(5.lem4) to bound the ratio ${\vartheta_{n,l}(1)}/{\vartheta_{n,l}(0)}$
and choosing $\theta^{\star}_{n,l}$ as in \eqv(5.prop1.1) then yield \eqv(5.lem3.14).
The proof of the lemma is done.
\end{proof}

\begin{proof}[Proof of Proposition \thv(5.prop1)] When $|C^{\star}_{n,l}|=2$, $Q_{n,l}=\vartheta_{n,l}(0) I_2$
where $I_2$ denotes the identity matrix in $\R^2$. Thus
$
P_{x}\left(T^{\star}_{n,l}>i\right)
=\vartheta_{n,l}^{i}(0)
$
which by \eqv(5.lem1bis.1) leads to  \eqv(5.prop1.4).
To prove \eqv(5.prop1.0') use that by \eqv(5.lem3.3) with $i=\theta^{\star}_{n,l}$,
\be
v^{\star}_{n,l}(x)
=P_{x}\left(T^{\star}_{n,l}>\theta^{\star}_{n,l}\right)\vartheta_{n,l}^{-\theta^{\star}_{n,l}}(0)(1+\epsilon^{\star}_n)^{-1}
\leq\vartheta_{n,l}^{-\theta^{\star}_{n,l}}(0)(1+\epsilon^{\star}_n)^{-1}.
\ee
Hence
$
P_{x}\left(T^{\star}_{n,l}> i\right)
\leq 
\vartheta_{n,l}^{-\theta^{\star}_{n,l}}(0)\vartheta_{n,l}^{i}(0)
$
where, by \eqv(5.lem1.1),
$
\vartheta_{n,l}^{i}(0)
\leq e^{-i(n/\varrho_{n,l}(0)|C^{\star}_{n,l}|)(1-o(1))}
$
and
$
\vartheta_{n,l}^{-\theta^{\star}_{n,l}}(0)
\leq 1+\sfrac{\theta^{\star}_{n,l}n}{\varrho_{n,l}(0)|C^{\star}_{n,l}|}(1+o(1))
$.
\end{proof}



\section{Properties of the effective jump chain $J^\circ_n$}
 \label{4}

 Thi section gathers needed results on  the chain $J^\circ_n$. 
 The first proposition, which is central to the strategy of  Sections  \ref{6},  \ref{7}, and \ref{12},
 states that $J^\circ_n$ is fast mixing.
Define
\be
\ell^\circ_n=
\left\lceil
n^3
\right\rceil.
\Eq(4.prop1.0)
\ee

\begin{proposition}
  \TH(4.prop1)
Assume that $c_{\star}>1+\log 4$. For all $\b>0$, there exists a subset $\O_1\subset\O$
with  $\P\left(\O_1\right)=1$ such that, on $\O_1$, for all but a finite number of
indices $n$, 
for all pairs $x\in\VV^\circ_n, y\in\VV^\circ_n$, and all $i\geq 0$,
\be
\left| 
P^\circ_{\pi^\circ_n}\left(J^\circ_n(i+\ell^\circ_n)=y,  J^\circ_n(i)=x \right)-\pi^\circ_n(x)\pi^\circ_n(y)
\right|
\leq \delta_n\pi^\circ_n(x)\pi^\circ_n(y)\,,
\Eq(4.prop1.1)
\ee
where
$
0\leq\delta_n\leq   2^{-n}
$.
\end{proposition}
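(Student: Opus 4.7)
The plan is to reduce the claim to a uniform spectral gap estimate for $J^\circ_n$ and then invoke the standard $L^2$ mixing inequality for reversible chains. The starting point is that $\pi^\circ_n$ is the uniform measure on $\VV^\circ_n$: indeed, the reversible measure of the Metropolis jump chain is $\pi_n(x)\propto \frac{1}{n}\sum_{y\sim x}e^{-\b\max(H_n(x),H_n(y))}$, and for every $x\in\VV^\circ_n=N^\star_n\cup I^\star_n$ the maximum vanishes (if $x\in N^\star_n$ then $H_n(x)=0$; if $x\in I^\star_n$ then every neighbor lies in $N^\star_n$, so $H_n(y)=0$); hence $\pi_n$ is constant on $\VV^\circ_n$. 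Reversibility of $J^\circ_n$ with respect to $\pi^\circ_n$ is inherited from that of $J_n$ via the trace construction. Rewriting the joint probability in \eqv(4.prop1.1) as $\pi^\circ_n(x)P^\circ_x(J^\circ_n(\ell^\circ_n)=y)$, the claim reduces to
\[
\bigl|P^\circ_x(J^\circ_n(\ell^\circ_n)=y)/\pi^\circ_n(y)-1\bigr|\leq 2^{-n}\quad\text{uniformly in }x,y\in\VV^\circ_n.
\]

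By the classical $L^2$ bound for reversible chains, the left-hand side is at most $\lambda_*^{\ell^\circ_n}/\sqrt{\pi^\circ_n(x)\pi^\circ_n(y)}\leq 2^{n}\lambda_*^{\ell^\circ_n}$, where $\lambda_*$ is the second-largest eigenvalue in modulus of $p^\circ_n$ (using that $|\VV^\circ_n|\leq 2^n$ so $\pi^\circ_n(x)\geq 2^{-n}$). Thus it suffices to show that, on an almost sure event, $\lambda_*\leq 1-C/n^2$ for a sufficiently large constant $C$; the choice $\ell^\circ_n=\lceil n^3\rceil$ then forces $2^n\lambda_*^{\ell^\circ_n}\leq 2^{-n}$. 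I would establish this spectral gap via a Dirichlet-form comparison between $J^\circ_n$ and the simple random walk on $\QQ_n$, using the canonical-paths method of Diaconis and Saloff-Coste. The inputs, all available on $\O^\star\cap\O^{\scriptscriptstyle{\textsf{CRIT}}}$ from Section~\thv(3), are that the components $C^\star_{n,l}$ have size $\OO(n/\log n)$, are $\OO(2^n n^{-2c_\star+1})$ in number, and their total volume is a vanishing fraction of $2^n$. Consequently any pair $x,y\in\VV^\circ_n$ can be joined in $G^\circ(\VV^\circ_n)$ by a path of length $\leq n+\OO(n/\log n)$, obtained by following a hypercube geodesic and locally detouring around any component it crosses. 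Bounding the resulting edge congestion yields a Poincaré constant of order $n^2$ and hence $1-\th^\circ_n(1)\geq C/n^2$. The matching lower bound on $\th^\circ_n(|\VV^\circ_n|-1)$ (quantitative aperiodicity) follows because excursions through components of size $\geq 2$ produce effective self-loops in $J^\circ_n$ of probability of order $1/n$ at a positive density of vertices.

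The main obstacle is the almost-sure control of the random graph structure required by the canonical-paths argument: one must verify that the number of components crossed by typical geodesics, and the load they induce on individual edges, stay well-behaved uniformly in the environment. This calls for a first-moment computation combined with the Borel--Cantelli lemma, analogous in spirit to the proofs of Lemmata~\thv(3.lem1)--\thv(3.lem2); the assumption $c_\star>1+\log 4$ enters here to secure summability. Intersecting the resulting full-measure event with $\O^\star\cap\O^{\scriptscriptstyle{\textsf{CRIT}}}$ defines $\O_1$, on which the spectral bound yields \eqv(4.prop1.1) for all but a finite number of $n$.
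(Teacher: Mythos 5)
Your outline for the gap at the top of the spectrum (canonical paths via the component structure) matches the paper's Subsection~\thv(4.4), which indeed uses the path system of \cite{FIKP}. The fatal problem is the \emph{lower} end of the spectrum, which you dismiss in one sentence. You claim that ``excursions through components of size $\geq 2$ produce effective self-loops in $J^\circ_n$ of probability of order $1/n$ at a positive density of vertices,'' but that density is \emph{not} positive. The only vertices of $\VV^\circ_n$ at which $J^\circ_n$ can make a parity-preserving step are those in $\del\bigl(\cup_{l}C^{\star}_{n,l}\bigr)$, and by \eqv(10.lem1.3) together with \eqv(4.prop2.0) this set has relative size $\OO(n^{-2(c_{\star}-1)})\to 0$. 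Indeed, testing the Rayleigh quotient against $\phi(x)=(-1)^{\dist(\bm 1,x)}$ (appropriately centered) gives
$
(\phi,P^\circ_n\phi)=2\sum_{x\in\VV^\circ_n}\pi^\circ_n(x)q(x)-1
$,
where $q(x)$ is the probability of a parity-preserving move from $x$; since $q(x)=0$ off $\del(\cup_l C^{\star}_{n,l})$ and $\sum_x\pi^\circ_n(x)q(x)\leq |V_n(\rho^{\star}_n)\setminus I^{\star}_n|/|\VV^\circ_n| \sim n^{-2c_\star+1}$, one gets $\lambda_{\min}\leq -1+\OO(n^{-2c_{\star}+1})$. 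Hence $|\lambda_*|^{\ell^\circ_n}=|\lambda_{\min}|^{n^3}\geq e^{-\OO(n^{4-2c_\star})}$, which for $c_\star>2$ does not even vanish, let alone beat the $2^n$ amplification from $1/\sqrt{\pi^\circ_n(x)\pi^\circ_n(y)}$. The $L^2$ bound you invoke, which requires control of the second-largest eigenvalue \emph{in modulus}, therefore cannot close.

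The paper avoids this near-periodicity entirely: in Subsection~\thv(4.5) it introduces the continuous-time chain $J^{*}_n$ with jump chain $J^\circ_n$ and rate-one clock, invokes the Diaconis--Stroock total-variation bound (Lemma~\thv(4.lem4)) for continuous-time reversible chains --- which depends only on the relaxation time $\t^\circ_n=1/(1-\vartheta^\circ_n(1))$, i.e.~the gap at the \emph{top} of the spectrum as established in Proposition~\thv(4.prop3) --- and then transfers the estimate from continuous to discrete time via a large-deviation comparison of $J^\circ_n(m_n)$ with $J^{*}_n(t)$ (Lemma~\thv(4.lem5)). No quantitative aperiodicity estimate for $J^\circ_n$ is ever needed. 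Your proposal would have to be rewritten along these lines; as it stands, the step from the Poincar\'e inequality to the conclusion does not hold.
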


%
Thus the random variables $J^\circ_n(\ell^\circ_ni)$, $i\in \N$, are close to independent and distributed according to the invariant distribution $\pi^\circ_n$.
%
The next proposition provides bounds on certain mean local times that are needed to control stretches 
 of trajectories of length $\ell^\circ_n$.
Recall that $I^{\star}_n$ is the set of isolated vertices in the partition \eqv(10.1.4). 
\begin{proposition}
  \TH(4.prop6)
Assume that $c_{\star}>3$.
Then, 
there exists a subset $\O^{\scriptscriptstyle{\textsf{SRW}}}\subset\O$ with  
$\P\left(\O^{\scriptscriptstyle{\textsf{SRW}}}\right)=1$ such that, 
on $\O^{\scriptscriptstyle{\textsf{SRW}}}\cap \O^{\star}$, for all but a finite number of indices $n$, the following holds:
there exist constants $0<C_\circ, C'_\circ<\infty$ such that,
\item{(i)}  for all 
$
z\in I^{\star}_n
$,
\be
\Eq(4.prop6.1)
\sum_{l=1}^{\ell^\circ_n-1}
P^\circ\left(J^\circ_n(l+2)=z \mid  J^\circ_n(0)=z \right)\leq \frac{C_\circ}{n\log n},
\ee
\item{(ii)} for all $1\leq l\leq L^{\star}$ 
and all $z,z'\in \del C^{\star}_{n,l}$,
\be
\Eq(4.prop6.1')
\sum_{l=1}^{\ell^\circ_n-1}
P^\circ\left(J^\circ_n(l)=z \mid  J^\circ_n(0)=z' \right)\leq
\frac{C'_\circ}{n}.
\ee
\end{proposition}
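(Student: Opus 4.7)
The plan is to couple $J_n^\circ$ to the symmetric random walk (SRW) on $\QQ_n$ and then invoke classical return-probability bounds for SRW together with the mixing estimate of Proposition \thv(4.prop1). The starting observation is structural: at every $y \in N_n^{\star}$ (where $H_n(y) = 0$ and each neighbor $y'$ satisfies $H_n(y') \leq 0$) and at every isolated vertex $z \in I_n^{\star}$ (where all neighbors lie in $N_n^{\star}$), the Metropolis weights $e^{-\b[H_n(y')-H_n(y)]^+}$ coincide over the $n$ neighbors, so that $J_n$ -- and hence $J_n^\circ$ -- jumps uniformly over the $n$ neighbors, exactly as SRW would. A direct detailed-balance computation moreover shows that $\pi_n^\circ$ is the uniform measure on $\VV_n^\circ$.

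Consequently, $J_n^\circ$ differs from SRW only when its current position lies in $D_n \equiv \{x \in \VV_n^\circ : \dist(x, \cup_l C_{n,l}^{\star}) = 1\}$, in which case a direct step into a component is replaced by a single ``skip'' across it. By Lemma \thv(10.lem1), $|\cup_l C_{n,l}^{\star}| \leq 2^n n^{-2c_{\star}+1}$, so $|D_n| \leq n|\cup_l C_{n,l}^{\star}|$ and $\pi_n^\circ(D_n) = O(n^{2-2c_{\star}})$. Proposition \thv(4.prop1) then yields that the expected number of visits of $J_n^\circ$ to $D_n$ during $[1,\ell_n^\circ]$ from any starting vertex is at most $O(\ell_n^\circ \pi_n^\circ(D_n)) = O(n^{5-2c_{\star}})$, which is $o(n^{-1})$ since $c_{\star} > 3$.

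For SRW on $\QQ_n$, Walsh-function diagonalization gives $P_z^{\mathrm{SRW}}(Y_l = z) = 0$ for odd $l$ and $P_z^{\mathrm{SRW}}(Y_{2k} = z) \sim c_k n^{-k}$ for fixed $k$, hence $\sum_{l \geq 3} P_z^{\mathrm{SRW}}(Y_l = z) = O(n^{-2})$; similarly, for $z, z'$ at bounded hypercube distance, $\sum_{l \geq 1} P_{z'}^{\mathrm{SRW}}(Y_l = z) = O(n^{-1})$. For part (i), $z \in I_n^{\star}$ lies outside $D_n$, so decomposing the sum via the first visit to $D_n$ gives: on the event that $D_n$ is never visited in $[1, \ell_n^\circ]$, the trajectory is distributed as SRW and the SRW estimate yields $O(n^{-2}) \leq C_\circ/(n \log n)$; on the complementary event (of probability $O(n^{5-2c_{\star}})$), the crude bound $\sup_{x \in D_n} E_x[\#\text{visits to }z\text{ in }\ell_n^\circ] = O(1)$ (obtained by combining the pre-mixing SRW estimate with $\ell_n^\circ \cdot \pi_n^\circ(z) = O(n^3 2^{-n})$ for post-mixing) yields a contribution $O(n^{5-2c_{\star}}) = o(1/(n \log n))$ because $c_{\star} > 3$. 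For part (ii), one argues analogously using the second SRW sum, but must additionally handle the first excursion away from the $O(n\eta_n)$-neighborhood of $C_{n,l}^{\star}$ (since $z' \in D_n$): this excursion contributes $O(\eta_n) = O(1/\log n)$ by Lemma \thv(3.lem2) and the geometry of the component.

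The principal obstacle is the careful treatment of detours in part (ii). Because $z'$ already lies in $D_n$, the first step may enter the adjacent component, and each subsequent visit to $D_n$ carries a fresh detour risk. The argument must show that detours accumulate negligibly: their per-step probability is bounded by $\pi_n^\circ(D_n) \cdot (|C_{n,l}^\star|/n) = O(\eta_n \cdot n^{2-2c_\star})$, and their aggregate effect on the Green's function at $z$ remains within the target $O(1/n)$ because the mixing bound of Proposition \thv(4.prop1) forces the remainder of the trajectory to equilibrate to $\pi_n^\circ$ on $\VV_n^\circ$, contributing only $O(n^3 \cdot 2^{-n})$ overall.
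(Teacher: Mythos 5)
Your overall decomposition — observe that $J_n^\circ$ coincides with SRW until it hits the "star" set, and split by whether the interval $[1,\ell_n^\circ]$ contains such a visit — is the right skeleton and is also the strategy of the paper's proof. However, the quantitative heart of your argument is wrong.

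The critical gap is the claim that "the expected number of visits of $J_n^\circ$ to $D_n$ during $[1,\ell_n^\circ]$ from any starting vertex is at most $O(\ell_n^\circ\,\pi_n^\circ(D_n))=O(n^{5-2c_\star})$," which you attribute to Proposition \thv(4.prop1). This is not what that proposition gives. Proposition \thv(4.prop1) controls the two-step marginal under the \emph{stationary} initial distribution $\pi_n^\circ$, and its mixing statement (cf.\ \eqv(4.4.6)) only takes effect after $\ell_n^\circ$ steps; it says nothing about the occupation measure of $J_n^\circ$ from a fixed point \emph{within} the window $[1,\ell_n^\circ]$, which is exactly the pre-mixing regime. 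Indeed your bound is plainly false as a uniform statement over $z\in I_n^\star$: if $z$ happens to be at hypercube distance $2$ from some component $C_{n,l}^\star$, then $z$ has a neighbor in $D_n$ and the hitting probability of $D_n$ at step one is already $\geq 1/n \gg n^{5-2c_\star}$ for $c_\star>3$. The proposition must hold for \emph{all} $z\in I_n^\star$, and such $z$ do occur.

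The correct replacement for this step is Lemma \thv(4.lem1), the percolation-cloud hitting-time estimate of \cite{CG08}. Starting from a fixed vertex, the SRW hits $V_n(\rho_n^\star)$ (hence a fortiori $\cup_l C_{n,l}^\star$ or $D_n$) within $\ell_n^\circ$ steps with probability bounded only by $\alpha_n = O(1/\log n)$, not $O(n^{5-2c_\star})$; the $O(1/\log n)$ is an irreducible boundary-layer error coming from the pre-mixing regime. This is why the proposition's right-hand side carries a factor $1/(n\log n)$ rather than $1/n^2$: your claimed conclusion is actually stronger than the proposition and stronger than what Lemma \thv(4.lem1) permits one to prove. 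Moreover, because each exit from a component $C_{n,l}^\star$ deposits the chain at a boundary vertex which is again in $D_n$, the hitting events recur with the same $O(1/\log n)$ rate; one needs the explicit iteration argument of the paper (the recursion \eqv(4.prop6.28)--\eqv(4.prop6.29) with ratio $\alpha_n\leq c''/\log n$, iterated $\lfloor\log n\rfloor$ times) to sum the geometric series of detour contributions. Your "principal obstacle" paragraph gestures at this accumulation but again estimates the per-return risk by a stationary-occupation bound, which is not available before mixing. Until you replace the mixing-based bound by Lemma \thv(4.lem1) and set up the iteration, the proof does not close.
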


SRW in  $\O^{\scriptscriptstyle{\textsf{SRW}}}$ above stands for Symmetric Random Walk. The reason for this will become clear from the proof (see Lemma \thv(4.lem1)).
%
One may however already observe that the behavior of  $J^\circ_n$ in Proposition \thv(4.prop1) and Proposition \thv(4.prop6) is reminiscent of SRW (see e.g.~Section 3 of \cite{G10b})
and
as the next proposition shows, so is  that of its invariant measure.

\begin{proposition}
  \TH(4.prop2)
Assume that $c_{\star}>2$. For all $\b>0$,
\be
\pi^\circ_n(x)={1}/{|\VV^\circ_{n}|},\quad x\in\VV^\circ_{n},
\Eq(4.prop2.1)
\ee
where, on $\O^{\star}$, for all but a finite number of indices $n$,
\be
\textstyle
|\VV^\circ_{n}|=2^n\left[1- n^{-2c_{\star}+1}(1+\OO(n^{-(c_{\star}-1)}))\right].
\Eq(4.prop2.0)
\ee
\end{proposition}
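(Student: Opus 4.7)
The plan is straightforward: use the explicit expression \eqv(2.1.6) of $\pi^\circ_n$ as the normalized restriction of the jump-chain invariant measure $\pi_n$ to $\VV^\circ_n$, and show that $\pi_n$ is in fact constant on $\VV^\circ_n$. Then \eqv(4.prop2.1) follows by renormalization, while \eqv(4.prop2.0) is a direct consequence of Lemma \thv(10.lem1).

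First I would record the jump-chain invariant measure. Since the Metropolis rates \eqv(1.1.10) are reversible with respect to $G_{\beta,n}$ by construction, i.e.\ $G_{\beta,n}(x)\lambda_n(x,y)=G_{\beta,n}(y)\lambda_n(y,x)$, combining this with the detailed-balance equation $\pi_n(x)p_n(x,y)=\pi_n(y)p_n(y,x)$ for $J_n$ immediately yields $\pi_n(x)\propto\lambda_n(x)e^{-\beta H_n(x)}$. Expanding $\lambda_n(x)$ via \eqv(1.1.11) and pulling the $e^{-\beta H_n(x)}$ factor inside the exponential gives
\begin{equation*}
\pi_n(x)\;\propto\;\frac{1}{n}\sum_{y:(x,y)\in\EE_n}e^{-\beta\max(H_n(x),H_n(y))}.
\end{equation*}

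Next I would split $\VV^\circ_n=N^\star_n\cup I^\star_n$ using \eqv(10.1.4)--\eqv(10.1.4bis) and evaluate this sum on each piece. If $x\in N^\star_n$ then $H_n(x)=0$ and, since $H_n\leq 0$ everywhere by \eqv(10.1.2), every neighbor $y$ contributes a $1$. If $x\in I^\star_n$ then $H_n(x)<0$ but, by the very definition of an isolated occupied vertex, no neighbor of $x$ lies in $V_n(\rho^\star_n)$, hence $H_n(y)=0$ for every such $y$ and again $\max(H_n(x),H_n(y))=0$. In both cases the sum collapses to $n$, so $\pi_n$ takes the same value at every $x\in\VV^\circ_n$. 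Renormalizing according to \eqv(2.1.6) yields \eqv(4.prop2.1).

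For \eqv(4.prop2.0) I would simply write $|\VV^\circ_n|=2^n-\sum_{l=1}^{L^\star}|C^\star_{n,l}|$ using \eqv(2.1.1) and \eqv(10.1.4bis), and substitute the asymptotic \eqv(10.lem1.3) of Lemma \thv(10.lem1), which holds on $\O^\star$ for all $n$ large enough (this is where $c_\star>2$ enters). There is no real obstacle in this proof: the content of the proposition is essentially that the truncation rule \eqv(10.1.2) has been engineered to make every edge with at least one endpoint in $\VV^\circ_n$ either horizontal or downhill in energy, so that the Metropolis acceptance factor on such edges is always $1$ and the product $\lambda_n(x)e^{-\beta H_n(x)}$ is constant on $\VV^\circ_n$.
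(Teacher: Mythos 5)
Your proof is correct and follows essentially the same route as the paper's: both identify $\pi_n(x)\propto\sum_{y:(x,y)\in\EE_n}e^{-\beta\max(H_n(x),H_n(y))}$ (the paper writes this down directly via \eqv(1.1.15), you derive it by combining the two detailed-balance relations), both observe that the max vanishes on every edge incident to $\VV^\circ_n$, and both then count $|\VV^\circ_n|$ via \eqv(10.lem1.3). The only cosmetic difference is that you split the case analysis into $N^\star_n$ and $I^\star_n$ where the paper states the same fact more tersely as a single claim about edges with an endpoint in $\VV^\circ_n$.
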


Let us immediately give the short proof of Proposition \thv(4.prop2).

\begin{proof}
Because the process $X_n$ has a unique reversible invariant measure, $G_{\b,n}$,  
the jump chain also has unique reversible invariant measure, which is the measure
defined on $\VV_n$ by
\be
\pi_n(x)
=\frac{\l_n(x)G_{\b,n}(x)}{\sum_{x\in\VV_n}\l_n(x)G_{\b,n}(x)}
=
\frac
{
\sum_{y:(x,y)\in\EE_n}e^{-\b\max\left(H_n(y),H_n(x)\right)}
}{
\sum_{x\in\VV_n}\sum_{y:(x,y)\in\EE_n}e^{-\b\max\left(H_n(y),H_n(x)\right)}
}.
\Eq(1.1.15)
\ee
By this and \eqv(2.1.6)
$
\pi^\circ_n(x)
=
(nW^{\circ}_{\b,n})^{-1}
\textstyle 
\sum_{y:(x,y)\in\EE_n}e^{-\b\max\left(H_n(y),H_n(x)\right)}
$, 
$x\in\VV^\circ_n$,
where
$
W^{\circ}_{\b,n}=n^{-1}\sum_{x\in\VV^{\circ}_n}\sum_{y:(x,y)\in\EE_n}e^{-\b\max\left(H_n(y),H_n(x)\right)}
$.
But by  \eqv(10.1.4) and the definition \eqv(2.1.1) of $\VV^\circ_n$,
$
\max\left(H_n(y),H_n(x)\right)=0
$
whenever one of the two vertices
$\{x,y\}$ lies in $\VV^\circ_n$. Hence
$
W^{\circ}_{\b,n}=|\VV^\circ_{n}|
$,
yielding \eqv(4.prop2.1). Since
$
|\VV^\circ_{n}|=2^n-\sum_{l=1}^{{L^{\star}}} \left|C^{\star}_{n,l}\right|
$,
\eqv(4.prop2.0) follows from \eqv(10.lem1.3).
\end{proof}

Our last proposition contains a rough lower bound on hitting times at stationarity that is needed in the proof 
of Theorem \thv(1.theo3.Aging). 
Write
\be
T^\circ(A)\equiv\inf\{i\in\N \mid J^\circ_n(i)\in A\},\quad A\subseteq \VV^\circ_{n}.
\ee
\begin{proposition}
  \TH(4.prop5)
Assume that $c_{\star}>1+\log 4$.
On $\O_1\cap \O^{\star}$, for all but a finite number of
indices $n$, we have that for all $A\subseteq \VV^\circ_{n}$ and  for $I^{\star}_n$ as in \eqv(10.1.4),
\be
P^\circ_{\pi^\circ_n}\left(T^\circ(A\cap I^{\star}_n)>t\right)
\geq
(1+o(1))
\exp\left({-2t|A\cap I^{\star}_n|/|\VV^\circ_{n}|}\right)
-\OO(\sfrac{1}{\log n}), \quad t>0.
\Eq(4.prop5.1)
\ee

\end{proposition}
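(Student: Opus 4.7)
Set $B := A \cap I_n^\star$, $\alpha_n := |B|/|\VV_n^\circ| = \pi_n^\circ(B)$ (the last equality by Proposition \thv(4.prop2)), and abbreviate $\tau := T^\circ(B)$. The plan is to combine the rapid mixing of $J_n^\circ$ from Proposition \thv(4.prop1) with a union bound, splitting into a short-time regime $t\alpha_n \le 1/2$ and a long-time regime $t\alpha_n > 1/2$. In the short-time regime, stationarity and the union bound give $P^\circ_{\pi_n^\circ}(\tau \le t) \le \sum_{i=1}^{t} P^\circ_{\pi_n^\circ}(J_n^\circ(i) \in B) = t\alpha_n$, so $P^\circ_{\pi_n^\circ}(\tau > t) \ge 1 - t\alpha_n \ge \exp(-2t\alpha_n)$ via the elementary inequality $1-x \ge e^{-2x}$ on $[0,1/2]$; this already yields the claimed bound with no error term.

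For the long-time regime I would iterate this one-block estimate in blocks of a length $s$ chosen so that $s\alpha_n \le 1/2$ (to keep the one-block bound useful) while $s \ge \ell_n^\circ$ (so that the mixing estimate of Proposition \thv(4.prop1) has taken effect within one block). Writing $t = ks + r$ with $0 \le r < s$, so that $\{\tau > (k+1)s\} \subseteq \{\tau > t\}$, the strong Markov property at times $js$ yields
\begin{equation*}
P^\circ_{\pi_n^\circ}(\tau > (k+1)s) = E^\circ_{\pi_n^\circ}\bigl[\mathbbm{1}_{\{\tau > ks\}}\, P^\circ_{J_n^\circ(ks)}(\tau > s)\bigr],
\end{equation*}
and Proposition \thv(4.prop1) (with $\delta_n \le 2^{-n}$) allows me to replace $P^\circ_{J_n^\circ(ks)}(\tau > s)$ by $P^\circ_{\pi_n^\circ}(\tau > s) \ge 1 - s\alpha_n$ at multiplicative cost $1 \pm O(\delta_n)$. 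Iterating $k+1$ times gives $P^\circ_{\pi_n^\circ}(\tau > (k+1)s) \ge (1 - s\alpha_n)^{k+1}(1 - O(k\delta_n))$; a final application of $1-x \ge e^{-2x}$ then produces $(1+o(1))\exp(-2(k+1)s\alpha_n) \ge (1+o(1))\exp(-2t\alpha_n)$, since $s\alpha_n = o(1)$ and $k\delta_n \ll 1$ are both absorbed into the $(1+o(1))$ factor.

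The main obstacle is the joint requirement $s \ge \ell_n^\circ$ and $s\alpha_n \le 1/2$, which forces $\alpha_n = O(n^{-3})$. This is comfortably satisfied in the main application of the proposition (Theorem \thv(1.theo3.Aging), where $c_\star > 3$), but it is not immediate from the weaker stated hypothesis $c_\star > 1 + \log 4$. To cover the intermediate regime one must bound $P^\circ_x(\tau \le s)$ uniformly in $x \in \VV_n^\circ \setminus B$ during the non-mixed initial portion of each block, which requires using the near-SRW behavior of $J_n^\circ$ together with the sparseness $|I_n^\star|/|\VV_n^\circ| = O(n^{-c_\star})$; this short-time estimate is where the additive $O(1/\log n)$ error in the stated lower bound originates.
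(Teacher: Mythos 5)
Your proposal takes a genuinely different route from the paper, and it has a real gap that goes beyond the one you flag.

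The paper does not block by mixing time at all. It works first with the continuous-time chain $J^{*}_n$ (rate-one exponential holding times, jump chain $J^\circ_n$) and applies the Aldous--Brown exponential-approximation inequality for hitting times of reversible chains (Theorem~3 and Lemma~2 of~\cite{AB1}): for $B\subseteq \VV^\circ_n$,
$$P^{*}_{\pi^\circ_n}\left(T^{*}(B)>t\right)\geq\Bigl(1-\t^\circ_n\sfrac{q(B,B^c)}{1-\pi^\circ_n(B)}\Bigr)\exp\Bigl(-t\sfrac{q(B,B^c)}{1-\pi^\circ_n(B)}\Bigr),$$
where $q(B,B^c)=\pi^\circ_n(B)$ because $B\subseteq I^{\star}_n$, and $\t^\circ_n\leq \sfrac12 n^2(1+o(1))$ by Proposition~\thv(4.prop3). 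This is valid for all $t$ uniformly, with no blocking and no constraint tying $\pi^\circ_n(B)$ to $\ell^\circ_n$; the prefactor is $1-n^{-(c_\star-2)}(1+o(1))$, which is $1-o(1)$ already under $c_\star>2$, comfortably covered by $c_\star>1+\log 4$. The discrete-time statement then follows by comparing $T^\circ(B)$ with $T^{*}(B)$ through the Poisson clock $s_n(\cdot)$ and a large-deviation bound, using Lemma~\thv(4.lem1) (SRW hitting times of percolation clouds) to get the a priori bound $P^\circ_{\pi^\circ_n}(T^\circ(B)>l_n)\geq 1-\OO(1/\log n)$ that justifies the time change; that is where the additive $\OO(1/\log n)$ comes from.

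Your blocking argument has two problems, and only one of them is the one you name. The one you do name --- that $s\geq\ell^\circ_n$ and $s\alpha_n\leq 1/2$ force $\alpha_n=\OO(n^{-3})$ and hence $c_\star\geq 3$ --- is real and is not repaired by the vague remark that one ``must bound $P^\circ_x(\tau\leq s)$'' using near-SRW behavior; that bound is precisely the content of the proposition in the intermediate regime, so the argument as sketched is circular there. The second problem you do not name: the step where you ``replace $P^\circ_{J^\circ_n(ks)}(\tau>s)$ by $P^\circ_{\pi^\circ_n}(\tau>s)$ at multiplicative cost $1\pm\OO(\delta_n)$'' is not a valid use of Proposition~\thv(4.prop1). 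That proposition controls the \emph{one-time marginal} at lag $\ell^\circ_n$; it says nothing about the full trajectory during the first $\ell^\circ_n$ steps of a block, which is exactly when the chain, started from an arbitrary $x$, could hit $B$. To make the blocking work one needs a uniform bound on $\sup_{x\notin B}P^\circ_x(T^\circ(B)\leq \ell^\circ_n)$ \emph{before} the chain has mixed, and that is again the hard estimate. The Aldous--Brown route sidesteps both issues entirely by working at the level of the spectral gap rather than the mixing time.
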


The rest of this section is organized as follows.  The proof of Proposition \thv(4.prop1), 
given in Subsection \thv(4.5),  relies on a bound on the spectral gap of $J^\circ_n$ established 
in Subsection \thv(4.4), which itself relies on estimates on the transition probabilities 
of  $J^\circ_n$ that are established in Subsection \thv(4.0).
The proof of  Proposition \thv(4.prop6) and Proposition \thv(4.prop5)  are carried out in Subsection \thv(4.6)
and Subsection \thv(4.2), respectively



\subsection{Estimates on the transition probabilities}
 \label{4.0}

We now examine the transition probabilities \eqv(2.1.5) of  $J^\circ_n$. 
In what follows we denote by $G^\star (A)$  the complete graph on $A$.
Let $G^\circ(\VV^\circ_{n})$ be the graph with vertex set $\VV^\circ_{n}$ such that 
$(x,y)$ is an egde of the graph if and only if $p^\circ_n(x,y)>0$. In view of \eqv(2.1.4)-\eqv(2.1.5),
\be
G^\circ(\VV^\circ_{n})=G(\VV^\circ_{n})\bigcup_{1\leq l\leq L^{\star}}  G^\star(\del C^{\star}_{n,l}).
\Eq(4.0.1)
\ee

\begin{proposition}
  \TH(4.prop4)
For all $(x,y)\in G(\VV^\circ_{n})$,
\be
p^\circ_n(x,y)
=1/n,
\Eq(4.prop4.2)
\ee
and, for all $1\leq l\leq L^{\star}$ and all $(x,y)$ in $G^\star(\del C^{\star}_{n,l})$, 
\be
p^\circ_n(x,y) 
=
\frac{m^{\star}_{n,l}(x)m^{\star}_{n,l}(y)}
{\sum_{z\in \del C^{\star}_{n,l}}m^{\star}_{n,l}(z)}(1+o(1)),
\Eq(4.prop4.1)
\ee
where $nm^{\star}_{n,l}(x)$ is the number of vertices of $C^{\star}_{n,l}$ that are are distance one from $x$,
\be
m^{\star}_{n,l}(x)\equiv
n^{-1}|\left\{y\in C^{\star}_{n,l}\mid\dist(y,x)=1\right\}|,\,\,\, x\in\del C^{\star}_{n,l}.
\Eq(4.prop4.0)
\ee
\end{proposition}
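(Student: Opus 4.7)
The plan is to work from the definition $p^\circ_n(x,y)=P_x(J_n(T^\circ_{n,1})=y)$ in \eqv(2.1.5), decomposing the first-return event according to whether the chain reaches $y$ in a single step (a ``direct'' transition) or after an excursion through some $C^{\star}_{n,l}$.

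For \eqv(4.prop4.2), I compute $p_n(x,y')$ directly from \eqv(1.1.12) for arbitrary $x\in\VV^\circ_n$. The key observation is that when $x\in\VV^\circ_n=N^{\star}_n\cup I^{\star}_n$, all $n$ exponential weights $e^{-\b[H_n(y')-H_n(x)]^+}$ in the denominator coincide, so every neighbor is equally likely. Indeed, if $x\in N^{\star}_n$ then $H_n(x)=0$ and $H_n(y')\leq 0$, so each weight is $1$; if $x\in I^{\star}_n$ then by the very definition of $I^{\star}_n$ every neighbor $y'$ of $x$ lies in $N^{\star}_n$, whence $H_n(y')=0$ and each weight is $e^{\b H_n(x)}$. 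In both cases $p_n(x,y')=1/n$, which gives the direct-transition contribution to $p^\circ_n(x,y)$.

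For \eqv(4.prop4.1), the excursion contribution to $p^\circ_n(x,y)$ reads
$$\sum_{z\in C^{\star}_{n,l},\,\dist(x,z)=1}p_n(x,z)\,P_z\bigl(J_n(\s^{\star}_{n,l})=y\bigr),$$
where $\s^{\star}_{n,l}\equiv\inf\{i\geq 1:J_n(i)\in\del C^{\star}_{n,l}\}$. By the formula just proved each $p_n(x,z)=1/n$, and since there are $nm^{\star}_{n,l}(x)$ such entry points, the sum equals $m^{\star}_{n,l}(x)$ times the average exit-point distribution at $y$. To identify this exit distribution I use two ingredients. A \emph{flux-balance} computation pins down the stationary exit distribution: by reversibility of $J_n$ with respect to $\pi_n$, for every boundary edge $\{w,y\}$ with $w\in C^{\star}_{n,l}$ and $y\in\del C^{\star}_{n,l}$ one has $\pi_n(w)p_n(w,y)=\pi_n(y)p_n(y,w)$; but $\pi_n$ is constant on $\VV^\circ_n$ (as in the proof of \eqv(4.prop2.1)) and $p_n(y,w)=1/n$ by \eqv(4.prop4.2), so every such edge carries the same stationary flux. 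Summing over the $nm^{\star}_{n,l}(y)$ edges incident to $y$ yields that the stationary fraction of $C^{\star}_{n,l}$-exits ending at $y$ equals exactly $m^{\star}_{n,l}(y)/\sum_{z'\in\del C^{\star}_{n,l}}m^{\star}_{n,l}(z')$, independently of where the excursion entered. A \emph{quasi-stationary} argument then shows that $P_z(J_n(\s^{\star}_{n,l})=\cdot)$ from any fixed entry $z$ coincides with this stationary law up to a multiplicative $(1+o(1))$. This uses the spectral gap estimate of Corollary \thv(5.lem4), which gives $\vartheta_{n,l}(1)/\vartheta_{n,l}(0)\leq\exp(-|C^{\star}_{n,l}|^{-4}\varrho_{n,l}(1)/\varrho_{n,l}(0))$, so after $k$ internal steps the chain $J^{\star}_{n,l}$ is within $(\vartheta_{n,l}(1)/\vartheta_{n,l}(0))^{k}$ of its quasi-stationary distribution, while Lemma \thv(5.lem1) shows the mean exit time is of order $\varrho_{n,l}(0)|C^{\star}_{n,l}|/n$; by Lemma \thv(5.lem7) and the cardinality bound \eqv(10.lem1.4), the ratio of the mixing scale $\theta^{\star}_{n,l}$ in \eqv(5.prop1.1) to this exit time is exponentially small in $n$. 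Combining flux identification with the quasi-stationary approximation yields \eqv(4.prop4.1).

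The main obstacle is quantifying the $(1+o(1))$ uniformly in the entry point $z$: the right Perron eigenvector $v^{\star}_{n,l}(z)$ enters the spectral representation of $P_z(J_n(\s^{\star}_{n,l})=\cdot)$ as a prefactor and must be shown to cancel upon normalization of the exit distribution. This is where the a priori pointwise lower bound on $v^{\star}_{n,l}$ from Lemma \thv(5.lem5), together with the wide separation of timescales ensured by Lemmata \thv(5.lem1)--\thv(5.lem2), become essential.
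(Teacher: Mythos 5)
Your proof is correct in substance but follows a genuinely different route from the paper's. For \eqv(4.prop4.2) you argue exactly as the paper does. For \eqv(4.prop4.1), the paper also starts from the decomposition $p^\circ_n(x,y)=\sum_{z\in C^{\star}_{n,l}}p_n(x,z)\,P_z(J_n(T^{\star}_{n,l})=y)$ (its \eqv(4.prop4.7)) and reduces everything to showing that the exit law $P_z(J_n(T^{\star}_{n,l})=\cdot)$ is asymptotically independent of $z$ and to identifying the common value. Where you diverge is in both steps. The paper proves entry-point independence via Lemma~\thv(5.lem8): starting from any interior $z$, the chain visits any other fixed interior vertex $\bar z$ before exiting with probability $1-\OO(|\del C^{\star}_{n,l}|/\varrho_{n,l}(1))$, which couples the exit laws from $z$ and $\bar z$ up to a tiny additive error; you instead invoke quasi-stationary convergence, using the spectral gap of Corollary~\thv(5.lem4) and the a priori bound on $v^{\star}_{n,l}$ from Lemma~\thv(5.lem5). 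To identify the limit, the paper uses the two a~priori relations $p^\circ_n(x,y)=p^\circ_n(y,x)$ (reversibility of $J^\circ_n$) and $m^{\star}_{n,l}(y)=\sum_{x\in\del C^{\star}_{n,l}}p^\circ_n(x,y)$ (from $\sum_y p^\circ_n(x,y)=1$), together with a contradiction argument in Lemma~\thv(5.lem9); you instead identify the limit via a stationary-flux computation for the underlying chain $J_n$. Your route is conceptually transparent, but the coupling argument of Lemma~\thv(5.lem8) is more economical for the uniformity step: it gives a direct additive bound on $|P_z(\text{exit at }y)-P_{\bar z}(\text{exit at }y)|$ without needing to control the contribution of excursions that exit before the mixing time $\theta^{\star}_{n,l}$ or to track the non-Perron corrections uniformly in the entry point, which your sketch leaves implicit. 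Also note a small slip: you cite \eqv(4.prop4.2) to justify $p_n(y,w)=1/n$ for boundary edges, but \eqv(4.prop4.2) concerns $p^\circ_n$; the correct justification is the direct computation you give at the start (all weights coincide when the base point lies in $\VV^\circ_n$).
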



\begin{proof} 
Clearly,  if $(x,y)\in G(\VV^\circ_{n})$,
$
p^\circ_n(x,y)=p_n(x,y)=1/n
$,
yielding \eqv(4.prop4.2). We now turn to \eqv(4.prop4.1). 
Let us first state  two useful a piori relations
\bea
\Eq(4.prop4.4)
p^\circ_n(x,y)&=&p^\circ_n(y,x)\quad \forall (x,y)\in G^\circ(\VV^\circ_{n}),\\
\Eq(4.prop4.5)
m^{\star}_{n,l}(y)
&=&
\textstyle
\sum_{x\in\del C^{\star}_{n,l} }p^\circ_n(x,y)\quad \forall y\in\del C^{\star}_{n,l}.
\eea
Eq.~\eqv(4.prop4.4) is reversibility. Eq.~\eqv(4.prop4.5) follows from the relation 
$
\sum_{y} 
p^\circ_n(x,y)=1
$, 
 \eqv(4.prop4.2),  \eqv(4.prop4.4), and the definition \eqv(4.prop4.0).
 
Given $A\subseteq \VV_n$ write
$
T(A)\equiv\inf\{i\in\N \mid J_n(i)\in A\}
$.
Also recall that for $1\leq l\leq L^{\star}$,
$
T^{\star}_{n,l}\equiv\inf\{i\in\N \mid J_n(i)\in\del C^{\star}_{n,l}\}
$.
Then, for all $(x,y)\in G^\star(\del C^{\star}_{n,l})$, 
\be
\textstyle
p^\circ_n(x,y)=\sum_{z\in C^{\star}_{n,l}}p_n(x,z) P_z\left(J_n(T^{\star}_{n,l})=y\right).
\Eq(4.prop4.7)
\ee
The next lemma establishes that the exit distribution from $C^{\star}_{n,l}$ is independent from the entrance point, provided that the exit probability is not too small.

\begin{lemma}
 \TH(5.lem8)
For any two distinct vertices $z$ and $\bar z$ in $C^{\star}_{n,l}$ and any $y\in\del C^{\star}_{n,l}$,
 \be
\Eq(5.lem8.1)
P_z\left(J_n(T^{\star}_{n,l})=y\right)
=(1-\tilde\epsilon_n)P_{\bar z}\left(J_n(T^{\star}_{n,l})=y\right)+\tilde\epsilon_n,
\ee
where $\tilde\epsilon_n \leq {|\del C^{\star}_{n,l}|}/{\varrho_{n,l}(1)}$.
\end{lemma}

\begin{proof}[Proof of Lemma \thv(5.lem8)] 
Note that for any two vertices $z$ and $\bar z$ in $C^{\star}_{n,l}$,
\bea
\Eq(5.lem8.2)
P_z\left(T^{\star}_{n,l}\leq T(\bar z)\right)
&=&\textstyle\sum_{y\in \del C^{\star}_{n,l}}P_z\left(T(y)\leq T(\bar z\cup(\del C^{\star}_{n,l}))\right)\\
\Eq(5.lem8.2')
&=&\textstyle\sum_{y\in \del C^{\star}_{n,l}}\frac{\pi_n(y)}{\pi_n(z)}P_y\left(T(z)\leq T(\bar z\cup(\del C^{\star}_{n,l}))\right)\\
\Eq(5.lem8.2'')
&\leq&
{|\del C^{\star}_{n,l}|}{\varrho^{-1}_{n,l}(1)},
\eea
where the second equality is reversibility.
Next decompose the event $\{J_n(T^{\star}_{n,l})=y\}$ according to whether 
$\{T(\bar z)\geq T^{\star}_{n,l}\}$ or $\{T(\bar z)< T^{\star}_{n,l}\}$: by the strong Markov property,
\be
P_z\left(T(\bar z)< T^{\star}_{n,l}, J_n(T^{\star}_{n,l})=y\right)
=P_z\left(T(\bar z)< T^{\star}_{n,l}\right)P_{\bar z}\left(J_n(T^{\star}_{n,l})=y\right),
\Eq(5.lem8.3)
\ee
whereas
$
P_z\left(T^{\star}_{n,l}\leq T(\bar z), J_n(T^{\star}_{n,l})=y\right)
\leq 
P_z\left(T^{\star}_{n,l}\leq T(\bar z)\right)
$.
Eq.~\eqv(5.lem8.1) now follows.
\end{proof}

Now pick an arbitrary vertex $z^{\star}_{n,l}\in C^{\star}_{n,l}$ and denote by $\LL^{\star}_{n,l}$ the exit distribution
\be
\Eq(5.lem9.0)
\LL^{\star}_{n,l}(y)=
P_{z^{\star}_{n,l}}\left(J_n(T^{\star}_{n,l})=y\right),
\quad  y\in \del C^{\star}_{n,l}.
\ee

\begin{lemma}
 \TH(5.lem9)
For all $z\in C^{\star}_{n,l}$ and $y\in \del C^{\star}_{n,l}$
\be
P_z\left(J_n(T^{\star}_{n,l})=y\right)=(1+o(1))\LL^{\star}_{n,l}(y).
\Eq(5.lem9.1)
\ee
\end{lemma}

\begin{proof}[Proof of Lemma \thv(5.lem9)]  We readily deduce from Lemma \thv(5.lem8) that
if $y\in\del C^{\star}_{n,l}$  is such that
$
\LL^{\star}_{n,l}(y)\geq n\tilde\epsilon_n
$,
then
$
P_z\left(J_n(T^{\star}_{n,l})=y\right)=(1+o(1))\LL^{\star}_{n,l}(y)
$,
otherwise
\be
\textstyle
P_z\left(J_n(T^{\star}_{n,l})=y\right)<(n+1)\tilde\epsilon_n.
\Eq(5.lem9.3)
\ee
Let us prove by contradiction that 
$
\LL^{\star}_{n,l}(y)\geq n\tilde\epsilon_n
$
for all $y\in\del C^{\star}_{n,l}$.
Assume that there exists $y\in\del C^{\star}_{n,l}$ such that
$
\LL^{\star}_{n,l}(y)< n\tilde\epsilon_n
$.
Then, by \eqv(5.lem9.3) and \eqv(4.prop4.7),
\be
\textstyle
p^\circ_n(x,y)
\leq (n+1)\tilde\epsilon_n
\sum_{z\in C^{\star}_{n,l}}p_n(x,z)
= (n+1)\tilde\epsilon_n m^{\star}_{n,l}(x).
\Eq(5.lem9.4)
\ee
Summing both sides over $x\in\del C^{\star}_{n,l}$,
\be
\textstyle
\sum_{x\in\del C^{\star}_{n,l} }p^\circ_n(x,y)
\leq
(n+1)\tilde\epsilon_n \sum_{x\in\del C^{\star}_{n,l} }m^{\star}_{n,l}(x)
\leq
n^5\varrho^{-1}_{n,l}(1)
\ll n^{-1}.
\Eq(5.lem9.5)
\ee
However, by \eqv(4.prop4.5),
$
\sum_{x\in\del C^{\star}_{n,l} }p^\circ_n(x,y)=m^{\star}_{n,l}(x)\geq n^{-1},
$
which is a contradiction.
\end{proof}

We are now 
ready
to conclude the proof of \thv(5.lem8). By  \eqv(4.prop4.7) and \eqv(5.lem9.1),
\be
p^\circ_n(x,y)=m^{\star}_{n,l}(x)\LL^{\star}_{n,l}(y)(1+o(1)).
\Eq(4.prop4.8)
\ee
Inserting this in \eqv(4.prop4.4) and summing both sides over $x\in\del C^{\star}_{n,l}$ we get
\be
\textstyle
\LL^{\star}_{n,l}(y)=\frac{m^{\star}_{n,l}(y)}{\sum_{x\in\del C^{\star}_{n,l}}m^{\star}_{n,l}(x)}(1+o(1)),
\Eq(4.prop4.9)
\ee
and inserting this in turn in \eqv(4.prop4.8) yields \eqv(4.prop4.1). The proof of the proposition is done.
\end{proof}


\subsection
{Bound on the spectral gap of $J^\circ_n$.}
 \label{4.4}
 
Let
\be
\Eq(4.prop3.0)
1=\vartheta^\circ_n(0)> \vartheta^\circ_n(1)\geq \vartheta^\circ_n\dots\geq \vartheta_{n,l}(\left|\VV^\circ_n\right|-1)>-1
\ee
denote the eigenvalues of the matrix with entries \eqv(2.1.5) and set $\t^\circ_n=1/(1-\vartheta^\circ_n(1))$.
The proof of Proposition \thv(4.prop1) relies on the following lower bound on the spectral gap of $J^\circ_n$.

\begin{proposition}
  \TH(4.prop3)
  Assume that $c_{\star}>1+\log 4$.
For all $\b>0$, there exists a subset $\O_2\subset\O$ with  $\P\left(\O_2\right)=1$ such that, 
on $\O_2$, for all but a finite number of indices $n$, 
\be
\t^\circ_n\leq  \sfrac{1}{2}n^2(1+o(1)).
\Eq(4.prop3.1)
\ee
\end{proposition}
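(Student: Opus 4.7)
The plan is to apply the Diaconis--Stroock canonical paths inequality to $J_n^\circ$, exploiting that the invariant measure $\pi_n^\circ\equiv 1/|\VV_n^\circ|$ is uniform on $\VV_n^\circ$ (Proposition \thv(4.prop2)) and that the one-step transition probabilities are explicit, being either $1/n$ or of the form \eqv(4.prop4.1) according to Proposition \thv(4.prop4). Writing $Q(e)=\pi_n^\circ(x')p_n^\circ(x',y')$ for an edge $e=(x',y')\in G^\circ(\VV_n^\circ)$, the goal is to bound
\be
\tau_n^\circ-1\leq \max_{e\in G^\circ(\VV_n^\circ)}\frac{1}{Q(e)}\sum_{\g_{x,y}\ni e}|\g_{x,y}|\pi_n^\circ(x)\pi_n^\circ(y)
\ee
by $\tfrac12 n^2(1+o(1))$ for a judicious family $\{\g_{x,y}\}_{x,y\in\VV_n^\circ}$ of paths in $G^\circ(\VV_n^\circ)$.

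My choice of $\g_{x,y}$ is the standard hypercube canonical path, which flips the coordinates where $x$ and $y$ disagree in the fixed order $1,2,\dots,n$, modified as follows: whenever this straight-line path enters a trapping component $C_{n,l}^\star$ at a vertex $x'\in\del C_{n,l}^\star$ and subsequently exits at $y'\in\del C_{n,l}^\star$, I collapse its entire sojourn in $C_{n,l}^\star$ to the single long-jump edge $(x',y')$, which lies in $G^\star(\del C_{n,l}^\star)\subset G^\circ(\VV_n^\circ)$ by \eqv(4.0.1). Since distinct components $C_{n,l}^\star$ are pairwise non-adjacent in the hypercube, each excursion away from $\VV_n^\circ$ is unambiguously associated to exactly one $C_{n,l}^\star$, so the construction is well-defined; the resulting $\g_{x,y}$ has length $\leq n$.

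On a hypercube edge $e\in G(\VV_n^\circ)$ one has $p_n^\circ(x',y')=1/n$ by \eqv(4.prop4.2), hence $Q(e)=(n|\VV_n^\circ|)^{-1}$. The congestion at such an $e$ is unchanged by the collapsing procedure (which only rewrites excursions inside components, while no edge of $G(\VV_n^\circ)$ lies inside any $C_{n,l}^\star$), so it is bounded by the classical count of at most $2^{n-1}$ flip-in-order paths through a fixed hypercube edge. Combining this with $\pi_n^\circ(x)\pi_n^\circ(y)=|\VV_n^\circ|^{-2}$, $|\g_{x,y}|\leq n$, and $|\VV_n^\circ|=2^n(1-o(1))$ from Proposition \thv(4.prop2) gives
\be
\max_{e\in G(\VV_n^\circ)}\frac{1}{Q(e)}\sum_{\g_{x,y}\ni e}|\g_{x,y}|\pi_n^\circ(x)\pi_n^\circ(y)\leq \frac{n\cdot 2^{n-1}\cdot n}{|\VV_n^\circ|}=\tfrac12 n^2(1+o(1)),
\ee
which is precisely the target right-hand side.

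The hard part will be bounding the contribution of the long-jump edges $e=(x',y')\in G^\star(\del C_{n,l}^\star)$, because \eqv(4.prop4.1) together with Lemma \thv(10.lem1) shows that $p_n^\circ(x',y')$ can be as small as $\asymp 1/(n^2|C_{n,l}^\star|)$. The saving is that only those pairs $(x,y)$ whose hypercube canonical path enters $C_{n,l}^\star$ exactly at $x'$ and exits exactly at $y'$ route through $e$; such pairs are parametrized by the bits of $x$ on coordinates flipped before entry and the bits of $y$ on coordinates flipped after exit, and using $|C_{n,l}^\star|\leq n\eta_n\ll n$ (Lemma \thv(3.lem2), applied with $\eta_n=1/\log n$) together with the degree bound $|\del C_{n,l}^\star|\leq n|C_{n,l}^\star|$ from Lemma \thv(10.lem1), the enumeration shows the corresponding maximum to be of lower order $o(n^2)$. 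The almost-sure set $\O_2$ is then the intersection of $\O^\star$ (Lemma \thv(10.lem1)) with the Borel--Cantelli event of Lemma \thv(3.lem2); the sharper threshold $c_\star>1+\log 4$ should enter when verifying that $G^\circ(\VV_n^\circ)$ is irreducible and that the summation over all components $C_{n,l}^\star$ relevant to the long-jump congestion is absolutely controlled.
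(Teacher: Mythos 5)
Your construction is well-defined and the hypercube-edge congestion argument is correct, giving the $\tfrac12 n^2$ target on those edges. The fatal flaw is in the long-jump edges of $G^\star(\del C^\star_{n,l})$, whose contribution you assert to be $o(n^2)$: it is not. Take a size-$2$ component $C^\star_{n,l}=\{z_1,z_2\}$ and let $x'=z_1\oplus e_{i}$, $y'=z_1\oplus e_{j}$ with $j=i+1$. Then the straight-line excursion $x'\to z_1\to y'$ is collapsed to the long-jump edge $e=(x',y')$, and the set of pairs $(x,y)$ routed through $e$ (free bits of $x$ on coordinates $<i$, free bits of $y$ on coordinates $>j$) has cardinality of order $2^{n-2}$. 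Meanwhile Proposition~\thv(4.prop4) gives $p^\circ_n(x',y')=\frac{m^\star(x')m^\star(y')}{\sum_z m^\star(z)}(1+o(1))\asymp\frac{(1/n)^2}{2(n-1)/n}\asymp\frac{1}{2n^2}$, an order $n$ smaller than the hypercube-edge probability $1/n$. Plugging in $Q(e)=\pi^\circ_n(x')p^\circ_n(x',y')\asymp(n^2 2^{n+1})^{-1}$, $|\gamma|\leq n$, $\pi^\circ_n(x)\pi^\circ_n(y)\asymp 2^{-2n}$ and congestion $\asymp 2^{n-2}$ yields a contribution of order $\tfrac12 n^3$, not $o(n^2)$. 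The extra factor of $n$ comes precisely from the small transition probability across the long-jump edge, and it swamps the claimed bound; it also breaks the downstream application, since the proof of Proposition~\thv(4.prop1) needs $\ell^\circ_n/\tau^\circ_n\geq 2n(1-o(1))$ with $\ell^\circ_n\sim n^3$.

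The paper circumvents exactly this obstruction by \emph{not} routing long-distance traffic through long-jump edges at all: following \cite{FIKP}, it builds paths $\Gamma_n$ whose interior vertices all lie in $N^\star_n$ (the FIKP ``good'' vertices), so that every long path lives entirely on hypercube edges of $G(\VV^\circ_n)$ with transition probability $1/n$. The long-jump edges are then used only for the trivial one-step paths between two vertices of $\del C^\star_{n,l}$, whose weight $\pi^\circ_n(y')/p^\circ_n(x',y')\asymp n^4 2^{-n}$ is negligible. This is also where $c_\star>1+\log 4$ enters: it is the threshold under which Proposition~4.1 of \cite{FIKP} guarantees that the good-path family exists $\P$-a.s., not a condition for irreducibility or for summability over components as you suggest. (A minor additional point: your choice $\eta_n=1/\log n$ does not satisfy the hypothesis $\eta_n\log n\uparrow\infty$ of Lemma~\thv(3.lem2); one needs something like $\eta_n=\log\log n/\log n$.) To repair your proof you would need to replace the collapsing device by a routing that detours around $C^\star_{n,l}$ through $\del C^\star_{n,l}$ using only hypercube edges, which is essentially what the FIKP construction accomplishes.
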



\begin{proof} 
The proof of Proposition \thv(4.prop3)  is based on a bound on spectral gaps already stated in  Subsection  \thv(9.1) which now reads as follows:
 if $\G_n^\circ=\{\g^\circ_{x,y}\}$ is a set of paths in the graph 
$G^\circ(\VV^\circ_{n})$ such as described in the paragraph above \eqv(9.lem2.5),
then
\be
\Eq(4.5.1)
\textstyle
\t^\circ_n\leq \max_{e}\rho^{-1}_n(e)\sum_{\g^\circ_{x,y}\ni e}\left|\g^\circ_{x,y}\right|\pi^\circ_n(x)\pi^\circ_n(y),
\ee
where the max is over all edges $e=\{x',y'\}$ of $G^\circ(\VV^\circ_{n})$,
$\rho_n(e)\equiv\pi^\circ_{n,l}(x')p^\circ_n(x',y')$, and the summation is over all paths $\g^\circ_{x,y}$ in $\G_n^\circ$
that pass through $e$. 
The structure \eqv(4.0.1) of the graph $G^\circ(\VV^\circ_{n})$ naturally prompts us to write
\be
\G_n^\circ=\G_n\cup\left(\cup_{1\leq l\leq L^{\star}}  \G^\star_{n,l}\right),
\Eq(4.5.2)
\ee
where $\G_n$ is a set of paths in $G(\VV^\circ_{n})$ and,  for each $1\leq l\leq L^{\star}$,
\be
\G^\star_{n,l}=\left\{\g_{x,y}\equiv(x,y),\,  x,y \in\del C^{\star}_{n,l}\right\}.
\Eq(4.5.3)
\ee
That is, the paths of $\G^\star_{n,l}$ simply are the edges of $ G^\star(\del C^{\star}_{n,l})$.
Eq. \eqv(4.5.1) then becomes
\be
\Eq(4.5.4)
\textstyle
\t^\circ_n\leq \max\left\{\t_n, \max_{1\leq l\leq L^{\star}}\t^{\star}_{n,l}\right\},
\ee
where
$
\t^{\star}_{n,l}\equiv \max_{e=(x,y), x,y \in\del C^{\star}_{n,l}}\pi^\circ_n(y)/p^\circ_n(x,y)
$,
and
\be
\Eq(4.5.6)
\textstyle
\t_n\equiv \max_{e}
\rho^{-1}_n(e)\sum_{\g_{x,y}\ni e}\left|\g_{x,y}\right|\pi^\circ_n(x)\pi^\circ_n(y),
\ee
the max now being over all edges $e$ of $G(\VV^\circ_{n})$, and the summation over all paths $\g_{x,y}$ in $\G_n$
that pass through $e$. 
Using Proposition \thv(4.prop2) and the fact that 
$
\max\left(H_n(z),H_n(z')\right)=0
$
whenever $(z,z')\in\EE_n$ is an edge with at least one endpoint in $\VV^\circ_n$,
\eqv(4.5.6) reduces to
\be
\textstyle
\t_n=({n}/{|\VV^\circ_{n}|}) \max_{e=(x,x')\in G(\VV^\circ_{n})}
\sum_{\g_{y,y'}\ni e}\left|\g_{y,y'}\right|.
\Eq(4'.prop2.2'')
\ee

The quality of the bound \eqv(4'.prop2.2'') now depends on making a judicious choice of the set of paths $\G_n$.
We will adopt a very clever choice made in \cite{FIKP}.

\smallskip
\noindent{\textbf{\emph{$\bullet$ A choice of $\G_n$.}}} The set
$\G_n$ is defined as 
\be
\G_n=\bigl\{\g_{x,y}\in\G'_n,\, x,y\in\VV^\circ_n\bigr\},
\Eq(4.5.7)
\ee
where $\G'_n$ is a subset of paths in $G(\VV_{n})$ constructed as follows. 
Given $i\in\{1,\dots\,n\}$, and given two vertices
$x$ and $x'\in\VV_n$ such that $x_i\neq x'_i$, let $\g_{x,x'}^i$ be the path
obtained by going left to right cyclically from $x$ to $x'$,
successively flipping the disagreeing coordinates, starting from the $i$-th coordinate.
Set
$
\G_n^i=\left\{\g_{x,x'}^i, x,x'\in\VV_n\right\}
$,
$1\leq i\leq n$.
These paths are ordered in an obvious way.
Given $x,x'$ and $\g_{x,x'}$, let $\overline\g_{x,x'}$ be the set of vertices visited
by the path $\g_{x,x'}$, and let $\g_{x,x'}^{int}=\overline\g_{x,x'}\setminus\{x,x'\}$
be the subset of ``interior'' vertices.
We next split the set of vertices $\VV_n$ into {\it good} ones and {\it bad} ones. Recalling \eqv(10.1.4),
we  say that a vertex  is good if it belongs to $N^{\star}_n$; otherwise it is bad.
We say that a path $\g$ is good if all its interior points $\g^{int}$ are good,
and that a set of paths is good if all its elements are good.

The (random) set of path $\G'_n$ is then constructed as follows:

\item{(i)} Consider pairs $x$ and $x'$ such that $\dist(x,x')\geq n/\log n$.
If $\{\g_{x,x'}^i,  1\leq i\leq n\}$ 
contains a good path,
choose the first such for $\G'_n$; otherwise choose $\g_{x,x'}^1$.

\item{(ii)} Consider pairs $x$ and $x'$ such that $\dist(x,x')< n/\log n$. If there is a good vertex
$x''\in\VV_n$ such that  $\dist(x,x'')\geq n/\log n$ and $\dist(x'',x')\geq n/\log n$, and
if there are good paths, one in $\left\{\g_{x,x''}^i,  1\leq i\leq n\right\}$ and
one in $\left\{\g_{x'',x'}^i,  1\leq i\leq n\right\}$, such that the union of these two good paths is
a self avoiding path of length less than $n$, select this union as the path connecting $x$ to $x'$
in $\G'_n$ (notice that this is a good path); otherwise choose $\g_{x,x'}^1$.

The key point of this construction is that $\G'_n$ is almost surely good.
More precisely,  set
$
\O^{\scriptscriptstyle{\textsf{GOOD}}}_n=\{\o\in\O \,\big|\,\G'_n\equiv \G'_n(\o) \,\hbox{\rm is good}\,\}
$,
$n\geq 1$,
and 
$
\O^{\scriptscriptstyle{\textsf{GOOD}}}=\liminf_{n\rightarrow \infty}\O^{\scriptscriptstyle{\textsf{GOOD}}}_n
$.

\begin{proposition}[Proposition 4.1 of \cite{FIKP}]
  \TH(4'.prop3)
 If $c_{\star}>1+\log 4$ then $\P\bigl(\O^{\scriptscriptstyle{\textsf{GOOD}}}\bigr)=1$.
\end{proposition}



Going back to \eqv(4.5.7), we see that
the set
$
\G_n
$ is obtained from 
$\G'_n$ by removing  the paths whose endpoints lie in $\cup_{1\leq l\leq L^{\star}}C^{\star}_{n,l}$.
Thus  on $\O^{\scriptscriptstyle{\textsf{GOOD}}}$ the paths of $\G_n$ only visit vertices in
$
\VV^\circ_n
$. 
This finishes our construction.
Note that the paths constructed in this way have length smaller than $n$. Thus \eqv(4'.prop2.2'') yields
\be
\textstyle
\t_n
\leq
({n^2}/{|\VV^\circ_{n}|}) \max_{e\in G(\VV^\circ_{n})}|\{\g\in\G \mid  e\in \g\}|
\,.
\Eq(4'.prop2.4)
\ee
\noindent{\textbf{\emph{$\bullet$ Bound on $\t_n$.}}} 
From now on we assume that  $\o\in \O^{\scriptscriptstyle{\textsf{GOOD}}}$ so that, for all large enough $n$, $\G_n\equiv \G_n(\o)$ is good.
In that case a bad vertex can appear only at the ends of any path.
Let us write
\be
\t_{n}=({n^2}/{|\VV^\circ_{n}|})(\t_n^1+\t_n^2)\,,
\Eq(4'.prop2.11)
\ee
where $\t_n^1$, repectively $\t_n^2$, is obtained by restricting
the sum in \eqv(4'.prop2.4) to paths connecting vertices at distance $n/\log n$ or more apart, repectively, 
less than $n/\log n$ apart.

On the one hand it is well known that (see e.g. Example 2.2, p.~45 in \cite{DS})
\be
\t_n^1\leq 2^{n-1}.
\Eq(4.5.8)
\ee
On the other hand, arguing as in \cite{FIKP} (see Subsection 4.2.2, page 934) that the sum in $\t_n^2$
is over a set of paths that connect vertices in a hypercube of dimension at most $n/\log n$
around $e$, we have
\be
\t_n^2\leq 2^{2n/\log n}.
\Eq(4'.lem5.10)
\ee
Plugging \eqv(4.5.8) and \eqv(4'.lem5.10) in \eqv(4'.prop2.11), and using \eqv(4.prop2.0) 
of Proposition \thv(4.prop2) to bound $|\VV^\circ_{n}|$, we 
we get that on $\O^{\scriptscriptstyle{\textsf{GOOD}}}\cap \O^{\star}$, 
for large enough $n$,
\be
\t_n\leq  n^22^{-n}\left[1- n^{-2c_{\star}+1}(1+o(1))\right]^{-1}
\left(
2^{n-1}+ 2^{2n/\log n}
\right)\leq ({n^2}/{2})(1+o(1)).
\Eq(4.5.10)
\ee

\smallskip
\noindent{\textbf{\emph{$\bullet$ Bound on $\t^{\star}_{n,l}, 1\leq l\leq L^{\star}$.}}} Consider the
terms $\t^{\star}_{n,l}$ from \eqv(4.5.4).
By Proposition \thv(4.prop2)  and \eqv(4.prop4.1) of Proposition \thv(4.prop4), on $\O_0\cap \O^{\star}$, for large enough $n$
we have, for all $e=(x,y), x,y \in\del C^{\star}_{n,l}$ and all $1\leq l\leq L^{\star}$,
\be
\Eq(4.5.12)
\frac{\pi^\circ_n(y)}{p^\circ_n(x,y)}
\leq
\frac{1}{|\VV^\circ_{n}|}
\frac
{n\sum_{z\in \del C^{\star}_{n,l}}m^{\star}_{n,l}(z)}
{m^{\star}_{n,l}(x)m^{\star}_{n,l}(y)}(1+o(1))
\leq
\frac{n^2|C^{\star}_{n,l}|^2}{|\VV^\circ_{n}|}
\leq 2n^42^{-n}.
\ee
Thus, on $\O_0\cap \O^{\star}$, for large enough $n$,
\be
\Eq(4.5.13)
\textstyle
\max_{1\leq l\leq L^{\star}}\t^{\star}_{n,l} \leq 2n^42^{-n}.
\ee

Setting $\O_2=\O^{\scriptscriptstyle{\textsf{GOOD}}}\cap\O_0\cap \O^{\star}$ and plugging 
\eqv(4.5.13) and \eqv(4.5.10) in \eqv(4.5.4) finally 
yields the upper bound \eqv(4.prop3.1) on $\t^\circ_n$. The proof of Proposition \thv(4.prop3) is done.
\end{proof}


\subsection{Proof of Proposition \thv(4.prop1).}
 \label{4.5}
Consider
%
the continuous time Markov chain $(J^{*}_n(t), t>0)$ with
jump chain $(J^{\circ}_n(k), k\in\N)$ 
and rate one exponential waiting times. 
That is,
given a family $(e^{*}_{n,i}\,, i\in\N)$ of independent mean one exponential r.v.'s,
independent of $J^{\circ}_n$, 
\be
J^{*}_n(t)=J^{\circ}_n(i)\,\,\, \hbox{\rm if}\,\,\, s_n(i)\leq t<s_n(i+1) \,\,\,
\hbox{\rm for some}\,\, i,
\Eq(4.4.2)
\ee
where
$
\textstyle
s_n(k)\equiv\sum_{i=0}^{k-1}e^{*}_{n,i}
$,
$k\in \N$.
Write $P^{*}_{x}$ for the law of $J^{*}_n$ started in $x$. 
We know that:


\begin{lemma}[Proposition 3 of \cite{DS}] 
  \TH(4.lem4)
For all $x\in\VV^\circ_n$ and all $t\in(0,\infty)$,
%
%
\be
\bigl\|
P^{*}_{x}\left(J^{*}_n(t)=\cdot\right)
-\pi^\circ_n(\cdot)\bigr\|_{TV}\leq
\sqrt{(1-\pi^\circ_n(x))/(4\pi^\circ_n(x))}
e^{-t/\t^{\circ}_n}\,.
\Eq(4.lem4.1)
\ee
\end{lemma}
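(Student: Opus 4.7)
The plan is to reproduce the classical spectral $L^2$-bound of Diaconis and Stroock. Since $J^\circ_n$ is reversible with respect to $\pi^\circ_n$ (inherited from the reversibility of $J_n$, cf.~the proof of Proposition \thv(4.prop2)), the matrix $P^\circ_n$ is self-adjoint on $L^2(\pi^\circ_n)$ and admits an orthonormal eigenbasis $(\phi_i)_{i\geq 0}$ with eigenvalues $1=\vartheta^\circ_n(0)>\vartheta^\circ_n(1)\geq\dots$ and $\phi_0\equiv 1$. The continuous-time semigroup of $J^*_n$ is $e^{t(P^\circ_n-I)}$, so each $\phi_i$ is an eigenfunction of $P^*_t$ with eigenvalue $e^{-t(1-\vartheta^\circ_n(i))}$, and for every $i\geq 1$ one has $1-\vartheta^\circ_n(i)\geq 1-\vartheta^\circ_n(1)=1/\t^\circ_n$.

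The main step is to bound total variation by the $L^2$ distance via Cauchy--Schwarz,
$$\bigl\|P^{*}_{x}(J^{*}_n(t)=\cdot)-\pi^\circ_n(\cdot)\bigr\|_{TV}\leq \tfrac{1}{2}\bigl\|P^{*}_t(x,\cdot)/\pi^\circ_n(\cdot)-1\bigr\|_{L^2(\pi^\circ_n)},$$
and then to expand the density ratio in the eigenbasis. Reversibility gives the spectral representation $P^{*}_t(x,y)/\pi^\circ_n(y)=\sum_{i\geq 0}e^{-t(1-\vartheta^\circ_n(i))}\phi_i(x)\phi_i(y)$; the $i=0$ term is identically $1$ and cancels, so that by Parseval
$$\bigl\|P^{*}_t(x,\cdot)/\pi^\circ_n(\cdot)-1\bigr\|_{L^2(\pi^\circ_n)}^2=\sum_{i\geq 1}e^{-2t(1-\vartheta^\circ_n(i))}\phi_i(x)^2\leq e^{-2t/\t^\circ_n}\sum_{i\geq 1}\phi_i(x)^2.$$
Applying Parseval to the function $\1_{\{\cdot=x\}}/\pi^\circ_n(x)$, whose $i$-th Fourier coefficient is $\phi_i(x)$, yields $\sum_{i\geq 0}\phi_i(x)^2=1/\pi^\circ_n(x)$, hence $\sum_{i\geq 1}\phi_i(x)^2=(1-\pi^\circ_n(x))/\pi^\circ_n(x)$. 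Taking square roots and combining the two displays yields \eqv(4.lem4.1).

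There is no real obstacle: the argument uses only reversibility of $J^\circ_n$ and the definition of the spectral gap $1/\t^\circ_n$, with no input from the REM landscape. The only point worth a remark is that negative eigenvalues of $P^\circ_n$ cause no trouble in continuous time: regardless of sign, every $i\geq 1$ contributes a factor $e^{-t(1-\vartheta^\circ_n(i))}\leq e^{-t/\t^\circ_n}$, the bound being uniform because $1-\vartheta^\circ_n(i)\geq 1-\vartheta^\circ_n(1)=1/\t^\circ_n$.
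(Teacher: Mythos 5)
Your proof is correct and is precisely the spectral $L^2$-argument of Diaconis--Stroock that the paper invokes by citing Proposition~3 of \cite{DS}; the paper gives no proof of its own. Your remark on negative eigenvalues is apt: in continuous time the relaxation rate is $1-\vartheta^\circ_n(i)$, which is monotone nonincreasing in $\vartheta^\circ_n(i)$, so the bound $1-\vartheta^\circ_n(i)\geq 1/\t^\circ_n$ holds uniformly over $i\geq 1$ with no aperiodicity hypothesis needed.
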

We now aim to compare the discrete and continuous time transition kernels. Set
\be
\II(\zeta)=\zeta-\log(1+\zeta),\quad \zeta>0.
\Eq(4.lem5.0)
\ee

\begin{lemma}
  \TH(4.lem5)
Let $0<m_n\uparrow\infty$ be an integer valued sequence.
For all $\zeta>0$ we have
\be
\nonumber
\textstyle
\left|
P^\circ_{x}\left(J^\circ_n(m_n)=y\right)
-\pi^\circ_n(y)
\right|
\leq 
(4\pi^\circ_n(x))^{-1/2}
e^{-m_n(1-\zeta)/\t^{\circ}_n}
+2e^{-m_n\II(\zeta)}, \quad\forall x,y\in\VV^\circ_n.
\ee
\end{lemma}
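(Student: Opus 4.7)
The strategy is to bootstrap the continuous-time mixing bound of Lemma~\thv(4.lem4) to the discrete chain via the uniformization coupling $J^*_n(t) = J^\circ_n(N(t))$ from \eqv(4.4.2), where $N(t) = \max\{i : s_n(i) \leq t\}$ is a rate-$1$ Poisson process independent of the embedded chain $J^\circ_n$. Under this coupling the $m_n$-th jump time $s_n(m_n) = \sum_{i=0}^{m_n-1} e^*_{n,i}$ is $\Gamma(m_n,1)$-distributed and independent of $J^\circ_n$, so concentration of $s_n(m_n)$ will play the same role here as Poisson concentration plays for $N$.

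The main preparatory computation is a Chernoff estimate for $s_n(m_n)$. Optimizing $P(s_n(m_n)\ge u)\le e^{-\lambda u}(1-\lambda)^{-m_n}$ over $\lambda\in(0,1)$ at $u=m_n(1+\zeta)$, with minimizer $\lambda^\star=\zeta/(1+\zeta)$, yields
\[
P\bigl(s_n(m_n)\ge m_n(1+\zeta)\bigr)\le e^{-m_n\II(\zeta)}.
\]
The symmetric lower-tail estimate obeys the strictly larger rate $I_-(\zeta):=-\zeta-\log(1-\zeta)\ge\II(\zeta)$ (the difference $I_-(\zeta)-\II(\zeta)=2[\mathrm{arctanh}(\zeta)-\zeta]\ge0$), so the same bound $e^{-m_n\II(\zeta)}$ controls both deviations of $s_n(m_n)$ from its mean $m_n$.

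Setting $\tilde t=m_n(1-\zeta)$, the triangle inequality
\[
|K^{m_n}(x,y)-\pi^\circ_n(y)|\le |K^{m_n}(x,y)-K^*_{\tilde t}(x,y)|+|K^*_{\tilde t}(x,y)-\pi^\circ_n(y)|
\]
reduces the task to bounding the two summands. The second is Lemma~\thv(4.lem4) applied at $t=\tilde t$, and gives $(4\pi^\circ_n(x))^{-1/2}e^{-m_n(1-\zeta)/\t^{\circ}_n}$, the first term of the stated bound. For the first summand I would start from the identity $K^*_{\tilde t}(x,y)=\sum_k q_k(\tilde t)K^k(x,y)$ with $q_k(\tilde t)=e^{-\tilde t}\tilde t^k/k!$, then apply Chapman--Kolmogorov $K^k=K^{m_n}K^{k-m_n}$ for $k\ge m_n$ together with the strong Markov property of the Poisson clock at the stopping time $s_n(m_n)\wedge\tilde t$; this rewrites the discrepancy as an expectation entirely governed by the Gamma tail $P(s_n(m_n)\le\tilde t)\le e^{-m_n\II(\zeta)}$. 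The factor $2$ in $2e^{-m_n\II(\zeta)}$ absorbs the contributions of $K^{m_n}(x,y)$ and $\pi^\circ_n(y)$ on the complementary rare event, via the crude bound $|a-b|\le a+b$.

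The main technical subtlety is arranging this Markov decomposition so that every discrepancy term is transparently a Gamma/Poisson tail, without spurious factors (for instance $1/P(N(\tilde t)=m_n)$, which would blow up since the Poisson mode has mass only of order $1/\sqrt{m_n}$) that would degrade the bound. This requires invoking the strong Markov property at the data-dependent stopping time $s_n(m_n)\wedge\tilde t$ rather than at a deterministic time, and exploiting the independence of $s_n(m_n)$ from the embedded chain $J^\circ_n$.
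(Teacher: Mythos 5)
Your Cram\'er computation for $s_n(m_n)$ is fine, including the observation that the lower-tail rate $I_-(\zeta)=-\zeta-\log(1-\zeta)$ dominates $\II(\zeta)$, so both tails are captured by $e^{-m_n\II(\zeta)}$. The difficulty lies in the triangle-inequality split through the fixed continuous time $\tilde t=m_n(1-\zeta)$: bounding $\bigl|K^{m_n}(x,y)-K^*_{\tilde t}(x,y)\bigr|$ is the heart of the matter, and the route you sketch does not close. Writing $K^*_{\tilde t}=\sum_k q_k(\tilde t)K^k$ and appealing to Chapman--Kolmogorov forces you to compare $K^{m_n}(x,y)$ with $K^k(x,y)$ for $k$ in the bulk of the Poisson$(\tilde t)$ law, i.e.\ $k\approx m_n(1-\zeta)$ rather than $k\approx m_n$; such a comparison between iterates of $p^\circ_n$ at differing step-counts is precisely the discrete-time equidistribution the lemma asserts, not a Gamma or Poisson tail. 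Conditioning on the clock does not break the circle: the embedded chain $J^\circ_n$ is independent of the waiting times $(e^*_{n,i})$, so $J^*_n(s_n(m_n))=J^\circ_n(m_n)$ factors off from any event in the clock $\sigma$-field, and whatever quantity you extract is again $P^\circ_x(J^\circ_n(m_n)=y)$. Stopping at $s_n(m_n)\wedge\tilde t$ only defers the problem: after the stopping time you are left comparing a continuous-time residual with $K^{m_n}$, which again needs the estimate you are after.

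The paper avoids a fixed reference time entirely. It identifies $P^\circ_x(J^\circ_n(m_n)=y)$ with $P^*_x(J^*_n(s_n(m_n))=y)$, splits on the two-sided event $A_{\zeta,n}=\{|s_n(m_n)-m_n|\geq\zeta m_n\}$, discharges $A_{\zeta,n}$ by the large-deviation bound $P(A_{\zeta,n})\leq 2e^{-m_n\II(\zeta)}$ (this is where the factor $2$ comes from, not from a crude $|a-b|\leq a+b$), and on $A^c_{\zeta,n}$ bounds both above and below by $\sup_{|t-m_n|<\zeta m_n}P^*_x(J^*_n(t)=y)$ and $\inf_{|t-m_n|<\zeta m_n}P^*_x(J^*_n(t)=y)$, to which Lemma~\thv(4.lem4) applies directly; the lower endpoint $t=m_n(1-\zeta)$ produces $e^{-m_n(1-\zeta)/\t^\circ_n}$ in the upper bound and the upper endpoint gives the matching lower bound. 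That symmetric decomposition never introduces the object $|K^{m_n}-K^*_{\tilde t}|$ on which your proposal founders; to repair your version you would need to replace the single reference time $\tilde t$ with this two-sided window argument.
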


\proof By \eqv(4.4.2),
$
P^\circ_{x}\left(J^{\circ}_n(m_n)=y\right)
=
P^{*}_{x}\left(J^{*}_n(s_n(m_n))=y\right)
$. 
Now consider the event $A_{\zeta,n}=\left\{\left|s_n(m_n)-m_n\right|\geq \zeta m_n\right\}$, $\zeta>0$.
A classical large deviation estimates yields
\be
P^{*}_{x}\left(A_{\zeta,n}\right) \leq 2 e^{-m_n\II(\zeta)}.
\Eq(4.lem5.6+)
\ee
Hence
\be
0\leq P^{*}_{x}\left(J^{*}_n(s_n(m_n))=y, A_{\zeta,n}\right)\leq 2 e^{-m_n\II(\zeta)}.
\Eq(4.lem5.6)
\ee
Writing $A^c_{\zeta,n}=\left\{\left|s_n(m_n)-m_n\right|< \zeta m_n\right\}$,
\bea
\nonumber
P^{*}_{x}\left(J^{*}_n(s_n(m_n))=y, A^c_{\zeta,n}\right)
&\leq&
\sup_{|t-m_n|< \zeta m_n}P^{*}_{x}\left(J^{*}_n(t)=y, A^c_{\zeta,n}\right)
\\ \nonumber
&\leq&
\sup_{|t-m_n|< \zeta m_n}P^{*}_{x}\left(J^{*}_n(t)=y\right)
\\  
&\leq&
\pi^\circ_n(y)
+
(4\pi^\circ_n(x))^{-1/2}
e^{-m_n(1-\zeta)/\t^{\circ}_n},
\Eq(4.lem5.2)
\eea
where the last inequality follows from Lemma \thv(4.lem4).
Similarly,
\bea
P^{*}_{x}\left(J^{*}_n(s_n(m_n))=y, A^c_{\zeta,n}\right)
\geq
\inf_{|t-m_n|< \zeta m_n}P^{*}_{x}\left(J^{*}_n(t)=y, A^c_{\zeta,n}\right).
\Eq(4.lem5.3)
\eea
But
$
P^{*}_{x}\left(J^{*}_n(t)=y, A^c_{\zeta,n}\right)
\geq
P^{*}_{x}\left(J^{*}_n(t)=y\right)- 2 e^{-m_n\II( \zeta m_n)}
$
so that, by Lemma \thv(4.lem4)  again,
\bea
P^{*}_{x}\left(J^{*}_n(s_n(m_n))=y, A^c_{\zeta,n}\right)
\geq
\pi^\circ_n(y)
-
(4\pi^\circ_n(x))^{-1/2}
e^{-m_n(1+\zeta)/\t^{\circ}_n}
- 2 e^{-m_n\II( \zeta m_n)}.\quad
\Eq(4.lem5.5)
\eea
Combining \eqv(4.lem5.6), \eqv(4.lem5.2), and \eqv(4.lem5.5)
proves the claim of the lemma.\endproof

We can now proceed with the proof of Proposition \thv(4.prop1).
Since $\pi^\circ_n$ is the invariant measure of $J^{\circ}_n$, we have
$
P^\circ_{\pi^\circ_n}\left(J^\circ_n(i+\ell^\circ_n)=y,  J^\circ_n(0)=x \right)
=P^\circ_{x}\left(J^{\circ}_n(i+\ell^\circ_n)=y\right)\pi^\circ_n(x)\,.
$
It thus suffices to show that there exists a subset $\O_1\subset\O$
with  $\P\left(\O_1\right)=1$ such that, on $\O_1$, for all but a finite number of
indices $n$, 
for all pairs $x\in\VV^\circ_n, y\in\VV^\circ_n$, and all $i\geq 0$,
\be
\textstyle
\left| 
P^\circ_{x}\left(J^{\circ}_n(i+\ell^\circ_n)=y \right)-\pi^\circ_n(y)
\right|
\leq \delta_n\pi^\circ_n(y)\,,
\Eq(4.4.6)
\ee
for some 
$
0\leq\delta_n\leq  2^{-n}
$.
To do this choose $\zeta=1/2$ and $m_n=\ell^\circ_n$ in Lemma \thv(4.lem5). 
By \eqv(4.prop1.0) and Proposition \thv(4.prop3), on $\O_2$,
$
{\ell^\circ_n}/{\tau^\circ_n}
\geq 2n(1-o(1))
$
for all $n$ large enough.
Futhermore, by Proposition \thv(4.prop2), on $\O^{\star}$, for all $n$ large enough,
$
(\pi^\circ_n(y)\sqrt{4\pi^\circ_n(x)})^{-1}
\leq 2^{3n/2}.
$
Thus, on $\O_1\equiv\O^{\star}\cap \O_2$,  
for all $n$ large enough,
\be
\d_n\equiv
(\pi^\circ_n(y)\sqrt{4\pi^\circ_n(x)})^{-1}
e^{-\ell^\circ_n(1-\zeta)/\t^{\circ}_n}
+
(\pi^\circ_n(y))^{-1}
2e^{-\ell^\circ_n\II( \zeta m_n)}
\leq 2^{-n}
\,.
\Eq(4.4.7)
\ee
Inserting \eqv(4.4.7) in  \eqv(4.lem4.1) yields the claim of  \thv(4.4.6).
The proof of Proposition \thv(4.prop1) is done.

\subsection{Mean local times: proof of Proposition \thv(4.prop6).}
 \label{4.6}

Let $J^{\scriptscriptstyle{\textsf{SRW}}}_n$ and $P^{\scriptscriptstyle{\textsf{SRW}}}$ 
denote, respectively, the symmetric random walk on $\VV_n$ (hereafter SRW) and its law. More precisely,
\be
p^{\scriptscriptstyle{\textsf{SRW}}}_n(x,y)
\equiv
P^{\scriptscriptstyle{\textsf{SRW}}}\left(
J^{\scriptscriptstyle{\textsf{SRW}}}_n(1)=y\mid J^{\scriptscriptstyle{\textsf{SRW}}}_n(1)=x \right)
=
\begin{cases}
\frac{1}{n}
&\hbox{\rm if}\, \dist(x,y)=1,\\
0,&\hbox{\rm else}.
\end{cases}
\Eq(4.prop6.3)
\ee
We also write $P^{\scriptscriptstyle{\textsf{SRW}}}_x$ for the law of 
$J^{\scriptscriptstyle{\textsf{SRW}}}_n$ started in $x$.
The proof of  Proposition \thv(4.prop6) relies on three key properties
of $J^{\scriptscriptstyle{\textsf{SRW}}}_n$  that we state below in the form of three lemmata.

Our first lemma is a  reformulation of Theorem 1.1 of \cite{CG08} on the hitting time of so-called percolation clouds, namely, sets
of the form 
$V_n(\rho)$ (see \eqv(3.0.1)). 
For $A\subset\VV_n$ let
$T^{\scriptscriptstyle{\textsf{SRW}}}(A)$ be  the hitting time 
\be
\Eq(4.prop6.2)
T^{\scriptscriptstyle{\textsf{SRW}}}(A)=\inf\left\{k\in\N : J^{\scriptscriptstyle{\textsf{SRW}}}_n(k)\in A\right\}.
\ee

\begin{lemma}
  \TH(4.lem1)
Let $\rho^{\star}_n$  be as in \eqv(10.1.1) for some $c_{\star}$ such that $n^{c_{\star}}\gg n\log n$. 
There exists a subset $\O^{\scriptscriptstyle{\textsf{SRW}}}\subset\O$ with  
$\P\left(\O^{\scriptscriptstyle{\textsf{SRW}}}\right)=1$ such that, 
on $\O^{\scriptscriptstyle{\textsf{SRW}}}$, for all but a finite number of indices $n$ the following holds: 
for all sequences $l_n>0$ such that $l_n/n^{c_{\star}}\leq C$ for some constant $0<C<\infty$,
  \be
\Eq(4.lem1.1)
\max_{x\in\VV_n}\left|
P^{\scriptscriptstyle{\textsf{SRW}}}_x
\left(T^{\scriptscriptstyle{\textsf{SRW}}}(V_n(\rho^{\star}_n)\setminus x)\geq l_n \right) 
-e^{-l_n/n^{c_{\star}}} 
\right|
\leq
C'\left[
\frac{1}{n}
+\frac{1}{\log n^{c_{\star}}}
+\frac{n\log n}{n^{c_{\star}}}\right],
\ee
where $0<C'<\infty$ is a numerical constant.
\end{lemma}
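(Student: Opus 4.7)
The plan is to reduce the lemma directly to Theorem~1.1 of \cite{CG08}, exactly as the statement of \thv(4.lem1) advertises; the argument is essentially one of parameter matching plus a Borel--Cantelli upgrade from an in-probability to an almost-sure statement. First, I would identify the random set $V_n(\rho^\star_n)$ with a Bernoulli percolation cloud on $\VV_n$: by \eqv(3.0.2) and the independence of the Gaussians $g(x)$, the occupancy indicators $\chi_n(x) \equiv \1_{\{w_n(x)\geq r_n(\rho^\star_n)\}}$ are i.i.d.\ Bernoulli with success probability $p_n = 2^{-\rho^\star_n n} = n^{-c_\star}$, thanks to \eqv(10.1.1). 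Hence the problem of estimating the hitting time of $V_n(\rho^\star_n)$ by $J^{\scriptscriptstyle{\textsf{SRW}}}_n$ fits exactly the framework of \cite{CG08} with density parameter $p_n = n^{-c_\star}$.

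Next, I would check the two quantitative inputs such a result requires. The mean hitting time of an i.i.d.\ percolation cloud of density $p_n$ by SRW on $\VV_n$ is heuristically $1/p_n = n^{c_\star}$, which is the normalization used in \eqv(4.lem1.1). The exponential approximation requires that this mean time dominate the mixing time of SRW on the hypercube, which is of order $n\log n$; this is precisely the standing hypothesis $n^{c_\star} \gg n\log n$. With these two ingredients Theorem~1.1 of \cite{CG08} provides an exponential approximation of $P^{\scriptscriptstyle{\textsf{SRW}}}_x(T^{\scriptscriptstyle{\textsf{SRW}}}(V_n(\rho^\star_n)\setminus x)\geq l_n)$ with mean $n^{c_\star}$, uniformly in $x\in\VV_n$ and in $l_n\leq C n^{c_\star}$, with quantitative remainder decomposing into three contributions: a $1/n$ term accounting for the local neighborhood of the starting vertex, a $1/\log n^{c_\star}$ term for the SRW mixing-time fluctuation, and an $n\log n / n^{c_\star}$ term for the fluctuation of $|V_n(\rho^\star_n)|$ around its mean $2^n n^{-c_\star}$. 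These are exactly the three summands in the right-hand side of \eqv(4.lem1.1).

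Finally, to upgrade to the almost-sure statement on a full-probability event $\O^{\scriptscriptstyle{\textsf{SRW}}}$, I would apply the first Borel--Cantelli lemma. The exceptional events underlying \cite{CG08} --- typically, large deviations of $|V_n(\rho^\star_n)|$ from its mean and irregularities of the density of $V_n(\rho^\star_n)$ in balls around starting vertices --- have $\P$-probabilities that decay at least polynomially in $n$ with a sufficiently large exponent, courtesy of standard Chernoff bounds and the hypothesis $n^{c_\star}\gg n\log n$. Summability over $n$ then yields a set $\O^{\scriptscriptstyle{\textsf{SRW}}}$ with $\P(\O^{\scriptscriptstyle{\textsf{SRW}}})=1$ on which \eqv(4.lem1.1) holds uniformly in $x$ for all but finitely many $n$. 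The main (and essentially the only) obstacle here is bookkeeping: one must inspect carefully the precise form in which Theorem~1.1 of \cite{CG08} is stated --- the quenched versus annealed nature of its claim, the uniformity in the starting vertex, and the decomposition of its error --- to match it exactly with the rate displayed in \eqv(4.lem1.1). No new probabilistic argument is required.
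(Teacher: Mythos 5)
The proposal is correct and matches the paper's approach: the paper itself gives no proof, merely stating that Lemma~\thv(4.lem1) "is a reformulation of Theorem~1.1 of \cite{CG08} on the hitting time of so-called percolation clouds," which is exactly the reduction you propose. Your parameter-matching account (occupancy density $p_n=2^{-\rho_n^\star n}=n^{-c_\star}$ via \eqv(10.1.1), mean hitting time $\approx 1/p_n=n^{c_\star}$, and the hypothesis $n^{c_\star}\gg n\log n$ ensuring the mean hitting time dominates SRW mixing) is the right justification for why \cite{CG08} applies, and is more detail than the paper offers. The one caveat worth noting is that you present the Borel--Cantelli upgrade as if it were needed to pass from an annealed to a quenched statement, but Theorem~1.1 of \cite{CG08} is already stated almost surely in the environment (the point of that paper is precisely a quenched estimate uniform in the starting vertex); so that step is either unnecessary or implicit in the cited theorem. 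This does not affect the validity of the argument, only the bookkeeping.
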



The next two lemmata bound the mean number of returns to a given vertex, $z$, respectively the mean local time in $z$, in a time interval of the form $\{3,\dots, m\}$, $m\leq  \left\lceil n^3\right\rceil$.

\begin{lemma}
  \TH(4.lem2)
For all 
$
m
\leq  \left\lceil n^3\right\rceil
$,
all 
$
z\in\VV_n
$,
and $a\in\{0,1,2,3\}$,
\be
\Eq(4.lem2.1)
\sum_{l=1}^mP^{\scriptscriptstyle{\textsf{SRW}}}_z\left(J^{\scriptscriptstyle{\textsf{SRW}}}_n(l+a)=z \right)
\leq \frac{c}{n^b},
\quad
b=
\begin{cases}
1,
&\hbox{\rm if}\,\,\, a\in\{0,1\}\\
2,
&\hbox{\rm if}\,\,\, a\in\{2,3\}
\end{cases},
\ee
where $0<c<\infty$ is a numerical constant. \end{lemma}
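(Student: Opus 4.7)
The proof rests on a parity argument, the explicit value of the first return, and an asymptotic estimate that exploits delicate cancellations. Let $p^{(l)}(z,z) := P_z^{\scriptscriptstyle{\textsf{SRW}}}(J_n^{\scriptscriptstyle{\textsf{SRW}}}(l)=z)$. Since $\VV_n=\{-1,1\}^n$ is bipartite and SRW alternates between the two parts at each step, $p^{(l)}(z,z)=0$ for all odd $l$. Setting $k_0 = 1$ if $a \in \{0,1\}$ and $k_0 = 2$ if $a \in \{2,3\}$, and $K\leq \lceil (m+a)/2\rceil \leq n^3$, this immediately reduces the sum in \eqv(4.lem2.1) to $\sum_{k=k_0}^K p^{(2k)}(z,z)$. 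The direct computation $p^{(2)}(z,z) = 1/n$ (flip one of $n$ coordinates, then flip it back) supplies the leading $1/n$ in the cases $a\in\{0,1\}$, so both cases of the lemma reduce to the single tail estimate
\be
\Sigma_n := \sum_{k=2}^K p^{(2k)}(z,z) \leq C/n^2, \qquad K\leq n^3,
\ee
for some absolute constant $C$.

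To prove this I would use the explicit spectral decomposition of the SRW on the hypercube: its transition matrix has eigenvalues $(1-2s/n)$ with multiplicities $\binom{n}{s}$, yielding $p^{(2k)}(z,z)=2^{-n}\sum_{s=0}^n\binom{n}{s}(1-2s/n)^{2k}$. Exchanging sums and applying the geometric bound $\sum_{k=2}^K t_s^k \leq t_s^2/(1-t_s)$ with $t_s := (1-2s/n)^2 < 1$ for $s\in\{1,\dots,n-1\}$ (the boundary terms $s\in\{0,n\}$ contributing at most $2K/2^n \leq n^3/2^{n-1}$), one obtains
\be
\Sigma_n \leq \frac{n^3}{2^{n-1}} + \frac{n^2}{4\cdot 2^n}\, T_n, \quad T_n := \sum_{s=1}^{n-1}\binom{n}{s}\frac{(1-2s/n)^4}{s(n-s)}.
\ee

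The main obstacle lies in showing $T_n = O(2^n/n^4)$: the naive bound $(1-2s/n)^4 \leq 1$ only yields $T_n = O(2^n/n^2)$ and hence the useless $\Sigma_n=O(1)$. The sharp estimate follows from expanding
\be
(1-2s/n)^4 = 1 - 8\, s(n-s)/n^2 + 16\, s^2(n-s)^2/n^4
\ee
and combining the identity $\sum_{s=1}^{n-1}\binom{n}{s}s(n-s) = n(n-1)2^{n-2}$ with the two-term asymptotic $\sum_{s=1}^{n-1}\binom{n}{s}/(s(n-s)) = (4\cdot 2^n/n^2)(1 + 1/n + O(1/n^2))$ (obtained from $\sum_{k=1}^n 2^k/k = (2\cdot 2^n/n)(1 + 1/n + O(1/n^2))$ via symmetrization): the $O(2^n/n^2)$ and $O(2^n/n^3)$ orders cancel \emph{exactly} across the three resulting sums, leaving $T_n = O(2^n/n^4)$. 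As a cancellation-free alternative, one may use the combinatorial upper bound $p^{(2k)}(z,z) \leq 2(2k-1)!!/n^k$---proved by bounding the number of sequences in $[n]^{2k}$ with every coordinate used an even number of times by the dominant ``all-pairs'' profile $\binom{n}{k}(2k)!/2^k$---which yields a geometric sum of ratio $(2k+1)/n \leq 1/2$ for $k \leq n/4$, contributing $\leq 6/n^2$, while the tail $k > n/4$ is handled by the $\ell^2$ spectral bound $p^{(2k)}(z,z) \leq 2/2^n + (1-2/n)^{2k}\cdot 2^n$, together with a further cutoff at $k \sim n^2$ beyond which the stationary contribution $K/2^n$ is itself negligible.
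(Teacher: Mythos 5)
Your primary spectral route is correct and self-contained, which is already more than the paper provides: for this lemma the paper simply delegates to Proposition 3.2 of \cite{G10b} without reproducing a proof (and the surrounding arguments in both papers, e.g.~the lumping proof of the neighbouring Lemma 5.6, suggest the cited proof is of a combinatorial/lumping nature, so your eigenfunction expansion with its exact cancellation is a genuinely different approach). The reduction to $\Sigma_n:=\sum_{k\ge 2}p^{(2k)}(z,z)$ via bipartiteness, the identity $p^{(2)}(z,z)=1/n$, the spectral formula $p^{(2k)}(z,z)=2^{-n}\sum_s\binom{n}{s}(1-2s/n)^{2k}$ and the passage to $T_n$ via the geometric tail are all right, and the cancellation computation checks out: writing $T_n=A-\frac{8}{n^2}(2^n-2)+\frac{4(n-1)2^n}{n^3}$ with $A=\sum_{s=1}^{n-1}\binom{n}{s}/(s(n-s))=\frac{4\cdot 2^n}{n^2}\bigl(1+\frac{1}{n}+O(n^{-2})\bigr)+O(n^{-1}\log n)$, both the $2^n/n^2$ and $2^n/n^3$ coefficients vanish ($4-8+4=0$ and $4-0-4=0$), leaving $T_n=O(2^n/n^4)$, hence $\Sigma_n=O(n^{-2})$ and all four cases of the lemma.

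The ``cancellation-free alternative,'' however, does not close as written. The quoted bound $p^{(2k)}(z,z)\le 2/2^n+(1-2/n)^{2k}\cdot 2^n$ carries a spurious factor $2^n$; the correct worst-eigenvalue estimate is $p^{(2k)}(z,z)\le 2/2^n+(1-2/n)^{2k}$, but even this is insufficient in the tail, since $\sum_{k>n/4}(1-2/n)^{2k}=(1-2/n)^{2\lceil n/4\rceil}/\bigl(1-(1-2/n)^2\bigr)\sim n/(4e)$, of order $n$ rather than $o(n^{-2})$. To repair it one must use the full spectral sum: for $k\ge n/4$ one has $(1-2s/n)^{2k}\le e^{-\min(s,n-s)}$, hence $p^{(2k)}(z,z)\le 2\bigl((1+e^{-1})/2\bigr)^n$, exponentially small, so the tail over $n/4<k\le n^3$ is negligible. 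Equivalently, note that $p^{(2k)}(z,z)$ is nonincreasing in $k$ (each $(1-2s/n)^{2k}$ is) and that the Khintchine bound already gives $p^{(2k)}(z,z)\le(2k-1)!!/n^k\le\sqrt{2}\,(2k/(en))^k$, which is exponentially small at $k=\lceil n/4\rceil$. A secondary remark: that combinatorial bound is cleanest stated as Khintchine's inequality $E\bigl[(\sum_{i=1}^n\xi_i)^{2k}\bigr]\le(2k-1)!!\,n^k$ for i.i.d.~Rademacher $\xi_i$, combined with the exact identity $p^{(2k)}(z,z)=E[(\sum_i\xi_i)^{2k}]/n^{2k}$; the ``all-pairs profile dominates'' phrasing is only a heuristic, since returning step-sequences correspond to all even partitions of $2k$, not just the one with all parts equal to two.
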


\begin{proof}
The lemma is proved in exactly the same way as Proposition 3.2 of \cite{G10b}.
  \end{proof}

\begin{lemma}
  \TH(4.lem3)
For all 
$
m
\leq  \left\lceil n^3\right\rceil
$
and all $y,z$ such that $\dist(y,z)\geq 1$, 
\be
\Eq(4.lem3.1)
\sum_{l=1}^mP^{\scriptscriptstyle{\textsf{SRW}}}_y\left(J^{\scriptscriptstyle{\textsf{SRW}}}_n(l)=z \right)
\leq 
\frac{c'}{n}
\ee
where $0<c'<\infty$ is a numerical constant.
\end{lemma}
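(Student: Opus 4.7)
My plan is to transfer the problem to the Ehrenfest urn via the hyperoctahedral symmetry of the hypercube, and then bound the resulting local time via a Wald-type renewal argument.

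First I would use that by the symmetry of $\VV_n$ around $y$, $P^{\scriptscriptstyle{\textsf{SRW}}}_y(J^{\scriptscriptstyle{\textsf{SRW}}}_n(l) = z)$ depends only on $d := \dist(y, z)$; denote it $p_n^l(d)$. The distance process $\tilde D_l := \dist(J^{\scriptscriptstyle{\textsf{SRW}}}_n(l), y)$ is then the Ehrenfest urn on $\{0, 1, \ldots, n\}$ started at $0$ (transitions $k \to k+1$ with probability $(n-k)/n$ and $k \to k-1$ with probability $k/n$), with stationary measure $\tilde\pi(k) = \binom{n}{k}/2^n$. Summing $P_y(J^{\scriptscriptstyle{\textsf{SRW}}}_n(l) = z')$ over the $\binom{n}{d}$ vertices $z'$ at distance $d$ from $y$ yields $\binom{n}{d}\,p_n^l(d) = P_0(\tilde D_l = d)$, so that
\[
\sum_{l=1}^m p_n^l(d) \;=\; \frac{E_0[\tilde L_m(d)]}{\binom{n}{d}}, \qquad \tilde L_m(d) := \sum_{l=1}^m \1_{\tilde D_l = d}.
\]

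Second, I would bound $E_0[\tilde L_m(d)]$. The strong Markov property at the first hitting time $\tilde T_d$ of level $d$ gives $E_0[\tilde L_m(d)] \leq 1 + E_d[\tilde L_m(d)]$. For the latter, let $\xi_1, \xi_2, \ldots$ be the i.i.d.\ successive return times to $d$ for the Ehrenfest chain started from $d$, with mean $E[\xi_1] = 1/\tilde\pi(d) = 2^n/\binom{n}{d}$, and set $N^* := \sup\{k \geq 0 : \sum_{i \leq k}\xi_i \leq m\}$. Then $N^*+1$ is a stopping time for $(\xi_i)$, and Wald's identity applied to $N^*+1$ together with the pathwise inequality $\sum_{i \leq N^*+1}\xi_i \leq m + \xi_{N^*+1}$ gives $(E[N^*] + 1)\,E[\xi_1] \leq m + E[\xi_1]$. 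Hence $E_d[\tilde L_m(d)] = E[N^*] \leq m\binom{n}{d}/2^n$, and combining yields
\[
\sum_{l=1}^m p_n^l(d) \;\leq\; \frac{1}{\binom{n}{d}} + \frac{m}{2^n}.
\]
For $1 \leq d \leq n-1$ one has $\binom{n}{d} \geq n$, so the right-hand side is at most $\tfrac{1}{n} + \tfrac{\lceil n^3\rceil}{2^n} \leq \tfrac{c'}{n}$ for an absolute constant $c'$, since $n \cdot n^3/2^n$ is uniformly bounded in $n$.

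The antipodal case $d = n$ is not covered by this bound because $\binom{n}{n} = 1$; here I would use instead that $\tilde T_n \geq n$ deterministically and that $P_0(\tilde T_n \leq m) = O(m/2^n) = O(n^3/2^n)$, which follows by a union bound over $l \in [n, m]$ combined with the estimate $P_0(\tilde D_l = n) = O(1/2^n)$ valid for all $l$ either by the spectral-gap bound (once past the $O(n\log n)$ mixing time of the Ehrenfest chain) or by Poissonization of the coordinate-flip sequence, giving $\sum_{l=1}^m p_n^l(n) = O(n^3/2^n) = O(1/n)$.

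The main obstacle is the Wald step: one must verify that $N^*+1$ is an integrable stopping time for the renewal sequence (immediate since $N^* \leq m$) and that $E[\xi_{N^*+1}] = E[\xi_1]$ (which follows from the stopping-time property combined with the i.i.d.\ structure of $(\xi_i)$). The antipodal case also requires care, since the simple bound is off by a factor of $n$ and must be rescued by exploiting that $E_0[\tilde T_n]$ is exponentially large in $n$; once these two points are in place, the rest of the proof is elementary.
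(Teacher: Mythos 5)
Your reduction to the one-dimensional Ehrenfest chain via the exact identity $\binom{n}{d}\,p_n^l(d)=P_0(\tilde D_l=d)$ is a clean observation and a genuinely different route from the paper's, which instead uses the two-dimensional lumping $\gamma^\Lambda$ of \cite{BG08} (tracking distance to both $y$ and $z$), an electrical-network bound on the escape probability $\P^\Lambda(\tau^1_x\le\tau^1_0)$ via their Theorem 3.2 for $\dist(y,z)\ge5$, and a separate combinatorial confinement argument for $1\le\dist(y,z)\le4$. If your local-time bound held, the one-dimensional projection would be tidier. Unfortunately the key step does not hold.

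The Wald computation contains a genuine error: the claim $E[\xi_{N^*+1}]=E[\xi_1]$ is false, and it is not a consequence of $N^*+1$ being a stopping time for an i.i.d.\ sequence. The event $\{N^*+1=k\}=\{\sum_{i<k}\xi_i\le m<\sum_{i\le k}\xi_i\}$ depends on $\xi_k$ itself, and biases $\xi_k$ toward larger values — this is the inspection paradox, and $E[\xi_{N^*+1}]>E[\xi_1]$ strictly whenever $\xi_1$ is not a.s.\ constant (take $\xi_i\in\{1,3\}$ uniformly, $m=1$: $E[\xi_{N^*+1}]=5/2>2=E[\xi_1]$). The resulting bound $E_d[\tilde L_m(d)]\le m\,\tilde\pi(d)$ fails badly for small $d$: for $d=1$ one has $m\,\tilde\pi(1)=n^4/2^n$, yet $E_1[\tilde L_m(1)]\ge P_1(\xi_1=2)\ge 1/n$, so the true value exceeds the claimed bound by a factor of order $2^n/n^5$. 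Your final numerical conclusion happens to survive only because the additive ``$1$'' from the strong Markov step dominates, but the intermediate estimate is wrong and the argument as written does not establish the result. What a correct version of your approach would need is a uniform-in-$d$ bound of the form $E_d[\tilde L_m(d)]\le P_d(\xi_1\le m)/(1-P_d(\xi_1\le m))$ together with a proof that $P_d(\xi_1\le m)$ is bounded away from $1$ for small $d$ — i.e.\ that the Ehrenfest chain started near the boundary escapes before returning. That escape estimate is precisely the nontrivial content, and it is the same kind of hitting-probability bound the paper extracts from \cite{BG08}. (Your treatment of the antipodal case $d=n$ has a similar gap: the claim $P_0(\tilde D_l=n)=O(2^{-n})$ uniformly over $n\le l\le m$, including $l$ well below the mixing time, is asserted but not proved, and the spectral-gap argument you invoke only applies after mixing.)
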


\begin{proof}[Proof of  Lemma  \thv(4.lem3)] 
The proof draws on the results of \cite{BG08} where a $d$-dimensional version of 
the Ehrenfest scheme, called lumping, was introduced and analyzed 
(hereafter and whenever possible we use the notations of \cite{BG08}).
Without loss of generality we may take $y\equiv 1$
to be the vertex all of whose coordinates take the value 1. Let $\g^{\L}$ be the map
(1.7) of \cite{BG08} derived from the partition of $\L\equiv\{1,\dots,n\}$ into
$d=2$ classes, $\L=\L_1\cup\L_2$, defined through the relation:
$i\in\L_1$ if the $i^{th}$ coordinate of $z$ is 1, and $i\in\L_2$ otherwise.
 The resulting lumped chain, $X^{\L}_n\equiv\g^{\L}(J^{\scriptscriptstyle{\textsf{SRW}}}_n)$,
has range $\G_{n,2}=\g^{\L}(\VV_n)\subset[-1,1]^2$. Note that the vertices
y and $z$ of $\VV_n$ are mapped, respectively, onto the corners $1\equiv(1,1)$ and $x\equiv(1,-1)$ of $[-1,1]^2$.
Denoting by $\P^{\L}$ the law of $X^{\L}_n$, we have,
\be
\Eq(4.lem3.2)
P^{\scriptscriptstyle{\textsf{SRW}}}_y\left(J^{\scriptscriptstyle{\textsf{SRW}}}_n(l)=z \right)
=\P^{\L}(X^{\L}_n(l)=x \mid X^{\L}_n(0)=1).
\ee
Write
$
\t^{x'}_x=\inf\{ k>0 \mid X^{\L}_n(0)=x', X^{\L}_n(k)=x\}
$.
Without loss of generality we may assume that $0\in\G_{n,2}$ (namely, both $\L_1$ and $\L_2$ have even cardinality).
Then, decomposing \eqv(4.lem3.2) according to whether,
starting from 1, $X^{\L}_n$ visits 0 before it visits $x$ or not, we get:
$
\P^{\L}(X^{\L}_n(l)=x \mid X^{\L}_n(0)=1)=A+B
$,
\bea
\Eq(4.lem3.3)
&&A=\P^{\L}(X^{\L}_n(l)=x, \t^{1}_0< \t^{1}_x),\\
\Eq(4.lem3.4)
&&B=\P^{\L}(X^{\L}_n(l)=x, \t^{1}_0\geq \t^{1}_x).
\eea
By Theorem 3.2 of \cite{BG08}, for all 
$y,z$ such that 
$
\dist(z,y)\geq 5
$,
\be
\Eq(4.lem3.5)
B
\leq \P^{\L}(\t^{1}_x\leq \t^{1}_0)
\leq F_{n,2}(\dist(z,y))
\leq
c_1n^{-4}
\ee
for some constant $0<c_1<\infty$.
Of course $A=0$ for all $l$ such that $l<n/2$ since  the chain $X^{\L}_n$ needs at least $n/2$ steps to travel from the vertex $1$ to $0$. To bound $A$ when $l\geq n/2$ we condition on the time of the last visit to 0 before time $l$,
and bound the probability of the latter event by 1. This readily yields
\be
\Eq(4.lem3.6)
\textstyle
A
\leq l\P^{\L}(\t^{0}_x< \t^{0}_0)
=l\frac{\Q_n(x)}{\Q_n(0)}\P^{\L}(\t^{x}_0< \t^{x}_x)
\leq l\frac{\Q_n(x)}{\Q_n(0)},
\ee
where the equality in the middle is reversibility, and where $\Q_n$, defined in
Lemma 2.2 of \cite{BG08}, denotes the invariant measure of $X^{\L}_n$. We are thus left to estimate the ratio
of invariant masses in \eqv(4.lem3.6). By (2.4) of \cite{BG08} we get that
$
\frac{\Q_n(x)}{\Q_n(0)}
\leq |\{x'\in\VV_n \mid \g^{\L}(x')=0\}|^{-1}
\leq
e^{-c_2n}
$
for some constant $0<c_2<\infty$. Gathering our bounds we get that  for all $y,z$ such that $\dist(y,z)\geq 5$,
\be
\Eq(4.lem3.7)
P^{\scriptscriptstyle{\textsf{SRW}}}_y\left(J^{\scriptscriptstyle{\textsf{SRW}}}_n(l)=z \right)
=A+B
\leq c_1n^{-4}+le^{-c_2n}
\leq c_3n^{-4}
\ee
for some constant $0<c_3<\infty$, so that for all $m\leq  \left\lceil n^3\right\rceil$, 
\be
\Eq(4.lem3.8)
\textstyle
\sum_{l=1}^mP^{\scriptscriptstyle{\textsf{SRW}}}_y\left(J^{\scriptscriptstyle{\textsf{SRW}}}_n(l)=z \right)
\leq c_3n^{-1}.
\ee

It remains to deal with the cases $1\leq \dist(y,z)\leq 4$. 
Thus assume from now on that $1\leq \dist(y,z)\leq 4$.  
Consider the event $\AA_z\equiv\{\forall i\leq l \dist(J^{\scriptscriptstyle{\textsf{SRW}}}_n(i), z)<5\}$
and denote by $\AA^c_z$ its complement. Decomposing $\AA^c_z$ on the place and time of the first visit of the chain to the ball of radius $5$, we readily get by the Markov property and \eqv(4.lem3.7) that
\be
\Eq(4.lem3.9)
\textstyle
P^{\scriptscriptstyle{\textsf{SRW}}}_{y}\left(J^{\scriptscriptstyle{\textsf{SRW}}}_n(l)=z , \AA^c_z\right)
\leq c_3n^{-4}.
\ee
Next, by reversibility (the invariant measure of $J^{\scriptscriptstyle{\textsf{SRW}}}_n$ being the uniform measure),
\be
\Eq(4.lem3.11)
\textstyle
P^{\scriptscriptstyle{\textsf{SRW}}}_{y}\left(J^{\scriptscriptstyle{\textsf{SRW}}}_n(l)=z , \AA_z\right)
=
P^{\scriptscriptstyle{\textsf{SRW}}}_{z}\left(J^{\scriptscriptstyle{\textsf{SRW}}}_n(l)=y , \AA_z\right)
\leq
P^{\scriptscriptstyle{\textsf{SRW}}}_{z}\left(\AA_z\right).
\ee
Let us thus estimate the probability
$
P^{\scriptscriptstyle{\textsf{SRW}}}_{z}\left(\AA_z\right)
$
that starting in $z$, the chain did not exit a ball of radius $4$ centered at $z$ by time $l$.
This means that at every step it takes
 the chain flips a coordinate of $z$ in such a way that the 
total number of coordinates of $z$ and
$J^{\scriptscriptstyle{\textsf{SRW}}}_n(i)$ that disagree 
is at most $4$ for each $i\leq l$.
If $l\geq 4$, this implies that 
$
(l-4)/2
$
of its $l$ steps 
(respectively,
$
(l-4+1)/2
$
of them)
consist in flipping back a coordinate to its initial position if $l-4$ is even
(respectively, if $l-4$ is odd). Each time such a backward flip occurs the chain chooses one in at most 4 flipped coordinates.
Thus, for all $l\geq 4$,
\be
\Eq(4.lem3.10)
P^{\scriptscriptstyle{\textsf{SRW}}}_{y}\left(\AA_z\right)
\leq \left({4}/{n}\right)^{\frac{l-4}{2}}\1_{l\,\text{even}}
+\left({4}/{n}\right)^{\frac{l-3}{2}}\1_{l\,\text{odd}}.
\ee
Plugging \eqv(4.lem3.10) in \eqv(4.lem3.11) yields that
for all $y,z$ such that $1\leq \dist(y,z)\leq 4$,
\be
\Eq(4.lem3.12)
\textstyle
\sum_{l=5}^mP^{\scriptscriptstyle{\textsf{SRW}}}_{y}\left(J^{\scriptscriptstyle{\textsf{SRW}}}_n(l)=z , \AA_z\right)
\leq c_4n^{-1},
\ee
for  all $m\leq  \left\lceil n^3\right\rceil$ and some constant $0<c_4<\infty$, while 
by simple combinatorics,
\be
\Eq(4.lem3.13)
\textstyle
\sum_{l=1}^4P^{\scriptscriptstyle{\textsf{SRW}}}_{y}\left(J^{\scriptscriptstyle{\textsf{SRW}}}_n(l)=z , \AA_z\right)
\leq \sum_{l=1}^4P^{\scriptscriptstyle{\textsf{SRW}}}_{y}\left(J^{\scriptscriptstyle{\textsf{SRW}}}_n(l)=z \right)
\leq c_5n^{-1},
\ee
for some $0<c_5<\infty$.
Combining \eqv(4.lem3.8), \eqv(4.lem3.12) and \eqv(4.lem3.13)  finishes the proof. \end{proof}


 
 We are now ready to give the proof of Proposition \thv(4.prop6).

 \begin{proof}[Proof of Proposition \thv(4.prop6), (i)] 
 Given $y\in\VV_n$ denote respectively by $P^\circ_y$, $P_y$, and $P^{\scriptscriptstyle{\textsf{SRW}}}_y$ the laws of $J^\circ_n$, $J_n$, and $J^{\scriptscriptstyle{\textsf{SRW}}}_n$  started in $y$. 
 The idea behing the proof is to decompose the paths of $J_n$ at visits to the set
$\VV_n^{\star}\equiv\cup_{1\leq l\leq L^{\star}}C^{\star}_{n,l}$,
and use that, away from this set, $J_n$ reduces to  SRW.
To this end define
$
T^{{\scriptscriptstyle{\textsf{SRW}}}}(A)\equiv\inf\{i\in\N \mid J^{\scriptscriptstyle{\textsf{SRW}}}_n(i)\in A\}
$,
$A\subseteq \VV_n$,
and set
\be
\Eq(4.prop6.2')
T^{{\scriptscriptstyle{\textsf{SRW}}},\star}_n\equiv\inf\left\{k\in\N : 
J^{\scriptscriptstyle{\textsf{SRW}}}_n(k)\in\VV_n^{\star}\right\}.
\ee
Similarly set
\be
\Eq(4.prop6.4)
T^{\star}_n\equiv\inf\left\{i\in\N \mid J_n(i)\in\VV_n^{\star}\right\}.
\ee
Let $z\in I^{\star}_n$ be fixed. Using that by definition
$
J_n^\circ(i)\equiv J_n(T_{n,i}^\circ)
$, 
we may write
\be
\Eq(4.prop6.5)
\textstyle
\sum_{k=1}^{\ell^\circ_n-1}
P^\circ_z\left(J^\circ_n(k+2)=z\right)
=
\sum_{k=1}^{\ell^\circ_n-1}
P_z\left(J_n(T_{n,k+2}^\circ)=z \right)
=I_1+I_2
\ee
where
\bea
\Eq(4.prop6.6)
I_1&\equiv& \textstyle
\sum_{k=1}^{\ell^\circ_n-1}
P_z\left(J_n(T_{n, k+2}^\circ)=z,  T^{\star}_n>  k+2\right),
\\
\Eq(4.prop6.7)
I_2&\equiv& \textstyle
\sum_{k=1}^{\ell^\circ_n-1}
P_z\left(J_n(T_{n, k+2}^\circ)=z,  T^{\star}_n\leq  k+2\right).
\eea
In view of \eqv(2.1.3)-\eqv(2.1.4), $T_{n,i}^\circ=i$ for all $i\in\{0,\dots,T^{\star}_n-1\}$. Hence
\be
\Eq(4.prop6.8)
I_1= \textstyle
\sum_{k=1}^{\ell^\circ_n-1}
P_z\left(J_n( k+2)=z,  T^{\star}_n>  k+2\right),
\ee
and since up to time $T^{\star}_n$ the transition probabilities of $J_n$
 are those of  SRW,
\be
\Eq(4.prop6.9)
\textstyle
I_1\leq 
\sum_{k=1}^{\ell^\circ_n-1}
P^{\scriptscriptstyle{\textsf{SRW}}}_z\left(J^{\scriptscriptstyle{\textsf{SRW}}}_n(k+2)=z \right)
\leq cn^{-2},
\ee
where the last inequality is \eqv(4.lem2.1). 

To Bound $I_2$ note that the event $\{T^{\star}_n\leq  k+2\}$
can be written as the disjoint union
\be
\Eq(4.prop6.10)
\{T^{\star}_n\leq  k+2\}
=\cup_{i\leq   k+2}\cup_{y\in\VV_n^{\star}}
\{ T^{\star}_n=i, J_n(T^{\star}_n)=y\}.
\ee
Thus
\be
\Eq(4.prop6.11)
\textstyle
I_2= 
\sum_{k=1}^{\ell^\circ_n-1}
\sum_{i=1}^{k+2}\sum_{y\in\VV_n^{\star}}
P_z\left(J_n(T_{n,k+2}^\circ)=z,  T^{\star}_n=i, J_n(T^{\star}_n)=y\right).
\ee
As above note that $T_{n,i}^\circ=i$ for all $i\in\{0,\dots,T^{\star}_n-1=i-1\}$,
that $T^{\star}_n=T_{n,i-1}^\circ+1$, and that
in the time interval $\{0,\dots,T^{\star}_n\}$, $J_n$ has the same transition probabilities as SRW.
By this and 
the Markov property,  
the probability in \eqv(4.prop6.11) is equal to
\be
\Eq(4.prop6.12)
P^{\scriptscriptstyle{\textsf{SRW}}}_z\left(
T^{{\scriptscriptstyle{\textsf{SRW}}},\star}_n=i,
J^{\scriptscriptstyle{\textsf{SRW}}}_n(i)=y
\right)
P_y\left(J_n(T_{n,k+2-i}^\circ)=z\right).
\ee
Consider now the last factor in \eqv(4.prop6.12). By construction 
$y\in \VV_n^{\star}$.
Hence, by \eqv(2.1.3),
\bea
\nonumber
P_y\left(J_n(T_{n,k+2-i}^\circ)=z\right)
\hspace{-6pt}&=&\hspace{-6pt}
P_y\left(J_n(T_{n,0}^\circ)=z\right)\1_{\{k+2-i=0\}}
\\
\Eq(4.prop6.15)
\hspace{-6pt}&+&\hspace{-6pt}
\textstyle
\sum_{x}P_y\left(J_n(T_{n,0}^\circ)=x\right)
P_x\left(J_n(T_{n,k+2-i}^\circ)=z\right)\1_{\{k+2-i>0\}}\quad\quad\,\,
\eea
where the sum is over $x$ in $\del\VV_n^{\star}=\cup_{1\leq l\leq L^{\star}}\del C^{\star}_{n,l}$. 
Note that $P_y\left(J_n(T_{n,0}^\circ)=x\right)$ is nothing but the exit distribution from the set $C^{\star}_{n,l}$ containing $y$,
so by Lemma \thv(5.lem9), for all $y\in C^{\star}_{n,l}$, 
\bea
\nonumber
P_y\left(J_n(T_{n,k+2-i}^\circ)=z\right)
\hspace{-6pt}&=&\hspace{-6pt}
(1+o(1))\bigl\{
\LL^{\star}_{n,l}(z)\1_{\{z\in\del C^{\star}_{n,l}\}}
\1_{\{k+2-i=0\}}
\\
\Eq(4.prop6.16)
\hspace{-6pt}&+&\hspace{-6pt}
\textstyle\sum_{x}\LL^{\star}_{n,l}(x)
P_x\left(J_n(T_{n,k+2-i}^\circ)=z\right)\1_{\{k+2-i>0\}}\bigr\}.
\eea
Observe that since by assumption $z$ belongs to
$
 I^{\star}_n
$
 then, by \eqv(10.1.4), 
$
z\notin \del\VV_n^{\star}
$.
Thus
the first term in the r.h.s.~of   \eqv(4.prop6.16) is zero.
Now for indices $i,k$ such that $k+2-i>0$, 
let us rewrite the  probability  in the remaining term as
$P_x\left(J_n(T_{n,k+2-i}^\circ)=z\right)=I_3(x)+I_4(x)$,
\bea
\Eq(4.prop6.13)
I_3(x)&\equiv&P_x\left(J_n(T_{n,k+2-i}^\circ)=z, T^{\star}_n>  k+2-i\right),
\\
\Eq(4.prop6.14)
I_4(x)\equiv
A_x(k+2-i)
&\equiv&P_x\left(J_n(T_{n,k+2-i}^\circ)=z, T^{\star}_n\leq  k+2-i\right).
\eea
Proceeding as we did for $I_1$ to express $I_3(x)$ yields 
\be
\Eq(4.prop6.17)
I_3(x)
=
P^{\scriptscriptstyle{\textsf{SRW}}}_x\left(J^{\scriptscriptstyle{\textsf{SRW}}}_n(k+2-i)=z \right).
\ee
Thus, up to a prefactor $1+o(1)$, the contribution to $I_2$ coming from the terms $I_3(x)$ is 
\bea
\nonumber
K_1\equiv
\textstyle
\sum_{k=1}^{\ell^\circ_n-1}
\sum_{i=1}^{k+1}
\sum_{1\leq l\leq L^{\star}}
\hspace{-10pt}&&\hspace{-10pt}
\textstyle
\sum_{y\in C^{\star}_{n,l}}
\bigl\{
P^{\scriptscriptstyle{\textsf{SRW}}}_z\left(
T^{{\scriptscriptstyle{\textsf{SRW}}},\star}_n=i, 
J^{\scriptscriptstyle{\textsf{SRW}}}_n(i)=y
\right)
\\
\Eq(4.prop6.18)
&&
\textstyle
\sum_{x\in\del C^{\star}_{n,l}}\LL^{\star}_{n,l}(x)
P^{\scriptscriptstyle{\textsf{SRW}}}_x\left(J^{\scriptscriptstyle{\textsf{SRW}}}_n(k+2-i)=z \right)
\bigr\}.
\quad\quad
\eea
By a change of indices,
\be
\Eq(4.prop6.19)
K_1=
\textstyle
\sum_{1\leq l\leq L^{\star}}
\sum_{j=1}^{\ell^\circ_n}
\sum_{x\in\del C^{\star}_{n,l}}\LL^{\star}_{n,l}(x)
P^{\scriptscriptstyle{\textsf{SRW}}}_x\left(J^{\scriptscriptstyle{\textsf{SRW}}}_n(j)=z \right)R_1(j),
\ee
where
\bea
\nonumber
R_1(j)
\equiv\hspace{-6pt}&&\hspace{-6pt}
\textstyle
\sum_{m=1}^{\ell^\circ_n+1-j}\sum_{y\in C^{\star}_{n,l}}
P^{\scriptscriptstyle{\textsf{SRW}}}_z\left(
T^{{\scriptscriptstyle{\textsf{SRW}}},\star}_n=m, 
J^{\scriptscriptstyle{\textsf{SRW}}}_n(m)=y
\right)
\\
\nonumber
\leq\hspace{-6pt}&&\hspace{-6pt}
P^{\scriptscriptstyle{\textsf{SRW}}}_z\left(
T^{{\scriptscriptstyle{\textsf{SRW}}},\star}_n
=T^{\scriptscriptstyle{\textsf{SRW}}}(C^{\star}_{n,l})
\leq \ell^\circ_n+1-j
\right)
\\
\Eq(4.prop6.20)
\leq\hspace{-6pt}&&\hspace{-6pt}
P^{\scriptscriptstyle{\textsf{SRW}}}_z\left(
T^{{\scriptscriptstyle{\textsf{SRW}}},\star}_n
=T^{\scriptscriptstyle{\textsf{SRW}}}(C^{\star}_{n,l})
\leq \ell^\circ_n
\right).
\eea
 As observed earlier, $\dist(x,z)\geq 1$.
Thus, by  Lemma \thv(4.lem3), and since $\sum_{x}\LL^{\star}_{n,l}(x)=1$, 
\be
\Eq(4.prop6.21)
\textstyle
\sum_{j=1}^{\ell^\circ_n}
\sum_{x\in\del C^{\star}_{n,l}}\LL^{\star}_{n,l}(x)
P^{\scriptscriptstyle{\textsf{SRW}}}_x\left(J^{\scriptscriptstyle{\textsf{SRW}}}_n(j)=z \right)
\leq c'/n.
\ee
Plugging \eqv(4.prop6.20) in \eqv(4.prop6.19) and using  \eqv(4.prop6.21), 
\bea
\nonumber
K_1\leq\hspace{-6pt}&&\hspace{-6pt}
\textstyle
(c'/n)
\sum_{1\leq l\leq L^{\star}}
P^{\scriptscriptstyle{\textsf{SRW}}}_z\left(
T^{{\scriptscriptstyle{\textsf{SRW}}},\star}_n
=T^{\scriptscriptstyle{\textsf{SRW}}}(C^{\star}_{n,l})
\leq \ell^\circ_n
\right)
\\
\nonumber
\leq\hspace{-6pt}&&\hspace{-6pt}
(c'/n)
P^{\scriptscriptstyle{\textsf{SRW}}}_z\left(
T^{{\scriptscriptstyle{\textsf{SRW}}},\star}_n
\leq \ell^\circ_n
\right)
\\
\Eq(4.prop6.22)
\leq\hspace{-6pt}&&\hspace{-6pt}
(c'/n)
P^{\scriptscriptstyle{\textsf{SRW}}}_z
\left(T^{\scriptscriptstyle{\textsf{SRW}}}(V_n(\rho^{\star}_n)\setminus z)\leq\ell^\circ_n+1 \right).
\eea
Note that by \eqv(4.prop1.0) and the assumption that $c_{\star}>3$, $n^{c_{\star}}> \ell^\circ_n\log n$ 
for all  $n$ large enough. It thus follows from Lemma \thv(4.lem1) that on $\O^{\scriptscriptstyle{\textsf{SRW}}}$, 
for all but a finite number of indices $n$,
\be
\Eq(4.prop6.23)
K_1\leq\Bigl(\frac{c'}{n}\Bigr)\Bigl(\frac{2+C'/c_{\star}}{\log n}\Bigr).
\ee
We assume from now on that $\o\in\O^{\scriptscriptstyle{\textsf{SRW}}}$.
It remains to deal with the contribution to $I_2$ coming from the terms $I_4(x)$:
in view of \eqv(4.prop6.14) this
is given, up to a  prefactor $1+o(1)$, by
\bea
\nonumber
K_2\equiv
\textstyle
\sum_{k=1}^{\ell^\circ_n-1}
\sum_{i=1}^{k+1}
\sum_{1\leq l\leq L^{\star}}\sum_{y\in C^{\star}_{n,l}}
\hspace{-10pt}&&\hspace{-10pt}
\textstyle
\bigl\{
P^{\scriptscriptstyle{\textsf{SRW}}}_z\left(
T^{{\scriptscriptstyle{\textsf{SRW}}},\star}_n=i, 
J^{\scriptscriptstyle{\textsf{SRW}}}_n(i)=y
\right)\quad\quad
\\
\Eq(4.prop6.24)
&&
\textstyle
\sum_{x\in\del C^{\star}_{n,l}}\LL^{\star}_{n,l}(x)
A_x(k+2-i)\bigr\}.
\quad\quad
\eea
Proceeding as in  $K_1$ to rearrange the indices
and using \eqv(4.prop6.20),
\bea
\nonumber
K_2\leq
\textstyle
\sum_{1\leq l\leq L^{\star}}P^{\scriptscriptstyle{\textsf{SRW}}}_z\left(
T^{{\scriptscriptstyle{\textsf{SRW}}},\star}_n
=T^{\scriptscriptstyle{\textsf{SRW}}}(C^{\star}_{n,l})
\leq \ell^\circ_n+1
\right)\hspace{-10pt}&&\hspace{-10pt}\quad\quad\quad
\\\Eq(4.prop6.26)
\textstyle
\sum_{x\in\del C^{\star}_{n,l}}\LL^{\star}_{n,l}(x)
\hspace{-10pt}&&\hspace{-10pt}
\textstyle\sum_{j=1}^{\ell^\circ_n}A_x(j).\quad\quad
\eea
Summing up, we have established that $I_2\leq (1+o(1))(K_1+K_2)$ with $K_1$ bounded by \eqv(4.prop6.23)
and $K_2$ bounded by \eqv(4.prop6.26). But by \eqv(4.prop6.7) and \eqv(4.prop6.14),
\be
\Eq(4.prop6.27)
I_2
=
\textstyle\sum_{k=1}^{\ell^\circ_n-1}A_z(k+2).
\ee
Hence
\be
\Eq(4.prop6.28)
\textstyle
\sum_{k=1}^{\ell^\circ_n-1}A_z(k+2)
\leq
(c'/n)
\a_n
+
\sum_{1\leq l\leq L^{\star}}
\a_{n,l}
\sum_{x\in\del C^{\star}_{n,l}}\LL^{\star}_{n,l}(x)
\sum_{j=1}^{\ell^\circ_n}A_x(j),
\quad
\ee
where $ \a_{n,l}\geq 0$, $\sum_{1\leq l\leq L^{\star}} \a_{n,l}=\a_n$, and $\a_n\leq c''/(\log n)$ for some constant 
$0<c''<\infty$.

The idea  now simply is to  iterate this relation. One easily checks, going through the preceeding 
arguments that, indeed, $\sum_{j=1}^{\ell^\circ_n}A_x(j)$ can be bounded in exactly the same way as $I_2$.
Doing so and iterating  $\kappa$ times gives
\be
\Eq(4.prop6.29)
\textstyle
I_2\leq(c'/n)\sum_{m=1}^{\kappa}\a_n^{m}
+\a_n^{\kappa-1}
\sum_{1\leq l\leq L^{\star}}\a_{n,l}
\sum_{x\in\del C^{\star}_{n,l}}\LL^{\star}_{n,l}(x)
\sum_{j=1}^{\ell^\circ_n}A_x(j).
\ee
Since $A_x(j)\leq 1$, choosing $\kappa=\lfloor\log n\rfloor$ yields
\be
\Eq(4.prop6.29')
I_2\leq\frac{c'}{n}\frac{\a_n}{1-\a_n}+\a_n^{\kappa}\ell^\circ_n
\leq \frac{c'}{n}\frac{\a_n}{1-\a_n}+o\Bigl(\frac{1}{n^2}\Bigr).
\ee
Inserting this bound and 
\eqv(4.prop6.9) in \eqv(4.prop6.5) we finally obtain
\be
\Eq(4.prop6.30)
{\textstyle\sum_{k=1}^{\ell^\circ_n-1}}
P^\circ_z\left(J^\circ_n(k+2)=z\right)
\leq
cn^{-2}(1+o(1))+c'n^{-1}\frac{\a_n}{1-\a_n}
\leq \frac{C_\circ}{n\log n},
\ee
for some constant $0<C_\circ<\infty$, and this is valid on $\O^{\scriptscriptstyle{\textsf{SRW}}}$
 for all but a finite number of indices $n$.
The proof of assertion (i) of Proposition \thv(4.prop6) is complete.
\end{proof}

 \begin{proof}[Proof of  Proposition \thv(4.prop6), (ii)] The  proof is a rerun of the
 proof of assertion (i) with only three minor changes that we now indicate.
Let $1\leq l'\leq L^{\star}$ and $z,z'\in \del C^{\star}_{n,l'}$ be given. As in \eqv(4.prop6.5) write
 \be
\Eq(4.prop6.31)
\textstyle
\sum_{k=1}^{\ell^\circ_n-1}
P^\circ_{z'}\left(J^\circ_n(k)=z\right)
=
\sum_{k=1}^{\ell^\circ_n-1}
P_{z'}\left(J_n(T_{n,k}^\circ)=z \right)
=I_1+I_2
\ee
where $I_1$ and $I_2$ are the analogues of \eqv(4.prop6.6) and \eqv(4.prop6.7), respectively.
Arguing as in \eqv(4.prop6.8)-\eqv(4.prop6.9) to bound $I_1$,  but using  \eqv(4.lem2.1) of Lemma \thv(4.lem2) if $z=z'$
and  \eqv(4.lem3.1) of Lemma \thv(4.lem3) if $z\neq z'$,  we get that
\be
\Eq(4.prop6.32)
\textstyle
I_1\leq 
\sum_{k=1}^{\ell^\circ_n-1}
P^{\scriptscriptstyle{\textsf{SRW}}}_{z'}\left(J^{\scriptscriptstyle{\textsf{SRW}}}_n(k)=z \right)
\leq c''n^{-1}
\ee
 for some constant $0< c''<\infty$. This is the most impacting change since, as we shall see, this term now becomes the leading one. The second change is in the treatment of \eqv(4.prop6.16) where
 the term 
$
\LL^{\star}_{n,l}(z)\1_{\{z\in\del C^{\star}_{n,l}\}}
\1_{\{k+2-i=0\}}
$
is non zero whenever $l=l'$. In that case we deduce from \eqv(4.prop4.9) and \eqv(4.prop4.0) that
$
\LL^{\star}_{n,l}(z)\leq (1+o(1)){|C^{\star}_{n,l}|}/{|\del C^{\star}_{n,l}|},
$
so that by \eqv(10.lem1.6) of Lemma \thv(10.lem1), 
\be
\LL^{\star}_{n,l}(z)\leq n^{-1}(1+o(1))
\Eq(4.prop6.33)
\ee
on $\O^{\star}$,  for all but a finite number of indices $n$.
One then easily checks that the contribution to $I_2$ coming from this term can be bounded in the same way as 
the contribution coming from the terms $I_3(x)$, that is to say, as $K_1$.
The third and last minor change is in the treatment of $K_1$. Notice that the sum over $x$ in \eqv(4.prop6.18) 
contains the term $x=z$. For this term we bound $I_3(x)$ (see \eqv(4.prop6.17))
 using  \eqv(4.lem2.1).
 Clearly, this does not affect \eqv(4.prop6.21) and thus, the bound 
 \eqv(4.prop6.23) (valid  on $\O^{\scriptscriptstyle{\textsf{SRW}}}$, for all but a finite number of indices $n$) is unchanged.
 
Poceeding from there on as in the proof of  assertion (i) we finally get that
on $\O^{\scriptscriptstyle{\textsf{SRW}}}\cap \O^{\star}$,  for all but a finite number of indices $n$,
 \be
\Eq(4.prop6.34)
{\textstyle\sum_{k=1}^{\ell^\circ_n-1}}
P^\circ_{z'}\left(J^\circ_n(k)=z\right)
\leq
c''n^{-1}+c'n^{-1}\frac{\a_n}{1-\a_n}+o(n^{-2})
\leq 
C'_\circ n^{-1}
\ee
for some constant $0<C'_\circ<\infty$.
 The proof of  assertion (ii) of Proposition \thv(4.prop6) is done.
\end{proof}


\subsection{Hitting time at stationarity: proof of Proposition \thv(4.prop5).}
 \label{4.2}
 
Let us first prove, that under the assumptions of Proposition \thv(4.prop5), \eqv(4.prop5.1) holds for the continous time Markov chain $J^{*}_{n}$ introduced in Subsection \thv(4.5).
For this we use results  from \cite{AB1}.
  Set $B\equiv A\cap I^{\star}_n$ and write
$
T^{*}(B)\equiv\inf\{t>0 \mid J^{*}_n(t)\in B\}
$.
Then, by Theorem 3 and
 Lemma 2 of \cite{AB1} we have,
\be\textstyle
P^{*}_{\pi^\circ_n}\left(T^{*}(B)>t\right)
\geq
\bigl(
1-\t^\circ_n\frac{q(B,B^c)}{1-\pi^\circ_n(B)}
\bigr)
\exp\bigl({-t\frac{q(B,B^c)}{1-\pi^\circ_n(B)}}\bigr), \quad t>0,
\Eq(4.prop5.2)
\ee
where $\t^\circ_n$ is as in \eqv(4.prop3.1) and where
$
q(B,B^c)=\sum_{x\in B}\sum_{y\notin B}\pi^\circ_n(x)p^\circ_n(x,y)=\pi^\circ_n(B)
$
as follows from \eqv(4.prop4.2) and the fact that $B\subseteq I^{\star}_n$. By Proposition \thv(4.prop2),
\be
\pi^\circ_n(B)={|B|}/{|\VV^\circ_{n}|}\leq {|I^{\star}_n|}/{|\VV^\circ_{n}|}\leq n^{-c_{\star}}(1+o(1)),
\Eq(4.prop5.3)
\ee
where we used  \eqv(4.prop2.0) and \eqv(10.lem1.2)
in the last inequality. From this and
\eqv(4.prop3.1), we get that
\be\textstyle
P^{*}_{\pi^\circ_n}\left(T^{*}(B)>t\right)
\geq
\bigl(
1-n^{-(c_{\star}-2)}(1+o(1))
\bigr)
\exp\bigl(-t\frac{|B|}{|\VV^\circ_{n}|}(1+o(n^{-c_{\star}}))\bigr), \quad t>0.
\Eq(4.prop5.4)
\ee

The idea then is that for $s_n$ as in \eqv(4.4.2), $T^{*}(B)-T^\circ(B)=s_n(T^\circ(B))-T^\circ(B)$, which by 
\eqv(4.lem5.6+) should be small for $T^\circ(B)$ large.
We thus need an a priori lower bound on $T^\circ(B)$. 
Now note that $B\subset V_n(\rho^{\star}_n)$ so that by 
Lemma \thv(4.lem1) and \eqv(4.prop5.3), for all $l_n\leq n^{c_{\star}}/\log n$, 
\be
\nonumber
\textstyle
P^\circ_{\pi^\circ_n}(T^\circ(B)>l_n)
\geq (1-\pi^\circ_n(B))\inf_{x\notin B}P^{\scriptscriptstyle{\textsf{SRW}}}_x
\left(T^{\scriptscriptstyle{\textsf{SRW}}}(V_n(\rho^{\star}_n)\setminus x)\geq l_n \right) 
\geq 1-\OO(\sfrac{1}{\log n}).
\Eq(4.prop5.5)
\ee
From this bound, \eqv(4.lem5.6+), and \eqv(4.prop5.4) we easily get that for 
$\II(\zeta)$ as in \eqv(4.lem5.0) and any $\zeta>0$,
\be
P^{*}_{\pi^\circ_n}\left(T^{*}(B)>t\right)
\leq P^\circ_{\pi^\circ_n}\left(T^\circ(B)>t/(1+\zeta)\right)+2e^{-l_n\II(\zeta)}+\OO(\sfrac{1}{\log n}).
\Eq(4.prop5.6)
\ee
%
%
Taking e.g.~$\zeta=1/2$ and $l_n=n^2$ yields \eqv(4.prop5.1) and finishes the proof of Proposition  \thv(4.prop5).




\section{Proof of Theorem \thv(2.theo3) and of Theorem \thv(2.theo4)}
 \label{8}
 
 
 The proofs of Theorem \thv(2.theo3) and of Theorem \thv(2.theo4) hinge upon the next two lemmata.
 
\subsection{Preparatory Lemmata.}
 \label{8.3}
Let $0<\rho<1$ and, for $V_n(\rho)$ defined in \eqv(3.0.1),
set
\be
C_{n,l}^\star(\rho)=
\begin{cases} 
 C_{n,l}^\star &\mbox{if}\, C_{n,l}^\star\cap V_n(\rho)\neq \emptyset,\\
\emptyset & \mbox{else}.
\end{cases} 
\Eq(8.3.0)
\ee
\begin{lemma}
  \TH(8.3lem1)
Assume that $c_{\star}>2$.
There exists a subset $\O_3\subset\O$ with  $\P\left(\O_3\right)=1$ such that 
on $\O_3$, for all but a finite number of indices $n$, for all $\rho^{\star}_n\leq \rho\leq1-3\rho^{\star}_n$,
\be
\left|\cup_{1\leq l\leq L^{\star}}C^{\star}_{n,l}(\rho)\right|/|\VV^\circ_{n}|
\leq  n^{-c_{\star}+1}2^{-n\rho}(1+o(1)),
\Eq(8.3lem1.2bis)
\ee
and, for $m^{\star}_{n,l}(x)$ as in \eqv(4.prop4.0),
\be
\sum_{1\leq l\leq L^{\star}}\sum_{x\in\del C^{\star}_{n,l}(\rho)}\pi^\circ_n(x)m^{\star}_{n,l}(x)
\leq  n^{-c_{\star}+1}2^{-n\rho}(1+o(1)).
\Eq(8.3lem1.2)
\ee
\end{lemma}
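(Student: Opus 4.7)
The plan is first to show that \eqv(8.3lem1.2) follows from \eqv(8.3lem1.2bis). Swapping the order of summation in \eqv(4.prop4.0) gives
\[
\textstyle
\sum_{x\in\del C^{\star}_{n,l}} m^{\star}_{n,l}(x)
= n^{-1}\sum_{y\in C^{\star}_{n,l}} \bigl|\{x\in\del C^{\star}_{n,l}\mid \dist(x,y)=1\}\bigr|
\leq |C^{\star}_{n,l}|,
\]
since every vertex of $\QQ_n$ has exactly $n$ neighbors. Combined with $\pi^\circ_n\equiv 1/|\VV^\circ_{n}|$ from Proposition \thv(4.prop2) and the fact that $C^{\star}_{n,l}(\rho)$ is either $C^{\star}_{n,l}$ or empty, this yields
$\sum_l\sum_{x\in\del C^{\star}_{n,l}(\rho)}\pi^\circ_n(x)m^{\star}_{n,l}(x)\leq|\VV^\circ_{n}|^{-1}\sum_l|C^{\star}_{n,l}(\rho)|$,
so it suffices to prove \eqv(8.3lem1.2bis).

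\textbf{First-moment path enumeration.} For \eqv(8.3lem1.2bis), a vertex $x$ lies in $\cup_l C^{\star}_{n,l}(\rho)$ precisely when $x$ belongs to a connected component of $V_n(\rho_n^{\star})$ of size $\geq 2$ meeting $V_n(\rho)$. This forces the existence of a self-avoiding path $x=x_0,x_1,\dots,x_k$ in $\QQ_n$ of length $k\leq M_n-1$, where $M_n\equiv\{\rho_n^{\star}[1-2c_{\star}^{-1}(1+o(1))]\}^{-1}$ is the uniform upper bound on $|C^{\star}_{n,l}|$ from \eqv(10.lem1.4), with $x_0,\dots,x_{k-1}\in V_n(\rho_n^{\star})$ and $x_k\in V_n(\rho)$ (augmented, when $k=0$, by the requirement that $x$ have at least one neighbor in $V_n(\rho_n^{\star})$ to escape $I_n^{\star}$). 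By independence of the weights at distinct vertices, each such realisation has probability at most $n^{-c_{\star}k}\cdot 2^{-n\rho}$; since the number of length-$k$ paths out of $x$ is bounded by $n^k$ and the hypothesis $c_{\star}>2$ gives $n^{-(c_{\star}-1)}=o(1)$, summing over $k$ and $x\in\VV_n$ yields
\[
\E\bigl|\cup_l C^{\star}_{n,l}(\rho)\bigr|\leq 2^n\cdot n^{-(c_{\star}-1)}\cdot 2^{-n\rho}(1+o(1)),
\]
the dominant contributions coming from the $k\leq 1$ terms; dividing by $|\VV^\circ_{n}|=2^n(1+o(1))$ from \eqv(4.prop2.0) gives the target bound \emph{in expectation}.

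\textbf{Concentration and uniformity in $\rho$.} To upgrade this to an almost sure and $\rho$-uniform bound, the plan is to compute a second moment by enumerating pairs of paths $(\gamma,\gamma')$ with starting vertices $(x,x')$: vertex-disjoint pairs contribute independently, while overlapping pairs are rare enough (again thanks to $c_{\star}>2$) that $\Var\bigl|\cup_l C^{\star}_{n,l}(\rho)\bigr|=o\bigl((\E|\cup_l C^{\star}_{n,l}(\rho)|)^2\bigr)$, whence Chebyshev gives summable tail decay at fixed $\rho$ and Borel--Cantelli an almost sure bound. Uniformity in $\rho\in[\rho_n^{\star},1-3\rho_n^{\star}]$ is then obtained by sampling $\rho$ on a grid of mesh $O(1/n^2)$ and using monotonicity of both $|\cup_l C^{\star}_{n,l}(\rho)|$ and $2^{-n\rho}$ in $\rho$ to interpolate between grid points: the multiplicative discretisation error is $2^{O(1/n)}=1+o(1)$ and can be absorbed in the $(1+o(1))$ factor, while the union bound over the $O(n^2)$ grid points still yields a summable sequence in $n$. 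The main obstacle is the second-moment step, which requires careful control of how pairs of self-avoiding paths in the high-dimensional hypercube can share intermediate vertices in $V_n(\rho_n^{\star})$ and propagate correlation between distant starting points.
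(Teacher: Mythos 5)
Your reduction from \eqv(8.3lem1.2) to \eqv(8.3lem1.2bis) is the same as the paper's (which also bounds $\sum_{x\in\del C^{\star}_{n,l}}m^{\star}_{n,l}(x)\leq|C^{\star}_{n,l}|$ and uses that $\pi^\circ_n$ is uniform), and your high-level strategy (first moment, second-moment concentration, Borel--Cantelli, grid for $\rho$-uniformity) is a legitimate alternative to the paper's decomposition of $|\cup_l C^{\star}_{n,l}(\rho)|$ into $\sum_k S_n(k)$ with Bennett's inequality applied to $S_n(2)$ and $S_n(3)$ and Markov to $S_n(k)$, $k\geq 4$. But the second-moment step is a genuine gap, and it is not just a technicality.

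Your concern --- that pairs of paths sharing intermediate vertices ``propagate correlation between distant starting points'' --- is slightly misdirected. By Lemma \thv(10.lem1), $|C^{\star}_{n,l}|\leq M_n\ll n$ uniformly on $\O^{\star}$, so $\1[x\in\cup_l C^{\star}_{n,l}(\rho)]$ is measurable with respect to the weights in the ball of radius $M_n$ about $x$, and $\Cov(\1_x,\1_y)=0$ as soon as $\dist(x,y)>2M_n$. The real problem is that $|B(x,2M_n)|$ is super-polynomial in $n$ when $M_n\sim n/\log n$, so the crude estimate $\Var\leq\E\cdot|B(2M_n)|$ is useless for Chebyshev. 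The fix is to truncate at a bounded (or very slowly growing) path length --- equivalently component size --- which is precisely what the paper's per-$k$ split achieves: for small $k$ the dependency radius is $O(k)$, the covariance sum is $\mathrm{poly}(n)\cdot\E$, and a Chebyshev (or Bennett) argument closes; for larger $k$ the first moment is already smaller than $n^{1-c_{\star}}2^{-n\rho}$ by a factor $n^{-(c_{\star}-1)(k-2)}$ (thanks to $c_{\star}>2$), so Markov suffices. As written, your concentration step does not go through; this truncation must be made explicit, and it is the technical core of the proof.

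A secondary point: the $k=0$ term (with $x\in V_n(\rho)$ and an occupied neighbor) and the $k=1$ term (with $x\in V_n(\rho_n^{\star})\setminus V_n(\rho)$ and a neighbor in $V_n(\rho)$) each contribute $\sim n^{1-c_{\star}}2^{-n\rho}$, so your union bound over $k$ gives a leading constant close to $2$ (exactly $2$ when $\rho\gg\rho_n^{\star}$), not the $1+o(1)$ you assert. You should carry the constant rather than claim $1+o(1)$.
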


\begin{lemma}
  \TH(8.3lem2)
Assume that $c_{\star}>2$. On $\O^{\star}$, for all but a finite number of indices $n$,
\be
\pi^\circ_n\left(\del\left(\cup_{1\leq l\leq L^{\star}}C^{\star}_{n,l}\right)\right)
\leq 
 n^{-2(c_{\star}-1)}(1+\OO(n^{-(c_{\star}-1)})).
 \Eq(8.3lem2.2)
\ee
\end{lemma}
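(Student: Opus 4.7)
The plan is to reduce everything to counting cardinalities via Proposition \thv(4.prop2), and then apply the estimates of Lemma \thv(10.lem1).

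First I would observe that $\del(\cup_{1\leq l\leq L^{\star}} C^{\star}_{n,l}) \subseteq \VV_n \setminus \cup_{1\leq l\leq L^{\star}} C^{\star}_{n,l} = \VV_n^\circ$, by the definition \eqv(2.1.1). Hence by the explicit formula \eqv(4.prop2.1),
\be
\pi^\circ_n\bigl(\del\bigl({\textstyle\cup_{l}}C^{\star}_{n,l}\bigr)\bigr)
=
\frac{\bigl|\del\bigl(\cup_{l}C^{\star}_{n,l}\bigr)\bigr|}{|\VV^\circ_{n}|}.
\ee

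Next I would bound the numerator by the union bound followed by the per-component estimate \eqv(10.lem1.6):
\be
\bigl|\del\bigl({\textstyle\cup_{l}}C^{\star}_{n,l}\bigr)\bigr|
\leq
{\textstyle\sum_{l=1}^{L^{\star}}}\bigl|\del C^{\star}_{n,l}\bigr|
\leq
n{\textstyle\sum_{l=1}^{L^{\star}}}\bigl|C^{\star}_{n,l}\bigr|.
\ee
Plugging in \eqv(10.lem1.3) then gives, on $\O^{\star}$ and for all but finitely many $n$,
\be
\bigl|\del\bigl({\textstyle\cup_{l}}C^{\star}_{n,l}\bigr)\bigr|
\leq
n \cdot 2^n n^{-2c_{\star}+1}(1+\OO(n^{-(c_{\star}-1)}))
=
2^n n^{-2(c_{\star}-1)}(1+\OO(n^{-(c_{\star}-1)})).
\ee

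Finally I would use the lower bound $|\VV^\circ_{n}| = 2^n(1+o(n^{-c_{\star}}))$ from \eqv(4.prop2.0) of Proposition \thv(4.prop2) to absorb the denominator; since $n^{-2c_{\star}+1}$ is much smaller than $n^{-(c_{\star}-1)}$ for $c_{\star}>2$, the ratio yields the claimed bound $n^{-2(c_{\star}-1)}(1+\OO(n^{-(c_{\star}-1)}))$. There is no genuine obstacle here: the whole statement is a direct corollary of the component count \eqv(10.lem1.3) together with the deterministic isoperimetric-type bound \eqv(10.lem1.6) and the explicit form of $\pi^\circ_n$. The loss from the union bound $|\del(\cup_l C^{\star}_{n,l})| \leq \sum_l |\del C^{\star}_{n,l}|$ is harmless because the right-hand side of \eqv(8.3lem2.2) already matches the order of $\sum_l n|C^{\star}_{n,l}|/|\VV^\circ_n|$ to leading order.
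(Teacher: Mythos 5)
Your proof is correct and follows essentially the same route as the paper's: the paper likewise reduces to the ratio $n\left|\cup_{l}C^{\star}_{n,l}\right|/|\VV^\circ_{n}|$ (via the trivial deterministic bound that each vertex contributes at most $n$ boundary neighbors, rather than detouring through the per-component estimate \eqv(10.lem1.6), but this is cosmetic) and then invokes \eqv(10.lem1.3) and \eqv(4.prop2.0) exactly as you do.
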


\begin{proof}[Proof of  Lemma  \thv(8.3lem2)] 
By \eqv(4.prop2.1),
$
\pi^\circ_n\left(\del(\cup_{1\leq l\leq L^{\star}}C^{\star}_{n,l})\right)
\leq 
n|\cup_{1\leq l\leq L^{\star}}C^{\star}_{n,l}|/{|\VV^\circ_{n}|}
$. 
By \eqv(10.lem1.3) of Lemma \thv(10.lem1) and \eqv(4.prop2.0) of Proposition \thv(4.prop2), 
on $\O^{\star}$, for all but a finite number of indices $n$,
\be
n|\cup_{1\leq l\leq L^{\star}}C^{\star}_{n,l}|/{|\VV^\circ_{n}|}
=
n\left|V_n(\rho^{\star}_n)\setminus I^{\star}_n\right|/{|\VV^\circ_{n}|}
\leq 
nn^{-2c_{\star}+1}(1+\OO(n^{-(c_{\star}-1)})),
 \Eq(8.3lem2.3)
\ee
proving \eqv(8.3lem2.2). 
\end{proof}

\begin{proof}[Proof of  Lemma  \thv(8.3lem1)] Set $k_n^{\star}\equiv\max_{2\leq l\leq L^{\star}}|C^{\star}_{n,l}(\rho)|$ and let
\be
\textstyle
S_{n}(k)\equiv
\sum_{l=2}^{L^{\star}}|C^{\star}_{n,l}(\rho)|
\1_{\{|C^{\star}_{n,l}(\rho)|=k\}}
\Eq(8.3lem1.4)
\ee
be the total number of vertices that belong to sets 
$C^{\star}_{n,l}(\rho)$ that have cardinality $k$.
Note that by  \eqv(10.1.1) and \eqv(10.lem1.4)  of Lemma \thv(10.lem1),
on $\O^{\star}$, for large enough $n$,
\be
k_n^{\star}
\leq{n}/({(c_{\star}-2)\log n}).
\Eq(8.3lem1.5)
\ee
Now, on the one hand,
\be
\textstyle
|\cup_{1\leq l\leq L^{\star}}C^{\star}_{n,l}(\rho)|/|\VV^\circ_{n}|
=\frac{1}{|\VV^\circ_{n}|}\sum_{l=2}^{L^{\star}}|C^{\star}_{n,l}(\rho)|
=\frac{1}{|\VV^\circ_{n}|}\sum_{k=2}^{k_n^{\star}} S_{n}(k).
\Eq(8.3lem1.17)
\ee
On the other hand, by Proposition \thv(4.prop2),
\bea
\nonumber
&&
\textstyle\sum_{1\leq l\leq L^{\star}}\sum_{x\in\del C^{\star}_{n,l}(\rho)}\pi^\circ_n(x)m^{\star}_{n,l}(x)
\\
\Eq(8.3lem1.3)
&=&
\textstyle
\sum_{k=2}^{k_n^{\star}} \sum_{l=2}^{L^{\star}}
\1_{\{|C^{\star}_{n,l}(\rho)|=k\}}\frac{1}{|\VV^\circ_{n}|}
\sum_{x\in\del C^{\star}_{n,l}(\rho)}m^{\star}_{n,l}(x)
\leq 
\textstyle
\frac{1}{|\VV^\circ_{n}|}\sum_{k=2}^{k_n^{\star}} S_{n}(k),
\eea
where we used in the final inequality that by \eqv(4.prop4.0),
\bea
\textstyle
\sum_{x\in\del C^{\star}_{n,l}(\rho)}m^{\star}_{n,l}(x)
&=&
\textstyle
n^{-1}\sum_{y\in C^{\star}_{n,l}(\rho)}\sum_{x\in\del C^{\star}_{n,l}(\rho)}\1_{\{\dist(y,x)=1\}}
\\
&=&
\textstyle
n^{-1}\sum_{y\in C^{\star}_{n,l}(\rho)}|\del C^{\star}_{n,l}(\rho)\cap \del y|\leq |C^{\star}_{n,l}(\rho)|,
\eea
since $|\del C^{\star}_{n,l}(\rho)\cap \del y|\leq n$.
Let us now focus on the quantities $S_{n}(k)$, $2\leq k\leq {k_n^{\star}}$.
We claim that if $c_{\star}>2$ there exists a subset $\O^{\star\star}\subset\O$ with  
$\P\left(\O^{\star\star}\right)=1$ such that, 
on $\O^{\star\star}$, for all but a finite number of indices $n$,
for all $\rho^{\star}_n\leq \rho\leq1-3\rho^{\star}_n$,
\be
S_{n}(2)\leq  n^{-c_{\star}+1}2^{n(1-\rho)}(1+\OO(n^{-(c_{\star}-1)})),
\Eq(8.3lem1.6)
\ee
\be
S_{n}(3)\leq  n^{-2(c_{\star}-1)}2^{n(1-\rho)}(1+\OO(n^{-(c_{\star}-1)})),
\Eq(8.3lem1.6')
\ee
and, for all $4\leq k\leq k_n^{\star}$,
\be
S_{n}(k)\leq n^{-1} n^{-c_{\star}+1}2^{n(1-\rho)}(1+\OO(n^{-(c_{\star}-1)})).
 \Eq(8.3lem1.7)
 \ee

We first prove  \eqv(8.3lem1.6).  For this let us introduce the variables
$\chi^{\rho}(x)\equiv\1_{\left\{w_n(x)\geq r_n(\rho)\right\}}$,
$\chi_n^{\star,\rho}(x)\equiv\1_{\left\{r_n(\rho^{\star}_n)\leq w_n(x)<r_n(\rho) \right\}}$,
and
$\chi_n(x)\equiv\1_{\left\{w_n(x)\geq r_n(\rho^{\star}_n)\right\}}$.
They are Bernoulli r.v.'s~with 
$
\P\left(\chi^{\rho}(x)=1\right)=2^{-\rho n}
$,
$
\P\left(\chi_n(x)=1\right)
=n^{-c_{\star}}
$,
and
$
\P\left(\chi_n^{\star,\rho}(x)=1\right)
=n^{-c_{\star}}-2^{-\rho n}
$
respectively, that inherit the independence of the variables $(w_n(x), x\in\VV_n)$.
We then may write
$
S_{n}(2)=S^0_{n}(2)+S^1_{n}(2)
$
where, for $\GG_2$ as in \eqv(7.lem4.2) (see also the paragraph above \eqv(7.lem4.5)),
\bea
\Eq(8.3lem1.8)
S^0_{n}(2)&\equiv&\textstyle
\sum_{\CC=\{x,y\}\in \GG_2}(Y_n(x,y)+Y_n(y,x)),
\\
\Eq(8.3lem1.9)
S^1_{n}(2)&\equiv&\textstyle\sum_{\CC=\{x,y\}\in \GG_2}Z_n(x,y),
\eea
where
\bea
Y_n(x,y)&\equiv&\textstyle\chi^{\rho}_n(x)\chi^{\star,\rho}_n(y)\prod_{z\in(\del x\cup\del y)\setminus \{x,y\}}(1-\chi_n(z)),
\Eq(8.3lem1.10)
\\
Z_n(x,y)&\equiv&\textstyle\chi^{\rho}_n(x)\chi^{\rho}_n(y)
\prod_{z\in(\del x\cup\del y)\setminus \{x,y\}}(1-\chi_n(z)).
\Eq(8.3lem1.11)
\eea
To bound the sums \eqv(8.3lem1.8) and \eqv(8.3lem1.9)
we proceed as in the proof of \eqv(7.lem5.11) (see \eqv(7.lem5.8)-\eqv(7.lem5.11)).
Namely, we decompose  $\GG_2$ into $\GG_2=\cup_{1\leq j\leq n}\cup_{1\leq i\leq 4}\GG_2^{j,i}$, where the  $\GG_2^{j,i}$'s are defined in \eqv(7.lem5.10'), and use Bennett's bound  \eqv(7.lem5.7) to estimate the sum over each $\GG_2^{j,i}$.
Doing this we  readily get that
$
\E S^0_{n}(2)=n(n^{-c_{\star}}-2^{-\rho n})2^{n(1-\rho)}(1-n^{-c_{\star}})^{2(n-1)}
$
and
\be
\P\left(|S^0_{n}(2)-\E S^0_{n}(2)|\geq 2n\sqrt{\E S^0_{n}(2)}\right)\leq 4ne^{-n}.
\Eq(8.3lem1.12)
\ee
Similarly, 
$
\E S^1_{n}(2)=n2^{n(1-2\rho)}(1-n^{-c_{\star}})^{2(n-1)}
$
and for all $\rho^{\star}_n\leq \rho\leq (1-4\rho^{\star}_n)/2$,
\be
\P\left(|S^1_{n}(2)-\E S^1_{n}(2)|\geq 2n\sqrt{\E S^1_{n}(2)}\right)\leq 4ne^{-n}.
\Eq(8.3lem1.13)
\ee
For $\rho> (1-4\rho^{\star}_n)/2$ we simply use that by    Tchebychev's first order order inequality,
\be
\P\left(S^1_{n}(2)\geq 2^{-n\rho/2}\E S^0_{n}(2)\right)
\leq
 2^{-n\rho/2}.
\Eq(8.3lem1.13')
\ee
From the assumptions that 
 $\rho^{\star}_n\leq \rho\leq1-3\rho^{\star}_n$ and $c_{\star}>1$
 it then  immediately follows that Eq.~\eqv(8.3lem1.6) holds true, and this with a probability larger than $1-c_0ne^{-c_1n}$ for some constants $0<c_0,c_1<\infty$. Thus, by Borel-Cantelli Lemma, it holds on a subset of $\O$ of full measure, for all but a finite number of indices $n$.

One proves \eqv(8.3lem1.6') in a similar way. We skip the (simple but lenghty) details.
 
When $4\leq k\leq k_n^{\star}$ we do not need such a refined control on $S_n(k)$: we simply write
\be
\textstyle
S_n(k)\leq k\sum_{B\subset\VV_n}
\1_{\{\exists x\in B : \chi^{\rho}_n(x)\prod_{y\in B\setminus x}\chi^{\star,\rho}_n(y)=1\}}
\prod_{z\in\del B}(1-\chi_n(z)),
\Eq(8.3lem1.14)
\ee
where the sum is over all subsets $B\subset\VV_n$ such that $|B|=k$, and such that the graph $G(B)$ is connected.
Since the number of such sets is bounded above by $(k-1)!n^{k-1}2^n$, 
$
\E S_n(k)\leq k!n^{k-1} n^{-c_{\star}(k-1)}2^{n(1-\rho)}
$,
and a first order order Tchebychev inequality yields
\be
\P\left(S_{n}(k)\geq n^{-1}\E S^0_{n}(2)\right)\leq  k! n n^{-(c_{\star}-1)(k-2)}.
\Eq(8.3lem1.15)
\ee
One easily checks that if $c_{\star}> 2$ then, for all $m\geq 3$ and all $K\leq n$,
\be
\textstyle
\sum_{k=m}^{K}  k! n^{-(c_{\star}-1)(k-2)}\leq (m!+1)n^{-(m-2)(c_{\star}-1)}.
\Eq(8.3lem1.sum)
\ee
Therefore 
$
\P\left(\cup_{4\leq k\leq k_n^{\star}} \left\{S_{n}(k)\geq n^{-1}\E S^0_{n}(2)\}\right\}\right)\leq  25 n^{-2(c_{\star}-1)+1}
$,
which is summable when $c_{\star}> 2$. By Borel-Cantelli Lemma we conclude that
on a subset of $\O$ of full measure, for all but a finite number of indices $n$, 
\eqv(8.3lem1.7) holds true for all $4\leq k\leq k_n^{\star}$.
 This concludes the proof of the claim \eqv(8.3lem1.6)-\eqv(8.3lem1.7).

Now, by \eqv(8.3lem1.6)-\eqv(8.3lem1.7) and \eqv(4.prop2.0),
on $\O_3\equiv\O^{\star}\cap\O^{\star\star}$, for all large enough $n$, 
\be\textstyle
\frac{1}{|\VV^\circ_{n}|}\sum_{k=2}^{k_n^{\star}} S_{n}(k)
\leq (1+o(1))(1+(k_n^{\star}/n)) n^{-c_{\star}+1}2^{-n\rho}
=n^{-c_{\star}+1}2^{-n\rho}(1+o(1)),
\Eq(8.3lem1.16)
\ee
where the last equality follows from \eqv(8.3lem1.5). 
Inserting  \eqv(8.3lem1.16) in \eqv(8.3lem1.17) and in \eqv(8.3lem1.3) yields  \eqv(8.3lem1.2bis) and \eqv(8.3lem1.2), respectively. The proof of Lemma \thv(8.3lem1) is done.
\end{proof}

\subsection{Elementary properties of the chains $J_n^\dagger$ and $J_n^\circ$.}
 \label{8.0}

For easy reference we gather here a few elementary properties of
the  chains $J_n^\dagger$ and $J_n^\circ$.
We state them without proof: recalling that
 $J_n^\circ(i)\equiv J_n(T_{n,i}^\circ)$ and $J_n^\dagger(i)\equiv J_n(T_{n,i}^\dagger)$ they 
  are immediate consequences from 
the definitions of the sequences $\bigl(T^\dagger_{n,j}\bigr)$ and $\bigl(T^\circ_{n,j}\bigr)$
 (see \eqv(2.1.3)-\eqv(2.2.6)).

\begin{lemma}
\TH(8.0lem1)
To each $j\geq 0$ there corresponds a unique $i\leq j$ such that:
\item(i) 
$J_n^\dagger(j)\notin\VV^\circ_n 
\Leftrightarrow
T^\dagger_{n,j-1}=T_{n,i-1}^\circ<T^\dagger_{n,j}=T_{n,i-1}^\circ+1<T^\dagger_{n,j+1}=T_{n,i}^\circ
$,
\item(ii) 
$J_n^\dagger(j)\in\VV^\circ_n \Leftrightarrow
T^\dagger_{n,j}=T_{n,i}^\circ.
$
\end{lemma}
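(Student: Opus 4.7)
The plan is to unwind the definitions of the three sequences in Section~\thv(2.2), noting that the lemma is essentially a bookkeeping statement about how $(T^\dagger_{n,j})$ interleaves the subsequences $(T^\circ_{n,i})$ and $(\overline T'_{n,j})$. The crucial structural fact is the one already isolated in the paragraph preceding \eqv(2.2.6): for every index $j'$ there exists a unique $i$ such that
\be
T^\circ_{n,i-1}<\overline T'_{n,j'}=T^\circ_{n,i-1}+1<T^\circ_{n,i}.
\Eq(planA)
\ee
In words, each exit time $\overline T'_{n,j'}$ is wedged strictly between two consecutive visit times of $J_n$ to $\VV^\circ_n$, and coincides with the successor of the earlier of these two. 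Conversely, there is at most one $\overline T'$-time inside each interval $(T^\circ_{n,i-1},T^\circ_{n,i})$, namely $T^\circ_{n,i-1}+1$ if $J_n(T^\circ_{n,i-1}+1)\notin\VV^\circ_n$, and none otherwise. These two observations completely describe the merged sequence.

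For statement (ii), I would argue as follows. By definition $J_n^\dagger(j)=J_n(T^\dagger_{n,j})$, and $J_n$ visits $\VV^\circ_n$ precisely at the times listed in $(T^\circ_{n,i})_{i\geq 0}$; hence $J_n^\dagger(j)\in\VV^\circ_n$ if and only if $T^\dagger_{n,j}$ belongs to the subsequence $(T^\circ_{n,i})$, i.e.\ if and only if $T^\dagger_{n,j}=T^\circ_{n,i}$ for some (unique, by strict monotonicity) $i$. For statement (i), if $J_n^\dagger(j)\notin\VV^\circ_n$ then $T^\dagger_{n,j}$ cannot be a $T^\circ$-time, so by the merging construction it must coincide with some $\overline T'_{n,j'}$; \eqv(planA) then supplies the unique $i$ with $\overline T'_{n,j'}=T^\circ_{n,i-1}+1$. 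Because $T^\circ_{n,i-1}$ and $T^\circ_{n,i}$ both lie in the merged sequence and, by the second half of the observation above, no other element of the merged sequence can fall into $(T^\circ_{n,i-1},T^\circ_{n,i})\setminus\{\overline T'_{n,j'}\}$, strict monotonicity of $(T^\dagger_{n,k})$ forces $T^\dagger_{n,j-1}=T^\circ_{n,i-1}$ and $T^\dagger_{n,j+1}=T^\circ_{n,i}$, which is the chain of equalities required.

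To close, I would verify the inequality $i\leq j$ by a simple counting argument: among the first $j+1$ terms $T^\dagger_{n,0},\dots,T^\dagger_{n,j}$, the subsequence consisting of $T^\circ$-times contains exactly $i+1$ elements in case (ii), and exactly $i$ elements in case (i), so in both cases $i\leq j$. I do not anticipate any real obstacle here; the statement is a direct consequence of \eqv(planA) together with the fact that the $\overline T'$-times and $T^\circ$-times alternate in a strictly controlled way inside each excursion. The only subtle point worth flagging is the boundary case $j=0$, which is consistent because under the initial distribution $\pi_n^\circ$ one has $J_n(0)\in\VV^\circ_n$ and $T^\circ_{n,0}=T^\dagger_{n,0}=0$, placing $j=0$ unambiguously in case (ii) with $i=0$.
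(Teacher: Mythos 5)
Your proposal is correct. The paper in fact states Lemma \thv(8.0lem1) without a proof, asserting only that it is an ``immediate consequence'' of the definitions of $(T^\dagger_{n,j})$ and $(T^\circ_{n,i})$; what you have written is a careful unwinding of exactly the bookkeeping being invoked. The pivotal interleaving fact you isolate as (planA) --- that each $\overline T'$-time sits strictly between two consecutive $T^\circ$-times, equals the successor of the earlier one, and that the two families of times are therefore disjoint --- is precisely the relation the paper records in the paragraph preceding \eqv(2.2.6). Your deduction of (ii) (membership in $\VV^\circ_n$ iff $T^\dagger_{n,j}$ is a $T^\circ$-time), the sandwich argument for (i), the counting argument for $i\le j$, and the boundary check at $j=0$ under $\pi^\circ_n$ are all sound, so this is a complete justification of a statement the paper leaves to the reader.
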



From Lemma \thv(8.0lem1), (i),  we derive two descriptions of the event 
$\{J_n^\dagger(j)\in C^{\star}_{n,l}\}$,  $j>0$, $1\leq l\leq L^{\star}$.
The first consists in saying
that a visit of $J_n^\dagger$ to $C^{\star}_{n,l}$ must be immediately preceded and followed by a visit to  $\del C^{\star}_{n,l}$. 
%
%
\begin{corollary} 
\TH(8.0cor1)
$
\{J_n^\dagger(j)\in C^{\star}_{n,l}\}=
\{J_n^\dagger(j-1)\in \del C^{\star}_{n,l}, J_n^\dagger(j)\in C^{\star}_{n,l}, J_n^\dagger(j+1)\in \del C^{\star}_{n,l}\}
$.
\end{corollary}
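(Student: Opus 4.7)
The plan is to read the claim off Lemma \thv(8.0lem1), (i), using only the following elementary fact about the decomposition \eqv(10.1.4): since each $C^{\star}_{n,l}$ is a maximal connected subgraph of $V_n(\rho^{\star}_n)$, its boundary $\del C^{\star}_{n,l}$ is contained in $N^{\star}_n\cup I^{\star}_n\subseteq \VV^\circ_n$ (if a neighbor of $C^{\star}_{n,l}$ belonged to some $C^{\star}_{n,l'}$ with $l'\neq l$, the two components would merge into one, contradicting the partition \eqv(10.1.4)).

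Assume $J_n^\dagger(j)\in C^{\star}_{n,l}$. Then $J_n^\dagger(j)\notin\VV^\circ_n$, so Lemma \thv(8.0lem1), (i), furnishes an index $i$ with $T^\dagger_{n,j-1}=T^\circ_{n,i-1}$, $T^\dagger_{n,j}=T^\circ_{n,i-1}+1$, and $T^\dagger_{n,j+1}=T^\circ_{n,i}$. The first and third identities place $J_n^\dagger(j-1)=J_n^\circ(i-1)$ and $J_n^\dagger(j+1)=J_n^\circ(i)$ in $\VV^\circ_n$. The second identity says that $J_n^\dagger(j-1)$ and $J_n^\dagger(j)$ are consecutive positions of the chain $J_n$, hence at graph distance $1$ in $\VV_n$; combined with $J_n^\dagger(j-1)\in\VV^\circ_n$ and $J_n^\dagger(j)\in C^{\star}_{n,l}$, this forces $J_n^\dagger(j-1)\in\del C^{\star}_{n,l}$.

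For the right endpoint, note that on the discrete interval $[T^\circ_{n,i-1}+1, T^\circ_{n,i}-1]$ the chain $J_n$ stays outside $\VV^\circ_n$ by definition of $T^\circ_{n,i}$, and hence inside $\cup_{l'}C^{\star}_{n,l'}$. A single nearest-neighbor step from $C^{\star}_{n,l}$ to a different $C^{\star}_{n,l'}$ would have to land in $\del C^{\star}_{n,l}\subseteq\VV^\circ_n$, which is excluded, so $J_n$ remains inside the component $C^{\star}_{n,l}$ it entered at time $T^\circ_{n,i-1}+1$. In particular $J_n(T^\circ_{n,i}-1)\in C^{\star}_{n,l}$, so its nearest neighbor $J_n(T^\circ_{n,i})=J_n^\dagger(j+1)$, which lies in $\VV^\circ_n$, must lie in $\del C^{\star}_{n,l}$. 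The reverse inclusion is immediate since the right-hand event already imposes $J_n^\dagger(j)\in C^{\star}_{n,l}$. The only point worth flagging — and the closest thing to an obstacle — is the inclusion $\del C^{\star}_{n,l}\subseteq\VV^\circ_n$ invoked twice above, which is just the maximality of $C^{\star}_{n,l}$ in the partition \eqv(10.1.4).
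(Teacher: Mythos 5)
Your proof is correct, and it is precisely the "immediate consequence" the paper has in mind: the paper states Lemma \thv(8.0lem1) and its corollaries without proof, and the argument you give — read off Lemma \thv(8.0lem1), (i), then use the disjointness of the decomposition \eqv(10.1.4) to conclude that $\del C^{\star}_{n,l}\subseteq\VV^\circ_n$, which simultaneously identifies the boundary points and prevents the chain from jumping between distinct components while outside $\VV^\circ_n$ — is exactly the unpacking the author relies on (the same structural fact is used implicitly in \eqv(2.2.10) and again around \eqv(9.lem1.4)).

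One cosmetic remark: since a boundary vertex of $C^{\star}_{n,l}$ adjacent to an occupied vertex cannot itself be isolated, one in fact has the slightly sharper inclusion $\del C^{\star}_{n,l}\subseteq N^{\star}_n$. Your weaker statement $\del C^{\star}_{n,l}\subseteq N^{\star}_n\cup I^{\star}_n$ is still true and suffices for the proof, so this does not affect the argument.
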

The second expresses the fact that  when $J_n^\dagger(j)$ enters $C^{\star}_{n,l}$, $J_n^\circ(i)$ straddles over it.
\begin{corollary}
\TH(8.0cor2)
To each $j\geq 0$ there corresponds a unique $i\leq j$ such that
\be
\{J_n(T^\dagger_{n,j})\in C^{\star}_{n,l}\}
=
\{J_n(T_{n,i-1}^\circ)\in \del C^{\star}_{n,l}, J_n(T_{n,i}^\circ)\in \del C^{\star}_{n,l}\}.
\ee
\end{corollary}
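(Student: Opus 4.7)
The plan is to derive this corollary as a direct consequence of the preceding Lemma \thv(8.0lem1)(i) and Corollary \thv(8.0cor1). The starting observation is that since $C^{\star}_{n,l}\subset\VV_n\setminus\VV^\circ_n$, the event $\{J_n(T^\dagger_{n,j})\in C^{\star}_{n,l}\}$ is contained in the event $\{J_n^\dagger(j)\notin\VV^\circ_n\}$, so Lemma \thv(8.0lem1)(i) applies and produces the unique $i\leq j$ with $T^\dagger_{n,j-1}=T_{n,i-1}^\circ$ and $T^\dagger_{n,j+1}=T_{n,i}^\circ$. In particular, $J_n^\dagger(j-1)=J_n^\circ(i-1)$ and $J_n^\dagger(j+1)=J_n^\circ(i)$, so the two sequences agree at these indices, with $J_n^\dagger(j)$ being the single ``inside'' step that $J_n^\circ$ steps over.

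For the inclusion $\subseteq$, I would argue as follows. Assume $J_n(T^\dagger_{n,j})\in C^{\star}_{n,l}$; then by Corollary \thv(8.0cor1), both $J_n^\dagger(j-1)$ and $J_n^\dagger(j+1)$ lie in $\del C^{\star}_{n,l}$. Using the identifications $J_n^\dagger(j-1)=J_n(T_{n,i-1}^\circ)$ and $J_n^\dagger(j+1)=J_n(T_{n,i}^\circ)$ supplied by Lemma \thv(8.0lem1)(i), this is exactly the event on the right hand side.

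For the reverse inclusion $\supseteq$, suppose $J_n(T_{n,i-1}^\circ)$ and $J_n(T_{n,i}^\circ)$ both belong to $\del C^{\star}_{n,l}$ for the $i=i(j)$ attached to $j$. By construction of the sequence $(T^\circ_{n,k})$, every strict intermediate time $T_{n,i-1}^\circ<m<T_{n,i}^\circ$ satisfies $J_n(m)\notin\VV^\circ_n$; combined with the fact that the edges of $\QQ_n$ connecting $\del C^{\star}_{n,l}$ to $\VV_n\setminus\VV^\circ_n$ land in $C^{\star}_{n,l}$, a one-step argument shows that the single inner time $T_{n,i-1}^\circ+1=T^\dagger_{n,j}$ must satisfy $J_n(T^\dagger_{n,j})\in C^{\star}_{n,l}$.

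The argument is essentially bookkeeping, so no serious obstacle is expected; the only point requiring mild care is the reverse inclusion, where one must verify that the chain, having entered $\VV_n\setminus\VV^\circ_n$ from $\del C^{\star}_{n,l}$ and exiting again into $\del C^{\star}_{n,l}$ in exactly one additional $J_n$-step (as encoded by $T^\dagger_{n,j+1}=T_{n,i-1}^\circ+1<T^\circ_{n,i}$ from Lemma \thv(8.0lem1)(i)), necessarily sits inside the same component $C^{\star}_{n,l}$ at the intermediate time. This follows from the fact that the $C^{\star}_{n,l}$'s are disjoint connected components of $\VV_n\setminus\VV^\circ_n$ and that any vertex of $\VV_n\setminus\VV^\circ_n$ adjacent to $\del C^{\star}_{n,l}$ must belong to $C^{\star}_{n,l}$ itself by \eqv(10.1.4).
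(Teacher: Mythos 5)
The paper gives no proof of this corollary: at the start of Subsection~\ref{8.0} the author declares it, together with Lemma~\thv(8.0lem1) and Corollaries~\thv(8.0cor1),~\thv(8.0cor3), an ``immediate consequence from the definitions,'' so there is no paper proof to compare against. Your strategy of deducing the statement from Lemma~\thv(8.0lem1)(i) and Corollary~\thv(8.0cor1) is the natural one, and your forward inclusion $\subseteq$ is sound: if $J_n(T^\dagger_{n,j})\in C^{\star}_{n,l}$ then indeed $J_n^\dagger(j)\notin\VV^\circ_n$, case~(i) of the lemma produces $T^\dagger_{n,j\mp1}=T^\circ_{n,i-1},T^\circ_{n,i}$, and Corollary~\thv(8.0cor1) places $J_n(T^\circ_{n,i-1})$ and $J_n(T^\circ_{n,i})$ in $\del C^{\star}_{n,l}$.

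The reverse inclusion, however, has two gaps. First, you silently assume that the event on the right forces $J_n^\dagger(j)\notin\VV^\circ_n$, so that case~(i) of Lemma~\thv(8.0lem1) applies and $T^\circ_{n,i-1}+1<T^\circ_{n,i}$. But this is not implied. Two vertices $x,y\in\del C^{\star}_{n,l}$ (both in $\VV^\circ_n$) can be $\QQ_n$-adjacent: with $C^{\star}_{n,l}=\{a,b\}$, $\dist(a,b)=1$, take $x$ obtained from $a$ by flipping coordinate $2$ and $y$ obtained from $b$ by flipping coordinate $2$, so $\dist(x,y)=1$ and both lie in $\del C^{\star}_{n,l}$. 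If $J_n$ steps directly $x\to y$, then $T^\circ_{n,i}=T^\circ_{n,i-1}+1$, there is no ``inner time,'' the right-hand event holds, yet $J_n(T^\dagger_{n,j})\in\del C^{\star}_{n,l}\subset\VV^\circ_n$ and the left-hand event fails. Second, even when the chain does leave $\VV^\circ_n$ at time $T^\circ_{n,i-1}+1$, your closing assertion that ``any vertex of $\VV_n\setminus\VV^\circ_n$ adjacent to $\del C^{\star}_{n,l}$ must belong to $C^{\star}_{n,l}$ itself by \eqv(10.1.4)'' is false. A boundary vertex $v\in\del C^{\star}_{n,l}$ is a non-occupied vertex of $N^{\star}_n$, which can perfectly well also lie in $\del C^{\star}_{n,l'}$ for some $l'\neq l$; the decomposition \eqv(10.1.4) places $v$ outside all the $C^{\star}_{n,l}$'s but says nothing about which component(s) it borders. (Indeed, the paper itself acknowledges that boundaries can overlap: the set $\LL_n(k)$ in \eqv(12.1.6) is defined precisely by the nontrivial requirement $\del C^{\star}_{n,l}\cap\del C^{\star}_{n,l'}=\emptyset$ for all $l'\neq l$.) Thus the chain entering $\VV_n\setminus\VV^\circ_n$ from $v$ might land in $C^{\star}_{n,l'}$ with $l'\neq l$, in which case the right-hand event (for $l$) holds while the left-hand event (for $l$) does not. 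To salvage the argument one would have to restrict to components whose boundary avoids all other boundaries and contains no $\QQ_n$-adjacent pairs, or settle for the one-sided inclusion $\subseteq$ — which, incidentally, is what is actually used in \eqv(8.1.10') and \eqv(8.1.11') to produce upper bounds.
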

Note finally that by Lemma \thv(8.0lem1), (ii),
the chain $J_n^\dagger$ observed only when it visits  $\VV_n^{\circ}$
is nothing but the chain $J_n^\circ$ itself:
\begin{corollary} 
\TH(8.0cor3)
$
\{J_n^\dagger(j) : \exists i >0\,\,\text{s.t.}\,\, T^\dagger_{n,j}=T_{n,i}^\circ, j\geq0\}
\stackrel{d}{=}\{J_n^\circ(i), i\geq 0\}
$.
\end{corollary}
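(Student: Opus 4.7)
The plan is to observe that Corollary \thv(8.0cor3) is essentially a restatement of the construction of $J_n^\dagger$ and follows immediately from Lemma \thv(8.0lem1)(ii); in fact, pathwise equality (not just equality in distribution) holds. First I would note that by \eqv(2.2.6) the sequence $\bigl(T^\dagger_{n,j}\bigr)_{j\geq 0}$ is obtained by merging $\bigl(T^\circ_{n,i}\bigr)_{i\geq 0}$ and $\bigl(\overline T'_{n,j}\bigr)_{j\geq 0}$ in strictly increasing order. These two sequences are disjoint: by \eqv(2.1.3)-\eqv(2.1.4), $J_n(T^\circ_{n,i})\in\VV^\circ_n$ for every $i$, while by \eqv(2.2.2)-\eqv(2.2.3'), $J_n(\overline T'_{n,j})\notin\VV^\circ_n$ for every $j$. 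Consequently each $T^\circ_{n,i}$ appears exactly once in the merged list, defining a strictly increasing map $i\mapsto j(i)$ with $T^\dagger_{n,j(i)}=T^\circ_{n,i}$, whose image, by Lemma \thv(8.0lem1)(ii), is precisely $\{j\geq 0 : J_n^\dagger(j)\in\VV^\circ_n\}$.

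From this and the definitions $J_n^\dagger(j)\equiv J_n(T^\dagger_{n,j})$ and $J_n^\circ(i)\equiv J_n(T^\circ_{n,i})$, I then conclude
\[
J_n^\dagger\bigl(j(i)\bigr)=J_n\bigl(T^\dagger_{n,j(i)}\bigr)=J_n\bigl(T^\circ_{n,i}\bigr)=J_n^\circ(i),\quad i\geq 0,
\]
which, reindexing the left-hand side by the monotone bijection $i\mapsto j(i)$, yields the claimed identity of ordered sequences, hence the equality in distribution. There is no genuine obstacle; the only point worth making explicit is that the bijection between the index set appearing on the left-hand side of the corollary and $\N$ is order-preserving, so the ordering used to compare the two processes on the two sides is consistently matched.
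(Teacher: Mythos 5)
Your argument is correct and fills in exactly what the paper leaves implicit when it declares Corollary \thv(8.0cor3) an ``immediate consequence'' of the definitions of the sequences $\bigl(T^\dagger_{n,j}\bigr)$ and $\bigl(T^\circ_{n,i}\bigr)$. Your observation that the merge in \eqv(2.2.6) is of two disjoint sequences (distinguished by whether $J_n$ lands in $\VV^\circ_n$ or not) is the right key fact, and your conclusion that the equality actually holds pathwise, not merely in distribution, is accurate and slightly sharper than the way the statement is written; you might also note in passing that the condition $\exists\, i>0$ in the corollary's statement should read $\exists\, i\geq 0$, consistent with Lemma \thv(8.0lem1), (ii), and with your argument.
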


\subsection{Proof of Theorem \thv(2.theo4).}
 \label{8.2}
 Theorem \thv(2.theo4) is a rough estimate.
 By \eqv(2.3.0),
 \be
0\leq k^\dagger_n(t)-k^\circ_n(t)=
\textstyle\sum_{j=0}^{k^\dagger_n(t)-1}\1_{\{J_n^\dagger(j)\notin \VV^\circ_n\}}.
\Eq(8.2.0)
 \ee
We now want to replace the chain $J_n^\dagger$ and the quantity $k^\dagger_n(t)$  in the right hand side of \eqv(8.2.0) by, respectively, $J_n^\circ$
and $k^\circ_n(t)$. Note that by Corollary \thv(8.0cor1),
for each $j\geq 1$,
\be
\{J_n^\dagger(j)\notin \VV^\circ_n\}
=
\cup_{1\leq l\leq L^{\star}}\{J_n^\dagger(j)\in C^{\star}_{n,l}\}
\subseteq
\{J_n^\dagger(j-1)\in \del(\cup_{1\leq l\leq L^{\star}}C^{\star}_{n,l})\}.
\Eq(8.2.2)
\ee
From this and the fact that 
$J_n^\dagger(0)=J_n^\circ(0)\in \VV^\circ_n$ (indeed $J_n^\dagger$ starts in $\pi^\circ_n$),
we deduce that, 
\bea
\Eq(8.2.1)
\textstyle\sum_{j=0}^{k^\dagger_n(t)-1}\1_{\{J_n^\dagger(j)\notin \VV^\circ_n\}}
&\leq &
\textstyle\sum_{j=1}^{k^\dagger_n(t)-1}\1_{\{J_n^\dagger(j-1)\in \del(\cup_{1\leq l\leq L^{\star}}C^{\star}_{n,l})\}}
\\
&\stackrel{d}{=}&
\textstyle\sum_{i=1}^{k^\circ_n(t)-1}\1_{\{J_n^\circ(i-1)\in \del(\cup_{1\leq l\leq L^{\star}}C^{\star}_{n,l})\}},
\Eq(8.2.3)
\eea
where the last equality follows from Corollary \thv(8.0cor3) and the definition of  $k^\circ_n(t)$ (see \eqv(2.3.3)).
It remains to bound the last sum in \eqv(8.2.3). Since 
$k^\circ_n(t)=\lfloor a_n t\rfloor$ is deterministic, a first order Tchebychev inequality entails that for all
$c_\circ>0$,
\be
\nonumber
P_{\pi^\circ_n}\left(\textstyle\sum_{i=1}^{\lfloor a_n t\rfloor-1}\1_{\{J_n^\circ(i-1)\in \del(\cup_{1\leq l\leq L^{\star}}C^{\star}_{n,l})\}}\geq
n^{-c_\circ}
 \lfloor a_n t\rfloor\right)
\leq 
n^{c_\circ}
\pi^\circ_n\left(\del(\cup_{1\leq l\leq L^{\star}}C^{\star}_{n,l})\right).
\ee
Inserting \eqv(8.3lem2.2) of Lemma \thv(8.3lem2) in the right hand side above,
and combining the resulting bound with 
\eqv(8.2.0) and \eqv(8.2.3), we get  that  on $\O^{\star}$,  for all but a finite number of indices $n$,
\be
P_{\pi^\circ_n}\left(
k^\dagger_n(t)\geq k^\circ_n(t)\left(1+ n^{-c_\circ}\right)
\right) 
\leq 
n^{-2(c_{\star}-1)+c_\circ}(1+\OO(n^{-(c_{\star}-1)})).
\Eq(8.2.6)
\ee
This readily implies the claim of Theorem \thv(2.theo4).\endproof

%
%

%
%

\subsection{Proof of Theorem \thv(2.theo3).}
 \label{8.1}


 By definition of the Skorohod topology on $D[0,\infty)$,  
it is enough to show this result with 
$\rho_\infty$ replaced by $\rho_r$, the Skorohod metric on $D[0,r]$, for $r >0$ arbitrary. 
Choosing $r=1$ for convenience we get
\be
\PP_{\pi_n^\circ}\left(\rho_1\bigl(S_n(\cdot),S_n^\circ(\cdot)\bigr)>\e\right)
\leq
\textstyle
\PP_{\pi_n^\circ}\bigl(\sup_{0\leq t\leq 1}\wh S_n(t)>\e\bigr).
\Eq(8.1.1)
\ee
Theorem \thv(2.theo3) then is an immediate consequence of the lemma below.

\begin{lemma} 
  \TH(8.3lem4)
Assume that $c_{\star}>2$ and that $\b> \b_c(\varepsilon)$. Then
$\P$-almost surely, for all $\e>0$,
\be
\textstyle
\limsup_{n\rightarrow\infty}\PP_{\pi_n^\circ}\bigl(\sup_{0\leq t\leq 1}\wh S_n(t)>\e\bigr)=0.
\Eq(8.3lem4.1)
\ee
\end{lemma}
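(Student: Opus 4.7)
The plan is as follows. Since $\wh S_n$ is nondecreasing in $t$, one has $\sup_{0\leq t\leq 1}\wh S_n(t)=\wh S_n(1)$, so it suffices to show $\wh S_n(1)\to 0$ in $\PP_{\pi^\circ_n}$-probability, $\P$-a.s. I would split on whether $J^\circ_n$ ever visits an atypically ``deep'' component in its first $\lfloor a_n\rfloor$ steps. Set $\rho_n:=\log_2 a_n/n\to\varepsilon$, so that $c_n=r_n(\rho_n)(1+o(1))$, and define
\be
B_n:=\bigl\{\exists\, i\leq\lfloor a_n\rfloor,\, l\in\{1,\dots,L^{\star}\}:\ J^\circ_n(i)\in\del C^{\star}_{n,l},\ \bar\varrho_{n,l}(0)\geq c_n\bigr\}.
\ee
A first-moment bound using stationarity of $\pi^\circ_n$, Lemma \thv(8.3lem1) applied at level $\rho_n$, the edge-count estimate $|\del C^{\star}_{n,l}|\leq n|C^{\star}_{n,l}|$ from Lemma \thv(10.lem1), and the identity $a_n\sim 2^{n\rho_n}$ then gives $\PP_{\pi^\circ_n}(B_n)\leq a_n\pi^\circ_n(\cup_{l:\bar\varrho_{n,l}\geq c_n}\del C^{\star}_{n,l})=O(n^{-c_\star+2})$, which vanishes since $c_\star>2$.

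On $B_n^c$, only components with $\bar\varrho_{n,l}(0)<c_n$ are visited by $J^\dagger_n$. Corollaries \thv(8.0cor1)-\thv(8.0cor3) put visits of $J^\dagger_n$ to $C^{\star}_{n,l}$ in correspondence with steps of $J^\circ_n$ through the complete graphs $G^{\star}(\del C^{\star}_{n,l})$, so $\#\{j\leq k^\dagger_n(1)-1:J^\dagger_n(j)\in C^{\star}_{n,l}\}\leq\#\{i\leq\lfloor a_n\rfloor:J^\circ_n(i)\in\del C^{\star}_{n,l}\}$. Integrating the tail estimate of Proposition \thv(9.prop1) and using Lemma \thv(9.lem4) to show that $\bar\theta^{\star}_{n,l}\ll\bar\varrho_{n,l}(0)$ yields $\E[\wh\L^\dagger_n\mid J^\dagger_n(j)\in C^{\star}_{n,l}]\leq(1+o(1))\bar\varrho_{n,l}(0)$. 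Markov's inequality combined with stationarity of $J^\circ_n$ under $\pi^\circ_n$ thus reduces the problem to showing
\be
\frac{a_n}{c_n}\sum_{l:\bar\varrho_{n,l}(0)<c_n}\bar\varrho_{n,l}(0)\,\pi^\circ_n(\del C^{\star}_{n,l})\;=\;o(1).
\ee

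I would estimate this sum by Abel summation, rewriting it as $\int_0^{c_n}\pi^\circ_n\bigl(\cup_{l:\bar\varrho_{n,l}(0)\geq t}\del C^{\star}_{n,l}\bigr)\,dt$, substituting $t=r_n(\rho)$, and bringing in Lemma \thv(8.3lem1) to bound the integrand by $n^{-c_\star+2}2^{-n\rho}$. The resulting integral has the Laplace form $\int_{\rho^{\star}_n}^{\rho_n}e^{n[\b\b_c(\rho)-\rho\log 2]}d\rho$; the hypothesis $\b>\b_c(\varepsilon)$ guarantees that the derivative $\b\b_c'(\rho)-\log 2=\log 2(1-\alpha(\rho))/\alpha(\rho)$ is strictly positive and bounded away from zero at $\rho_n\approx\varepsilon$, so the integral concentrates at the upper endpoint $\rho_n$ and contributes an extra $1/n$. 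Tracking the sub-exponential correction $n^{-1/(2\alpha(\varepsilon))}$ in $r_n(\rho_n)$ given by Lemma \thv(9.lem4'), this correction exactly cancels the factor $n^{1/(2\alpha(\varepsilon))}$ arising from $a_n e^{n[\b\b_c(\rho_n)-\rho_n\log 2]}/c_n$, leaving $\E_{\pi^\circ_n}[\wh S_n(1)\1_{B_n^c}]=O(n^{-c_\star+2})=o(1)$; combined with the bound on $\PP_{\pi^\circ_n}(B_n)$, this proves the claim.

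The hard part is that an unconditioned Markov bound on $\wh S_n(1)$ is useless: $\E_{\pi^\circ_n}\wh S_n(1)$ is dominated by rare visits to the heaviest traps and blows up like $e^{n(\b-\b_c(\varepsilon))^2/2}$, even though typical contributions are tiny. Removing the deep-trap events by a direct union bound is therefore essential, and the remaining Laplace-type estimate is sharp only because of the exact cancellation between the sub-exponential prefactors in $r_n(\rho)$ and in the implicit definition of $c_n$; this cancellation is what allows the assumption $c_\star>2$ (rather than a $\b$-dependent threshold) to suffice.
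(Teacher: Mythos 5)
Your proof is correct and follows the same basic strategy as the paper's: remove a ``deep trap'' event whose probability is controlled by Lemma \thv(8.3lem1), and apply a first-moment (Markov) estimate to the remainder using the effective holding-time bounds of Section \thv(9). The technical execution differs in two ways, both of which streamline the argument. First, where the paper introduces a separate event $\AA$ (Corollary \thv(9.cor1)) to bound each \emph{realized} holding time by $n\bar\varrho_{n,l}(0)$, you instead integrate the tail estimate of Proposition \thv(9.prop1) directly to obtain $\E\bigl[\wh\L_n^\dagger \mid J_n^\dagger(j)\in C^{\star}_{n,l}\bigr]\leq(1+o(1))\bar\varrho_{n,l}(0)$; this is cleaner and saves a factor of $n$ (the paper recoups it from the sharper form of Lemma \thv(8.3lem1), so the final orders are comparable). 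Second, where the paper bounds the truncated sum via a band decomposition over levels $r_n(\varepsilon_n - \tfrac{k+1}{2}\rho^{\star}_n)$ and sums the resulting geometric series in $k$ (Eqs.~\eqv(8.1.15)--\eqv(8.1.24)), you pass to an Abel-summation / Laplace-type integral $\int_0^{c_n}\P(w_n\geq t)\,dt \sim c_n/a_n$; the positivity of the derivative $\b\b_c'(\rho)-\log 2=\log 2(1-\alpha(\rho))/\alpha(\rho)$ under $\b>\b_c(\varepsilon)$ plays the role of the geometric ratio $n^{-c_{\star}(1/\alpha-1)/2}<1$. Both routes exploit the same cancellation $\frac{a_n}{c_n}\cdot\frac{c_n}{a_n}=1$ built into the definition of $c_n$, and both land on a bound of order $n^{2-c_{\star}}$ (respectively $n^{2-c_{\star}/\alpha}$ in the paper), which is $o(1)$ under $c_{\star}>2$. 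Your diagnosis that the unconditioned mean $\E_{\pi^\circ_n}\wh S_n(1)$ diverges, so a direct Markov bound fails and the deep-trap truncation is essential, correctly identifies the crux.

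One minor caveat: your Abel identity as written puts $\pi^\circ_n\bigl(\cup_{l:\bar\varrho_{n,l}(0)\geq t}\del C^{\star}_{n,l}\bigr)$ inside the integral where the exact Abel rewriting produces $\sum_{l:\bar\varrho_{n,l}(0)\geq t}\pi^\circ_n(\del C^{\star}_{n,l})$; since the $\del C^{\star}_{n,l}$ may overlap, the union gives a potentially smaller quantity and the inequality must go through the sum (as the paper's \eqv(8.3lem1.2) and \eqv(8.3lem1.3) do). This is easily repaired and does not affect the order of the bound.
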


\begin{proof}[Proof of  Lemma  \thv(8.3lem4)] Since $\wh S_n$  is nondecreasing
\be
\Eq(8.3lem4.2)
\textstyle
\PP_{\pi_n^\circ}\bigl(\sup_{0\leq t\leq 1}\wh S_n(t)>\e\bigr)
\leq 
\PP_{\pi_n^\circ}\bigl(\wh S_n(1)>\e\bigr).
\ee
Introducing the event
\be
\AA\equiv\left\{\forall_{0\leq j\leq k^\dagger_n(1)-1}\forall_{1\leq l\leq L^{\star}}
\wh\L_n^\dagger(J_n^\dagger(j))\1_{\{J_n^\dagger(j)\in C^{\star}_{n,l}\}}\leq n
\bar\varrho_{n,l}(0)
\right\}
\Eq(8.1.2)
\ee
we have, by Corollary \thv(9.cor1), that on $\O_0\cap\O^{\star}$, for all but a finite number of indices $n$,
\be
\PP_{\pi_n^\circ}\bigl(\wh S_n(1)>\e\bigr)
\leq   e^{-n}+n^{-2(c_{\star}-1)+c_\circ}+
\PP_{\pi_n^\circ}\bigl(\wh S_n(1)>\e, \AA\bigr).
\Eq(8.1.3)
\ee
where $c_\circ>0$ is arbitrary.
From the definitions \eqv (2.3.12) and \eqv(2.2.8) of $\wh S_n$ and  $\L_n^\dagger(J_n^\dagger(i))$, and since 
$\L_n^\dagger(J_n^\dagger(i))$ is non zero if and only if $J_n^\dagger(i)\in\cup_{1\leq l\leq L^{\star}}C^{\star}_{n,l}$,
we see that on  $\AA$,
\bea
\Eq(8.1.4)
\wh S_n(1)
&=&c_n^{-1}\textstyle
\sum_{l=1}^{L^{\star}}
\sum_{j=1}^{k^\dagger_n(1)-1}
\wh\L_n^\dagger(J_n^\dagger(j))
\1_{\{
J_n^\dagger(j)\in C^{\star}_{n,l}
\}}
\\
\Eq(8.1.5)
&\leq& c_n^{-1}\textstyle
\sum_{l=1}^{L^{\star}}
\sum_{j=1}^{k^\dagger_n(1)-1}
n\bar\varrho_{n,l}(0)
\1_{\{
J_n^\dagger(j)\in C^{\star}_{n,l}
\}}.
\eea
Therefore,
\be
\PP_{\pi_n^\circ}\bigl(\wh S_n(1)>\e, \AA\bigr)
\leq
\PP_{\pi_n^\circ}\bigl(nc_n^{-1}\textstyle
\sum_{l=1}^{L^{\star}}\bar\varrho_{n,l}(0)
\sum_{j=1}^{k^\dagger_n(1)-1}
\1_{\{
J_n^\dagger(j)\in C^{\star}_{n,l}
\}}
>\e
\bigr).
\Eq(8.1.7)
\ee
%

The problem we still face is that the quantity $\bar\varrho_{n,l}(0)$ appearing in \eqv(8.1.7) can be very large 
compared to $c_n$. 
However, sets $C^{\star}_{n,l}$
such that this happens will typically not be visited. 
More precisely, for $C^{\star}_{n,l}(\rho)$ as in \eqv(8.3.0), one may choose the  
parameter $0<\rho<1$ in a such a way that the event
\be
\wt\AA\equiv\left\{\forall_{1\leq j\leq k^\dagger_n(1)-1}
J_n^\dagger(j)\notin\left(\cup_{1\leq l\leq L^{\star}}C^{\star}_{n,l}(\rho)\right)
\right\},
\Eq(8.1.8)
\ee
has probability close to one. 
Indeed
\bea
\Eq(8.1.9')
\PP_{\pi_n^\circ}\bigl(\wt\AA^c\,\bigr)
=P_{\pi_n^\circ}\bigl(\wt\AA^c\,\bigr)
&=&
\textstyle
\sum_{1\leq l\leq L^{\star}}E_{\pi_n^\circ}
\sum_{j=0}^{k^\dagger_n(1)-1}\1_{\{J_n^\dagger(j)\in C^{\star}_{n,l}\}}
\\
\Eq(8.1.10')
&=&
\textstyle
\sum_{1\leq l\leq L^{\star}}E_{\pi_n^\circ}
\sum_{i=1}^{k^\circ_n(1)-1}\1_{\{J_n^\circ(i-1)\in \del C^{\star}_{n,l}, J_n^\circ(i)\in \del C^{\star}_{n,l}\}}
\eea
where \eqv(8.1.10') follows from Corollary \thv(8.0cor2).
Note that for all  $x\in \del C^{\star}_{n,l}$, 
\be
\Eq(8.1.10'')
\hspace{-1pt}
P_{\pi^\circ_n}\left(\textstyle J_n^\circ(i-1)=x,J_n^\circ(i)\in \del C^{\star}_{n,l}\right)
=\pi^\circ_n(x)P_{x}(\textstyle J_n^\circ(1)\in \del C^{\star}_{n,l})
=\pi^\circ_n(x)m^{\star}_{n,l}(x),
\ee
where  $m^{\star}_{n,l}(x)$ is defined in \eqv(4.prop4.0).
Inserting this in \eqv(8.1.10'), it follows from \eqv(8.3lem1.2) of Lemma \thv(8.3lem1)
that on $\O_3$, for all but a finite number of indices $n$,
\bea
\PP_{\pi_n^\circ}\bigl(\wt\AA^c\,\bigr)
&\leq&
\textstyle k^\circ_n(t)\sum_{1\leq l\leq L^{\star}}
\sum_{x\in\del C^{\star}_{n,l}(\rho)}\pi^\circ_n(x)m^{\star}_{n,l}(x)
\\
&\leq& k^\circ_n(t)n^{-c_{\star}+1}2^{-n\rho}(1+o(1))
= n2^{-n\rho}2^{n\varepsilon_n-n\rho^{\star}_n}(1+o(1)).
\Eq(8.1.9)
\eea
where we wrote
$
\varepsilon_n\equiv\frac{\log a_n}{n\log 2}
$; 
thus by \thv(1.theo1.M1), $\lim_{n\rightarrow\infty}\varepsilon_n=\varepsilon$, $0<\varepsilon<1$.
Assume from now on that $\o\in\O_3$ 
and take $\rho\equiv\varepsilon_n-\rho^{\star}_n/2$. Then
\bea
\nonumber
\hspace{-6pt}\PP_{\pi_n^\circ}\bigl(\wh S_n(1)>\e, \AA\bigr)
&\leq&
\PP_{\pi_n^\circ}\bigl(\wt\AA^c\,\bigr)+\PP_{\pi_n^\circ}\bigl(\wh S_n(1)>\e, \AA, \wt \AA\,\bigr)
\\
&\leq&
n2^{-n\rho^{\star}_n/2}(1+o(1))
+ \PP_{\pi_n^\circ}\bigl(\wh\AA\,\bigr),
\Eq(8.1.10)
\eea
where, recalling from \eqv(3.0.1) that
$
V_n(\varepsilon_n-\rho^{\star}_n/2)=\left\{x\in\VV_n\mid w_n(x)\geq r_n(\varepsilon_n-\rho^{\star}_n/2)\right\}
$,
\be
\wh\AA\equiv\left\{
nc_n^{-1}
\textstyle
\sum_{{1\leq l\leq L^{\star} \,:\, }{C_{n,l}^\star\cap V_n(\varepsilon_n-\rho^{\star}_n/2)=\emptyset}}
\bar\varrho_{n,l}(0)
\sum_{j=1}^{k^\dagger_n(1)-1}
\1_{\{
J_n^\dagger(j)\in C^{\star}_{n,l}
\}}
>\e
\right\}.
\Eq(8.1.11)
\ee
Again, we wish to express this event in terms of the chain $J_n^\circ$
and the quantity $k^\circ_n(t)$ rather than $J_n^\dagger$ and $k^\dagger_n(t)$. For this
note that by Corollary \thv(8.0cor2), Corollary \thv(8.0cor3),  the definition 
\eqv(2.3.0) of $k^\dagger_n(t)$ and the definition \eqv(2.3.3) of $k^\circ_n(t)$, for each $1\leq l\leq L^{\star}$,
\be
\Eq(8.1.11')
\textstyle
\sum_{j=1}^{k^\dagger_n(1)-1}\1_{\{J^\dagger_n(j)\in C^{\star}_{n,l}\}}
\stackrel{d}{=}
\sum_{i=1}^{k^\circ_n(1)-1}\1_{\{J_n^\circ(i-1)\in \del C^{\star}_{n,l}, J_n^\circ(i)\in \del C^{\star}_{n,l}\}}.
\ee
Then, by Tchebychev inequality,  \eqv(8.1.11'), and \eqv(8.1.10''),
\bea
\PP_{\pi_n^\circ}\bigl(\wh\AA\,\bigr)
&\leq&
\frac{n\lfloor a_n \rfloor }{\e c_n}
\sum_{{1\leq l\leq L^{\star}:}\atop{C_{n,l}^\star\cap V_n(\varepsilon_n-\rho^{\star}_n/2)=\emptyset}}
\max_{x\in C_{n,l}^\star}w_n(x)
\sum_{x\in \del C_{n,l}^\star}\pi^\circ_n(x)m^{\star}_{n,l}(x).\quad\quad
\Eq(8.1.13)
\eea
We next decompose the sum 
in \eqv(8.1.13) according to the size of $\max_{x\in C_{n,l}^\star}w_n(x)$: given $K>0$ to be chosen later
define, for $0\leq k\leq K$,
\be
\II_k\equiv
\textstyle
\Bigl\{
1\leq l\leq L^{\star} \mid
r_n\bigl(\varepsilon_n-\sfrac{k+2}{2}\rho^{\star}_n\bigr)
\leq 
\max_{x\in C_{n,l}^\star}w_n(x) 
\leq 
r_n\bigl(\varepsilon_n-\sfrac{k+1}{2}\rho^{\star}_n\bigr)
\Bigr\}.
\ee
By this and the choices of $a_n$ and $c_n$ from Theoreom \thv(1.theo1.Main), 
\eqv(8.1.13) becomes
\be
\textstyle
\PP_{\pi_n^\circ}\bigl(\wh\AA\,\bigr)\leq 
\e^{-1}n 
\left(
\sum_{0\leq k\leq K}Q_{n,k}+R_n
\right),
\Eq(8.1.15)
\ee
where
\bea
\Eq(8.1.16)
Q_{n,k}&=&
\textstyle
2^{n\varepsilon_n}r_n^{-1}(\varepsilon_n)
r_n\bigl(\varepsilon_n-\sfrac{k+1}{2}\rho^{\star}_n\bigr)
\sum_{l\in\II_k}\sum_{x\in \del C_{n,l}^\star}\pi^\circ_n(x)m^{\star}_{n,l}(x),
\\
\Eq(8.1.17)
R_n&=&
\textstyle
2^{n\varepsilon_n}r_n^{-1}(\varepsilon_n)
r_n\bigl(\varepsilon_n-\sfrac{K+2}{2}\rho^{\star}_n\bigr)
\sum_{1\leq l\leq L^{\star} }\sum_{x\in \del C_{n,l}^\star}\pi^\circ_n(x)m^{\star}_{n,l}(x).
\eea
Now, 
\bea
\nonumber
\textstyle
\sum_{l\in\II_k}\sum_{x\in \del C_{n,l}^\star}\pi^\circ_n(x)m^{\star}_{n,l}(x)
&\leq &
\textstyle
\sum_{1\leq l\leq L^{\star}}\sum_{x\in\del C^{\star}_{n,l}\bigl(\varepsilon_n-\sfrac{k+2}{2}\rho^{\star}_n\bigr)}
\pi^\circ_n(x)m^{\star}_{n,l}(x)
\\
&\leq&
\textstyle
 n^{-c_{\star}+1}2^{-n\left(\varepsilon_n-\frac{k+2}{2}\rho^{\star}_n\right)}(1+o(1))
\Eq(8.1.18)
\eea
where the last inequality is  \eqv(8.3lem1.2) of Lemma \thv(8.3lem1). Inserting \eqv(8.1.18) in \eqv(8.1.16),
\be
Q_{n,k}\leq
\textstyle
n2^{\frac{kn}{2}\rho^{\star}_n}
r_n^{-1}(\varepsilon_n)r_n\bigl(\varepsilon_n-\frac{k+1}{2}\rho^{\star}_n\bigr).
\Eq(8.1.19)
\ee
Using \eqv(A1.lem1.1), the bound 
$
\sqrt{1-x}-1\leq-\frac{1}{2}x(1+\frac{1}{4}x)
$, $0<x<1$,
and the assumption that  $\b>\b_c(\varepsilon)$, so that 
$\a(\varepsilon_n)\equiv\b_c(\varepsilon_n)/\b<1$ for large enough $n$, 
it follows from \eqv(8.1.19) that
\be
Q_{n,k}\leq
\textstyle
c_0 n2^{-n\rho^{\star}_n/\a(\varepsilon_n)}2^{-n\rho^{\star}_n(1/\a(\varepsilon_n)-1)\frac{k}{2}}
\Eq(8.1.20)
\ee
for some constant $0<c_0\equiv c_0(\varepsilon_n,\b)<\infty$.
Similarly, by \eqv(8.3lem1.2) with $\rho=\rho^{\star}_n$,
\be
\textstyle
\sum_{1\leq l\leq L^{\star} }\sum_{x\in \del C_{n,l}^\star}\pi^\circ_n(x)m^{\star}_{n,l}(x)
\leq  n^{-2c_{\star}+1}(1+o(1))
\Eq(8.1.21)
\ee
and
\be
R_n\leq
\textstyle
n2^{n\varepsilon_n-2n\rho^{\star}_n}
r_n^{-1}(\varepsilon_n)r_n\bigl(\varepsilon_n-\sfrac{K+2}{2}\rho^{\star}_n\bigr).
\Eq(8.1.22)
\ee
Now choose $K=\bigl\lceil 2\varepsilon_n\bigl(1-\sfrac{1}{16}\bigr)/\rho^{\star}_n\bigr\rceil$. Then 
$\sfrac{K+2}{2}\rho^{\star}_n\geq \varepsilon_n\bigl(1-\sfrac{1}{16}\bigr)$ and, using \eqv(A1.lem1.1),
\be
R_n\leq
\textstyle
n2^{n\varepsilon_n-2n\rho^{\star}_n}
r_n^{-1}(\varepsilon_n)r_n\bigl(\varepsilon_n/16\bigr)
\leq
n2^{-n\varepsilon_n/4-2n\rho^{\star}_n}
\Eq(8.1.23)
\ee
for all $\b>\b_c(\varepsilon_n)$. Inserting \eqv(8.1.20) and \eqv(8.1.23) in \eqv(8.1.15),
\be
\PP_{\pi_n^\circ}\bigl(\wh\AA\,\bigr)\leq 
2\e^{-1}\left(c_0n^{2-c_{\star}/\a(\varepsilon_n)}+n^{-2(c_{\star}-1)}2^{-n\varepsilon_n/4}\right)
\Eq(8.1.24)
\ee
for all $n$ large enough. Finally, combining \eqv(8.1.3), \eqv(8.1.10), and  \eqv(8.1.24), we obtain that 
for all $\b>\b_c(\varepsilon)$, on $\O_0\cap\O^{\star}\cap\O_3$, for all but a finite number of indices $n$,  
\be
\nonumber
\PP_{\pi_n^\circ}\bigl(\wh S_n(1)>\e\bigr)
\leq  e^{-n}+n^{-2(c_{\star}-1)+c_\circ}+2n^{-(c_{\star}-2)/2}+ 
2\e^{-1}\left(c_0n^{2-c_{\star}/\a(\varepsilon_n)}+n^{-2(c_{\star}-1)}2^{-n\varepsilon_n/4}\right)
\Eq(8.1.25)
\ee
for all $\e>0$, where $c_\circ>0$ is arbitrarily small, and where 
$\lim_{n\rightarrow\infty}\varepsilon_n=\varepsilon$, $0<\varepsilon<1$.
This yields the claim of  Lemma  \thv(8.3lem4)
since by assumption $c_{\star}>2$. \end{proof}

The proof of Theorem \thv(2.theo3) is now complete.


\section{Convergence of the front end clock process: Proof of Theorem \thv(2.theo1)}
 \label{6}
 
 The proofs of Theorem \thv(2.theo1) and Theorem \thv(2.theo2) rely on a method developped
 by Durrett and  Resnick  \cite{DR78} that provides sufficient conditions for partial sum processes 
 to converge to L\'evy processes. We use their results in a  specialized form  suitable for our applications
 which is taken from \cite{G12}, where this method was first applied to
  the study of clock processes in random environment; see also  \cite{BG13} where it was implemented in more generality.
 %
 
 \subsection{A convergence theorem for \textsc{\textbf{fecp}}}
 \label{6.1}


 Consider the rescaled front end clock process \eqv(2.3.11),
 \be
 \Eq(2.3.11bis)
S_n^\circ(t)=c_n^{-1}\wt S_n^\circ (\lfloor a_n t\rfloor),\quad t\geq 0.
\ee
Theorem \thv(6.theo1) below is the corner stone of the proof of Theorem \thv(2.theo1). 
It deduces convergence of $S_n^\circ$ to a subordinator from a set of four conditions which we now formulate.
 Note that these conditions refer to given sequences of numbers $a_n$ and  $c_n$, 
 as well as a given realization of the random environment.
 For $t>0$ and  $u>0$ define
\be
h^{u}_n(y)=\sum_{x\in\VV^\circ_n}p^\circ_n(y,x)\exp\{-uc_n\l_n(x)\}\,,\,\,y\in\VV^\circ_n,
\Eq(6.1.1)
\ee
and, recalling the notation $k^\circ_n(t)=\lfloor a_n t\rfloor$,
\bea
\Eq(6.1.2)
\nu_n^{{{\scriptscriptstyle{J^\circ_n}},t}}(u,\infty)
&= &
\sum_{j=0}^{k^\circ_n(t)-1}h^{u}_n(J^\circ_n(j)),
\\
\Eq(6.1.3)
\s_n^{{{\scriptscriptstyle{J^\circ_n}},t}}(u,\infty)
&= &
\sum_{j=0}^{k^\circ_n(t)-1}\left[h^{u}_n(J^\circ_n(j))\right]^2.
\eea

\noindent{\bf Condition (C0).}  For all $v>0$,
\be
\textstyle
\sum_{x\in\VV^\circ_n}\pi^\circ_n(x)e^{-vc_n\l_n(x)}=o(1)\,.
\Eq(2.A0')
\ee

\noindent {\bf Condition  (C1).}
There exists a $\s$-finite measure $\nu^\circ$ on $(0,\infty)$ satisfying
$
\int_0^\infty
(1\wedge u)\nu^\circ(du)<\infty
$ 
such that, for all $t>0$ and all $u>0$,
\be
\Eq(6.1.5)
P^\circ_{\pi^\circ_n}\left(
\left|
\nu_n^{{{\scriptscriptstyle{J^\circ_n}},t}}(u,\infty)
-t\nu^\circ(u,\infty)
\right|
<\e
\right)=1-o(1)\,,\quad\forall\e>0\,.
\ee

\noindent {\bf Condition  (C2).}
 For all $u>0$ and all $t>0$,
\be
\Eq(6.1.6)
P^\circ_{\pi^\circ_n}\left(
\s_n^{{{\scriptscriptstyle{J^\circ_n}},t}}(u,\infty)
<\e
\right)=1-o(1)\,,\quad\forall\e>0\,.
\ee

\noindent {\bf Condition  (C3).}
 For all $t>0$,
\be
\Eq(6.1.7)
\lim_{\e\downarrow 0}\limsup_{n\uparrow \infty} k^\circ_n(t)
\EE^\circ_{\pi^\circ_n} \1_{\{\l_n^{-1}(J^\circ_n(0))e^\circ_0\leq c_n\e\}} c_n^{-1}\l_n^{-1}(J^\circ_n(0))e^\circ_0=0.
\ee

\begin{theorem}
\TH(6.theo1) 
Let the initial  distribution of $J^\circ_n$ be its invariant measure $\pi^\circ_n$.
 For all sequences $a_n$ and $c_n$ for which
Conditions  (C0), (C1), (C2), and (C3) are verified $\P$-almost surely,
\be
 S^\circ_n \Rightarrow_{J_1}  S^\circ_{\infty}
\Eq(6.theo1.1)
\ee
$\P$-almost surely,
where $S^\circ_{\infty}$ is the L\'evy subordinator with zero drift and L\'evy measure
$\nu^\circ$.
\end{theorem}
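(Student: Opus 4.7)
The plan is to view $S_n^\circ(t)$, conditionally on the $\sigma$-algebra $\FF^{J^\circ}$ generated by the jump chain $J_n^\circ$, as a partial sum process with conditionally independent summands $X_{n,i}\equiv c_n^{-1}\l_n^{-1}(J_n^\circ(i))e^\circ_{n,i}$, and then to invoke a Durrett--Resnick type functional limit theorem for triangular arrays of conditionally independent random variables. Conditionally on $\FF^{J^\circ}$, $X_{n,i}$ is exponential with rate $c_n\l_n(J_n^\circ(i))$, so its conditional tail above level $u$ equals $\exp\{-uc_n\l_n(J_n^\circ(i))\}$, and a further conditional expectation of this quantity given $J_n^\circ(j-1)$ produces exactly $h_n^u(J_n^\circ(j-1))$. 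Consequently $\nu_n^{{\scriptscriptstyle J^\circ_n},t}(u,\infty)$ is, up to a unit index shift, the predictable compensator of the point process of jumps of $S_n^\circ$ of height exceeding $u$, while $\s_n^{{\scriptscriptstyle J^\circ_n},t}(u,\infty)$ controls the conditional $L^2$-norm of the associated martingale differences.

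With this interpretation the four conditions acquire transparent roles. Condition~(C1) is precisely compensator convergence to the deterministic limit $t\nu^\circ(u,\infty)$ for each fixed $u,t>0$; condition~(C2), combined with Doob's $L^2$-inequality applied to the martingale obtained by subtracting the predictable compensator from the jump-count process, upgrades (C1) to convergence of the actual jump-counting measure and simultaneously excludes a Gaussian component in the limit; condition~(C3) prevents the accumulated small jumps from producing a deterministic drift, thereby pinning down the zero-drift character of $S^\circ_\infty$; and condition~(C0) furnishes the Laplace-transform estimate that absorbs boundary artifacts and makes the very first summand asymptotically negligible. I would then proceed by fixing a realization of $J_n^\circ$ in a high-$P^\circ_{\pi^\circ_n}$-probability event on which quantitative versions of (C1)--(C3) hold, applying the Durrett--Resnick theorem to obtain convergence of the conditional finite-dimensional distributions of $S^\circ_n$ to those of $S^\circ_\infty$, integrating out $\FF^{J^\circ}$ (the limit being deterministic in the chain), and deducing $J_1$-tightness from the monotonicity of $S^\circ_n$ together with the purely-discontinuous nature of the stable subordinator limit.

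The step I expect to be the main technical hurdle is the bookkeeping of the index shift between $h_n^u(J_n^\circ(j))$ in (6.1.2) and the natural predictable compensator $\PP(X_{n,j+1}>u\mid J_n^\circ(0),\dots,J_n^\circ(j))$: at $j=0$ there is no past to condition upon, and at $j=k^\circ_n(t)-1$ there is an endpoint term with no future. Both boundary contributions must be shown to be $o(1)$ in probability by combining (C0) applied at the stationary distribution $\pi^\circ_n$ with the elementary bound $\PP_{\pi^\circ_n}(X_{n,0}>c_n\e)\leq\sum_{x\in\VV^\circ_n}\pi^\circ_n(x)\exp\{-\e c_n\l_n(x)\}$, which (C0) forces to vanish for each $\e>0$. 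The $\P$-almost-sure phrasing of the conclusion then follows directly from that of the hypotheses, since (C0)--(C3) are stated as $P^\circ_{\pi^\circ_n}$-probabilistic assertions valid $\P$-a.s.\ in the random environment.
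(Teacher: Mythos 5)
Your proposal is correct and reproduces the mechanism of the result the paper cites: the paper's proof of Theorem \thv(6.theo1) is a one-line reference to Theorem 1.2 of \cite{BG13} (a variant of Theorem 1.1 of \cite{G12}) specialized to bloc length one, and that cited theorem is established by precisely the Durrett--Resnick compensator argument you sketch, with the quantity in \eqv(6.1.2) playing the role of the index-shifted predictable compensator of the exceedance count, (C2) killing the martingale fluctuation, (C3) the small-jump truncation, and (C0) together with stationarity absorbing the two boundary terms created by the shift exactly as you note. The only cosmetic deviation is that the functional limit theorem in \cite{G12} is already formulated with (C1)--(C3) as in-$P^\circ_{\pi^\circ_n}$-probability hypotheses over the full adapted filtration, so the intermediate step of fixing a high-probability set of jump-chain realizations and arguing conditionally is not needed.
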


\begin{proof} This is a restatement of Theorem 1.2 of \cite{BG13} specialized to the case where 
$\theta_n$, the ``bloc length'', is equal to one.  (Theorem 1.2 of \cite{BG13} is itself a generalization 
of Theorem 1.1 of \cite{G12} with a more workable Condition (C3).) 
\end{proof}

To verify the conditions of Theorem \thv(6.theo1) we  follow a by now well established two-step strategy that was first proposed  in \cite{G10b},
and was used later in \cite{BG13}. 
The first step consists in using  the mixing property and mean local time estimates of
Proposition \thv(4.prop1) and Proposition \thv(4.prop6), respectively, to prove an almost sure ergodic theorem for 
the quantities 
 \eqv(6.1.2) and \eqv(6.1.3).
This is done in Subsection \thv(6.2) (see Theorem \thv(6.theo2)).
It then enables us  
to reduce 
 Conditions (C1) and (C2) of Theorem \thv(6.theo1) 
to laws of large numbers in the random environment. This second step is carried out in  Subsection \thv(6.3)
(see Proposition \thv(6.prop1)). The proof is completed in Subsection \thv(6.4).




\subsection{An ergodic theorem for \textsc{\textbf{fecp}}}
 \label{6.2}
 
 Let $\pi^{{{\scriptscriptstyle{J^\circ_n}},t}}_n(x)$ denote the average number
of visits of $J^\circ_n$ to $x$ during the first $k^\circ_n(t)$ steps,
\be
\pi^{{{\scriptscriptstyle{J^\circ_n}},t}}_n(x)
=
(k^\circ_n(t))^{-1}
\sum_{j=0}^{k^\circ_n(t)-1}\1_{\{J^\circ_n(j)=x\}}\,,\quad x\in\VV^\circ_n\,.
\Eq(6.2.1)
\ee
Then \eqv(6.1.2) and \eqv(6.1.3) can be rewritten as
\bea
\Eq(6.2.2)
\nu_n^{{{\scriptscriptstyle{J^\circ_n}},t}}(u,\infty)
&= &
k^\circ_n(t)\sum_{y\in\VV^\circ_n}\pi^{{{\scriptscriptstyle{J^\circ_n}},t}}_n(y)h^{u}_n(y),
\\
\Eq(6.2.3)
\s_n^{{{\scriptscriptstyle{J^\circ_n}},t}}(u,\infty)
&= &
k^\circ_n(t)\sum_{y\in\VV^\circ_n}\pi^{{{\scriptscriptstyle{J^\circ_n}},t}}_n(y)\left[h^{u}_n(y)\right]^2.
\eea
One readily sees, using reversibility, that
\bea
\Eq(6.2.4)
E^\circ_{\pi^\circ_n}\bigl[\nu_n^{{{\scriptscriptstyle{J^\circ_n}},t}}(u,\infty)\bigr]
\hspace{-6pt}&=&\hspace{-6pt}
k^\circ_n(t)\sum_{x\in\VV^\circ_n}\pi^\circ_n(x)h^{u}_n(x)
=
({k^\circ_n(t)}/{a_n})\nu^\circ_n(u,\infty),
\\
\Eq(6.2.5)
E^\circ_{\pi^\circ_n}\bigl[\s_n^{{{\scriptscriptstyle{J^\circ_n}},t}}(u,\infty)\bigr]
\hspace{-6pt}&=&\hspace{-6pt}
k^\circ_n(t)\sum_{x\in\VV^\circ_n}\pi^\circ_n(x)\left[h^{u}_n(x)\right]^2
=({k^\circ_n(t)}/{a_n})\s^\circ_n(u,\infty),
\eea
where 
\be
\nu^\circ_n(u,\infty)
=
\Eq(6.2.6)
\frac{a_n}{\left|\VV^\circ_n\right|}\sum_{x\in\VV^\circ_n}e^{-uc_n\l_n(x)},
\ee
\be
\Eq(6.2.7)
\s^\circ_n(u,\infty)
=
\frac{a_n}{\left|\VV^\circ_n\right|}\sum_{x\in\VV^\circ_n}\sum_{x'\in\VV^\circ_n}
p_n^{\circ,2}(x,x')e^{-uc_n(\l_n(x)+\l_n(x'))}.
\ee
Here $p_n^{\circ,2}(\cdot,\cdot)$ denotes the $2$-steps transition probabilities of $J^\circ_n$. Note that since
$H(x)=0$ for all $x\in \VV^\circ_n\setminus I^{\star}_n$, 
where $I^{\star}_n$ is the set of isolated vertices in the partition \eqv(10.1.4),
we have
\be
\l_n(x)=
\begin{cases}
e^{\b H_n(x)},
&\hbox{\rm if}\,\,\, x\in I^{\star}_n,\\
1,&\hbox{\rm if}\,\,\, x\in\VV^\circ_n\setminus I^{\star}_n.
\end{cases}
\Eq(6.2.8)
\ee

\begin{theorem}
\TH(6.theo2) 
Assume that $c_{\star}>3$. 
Let $\rho^\circ_n>0$ be a decreasing sequence satisfying $\rho^\circ_n\downarrow 0$ as $n\uparrow\infty$.
There exists a sequence of subsets $\O^{\scriptscriptstyle{\textsf{EG}}}_{n}\subset\O$ with
$
\P\left[\left(\O^{\scriptscriptstyle{\textsf{EG}}}_{n}\right)^c\right]
<
{\ell^\circ_n}/({\rho^\circ_n a_n})
$,
and such that on $\O^{\scriptscriptstyle{\textsf{EG}}}_{n}$ the following holds for all large enough $n$: 
for all $t>0$, all $u>0$, and all $\e>0$,
\smallskip
\be
P^\circ_{\pi^\circ_n}\left(\left|
\nu_n^{{{\scriptscriptstyle{J^\circ_n}},t}}(u,\infty)
-
({k^\circ_n(t)}/{a_n})\nu^\circ_n(u,\infty)
\right|\geq\e\right)
\leq
\e^{-2}[C_1 t\Theta_{n,1}(u)+ t^2\Theta_{n,2}(u)]
\Eq(6.theo2.1) 
\ee
\smallskip
for some constant $0<C_1<\infty$, where
\be
\Theta_{n,1}(u)\equiv
\ell^\circ_ne^{-uc_n}[1+\nu^\circ_n(u,\infty)]
+\s^\circ_n(u,\infty)
+\frac{\nu^\circ_n(2u,\infty)}{n\log n}
+\rho^\circ_n\left[\E\nu^\circ_n(u,\infty)\right]^2,
\Eq(6.theo2.2) 
\ee
\be
\Theta_{n,2}(u)\equiv
2^{-n}\left[\nu^\circ_n(u,\infty)\right]^2.
\Eq(6.theo2.2') 
\ee
Moreover, for all $t>0$, all $u>0$, and all $\e'>0$,
\be
P^\circ_{\pi^\circ_n}\left(
\s_n^{{{\scriptscriptstyle{J^\circ_n}},t}}(u,\infty)
\geq\e'\right)
\leq
\frac{t}{\e'}(1+o(1))\s^\circ_n(u,\infty).
\Eq(6.theo2.3) 
\ee
\end{theorem}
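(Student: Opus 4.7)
The plan is to derive both \eqv(6.theo2.1) and \eqv(6.theo2.3) from Chebyshev/Markov inequalities applied to the representations \eqv(6.2.2)--\eqv(6.2.3), using the mean formulas \eqv(6.2.4)--\eqv(6.2.5) already in hand. The second assertion \eqv(6.theo2.3) is the shorter one: since $h^{u}_n\geq 0$, Markov's inequality together with \eqv(6.2.5) and $k^\circ_n(t)/a_n=1+o(1)$ immediately yields $\PP^\circ_{\pi^\circ_n}(\sigma_n^{{\scriptscriptstyle J^\circ_n},t}(u,\infty)\geq \varepsilon')\leq (t/\varepsilon')(1+o(1))\sigma^\circ_n(u,\infty)$, so this part requires no new ingredient beyond reversibility. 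The real work is in \eqv(6.theo2.1).

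For \eqv(6.theo2.1), I would bound the variance of $\nu_n^{{\scriptscriptstyle J^\circ_n},t}(u,\infty)$ under $P^\circ_{\pi^\circ_n}$ and invoke Chebyshev. Since $\pi^\circ_n$ is stationary for $J^\circ_n$,
\be
\mathrm{Var}\bigl(\nu_n^{{\scriptscriptstyle J^\circ_n},t}(u,\infty)\bigr)
=\sum_{j,j'=0}^{k^\circ_n(t)-1}\mathrm{Cov}\bigl(h^{u}_n(J^\circ_n(j)),h^{u}_n(J^\circ_n(j'))\bigr),
\nonumber
\ee
and I would split the double sum into three regimes: (a) the diagonal $j=j'$, (b) the near-diagonal $1\leq|j-j'|<\ell^\circ_n$, and (c) the far range $|j-j'|\geq\ell^\circ_n$. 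The diagonal contributes $(k^\circ_n(t)/a_n)\sigma^\circ_n(u,\infty)$, accounting for the $\sigma^\circ_n$-term in $\Theta_{n,1}$. For the far range I would use Proposition \thv(4.prop1): stationarity combined with $|P^\circ_{\pi^\circ_n}(J^\circ_n(i+\ell^\circ_n)=y,J^\circ_n(i)=x)-\pi^\circ_n(x)\pi^\circ_n(y)|\leq\delta_n\pi^\circ_n(x)\pi^\circ_n(y)$ with $\delta_n\leq 2^{-n}$ shows that each such pair contributes at most $\delta_n(a_n^{-1}\E\nu^\circ_n)^2$, giving the $\Theta_{n,2}=2^{-n}[\nu^\circ_n(u,\infty)]^2$ term after summing $\approx k^\circ_n(t)^2$ pairs.

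The main obstacle is the near-diagonal range (b), which is where the specific structure of the front chain enters. Here I would decompose each $y\in\VV^\circ_n$ according to whether $y\in I^\star_n$ (so $\lambda_n(y)=e^{\beta H_n(y)}$ can be large, and $h^u_n$ picks up contributions of the form $e^{-uc_n\lambda_n(x)}$ on visits to isolated vertices) or $y\in \partial C^\star_{n,l}$. Using $h^u_n(y)\leq 1$, a typical pair-term $\E[h^u_n(J^\circ_n(j))h^u_n(J^\circ_n(j+k))]$ reduces, after conditioning on the first visit, to a sum of $P^\circ(J^\circ_n(k)=z\mid J^\circ_n(0)=z')e^{-uc_n\lambda_n(z)}$-type expressions, to which the mean local-time bounds of Proposition \thv(4.prop6) apply: the bound $\sum_{k=1}^{\ell^\circ_n-1}P^\circ(J^\circ_n(k+2)=z\mid J^\circ_n(0)=z)\leq C_\circ/(n\log n)$ for $z\in I^\star_n$ yields the term $\nu^\circ_n(2u,\infty)/(n\log n)$ (the factor $2u$ arising from two visits to the same isolated vertex), while \eqv(4.prop6.1') yields, together with the trivial bound $h^u_n\leq 1$, a contribution of order $\ell^\circ_n e^{-uc_n}[1+\nu^\circ_n(u,\infty)]$ coming from visits to $\partial(\cup_l C^\star_{n,l})$ and from the $\ell^\circ_n$ "uncovered" initial steps in each mixing block.

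Finally, the term $\rho^\circ_n[\E\nu^\circ_n(u,\infty)]^2$ in $\Theta_{n,1}$ and the exceptional set $\Omega^{\scriptscriptstyle\textsf{EG}}_n$ of $\P$-probability at most $\ell^\circ_n/(\rho^\circ_n a_n)$ arise from a quenched-to-annealed reduction: after the above variance bound is in place, its right-hand side still features the random quantities $\nu^\circ_n(u,\infty)$ and $\sigma^\circ_n(u,\infty)$, but the prefactors $[\E\nu^\circ_n]^2$ in $\Theta_{n,2}$ are expectations. I would apply a first-order Chebyshev inequality to the random environment---using that $\mathrm{Var}_\P(\nu^\circ_n(u,\infty))$ is controlled by $a_n^{-1}[\E\nu^\circ_n]^2$ since the summands in \eqv(6.2.6) are i.i.d.\ and bounded, up to the rare-event corrections handled by $\Omega^\star$---to obtain that on a set of $\P$-probability at least $1-\ell^\circ_n/(\rho^\circ_n a_n)$, the variance bound of type $\nu^\circ_n\leq(1+\rho^\circ_n)\E\nu^\circ_n$ holds; this is where the $\rho^\circ_n[\E\nu^\circ_n]^2$ term originates. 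Combining the three pieces yields \eqv(6.theo2.1) with the constant $C_1$ absorbing the numerical factors from Proposition \thv(4.prop6).
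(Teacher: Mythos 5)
Your skeleton is correct and matches the paper: second-order Tchebychev on the variance, stationarity, and the decomposition of the double sum into diagonal, near-diagonal ($1\le|j-j'|<\ell^\circ_n$), and far-range pairs; the far range via Proposition \thv(4.prop1) giving $\Theta_{n,2}$; the diagonal giving $\s^\circ_n$; and Proposition \thv(4.prop6), (i) giving the $\nu^\circ_n(2u,\infty)/(n\log n)$ contribution on return visits to the same $z\in I^{\star}_n$. The last step \eqv(6.theo2.3) is exactly a first-order Markov inequality as you say.

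However, your explanation of where the exceptional set $\O^{\scriptscriptstyle\textsf{EG}}_n$ and the term $\rho^\circ_n[\E\nu^\circ_n(u,\infty)]^2$ come from is wrong, and this is the place where the proof would actually fail if done your way. They do \emph{not} arise from replacing $\nu^\circ_n$ by its $\P$-expectation via a concentration bound of the type $\nu^\circ_n\le(1+\rho^\circ_n)\E\nu^\circ_n$; note that $\nu^\circ_n$ itself appears un-averaged in all the other terms of $\Theta_{n,1}$ and in $\Theta_{n,2}$, so such a reduction would be neither needed nor consistent with the statement. The actual problem is the near-diagonal \emph{cross}-term over pairs $y\neq z$ with both $y,z\in I^{\star}_n$, namely
\be
\sum_{l=1}^{\ell^\circ_n-1}\;\sum_{z\in I^{\star}_n}\sum_{y\in I^{\star}_n:\, y\neq z}a_n\pi^\circ_n(y)\,e^{-uc_n(\l_n(y)+\l_n(z))}\,p_n^{\circ,l+2}(y,z),
\nonumber
\ee
which is a \emph{new} random quantity in the environment, distinct from $\nu^\circ_n$ and $\s^\circ_n$, and which the local-time bounds of Proposition \thv(4.prop6) cannot control because it sums over all target vertices $z$, not a fixed one. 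The paper's key observation (Lemma \thv(6.lem1)) is that $y\neq z$ in $I^{\star}_n$ forces $\dist(y,z)\geq 2$, so $\l_n(y)$ and $\l_n(z)$ are $\P$-independent; hence the $\P$-mean of this cross-term is at most $\ell^\circ_n a_n^{-1}[\E\nu^\circ_n(u,\infty)]^2$, and a single first-order Markov inequality in the environment gives a set of $\P$-measure $\geq 1-\ell^\circ_n/(\rho^\circ_n a_n)$ on which the cross-term is $<\rho^\circ_n[\E\nu^\circ_n(u,\infty)]^2$. That independence step is the idea missing from your proposal. A smaller point: the term $\ell^\circ_ne^{-uc_n}[1+\nu^\circ_n(u,\infty)]$ does not come from Proposition \thv(4.prop6), (ii) (which is not used in this theorem at all, only in the back-end clock analysis); it comes directly from the fact that $\l_n(x)=1$ on $\VV^\circ_n\setminus I^{\star}_n$, so $e^{-uc_n\l_n(x)}=e^{-uc_n}$ there, giving the contributions when at least one of the two near-diagonal points lies outside $I^{\star}_n$.
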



\begin{proof}[Proof of Theorem \thv(6.theo2) ] 
The upper bound \eqv(6.theo2.3) simply results from a first order Tchebychev inequality and \eqv(6.2.5).
The proof of \eqv(6.theo2.1) is more involved. It relies on a second order Tchebychev inequality, that is,
using \eqv(6.2.4),  we bound  the left hand side of \eqv(6.theo2.1)  from above by
\be
\e^{-2}
(k^\circ_n(t))^2\sum_{x\in\VV^\circ_n}\sum_{y\in\VV^\circ_n}h^{u}_n(x)h^{u}_n(y)
E^\circ_{\pi^\circ_n}\left(\pi^{{{\scriptscriptstyle{J^\circ_n}},t}}_n(x)-\pi^\circ_n(x)\right)\left(\pi^{{{\scriptscriptstyle{J^\circ_n}},t}}_n(y)-\pi^\circ_n(y)\right)\,.
\Eq(6.theo2.7)
\ee
In view of \eqv(6.2.1), setting
$
\Delta_{ij}(x,y)=P^\circ_{\pi^\circ_n}\left(J^\circ_n(i)=x, J^\circ_n(j)=y\right)-\pi^\circ_n(x)\pi^\circ_n(y)
$,
the expectation in \eqv(6.theo2.7) may be rewritten as
\be
\textstyle
E^\circ_{\pi^\circ_n}\bigl(\pi^{{{\scriptscriptstyle{J^\circ_n}},t}}_n(x)-\pi^\circ_n(x)\bigr)\bigl(\pi^{{{\scriptscriptstyle{J^\circ_n}},t}}_n(y)-\pi^\circ_n(y)\bigr)
=\sum_{i=0}^{k^\circ_n(t)-1}\sum_{j=0}^{k^\circ_n(t)-1}\Delta_{ij}(x,y).
\Eq(6.theo2.9)
\ee
For $\ell^\circ_n$ defined in \eqv(4.prop1.0) we now break the sum in the r.h.s. of \eqv(6.theo2.9) into three terms:
\bea
\nonumber
I_1^{(1)}
\hspace{-6pt}&=&\hspace{-6pt}
2
\textstyle
\sum_{0\leq i\leq k^\circ_n(t)-1}\sum_{i+\ell^\circ_n\leq j\leq k^\circ_n(t)-1}\Delta_{ij}(x,y)\,,
\\
\Eq(6.theo2.10)
I_2^{(1)}
\hspace{-6pt}&=&\hspace{-6pt}
\textstyle
\sum_{0\leq i\leq k^\circ_n(t)-1}\1_{\{i=j\}}\Delta_{ij}(x,y)\,,
\\
\nonumber
I_3^{(1)}
\hspace{-6pt}&=&\hspace{-6pt}
\textstyle
2\sum_{0\leq i\leq k^\circ_n(t)-1}\sum_{i<j<i+\ell^\circ_n}\Delta_{ij}(x,y)\,.
\eea
Consider first $I_1^{(1)}$. By Proposition \thv(4.prop1),
\be
\Eq(6.theo2.10')
I_1^{(1)}
\leq
\d_n(k^\circ_n(t))^2\pi^\circ_n(x)\pi^\circ_n(y)
\leq
2^{-n}(k^\circ_n(t))^2\pi^\circ_n(x)\pi^\circ_n(y).
\ee
Turning to the term $I_2^{(1)}$, we have,
\be
\Eq(6.theo2.11)
I_2^{(1)}
=\textstyle\sum_{1\leq i\leq k^\circ_n(t)}\Delta_{ii}(x,x)\1_{\{x=y\}}
=k^\circ_n(t)\pi^\circ_n(x)(1-\pi^\circ_n(x))\1_{\{x=y\}},
\ee
where we used that $P^\circ_{\pi^\circ_n}(J^\circ_n(i)=x)=\pi^\circ_n(x)$.
Finally,
\bea
\nonumber
I_3^{(1)}
\hspace{-6pt}&\leq&\hspace{-6pt}
\textstyle
2\sum_{i=0}^{k^\circ_n(t)-1}\sum_{l=1}^{\ell^\circ_n-1}P^\circ_{\pi^\circ_n}\left(J^\circ_n(i)=x, J^\circ_n(i+l)=y\right)
\\
\Eq(6.theo2.12)
\hspace{-6pt}&=&\hspace{-6pt} 
\textstyle
2k^\circ_n(t)\pi^\circ_n(x)\sum_{l=1}^{\ell^\circ_n-1}p_n^{\circ,l}(x,y)
\eea
where $p_n^{\circ,l}(\cdot,\cdot)$ denote the $l$-steps transition probabilities of $J^\circ_n$.
Combining our bounds on $(\overline{I}),I_2^{(1)}$, and $I_3^{(1)}$ with \eqv(6.theo2.7) we get that, for all $\e>0$,
\be
P^\circ_{\pi^\circ_n}\left(\left|
\nu_n^{{{\scriptscriptstyle{J^\circ_n}},t}}(u,\infty)
-
E^\circ_{\pi^\circ_n}\bigl[\nu_n^{{{\scriptscriptstyle{J^\circ_n}},t}}(u,\infty)\bigr]
\right|\geq\e\right)
\leq
\e^{-2}[I_1^{(2)}+I_2^{(2)}+I_3^{(2)}]\,,
\Eq(6.theo2.13)
\ee
where
\bea\Eq(6.theo2.14)
I_1^{(2)}
\hspace{-6pt}&=&\hspace{-6pt}
\textstyle
2^{-n}(k^\circ_n(t))^2\sum_{x\in\VV^\circ_n}\sum_{y\in\VV^\circ_n}h^{u}_n(x)h^{u}_n(y)\pi^\circ_n(x)\pi^\circ_n(y)\,,\nonumber
\\
I_2^{(2)}
\hspace{-6pt}&=&\hspace{-6pt}
\textstyle
k^\circ_n(t)\sum_{x\in\VV^\circ_n}\sum_{y\in\VV^\circ_n}h^{u}_n(x)h^{u}_n(y)\pi^\circ_n(x)(1-\pi^\circ_n(x))\1_{\{x=y\}}\,,
\\
I_3^{(2)}
\hspace{-6pt}&=&\hspace{-6pt}
\textstyle
2k^\circ_n(t)\sum_{x\in\VV^\circ_n}\sum_{y\in\VV^\circ_n}h^{u}_n(x)h^{u}_n(y)\pi^\circ_n(x)\sum_{l=1}^{\ell^\circ_n-1}p_n^{\circ,l}(x,y).
\nonumber
\eea
In view of \eqv(6.2.4)-\eqv(6.2.5),
\be
\Eq(6.theo2.15)
I_1^{(2)}\leq  
2^{-n}\left({k^\circ_n(t)}/{a_n}\right)^2
\left[\nu^\circ_n(u,\infty)\right]^2,
\ee
\be
\Eq(6.theo2.15')
I_2^{(2)}\leq  
({k^\circ_n(t)}/{a_n})\s^\circ_n(u,\infty).
\ee
To deal with the third term in \eqv(6.theo2.14) note first that by \eqv(6.1.1),
\be
\sum_{y\in\VV^\circ_n}p_n^{\circ,l}(x,y)h^{u}_n(y)
=\sum_{z\in\VV^\circ_n}p_n^{\circ,l+1}(x,z)e^{-uc_n\l_n(z)},
\Eq(6.theo2.16)
\ee
so that
\bea\Eq(6.theo2.17)
\sum_{x\in\VV^\circ_n}\pi^\circ_n(x)h^{u}_n(x)p_n^{\circ,l+1}(x,z)
\hspace{-6pt}&=&\hspace{-6pt}
\sum_{y\in\VV^\circ_n}e^{-uc_n\l_n(y)}\sum_{x\in\VV^\circ_n}\pi^\circ_n(x)p^\circ_n(x,y)p_n^{\circ,l+1}(x,z)
\nonumber\\
\hspace{-6pt}&=&\hspace{-6pt}
\sum_{y\in\VV^\circ_n}e^{-uc_n\l_n(y)}\pi^\circ_n(y)p_n^{\circ,l+2}(y,z)\,,
\eea
where the last equality follows by reversibility.
Hence,
\bea
\nonumber
I_3^{(2)}
\hspace{-6pt}&=&\hspace{-6pt}
2k^\circ_n(t)\sum_{l=1}^{\ell^\circ_n-1}
\sum_{z\in\VV^\circ_n}\Big[\sum_{x\in\VV^\circ_n}\pi^\circ_n(x)h^{u}_n(x)p_n^{\circ,l+1}(x,z)\Big]e^{-uc_n\l_n(z)}\,,
\\
\nonumber
\hspace{-6pt}&=&\hspace{-6pt}
2\sum_{l=1}^{\ell^\circ_n-1}k^\circ_n(t)\sum_{z\in\VV^\circ_n}\sum_{y\in\VV^\circ_n}\pi^\circ_n(y)
e^{-uc_n(\l_n(y)+\l_n(z))}p_n^{\circ,l+2}(y,z)
\\
\Eq(6.theo2.18)
\hspace{-6pt}&\equiv&\hspace{-6pt}
2 ({k^\circ_n(t)}/{a_n})\sum_{z\in\VV^\circ_n}\sum_{y\in\VV^\circ_n}f_n(y,z)
\eea
where the last line defines $f_n(y,z)$. In view of \eqv(6.2.8), we have
\be
\Eq(6.theo2.23)
\sum_{z\in\VV^\circ_n\setminus I^{\star}_n}\sum_{y\in\VV^\circ_n\setminus I^{\star}_n}f_n(y,z)
\leq
\ell^\circ_ne^{-2uc_n},
\ee
\be
\Eq(6.theo2.24)
\sum_{z\in\VV^\circ_n\setminus I^{\star}_n}\sum_{y\in  I^{\star}_n}f_n(y,z)
=
\sum_{z\in  I^{\star}_n}\sum_{y\in\VV^\circ_n\setminus I^{\star}_n}f_n(y,z)
\leq
\ell^\circ_ne^{-uc_n}\nu^\circ_n(u,\infty),
\ee
where the equality above is reversibility. It thus remains to bound the term
\be
I^{(3)}
\equiv
2 \frac{k^\circ_n(t)}{a_n}\sum_{z\in I^{\star}_n}\sum_{y\in I^{\star}_n}f_n(y,z)
=
2 \frac{k^\circ_n(t)}{a_n}\sum_{l=1}^{\ell^\circ_n-1}[I_{1,l}^{(3)}+I_{2,l}^{(3)}],
\ee
where, distinguishing the cases $z=y$ and $z\neq y$,
\bea
\Eq(6.theo2.19)
I_{1,l}^{(3)}
\hspace{-6pt}&\equiv&\hspace{-6pt}
\sum_{z\in I^{\star}_n}a_n\pi^\circ_n(z)e^{-2uc_n\l_n(z)}p_n^{\circ,l+2}(z,z),
\\
\Eq(6.theo2.22)
I_{2,l}^{(3)}
\hspace{-6pt}&\equiv&\hspace{-6pt}
\sum_{z\in I^{\star}_n}\sum_{y\in I^{\star}_n : y\neq z}a_n\pi^\circ_n(y)
e^{-uc_n(\l_n(y)+\l_n(z))}p_n^{\circ,l+2}(y,z).
\eea
By Proposition \thv(4.prop6) we readily have that, on $\O^{\scriptscriptstyle{\textsf{SRW}}}$, 
for all but a finite number of indices $n$,
\be
\sum_{l=1}^{\ell^\circ_n-1}I_{1,l}^{(3)}
\leq \frac{C_\circ}{n\log n}\nu^\circ_n(2u,\infty).
\Eq(6.theo2.20)
\ee
for some constant $0<C_\circ<\infty$.
The next lemma is designed to deal with \eqv(6.theo2.22).

\begin{lemma}
  \TH(6.lem1)
Let $\rho^\circ_n>0$ be a decreasing sequence satisfying $\rho^\circ_n\downarrow 0$ as $n\uparrow\infty$.
There exists a sequence of subsets $\O_{2,n}^{(3)}\subset\O$ with
$
\P\bigl(\O_{2,n}^{(3)}\bigr)\geq 1-{\ell^\circ_n}/({\rho^\circ_n a_n})
$
such that on $\O_{2,n}^{(3)}$,
\be
\sum_{l=1}^{\ell^\circ_n-1}I_{2,l}^{(3)}<\rho^\circ_n\left[\E\nu^\circ_n(u,\infty)\right]^2\,.
\Eq(6.lem1.2)
\ee
\end{lemma}

\begin{proof}
By definition of $I^{\star}_n$,  $\dist(y,z)\geq 2$  for all  $y\in I^{\star}_n$ and
$z\in I^{\star}_n$ such that $y\neq z$. Thus
\bea
\Eq(6.lem1.3)
\E\left(e^{-uc_n(\l_n(y)+\l_n(z))}\1_{\{y\in I^{\star}_n, z\in I^{\star}_n\}}\right) \1_{\{x\neq z\}}
\hspace{-6pt}&\leq&\hspace{-6pt}
\E\left(e^{-uc_n(\l_n(y)+\l_n(z))}\right) \1_{\{\dist(y,z)\geq 2\}}
\quad\quad
\\
\Eq(6.lem1.4)
\hspace{-6pt}&\leq&\hspace{-6pt}
\left(\E e^{-uc_n\l_n(y)}\right)\left(\E e^{-uc_n\l_n(z)}\right)
\\
\Eq(6.lem1.5)
\hspace{-6pt}&=&\hspace{-6pt}
\left[a^{-1}_n\E\nu^\circ_n(u,\infty)\right]^2
\eea
where we used independence in the second line.
Therefore, by a first order Tchebychev inequality, for all $\eta>0$,
\bea
\Eq(6.lem1.6)
\textstyle
\P\bigl(\sum_{l=1}^{\ell^\circ_n-1}I_{2,l}^{(3)}\geq\eta\bigr)
\leq &&\hspace{-12pt}
\frac{1}{\eta a_n}\left[\E\nu_n(u,\infty)\right]^2\sum_{l=1}^{\ell^\circ_n-1}
\sum_{y\in\VV^\circ_n}\pi^\circ_n(y)
\sum_{z\in\VV^\circ_n}p_n^{\circ,l+2}(y,z)
\\
\leq &&\hspace{-12pt}
\frac{\ell^\circ_n}{\eta a_n}\left[\E\nu^\circ_n(u,\infty)\right]^2.
\eea
The lemma now easily follows. 
\end{proof}

Gathering our bounds we conclude that under the assumptions
and with the notations of Proposition \thv(4.prop6) and Lemma \thv(6.lem1),
on
$
\O^{\scriptscriptstyle{\textsf{SRW}}}\cap\O_{2,n}^{(3)}
$, 
for all but a finite number of indices $n$,
\be
\Eq(6.theo2.25)
I_3^{(2)}\leq 2 \frac{k^\circ_n(t)}{a_n}\left[
\ell^\circ_ne^{-uc_n}[1+\nu^\circ_n(u,\infty)]
+
C_\circ\frac{\nu^\circ_n(2u,\infty)}{n\log n}
+
\rho^\circ_n\left[\E\nu^\circ_n(u,\infty)\right]^2
\right]
\ee
for some constant $0<C_\circ<\infty$. 
Inserting the bounds \eqv(6.theo2.15), \eqv(6.theo2.15'), and \eqv(6.theo2.25) in \eqv(6.theo2.13)
now yields \eqv(6.theo2.1)-\eqv(6.theo2.2').
The proof of Theorem \thv(6.theo2) is done. 
\end{proof}

\subsection{Almost sure convergence of $\nu^\circ_n$ and $\s^\circ_n$}
 \label{6.3}
 


 Theorem \thv(6.theo2) enables us to replace the chain dependant quantities 
$\nu_n^{{{\scriptscriptstyle{J^\circ_n}},t}}$ and $\s_n^{{{\scriptscriptstyle{J^\circ_n}},t}}$
by quantities, $\nu^\circ_n$ and $\s^\circ_n$, that now   only depend on the randomness of the environment.
Our next step consists in 
proving laws of large numbers for
$\nu^\circ_n$ and $\s^\circ_n$.


\begin{proposition}
{\TH(6.prop1)}
Under the assumptions and with the notation of Theorem \thv(1.theo1.Main)
%
%
there exists a subset  $\O^{\scriptscriptstyle{\textsf{LLN}}}\subset\O$
with $\P(\O^{\scriptscriptstyle{\textsf{LLN}}})=1$ such that, on $\O^{\scriptscriptstyle{\textsf{LLN}}}$, the following holds: 
for all $u>0$,
\bea
\Eq(6.prop1.1)
\lim_{n\rightarrow\infty}\nu^\circ_n(u,\infty)&=&\nu(u,\infty),
\Eq(6.prop1.2)
\\
\lim_{n\rightarrow\infty}n\s^\circ_n(u,\infty)&=&\nu(2u,\infty).
\eea
\end{proposition}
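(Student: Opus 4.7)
The plan is to reduce both claims to concentration statements around explicitly computable expectations, which in turn reduce to a one-dimensional Gaussian tail computation.

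For \eqv(6.prop1.1), I would first split $\VV_n^\circ = I_n^\star \cup (\VV_n^\circ \setminus I_n^\star)$ and use \eqv(6.2.8): on the second set $\lambda_n(x)=1$, so its contribution to $\nu_n^\circ(u,\infty)$ is at most $a_n e^{-uc_n}$, which is doubly exponentially small since $c_n$ grows like $\exp(\beta\beta_c(\varepsilon) n)$ while $a_n\sim 2^{n\varepsilon}$. On $I_n^\star$ we have $\lambda_n(x)^{-1}=w_n(x)$, and the event $\{x\in I_n^\star\}$ factors into $\{w_n(x)\geq r_n(\rho_n^\star)\}$ and an independent event depending only on neighbors of $x$. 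Hence
\be
\E[\nu_n^\circ(u,\infty)] = (1+o(1))\, a_n (1-n^{-c_\star})^n \E\bigl[\1_{\{w_n(x)\geq r_n(\rho_n^\star)\}} e^{-uc_n/w_n(x)}\bigr].
\ee
Integration by parts and the change of variable $s=uc_n/w$ turn the expectation into $\int_0^{uc_n/r_n(\rho_n^\star)} \bar F_n(uc_n/s)e^{-s}\,ds$ plus a boundary term that is absorbed by $\bar F_n(r_n(\rho_n^\star))=n^{-c_\star}$. Using the Gaussian tail expansion for $w_n(x)=e^{-\beta\sqrt n g(x)}$ together with the defining relation $a_n\bar F_n(c_n)\sim 1$, one checks $a_n\bar F_n(c_n/s)\to s^{\alpha(\varepsilon)}$ pointwise, with a uniform bound of the form $a_n\bar F_n(c_n/s)\leq C s^{\alpha(\varepsilon)+\delta}$ for $s$ large, that allows dominated convergence. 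The limit is $\int_0^\infty (s/u)^{\alpha(\varepsilon)} e^{-s}\,ds = u^{-\alpha(\varepsilon)}\alpha(\varepsilon)\Gamma(\alpha(\varepsilon)) = \nu(u,\infty)$.

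To upgrade this to almost sure convergence, I would bound the variance. The random variables $Y_x\equiv\1_{\{x\in I_n^\star\}}e^{-uc_n/w_n(x)}$ are independent whenever $\dist(x,x')\geq 3$, so $\mathrm{Var}\bigl(\sum_x Y_x\bigr)\leq c\,n^2 |\VV_n|\max_x\E[Y_x^2]$, and by the $u\mapsto 2u$ version of the computation above $\max_x\E[Y_x^2]\leq Ca_n^{-1}\nu(2u,\infty)$. Together with $|\VV_n^\circ|\sim 2^n$ this gives $\mathrm{Var}(\nu_n^\circ(u,\infty))=O(n^2 a_n 2^{-n})$, which is summable; Chebyshev and Borel--Cantelli then yield \eqv(6.prop1.1) almost surely for each fixed $u>0$. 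Taking a countable dense set of $u$'s and using monotonicity in $u$ produces the desired full-measure set $\O^{\scriptscriptstyle{\textsf{LLN}}}$.

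For \eqv(6.prop1.2) the starting point is the explicit form of $p_n^{\circ,2}$ obtained from Proposition \thv(4.prop4): for $x,x'\in I_n^\star$ one has $p_n^\circ(x,y)=1/n$ for each of the $n$ neighbors $y$ of $x$ (all of which lie in $\VV_n^\circ\setminus V_n(\rho_n^\star)$), which gives $p_n^{\circ,2}(x,x)=1/n$ and, when $\dist(x,x')=2$, $p_n^{\circ,2}(x,x')=2/n^2$; other contributions involve only non-isolated vertices or distance-2 pairs. The diagonal $x=x'$ in \eqv(6.2.7) thus produces $\frac{a_n}{n|\VV_n^\circ|}\sum_{x\in I_n^\star} e^{-2uc_n/w_n(x)}$, which by the expectation/variance analysis already carried out (with $u$ replaced by $2u$) equals $n^{-1}\nu(2u,\infty)(1+o(1))$ almost surely. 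The off-diagonal pair contribution is bounded by $\frac{a_n}{|\VV_n^\circ|} \cdot \frac{2}{n^2}\cdot O(n^2|\VV_n|)\cdot\frac{\nu(u,\infty)^2}{a_n^2}=O(a_n^{-1})$, which vanishes; the contributions involving $\VV_n^\circ\setminus I_n^\star$ are controlled using $\lambda_n=1$ there to get $a_n e^{-uc_n}$-type bounds. Multiplying by $n$ gives the claimed limit.

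The main technical nuisance will be obtaining the uniform dominating bound needed in the dominated-convergence step, both for small $s$ (where $\bar F_n(c_n/s)$ is much smaller than $s^{\alpha(\varepsilon)}/a_n$, so we need the Mill's-ratio correction to show the contribution is controlled on $(0,\delta)$) and for $s$ comparable to $\log a_n$; once this is in hand, the rest of the argument is a standard second-moment plus Borel--Cantelli reasoning, exploiting the fact that the random environment enters only through independent Gaussians at distinct vertices.
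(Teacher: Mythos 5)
Your argument is correct in its essentials, but it takes a genuinely different and more self-contained route than the paper. The paper disposes of this proposition in one line by combining two separate results: Lemma \thv(6.lem2), which shows that $\nu^\circ_n$ and $\s^\circ_n$ differ from their counterparts $\nu^{\scriptscriptstyle{\textsf{REM}}}_n$ and $\s^{\scriptscriptstyle{\textsf{REM}}}_n$ (defined in \eqv(6.prop1.3)--\eqv(6.prop1.4) for the random hopping dynamics on the un-truncated REM) only by error terms that are polynomially small in $n$ --- essentially because $c_n\l_n(x)=1/\g_n(x)$ on $I^\star_n$, $\l_n(x)=1$ on $\VV^\circ_n\setminus I^\star_n$ gives doubly exponentially small terms, and Lemma \thv(8.3lem1) controls the contribution of the pruned vertices $\cup_l C^\star_{n,l}$ --- together with Proposition \thv(6.prop2), which simply cites the almost-sure convergence of $\nu^{\scriptscriptstyle{\textsf{REM}}}_n$ and $n\s^{\scriptscriptstyle{\textsf{REM}}}_n$ from Proposition~5.1(i) of \cite{G10b}. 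You, by contrast, redo the Gaussian tail calculation from scratch (integration by parts plus a dominated-convergence passage to $\int_0^\infty e^{-s}(s/u)^{\a(\varepsilon)}ds$) and then upgrade to an almost-sure statement via a variance bound exploiting independence of the $Y_x$ at $\QQ_n$-distance $\geq 3$ and Borel--Cantelli; for $n\s^\circ_n$ you further use the explicit two-step transition probabilities $p_n^{\circ,2}(x,x)=1/n$ and $p_n^{\circ,2}(x,x')=2/n^2$ for $x,x'\in I^\star_n$ at distance $2$, which indeed follow from Proposition~\thv(4.prop4) and the fact that $I^\star_n\cap\del C^\star_{n,l}=\emptyset$. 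The paper's approach buys brevity and isolates precisely what is new about the truncated Metropolis setting relative to the prior REM analysis; your approach buys self-containedness, at the cost of rederiving the REM estimates (and you rightly flag the domination bound for the $s$-integral as the one genuinely technical ingredient, which the paper avoids having to redo). One small remark: the uniform dominating bound you worry about is exactly supplied by Lemma \thv(7.lem2), itself quoted from \cite{G10b}, and the boundary term at $s=uc_n/r_n(\rho^\star_n)$ vanishes because $c_n/r_n(\rho^\star_n)\gg n^2$ (cf. the remark after \eqv(6.lem2.4)), so the range where Lemma \thv(7.lem2)(ii) does not apply contributes a doubly exponentially small amount.
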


 We prove the proposition by comparing $\nu^\circ_n$ and $\s^\circ_n$ to
 their counterpart, $\nu^{\scriptscriptstyle{\textsf{REM}}}_n$ and $\s^{\scriptscriptstyle{\textsf{REM}}}_n$,
 in the random hopping dynamics of the non truncated REM.
To define $\nu^{\scriptscriptstyle{\textsf{REM}}}_n$ and $\s^{\scriptscriptstyle{\textsf{REM}}}_n$ 
recall the definition of $w_n(x)$ from \eqv(1.1.2) and set
\be
\g_n(x)=w_n(x)/c_n.
\ee
Then, for all $u>0$,
\be
\nu^{\scriptscriptstyle{\textsf{REM}}}_n(u,\infty)
=
\Eq(6.prop1.3)
\frac{a_n}{|\VV_n|}\sum_{x\in\VV_n}e^{-u/\g_n(x)},
\ee
\be
\Eq(6.prop1.4)
\s^{\scriptscriptstyle{\textsf{REM}}}_n(u,\infty)
=
\frac{a_n}{|\VV_n|}\sum_{x\in\VV_n}\sum_{x'\in\VV_n}
p_n^{\scriptscriptstyle{\textsf{SRW}},(2)}(x,x')e^{-u(1/\g_n(x)+1/\g_n(x'))},
\ee
where $p_n^{\scriptscriptstyle{\textsf{SRW}},(2)}(\cdot,\cdot)$ denotes the $2$-steps transition probabilities of 
$J^{\scriptscriptstyle{\textsf{SRW}}}_n$ 
(see \eqv(4.prop6.3)). For later use (namely, for the treatment of Condition (C3)) we also define, for all $\e>0$,
\be
\Eq(6.prop1.4')
\eta^{\scriptscriptstyle{\textsf{REM}}}_n(\e)
=
 \frac{a_n}{|\VV_n|} \sum_{x\in \VV_n}\g_n(x)\left(1-e^{-\e /\g_n(x)}\right).
\ee
The functions $\nu^{\scriptscriptstyle{\textsf{REM}}}_n$, $\s^{\scriptscriptstyle{\textsf{REM}}}_n$,
and $\eta^{\scriptscriptstyle{\textsf{REM}}}_n$
are well understood. We know in particular that:

\begin{proposition}
{\TH(6.prop2)}
Given  $0<\varepsilon<1$ let $a_n$ and $c_n$ be as in Theorem \thv(1.theo1.Main).
Let $\nu$ be as in \eqv(1.theo1.M3) and assume that $\b>\b_c(\varepsilon)$.
Then, there exists a subset  $\O^{\scriptscriptstyle{\textsf{REM}}}\subset\O$
with $\P(\O^{\scriptscriptstyle{\textsf{REM}}})=1$ such that, on $\O^{\scriptscriptstyle{\textsf{REM}}}$, 
the following holds: 
\bea
\Eq(6.prop2.1)
\lim_{n\rightarrow\infty}\nu^{\scriptscriptstyle{\textsf{REM}}}_n(u,\infty)&=&\nu(u,\infty),\quad \forall u>0,
\\
\Eq(6.prop2.2)
\lim_{n\rightarrow\infty}n\s^{\scriptscriptstyle{\textsf{REM}}}_n(u,\infty)&=&\nu(2u,\infty), \quad \forall u>0,
\eea
\vspace{-5pt}
and
\vspace{-5pt}
\be
\Eq(6.prop2.2')
 \lim_{\e\rightarrow 0}\lim_{n\rightarrow\infty} \eta^{\scriptscriptstyle{\textsf{REM}}}_n(\e)=0.
\ee
\end{proposition}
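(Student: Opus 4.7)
The plan is to reduce all three convergences to two ingredients: (i) an asymptotic computation of the expected value, and (ii) a variance bound yielding almost sure convergence via Chebyshev plus Borel--Cantelli. The statements for $\nu_n^{\scriptscriptstyle{\textsf{REM}}}$ and $\sigma_n^{\scriptscriptstyle{\textsf{REM}}}$ are essentially those established in \cite{G10b} for the Random Hopping dynamics of the standard REM, so I would first check that the objects defined here coincide with the ones there (they do, since the non-truncated weights $w_n(x)=\exp\{-\b H_n^{\scriptscriptstyle{\textsf{REM}}}(x)\}$ are used), and then import the proof.

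For the expectation of $\nu_n^{\scriptscriptstyle{\textsf{REM}}}(u,\infty)$, I would change variables to $g(x)$ and set $T_n=(\log c_n)/(\b\sqrt n)$, so that $a_n \E[e^{-u/\g_n(x)}] = a_n \E[\exp(-u e^{-\b\sqrt n(g(x)-T_n)})]$. Using the Gaussian tail asymptotics from Lemma \thv(9.lem4') together with the normalization $a_n\P(w_n(x)\ge c_n)\sim 1$, a straightforward change of variable $r=w_n(x)/c_n$ shows that the distribution of $\g_n(x)$, rescaled by $a_n$, converges vaguely on $(0,\infty)$ to the stable L\'evy measure $\nu(dr)=\a(\varepsilon)r^{-\a(\varepsilon)-1}dr$, leading to
\be
\nonumber
\E \nu_n^{\scriptscriptstyle{\textsf{REM}}}(u,\infty) \;\longrightarrow\; \int_0^\infty (1-e^{-u/r})\a(\varepsilon) r^{-\a(\varepsilon)-1}dr = u^{-\a(\varepsilon)}\a(\varepsilon)\G(\a(\varepsilon)) = \nu(u,\infty).
\ee
The hypothesis $\b>\b_c(\varepsilon)$ enters precisely to guarantee $\a(\varepsilon)<1$, so the integral converges at infinity.

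To pass to the almost sure statement I would use that $Y_x\equiv e^{-u/\g_n(x)}\in[0,1]$ are independent in $x\in\VV_n$, so
\be
\nonumber
\textrm{Var}(\nu_n^{\scriptscriptstyle{\textsf{REM}}}(u,\infty)) = \frac{a_n^2}{|\VV_n|^2}\sum_{x}\textrm{Var}(Y_x)\leq \frac{a_n}{|\VV_n|}\E\nu_n^{\scriptscriptstyle{\textsf{REM}}}(u,\infty) = \OO(2^{-n(1-\varepsilon)}),
\ee
which is exponentially summable since $\varepsilon<1$. Chebyshev plus Borel--Cantelli give a.s. convergence for each fixed $u$ (and each fixed $\varepsilon$); monotonicity of $u\mapsto \nu_n^{\scriptscriptstyle{\textsf{REM}}}(u,\infty)$ and continuity of the limit $\nu(u,\infty)$ then upgrade the result to simultaneous convergence for all $u>0$ on a common event of full measure.

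For $\sigma_n^{\scriptscriptstyle{\textsf{REM}}}$, I would split the double sum according to whether $x=x'$ or $\dist(x,x')=2$. Since $p_n^{\scriptscriptstyle{\textsf{SRW}},(2)}(x,x)=1/n$ and $p_n^{\scriptscriptstyle{\textsf{SRW}},(2)}(x,x')=1/n^2$ when $\dist(x,x')=2$, the diagonal contribution equals $n^{-1}\nu_n^{\scriptscriptstyle{\textsf{REM}}}(2u,\infty)$, which after multiplication by $n$ yields $\nu(2u,\infty)$ by the analysis above. The off-diagonal contribution has expectation of order $1/n$ (using independence of $g(x),g(x')$ at distance $2$ to factorize, and the same vague-convergence estimate), so it is negligible; an analogous variance bound plus Borel--Cantelli produces the a.s.\ statement. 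For $\eta_n^{\scriptscriptstyle{\textsf{REM}}}(\varepsilon)$, the integrand $\g_n(x)(1-e^{-\varepsilon/\g_n(x)})$ is dominated by $\min(\varepsilon,\g_n(x))$, and $\int_0^\infty (r\wedge 1)\nu(dr)<\infty$, so dominated convergence gives $\E\eta_n^{\scriptscriptstyle{\textsf{REM}}}(\varepsilon)\to \int r(1-e^{-\varepsilon/r})\nu(dr)\to 0$ as $\varepsilon\downarrow 0$. The a.s.\ statement is not needed for \eqv(6.prop2.2'), only the expected limit. The main delicate point is handling the Gaussian tail asymptotics so that the limit picks up exactly the stable measure $\nu$ with the right normalization constant; this is classical but requires care with the $\log\log n$ correction in $u_n$. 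No new obstacle beyond what is already treated in \cite{G10b}.
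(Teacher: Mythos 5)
Your treatment of \eqv(6.prop2.1) and \eqv(6.prop2.2) is essentially the same as the paper's: it cites Proposition 5.1, (i) of \cite{G10b} for both, and your sketch (vague convergence of the rescaled weights to the stable measure for the expectation, then Chebyshev plus Borel--Cantelli and monotonicity for the a.s. upgrade) is the content of that reference. Your diagonal/off-diagonal split of $\s^{\scriptscriptstyle{\textsf{REM}}}_n$ is the right idea, though the off-diagonal expectation is in fact $\OO(a_n^{-1})$, exponentially small, not merely $\OO(1/n)$ — this does not affect the conclusion.

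There is, however, a genuine gap in your handling of \eqv(6.prop2.2'). You assert that the a.s.\ statement is not needed and the expected limit suffices, but $\eta^{\scriptscriptstyle{\textsf{REM}}}_n(\e)$ is a random variable in the environment, and the proposition explicitly claims that \eqv(6.prop2.2') holds \emph{on the full-measure set} $\O^{\scriptscriptstyle{\textsf{REM}}}$. This a.s.\ statement is actually used downstream: in Subsection 6.4 the verification of Condition (C3) passes through the bound $\eta^\circ_n(\e)\leq \lfloor a_n\rfloor/c_n + (|\VV_n|/|\VV^\circ_n|)\eta^{\scriptscriptstyle{\textsf{REM}}}_n(\e)$, and one needs $\lim_{\e\downarrow 0}\limsup_n\eta^{\scriptscriptstyle{\textsf{REM}}}_n(\e)=0$ to hold $\P$-almost surely. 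The paper closes this by computing $\E\eta^{\scriptscriptstyle{\textsf{REM}}}_n(\e)\leq c\e^{1-\a(\varepsilon_n)}$ (matching your dominated convergence estimate) and then proving concentration, $\P\bigl(|\eta^{\scriptscriptstyle{\textsf{REM}}}_n(\e)-\E\eta^{\scriptscriptstyle{\textsf{REM}}}_n(\e)|>n^{-1}\bigr)\leq n^3 a_n/|\VV_n|$, which is summable, followed by Borel--Cantelli and the monotonicity-in-$\e$ upgrade to pass from a countable dense set of $\e$'s to all $\e>0$. You have exactly this machinery in hand from your treatment of \eqv(6.prop2.1); you should apply it here rather than declare it unnecessary.
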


Throughout this section we set 
$
\varepsilon_n\equiv\frac{\log a_n}{n\log 2}
$; 
thus by \thv(1.theo1.M1), $\lim_{n\rightarrow\infty}\varepsilon_n=\varepsilon$, $0<\varepsilon<1$.

\begin{proof} Eq.~\eqv(6.prop2.1) and \eqv(6.prop2.2) are proved in Proposition 5.1, (i), in Section 5.1 of \cite{G10b}.
The proof of \eqv(6.prop2.2') is elementary: by simple Gaussian calculations,
$
\E\eta^{\scriptscriptstyle{\textsf{REM}}}_n(\e)\leq c\e^{1-\a(\varepsilon_n)}\downarrow 0
$
as $n\uparrow\infty$ and $\e\downarrow 0$, where $0<c<\infty$ is a constant,
and
$
\P\left(\left|
\eta^{\scriptscriptstyle{\textsf{REM}}}_n(\e)-\E\eta^{\scriptscriptstyle{\textsf{REM}}}_n(\e)
\right|>n^{-1}
\right)
\leq n^3 {a_n}/{|\VV_n|}
$,
which is summable under our assumptions on $a_n$. Since
$\eta^{\scriptscriptstyle{\textsf{REM}}}_n(\e)$
is a monotonic function of $\e>0$,  arguing  e.g.~as in \eqv(7.prop1.4) yields the claim \eqv(6.prop2.2').
\end{proof} 

Our next lemma establishes that $\nu^\circ_n$ and $\s^\circ_n$ are very close
to $\nu^{\scriptscriptstyle{\textsf{REM}}}_n$ and $\s^{\scriptscriptstyle{\textsf{REM}}}_n$.
%
%


\begin{lemma}
  \TH(6.lem2)
On $\O_3$, for all but a finite number of indices $n$, for all $u>0$,
  \bea
\Eq(6.lem2.1)
\left|\nu^\circ_n(u,\infty)-\nu^{\scriptscriptstyle{\textsf{REM}}}_n(u,\infty)\right|
&\leq&
2n^{-2c_{\star}+1}\nu^{\scriptscriptstyle{\textsf{REM}}}_n(u,\infty)
+2a_ne^{-un^2}
+2n^{-c_{\star}+1+2\a(\varepsilon_n)},\quad\quad
\\
\Eq(6.lem2.2)
\left|\s^\circ_n(u,\infty)-\s^{\scriptscriptstyle{\textsf{REM}}}_n(u,\infty)\right|
&\leq&
2n^{-2c_{\star}+1}\s^{\scriptscriptstyle{\textsf{REM}}}_n(u,\infty)
+4a_ne^{-un^2}
+2n^{-c_{\star}+1+2\a(\varepsilon_n)}.\quad\quad
\eea
\end{lemma}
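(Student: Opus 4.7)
The strategy is to exploit the partition $\VV_n = N^{\star}_n \cup I^{\star}_n \cup (\cup_l C^{\star}_{n,l})$ together with the observation from \eqv(6.2.8) that the Metropolis exponent $c_n\l_n(x)$ coincides with the REM exponent $1/\g_n(x)$ on $I^{\star}_n$ (since $\l_n(x)=e^{\b H_n(x)}=1/w_n(x)$ there) and degenerates to the constant $c_n$ on $N^{\star}_n$, while $\nu^\circ_n$ omits the cluster vertices $\cup_l C^{\star}_{n,l}$ altogether. Writing $A^\circ_n \equiv (|\VV^\circ_n|/a_n)\nu^\circ_n$ and $A^{\scriptscriptstyle{\textsf{REM}}}_n \equiv (|\VV_n|/a_n)\nu^{\scriptscriptstyle{\textsf{REM}}}_n$, I would decompose
\begin{equation*}
\nu^\circ_n - \nu^{\scriptscriptstyle{\textsf{REM}}}_n
= \Bigl(\tfrac{a_n}{|\VV^\circ_n|} - \tfrac{a_n}{|\VV_n|}\Bigr) A^{\scriptscriptstyle{\textsf{REM}}}_n
+ \tfrac{a_n}{|\VV^\circ_n|}\bigl(A^\circ_n - A^{\scriptscriptstyle{\textsf{REM}}}_n\bigr),
\end{equation*}
where $A^\circ_n - A^{\scriptscriptstyle{\textsf{REM}}}_n = \sum_{x\in N^{\star}_n}(e^{-uc_n} - e^{-u/\g_n(x)}) - \sum_{x\in \cup_l C^{\star}_{n,l}} e^{-u/\g_n(x)}$, isolating three contributions: the geometric prefactor, the $N^{\star}_n$ mismatch, and the missing cluster mass.

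The first two contributions are routine. By \eqv(4.prop2.0), $(|\VV_n|-|\VV^\circ_n|)/|\VV^\circ_n| \leq 2n^{-2c_{\star}+1}(1+o(1))$ on $\O^{\star}$, handling the prefactor term and yielding the first summand of \eqv(6.lem2.1). For the $N^{\star}_n$ mismatch I would use the pointwise inequality $|e^{-uc_n} - e^{-u/\g_n(x)}| \leq e^{-u\min(c_n,1/\g_n(x))}$; for $x\in N^{\star}_n$ one has $w_n(x) < r_n(\rho^{\star}_n)$, hence $\min(c_n, 1/\g_n(x)) \geq c_n/r_n(\rho^{\star}_n)$, and this quantity exceeds $n^2$ for large $n$ by the explicit form \eqv(9.lem4'.2) of $r_n$; summing over $N^{\star}_n$ then gives the summand $2 a_n e^{-un^2}$.

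The main difficulty is the cluster contribution $\Sigma_n(u) \equiv \sum_{x\in \cup_l C^{\star}_{n,l}} e^{-u/\g_n(x)}$. Here I would apply a single level-set cut at $\rho_n \equiv \varepsilon_n - 2\a(\varepsilon_n)\log n/(n\log 2)$ (writing $\varepsilon_n = \log a_n/(n\log 2)$), splitting the sum into clusters that meet $V_n(\rho_n)$ and those that do not. The former contribute at most $|\cup_l C^{\star}_{n,l}(\rho_n)|$, which by \eqv(8.3lem1.2bis) of Lemma \thv(8.3lem1) (on $\O_3$) is bounded by $|\VV^\circ_n| n^{-c_{\star}+1} 2^{-n\rho_n}(1+o(1))$; since $2^{n(\varepsilon_n-\rho_n)} = n^{2\a(\varepsilon_n)}$, multiplying by $a_n/|\VV^\circ_n|$ yields exactly the claimed $n^{-c_{\star}+1+2\a(\varepsilon_n)}$ contribution. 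For the latter, every $x$ in such a cluster satisfies $w_n(x) < r_n(\rho_n)$, hence $e^{-u/\g_n(x)} \leq e^{-uc_n/r_n(\rho_n)}$; combining \eqv(A1.lem1.1) with $\a(\varepsilon_n) = \b_c(\varepsilon_n)/\b$ gives $\log(c_n/r_n(\rho_n)) = 2\log n\,(1+o(1))$, so this portion is bounded by $a_n n^{-2c_{\star}+1} e^{-un^2}(1+o(1))$, absorbed into the $a_n e^{-un^2}$ summand. The main obstacle is precisely this step: identifying the correct cutoff $\rho_n$ and verifying the asymptotics $c_n/r_n(\rho_n) \sim n^2$ from \eqv(A1.lem1.1), which is where the exponent $2\a(\varepsilon_n)$ enters.

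The bound \eqv(6.lem2.2) on $\s^\circ_n - \s^{\scriptscriptstyle{\textsf{REM}}}_n$ follows by the same strategy applied to the double sum. The only new feature is that the exponent $c_n(\l_n(x)+\l_n(x'))$ picks up an $e^{-uc_n}$ factor whenever either $x$ or $x'$ lies in $N^{\star}_n$, which is what doubles the $N^{\star}_n$ contribution and produces the coefficient $4$ rather than $2$ in front of $a_n e^{-un^2}$; the cluster contribution is controlled identically, with the two-step transition factor $p^{\circ,2}_n(x,x') \leq 1$ playing no role in the dyadic estimate.
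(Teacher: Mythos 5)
Your treatment of \eqv(6.lem2.1) coincides with the paper's: the decomposition into a volume-prefactor term, an $N^\star_n$ mismatch, and a missing cluster sum is exactly the $({|\VV_n|}/{|\VV^\circ_n|})\nu^{\scriptscriptstyle{\textsf{REM}}}_n+I_1-I_2-I_3$ split in the paper; the pointwise bounds $e^{-uc_n}$ and $e^{-uc_n/r_n(\rho^\star_n)}$ for $I_1,I_2$, the level-set cut with the same asymptotic cutoff (the paper takes $\sqrt\rho=\sqrt{\varepsilon_n}-\frac{2\log n}{n\b\b_c(1)}$, which agrees with your $\rho_n=\varepsilon_n-\frac{2\a(\varepsilon_n)\log n}{n\log 2}$ to leading order, and either choice gives $c_n/r_n(\rho_n)\sim n^2$), and the invocation of Lemma \thv(3.lem2)-style bounds via Lemma \thv(8.3lem1) are all the same. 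You should also record that your $\rho_n$ falls in the admissible window $\rho^\star_n\leq\rho\leq 1-3\rho^\star_n$ of Lemma \thv(8.3lem1), as the paper does, but this is a formality.

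For \eqv(6.lem2.2) there is a real omission. You write that ``the only new feature is that the exponent $c_n(\l_n(x)+\l_n(x'))$ picks up an $e^{-uc_n}$ factor'' and that the two-step transition factor ``plays no role.'' That is only true for the error terms. On the dominant block $I^\star_n\times I^\star_n$ the exponents do match, but the two kernels in \eqv(6.2.7) and \eqv(6.prop1.4) are a priori different objects: $p_n^{\circ,2}$ is the two-step kernel of the modified chain $J^\circ_n$, whose transition graph $G^\circ(\VV^\circ_n)$ includes the complete graphs on the boundaries $\del C^\star_{n,l}$, not the simple-random-walk kernel $p_n^{\scriptscriptstyle{\textsf{SRW}},(2)}$. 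If they disagreed on $I^\star_n\times I^\star_n$, the difference $\s^\circ_n-\s^{\scriptscriptstyle{\textsf{REM}}}_n$ would carry an uncontrolled $O(1)$ term and the lemma would fail. The paper therefore flags as the ``additional observation'' that $p_n^{\circ,2}(x,x')=p_n^{\scriptscriptstyle{\textsf{SRW}},(2)}(x,x')$ for all $x,x'\in I^\star_n$, a fact which follows from Proposition \thv(4.prop4) together with $I^\star_n\cap\del C^\star_{n,l}=\emptyset$ (so that no intermediate vertex of a length-two path from $I^\star_n$ lies in a boundary set, and all one-step rates are $1/n$ as for SRW). Your ``same strategy'' line implicitly relies on this cancellation, but you neither state it nor point to why it is true; this needs to be made explicit.
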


\begin{proof}[Proof of Lemma \thv(6.lem2) ] 
The proof hinges on the observation that $c_n\l_n(x)=1/\g_n(x)$ for all $x$ in the subset 
$I^{\star}_n$ of the decomposition \eqv(10.1.4). This enables us to rewrite $\nu^\circ_n(u,\infty)$ as
\be
\nu^\circ_n(u,\infty)=({\left|\VV_n\right|}/{\left|\VV^\circ_n\right|})\nu^{\scriptscriptstyle{\textsf{REM}}}_n(u,\infty)
\Eq(6.lem2.3')
+I_1-I_2-I_3
\ee
where
\be
\textstyle
I_3
\equiv({a_n}/{\left|\VV^\circ_n\right|})
\sum_{x\in\cup_{l=1}^{L^{\star}} C^{\star}_{n,l}
}e^{-u/\g_n(x)},
\Eq(6.lem2.5)
\ee
\be
\textstyle
I_1
\equiv({a_n}/{\left|\VV^\circ_n\right|})\sum_{x\in\VV^\circ_n\setminus I^{\star}_n}e^{-uc_n\l_n(x)}
\leq a_ne^{-uc_n},
\Eq(6.lem2.3)
\ee
\be
\textstyle
I_2
\equiv({a_n}/{\left|\VV^\circ_n\right|})\sum_{x\in\VV^\circ_n\setminus I^{\star}_n}e^{-u/\g_n(x)}
\leq a_ne^{-uc_n/r_n(\rho^{\star}_n)}.
\Eq(6.lem2.4)
\ee
The bounds on $I_1$ and $I_2$ follow from the fact that on  
$\VV^\circ_n\setminus I^{\star}_n\equiv N^{\star}_n$, 
$\l_n(x)=1$ and $w_n(x)< r_n(\rho^{\star}_n)$.
In order to bound $I_3$  recall \thv(3.0.1) and set
$
\WW_n(\rho)\equiv(\cup_{l=1}^{L^{\star}} C^{\star}_{n,l})\cap V(\rho)
$
and
$
\WW^c_n(\rho)\equiv(\cup_{l=1}^{L^{\star}} C^{\star}_{n,l})\cap V^c(\rho)
$
for some $\rho>0$.
Then, on $\WW^c_n(\rho)$,  by \eqv(A1.lem1.1) of Lemma \thv(9.lem4'), 
\be
\textstyle
\frac{w_n(x)}{c_n}
\leq 
\frac{r_n(\rho)}{r_n(\varepsilon_n)}
=
\exp\{n\b\b_c(1)(\sqrt\varepsilon_n-\sqrt\rho)
-\frac{\b\log n}{2\b_c(1)}(\frac{1}{\sqrt\varepsilon_n}-\frac{1}{\sqrt\rho})+o(1)\},
\Eq(6.lem2.6)
\ee
so that choosing $\sqrt\rho=\sqrt\varepsilon_n-\frac{2\log n}{n\b\b_c(1)}$, we get
\be
\textstyle
\frac{r_n(\rho)}{r_n(\varepsilon_n)}=n^{2}\exp \left\{
\frac{\log n}{n\b\varepsilon_n2\log 2}(1+o(1))
\right\}=n^2(1+o(1)).
\Eq(6.lem2.7)
\ee
One also sees that for this choice of $\rho$, $4\rho^{\star}_n<\rho<1-4\rho^{\star}_n$ for all $0<\varepsilon<1$ and large enough $n$. Therefore 
Lemma \thv(8.3lem1) applies, yielding 
\be
\left|\WW_n(\rho)\right|/|\VV^\circ_{n}|
\leq  n^{-c_{\star}+1}2^{-n\rho}(1+o(1)),
\Eq(6.lem2.8)
\ee
on $\O_3$, for all $n$ large enough. Assume from now on that $\o\in\O_3$.
By \eqv(6.lem2.7) and \eqv(6.lem2.8), 
\be
I_3
\leq
a_ne^{-un^2}+2n^{-c_{\star}+1}2^{n(\varepsilon_n-\rho)}
\leq
a_ne^{-un^2}+2n^{-c_{\star}+1}n^{2\b_c(\varepsilon_n)/\b},
\Eq(6.lem2.9)
\ee
where we used that 
$
\varepsilon_n-\rho=(\sqrt{\varepsilon_n}-\sqrt\rho)(\sqrt{\varepsilon_n}+\sqrt\rho)
\leq 2\sqrt{\varepsilon_n}\frac{2\log n}{n\b\b_c(1)}
$.
Eq.~\eqv(6.lem2.1) now easily follows
observing that, by \eqv(A1.lem1.1)  and \eqv(9.lem4'.2) of Lemma \thv(9.lem4'), 
$
c_n\gg c_n/r_n\bigl(\rho^{\star}_n\bigr)\gg n^2
$,
and using that
$
{\left|\VV_n\right|}/{\left|\VV^\circ_n\right|}=1+n^{-2c_{\star}+1}(1+\OO(n^{-(c_{\star}-1)}))
$,
as follows from \eqv(4.prop2.0).

The proof of \eqv(6.lem2.2) follows the same pattern, using the additionnal observation that 
$
p_n^{\circ,2}(x,x')=p_n^{\scriptscriptstyle{\textsf{SRW}},2}(x,x')
$
for all $x,x'$ in $I^{\star}_n\times I^{\star}_n$. This follows from
Proposition \thv(4.prop4) and the fact that, by construction, $I^{\star}_n\cap \del C^{\star}_{n,l}=\emptyset$
for all $1\leq l\leq L^{\star}$. We skip the details.
\end{proof}

\begin{proof}[Proof of Proposition \thv(6.prop1) ] The proposition is now an immediate consequence of
Lemma \thv(6.lem2) and Proposition \thv(6.prop2). 
\end{proof}


\subsection{Conclusion of the proof of Theorem \thv(2.theo1).}
 \label{6.4}
 
 
We are now ready to show that under the assumptions of Theorem \thv(1.theo1.Main), taking
for initial distribution the invariant measure $\pi_n^\circ$ of  $J^\circ_n$, the conditions of 
Theorem \thv(6.theo1) are satisfied $\P$-almost surely.
Firstly, by  Theorem \thv(6.theo2) and Proposition \thv(6.prop1),  Conditions (C1) and (C2) are
satisfied $\P$-almost surely. That is, $\P$-almost surely the following holds:
for all $u>0$ and all $t>0$,
\bea
\Eq(6.4.1)
&&\lim_{n\rightarrow\infty}\nu_n^{{{\scriptscriptstyle{J^\circ_n}},t}}(u,\infty)
= t\nu^\circ(u,\infty)\quad \text{in $\PP^\circ$-probability},
\\
\Eq(6.4.2)
&&\lim_{n\rightarrow\infty}\s_n^{{{\scriptscriptstyle{J^\circ_n}},t}}(u,\infty)
= 0 \quad \text{in $\PP^\circ$-probability}.
\eea
Next,  in view of  \eqv(6.2.6), \eqv(2.A0') reads  $\nu^\circ_n(v,\infty)/a_n=o(1)$,
and so, by \eqv(6.prop1.1)  of Proposition \thv(6.prop1),  Condition (C0) is satisfied.
It remains to check Condition (C3). As in the proof of Proposition \thv(6.prop1), we
do this by comparing the quantity 
\bea
\Eq(6.4.3)
\eta^\circ_n(\e)
&\equiv&
\lfloor a_n \rfloor
\EE^\circ_{\pi_n^\circ} \1_{\{\l_n^{-1}(J^\circ_n(0))e^\circ_0\leq c_n\e\}} c_n^{-1}\l_n^{-1}(J^\circ_n(0))e^\circ_0
\\
&=&
\frac{\lfloor a_n \rfloor}{|\VV^\circ_n|} \sum_{x\in \VV^\circ_n}
c_n^{-1}\l_n^{-1}(x)\left(1-e^{-\e c_n\l_n(x)}\right)
\eea
arising in \eqv(6.1.7), to its counterpart
in the random hopping dynamics of the non truncated REM,
 $\eta^{\scriptscriptstyle{\textsf{REM}}}_n(\e)$, defined in \eqv(6.prop1.4').
For this we simply write that
since $\l_n(x)=1$  on $\VV^\circ_n\setminus I^{\star}_n$ and $c_n\l_n(x)=1/\g_n(x)$ on $I^{\star}_n$,
\be
\Eq(6.4.4)
\eta^\circ_n(\e)
\leq
\frac{\lfloor a_n \rfloor}{c_n}
+\frac{\lfloor a_n \rfloor}{|\VV^\circ_n|} \sum_{x\in I^{\star}_n}
\g_n(x)\left(1-e^{-\e /\g_n(x)}\right)
\\
\leq
\frac{\lfloor a_n \rfloor}{c_n}
+
\frac{|\VV_n|}{|\VV^\circ_n|}\eta^{\scriptscriptstyle{\textsf{REM}}}_n(\e).
\ee
From this, \eqv(4.prop2.0), and \eqv(6.prop2.2') it follows that,
under the assumptions of Proposition \thv(6.prop2),
\be
\Eq(6.4.5)
 \lim_{\e\rightarrow 0}\lim_{n\rightarrow\infty} \eta^\circ_n(\e)=0, \quad
\text{ $\P$-almost surely}.
\ee
Therefore Condition (C3) is satisfied $\P$-almost surely.

Since all four conditions (C0), (C1), (C2), and (C3) are satisfied $\P$-almost surely,
 it follows from Theorem \thv(6.theo1) 
that, for our choices of $a_n$, $c_n$, $\b$, and $c_{\star}$, 
 $\P$-almost surely, 
\be
S^\circ_n\Rightarrow_{J_1}  S^\circ_{\infty}
\Eq(6.4.6)
\ee
where $S^\circ_{\infty}$ is a subordinator with zero drift and L\'evy measure
$\nu^\circ=\nu$ defined in \eqv(1.theo1.M3). The proof of Theorem \thv(2.theo1) is complete.



\section{Convergence of the back end clock process below the critical temperature: proof of Theorem \thv(2.theo2)}
 \label{7}
 
 \subsection{A convergence theorem for \textsc{\textbf{becp}}}
 \label{7.1}
  

Consider the rescaled  process \eqv(2.3.1),
\be
S_n^\dagger(t) = b_n^{-1}\wt S_n^\dagger (k^\dagger_n(t)),\quad t\geq 0.
\Eq(2.3.1')
\ee
%
Theorem \thv(7.theo1) below parallels Theorem \thv(6.theo1) for
\textsc{fecp}, namely,
it gives three sufficient conditions for the sequence $S_n^\dagger$ to converge to a subordinator when the initial distribution of $J^\dagger_n$ is  the  invariant measure $\pi^\circ_n$ of $J^\circ_n$. 
As before these conditions refer to given sequences of  numbers $a_n$ and $b_n$,
and a given realization of the random environment.
For $u>0$ define
\be
\bar h^{u}_n(y)=
\sum_{1\leq l\leq L^{\star}}\sum_{x\in C^{\star}_{n,l}} p_n(y,x)P_x(T^{\star}_{n,l}>b_nu)
\,,\,\,
y\in\VV^\circ_n,
\Eq(7.1.1)
\ee
where $T^{\star}_{n,l}$  is the exit time \eqv(5.2.1).
(Note that $\bar h^{u}_n(y)=0$ unless $y\in\cup_{1\leq l\leq L^{\star}}\del C^{\star}_{n,l}$.)
For $k^\circ_n(t)$  as in \eqv(2.3.3) define, for $t>0$ and $u>0$, 
\bea
\Eq(7.1.2)
\bar\nu_n^{{{\scriptscriptstyle{J^\circ_n}},t}}(u,\infty)
&= &
\sum_{j=0}^{k^\circ_n(t)-1}\bar h^{u}_n(J^\circ_n(j)),
\\
\Eq(7.1.3)
\bar\s_n^{{{\scriptscriptstyle{J^\circ_n}},t}}(u,\infty)
&= &
\sum_{j=0}^{k^\circ_n(t)-1}\left[\bar h^{u}_n(J^\circ_n(j))\right]^2.
\eea

\noindent {\bf Condition  (A1).}
There exists a $\s$-finite measure $\nu^\dagger$ on $(0,\infty)$ satisfying
$
\int_0^\infty
(1\wedge u)\nu^\dagger(du)<\infty
$ 
such that, for all $t>0$ and all $u>0$,
\be
\Eq(7.1.5)
P^\circ_{\pi^\circ_n}\left(
\left|
\bar\nu_n^{{{\scriptscriptstyle{J^\circ_n}},t}}(u,\infty)
-t\nu^\dagger(u,\infty)
\right|
<\e
\right)=1-o(1)\,,\quad\forall\e>0\,.
\ee

\noindent {\bf Condition  (A2).}
 For all $u>0$ and all $t>0$,
\be
\Eq(7.1.6)
P^\circ_{\pi^\circ_n}\left(
\bar\s_n^{{{\scriptscriptstyle{J^\circ_n}},t}}(u,\infty)
<\e
\right)=1-o(1)\,,\quad\forall\e>0\,.
\ee

\noindent {\bf Condition  (A3).}
 For all $t>0$,
\be
\Eq(7.1.7)
\lim_{\e\downarrow 0}\limsup_{n\uparrow \infty}
\frac{k^\circ_n(t)}{|\VV^\circ_{n}|}\sum_{1\leq l\leq L^{\star}}\sum_{x\in C^{\star}_{n,l}}
E_{x}\bigl( \1_{\{b_n^{-1}T^{\star}_{n,l}\leq \e\} }b_n^{-1}T^{\star}_{n,l}\bigr)
=0.
\ee

\begin{theorem}
\TH(7.theo1) 
Choose for initial  distribution the invariant measure $\pi^\circ_n$ of $J^\circ_n$.~For all sequences $a_n$ and $b_n$ for which
Conditions  (A1), (A2), and (A3) are verified $\P$-almost surely,
\be
S_n^\dagger  \Rightarrow_{J_1}  S^\dagger_{\infty}
\Eq(7.theo1.1)
\ee
$\P$-almost surely, where  $S^\dagger_{\infty}$  is the L\'evy subordinator with zero drift and L\'evy measure
$\nu^\dagger$.
\end{theorem}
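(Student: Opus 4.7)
The strategy is to recast $S_n^\dagger$ as a partial sum process indexed by the fast-mixing chain $J_n^\circ$, thereby reducing the problem to an application of the Durrett--Resnick convergence criterion in the form proved in \cite{BG13} (Theorem 1.2 with block length one), exactly as was done for $S_n^\circ$ in the proof of Theorem \thv(6.theo1).

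The reindexing proceeds in two steps. First, by Theorem \thv(2.theo4) one may replace $k^\dagger_n(t)$ by $k^\circ_n(t)=\lfloor a_n t\rfloor$ at the cost of adding at most $n^{-c_\circ} k^\circ_n(t)$ increments to $\wt S_n^\dagger$, with probability tending to one. Second, by Corollary \thv(8.0cor1) and Corollary \thv(8.0cor2), each visit of $J_n^\dagger$ to some $C_{n,l}^\star$ arises uniquely from a single transition $J_n^\circ(i-1)\to J_n^\circ(i)$ whose two endpoints both lie in $\del C_{n,l}^\star$; by the strong Markov property applied at the instant $J_n$ enters $C_{n,l}^\star$, the sojourn of $J_n$ in $C_{n,l}^\star$ during such an excursion is distributed as $T_{n,l}^\star$ issued from a random entry point $x\in C_{n,l}^\star$ selected with probability $p_n(J_n^\circ(i-1),x)$, independently of the remaining evolution of $J_n^\circ$. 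This yields, in distribution,
\be
b_n^{-1}\wt S_n^\dagger(k^\dagger_n(t))\stackrel{d}{=}\textstyle\sum_{i=1}^{k^\circ_n(t)} b_n^{-1}\xi_{n,i}+r_n(t),
\Eq(plan.1)
\ee
where, conditional on $J_n^\circ(i-1)=y$,
\be
P\bigl(b_n^{-1}\xi_{n,i}>u\mid J_n^\circ(i-1)=y\bigr)=\bar h_n^u(y),\quad u>0,
\Eq(plan.2)
\ee
with $\bar h_n^u$ as in \eqv(7.1.1), and $r_n(t)$ is a remainder gathering the excess excursions, to be shown $o_P(1)$ via Theorem \thv(2.theo4) together with the tail estimate of Proposition \thv(5.prop1).

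With \eqv(plan.1)--\eqv(plan.2) established, Theorem \thv(7.theo1) follows by invoking Theorem 1.2 of \cite{BG13} (block length one), the very same abstract result used in the proof of Theorem \thv(6.theo1). The quantities $\bar\nu_n^{J_n^\circ,t}(u,\infty)$ and $\bar\sigma_n^{J_n^\circ,t}(u,\infty)$ defined in \eqv(7.1.2)--\eqv(7.1.3) serve, respectively, as the first and second chain-compensators of the conditional tails of $b_n^{-1}\xi_{n,i}$: Condition (A1) identifies the limiting L\'evy measure $\nu^\dagger$, Condition (A2) suppresses the quadratic compensator and ensures the limit is a pure-jump L\'evy process, and Condition (A3) is the Lindeberg-type small-jump condition that forces zero drift. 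Fast mixing of $J_n^\circ$ from the invariant initial distribution $\pi_n^\circ$ (Proposition \thv(4.prop1)) provides the chain-ergodicity hypothesis required by \cite{BG13}.

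The main obstacle lies in the rigorous justification of \eqv(plan.1)--\eqv(plan.2) and of the negligibility of $r_n$. The conditional law identity \eqv(plan.2) must be derived by a careful application of the strong Markov property at the successive entrance and exit times of $J_n$ into the components $C_{n,l}^\star$, together with the observation that $\xi_{n,i}$ enters the evolution of $J_n^\circ$ only through the two-step transition $J_n^\circ(i-1)\to J_n^\circ(i)$. For the remainder $r_n$, a union bound combined with Proposition \thv(5.prop1) must confirm that the $\lesssim n^{-c_\circ} k^\circ_n(t)$ extra excursions contribute only $o_P(b_n)$ to the total clock process.
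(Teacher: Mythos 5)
Your proposal takes a genuinely different route from the paper's, and it also contains a concrete misconception in the reindexing step.

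The paper does \emph{not} reindex the partial sum. It works with $S_n^\dagger(t)=\sum_{i=0}^{k^\dagger_n(t)-1}Z_{n,i}$ directly, adapted to the filtration $\FF^\dagger_{n,i}$ of $J^\dagger_n$, and notes that $k^\dagger_n(t)$ is a \emph{stopping time}. The abstract tool it then invokes is Theorem 2.1 of \cite{G12}, the Durrett--Resnick criterion specialized to nonnegative increments with a stopping-time summation limit -- not Theorem 1.2 of \cite{BG13}, the deterministic-index version used for \textsc{fecp}. The chain $J^\circ_n$ enters only afterwards: via \eqv(7.theo1.4)--\eqv(7.theo1.11) one shows that the Durrett--Resnick compensators computed in the $J^\dagger_n$ filtration coincide in distribution with $\bar\nu_n^{J^\circ_n,t}$ and $\bar\sigma_n^{J^\circ_n,t}$, so that (A1)--(A3) supply hypotheses (D1)--(D3) of \cite{G12} Theorem 2.1. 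Your plan -- rewrite the clock as a deterministic-index sum over $J^\circ_n$ and then apply \cite{BG13} Theorem 1.2 as in the \textsc{fecp} case -- is a legitimate alternative, but it is not ``exactly as was done'' for $S^\circ_n$.

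Your first reindexing step is wrong as stated, and this matters for how the rest should be organized. Since $k^\dagger_n(t)\geq k^\circ_n(t)$, truncating the argument of $\wt S^\dagger_n$ at $k^\circ_n(t)$ \emph{removes} increments rather than adding them, so Theorem~\thv(2.theo4) does not do what you say. More to the point, after your second step there is nothing left to control: by Lemma~\thv(8.0lem1) and Corollaries~\thv(8.0cor1)--\thv(8.0cor3), each of the $k^\dagger_n(t)-k^\circ_n(t)$ nonzero increments of $\wt S^\dagger_n$ corresponds bijectively to one of the $k^\circ_n(t)-1$ transitions $J^\circ_n(i-1)\to J^\circ_n(i)$ whose endpoints both lie on the boundary of the visited component, so $b_n^{-1}\wt S^\dagger_n(k^\dagger_n(t))=\sum_{i=1}^{k^\circ_n(t)-1}b_n^{-1}\xi_{n,i}$ holds \emph{pathwise}, with no remainder $r_n(t)$ (and one fewer term than your upper limit). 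Theorem~\thv(2.theo4) does appear in the paper's proof, but only inside the verification that (A3) implies the Lindeberg condition (D3), where it is used to replace the random stopping time $k^\dagger_n(t)$ by $\lfloor(1+n^{-1})\lfloor a_nt\rfloor\rfloor$ before a Tchebychev estimate; see \eqv(7.theo1.17). If you correct the first step and drop the remainder analysis, your reindex-then-apply-the-deterministic-criterion strategy is sound, provided you also note that \cite{BG13} Theorem 1.2 as used for \textsc{fecp} is stated for increments of the form $\l^{-1}_n(J^\circ_n(i))e^\circ_{n,i}$, so some care is needed to invoke it (or its source version) for the excursion-time increments $\xi_{n,i}$, whose conditional tail given $J^\circ_n(i-1)=y$ is $\bar h_n^u(y)$ but which are not of that multiplicative-exponential form.
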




\begin{proof}[Proof of Theorem \thv(7.theo1)] 

The proof of Theorem \thv(2.theo2) relies on Theorem 2.1 of \cite{G12}, which is itself a specialization of
Theorem 4.1 of \cite{DR78} to processes with non-negative increments.
Throughout we fix a realisation $\o\in\O^{\star}$ of the random environment but do not make this explicit in the notation. 
With the notations
of Subsection \thv(2.2) define, for $i\geq 0$,
\be
Z_{n,i} \equiv b_n^{-1}\L_n^\dagger(J_n^\dagger(i))\,.
\Eq(7.theo1.2)
\ee
Thus, by \eqv(2.2.7) and \eqv(2.3.1'),
\be
\textstyle
S_n^\dagger(t)= \sum_{i=0}^{ k^\dagger_n(t)-1}Z_{n,i}.
\Eq(7.theo1.3)
\ee
In view of \eqv(2.3.0), $k^\dagger_n(t)$ is a stopping time for each $t>0$. Furthermore,
because $J^\dagger_n$ starts in $\pi^\circ_n$, and because $\pi^\circ_n\left(\VV_n\setminus \VV^\circ_n\right)=0$,
it follows from  \eqv(2.2.8)  that  $Z_{n,0}=0$.
We may thus apply  Theorem 2.1 of \cite{G12} to the sum \eqv(7.theo1.3).

To this end let  $\{\FF^\dagger_{n,i}, n\geq 1, i\geq 0\}$ 
be the array of sub-sigma fields 
defined (with obvious notation)  by
$\FF^\dagger_{n,i}
=\s\left(J_n^\dagger(0),\dots,J_n^\dagger(i)\right)$,
for $i\geq 0$.
Clearly, for each $n$ and $i\geq 1$, $Z_{n,i}$ is $\FF^\dagger_{n,i}$
 measurable and $\FF^\dagger_{n,i-1}\subset\FF^\dagger_{n,i}$.
Next, observe that
\bea
\nonumber
\PP^\dagger_{\pi^\circ_n}\bigl(Z_{n,i}>u\,\big|\,\FF^\dagger_{n,i-1}\bigr)
\hspace{-6pt}&=&\hspace{-6pt}
\textstyle
\sum_{x\in\VV_n}\PP^\dagger_{\pi^\circ_n}\bigl(J_n^\dagger(i)=x, Z_{n,i}>u\,\big|\,J_n^\dagger(i-1)\bigr)
\\
\Eq(7.theo1.4)
\hspace{-6pt}&=&\hspace{-6pt}
\textstyle
\sum_{x\in\VV_n}\PP^\dagger_{\pi^\circ_n}\bigl(J_n^\dagger(i)=x, \L_n^\dagger(x)>b_nu\,\big|\,J_n^\dagger(i-1)\bigr).\quad
\eea
By \eqv(2.2.8) and \eqv(5.2.1), $\L_n^\dagger(x)=0$ if
$
x\notin
\cup_{1\leq l\leq L^{\star}} C^{\star}_{n,l}
$,
and
$
\L_n^\dagger(x)=T^{\star}_{n,l}
$
if
$x\in C^{\star}_{n,l}$. Thus, 
\be
\Eq(7.theo1.5)
\PP^\dagger_{\pi^\circ_n}\bigl(Z_{n,i}>u\,\big|\,\FF^\dagger_{n,i-1}\bigr)
=\sum_{1\leq l\leq L^{\star}}\sum_{x\in C^{\star}_{n,l}}
\PP^\dagger_{\pi^\circ_n}\bigl(J_n^\dagger(i)=x, \L_n^\dagger(x)>b_nu\,\big|\,J_n^\dagger(i-1)\bigr).
\ee
Now for all $1\leq l\leq L^{\star}$ and all $x\in C^{\star}_{n,l}$, 
\be
\Eq(7.theo1.6)
\PP^\dagger_{\pi^\circ_n}\bigl(J_n^\dagger(i)=x, \L_n^\dagger(x)>b_nu\,\big|\,J_n^\dagger(i-1)\bigr)
=
p_n^\dagger\bigl(J_n^\dagger(i-1),x\bigr)P_{x}\bigl(T^{\star}_{n,l}>b_nu\bigr)
\ee
 where, by \eqv(2.2.9), 
\be
\Eq(7.theo1.7)
p_n^\dagger\bigl(J_n^\dagger(i-1),x\bigr)=p_n\bigl(J_n^\dagger(i-1),x\bigr)\1_{\{J_n^\dagger(i-1)\in \VV^\circ_n\}}
\ee
(indeed, by definition of $J_n^\dagger$,
$J_n^\dagger(i)\in C^{\star}_{n,l}$
if and only if $J_n^\dagger(i-1)\in \del C^{\star}_{n,l}\subset \VV^\circ_n$).
In view of \eqv(7.1.1)  it follows
from  \eqv(7.theo1.4),   \eqv(7.theo1.5), \eqv(7.theo1.6) and \eqv(7.theo1.7)  that
\be
\textstyle
 \sum_{i=1}^{k^\dagger_n(t)}
\PP^\dagger_{\pi^\circ_n}\bigl(Z_{n,i}>u\,\big|\,\FF^\dagger_{n,i-1}\bigr)
=\sum_{i=1}^{k^\dagger_n(t)}\bar h^{u}_n\bigl(J^\dagger_n(i-1)\bigr)\1_{\{J_n^\dagger(i-1)\in \VV^\circ_n\}}.
\Eq(7.theo1.8)
\ee
It remains to notice that  the chain $J^\dagger_n$ observed only when it 
takes values in $\VV^\circ_n$ is nothing but the chain $J^\circ_n$, and that $J^\circ_n$ takes $k^\circ_n(t)$ 
steps  when $J^\dagger_n$ takes $k^\dagger_n(t)$ steps (see \eqv(2.3.3)). Thus, 
\be
\textstyle
\sum_{i=1}^{k^\dagger_n(t)}\bar h^{u}_n\bigl(J^\dagger_n(i-1)\bigr)\1_{\{J_n^\dagger(i-1)\in \VV^\circ_n\}}
\stackrel{d}{=}
\sum_{i=1}^{k^\circ_n(t)}\bar h^{u}_n\bigl(J^\circ_n(i-1)\bigr)
=
\bar\nu_n^{{{\scriptscriptstyle{J^\circ_n}},t}}(u,\infty),
\Eq(7.theo1.10)
\ee
where the first equality holds in distribution and the last is  \eqv(7.1.2).
Combining \eqv(7.theo1.8) and \eqv(7.theo1.10) now yields
\be
\textstyle
\sum_{i=1}^{k^\dagger_n(t)} 
\PP^\dagger_{\pi^\circ_n}\bigl(Z_{n,i}>u\,\big|\,\FF^\dagger_{n,i-1}\bigr)
\stackrel{d}{=}\bar\nu_n^{{{\scriptscriptstyle{J^\circ_n}},t}}(u,\infty).
\Eq(7.theo1.11)
\ee
Similarly, we get
\be
\textstyle
\sum_{i=1}^{k_n(t)-1}
\left[\PP^\dagger_{\pi^\circ_n}\bigl(Z_{n,i}>u\,\big|\,\FF^\dagger_{n,i-1}\bigr)\right]^2
\stackrel{d}{=}
\sum_{i=1}^{k^\dagger_n(t)}\left[
\bar h^{u}_n\bigl(J^\circ_n(i-1)\bigr)
\right]^2
=\bar\s_n^{{{\scriptscriptstyle{J^\circ_n}},t}}(u,\infty).
\Eq(7.theo1.12)
\ee
 From \eqv(7.theo1.11) and \eqv(7.theo1.12) it follows that  Conditions (A2) and (A1) of
Theorem \thv(7.theo1)  are exactly the Conditions (D1) and (D2) of  Theorem 2.1 of \cite{G12}.
To see that Condition (A3)  implies Condition (D3) we have to establish that
\eqv(7.1.7) implies
\be
\lim_{\e\rightarrow 0}\limsup_{n\rightarrow\infty}
\PP^\dagger_{\pi^\circ_n}\left(S_n^{\dagger,\e}(t)>\e\right)=0\,,
\ee
where for $\e\geq 0$,
$
\textstyle
S_n^{\dagger,\e}(t)= \sum_{i=0}^{ k^\dagger_n(t)-1}Z_{n,i}\1_{\{Z_{n,i}\leq\e\}}.
$
By Theorem \thv(2.theo4) with $c_\circ=1$, on $\O^{\star}$, for all but a finite number of indices $n$, all $0< t<\infty$,
and all $\e\geq 0$,
\be
\PP^\dagger_{\pi^\circ_n}\left(S_n^{\dagger,\e}(t)>\e\right)
\leq 
\PP^\dagger_{\pi^\circ_n}\left(\textstyle\sum_{i=0}^{k_n(t)-1}Z_{n,i}\1_{\{Z_{n,i}\leq\e\}}>\e\right)
+n^{-2(c_{\star}-1)+1}(1+o(1)),
\Eq(7.theo1.17)
\ee
where $k_n(t)\equiv\lfloor k^\circ_n(t)(1+n^{-1})\rfloor=\lfloor \lfloor a_nt\rfloor(1+n^{-1})\rfloor$.
By Tchebychev inequality,
\be
\Eq(7.theo1.13)
\PP^\dagger_{\pi^\circ_n}\left(\textstyle\sum_{i=0}^{k_n(t)-1}Z_{n,i}\1_{\{Z_{n,i}\leq\e\}}>\e\right)
\leq 
\e^{-1}\textstyle\sum_{i=1}^{k_n(t)-1} \EE^\dagger_{\pi^\circ_n} \1_{\{Z_i^n\leq \e\}}Z^n_i,
\ee
and the right hand side of \eqv(7.theo1.13) is equal to
\be
\Eq(7.theo1.16)
\textstyle
\sum_{1\leq l\leq L^{\star}}\sum_{x\in C^{\star}_{n,l}}
E_{x}\bigl( \1_{\{b_n^{-1}T^{\star}_{n,l}\leq \e\} }b_n^{-1}T^{\star}_{n,l}\bigr)
\sum_{i=1}^{k_n(t)-1} \EE^\dagger_{\pi^\circ_n}\1_{\{J_n^\dagger(i)=x\} }.
\ee
By \eqv(8.2.2)
and \eqv(2.2.9), for all $x\in C^{\star}_{n,l}$,
\bea
\nonumber
\textstyle
\sum_{i=1}^{k_n(t)-1} \EE^\dagger_{\pi^\circ_n}\1_{\{J_n^\dagger(i)=x\} }
&=&
\nonumber
\textstyle
\sum_{y\in \del C^{\star}_{n,l}\cap\del x}p_n(y,x)
\sum_{i=1}^{k_n(t)-1}\EE^\dagger_{\pi^\circ_n}\1_{\{J_n^\dagger(i-1)=y\} }
\\
&\leq&
\nonumber
\textstyle
\sum_{y\in \del C^{\star}_{n,l}\cap\del x}p_n(y,x)
\sum_{i=1}^{k_n(t)-1}\pi^\circ_n(y)
\\
&\leq&
\textstyle
{(k_n(t)-1)}/{|\VV^\circ_{n}|}
\Eq(7.theo1.14)
\eea
where the last inequality follows from  \eqv(4.prop2.1) and the fact that, for 
$y\in \del C^{\star}_{n,l}\cap\del x$ and $x\in C^{\star}_{n,l}$, $p_n(y,x)=n^{-1}$.
Combining  \eqv(7.theo1.13), \eqv(7.theo1.16), and \eqv(7.theo1.14),  the probability in the left
 hand side of \eqv(7.theo1.13) is bounded above by
\be
\Eq(7.theo1.15)
\textstyle
\e^{-1}(1+n^{-1})({k^\circ_n(t)}/{|\VV^\circ_{n}|})
\sum_{1\leq l\leq L^{\star}}\sum_{x\in C^{\star}_{n,l}}
E_{x}\bigl( \1_{\{b_n^{-1}T^{\star}_{n,l}\leq \e\} }b_n^{-1}T^{\star}_{n,l}\bigr).
\ee
Inserting this bound in  \eqv(7.theo1.17) yields the claim that
Condition (A3)  implies Condition (D3) of  Theorem 2.1 of \cite{G12}

Having established that, on $\O^{\star}$, the conditions of  Theorem 2.1 of \cite{G12} are verified
whenever those of Theorem \thv(7.theo1)
are verified, the proof of  Theorem \thv(7.theo1) is complete.
\end{proof}






\subsection{
An ergodic theorem for \textsc{\textbf{becp}}
}
 \label{7.2}

To prove that Conditions (A1) and (A2) of Theorem \thv(7.theo1) are satisfied
 we closely follow the strategy of Subsection \thv(6.2) and first prove an ergodic theorem
for the quantities $\bar\nu_n^{{{\scriptscriptstyle{J^\circ_n}},t}}(u,\infty)$ and
$\bar\s_n^{{{\scriptscriptstyle{J^\circ_n}},t}}(u,\infty)$ defined in \eqv(7.1.2) and \eqv(7.1.3).
Clearly, for $\pi^{{{\scriptscriptstyle{J^\circ_n}},t}}_n(x)$ as in \eqv(6.2.1), \eqv(7.1.2) and \eqv(7.1.3) can be rewritten as
\bea
\Eq(7.2.2)
\bar\nu_n^{{{\scriptscriptstyle{J^\circ_n}},t}}(u,\infty)
&= &
k^\circ_n(t)\sum_{y\in\VV^\circ_n}\pi^{{{\scriptscriptstyle{J^\circ_n}},t}}_n(y)\bar h^{u}_n(y),
\\
\Eq(7.2.3)
\bar\s_n^{{{\scriptscriptstyle{J^\circ_n}},t}}(u,\infty)
&= &
k^\circ_n(t)\sum_{y\in\VV^\circ_n}\pi^{{{\scriptscriptstyle{J^\circ_n}},t}}_n(y)\left[\bar h^{u}_n(y)\right]^2.
\eea
Before stating our main theorem, let us express the mean values of \eqv(7.2.2) and \eqv(7.2.3)
with respect to the law $P^\circ_{\pi^\circ_n}$.
Given $x\in C^{\star}_{n,l}$, $1\leq l\leq L^{\star}$, denote by
\be
\Eq(7.2.9)
Q_{n,l}^u(x)\equiv P_x(T^{\star}_{n,l}>b_nu),\,\,\, u>0,
\ee
the tail distribution of the exit time $T^{\star}_{n,l}$  given that the set $C^{\star}_{n,l}$ is entered in $x$, 
and define
\be
\bar\nu^\circ_n(u,\infty)
=
\Eq(7.2.6)
\frac{a_n}{2^n}
\sum_{1\leq l\leq L^{\star}}\sum_{x\in C^{\star}_{n,l}}Q_{n,l}^u(x),
\ee
\be
\Eq(7.2.8)
\bar\s^{=}_n(u,\infty)
=
\frac{a_n}{n 2^n}
\sum_{1\leq l\leq L^{\star}}
\Bigl[\sum_{x\in C^{\star}_{n,l}}Q_{n,l}^u(x)\Bigr]^2,
\ee
\be
\Eq(7.2.7)
\bar\s^\circ_n(u,\infty)
=
\frac{a_n}{2^n}
\sum_{1\leq l\leq L^{\star}}\sum_{1\leq l'\leq L^{\star}}
\sum_{x\in C^{\star}_{n,l}}\sum_{x'\in C^{\star}_{n,l'}}
Q_{n,l}^u(x)Q_{n,l'}^u(x')
\left(n^{-2}|\del x\cap \del x'|\right).
\ee


\begin{lemma}
\TH(7.lem3) 
Assume that $c_{\star}>2$. Then on $\O^{\star}$, for all but a finite number of indices $n$,
\bea
\Eq(7.lem3.1) 
E^\circ_{\pi^\circ_n}\bigl[\bar\nu_n^{{{\scriptscriptstyle{J^\circ_n}},t}}(u,\infty)\bigr]
\hspace{-6pt}&=&\hspace{-6pt}
(1+o(1))({k^\circ_n(t)}/{a_n})
\bar\nu^\circ_n(u,\infty),
\\
\Eq(7.lem3.2) 
E^\circ_{\pi^\circ_n}\bigl[\bar\s_n^{{{\scriptscriptstyle{J^\circ_n}},t}}(u,\infty)\bigr]
\hspace{-6pt}&=&\hspace{-6pt}
(1+o(1))({k^\circ_n(t)}/{a_n})\bar\s^\circ_n(u,\infty).
\eea
\end{lemma}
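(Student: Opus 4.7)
The starting point is that, because $J^\circ_n(0)\sim\pi^\circ_n$ is the invariant measure, $E^\circ_{\pi^\circ_n}\bigl[\bar h^{u}_n(J^\circ_n(j))\bigr]=\sum_{y\in\VV^\circ_n}\pi^\circ_n(y)\bar h^{u}_n(y)$ for every $j\geq 0$, independently of $j$. In view of \eqv(7.2.2) and \eqv(7.2.3) this reduces both \eqv(7.lem3.1) and \eqv(7.lem3.2) to evaluating $\sum_y\pi^\circ_n(y)\bar h^{u}_n(y)$ and $\sum_y\pi^\circ_n(y)[\bar h^{u}_n(y)]^2$ respectively. Two facts will be used throughout: by Proposition \thv(4.prop2), $\pi^\circ_n(y)=1/|\VV^\circ_n|$ and $|\VV^\circ_n|/2^n=1+o(1)$; and by construction of $\VV^\circ_n$ and the Metropolis rates \eqv(1.1.12), whenever $y\in\VV^\circ_n$ and $x\in C^\star_{n,l}$ with $\{x,y\}\in\EE_n$, we have $H_n(y)=0\geq H_n(x)$ so $p_n(y,x)=1/n$ and, necessarily, $y\in\del C^\star_{n,l}$.

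For the first moment, inserting the definition \eqv(7.1.1) of $\bar h^{u}_n$, interchanging the order of summation, and using the above observations gives
\be
\sum_{y\in\VV^\circ_n}\pi^\circ_n(y)\bar h^{u}_n(y)
=\frac{1}{n|\VV^\circ_n|}\sum_{1\leq l\leq L^\star}\sum_{x\in C^\star_{n,l}}Q_{n,l}^u(x)\,\bigl|\del C^\star_{n,l}\cap\del x\cap\VV^\circ_n\bigr|.
\Eq(plan.1)
\ee
By \eqv(10.lem1.7) of Lemma \thv(10.lem1), $|\del C^\star_{n,l}\cap\del x|\geq n(1-O(1/\log n))$; since $\del C^\star_{n,l}\setminus\VV^\circ_n\subseteq\cup_{l'\neq l}C^\star_{n,l'}$ and $|\cup_l C^\star_{n,l}|/2^n\leq n^{-2c_\star+1}(1+o(1))$ by \eqv(10.lem1.3), the fraction of neighbors lying outside $\VV^\circ_n$ is $o(1)$. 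Thus the geometric factor in \eqv(plan.1) is $n(1+o(1))$ uniformly in $x$ and $l$, and combining with $|\VV^\circ_n|^{-1}=(1+o(1))\,2^{-n}$ yields $\sum_y\pi^\circ_n(y)\bar h^{u}_n(y)=(1+o(1))\bar\nu^\circ_n(u,\infty)/a_n$, which is \eqv(7.lem3.1).

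For the second moment, expanding $[\bar h^{u}_n(y)]^2$ and again using that $p_n(y,x)p_n(y,x')=n^{-2}$ whenever $y\in\VV^\circ_n\cap\del x\cap\del x'$ and $x\in C^\star_{n,l}$, $x'\in C^\star_{n,l'}$, one obtains
\be
\sum_{y\in\VV^\circ_n}\pi^\circ_n(y)[\bar h^{u}_n(y)]^2
=\frac{1}{n^2|\VV^\circ_n|}\sum_{l,l'}\sum_{x\in C^\star_{n,l}}\sum_{x'\in C^\star_{n,l'}}Q_{n,l}^u(x)Q_{n,l'}^u(x')\,\bigl|\del x\cap\del x'\cap\VV^\circ_n\bigr|.
\Eq(plan.2)
\ee
Comparing with \eqv(7.2.7), the lemma will follow if one shows that in this weighted sum $|\del x\cap\del x'\cap\VV^\circ_n|$ may be replaced by $|\del x\cap\del x'|$ up to a $(1+o(1))$ factor. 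The non-trivial contributions come from pairs with $\dist(x,x')\in\{0,2\}$; for $x=x'$ the argument above already gives $|\del x\cap\VV^\circ_n|=n(1+o(1))$, while for $\dist(x,x')=2$ the two common neighbors are in $\VV^\circ_n$ except on an event whose contribution, summed against the $Q\cdot Q$ weights, is controlled via \eqv(10.lem1.3) and Lemma \thv(8.3lem1).

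The routine parts are the algebraic rearrangements and the invocation of $\pi^\circ_n$ being uniform on $\VV^\circ_n$; the main obstacle is the error analysis for the second moment, where one must quantify, uniformly in $l,l',x,x'$ contributing nontrivially, that the common neighbors $\del x\cap\del x'$ lying inside some $C^\star_{n,l''}$ form a negligible fraction. This is handled by the sparsity of $\cup_{l''}C^\star_{n,l''}$ guaranteed by Lemma \thv(10.lem1) together with the component-size bound $|C^\star_{n,l}|=O(n/\log n)$.
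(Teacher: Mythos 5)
Your overall line of argument is the same as the paper's: use stationarity to reduce each moment to a one-step expectation, expand $\bar h^{u}_n$ via \eqv(7.1.1), use that $p_n(y,x)=1/n$ for $y\in\VV^\circ_n$ adjacent to $x\in C^\star_{n,l}$ (since then $y\in N^\star_n$, so $H_n(y)=0$), and invoke \eqv(10.lem1.7) and \eqv(4.prop2.0) to extract the $(1+o(1))$ factor. That matches the paper's proof of \eqv(7.lem3.1), and your identity \eqv(plan.1) is exactly the paper's \eqv(7.lem3.5).

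However, the way you justify the geometric factor $\bigl|\del C^\star_{n,l}\cap\del x\cap\VV^\circ_n\bigr|=n(1+o(1))$ is not sound. You argue that $\del C^\star_{n,l}\setminus\VV^\circ_n\subseteq\cup_{l'\neq l}C^\star_{n,l'}$ and then pass to a \emph{global density} bound, $|\cup_{l} C^\star_{n,l}|/2^n\leq n^{-2c_\star+1}(1+o(1))$, to conclude that "the fraction of neighbors lying outside $\VV^\circ_n$ is $o(1)$." A small global density of $\cup_l C^\star_{n,l}$ does not control the \emph{local} number of neighbors of a particular $x$ that fall in $\cup_l C^\star_{n,l}$; this step is a non-sequitur. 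What saves the conclusion is a much simpler structural fact that you stop just short of: the $C^\star_{n,l}$ are the \emph{maximal} connected components of the occupied set, so no $z\in\del C^\star_{n,l}$ can lie in any $C^\star_{n,l'}$ (otherwise the two components would be connected through $z$), whence $\del C^\star_{n,l}\subset\VV^\circ_n$ exactly and the intersection with $\VV^\circ_n$ in \eqv(plan.1) is vacuous — precisely why the paper writes $|\del x\cap\del C^\star_{n,l}|$ with no further correction. The same objection applies to your treatment of the second moment: the common neighbors $\del x\cap\del x'$ lying inside some $C^\star_{n,l''}$ can only occur when $l=l'$ and the common neighbor is in $C^\star_{n,l}$ itself, and their contribution relative to the diagonal $x=x'$ terms (of size $n(1-O(1/\log n))$) is $O(|C^\star_{n,l}|/n)=O(1/\log n)=o(1)$ by \eqv(10.lem1.4); Lemma \thv(8.3lem1), which is about occupation of level sets $V_n(\rho)$, is neither needed nor the right tool here. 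So: correct destination, same route as the paper, but the step controlling the boundary count must be replaced by the maximality/disjointness observation rather than a density argument.
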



The main theorem of this section controls
 the fluctuations of  $\bar\nu_n^{{{\scriptscriptstyle{J^\circ_n}},t}}$
around its mean value and provides an upper bound on $\bar\s_n^{{{\scriptscriptstyle{J^\circ_n}},t}}$ in terms of the random (in the environment) quantities $\bar\nu^\circ_n$, $\bar\s^\circ_n$, and $\bar\s^{=}_n$.

\begin{theorem}
\TH(7.theo2) 
Assume that $c_{\star}>3$.
Let $\bar\rho^\circ_n>0$ be a decreasing sequence satisfying $\bar\rho^\circ_n\downarrow 0$ as $n\uparrow\infty$.
There exists a sequence of subsets $\overline\O^{\scriptscriptstyle{\textsf{EG}}}_{n}\subset\O$ with
$
\P\bigl[\bigl(\overline\O^{\scriptscriptstyle{\textsf{EG}}}_{n}\bigr)^c\bigr]
<
{2^6\ell^\circ_n}/({\bar\rho^\circ_n na_n})
$,
and such that on $\overline\O^{\scriptscriptstyle{\textsf{EG}}}_{n}$ the following holds for all large enough $n$: 
for all $t>0$, all $u>0$, and all $\e>0$,
\smallskip
\be
P^\circ_{\pi^\circ_n}\left(\left|
\bar\nu_n^{{{\scriptscriptstyle{J^\circ_n}},t}}(u,\infty)
-
E^\circ_{\pi^\circ_n}\bigl[\bar\nu_n^{{{\scriptscriptstyle{J^\circ_n}},t}}(u,\infty)\bigr]
\right|\geq\e\right)
\leq
\e^{-2}[C_3 t\Theta_{n,3}(u)+ t^2\Theta_{n,4}(u)]
\Eq(7.theo2.1) 
\ee
\smallskip
for some constant $0<C_3<\infty$ and where, for $\varsigma^{\neq}_{n}(u)$ as in Lemma \thv(7.lem9),
\be
\Theta_{n,3}(u)\equiv 
\bar\s^\circ_n(u,\infty)
+
\bar\s^{=}_n(u,\infty)
+
\bar\rho^\circ_n\left[\varsigma^{\neq}_{n}(u)\right]^2,
\Eq(7.theo2.2) 
\ee
\be
\Theta_{n,4}(u)\equiv 
2^{-n}\left[\bar\nu^\circ_n(u,\infty)\right]^2.
\Eq(7.theo2.3) 
\ee
Moreover, for all $t>0$, all $u>0$, and all $\e'>0$,
\be
P^\circ_{\pi^\circ_n}\left(
\bar\s_n^{{{\scriptscriptstyle{J^\circ_n}},t}}(u,\infty)
\geq\e'\right)
\leq
\frac{t}{\e'}(1+o(1))\bar\s^\circ_n(u,\infty).
\Eq(7.theo2.4) 
\ee
\end{theorem}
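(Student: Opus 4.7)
The proof will follow the scheme of Theorem \thv(6.theo2) under the dictionary: $\bar h^u_n$ replaces $h^u_n$, the exit-time tail probabilities $Q_{n,l}^u$ replace the Boltzmann factors $e^{-uc_n\l_n(x)}$, and since the support of $\bar h^u_n$ is $\cup_{1\le l\le L^\star}\del C^\star_{n,l}$ rather than $I^\star_n$, the relevant mean-local-time input will be part (ii) of Proposition \thv(4.prop6) rather than part (i).

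The bound \eqv(7.theo2.4) will follow at once from a first-order Tchebychev inequality and the mean formula \eqv(7.lem3.2):
\[
P^\circ_{\pi^\circ_n}\bigl(\bar\s_n^{{{\scriptscriptstyle{J^\circ_n}},t}}(u,\infty)\ge\e'\bigr)\le(\e')^{-1}E^\circ_{\pi^\circ_n}\bigl[\bar\s_n^{{{\scriptscriptstyle{J^\circ_n}},t}}(u,\infty)\bigr]=(\e')^{-1}(1+o(1))(k^\circ_n(t)/a_n)\bar\s^\circ_n(u,\infty),
\]
and $k^\circ_n(t)/a_n\to t$ as $n\uparrow\infty$ yields the claim.

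For \eqv(7.theo2.1) I will apply a second-order Tchebychev inequality together with \eqv(7.lem3.1) to produce the upper bound
\[
\e^{-2}\sum_{x,y\in\VV^\circ_n}\bar h^u_n(x)\bar h^u_n(y)\sum_{0\le i,j\le k^\circ_n(t)-1}\Delta_{ij}(x,y),
\]
where $\Delta_{ij}$ is as in the proof of Theorem \thv(6.theo2), and split the inner double sum using $\ell^\circ_n$ into the three ranges $|i-j|\ge\ell^\circ_n$, $i=j$, and $0<|i-j|<\ell^\circ_n$. The long-range piece is dispatched by Proposition \thv(4.prop1) and yields the $\Theta_{n,4}$ contribution $2^{-n}[\bar\nu^\circ_n(u,\infty)]^2$. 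The diagonal piece reduces, via $p_n(y,x)=1/n$ for $y\in\VV^\circ_n$ and $x\in\del y$, to $(k^\circ_n(t)/a_n)(1+o(1))\bar\s^\circ_n(u,\infty)$, which is the first summand of $\Theta_{n,3}$.

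The main obstacle lies in the short-range piece. Mimicking \eqv(6.theo2.16)--\eqv(6.theo2.18), reversibility converts it into an expression of the form
\[
\frac{2k^\circ_n(t)}{|\VV^\circ_n|}\sum_{l,l'}\sum_{z\in\del C^\star_{n,l}}\sum_{z'\in\del C^\star_{n,l'}}\bar h^u_n(z)\bar h^u_n(z')\sum_{k=1}^{\ell^\circ_n-1}p_n^{\circ,k+2}(z,z'),
\]
and I will split according to whether $l=l'$ or $l\ne l'$. When $l=l'$, Proposition \thv(4.prop6), (ii) bounds the local-time factor uniformly by $C'_\circ/n$, and unwrapping $\bar h^u_n$ via Proposition \thv(4.prop4) yields exactly the second summand $\bar\s^=_n(u,\infty)$ of $\Theta_{n,3}$; the $n^{-1}$ in the definition of $\bar\s^=_n$ is the fingerprint of this local-time bound. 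When $l\ne l'$, the random variables $Q_{n,l}^u(\cdot)$ and $Q_{n,l'}^u(\cdot)$ depend on disjoint subsets of the environment (by the independence statement in Proposition \thv(9.prop1) and the disjointness of the components themselves), so a first-order Tchebychev in the environment --- the direct analogue of Lemma \thv(6.lem1) --- compares the $l\ne l'$ contribution with $\bar\rho^\circ_n[\varsigma^{\neq}_n(u)]^2$ at the price of an exceptional set of $\P$-measure at most $2^6\ell^\circ_n/(\bar\rho^\circ_n n a_n)$, whose complement defines $\overline\O^{\scriptscriptstyle\textsf{EG}}_n$. The genuinely delicate step will be to verify that the algebra passing from $\bar h^u_n$ back to $Q_{n,l}^u$ via Proposition \thv(4.prop4) regenerates precisely the normalisation $n^{-2}|\del z\cap\del z'|$ appearing in \eqv(7.2.7), so that the diagonal and short-range estimates combine into the compact form of $\Theta_{n,3}$ stated in the theorem.
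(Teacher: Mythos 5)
Your plan matches the paper's proof line by line: (7.theo2.4) by a first-order Tchebychev inequality and the mean formula (7.lem3.2); (7.theo2.1) by a second-order Tchebychev inequality with the same $\{|i-j|\geq\ell^\circ_n\}$, $\{i=j\}$, $\{0<|i-j|<\ell^\circ_n\}$ decomposition as in Theorem \thv(6.theo2), the long-range piece giving $\Theta_{n,4}$ via Proposition \thv(4.prop1), the diagonal giving $\bar\s^\circ_n$, and the short-range piece split according to $l=l'$ (controlled by Proposition \thv(4.prop6), (ii), yielding $\bar\s^{=}_n$) versus $l\neq l'$ (controlled by the environment-Tchebychev argument — the paper's Lemma \thv(7.lem9) — exactly as you describe, with independence of the $Q_{n,l}^u$ across disjoint components). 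Two small imprecisions, neither of which damages the argument: $\bar h^u_n$ is defined directly in terms of the $J_n$-transitions $p_n$ and the tail probabilities $Q^u_{n,l}$ (see \eqv(7.1.1)), so no appeal to Proposition \thv(4.prop4) is needed to ``unwrap'' it, and because the internal factor is $p_n$ rather than $p^\circ_n$ the reversibility shift of \eqv(6.theo2.16)--\eqv(6.theo2.17) does not apply, so the short-range sum carries $p_n^{\circ,m}$ with $1\leq m\leq\ell^\circ_n-1$, not $p_n^{\circ,k+2}$ — but since Proposition \thv(4.prop6), (ii) bounds the whole sum by $C'_\circ/n$ the offset is immaterial.
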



We now prove, in this order, Lemma \thv(7.lem3) and Theorem \thv(7.theo2).

\begin{proof}[Proof of Lemma \thv(7.lem3)] By \eqv(7.1.1), \eqv(7.2.2), and \eqv(7.2.9), 
\bea
\Eq(7.lem3.3) 
E^\circ_{\pi^\circ_n}\bigl[\bar\nu_n^{{{\scriptscriptstyle{J^\circ_n}},t}}(u,\infty)\bigr]
&=&
\textstyle
k^{\circ}_n(t)\sum_{y\in\VV^\circ_n}\pi^{\circ}_n(y)\bar h^{u}_n(y)
\\
\Eq(7.lem3.4) 
&=&
\textstyle
k^{\circ}_n(t)\sum_{y\in\VV^\circ_n}\pi^{\circ}_n(y)
\sum_{1\leq l\leq L^{\star}}\sum_{x\in C^{\star}_{n,l}} p_n(y,x)Q_{n,l}^u(x),\quad\quad
\eea
and since both $x$ and $y$ belong to $\VV^\circ_n$, $p_n(y,x)=n^{-1}$ if $\dist(x,y)=1$ and  is zero else. Thus
$\sum_{y\in\VV^\circ_n}p_n(y,x)=n^{-1}|\del x\cap\del C^{\star}_{n,l}|$ and
\be
\textstyle
\Eq(7.lem3.5) 
E^\circ_{\pi^\circ_n}\bigl[\bar\nu_n^{{{\scriptscriptstyle{J^\circ_n}},t}}(u,\infty)\bigr]
=({k^{\circ}_n(t)}/{|\VV^\circ_n|})
\sum_{1\leq l\leq L^{\star}}\sum_{x\in C^{\star}_{n,l}}
n^{-1}|\del x\cap\del C^{\star}_{n,l}|Q_{n,l}^u(x).
\ee
The claim of \eqv(7.lem3.1) now follows from \eqv(10.lem1.7) and \eqv(4.prop2.0).
%
Eq.~\eqv(7.lem3.2)  is proved in a similar way. We skip the details.
\end{proof}


\begin{proof}[Proof of Theorem \thv(7.theo2) ] A first order Tchebychev inequality and \eqv(7.lem3.2) readily yield
\eqv(7.theo2.4). As in Theorem \thv(6.theo2), proving concentration of 
$\bar\nu_n^{{{\scriptscriptstyle{J^\circ_n}},t}}(u,\infty)$ is more involved. 
Since \eqv(7.2.2) is nothing but \eqv(6.2.2) with $h^{u}_n$ replaced by $\bar h^{u}_n$,
the proof naturally
%
%
starts in the same way as the proof of \eqv(6.theo2.1) of Theorem \thv(6.theo2). More precisely, 
substituting  $\bar h^{u}_n$ for $h^{u}_n$ in the definition \eqv(6.theo2.14) of the quantities $I_i^{(2)}$, $1\leq i\leq 3$,  we get that for all $\e>0$,
\be
P^\circ_{\pi^\circ_n}\left(\left|
\bar\nu_n^{{{\scriptscriptstyle{J^\circ_n}},t}}(u,\infty)
-
E^\circ_{\pi^\circ_n}\bigl[\bar\nu_n^{{{\scriptscriptstyle{J^\circ_n}},t}}(u,\infty)\bigr]
\right|\geq\e\right)
\leq
\e^{-2}[I_1^{(2)}+I_2^{(2)}+I_3^{(2)}].
\Eq(7.theo2.5) 
\ee
We are thus left to bound $I_i^{(2)}$, $1\leq i\leq 3$.
By \eqv(7.2.2) and \eqv(7.lem3.1),  
\be
\Eq(7.theo2.6)
I_1^{(2)}
=
2^{-n}
\left[E^\circ_{\pi^\circ_n}\bigl[\bar\nu_n^{{{\scriptscriptstyle{J^\circ_n}},t}}(u,\infty)\bigr]\right]^2
\leq 
2^{-n}\left({k^\circ_n(t)}/{a_n}\right)^2
\left[\bar\nu^\circ_n(u,\infty)\right]^2,
\ee
and by \eqv(7.2.3) and \eqv(7.lem3.2),
\be
\Eq(7.theo2.7)
I_2^{(2)}\leq 
E^\circ_{\pi^\circ_n}\bigl[\bar\s_n^{{{\scriptscriptstyle{J^\circ_n}},t}}(u,\infty)\bigr] 
\leq ({k^\circ_n(t)}/{a_n})\bar\s^\circ_n(u,\infty).
\ee
The term $I_3^{(2)}$ is a little more involved. We may write it 
in the form
\be
\Eq(7.theo2.8)
I_3^{(2)}
\equiv
2 ({k^\circ_n(t)}/{a_n})({a_n}/{|\VV^\circ_n|})
\sum_{m=1}^{\ell^\circ_n-1}[I_{1,m}^{(3)}+I_{2,m}^{(3)}],
\ee
where, setting
$
f_n^{\circ,m}(x,x';y,y')\equiv p_n(y,x)p_n(y',x')p_n^{\circ,m}(y,y')
$, 
\bea
\Eq(7.theo2.10)
I_{1,m}^{(3)}
\hspace{-6pt}&\equiv&\hspace{-6pt}
\sum_{1\leq l\leq L^{\star}}\sum_{x\in C^{\star}_{n,l}} \sum_{x'\in C^{\star}_{n,l}}
\sum_{y\in\del C^{\star}_{n,l}}\sum_{y'\in\del C^{\star}_{n,l}}
 Q_{n,l}^u(x)Q_{n,l}^u(x')f_n^{\circ,m}(x,x';y,y'),
\\
\Eq(7.theo2.11)
I_{2,m}^{(3)}
\hspace{-6pt}&\equiv&\hspace{-6pt}
\sum_{1\leq l, l'\leq L^{\star}:l\neq l'}\sum_{x\in C^{\star}_{n,l}} \sum_{x'\in C^{\star}_{n,l'}} 
\sum_{y\in\del C^{\star}_{n,l}}\sum_{y'\in\del C^{\star}_{n,l}}
Q_{n,l}^u(x)Q_{n,l'}^u(x')f_n^{\circ,m}(x,x';y,y').\quad
\eea
%
%

Consider $I_{1,m}^{(3)}$ first. It follows from
 Proposition \thv(4.prop6), (ii) that
on $\O^{\scriptscriptstyle{\textsf{SRW}}}$, for all but a finite number of indices $n$,
for all $x,x'\in C^{\star}_{n,l}$,
\bea
\sum_{m=1}^{\ell^\circ_n-1}\sum_{y\in\del C^{\star}_{n,l}}\sum_{y'\in\del C^{\star}_{n,l}}f_n^{\circ,m}(x,x';y,y')
\leq
\frac{C'_\circ}{n}\sum_{y\in\del C^{\star}_{n,l}}\sum_{y'\in\del C^{\star}_{n,l}}p_n(y,x)p_n(y',x')
\leq
\frac{C'_\circ}{n}.\quad
\Eq(7.theo2.13)
\eea
(Here we used that $p_n(y,x)=p_n(y,x)$ if both $x$ and $y$ belong to $\VV^\circ_n$.)
From this and Proposition \thv(4.prop2) we readily get that
 if  $c_{\star}>2$ then on $\O^{\scriptscriptstyle{\textsf{SRW}}}\cap \O^{\star}$, 
 for large enough $n$,
\be
\Eq(7.theo2.14) 
\textstyle
({a_n}/{|\VV^\circ_n|})
\sum_{m=1}^{\ell^\circ_n-1}I_{1,m}^{(3)}
\leq 
C'_\circ (1+o(1))\bar\s^{=}_n(u,\infty).
\ee


The next lemma bounds the contribution to \eqv(7.theo2.8)
coming from $I_{2,m}^{(3)}$. Its proof is given in Subsection \thv(7.3.3).

\begin{lemma}
\TH(7.lem9) 
Assume that $c_{\star}>2$.
Let $\bar\rho^\circ_n>0$ be a decreasing sequence satisfying $\bar\rho^\circ_n\downarrow 0$ as $n\uparrow\infty$.
There exists a sequence of subsets $\overline\O_{2,n}^{(3)}\subset\O$ with
$
\P\bigl(\overline\O_{2,n}^{(3)}\bigr)\geq 1-{2^6\ell^\circ_n}/({\bar\rho^\circ_n na_n})
$
such that on $\overline\O_{2,n}^{(3)}\cap  \O^{\star}$, for all $n$ large enough,
\be
\textstyle 
({a_n}/{|\VV^\circ_n|})
\sum_{m=1}^{\ell^\circ_n-1}I_{2,m}^{(3)}<
\bar\rho^\circ_n\left[\varsigma^{\neq}_{n}(u)\right]^2.
\Eq(7.lem9.2) 
\ee
where $\varsigma^{\neq}_{n}(u)$ is a positive decreasing function of $u>0$ that satisfies
\be
\lim_{n\rightarrow\infty}\varsigma^{\neq}_{n}(u)
=\nu^\dagger(u,\infty),\quad\forall u>0.
\Eq(7.lem9.13') 
\ee
\end{lemma}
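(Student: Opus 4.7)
The plan is to parallel the proof of Lemma \thv(6.lem1), adapting the independence argument to the setting of exit-time distributions from the distinct components $C^{\star}_{n,l}$. First I would collapse the sum over times $m$ using the mean local time estimate. Since $p_n(y,x) = 1/n$ when $\dist(x,y)=1$ with both endpoints in $\VV^\circ_n$, and by an extension of Proposition \thv(4.prop6)(ii) to pairs $y,y' \in \VV^\circ_n$ belonging to boundaries of \emph{distinct} components (which follows by the same SRW-based argument and gives $\sum_{m=1}^{\ell^\circ_n-1} p_n^{\circ,m}(y,y') \leq C/n$ on $\O^{\scriptscriptstyle{\textsf{SRW}}}\cap\O^{\star}$), one reduces
$$
\textstyle
(a_n/|\VV^\circ_n|)\sum_{m=1}^{\ell^\circ_n-1} I_{2,m}^{(3)}
\leq
(C a_n/(n|\VV^\circ_n|))\sum_{l\neq l'} M_{n,l}(u)\,M_{n,l'}(u),
$$
where $M_{n,l}(u) \equiv \sum_{x\in C^{\star}_{n,l}} Q_{n,l}^u(x)\,m^{\star}_{n,l}(x)$ with $m^{\star}_{n,l}$ as in \eqv(4.prop4.0).

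The key point is the \emph{independence} step. For $l\neq l'$, the components $C^{\star}_{n,l}$ and $C^{\star}_{n,l'}$ are disjoint by construction; moreover, since the Hamiltonian vanishes identically on $\del C^{\star}_{n,l}\subset\VV^\circ_n$, the absorption distribution $Q_{n,l}^u(x)$ is a measurable function of the Gaussian environment restricted to $C^{\star}_{n,l}$ alone. Consequently $M_{n,l}(u)$ and $M_{n,l'}(u)$ are \emph{independent} random variables under $\P$, and the $(l,l')$-symmetry yields
$$
\textstyle
\E\bigl[\sum_{l\neq l'} M_{n,l}(u) M_{n,l'}(u)\bigr]
\leq
\bigl(\sum_{l}\E M_{n,l}(u)\bigr)^2.
$$
One then defines $\varsigma^{\neq}_n(u)$ to be $(a_n/|\VV^\circ_n|)\sum_l \E M_{n,l}(u)$, which coincides (up to the boundary-weighting factor already controlled in \eqv(10.lem1.7)) with $\E\bar\nu^\circ_n(u,\infty)$ in the sense of \eqv(7.lem3.1). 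A first-order Tchebychev inequality applied to $\sum_{l\neq l'} M_{n,l}(u) M_{n,l'}(u)$, combined with the estimate $|\VV^\circ_n|\sim 2^n$ from \eqv(4.prop2.0), then produces the claimed bound $\bar\rho^\circ_n [\varsigma^{\neq}_n(u)]^2$ with failure probability $\leq 2^6\ell^\circ_n/(\bar\rho^\circ_n n a_n)$, which defines $\overline{\O}^{(3)}_{2,n}$.

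The convergence \eqv(7.lem9.13') is then the content of a law of large numbers for $\varsigma^{\neq}_n(u)$ parallel to Proposition \thv(6.prop2). Using the explicit form of $Q_{n,l}^u(x)$ given by Proposition \thv(5.prop1)(i) for two-vertex components, together with the exponential-tail estimate \eqv(5.prop1.0') for larger components and the bound \eqv(10.lem1.4) on $|C^{\star}_{n,l}|$, one shows that the dominant contribution to $\sum_l \E M_{n,l}(u)$ comes from pairs $\{x,y\}\in \EE_n$ with $\min(w_n(x),w_n(y)) \sim b_n$; a direct Gaussian computation then yields $\varsigma^{\neq}_n(u) \to \nu^\dagger(u,\infty)$ with the stable exponent $2\alpha(\varepsilon/2)$ of \eqv(1.theo2.M2).

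The main obstacle is this last convergence step, since the exponent $2\alpha(\varepsilon/2)$ (rather than $\alpha(\varepsilon)$) arises only from the \emph{joint} Gaussian extreme-value statistics of \emph{pairs} of adjacent deep traps---a genuinely new phenomenon compared to the \textsc{fecp} analysis, where a single-vertex tail sufficed. Steps~1--2 are routine adaptations of Lemma \thv(6.lem1), but handling the joint extremes rigorously, with almost-sure control in the random environment, requires the careful cluster-size bookkeeping of Section \thv(3.3) combined with Bennett-type concentration à la \eqv(8.3lem1.12)--\eqv(8.3lem1.13) to push expectations to almost-sure statements.
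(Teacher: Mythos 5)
There is a genuine gap in Step~1 of your proposal, and it is fatal to the Tchebychev bound you aim for. You collapse the sum over $m$ first, using (an extension of) the mean local-time bound $\sum_{m=1}^{\ell^\circ_n-1}p_n^{\circ,m}(y,y')\leq C/n$ for each fixed pair $(y,y')$, and only then sum over $y,y'$ (producing the $M_{n,l}\cdot M_{n,l'}$ expression). But the set of relevant $y'$ — boundary vertices of all components $C^\star_{n,l'}$ with $l'\neq l$ — has cardinality of order $2^n n^{-2c_\star+2}$, whereas for almost all such $y'$ the quantity $\sum_m p_n^{\circ,m}(y,y')$ is in fact exponentially small (two generic boundary vertices are at Hamming distance $\sim n/2$, unreachable in $\ell^\circ_n=n^3$ steps). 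Summing the uniform $C/n$ bound over all these $y'$ therefore overcounts by a factor of order $2^n/\ell^\circ_n$. Concretely: your expectation bound evaluates to
\[
\textstyle
\E\Bigl[({a_n}/{|\VV^\circ_n|})\sum_{m} I_{2,m}^{(3)}\Bigr]
\lesssim
\frac{Ca_n}{n|\VV^\circ_n|}\Bigl(\sum_l \E M_{n,l}\Bigr)^2
\sim
\frac{Ca_n}{n\,2^n}\Bigl(\frac{2^n\nu^\dagger}{a_n}\Bigr)^2
=
\frac{C\,2^n}{na_n}\,[\nu^\dagger]^2,
\]
and the resulting Tchebychev error probability $\sim 2^n/(na_n\bar\rho^\circ_n)$ \emph{diverges} (since $a_n\sim 2^{\varepsilon n}$ with $\varepsilon<1$), so you cannot conclude.

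The paper avoids this by collapsing in the opposite order: for each fixed $m$, it sums out $y'$ first via the identity $\sum_{y'\in\VV^\circ_n}p_n^{\circ,m}(y,y')=1$ (this is \eqv(7.lem9.9), and the same device appears in the proof of Lemma~\thv(6.lem1)), and then the sum over $m$ contributes only a factor $\ell^\circ_n$. This yields the expectation bound \eqv(7.lem9.11), of order $\ell^\circ_n/(na_n)[W_{n,3}]^2$, which is summable in $n$ and hence gives the claimed failure probability $\leq 2^6\ell^\circ_n/(\bar\rho^\circ_n na_n)$. The sharp local-time bound (Proposition~\thv(4.prop6)) is genuinely needed for $I_{1,m}^{(3)}$ — where both boundary vertices lie in the \emph{same} small component and hence are close, so $C/n$ is not wasteful — but for $I_{2,m}^{(3)}$ the naive normalization $\sum_{y'}p^{\circ,m}_n(y,y')=1$ is both simpler and decisively stronger.

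Two smaller remarks. First, the claimed independence of $M_{n,l}$ and $M_{n,l'}$ as random variables indexed by the (random) component labels $l,l'$ is not rigorous as stated; the paper resolves this by summing over deterministic combinatorial pairs $(\CC,\CC')\in\GG_k\times\GG_{k'}$ with the indicator $\phi(\CC,\CC')$ that both are components, and using independence of the Bernoulli occupation variables and of $Q_{n,\CC}^u,Q_{n,\CC'}^u$ for disjoint $\CC,\CC'$ (this is \eqv(7.Lem7.11)). Second, your identification of the limit $\varsigma^{\neq}_n(u)\to\nu^\dagger(u,\infty)$ via the joint extreme tail of adjacent pairs is correct and matches the paper: $\varsigma^{\neq}_n(u)$ is taken to be $W_{n,3}(u)$ from \eqv(7.Lem7.14), whose convergence is exactly Lemma~\thv(7.lem6).
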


Equipped with \thv(7.theo2.14) and Lemma \thv(7.lem9) 
we conclude that under the assumptions
and with the notations of Proposition \thv(4.prop6), Lemma \thv(7.lem9), and Proposition \thv(4.prop2),
on
$
\O^{\scriptscriptstyle{\textsf{SRW}}}\cap\overline\O_{2,n}^{(3)}\cap  \O^{\star}
$, 
for all but a finite number of indices $n$,
\be
\Eq(7.theo2.15)
I_3^{(2)}\leq 2 \frac{k^\circ_n(t)}{a_n}\left(
C''_\circ \bar\s^{=}_n(u,\infty)+
\bar\rho^\circ_n\left[\varsigma^{\neq}_{n}(u)\right]^2
\right)
\ee
for some constant $0<C''_\circ<\infty$. 
Inserting the bounds \eqv(7.theo2.6), \eqv(7.theo2.7), and \eqv(7.theo2.15) in \eqv(7.theo2.5)
then yields \eqv(7.theo2.1)-\eqv(7.theo2.3).
The proof of Theorem \thv(7.theo2) is done. \end{proof}

\subsection{Almost sure convergence of $\bar\nu^\circ_n$, $\bar\s^\circ_n$, and $\bar\s^{=}_n$.}
 \label{7.3}

As in Subsection \thv(6.3) our next step consists in 
proving strong laws of large numbers for the random (but now chain independant) quantities 
$\bar\nu^\circ_n$, $\bar\s^\circ_n$, and $\bar\s^{=}_n$ defined in \eqv(7.2.6), \eqv(7.2.7), and \eqv(7.2.8), respectively.
However, the complexity of these objects
(note in particular that they are sums of correlated random variables) makes this task much more arduous than in \textsc{fecp}.




\begin{proposition}
{\TH(7.prop1)}
Given  $0<\varepsilon<1$ let the sequences $a_n$ and $b_n$ be defined through
\be
\lim_{n\rightarrow\infty}\frac{\log a_n}{n\log 2}=\varepsilon,\quad \sqrt{na_n}\P(w_n(x)\geq (n-1) b_n)\sim 1,
\Eq(7.prop1.0new)
\ee
Assume that $c_{\star}>2$ and let $\nu^\dagger$ be as in \eqv(1.theo2.M2). Then, there exists a subset 
$\overline\O^{\scriptscriptstyle{\textsf{LLN}}}\subset\O$ with 
$\P(\overline\O^{\scriptscriptstyle{\textsf{LLN}}})=1$ such that, 
on $\overline\O^{\scriptscriptstyle{\textsf{LLN}}}$, 
the following holds: 
for all $u>0$,
\bea
\Eq(7.prop1.1)
\lim_{n\rightarrow\infty}\bar\nu^\circ_n(u,\infty)\hspace{-6pt}&=&\hspace{-6pt}\nu^\dagger(u,\infty),
\Eq(7.prop1.2)
\\
\lim_{n\rightarrow\infty}n\bar\s^\circ_n(u,\infty)\hspace{-6pt}&=& \hspace{-6pt}
\lim_{n\rightarrow\infty}n\bar\s^{=}_n(u,\infty)= 2\nu^\dagger(2u,\infty).
\eea
\end{proposition}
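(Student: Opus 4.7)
The plan is to reduce each of the three sums to sums over connected pairs of vertices of $\VV_n$. By Lemma~\thv(8.3lem1) and the summation bound \eqv(8.3lem1.sum), on $\O_3$ the contribution of components $C^\star_{n,l}$ of size $k\geq 3$ to the totals $\sum_l|C^\star_{n,l}|$ is smaller by a factor $n^{-(c_\star-1)}$ than the size-$2$ contribution; combined with the uniform bound $Q^u_{n,l}(x)\leq 1$ and the much sharper exponential tail of Proposition~\thv(5.prop1)(ii) for $k\geq 3$, this reduces each of $\bar\nu^\circ_n$, $\bar\s^\circ_n$ and $\bar\s^=_n$ to an $o(1)$ perturbation of a sum ranging only over isolated size-$2$ components. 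For such a component $C^\star_{n,l}=\{x,y\}$ we have $\varrho_{n,l}(0)=\min(w_n(x),w_n(y))$ and, by Proposition~\thv(5.prop1)(i), the identity $Q^u_{n,l}(x)=Q^u_{n,l}(y)=(V_{n,l}/(1+V_{n,l}))^{\lfloor b_n u\rfloor}$ with $V_{n,l}=\varrho_{n,l}(0)/(n-1)$. Since the relevant $V_{n,l}$ are of order $b_n\gg 1$, we may replace $Q^u_{n,l}(x)$ by $\exp(-u/\tilde\gamma_{n,l})$ with $\tilde\gamma_{n,l}=V_{n,l}/b_n$, up to a multiplicative $o(1)$ error uniform in $u$ on compacts of $(0,\infty)$.

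The next step is to compute the expectations of the reduced sums in the random environment. The indicator that $\{x,y\}$ is an isolated pair of $V_n(\rho^\star_n)$ decouples from $\min(w_n(x),w_n(y))$ up to a factor $(1-n^{-c_\star})^{2(n-1)}=1+o(1)$. Using the definition \eqv(7.prop1.0new) of $b_n$, which enforces $\sqrt{na_n}\P(w_n(x)\geq (n-1)b_n)\sim 1$, a Gaussian tail computation along the lines of Proposition~\thv(6.prop2) yields $\E\bar\nu^\circ_n(u,\infty)\to\nu^\dagger(u,\infty)$; the same calculation applied to the square of the pair-wise sum gives $\E[n\bar\s^=_n(u,\infty)]\to 2\nu^\dagger(2u,\infty)$ via the identity $[2Q^u_{n,l}]^2=4Q^{2u}_{n,l}$. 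For $\bar\s^\circ_n$ the diagonal contribution ($x=x'$, $l=l'$) produces $\bar\nu^\circ_n(2u,\infty)\to\nu^\dagger(2u,\infty)$, while the off-diagonal part is restricted to pairs of distinct isolated edges whose selected endpoints lie at hypercube distance $2$ (their two common neighbours being forced outside $V_n(\rho^\star_n)$); a separate combinatorial count, using independence of the environment at disjoint edges and a second application of the Gaussian tail asymptotics, shows that this off-diagonal part converges to the same $\nu^\dagger(2u,\infty)$, producing the announced factor~$2$.

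Concentration around these expectations is obtained via a second-moment argument. For $\bar\nu^\circ_n$ and $\bar\s^=_n$ one decomposes the sum over edges into $\OO(n)$ sub-collections of mutually disjoint edges, as in the decomposition $\GG_2=\cup_{j,i}\GG_2^{j,i}$ used inside the proof of Lemma~\thv(8.3lem1), and applies Bennett's inequality \eqv(7.lem5.7) on each; this yields deviation probabilities of order $\exp(-cn)$, summable in $n$, so Borel--Cantelli gives almost-sure convergence at each rational $u>0$, and the monotonicity in $u$ of each of the three functions extends it to all $u>0$. The main obstacle is the concentration for $\bar\s^\circ_n$: its off-diagonal part is a sum over \emph{pairs} of isolated edges coupled through a shared distance-$2$ configuration, so the simple edge-independence decomposition no longer applies. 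Handling this will require a finer combinatorial partition of the pair space into independent sub-families, together with the rarity estimates of Lemma~\thv(8.3lem1) on the number of pairs of nearby isolated size-$2$ components, in order to keep the tail probability summable.
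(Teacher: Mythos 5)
Your reduction to size-two components, the use of Proposition~\thv(5.prop1)(i), the computation of $\E\bar\nu^\circ_n$ via Gaussian tails (Lemma~\thv(7.lem6)), and the concentration argument by partitioning $\GG_2$ into independent sub-families and applying Bennett's bound are all essentially the paper's strategy (Lemmata~\thv(7.lem4), \thv(7.lem5), \thv(7.lem7)). The treatment of $\bar\nu^\circ_n$ and $\bar\s^{=}_n$ is sound. The genuine gap is in your accounting for $\bar\s^\circ_n$, and it causes you to misidentify which step is delicate.

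You claim that the off-diagonal ($l\neq l'$) contribution to $n\bar\s^\circ_n$ converges to $\nu^\dagger(2u,\infty)$ and that this is the source of the factor $2$. This is wrong. In the decomposition $\bar\s^\circ_n=\bar\s^{=}_n+\bar\s^{\neq}_n$ used in Lemma~\thv(7.lem7), the cross-component term $\bar\s^{\neq}_n$ is shown in \eqv(7.Lem7.7) to satisfy $\E[\bar\s^{\neq}_n]\leq 2^8 a_n^{-1}[W_{n,3}(u)]^2$ with $W_{n,3}(u)\to\nu^\dagger(u,\infty)$, whence $n\E[\bar\s^{\neq}_n]\to 0$ because $a_n$ grows like $2^{n\varepsilon}$ (\eqv(7.Lem7.12)). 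A direct count confirms this: two distinct isolated size-$2$ components at hypercube distance $2$ whose entries both contribute $\Theta(1)$ to $Q^u$ require both $\min(w_n(x),w_n(y))$ and $\min(w_n(x'),w_n(y'))$ to be of order $b_n$; by the choice of $b_n$ there are only $\sim\sqrt{na_n}$ such pairs among $\sim n2^n$ edges, so the probability that two are at distance~$2$ from one another is exponentially small in $n$. The factor $2$ in \eqv(7.prop1.2) is produced entirely by $\bar\s^{=}_n$: for a size-$2$ component with $Q^u_{n,l}(x)=Q^u_{n,l}(y)=Q^u_{n,l}$ one has $[\sum_{x}Q^u_{n,l}(x)]^2=4[Q^u_{n,l}]^2=4Q^{2u}_{n,l}$, whereas $\bar\nu^\circ_n(2u,\infty)$ only carries a factor $2$ from $\sum_x Q^{2u}_{n,l}(x)=2Q^{2u}_{n,l}$; the ratio is the announced $2$.

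This mislabeling makes the "main obstacle" you identify in your last paragraph a non-problem. Since $\bar\s^{\neq}_n\geq 0$ and $n\E[\bar\s^{\neq}_n]$ vanishes faster than any power of $n$, the almost-sure control of $n\bar\s^{\neq}_n$ requires only a one-sided first-order Tchebychev bound plus Borel--Cantelli, exactly as in \eqv(7.Lem7.13) with $L_3=n^{-2}$; no "finer combinatorial partition of the pair space into independent sub-families" is needed. Conversely, the second-moment bookkeeping that does need care -- the decomposition of $\GG_2$ to obtain summable deviation probabilities for $\bar\nu^\circ_n$ and $\bar\s^{=}_n$, and the bound on the tail $\sum_{k\geq 3}\bar\nu^{\circ,(k)}_n$ via Proposition~\thv(5.prop1)(ii) and \eqv(8.3lem1.sum) -- you treat as routine, whereas it is where the actual work of Lemmata~\thv(7.lem5) and \thv(7.lem8) lies.
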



To prove Proposition \thv(7.prop1)  we first establish control over the mean values of $\bar\nu^\circ_n(u,\infty)$,  
$\bar\s^\circ_n(u,\infty)$, and $\bar\s^{=}_n(u,\infty)$ (see Lemmata  \thv(7.lem4) and \thv(7.lem7)), and  then prove  that these quantities concentrate around their means (in Lemmata  \thv(7.lem5), \thv(7.lem8)). 
Both these steps rely
on the following key lemma.
Given sequences $\bar a_n$, $\bar b_n$, and two distinct vertices $x,y\in\VV_n$, set
\be
v_n(u; \bar a_n, \bar b_n)
=
\textstyle
{\bar  a_n}^2
\E\left[ \exp\left(-\frac{u\bar b_n}{\min\{w_n(x),w_n(y)\}}\right)\1_{\min\{w_n(x),w_n(y)\}\geq r_n(\rho^{\star}_n)\}}\right].
\Eq(7.lem5.0) 
\ee

\begin{lemma}
\TH(7.lem6) 
If the sequences $\bar a_n$ and $\bar b_n$ satisfy
$\bar a_n\P(w_n(x)\geq  \bar b_n)\sim 1$,
$\lim_{n\rightarrow\infty}\frac{\bar a_n}{2^n}=0$,
and
$
\lim_{n\rightarrow\infty}\frac{\log \bar a_n}{n\log2}= \bar\varepsilon
$
for some $\bar\varepsilon>0$, then
\be
\Eq(7.lem6.2) 
\lim_{n\rightarrow\infty}v_n(u; \bar a_n, \bar b_n)
=u^{-2\a( \bar\varepsilon)}2\a( \bar\varepsilon)\G(2\a( \bar\varepsilon)).
\ee
\end{lemma}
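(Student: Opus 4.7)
The plan is to cast the expectation as a Stieltjes integral against the tail of the random variable $W=\min(w_n(x),w_n(y))$, change variables to isolate the scaling, and then identify the limit via Mill's ratio.

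First I would use the independence of $w_n(x)$ and $w_n(y)$ (which holds since $x\neq y$) to write $\P(W>t)=G_n(t)^2$ with $G_n(t)\equiv\P(w_n(x)\ge t)$. Writing the expectation as an integral against $-d(G_n^2)$ on $[r_n(\rho^\star_n),\infty)$, integrating by parts, and substituting $s=\bar b_n/t$ yields
\[
v_n(u;\bar a_n,\bar b_n) = \bar a_n^2\, G_n(r_n(\rho^\star_n))^2\, e^{-u\bar b_n/r_n(\rho^\star_n)} + u\int_0^{\bar b_n/r_n(\rho^\star_n)} [\bar a_n G_n(\bar b_n/s)]^2\, e^{-us}\,ds.
\]
By \eqv(A1.lem1.1) one has $\log(\bar b_n/r_n(\rho^\star_n)) \sim n\beta\sqrt{2\bar\varepsilon\log 2}$, so $e^{-u\bar b_n/r_n(\rho^\star_n)}$ is doubly-exponentially small and absorbs the at-most-exponential factor $\bar a_n^2 G_n(r_n(\rho^\star_n))^2 \le 2^{2\bar\varepsilon n}n^{-2c_\star}$; the analysis thus reduces to the integral.

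Next, writing $w_n(x)=\exp(\beta\sqrt n\, g(x))$ with $g(x)$ standard normal and $B_n\equiv \log\bar b_n/(\beta\sqrt n)$, the asymptotic expansion underlying Lemma \thv(9.lem4') applied to $\bar b_n$ (combined with the hypothesis $\bar a_n G_n(\bar b_n)\sim 1$) gives $B_n/(\beta\sqrt n)\to \alpha(\bar\varepsilon)$. Mill's ratio then yields, for each fixed $s>0$,
\[
\frac{G_n(\bar b_n/s)}{G_n(\bar b_n)} = (1+o(1))\,\frac{B_n}{B_n-\log s/(\beta\sqrt n)}\,\exp\!\Bigl\{\frac{B_n\log s}{\beta\sqrt n} - \frac{(\log s)^2}{2\beta^2 n}\Bigr\} \;\longrightarrow\; s^{\alpha(\bar\varepsilon)},
\]
so that the integrand satisfies $[\bar a_n G_n(\bar b_n/s)]^2 \to s^{2\alpha(\bar\varepsilon)}$ pointwise. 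To upgrade to convergence of the integral I would split the $s$-range into three pieces: on $s\in(0,1]$, monotonicity of $G_n$ gives $\bar a_n G_n(\bar b_n/s)\le \bar a_n G_n(\bar b_n)\le 2$ for large $n$; on $s\in[1,\exp(n^{1/4})]$, the Gaussian computation above is uniform and produces a dominating envelope $C s^{\alpha(\bar\varepsilon)+o(1)}$; on $s\ge \exp(n^{1/4})$, the trivial bound $\bar a_n^2 = e^{O(n)}$ combined with $e^{-us}\le e^{-u\exp(n^{1/4})}$ makes the tail contribution $o(1)$. Dominated convergence then gives
\[
\lim_{n\to\infty} v_n(u;\bar a_n,\bar b_n) = u\int_0^\infty s^{2\alpha(\bar\varepsilon)}\, e^{-us}\,ds = u^{-2\alpha(\bar\varepsilon)}\,\Gamma(2\alpha(\bar\varepsilon)+1) = u^{-2\alpha(\bar\varepsilon)}\,2\alpha(\bar\varepsilon)\,\Gamma(2\alpha(\bar\varepsilon)),
\]
which is precisely \eqv(7.lem6.2).

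The main obstacle is the dominated-convergence step: one needs a uniform $s^{\alpha(\bar\varepsilon)+o(1)}$ envelope for $\bar a_n G_n(\bar b_n/s)$ on a range of $s$ that can stretch all the way to $\bar b_n/r_n(\rho^\star_n)$, which is itself exponentially large in $n$. The intermediate cutoff at $\exp(n^{1/4})$ is what makes the quadratic Gaussian correction $(\log s)^2/(2\beta^2 n)$ negligible in the regime where the sharp scaling is required, while $e^{-us}$ alone is strong enough to kill any crude bound above it.
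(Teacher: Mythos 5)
Your proposal is correct and takes essentially the same route as the paper: integrate by parts against the tail of $\min(w_n(x),w_n(y))$, use Gaussian tail asymptotics (Mill's ratio, the content of Lemma~\thv(7.lem2)) to obtain the pointwise limit $[\bar a_n G_n(\bar b_n/s)]^2\to s^{2\a(\bar\varepsilon)}$, and control the tail contributions uniformly in $n$. The differences are cosmetic — a reciprocal change of variable ($s=1/y$) and a dominated-convergence envelope in place of the paper's $\varsigma,\zeta$-cutoff bookkeeping; just note that the $o(1)$ appearing in your envelope exponent must be replaced by a fixed $\delta>0$ (valid for all $n$ large, uniformly on the range $s\leq e^{n^{1/4}}$) before dominated convergence can be invoked, which is exactly what the two-sided estimate of Lemma~\thv(7.lem2),~(ii) supplies.
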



The rest of Subsection \thv(7.3) is organized as follows. We prove Lemma \thv(7.lem6) in Subsection \thv(7.3.1). 
Equipped with this result we establish the convergence properties of  $\bar\nu^\circ_n(u,\infty)$ in Subsection \thv(7.3.2), and those of $\bar\s^\circ_n(u,\infty)$ and $\bar\s^{=}_n(u,\infty)$ in Subsection \thv(7.3.3). Subsection \thv(7.3.3) also contains the proof of of Lemma \thv(7.lem9). 
The proof of Proposition \thv(7.prop1) is then completed in Subsection \thv(7.3.4).

\subsubsection{Proof of Lemma \thv(7.lem6)}
 \label{7.3.1}

The proof uses the following lemma, taken from \cite{G10b}.
Set
\be
F_n(v)=\bar a_n\P(w_n(x)\geq v \bar b_n),\quad v\geq 0.
\Eq(7.lem2.0) 
\ee

\begin{lemma}[Lemma 2.1 of \cite{G10b}] 
\TH(7.lem2) 
Let the sequences $\bar a_n$ and $\bar b_n$ be as in Lemma \thv(7.lem6).

\item{(i)} For each fixed $\zeta>0$ and all $n$ sufficiently large so that $\zeta>\bar b_n^{-1}$ we have,
for all $v$ such that $\zeta\leq v<\infty$,
\be
F_n(v)= v^{-\a_n}(1+o(1)),
\Eq(7.lem2.3) 
\ee
where $0\leq \a_n=\a(\bar\varepsilon)+o(1)$.

\item{(ii)} Let $0<\d<1$.
Then, for all $v$ such that $\bar b_n^{-\d}\leq v\leq 1$ and all large enough $n$,
\be
v^{-\a_n}(1+o(1))
\leq F_n(v)\leq
\sfrac{1}{1-\d}v^{-\a_n(1-\frac{\d}{2})}(1+o(1)),
\Eq(7.lem2.4) 
\ee
where $\a_n$ is as before.
\end{lemma}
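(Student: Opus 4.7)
The statement is a purely analytic tail estimate: it asks for precise asymptotics of a Gaussian survival function along the sequence of truncation levels $\bar b_n$. My plan is to reduce everything to Mill's ratio and to exploit the calibration $\bar a_n\P(w_n(x)\geq\bar b_n)\sim 1$ as a normalization.

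\emph{Setup.} Since $w_n(x)=\exp(-\b\sqrt n g(x))$ with $g(x)$ standard Gaussian, for every $v>0$
\[
F_n(v)=\bar a_n\bigl(1-\Phi(B_n(v))\bigr),\qquad B_n(v)\equiv\tfrac{\log(v\bar b_n)}{\b\sqrt n}=B_n(1)+\tfrac{\log v}{\b\sqrt n}.
\]
The normalization $F_n(1)\sim 1$ together with the standard Mill's ratio expansion
$1-\Phi(x)=\frac{\phi(x)}{x}(1+O(x^{-2}))$ as $x\to\infty$ forces $B_n(1)\to\infty$ and yields the explicit asymptotic of $B_n(1)$ already used in Lemma \thv(9.lem4'): $B_n(1)=\sqrt{2\bar\varepsilon\log 2}\,\sqrt n\,(1+o(1))$. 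Setting $\a_n\equiv B_n(1)/(\b\sqrt n)$ one then has $\a_n=\a(\bar\varepsilon)+o(1)$, matching the claim in the statement.

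\emph{Part (i).} For $v\geq \zeta>\bar b_n^{-1}$ we have $B_n(v)\geq B_n(\zeta)\to\infty$, so Mill's ratio applies uniformly to both numerator and denominator of $F_n(v)/F_n(1)$. Taking the ratio,
\[
\frac{F_n(v)}{F_n(1)}=\frac{B_n(1)}{B_n(v)}\,\exp\!\Bigl\{-\tfrac12\bigl[B_n(v)^2-B_n(1)^2\bigr]\Bigr\}(1+o(1)).
\]
The prefactor equals $1+O((\log v)/(B_n(1)\b\sqrt n))=1+o(1)$ uniformly in $v\geq\zeta$ on compact ranges of $\log v$; more carefully, on $\zeta\leq v<\infty$ one absorbs the prefactor into the $o(1)$ by writing $B_n(1)/B_n(v)=\exp(O(\log v/n))$. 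The key quadratic difference is
\[
B_n(v)^2-B_n(1)^2=\tfrac{2B_n(1)\log v}{\b\sqrt n}+\tfrac{(\log v)^2}{\b^2 n}=2\a_n\log v+o(1),
\]
again uniformly in the stated $v$--range provided we absorb $(\log v)^2/n$ into the $o(1)$. Exponentiating gives $F_n(v)/F_n(1)=v^{-\a_n}(1+o(1))$, and multiplying by $F_n(1)\sim 1$ concludes part (i).

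\emph{Part (ii).} For $v\in[\bar b_n^{-\delta},1]$ we still have $B_n(v)\geq (1-\delta)B_n(1)\to\infty$, so Mill's ratio remains applicable, but the quadratic expansion now carries a term $(\log v)^2/(\b^2 n)$ that can be as large as $\delta^2\a_n^2\b^2$, i.e.\ order one. This is the only genuine obstacle: it prevents us from extracting the clean exponent $-\a_n$. The remedy is to keep this term as an inequality: writing $B_n(v)^2-B_n(1)^2=2\a_n\log v+(\log v)^2/(\b^2 n)$ and bounding $(\log v)^2\leq \delta|\log v|\cdot|\log\bar b_n|$ on the range of $v$, one gets
\[
\tfrac12\bigl[B_n(v)^2-B_n(1)^2\bigr]\leq \a_n\bigl(1-\tfrac{\delta}{2}\bigr)\log v,\qquad \text{since $\log v\leq 0$}.
\]
The lower bound is the unchanged $\a_n\log v$, giving the upper and lower bounds of the stated form. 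The prefactor $B_n(1)/B_n(v)$ is easily absorbed into the constants $1$ and $1/(1-\delta)$ respectively, using $B_n(v)\geq(1-\delta)B_n(1)$ for the upper bound.

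\emph{Main obstacle.} The only substantive difficulty is tracking the error term $(\log v)^2/(\b^2 n)$ in part (ii): its magnitude on the range $[\bar b_n^{-\delta},1]$ is exactly what forces the loss of the exponent from $-\a_n$ to $-\a_n(1-\delta/2)$, and checking that the prefactor and the Mill's-ratio error can be absorbed into multiplicative $1+o(1)$ and $1/(1-\delta)$ constants requires routine but careful bookkeeping. The rest of the proof is a direct consequence of Mill's ratio and the calibration of $\bar a_n$ and $\bar b_n$.
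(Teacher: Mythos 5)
This lemma is cited from Lemma 2.1 of \cite{G10b} and is \emph{not} proved in the present paper, so there is no in-paper proof to compare against; you are attempting a blind reconstruction of a cited result. Your Mill's-ratio strategy is exactly the right tool here, and your calibration of $\a_n$ via $\alpha_n=B_n(1)/(\b\sqrt n)=\log\bar b_n/(\b^2 n)$ correctly recovers $\a(\bar\varepsilon)+o(1)$.

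However, your derivation of part (ii) produces the inequalities in the \emph{reverse} direction from the statement, and you do not appear to notice this. You correctly obtain
$\a_n\log v \le \tfrac12[B_n(v)^2-B_n(1)^2] \le \a_n(1-\tfrac\d2)\log v$ on the range $\bar b_n^{-\d}\le v\le 1$ (using $\log v\le 0$). Since $F_n(v)/F_n(1)=\bigl(\text{prefactor}\bigr)\exp\{-\tfrac12[B_n(v)^2-B_n(1)^2]\}(1+o(1))$, these translate into
\[
v^{-\a_n(1-\d/2)}\cdot(\text{prefactor})\;\le\; F_n(v)/F_n(1)\;\le\; v^{-\a_n}\cdot(\text{prefactor}),
\]
i.e.~$v^{-\a_n(1-\d/2)}$ is a \emph{lower} bound and $v^{-\a_n}$ an \emph{upper} bound, with the prefactor $B_n(1)/B_n(v)$ sandwiched between $1$ and $(1-\d)^{-1}$. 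This is the opposite of what \eqv(7.lem2.4) asserts. Your claim that this yields ``the upper and lower bounds of the stated form'' is therefore incorrect as written. It is worth noting that the direction you derive is the one that Gaussian tails actually force: the correction $e^{-(\log v)^2/(2\b^2 n)}\le 1$ can only push $F_n(v)$ \emph{below} $v^{-\a_n}$, never above. Indeed for $v$ near $\bar b_n^{-\d}$ one has $v^{-\a_n}/\bigl(\sfrac{1}{1-\d}v^{-\a_n(1-\d/2)}\bigr)=(1-\d)\,e^{\d^2B_n(1)^2/2}\to\infty$, so the lemma as printed here cannot hold with both inequalities simultaneously; the transcription from \cite{G10b} almost certainly has the two sides swapped, and your bounds are the consistent ones. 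You should flag this discrepancy explicitly rather than asserting a match.

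A smaller issue in part (i): you write ``absorb $(\log v)^2/n$ into the $o(1)$'' uniformly over $\zeta\le v<\infty$, but for $v$ unbounded the term $(\log v)^2/(\b^2 n)$ is not $o(1)$. What your argument actually yields is the asymptotic equivalence $F_n(v)=v^{-\a_n}(1+o(1))$ uniformly on compact subsets of $[\zeta,\infty)$ (which, together with monotonicity of $F_n$ and continuity of the limit, is how the paper itself uses part (i) in the proof of Lemma \thv(7.lem6)), plus a one-sided bound $F_n(v)\le v^{-\a_n}(1+o(1))$ uniformly on all of $[\zeta,\infty)$. Stating it this way would make the argument airtight.
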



\begin{proof}[Proof of Lemma \thv(7.lem6)] 
For fixed $u>0$ set $f(y)=e^{-u/y}$.
Integrating by part in \eqv(7.lem5.0), and using  the fact that
$
{\bar  a_n}^2\P\left(\min\{w_n(x),w_n(y)\}\geq y\bar b_n\right)
=F^2_n(y)
$,
we get
\be
\textstyle
\E[v_n(u; \bar a_n, \bar b_n)]
=
F^2_n(r_n(\rho^{\star}_n)/ \bar b_n)
e^{-u\bar b_n/r_n(\rho^{\star}_n)}
+\int_{r_n(\rho^{\star}_n)/ \bar b_n}^{\infty}f'(y)F^2_n(y)dy.
\Eq(7.lem4'.3)
\ee
Note that
$
F^2_n(r_n(\rho^{\star}_n)/ \bar b_n)
e^{-u\bar b_n/r_n(\rho^{\star}_n)}
\leq
{\bar  a_n}^2e^{-u\bar b_n/r_n(\rho^{\star}_n)}
\downarrow 0
$
as $n\uparrow\infty$ for all $u>0$.
To deal with the second term in the r.h.s.~of  \eqv(7.lem4'.3)  set  $I_n(a,b)=\int_{a}^{b}f'(y)F^2_n(y)dy$, $a\leq b$, and, given 
 $0<\varsigma<1$ and $\zeta>1$, write
\be
I_n(r_n(\rho^{\star}_n)/ \bar b_n,\infty)
=
I_n\bigl(r_n(\rho^{\star}_n)/ \bar b_n,\varsigma\bigr)+I_n\bigl(\varsigma,\zeta\bigr)+I_n(\zeta,\infty).
\Eq(7.lem4'.3')
\ee
Let us now show that,  as $n\uparrow\infty$, for small enough $\varsigma$ and large enough $\zeta$,
the leading contribution to \eqv(7.lem4'.3') comes from $I_n\bigl(\varsigma,\zeta\bigr)$.
To do so we first use that by Lemma \thv(7.lem2), (ii) with $\d=1/2$,
$
I_n\bigl(r_n(\rho^{\star}_n)/ \bar b_n,\varsigma\bigr)
\leq 2(1+o(1))\int_{0}^{\varsigma}f'(y)y^{-(3/2)\a_n}dy
$
for all $0<\varsigma<1$, where $0\leq \a_n=\a(\bar\varepsilon)+o(1)$.
Now, there exists $\varsigma^*\equiv\varsigma^*(u)>0$ such that, for all $\varsigma<\varsigma^*$,
$f'(y)y^{-(3/2)\a_n}$ is strictly increasing on $[0,\varsigma]$. Hence,
for all $\varsigma<\min(1,\varsigma^*)$,
$
I_n\bigl(r_n(\rho^{\star}_n)/ \bar b_n,\varsigma\bigr)
\leq 2(1+o(1))u\varsigma^{-1+(3/2)[\a(\bar\varepsilon)+o(1)]}e^{-{u}/{\varsigma}}
$,
implying that
\be
\lim_{n\rightarrow\infty}I_n\bigl(r_n(\rho^{\star}_n)/ \bar b_n,\varsigma\bigr)
\leq 2u\varsigma^{-1+(3/2)\a(\bar\varepsilon)}e^{-{u}/{\varsigma}}\,,\quad \varsigma<\min(1,\varsigma^*)\,.
\Eq(7.lem4'.3''')
\ee
To deal with $I_n\bigl(\varsigma,\zeta\bigr)$ note that
by Lemma \thv(7.lem2), (i), $F_n(y)\rightarrow y^{-\a(\bar\varepsilon)}$, $n\rightarrow\infty$,
where the convergence is uniform in  $\varsigma\leq y\leq \zeta$ since, for each $n$,
$F_n(y)$ is a monotone function, and since the limit, $y^{-\a(\bar\varepsilon)}$,
is continuous. Hence,
\be
\textstyle
\lim_{n\rightarrow\infty}I_n\bigl(\varsigma,\zeta\bigr)
=\lim_{n\rightarrow\infty}\int_{\varsigma}^{\zeta}f'(y)F^2_n(y)dy
=\int_{\varsigma}^{\zeta}f'(y)y^{-2\a(\bar\varepsilon)}dy\,.
\Eq(7.lem4'.4)
\ee
It remains to bound $I_n(\zeta,\infty)$. By
\eqv(7.lem2.3) of Lemma \thv(7.lem2),
$
I_n(\zeta,\infty)
=\int_{\zeta}^{\infty}f'(y)F^2_n(y)dy
=(1+o(1))\int_{\zeta}^{\infty}f'(y)y^{-2\a_n}dy\,,
$
where again $0\leq \a_n=\a(\bar\varepsilon)+o(1)$.
Thus, for $0<\d<1$ arbitrary we have, taking $n$ large enough, that for all $y\geq\zeta>1$,
$
f'(y)y^{-2\a_n}
\leq f'(y)y^{-2\a(\bar\varepsilon)+2\d}
\leq u/y^{2(1-\d)}
$.
Therefore
$
I_n(\zeta,\infty)
\leq(1+o(1))\frac{1}{1-\d}\zeta^{-(1-2\d)}
$
and, choosing e.g.~$\d=1/4$,
\be
\lim_{n\rightarrow\infty}I_n(\zeta,\infty)\leq 4u\zeta^{-1/2}\,.
\Eq(7.lem4'.5)
\ee
Collecting \eqv(7.lem4'.3)-\eqv(7.lem4'.5),
we obtain that for all $\zeta>1$ and $\varsigma<\min(1,\varsigma^*)$,
\be
\textstyle
\E[v_n(u; \bar a_n, \bar b_n)]
=\int_{\varsigma}^{\zeta}f'(y)y^{-2\a(\bar\varepsilon)}dy
+\RR(\varsigma,\zeta)\,,
\Eq(7.lem4'.6)
\ee
where
$
0\leq \RR(\varsigma,\zeta)\leq
2u\varsigma^{-1+(3/2)\a(\bar\varepsilon)}e^{-{u}/{\varsigma}}
+4u\zeta^{-1/2}
$.
Finally, passing to the limit $\varsigma\rightarrow 0$ and $\zeta\rightarrow\infty$ in \eqv(7.lem4'.6) yields
\be
\textstyle
\E[v_n(u; \bar a_n, \bar b_n)]=\int_{0}^{\infty}f'(y)y^{-2\a(\bar\varepsilon)}dy
=u^{-2\a( \bar\varepsilon)}2\a( \bar\varepsilon)\G(2\a( \bar\varepsilon)),
\Eq(7.lem4'.8)
\ee
where  we used in the last equality that $2\a(\bar\varepsilon)>0$ since $\bar\varepsilon>0$. Lemma \thv(7.lem6) is proven.
\end{proof}



\smallskip
\subsubsection{Convergence properties of $\bar\nu^\circ_n$}
 \label{7.3.2}
 
 As stated in the next two lemmata $\bar\nu^\circ_n$ concentrates around its mean, and the mean as a limit.
 


\begin{lemma}
\TH(7.lem4) 
Assume that $c_{\star}>2$.
If $a_n$ and $b_n$ satisfy \eqv(7.prop1.0new) for some  $0<\varepsilon<1$, then
\be
\Eq(7.lem4.1) 
\lim_{n\rightarrow\infty}\E[\bar\nu^\circ_n(u,\infty)]=\nu^\dagger(u,\infty),\quad\forall u>0.
\ee
\end{lemma}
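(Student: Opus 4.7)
The plan is to split the sum defining $\bar\nu^\circ_n(u,\infty)$ according to the size of the connected components $C^{\star}_{n,l}$, isolate the contribution of pairs, and show that it is dictated by the random variable $v_n$ of Lemma \thv(7.lem6), while everything else is negligible. On $\O^{\star}$, by Lemma \thv(10.lem1), $|C^{\star}_{n,l}|\geq 2$ for every $l$, so one may decompose $\bar\nu^\circ_n(u,\infty)=A_n(u)+B_n(u)$ where $A_n$ collects the terms coming from $l$ with $|C^{\star}_{n,l}|=2$ and $B_n$ the terms with $|C^{\star}_{n,l}|\geq 3$.

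First I would analyze $A_n$. For a pair $C^{\star}_{n,l}=\{x,y\}$, one checks from \eqv(9.lem1.0) that $\varrho_{n,l}(0)=\min(w_n(x),w_n(y))$, and Proposition \thv(5.prop1)(i) gives the exact identity $Q^u_{n,l}(z)=(1-[1+\varrho_{n,l}(0)/(n-1)]^{-1})^{\lceil b_nu\rceil}$ for $z\in\{x,y\}$. Since on $\O^{\star}$ we have $\varrho_{n,l}(0)\geq r_n(\rho^{\star}_n)\to\infty$ super-polynomially (by Lemma \thv(9.lem4')), the ratio $\varrho_{n,l}(0)/(n-1)\to\infty$, and the standard expansion $(1-1/(1+t))^m=\exp(-m/t)(1+O(m/t^2))$ gives, uniformly in $\varrho\geq r_n(\rho^{\star}_n)$,
\[
Q^u_{n,l}(z)=\exp\!\left(-\tfrac{(n-1)b_nu}{\varrho_{n,l}(0)}\right)(1+o(1)).
\]
Using the indicator characterization \eqv(8.3lem1.10)-\eqv(8.3lem1.11) of pairs (exactly as in the proof of \eqv(8.3lem1.6)), summing over the $n2^{n-1}$ edges of $\VV_n$ and applying independence and $(1-n^{-c_{\star}})^{2(n-1)}=1+o(1)$, I find
\[
\E A_n(u)=(1+o(1))\,na_n\,\E\!\left[\exp\!\left(-\tfrac{(n-1)b_nu}{\min(w_n(x),w_n(y))}\right)\mathbbm{1}_{\{\min(w_n(x),w_n(y))\geq r_n(\rho^{\star}_n)\}}\right].
\]
With the identifications $\bar a_n=\sqrt{na_n}$ and $\bar b_n=(n-1)b_n$, the right-hand side is exactly $v_n(u;\bar a_n,\bar b_n)(1+o(1))$. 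The three hypotheses of Lemma \thv(7.lem6) are then easily verified from \eqv(7.prop1.0new): $\bar a_n\P(w_n(x)\geq\bar b_n)\sim 1$ is the second part of \eqv(7.prop1.0new), $\bar a_n/2^n=o(1)$ because $\log a_n/(n\log 2)\to\varepsilon<1$, and $\log\bar a_n/(n\log 2)\to\varepsilon/2$. Lemma \thv(7.lem6) then yields $\E A_n(u)\to u^{-2\a(\varepsilon/2)}2\a(\varepsilon/2)\G(2\a(\varepsilon/2))=\nu^\dagger(u,\infty)$.

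It remains to show $\E B_n(u)\to 0$. For components of size $k\geq 3$, the threshold $\theta^{\star}_{n,l}$ of Proposition \thv(5.prop1)(ii) is at most $2\beta n|C^{\star}_{n,l}|^5\leq O(n^6/(\log n)^5)$ by the bound $|C^{\star}_{n,l}|\leq n/\log n$ of Lemma \thv(10.lem1), while $b_nu$ is super-polynomially large; hence \eqv(5.prop1.0') applies and gives $Q^u_{n,l}(x)\leq \exp(-b_nun/(\varrho_{n,l}(0)|C^{\star}_{n,l}|))(1+o(1))$. I would bin the size-$k$ components by dyadic values $\rho\in\{\rho^{\star}_n,\rho^{\star}_n+\rho^{\star}_n/2,\dots,\varepsilon/2-\rho^{\star}_n/2\}$ of the parameter for which $\varrho_{n,l}(0)\sim r_n(\rho)$, exactly as in the proof of Lemma \thv(8.3lem4) (see \eqv(8.1.14)-\eqv(8.1.22)). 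For each bin one controls the count of components via \eqv(8.3lem1.6')-\eqv(8.3lem1.7) of Lemma \thv(8.3lem1) and uses the exponential bound on $Q^u$ with $\varrho\sim r_n(\rho)$; because $b_n$ corresponds via \eqv(9.lem4'.2) to a level $r_n(\rho')$ with $\rho'\to\varepsilon/2$, the ratio $b_n/r_n(\rho)$ is super-polynomially large whenever $\rho<\varepsilon/2-\rho^{\star}_n/2$, which kills any polynomial prefactor $a_n n^{k-1-c_{\star}k}$ coming from the counting. A separate residual bin covering the tail $\rho\geq\varepsilon/2-\rho^{\star}_n/2$ is handled by the plain bound $Q^u\leq 1$ together with the Tchebychev argument \eqv(8.3lem1.15)-\eqv(8.3lem1.sum), which shows that such components are too rare to contribute.

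The main obstacle is precisely this last estimate on $\E B_n$: the naive bound $Q^u\leq 1$ combined with the cardinality bound \eqv(10.lem1.3) yields a contribution of order $a_n n^{-2c_{\star}+1}$ that does not vanish (since $a_n$ grows exponentially in $n$), so one must really exploit the exponential decay rate $b_nun/(\varrho_{n,l}(0)|C^{\star}_{n,l}|)$ of Proposition \thv(5.prop1)(ii), coupling it to the depth-of-trap decomposition of Lemma \thv(8.3lem1) in the spirit of the proof of Lemma \thv(8.3lem4). Once this bookkeeping is done, combining $\lim\E A_n(u)=\nu^\dagger(u,\infty)$ with $\lim\E B_n(u)=0$ completes the proof of \eqv(7.lem4.1).
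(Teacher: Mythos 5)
Your treatment of the pair contribution $A_n$ is essentially the paper's: split off the size-$2$ components, apply Proposition \thv(5.prop1)(i), recognize $\varrho_{n,l}(0)=\min(w_n(x),w_n(y))$ (this is \eqv(5.lem1.0), not \eqv(9.lem1.0) as you cite), pass to the exponential via the geometric-tail expansion, and feed the resulting quantity into Lemma \thv(7.lem6) with $\bar a_n=\sqrt{na_n}$, $\bar b_n=(n-1)b_n$. The paper is more careful than your ``uniformly in $\varrho\geq r_n(\rho^\star_n)$'': the multiplicative error in $(1-1/(1+t))^m=e^{-m/t}(1+O(m/t^2))$ is not uniformly small when $\varrho_{n,l}(0)$ is near $r_n(\rho^\star_n)$, which is why the paper sandwiches $\bar\nu^{\circ,(2)}_n$ between $\bar\nu^{\circ,(2),\pm}_n$ built from shifted constants $b_n^\pm=b_n(n-1)(1-s_n)^{\pm1}$ rather than claiming a uniform asymptotic; but since in the bad regime $e^{-m/t}$ is itself negligible this is a presentational matter, not a fatal flaw.

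The $B_n$ (size $\geq 3$) argument has a genuine gap. You claim that the threshold $\theta^\star_{n,l}$ of Proposition \thv(5.prop1)(ii) is at most $2\b n|C^\star_{n,l}|^5=O(n^6/(\log n)^5)$, but the definition \eqv(5.prop1.1) reads $\theta^\star_{n,l}=2\b n|C^\star_{n,l}|^5\,\varrho_{n,l}(0)/\varrho_{n,l}(1)$, and the ratio $\varrho_{n,l}(0)/\varrho_{n,l}(1)$ is a random quantity that can be exponentially large in $n$ (it measures the gap between the shallowest state and the pass barrier, and only $\varrho_{n,l}(1)\geq r_n(\rho^\star_n)$ is guaranteed). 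Consequently the hypothesis $b_nu\geq\theta^\star_{n,l}$ of Proposition \thv(5.prop1)(ii) does \emph{not} hold unconditionally, and your assertion that \eqv(5.prop1.0') ``applies'' for all $k\geq3$ is false. This is precisely the difficulty the paper's proof is organized around: it splits $\bar\nu^{\circ,(k)}_n$ into $\bar\nu^{\circ,(k),+}_n$ (components with $b_n\geq n\theta^\star_{n,\CC}$, where the exponential bound is legitimate and leads, after summing over $k$ via \eqv(8.3lem1.sum), to a term comparable to $4n^{-(c_\star-1)}\E[v_n(u;\bar a_n,\bar b_n)]$) and $\bar\nu^{\circ,(k),-}_n$ (the complement), and for the latter it uses the crude bound $Q\leq 1$ \emph{together with} the observation that $b_n<n\theta^\star_{n,\CC}$ forces $\varrho_{n,\CC}(0)>r_n(\rho^\star_n)b_n/(2\b n^2k^5)$, i.e.~an anomalously deep pair, whose probability is controlled by a Gaussian tail and yields a vanishing contribution $(\b n^8/r_n(\rho^\star_n))^{2\a(\varepsilon/2)(1+o(1))}$. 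Your dyadic-$\rho$ binning could in principle recover the same arithmetic, but as written it presupposes the exponential bound for every bin and never addresses the deep bins where it does not apply, which is exactly the case you correctly identify as ``the main obstacle.''
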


\begin{lemma}
\TH(7.lem5) 
For all $L_1>0$ and $L_2\geq 0$ such that $na_nL_2/2^n=o(1)$, for all $u>0$,
\be
\textstyle
\P\bigl(\left|\bar\nu^\circ_n(u,\infty)-\E[\bar\nu^\circ_n(u,\infty)]\right|
\geq 
\phi_n(u,L_1,L_2)
\bigr)
\leq 2n e^{-L_2}+2L_1,
\Eq(7.lem5.1) 
\ee
where
$
\phi_n(u,L_1,L_2)\equiv
\sqrt{8na_nL_2V_{n,2}(2u)/2^n}
+
4n^{-(c_{\star}-1)} V_{n,1}(u)/L_1
$,
and where for each $n$, $V_{n,1}(u)$ and $V_{n,2}(u)$ are positive decreasing functions, while for each $u>0$, under the assumptions of Lemma \thv(7.lem4),
\be
\lim_{n\rightarrow\infty}V_{n,1}(u)
=\lim_{n\rightarrow\infty}V_{n,2}(u)
=\nu^\dagger(u,\infty).
\Eq(7.lem5.2) 
\ee
\end{lemma}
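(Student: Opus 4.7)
The plan is to decompose $\bar\nu^\circ_n(u,\infty)$ according to cluster size and treat each piece separately. First I would write
$$\bar\nu^\circ_n(u,\infty) = \bar\nu^{(2)}_n(u,\infty) + \bar\nu^{(\geq 3)}_n(u,\infty),$$
where $\bar\nu^{(k)}_n$ restricts the outer sum over $1\leq l\leq L^{\star}$ to those $l$ with $|C^{\star}_{n,l}|=k$ (respectively $\geq 3$). A routine combinatorial estimate paralleling the proof of Lemma \thv(8.3lem1) (compare \eqv(8.3lem1.6')--\eqv(8.3lem1.7) and the geometric series \eqv(8.3lem1.sum)) shows that $\E\bar\nu^{(\geq 3)}_n(u,\infty)\leq Cn^{-(c_{\star}-1)}V_{n,1}(u)$, where $V_{n,1}(u)$ is chosen of the same order as $\E\bar\nu^\circ_n(u,\infty)$ and is decreasing in $u$ by monotonicity of the geometric tails $Q_{n,l}^u$. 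A first-order Tchebychev inequality applied separately to $\bar\nu^{(\geq 3)}_n$ and to the gap $\E\bar\nu^\circ_n - \E\bar\nu^{(2)}_n$ then accounts for the $4n^{-(c_{\star}-1)}V_{n,1}(u)/L_1$ term in $\phi_n$ and contributes $2L_1$ to the probability.

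Second, for the dominant piece $\bar\nu^{(2)}_n$, Proposition \thv(5.prop1)(i) provides the explicit geometric form of $Q_{n,l}^u(x)$ for $|C^{\star}_{n,l}|=2$, allowing me to write
$$\bar\nu^{(2)}_n(u,\infty) = \frac{a_n}{2^n}\sum_{\{x,y\}\in\GG_2} F_u(w_n(x),w_n(y))\,\chi_n(x)\chi_n(y)\prod_{z\in(\del x\cup\del y)\setminus\{x,y\}}\bigl(1-\chi_n(z)\bigr),$$
where $F_u(\cdot,\cdot)$ is deterministic, bounded, and depends only on $\min(w_n(x),w_n(y))$ (with the $\chi_n$-factors automatically enforcing $\min(w_n(x),w_n(y))\geq r_n(\rho^{\star}_n)$), and $\GG_2$ is the set of nearest-neighbor pairs as in the proof of Lemma \thv(8.3lem1). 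Each summand is measurable with respect to the Gaussians at $\{x,y\}\cup\del x\cup\del y$, so the canonical colouring $\GG_2=\bigcup_{j=1}^n\bigcup_{i=1}^4\GG_2^{j,i}$ (see \eqv(7.lem5.10')) renders the summands within each class $\GG_2^{j,i}$ independent.

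Third, within each $\GG_2^{j,i}$ I apply Bennett's inequality \eqv(7.lem5.7) to centered independent bounded summands. The squared summand has expectation precisely of the form \eqv(7.lem5.0) at argument $2u$ (since $F_u^2$ doubles the exponent), so setting
$$V_{n,2}(u) = v_n(u;\sqrt{na_n},(n-1)b_n)$$
and using $|\GG_2^{j,i}|\leq 2^{n-1}$ together with the prefactor $(a_n/2^n)^2$, the variance proxy of the sum on $\GG_2^{j,i}$ is at most $na_nV_{n,2}(2u)/2^n$. Bennett's inequality applied with $s=\sqrt{8na_nL_2V_{n,2}(2u)/2^n}$, combined with a union bound over the $4n$ classes, yields
$$\P\bigl(|\bar\nu^{(2)}_n(u,\infty)-\E\bar\nu^{(2)}_n(u,\infty)|\geq s\bigr)\leq 2ne^{-L_2},$$
which together with the previous step proves \eqv(7.lem5.1). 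The asserted limits \eqv(7.lem5.2) follow from Lemma \thv(7.lem4) for $V_{n,1}$ and from Lemma \thv(7.lem6) applied with $\bar a_n=\sqrt{na_n}$, $\bar b_n=(n-1)b_n$, and $\bar\varepsilon=\varepsilon/2$ for $V_{n,2}$; monotonicity of both in $u$ is inherited from monotonicity of $F_u$.

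The main obstacle will be the variance bookkeeping in the Bennett step: one needs to verify that once the outer $(a_n/2^n)^2$ prefactor is absorbed, summing the Gaussian expectation of $F_u^2\,\chi_n(x)\chi_n(y)\prod(1-\chi_n(z))$ over the $\sim 2^{n-1}$ pairs of each class $\GG_2^{j,i}$ gives exactly $na_nV_{n,2}(2u)/2^n$ times a constant, with the indicator $\1_{\{\min(w_n(x),w_n(y))\geq r_n(\rho^{\star}_n)\}}$ appearing in \eqv(7.lem5.0) absorbed by the $\chi_n\chi_n$ factors and the residual $(1-\chi_n)$ factors bounded above by $1$. Once this is verified, the dependence on the exit-distribution correction factors from Proposition \thv(5.prop1)(i) is easily controlled since $\varrho_{n,l}(0)/(n-1)\gg 1$ on $\O_0$ by Lemma \thv(5.lem7), so the geometric tail is uniformly well-approximated by its exponential counterpart $e^{-ub_n(n-1)/\varrho_{n,l}(0)}$.
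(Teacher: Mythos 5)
Your proposal follows the paper's proof closely: same size-based decomposition into $\bar\nu^{\circ,(2)}_n$ and $\Delta_n=\sum_{k\geq 3}\bar\nu^{\circ,(k)}_n$, same first-order Tchebychev argument for $\Delta_n$ (using $\Delta_n>0$ together with the bound \eqv(7.lem4.28) on $\E\Delta_n$), same canonical colouring $\GG_2=\cup_{j,i}\GG_2^{j,i}$ into $4n$ classes of independent summands, same Bennett-plus-union-bound step, and the same identification of $V_{n,2}$ through $v_n$ and Lemma \thv(7.lem6) (the paper uses the slightly corrected sequences $a_n^\pm,b_n^\pm$ coming from the sandwich \eqv(7.lem4.6), but these agree with your $\sqrt{na_n},(n-1)b_n$ to leading order). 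One slip to flag: with the prefactor $2a_n/2^n$ from \eqv(7.lem4.5)--\eqv(7.lem5.8), $|\GG_2^{j,i}|\approx 2^{n-3}$, and $\E X^2(\CC)\lesssim V_{n,2}(2u)/(na_n)$, the variance of the rescaled sum over a single class $\GG_2^{j,i}$ is $\OO\bigl(a_nV_{n,2}(2u)/(n2^n)\bigr)$, not $na_nV_{n,2}(2u)/2^n$; the factor $n$ inside $\phi_n$ is produced by the union bound over the $4n$ classes, not by an $n^2$-larger per-class variance, and once this is corrected the chain of inequalities does indeed yield your stated $s=\sqrt{8na_nL_2V_{n,2}(2u)/2^n}$.
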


\begin{proof}[Proof of Lemma \thv(7.lem4)] Write 
$
\bar\nu^\circ_n(u,\infty)
=
\sum_{k\geq 2}
\bar\nu^{\circ,(k)}_n(u,\infty)
$
where
\be
\bar\nu^{\circ,(k)}_n(u,\infty)
\equiv
\Eq(7.lem4.4) 
\frac{a_n}{2^n}\,
{\textstyle 
\sum_{1\leq l\leq L^{\star}}\1_{\{|C^{\star}_{n,l}|=k\}}\sum_{x\in C^{\star}_{n,l}}Q_{n,l}^u(x)
}.
\ee
We saw in Subsection \thv(8.3) (see \eqv(8.3lem1.5)) that on $\O^{\star}$, 
$
k_n^{\star}\equiv\max_{2\leq l\leq L^{\star}}|C^{\star}_{n,l}(\rho)|
 \leq\frac{n}{(c_{\star}-2)\log n}
$
for all large enough $n$.
We may thus restrict the range of $k$ to $2\leq k\leq k_n^{\star}$. 
Let now $\GG_k$ be the collection of all vertex sets $\CC\subset \VV_n$ of size $k$ such that
$G(\CC)$ forms a connected subgraph of $G(\VV_n)$,
\be
\GG_k=\{
\CC=\{x_1,\dots,x_k\} : \forall_{1\leq i\leq k}\exists_{1\leq j\leq k} \,\,\,\text{such that} \,\,\, \dist(x_i,x_j)=1
\}.
\Eq(7.lem4.2) 
\ee
Then \eqv(7.lem4.4) can be written as
\be
\Eq(7.lem4.3) 
\bar\nu^{\circ,(k)}_n(u,\infty)
\equiv
\frac{a_n}{2^n}\,
{\textstyle 
\sum_{\CC\in \GG_k}\prod_{x\in\CC}\chi_n(x)\prod_{x'\in\del\CC}\overline\chi_n(x')\sum_{x\in\CC}Q_{n,\CC}^u(x)
},
\ee
where $Q_{n,\CC}^u(x)$ stands for $Q_{n,l}^u(x)$ with $C^{\star}_{n,l}\equiv\CC$, and
where $\chi_n(x)\equiv\1_{\left\{w_n(x)\geq r_n(\rho^{\star}_n)\right\}}$,  
$\overline\chi_n(x)\equiv1-\chi_n(x)$, are  Bernoulli variables  r.v.'s~with $\P\left(\chi_n(x)=1\right)=n^{-c_{\star}}$.
To further express  $\bar\nu^{\circ,(k)}_n$ we distinguish the case $k=2$ from the case 
$3\leq k\leq  k_n^{\star}$.

\smallskip
\noindent{\textbf{\emph{ $\bullet$ The case $k=2$.}}} Here $\GG_2$ is the set of undirected edges of $G(\VV_n)$
and $Q_{n,\CC}^u(x)$ is given by Proposition \thv(5.prop1), (i) :
observing  that $Q_{n,\CC}^u(x)=Q_{n,\CC}^u(y)$
on $\CC=\{x,y\}$,  and that, by \eqv(5.lem1.0),
$
\varrho_{n,l}(0)
=\min_{x\in\CC}w_n(x)
$
when $C^{\star}_{n,l}=\CC$,
we obtain
\be
\Eq(7.lem4.5) 
\bar\nu^{\circ,(2)}_n(u,\infty)
\equiv
2\frac{a_n}{2^n}\,
{\textstyle 
\sum_{\CC\in \GG_2}\prod_{x\in\CC}\chi_n(x)\prod_{x'\in\del\CC}\overline\chi_n(x')
\left(1-
\sfrac{1}{1+
\frac{\min_{x\in\CC}w_n(x)}{(n-1)}
}
\right)^{\lceil b_nu\rceil}
}.
\ee
From this we easily derive the bounds
\be
\textstyle
\bar\nu^{\circ,(2),-}_n(u,\infty)
(1-s_n)
\leq \bar\nu^{\circ,(2)}_n(u,\infty)\leq
\bar\nu^{\circ,(2),+}_n(u,\infty)
\Eq(7.lem4.6) 
\ee
where
$
s_n=\frac{n-1}{r_n(\rho^{\star}_n)}
$
and where, setting 
$
b_n^\pm= b_n(n-1)(1-s_n)^{\pm1}
$
and
\be
\g^{\pm}_n(\CC)=\min_{x\in\CC}w_n(x)/b^{\pm}_n,
\Eq(7.lem4.12) 
\ee
\be
\Eq(7.lem4.7) 
\bar\nu^{\circ,(2),\pm}_n(u,\infty)\equiv
2\frac{a_n}{2^n}\,
{\textstyle 
\sum_{\CC\in \GG_2}
\prod_{x'\in\del\CC}\overline\chi_n(x')
e^{-u/\g^\pm_n(\CC)}\1_{\{\g^\pm_n(\CC)\geq r_n(\rho^{\star}_n)/ b^{\pm}_n\}}.
}
\ee
By Lemma \thv(7.lem6),
\be
\lim_{n\rightarrow\infty}\E\bar\nu^{\circ,(2),-}_n(u,\infty)
=
\lim_{n\rightarrow\infty}\E\bar\nu^{\circ,(2),+}_n(u,\infty)
=\nu^\dagger(u,\infty).
\Eq(7.lem4.8) 
\ee
To see this note that, setting $a_n^+=a_n^-= \sqrt{n a_n}(1-n^{-c_{\star}})^{n-1}$,
\be
\E[\bar\nu^{\circ,(2),\pm}_n(u,\infty)]
=
v_n(u;  a_n^\pm, b_n^\pm).
\Eq(7.lem4.10) 
\ee
One then readily checks that for 
$a_n$, $b_n$ as in \eqv(7.prop1.0new), 
$
\lim_{n\rightarrow\infty}\frac{\log a_n^\pm}{n\log2}= \varepsilon/2
$,
$
\lim_{n\rightarrow\infty}\frac{a_n^\pm}{2^n}=0
$,
and
$
a_n^\pm\P(w_n(x)\geq  b_n^\pm)\sim 1
$.
The conditions of Lemma \thv(7.lem6)  are thus
satisfied with $\bar \varepsilon=\varepsilon/2$,
yielding \eqv(7.lem4.8).
Since clearly $\lim_{n\rightarrow\infty}s_n=0$, it follows from \eqv(7.lem4.6) that
\be
\lim_{n\rightarrow\infty}\E\bar\nu^{\circ,(2)}_n(u,\infty)
=\nu^\dagger(u,\infty).
\Eq(7.lem4.11)
\ee


\smallskip
\noindent{\textbf{\emph{$\bullet$ The case $3\leq k\leq k_n^{\star}$.}}} Recall that $Q_{n,\CC}^u(x)$ in \eqv(7.lem4.3) stands for $Q_{n,l}^u(x)$ with $C^{\star}_{n,l}\equiv\CC$. Similarly, denote by
$\varrho_{n,\CC}(0)$, $\varrho_{n,\CC}(1)$, and $\theta^{\star}_{n,\CC}$   the quantities
$\varrho_{n,l}(0)$, $\varrho_{n,l}(1)$, and $\theta^{\star}_{n,l}$ from \eqv(5.lem1.0)-\eqv(5.prop1.1)
with $C^{\star}_{n,l}\equiv\CC$.
Let us now split the sum over $\CC\in \GG_k$ in \eqv(7.lem4.3) according to whether $b_n< n\theta^{\star}_{n,\CC}$ or
not, and write
$
\bar\nu^{\circ,(k)}_n(u,\infty)
=\bar\nu^{\circ,(k),-}_n(u,\infty)
+\bar\nu^{\circ,(k),+}_n(u,\infty)
$
where 
\be
\Eq(7.lem4.13) 
\hspace{-0.8pt}
\bar\nu^{\circ,(k),+}_n(u,\infty)
\equiv
\frac{a_n}{2^n}\,
{\textstyle 
\sum_{\CC\in \GG_k}\prod_{x\in\CC}\chi_n(x)\prod_{x'\in\del\CC}\overline\chi_n(x')\sum_{x\in\CC}Q_{n,\CC}^u(x)
\1_{\{b_n\geq n\theta^{\star}_{n,\CC}\}}
},\hspace{-7pt}
\ee
and  $\bar\nu^{\circ,(k),-}_n(u,\infty)$ is the sum over the complement, i.e.~over terms such that $b_n< n\theta^{\star}_{n,\CC}$. The point of doing this is that if $b_n\geq n\theta^{\star}_{n,\CC}$ then, for each fixed $u>0$,  ${\lceil b_nu\rceil}\gg \theta^{\star}_{n,\CC}$ for all $x$ in $\CC$ and all $n$ large enough, so that $\bar\nu^{\circ,(k),+}_n(u,\infty)$ can be bounded using Proposition \thv(5.prop1), (ii). More precisely, on $\O_0\cap \O^{\star}$, for all but a finite number of indices $n$, by \eqv(5.prop1.0'),
\be
\Eq(7.lem4.14) 
Q_{n,\CC}^u(x)
\1_{\{b_n\geq n\theta^{\star}_{n,\CC}\}}
\leq  e^{-\{u(n-1)b_n/\varrho_{n,\CC}(0)\}}(1+o(1)), \quad \forall x\in\CC,
\ee
(since for $k\geq 3$ and large enough $n$, $k(n-1)/n(1-o(1))>1$).
Note that by \eqv(5.lem1.0),
\bea
\Eq(7.lem4.15) 
e^{-\{u(n-1)b_n/\varrho_{n,\CC}(0)\}}
&=&
\textstyle
\max_{\{x,y\}\in G(\CC)}e^{-\{u(n-1)b_n/\min\left(w_n(y),w_n(x)\right)\}}
\\
&\leq&
\Eq(7.lem4.16) 
\textstyle
\sum_{\{x,y\}\in G(\CC)}e^{-u/\bar\g_n(\{x,y\})},
\eea
where we now set 
\be
\bar\g_n(\CC')=\min_{x\in\CC'}w_n(x)/{(n-1)b_n}, \quad \CC'\in\GG_2.
\Eq(7.lem4.17) 
\ee
Combining these observations yields the bound
\be
\Eq(7.lem4.18) 
\bar\nu^{\circ,(k),+}_n(u,\infty)
\leq
k\frac{a_n}{2^n}\,
{\textstyle 
\sum_{\CC'\in \GG_2}
\sum_{\CC\in \GG_k:\CC'\subset\CC}\prod_{x\in\CC}\chi_n(x)
e^{-u/\bar\g_n(\CC')}
},
\ee
valid on $\O_0\cap \O^{\star}$, for all but a finite number of indices $n$, and this in turns implies that
\be
\Eq(7.lem4.19) 
\E[\bar\nu^{\circ,(k),+}_n(u,\infty)]
\leq
k (k-2)! n^{-(c_{\star}-1)(k-2)}v_n(u;  \bar  a_n, \bar b_n),
\ee
where $\bar  a_n\equiv\sqrt{na_n}$, $\bar b_n\equiv(n-1)b_n$.
Again one sees that these sequences (that differ but slightly from 
the choices made in \eqv(7.lem4.10))  satisfy the conditions 
 of Lemma \thv(7.lem6)   with $\bar \varepsilon=\varepsilon/2$.
 Thus
\be
\lim_{n\rightarrow\infty}
\E\left[v_n(u;  \bar  a_n, \bar b_n)\right]
=\nu^\dagger(u,\infty).
\Eq(7.lem4.21)
\ee
Since by assumption $c_{\star}>2$ we may use \eqv(8.3lem1.sum) to sum \eqv(7.lem4.19) over $k$, which gives
\be
\textstyle
\sum_{3\leq k\leq k_n^{\star}}\E[\bar\nu^{\circ,(k),+}_n(u,\infty)]
\leq 4n^{-(c_{\star}-1)}\E\left[v_n(u;  \bar  a_n, \bar b_n)\right].
\Eq(7.lem4.22)
\ee

It remains to bound the term $\bar\nu^{\circ,(k),-}_n(u,\infty)$ (see the line above \eqv(7.lem4.13)). For this we use the trite bound $Q_{n,\CC}^u(x)\leq 1$ and observe that, by  \eqv(5.lem1.0)-\eqv(5.prop1.1) and the 
rightmost inequality in Lemma \thv(9.lem4),
$
\1_{\{b_n< n\theta^{\star}_{n,\CC}\}}\prod_{x\in\CC}\chi_n(x)
\leq 
\1_{\{
\varrho_{n,\CC}(0)> r_n(\rho^{\star}_n)b_n/(2\b n^2k^5)
\}}
\prod_{x\in\CC}\chi_n(x).
$
Next, by \eqv(5.lem1.0) and \eqv(7.lem4.17),
\be
\{
\varrho_{n,\CC}(0)> r_n(\rho^{\star}_n)b_n/(2\b n^2k^5)
\}
\subseteq
\textstyle
\cup_{\{x,y\}\in G(\CC)}\{
\bar\g_n(\{x,y\})> r_n(\rho^{\star}_n)/\b n^8
\}
\ee
for large enough $n$. Thus
\be
\Eq(7.lem4.23) 
\bar\nu^{\circ,(k),-}_n(u,\infty)
\leq 
k\frac{a_n}{2^n}\,
{\textstyle 
\sum_{\CC'\in \GG_2}
\sum_{\CC\in \GG_k:\CC'\subset\CC}\prod_{x\in\CC}\chi_n(x)
\1_{\{
\bar\g_n(\CC')> r_n(\rho^{\star}_n)/\b n^8
\}}
},
\ee
and averaging out,
\be
\Eq(7.lem4.26) 
\E[\bar\nu^{\circ,(k),-}_n(u,\infty)]
\leq
k (k-2)! n^{-(c_{\star}-1)(k-2)}
\left[ {\bar  a_n}\P(w_n(0)> \bar b_nr_n(\rho^{\star}_n)/\b n^8) \right]^2,
\ee
where $\bar  a_n$, $\bar b_n$ are as before (see the line below  \eqv(7.lem4.19)). A simple Gaussian tail estimate gives
$
\left[ {\bar  a_n}\P(w_n(0)> \bar b_nr_n(\rho^{\star}_n)/\b n^8) \right]^2
\leq
\left(
\b n^8/r_n(\rho^{\star}_n)
\right)^{2\a(\varepsilon/2)(1+o(1))}
$.
Again, the assumption that $c_{\star}>2$ enables us to use \eqv(8.3lem1.sum), yielding
\be
\Eq(7.lem4.27) 
\textstyle
0\leq \sum_{3\leq k\leq k_n^{\star}}\E[\bar\nu^{\circ,(k),-}_n(u,\infty)]
\leq
4n^{-(c_{\star}-1)} \left(
\b n^8/r_n(\rho^{\star}_n)
\right)^{2\a(\varepsilon/2)(1+o(1))}.
\ee

Now set
\be
\Delta_n(u)\equiv
\textstyle\sum_{k\geq 3}\bar\nu^{\circ,(k)}_n(u,\infty)
=
\hat\nu^{\circ}_n(u,\infty)-\bar\nu^{\circ,(2)}_n(u,\infty)>0.
\Eq(7.lem4.30)
\ee
Combining  \eqv(7.lem4.22), \eqv(7.lem4.27),
and using \eqv(9.lem4'.2) to bound $r_n(\rho^{\star}_n)$,
we obtain that under the assumptions of the lemma,  for all $u>0$,
\be
\Eq(7.lem4.28) 
\textstyle
0\leq 
\E\Delta_n(u)
\leq 
4n^{-(c_{\star}-1)}\left(\E\left[v_n(u;  \bar  a_n, \bar b_n)\right]
+e^{-\b\sqrt{8 \varepsilon n\log n}}
\right),
\ee
and so
$
\lim_{n\rightarrow\infty}
\E\Delta_n(u)=0
$.
But this and \eqv(7.lem4.11) yield  \eqv(7.lem4.1). The proof of Lemma \thv(7.lem4) is complete. 
\end{proof}

\begin{proof}[Proof of Lemma \thv(7.lem5)] 
As in the proof of Lemma \thv(7.lem4) we separate the contribution of $\bar\nu^{\circ,(2)}_n$ from those of 
$\bar\nu^{\circ,(k)}_n$, $k\geq 3$ (see \eqv(7.lem4.3) and \eqv(7.lem4.5) for their definitions).
Namely, we write
\be
\bar\nu^\circ_n(u,\infty)-\E[\bar\nu^\circ_n(u,\infty)]
=
\bar\nu^{\circ,(2)}_n(u,\infty)-\E[\bar\nu^{\circ,(2)}_n(u,\infty)]
+\{\Delta_n(u)-\E[\Delta_n(u)]\}
\ee
where $\Delta_n(u)$ is defined in \eqv(7.lem4.30), and we take
\bea
V_{n,1}(u)&\equiv&\E\left[v_n(u;  \bar  a_n, \bar b_n)\right]+e^{-\b\sqrt{8 \varepsilon n\log n}},
\\
V_{n,2}(u)&\equiv&\E\left[v_n(u;  a_n^+, b_n^+)\right],
\eea
where $v_n(u;  \bar  a_n, \bar b_n)$ and $v_n(u;  a_n^+, b_n^+)$
are as in \eqv(7.lem4.28) and   \eqv(7.lem4.10), respectively. Eq.~\eqv(7.lem5.2) then
follows from \eqv(7.lem4.21) and \eqv(7.lem4.11). 
Note that $\Delta_n(u)>0$. Thus, by \eqv(7.lem4.28),
\be
\P\left(\left|\Delta_n(u)-\E[\Delta_n(u)]\right|\geq 4n^{-(c_{\star}-1)} V_{n,1}(u)/L_1
\right)
\leq 2L_1,\quad \forall L_1>0.
\Eq(7.lem5.5) 
\ee

Let us now establish that
for all $L_2\geq 0$ such that $na_nL_2/2^n=o(1)$,
\be
\textstyle
\P\left(\left|\bar\nu^{\circ,(2)}_n(u,\infty)-\E[\bar\nu^{\circ,(2)}_n(u,\infty)]\right|
\geq 
\sqrt{8na_nL_2V_{n,2}(2u)/2^n}\right)
\leq e^{-L_2}.
\Eq(7.lem5.4) 
\ee
Eq.~\eqv(7.lem4.5)  prompts us to set $\SS_n\equiv\sum_{\CC\in\GG_2}X(\CC)$,
where $X(\CC)\equiv Y(\CC)-\E Y(\CC)$ and
\be
\textstyle
Y(\CC)\equiv\prod_{x\in\CC}\chi_n(x)\prod_{x'\in\del\CC}\overline\chi_n(x')
\Bigl(1-
\left[1+
\frac{\min_{x\in\CC}w_n(x)}{(n-1)}
\right]^{-1}
\Bigr)^{\lceil b_nu\rceil},\quad \CC\in\GG_2.
\Eq(7.lem5.12) 
\ee
Then
\be
\P\left(\left|\bar\nu^{\circ,(2)}_n(u,\infty)-\E[\bar\nu^{\circ,(2)}_n(u,\infty)]\right|\geq \theta\right)
=\textstyle{\P\left(\left|\SS_n\right|\geq 2^{n-1} a_n^{-1}\theta\right)}.
\Eq(7.lem5.8) 
\ee
Observe that $\SS_n$ is a sum of dependent random variables.
To get round this difficulty we split it into $2n$ disjoint sums as follows. Write $\GG_2=\cup_{1\leq j\leq n}\GG_2^j$
where, for each $1\leq j\leq n$, 
$
\GG_2^j\equiv\left\{\{x,y\}\in\GG_2\mid x_j=-y_j\right\}
$
is the set of neighboring vertices that differ in exactly the $j$-th coordinate. Subdivide 
each $\GG_2^j$ into $4$ disjoint sets, 
$
\GG_2^j=\cup_{1\leq i\leq 4}\GG_2^{j,i}
$,
\be
\GG_2^{j,i}\equiv\left\{\{x,y\}\in\GG_2^j\mid \dist({\bf 1},\{x,y\})=4m+i-1, m\geq 0\right\},
\Eq(7.lem5.10')
\ee
where ${\bf 1}$ denotes the vertex of $\VV_n$ all of whose coordinates are 1.
Then, 
\be
\textstyle
\SS_n=\sum_{i=1}^4\sum_{j=1}^n \SS_n^{j,i},\quad
\SS_n^{j,i}
\equiv\sum_{\CC\in\GG_2^{j,i}} X(\CC).
\Eq(7.lem5.10) 
\ee
Each $\SS_n^{j,i}$ now is a sum of independent random variables, and can be controlled using
Bennett's bound \cite{Ben} for the tail behavior of sums of random variables, which we specialize as follows: if
$(X(i),\, i\in\II)$ is a family of i.i.d.~centered random variables
that satisfies $\max_i|X(i)|\leq  A$ then, setting
$\tilde B^2=\sum_{i\in\II}\E X^2(i)$,  for all $ B^2\geq \tilde B^2$,
for all $t< B^2/(2 A)$, 
\be
\textstyle{\P\left(\left|\sum_{i\in\II}X(i)\right|\geq t\right)}
\leq\exp\left\{-{t^2}/{4 B^2}\right\}.
\Eq(7.lem5.7) 
\ee
Since $|X(\CC)|\leq 1$ and since
$
\sum_{\CC\in\GG_2^{j,i}}\E X^2(\CC)
\leq(4na_n)^{-1}2^{n-1}V_{n,2}(2u)
$,
as follows from \eqv(7.lem4.6)-\eqv(7.lem4.10),
we may choose $ A=1$ and $ B^2=(4na_n)^{-1}2^{n-1}V_{n,2}(2u)$
in  \eqv(7.lem5.7), and we get that  for all $L_2>0$,
\be
\P\left(\left|\SS_n^{j\pm}\right|\geq  2^{n-1} (4na_n)^{-1}\theta\right)
=
\textstyle
\P\bigl(\left|\SS_n^{j\pm}\right|\geq  \sqrt{2^{n-1}L_2V_{n,2}(2u)/(4na_n)}\bigr)
\leq e^{-L_2},
\Eq(7.lem5.9) 
\ee
where we chose $\theta^2= 4na_n L_22^{-n+1}V_{n,2}(2u)$.
This choice is permissible provided that
$\theta\leq V_{n,2}(2u)/2$. In view of 
\eqv(7.lem4.8)
this will be verified for all $n$ large enough whenever $\theta\downarrow 0$
as $n\uparrow\infty$, i.e.~whenever $na_nL_2/2^n=o(1)$. 
Eq.~\eqv(7.lem5.9)  holds true for each $1\leq j\leq n$ and $1\leq i\leq 4$,
and combined with \eqv(7.lem5.10) yields
\be
\textstyle \P\left(\left|\SS_n\right|\geq 4n\sqrt{2^{n-1}L_2V_{n,2}(2u)/(4na_n)}\right)
\leq 4n e^{-L_2},
\Eq(7.lem5.11) 
\ee
which, by \eqv(7.lem5.8), is tantamount to \eqv(7.lem5.4). Combining \eqv(7.lem5.5)
and \eqv(7.lem5.4) then yields \eqv(7.lem5.1) and concludes the proof of Lemma \thv(7.lem5).
\end{proof}

\smallskip
\subsubsection{Convergence properties of $\bar\s^\circ_n$ and related functions}
 \label{7.3.3}

We have:

\begin{lemma}
\TH(7.lem7) 
Under the assumption and with the notation of Lemma \thv(7.lem4),
\be
\lim_{n\rightarrow\infty}n\E[\bar\s^\circ_n(u,\infty)]=
\lim_{n\rightarrow\infty}n\E[\bar\s^{=}_n(u,\infty)]= 
2\nu^\dagger(2u,\infty),\quad\forall u>0.
\Eq(7.lem7.1) 
\ee
\end{lemma}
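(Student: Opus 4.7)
The proof of Lemma \thv(7.lem7) follows the same architecture as Lemma \thv(7.lem4), resting on two pillars: (i) for both $\bar\s^\circ_n$ and $\bar\s^=_n$, the only contribution that survives the factor-$n$ rescaling comes from terms involving exclusively clusters of size $2$; and (ii) on a 2-cluster $\mathcal{C}=\{x_1,x_2\}\in\GG_2$, Proposition \thv(5.prop1), (i) gives $Q^u_{n,\mathcal{C}}(x_1)=Q^u_{n,\mathcal{C}}(x_2)=q^u_\mathcal{C}$, together with the algebraic identity $(q^u_\mathcal{C})^2 = q^{2u}_\mathcal{C}(1+o(1))$ uniformly in $\mathcal{C}$ — valid because the base $1-(1+\varrho_{n,\mathcal{C}}(0)/(n-1))^{-1}$ tends to $1$ on occupied clusters (where $\varrho_{n,\mathcal{C}}(0)\geq r_n(\rho^{\star}_n)\gg n$), so that the $O(1)$ discrepancy in the exponents $2\lceil b_n u\rceil$ and $\lceil 2 b_n u\rceil$ is absorbed.

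First I would decompose both sums by cluster size. Write $\bar\s^=_n=\bar\s^{=,(2)}_n+\bar\s^{=,(\geq 3)}_n$ according to $|C^{\star}_{n,l}|$, and $\bar\s^\circ_n=\bar\s^{\circ,(2,2)}_n+\bar\s^{\circ,\mathrm{rest}}_n$ according to whether both $|C^{\star}_{n,l}|=|C^{\star}_{n,l'}|=2$ or at least one of them is $\geq 3$. Using the bound of Proposition \thv(5.prop1), (ii) on $Q^u_{n,l}(x)$ for $|C^{\star}_{n,l}|\geq 3$, the crude bound $|C^{\star}_{n,l}|\leq k_n^{\star}=O(n/\log n)$ from \eqv(8.3lem1.5), and the summability identity \eqv(8.3lem1.sum), the arguments of \eqv(7.lem4.13)--\eqv(7.lem4.28) adapt verbatim (replacing $Q^u$ by $(Q^u)^2$ or $Q^u Q^u$, which only improves the decay): on $\O_0\cap\O^{\star}$, for all $n$ large enough,
\[
\E\bigl[\bar\s^{=,(\geq 3)}_n(u,\infty)\bigr]+\E\bigl[\bar\s^{\circ,\mathrm{rest}}_n(u,\infty)\bigr]\leq C n^{-(c_{\star}-1)},
\]
so these contributions vanish after multiplication by $n$ since $c_{\star}>2$.

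For the surviving 2-cluster parts, the identity $Q^u_{n,\mathcal{C}}(x_1)=Q^u_{n,\mathcal{C}}(x_2)=q^u_\mathcal{C}$ together with $(q^u_\mathcal{C})^2=q^{2u}_\mathcal{C}(1+o(1))$ yields for $\bar\s^=_n$ directly
\[
n\,\bar\s^{=,(2)}_n(u,\infty)=\frac{a_n}{2^n}\sum_{\mathcal{C}\in\GG_2}\text{ind}_\mathcal{C}\cdot 4(q^u_\mathcal{C})^2=2\bar\nu^{\circ,(2)}_n(2u,\infty)(1+o(1)),
\]
where the indicators are as in \eqv(7.lem4.5); taking expectations and invoking \eqv(7.lem4.11) of Lemma \thv(7.lem4) then gives the claimed limit. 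For $\bar\s^\circ_n$ I would further split $\bar\s^{\circ,(2,2)}_n$ into the contribution indexed by $(l,x)=(l',x')$ (where $|\partial x\cap\partial x|=n$ supplies a compensating $1/n$) and the cross-cluster contributions; the diagonal piece, by the same identity, is asymptotically a multiple of $\bar\nu^{\circ,(2)}_n(2u,\infty)$, while the $l=l'$, $x\neq x'$ pieces vanish identically because the hypercube is triangle-free, so $|\partial x_1\cap\partial x_2|=0$ for the two vertices of a 2-cluster at Hamming distance 1. The remaining $l\neq l'$ cross-cluster pieces involve disjoint 2-clusters $\mathcal{C},\mathcal{C}'$ with some pair of vertices at distance $2$ (so $|\partial x\cap\partial x'|=2$); using independence of $q^u_\mathcal{C}\text{ind}_\mathcal{C}$ and $q^u_{\mathcal{C}'}\text{ind}_{\mathcal{C}'}$ modulo negligible boundary overlaps, together with the estimate $\E[q^u_\mathcal{C}\text{ind}_\mathcal{C}]\sim \nu^\dagger(u,\infty)/(na_n)$ extracted from \eqv(7.lem4.10) and Lemma \thv(7.lem6), a direct count of such nearby pairs (of order $n^4 2^n\cdot n^{-4c_{\star}}$ in expectation) shows the cross contribution is $O([\nu^\dagger(u,\infty)]^2/a_n)$, hence vanishes after multiplying by $n$. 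Combining the diagonal and cross pieces completes the identification of the limit.

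The main obstacle is the combinatorial counting for the cross-cluster piece of $\bar\s^\circ_n$: one must enumerate ordered pairs of disjoint 2-clusters sharing at least one common neighbor in the hypercube, and then handle the correlations induced by overlapping boundary indicators (the sets $\partial\mathcal{C}$ and $\partial\mathcal{C}'$ can share up to $4$ vertices when $\dist(\mathcal{C},\mathcal{C}')=2$), showing that the resulting dependence contributes only a factor $(1-p)^{-O(1)}=1+o(1)$. The girth-$4$ property of $\QQ_n$, which annihilates the $l=l'$, $x\neq x'$ contribution outright, plays a key role in keeping the accounting tractable.
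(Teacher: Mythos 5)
Your proposal follows essentially the same route as the paper's: decompose by whether the cluster indices coincide, isolate the $2$-cluster contribution, invoke Proposition \thv(5.prop1), (i) for the explicit geometric tail, exploit $(q^u_\CC)^2=q^{2u}_\CC(1+o(1))$ (in the paper's phrasing, "substituting $|C^{\star}_{n,l}|^2 Q_{n,l}^{2u}(x)$ for $[\sum_x Q^u_l(x)]^2$"), and use independence of $Q^u_\CC$, $Q^u_{\CC'}$ for disjoint clusters plus a crude count to kill the cross-cluster part, exactly as the paper does for $\bar\s^{\neq}_n$ in \eqv(7.Lem7.7)--\eqv(7.Lem7.13). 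Your treatment of the larger clusters via the bounds of \eqv(7.lem4.13)--\eqv(7.lem4.28) and the summation \eqv(8.3lem1.sum) is also what the paper has in mind. So the architecture matches.

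The one substantive gap is that you leave "a multiple of $\bar\nu^{\circ,(2)}_n(2u,\infty)$" uncomputed for the diagonal piece of $\bar\s^{\circ,(2,2)}_n$, and this is actually the step where care pays off. If you carry it out: with $C^{\star}_{n,l}=\{x_1,x_2\}$, $Q^u(x_1)=Q^u(x_2)=q^u_\CC$, and $|\del x_i\cap\del x_i|=n$, the $l=l'$, $x=x'$ contribution to $n\bar\s^\circ_n$ is $\frac{a_n}{2^n}\sum_{\CC}\mathrm{ind}_\CC\cdot 2(q^u_\CC)^2\approx\bar\nu^{\circ,(2)}_n(2u,\infty)$, i.e.~the multiple is $1$, not $2$. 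Since you correctly note that the $l=l'$, $x\neq x'$ terms vanish outright (triangle-free hypercube) and the cross-cluster piece is $O(a_n^{-1})$, your accounting yields $\lim n\E[\bar\s^\circ_n(u,\infty)]=\nu^\dagger(2u,\infty)$, not the $2\nu^\dagger(2u,\infty)$ stated in \eqv(7.lem7.1). The source of the mismatch is that the paper's claimed identity $\bar\s^\circ_n=\bar\s^{=}_n+\bar\s^{\neq}_n$ (with $\bar\s^{=}_n$ as in \eqv(7.Lem7.2)) is not exact: the $x\neq x'$ cross terms inside $[\sum_{x\in C_l}Q^u_l(x)]^2$ are weighted by $1$ in $\bar\s^{=}_n$ but by $|\del x\cap\del x'|/n=0$ in the true $l=l'$ part of $\bar\s^\circ_n$, so $\bar\s^{=}_n$ over-counts the $l=l'$ contribution by a factor $2$ on $2$-clusters. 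Your more granular splitting exposes this, and if you follow the lemma's stated constant you will be forced into an inconsistency; the correct statement (and what your argument proves) has a factor $1$ for $\bar\s^\circ_n$ and a factor $2$ for $\bar\s^{=}_n$. The discrepancy is harmless for the paper — Theorem \thv(7.theo2) and Condition (A2) only need both quantities to vanish like $1/n$, which either constant gives — but you should compute the multiple explicitly rather than deferring it, since it is precisely what distinguishes the two limits.
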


\begin{lemma}
\TH(7.lem8) 
For all $L_1,L_3>0$ and $L_2\geq 0$ such that $na_nL_2/2^n=o(1)$, for all $u>0$,
\bea
\Eq(7.lem8.1') 
&
\textstyle\P\bigl(\left|\bar\s^{=}_n(u,\infty)-\E[\bar\s^{=}_n(u,\infty)]\right|
\geq
\tilde\phi_n(u,L_1,L_2)
\bigr)
\leq 2n e^{-L_2}+4L_1,&
\\
\Eq(7.lem8.1) 
\textstyle
&
\P\bigl(\left|\bar\s^\circ_n(u,\infty)-\E[\bar\s^\circ_n(u,\infty)]\right|
\geq \psi_n(u,L_1,L_2,L_3)
\bigr)
 \leq 2n e^{-L_2}+2L_1+2L_3,\quad
 &
\eea
where
$
\tilde\phi_n(u,L_1,L_2)\equiv
\sqrt{a_nL_2W_{n,2}(2u)/2^n}
+
4n^{-c_{\star}} W_{n,1}(u)/L_1
$,
$
\psi_n(u,L_1,L_2,L_3)\equiv
\tilde\phi_n(u,L_1,L_2)+ 2^8W^2_{n,3}(u)/(a_nL_3)
$,
and where for each $n$ and $1\leq i\leq 3$, $W_{n,i}(u)$   are positive decreasing functions, while for each $u>0$, under the assumptions of Lemma \thv(7.lem4),
\bea
\Eq(7.lem8.2) 
&\lim_{n\rightarrow\infty}W_{n,1}(u)
=\lim_{n\rightarrow\infty}W_{n,2}(u)
=2\nu^\dagger(2u,\infty),&
\\
\Eq(7.lem8.3) 
&
\lim_{n\rightarrow\infty}W^2_{n,3}(u)
=[\nu^\dagger(u,\infty)]^2.&
\eea
\end{lemma}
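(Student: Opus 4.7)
The plan is to run a decomposition and concentration argument that parallels the proof of Lemma \thv(7.lem5), splitting the quantities $\bar\s^{=}_n$ and $\bar\s^\circ_n$ into pieces according to (i) the size $k=|C^\star_{n,l}|$ of the components being summed and (ii) for $\bar\s^\circ_n$, whether the pair of component indices satisfies $l=l'$ or $l\neq l'$. For the diagonal part with $l=l'$ we rewrite
\be
\bar\s^{=}_n(u,\infty)
=\sum_{k\geq 2}\bar\s^{=,(k)}_n(u,\infty),
\quad
\bar\s^{=,(k)}_n(u,\infty)
=\frac{a_n}{n2^n}\sum_{\CC\in\GG_k}\prod_{x\in\CC}\chi_n(x)\prod_{x'\in\del\CC}\overline\chi_n(x')
\Bigl[\sum_{x\in\CC}Q^u_{n,\CC}(x)\Bigr]^2,
\ee
and analogously for $\bar\s^\circ_n$ with the weight $n^{-2}|\del x\cap\del x'|$.

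For the dominant contribution from $k=2$ I will use Proposition \thv(5.prop1), (i): if $\CC=\{x,y\}\in\GG_2$ then $Q^u_{n,\CC}(x)=Q^u_{n,\CC}(y)$ is a pure geometric tail, so $[\sum_{z\in\CC}Q^u_{n,\CC}(z)]^2=4[Q^u_{n,\CC}(x)]^2$ reproduces (up to the factor $4/n$) exactly the structure \eqv(7.lem4.5) but evaluated at argument $2u$ instead of $u$. Consequently $\bar\s^{=,(2)}_n$ is a sum of bounded random variables indexed by $\GG_2$, and after splitting $\GG_2=\cup_{j,i}\GG_2^{j,i}$ as in \eqv(7.lem5.10') into $4n$ disjoint subfamilies of \emph{independent} summands, Bennett's inequality \eqv(7.lem5.7) applies with $A=1/n$ and $B^2$ controlled by a shifted version of $V_{n,2}(2u)$; this is what gives the first summand $\sqrt{a_nL_2W_{n,2}(2u)/2^n}$ of $\tilde\phi_n$ and the error $2ne^{-L_2}$, with $W_{n,2}(u)\to 2\nu^\dagger(2u,\infty)$ from Lemma \thv(7.lem7). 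For the pieces with $k\geq 3$, exactly as in \eqv(7.lem4.19)--\eqv(7.lem4.28) of Lemma \thv(7.lem4), I bound $Q^u_{n,\CC}(x)$ by an exponential (when $b_n\geq n\theta^\star_{n,\CC}$) or by $1$ combined with the truncation estimate $\varrho_{n,\CC}(0)>r_n(\rho^\star_n)b_n/(2\b n^2k^5)$; this yields $\E[\sum_{k\geq3}\bar\s^{=,(k)}_n(u,\infty)]=O(n^{-c_\star}W_{n,1}(u))$, and since these $k\geq3$ contributions are positive a single-sided Markov inequality gives the term $4n^{-c_\star}W_{n,1}(u)/L_1$ in $\tilde\phi_n$ at cost $4L_1$ (the extra factor $2$ compared to \eqv(7.lem5.5) comes from having to bound both sides once the mean is only known to be $o(1)$).

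The main obstacle---and the only genuine new input---is the off-diagonal part $\bar\s^{\circ,\neq}_n$ of $\bar\s^\circ_n$ where $l\neq l'$. Since $C^\star_{n,l}$ and $C^\star_{n,l'}$ are disjoint components of $V_n(\rho^\star_n)$, vertices $x\in C^\star_{n,l}$ and $x'\in C^\star_{n,l'}$ cannot be adjacent, and the weight $|\del x\cap\del x'|$ forces $\dist(x,x')=2$, in which case $|\del x\cap\del x'|=2$. The plan is to compute the expectation directly: because the components are disjoint, after conditioning on the occupancy pattern inside each component the two $Q$-factors become independent, and using Proposition \thv(5.prop1) together with the analogue of Lemma \thv(7.lem6) applied to the product of two such tail factors one obtains
\be
\E[\bar\s^{\circ,\neq}_n(u,\infty)]\leq \frac{C}{a_n}W_{n,3}^2(u),
\quad W_{n,3}^2(u)\to[\nu^\dagger(u,\infty)]^2,
\ee
where the factor $a_n^{-1}$ reflects the fact that the number of ordered pairs $(x,x')$ at distance $2$ with both endpoints occupied is of order $n^22^n a_n^{-2}$, which after the weight $a_n/(n^22^n)$ produces $a_n^{-1}$. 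A first-order Markov inequality then controls this positive cross term by the extra summand $2^8W_{n,3}^2(u)/(a_nL_3)$ at cost $2L_3$. Combining with the diagonal bound from \eqv(7.lem8.1') yields \eqv(7.lem8.1), and the delicate point will be verifying that the conditioning on boundary occupancies ($\overline\chi_n$-factors) does not spoil the independence of the two $Q$-factors---this is handled by noting that the boundaries $\del C^\star_{n,l}$ and $\del C^\star_{n,l'}$ are themselves disjoint (otherwise the two components would merge), so the relevant product factorizes cleanly into a product of expectations.
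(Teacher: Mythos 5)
Your overall architecture — split $\bar\s^\circ_n$ into its diagonal part $\bar\s^{=}_n$ and off‑diagonal part $\bar\s^{\neq}_n$, rerun the $\bar\nu^\circ_n$ analysis of Lemma~\thv(7.lem5) with the observation $[\sum_{z\in\CC}Q^u_{n,\CC}(z)]^2\sim|\CC|^2[Q^u_{n,\CC}(z)]^2\sim|\CC|^2 Q^{2u}_{n,\CC}(z)$ for the diagonal, and use independence of $Q^u_{n,\CC}(x)$ and $Q^u_{n,\CC'}(x')$ for disjoint $\CC,\CC'$ plus a first‑order Markov inequality for the cross part — is exactly the paper's route, so the plan is sound.

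There is, however, a genuine error in the justification of independence in the off‑diagonal part. You claim that $\del C^{\star}_{n,l}$ and $\del C^{\star}_{n,l'}$ are disjoint for $l\neq l'$, "otherwise the two components would merge". This is false, and false precisely in the only relevant configuration: if $x\in C^{\star}_{n,l}$, $x'\in C^{\star}_{n,l'}$ satisfy $\dist(x,x')=2$ (the only way the weight $|\del x\cap\del x'|$ is nonzero), then any common neighbour $y$ lies in $\del x\cap\del x'\subseteq \del C^{\star}_{n,l}\cap \del C^{\star}_{n,l'}$, because $y\notin C^{\star}_{n,l}$ and $y\notin C^{\star}_{n,l'}$ (otherwise the two components would indeed merge). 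So the boundaries \emph{always} intersect in the cross terms you are summing, and your proposed reason for factorisation evaporates. The correct reason is different and more robust: $Q^u_{n,\CC}(x)$ is a function of the Gaussians $w_n|_\CC$ only, $Q^u_{n,\CC'}(x')$ of $w_n|_{\CC'}$ only, and the boundary occupancy term $\prod_{y'\in\del\CC\cup\del\CC'}\overline\chi_n(y')$ is a \emph{single} product over the union, so it counts each shared boundary vertex once, and it is a function of $w_n|_{\del\CC\cup\del\CC'}$. Since $\CC$, $\CC'$ and $\del\CC\cup\del\CC'$ are pairwise disjoint subsets of $\VV_n$, the expectation of $\phi(\CC,\CC')Q^u_{n,\CC}(x)Q^u_{n,\CC'}(x')$ factors into three independent expectations, regardless of whether $\del\CC$ and $\del\CC'$ overlap each other. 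This is the argument the paper uses in \eqv(7.Lem7.8)--\eqv(7.Lem7.11). You reach the right conclusion, but the stated reason would not survive scrutiny and should be replaced. A minor secondary point: in your heuristic count of distance‑two pairs you write "of order $n^22^n a_n^{-2}$", which conflates the occupancy probability ($n^{-c_\star}$) with $a_n^{-1}$; the $a_n^{-1}$ prefactor in $\E[\bar\s^{\neq}_n]$ ultimately comes out of the $v_n$‑type estimates feeding into $W_{n,3}$, not from the raw combinatorics.
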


We prove  Lemmata \thv(7.lem7) and \thv(7.lem8) simultaneously.

\begin{proof}[Proof of Lemma \thv(7.lem7) and Lemma \thv(7.lem8)] 
Write 
$
\bar\s^\circ_n(u,\infty)=\bar\s^{=}_n(u,\infty)+\bar\s^{\neq}_n(u,\infty)
$
where
\bea
\Eq(7.Lem7.2)
\bar\s^{=}_n(u,\infty)
\hspace{-6pt}&=&\hspace{-6pt}
\frac{a_n}{n 2^n}
\textstyle
\sum_{1\leq l\leq L^{\star}}
\left[\sum_{x\in C^{\star}_{n,l}}Q_{n,l}^u(x)\right]^2,
\\
\Eq(7.Lem7.3)
\bar\s^{\neq}_n(u,\infty)
\hspace{-6pt}&=&\hspace{-6pt}
\frac{a_n}{n^22^n}
\textstyle
\sum_{1\leq l\neq l'\leq L^{\star}}
\sum_{x\in C^{\star}_{n,l}}\sum_{x'\in C^{\star}_{n,l'}}
Q_{n,l}^u(x)Q_{n,l'}^u(x')|\del x\cap \del x'|.\quad
\eea
Comparing \eqv(7.Lem7.2) to \eqv(7.2.6), we see that $n\bar\s^{=}_n(u,\infty)$ differs from $\bar\nu^\circ_n(u,\infty)$ 
in that the term in square brackets is squared. 
However, examining the proof of \thv(7.lem4) (see \eqv(7.lem4.5)-\eqv(7.lem4.7) and \eqv(7.lem4.13)-\eqv(7.lem4.18))
we also see that
 $n\bar\s^{=}_n(u,\infty)$ can be controlled in exactly the same way as $\bar\nu^\circ_n(u,\infty)$,
substituting $|C^{\star}_{n,l}|^2 Q_{n,l}^{2u}(x)$ for  $[\sum_{x\in C^{\star}_{n,l}}Q_{n,l}^u(x)]^2$.
This yields
\be
\Eq(7.Lem7.4)
\lim_{n\rightarrow\infty}n\E[\bar\s^{=}_n(u,\infty)]=2\nu^\dagger(2u,\infty),\quad\forall u>0.
\ee
Similarly, a rerun of the proof of 
Lemma \thv(7.lem5) yields, under the same assumptions  on $L_1,L_2$,
\be
\textstyle
\P\bigl(\left|\bar\s^{=}_n(u,\infty)-\E[\bar\s^{=}_n(u,\infty)]\right|
\geq
\phi_n(u,L_1,L_2)
\bigr)
\leq 2n e^{-L_2}+4L_1,
\Eq(7.Lem7.5)
\ee
where
$
\phi_n(u,L_1,L_2)\equiv
\sqrt{a_nL_2W_{n,2}(2u)/2^n}
+
4n^{-c_{\star}} W_{n,1}(u)/L_1
$,
and where $W_{n,1}(u)$ and $W_{n,2}(u)$ enjoy the same properties, under the same assumptions, as
$V_{n,1}(u)$ and $V_{n,2}(u)$, but  with \eqv(7.lem5.2) replaced by \eqv(7.lem8.2).

Let us now establish that
\be
\Eq(7.Lem7.7)
\E[\bar\s^{\neq}_n(u,\infty)]
\leq
2^8a_n^{-1}[W_{n,3}(u)]^2
\ee
for some positive decreasing function $W_{n,3}(u)$ of $u>0$, that satisfies
\be
\lim_{n\rightarrow\infty}W_{n,3}(u)
=\nu^\dagger(u,\infty),\quad\forall u>0.
\Eq(7.Lem7.7')
\ee
For this write
$
\bar\s^{\neq}_n(u,\infty)
=
\sum_{2\leq k,k'\leq k_n^\star}
\hat\s^{\neq,(k,k')}_n(u,\infty)
$
where, with the notation of \eqv(7.lem4.3),
\be
\Eq(7.Lem7.8)
\hat\s^{\neq,(k,k')}_n(u,\infty)
\equiv
\frac{a_n}{n^22^n}\,
{ \textstyle
%
%
\sum^{(1)}_{\CC,\CC'}\phi(\CC,\CC')\sum^{(2)}_{x, x'}Q_{n,\CC}^u(x)Q_{n,\CC'}^u(x')
}.
\ee
Here the first sum, $\Sigma^{(1)}$, is over all $\CC\in \GG_k$ and $\CC'\in \GG_{k'}$ such that $\dist(\CC,\CC')= 2$,
the second one, $\Sigma^{(2)}$, is over all $x\in\CC$ and $x'\in\CC'$ such that $\dist(x,x')=2$, and
$
\phi(\CC,\CC')\equiv
\prod_{y\in\CC\cup\CC'}\chi_n(y)
\prod_{y'\in\del\CC\cup\del\CC'}\overline\chi_n(y')
$.
Thus $\CC\cap\CC'=\emptyset$, so that that $Q_{n,\CC}^u(x)$ and $Q_{n,\CC'}^u(x')$ are independant random variables for all $x\in\CC$, $x'\in\CC'$, and averaging out,
\be
\Eq(7.Lem7.9)
\textstyle
\E\sum_{2\leq k,k'\leq k_n^\star}\hat\s^{\neq,(k,k')}_n(u,\infty)
\leq
a_n^{-1}
[W_{n,3}(u)
]^2 s^{(3)}_ns^{(4)}_n,
\ee
where
\be
W_{n,3}(u)\equiv\E\left[v_n(u;  \bar  a_n, \bar b_n)\right]+e^{-\b\sqrt{8 \varepsilon n\log n}}
\Eq(7.Lem7.14)
\ee
for some  sequences $\bar a_n$, $\bar b_n$ chosen as in Lemma \thv(7.lem6), and where
\be
\Eq(7.Lem7.10)
\textstyle
s^{(i)}_n=\sum_{2\leq k\leq k_n^\star}k^i(k-1)!n^{-(c_{\star}-1)(k-2)}, \quad i\geq 1.
\ee
To see this, reason that there are at most $2^n(k-1)!n^{k-1}$ sets $\CC\in \GG_k$, that
for each $\CC\in \GG_k$ there are at most $n^2k'(k'-1)!n^{k'-1}$
sets $\CC'\in \GG_{k'}$ such that $\dist(\CC,\CC')= 2$,
 that $\Sigma^{(2)}$ contains at most $kk'$ terms, and that, proceeding as in \eqv(7.lem4.14)-\eqv(7.lem4.18), respectively
\eqv(7.lem4.23)-\eqv(7.lem4.27), to bound the terms $Q_{n,\CC}^u$ when $k>2$, depending on whether 
$b_n< n\theta^{\star}_{n,\CC}$ or $b_n\geq n\theta^{\star}_{n,\CC}$, and proceeding as in  \eqv(7.lem4.6)-\eqv(7.lem4.8)
when $k=2$, we have
\be
\Eq(7.Lem7.11)
\E\left[\phi(\CC,\CC')Q_{n,\CC}^u(x)Q_{n,\CC'}^u(x')\right]
\leq
(kk')^2(na_n)^{-2}n^{-c_{\star}[(k-2)+(k'-2)]}
[W_{n,3}(u)]^2,
\ee
for some sequences $\bar a_n$, $\bar b_n$ for which the assumptions of Lemma \thv(7.lem6) are verified,
and all $k,k'\geq 2$. Now for $c_{\star}>2$, by \eqv(8.3lem1.sum),
$
s^{(i)}_n\leq 2^i(1+2^{i+1}n^{-2(c_{\star}-1)})
$.
Inserting this in \eqv(7.Lem7.9) and using Lemma \thv(7.lem6)
proves the claim \eqv(7.Lem7.7)-\eqv(7.Lem7.7').
This immediately implies that
\be
\textstyle
\lim_{n\rightarrow\infty}
n\E[\bar\s^{\neq}_n(u,\infty)]=0,\quad \forall u>0,
\Eq(7.Lem7.12)
\ee
and that, under the assumptions and with the notation of \eqv(7.Lem7.7)-\eqv(7.Lem7.7'), for all $u>0$,
\be
\textstyle
\P\left(\left|\bar\s^{\neq}_n(u,\infty)-\E[\bar\s^{\neq}_n(u,\infty)]\right|
\geq 
2^8[W_{n,3}(u)]^2/(a_nL_3)
\right) \leq
2L_3.
\Eq(7.Lem7.13)
\ee
Lemma \thv(7.lem7) now follows from \eqv(7.Lem7.4) and \eqv(7.Lem7.12),
and Lemma \thv(7.lem8) follows from \eqv(7.Lem7.5), \eqv(7.Lem7.7'), and \eqv(7.Lem7.13).
\end{proof}


We now prove Lemma \thv(7.lem9).

\begin{proof}[Proof of Lemma \thv(7.lem9)] Let us establish first that for all $m\geq 1$, if $c_{\star}>2$, 
\be
\Eq(7.lem9.12) 
\E\bigl[I_{2,m}^{(3)}\bigr]
\leq 
n^{-1}a_n^{-2}2^n
[
W_{n,3}(u)
]^2
2^5(1+2^{4}n^{-2(c_{\star}-1)})^2,
\ee
where $W_{n,3}(u)>0$ is a decreasing function satisfying
$
\lim_{n\rightarrow\infty}W_{n,3}(u)
=\nu^\dagger(u,\infty)
$
for all $u>0$.
For this note that $I_{2,m}^{(3)}$ in \eqv(7.theo2.11) is very similar to the quantity $\bar\s^{\neq}_n(u,\infty)$ defined in \eqv(7.Lem7.3).
This prompts us to write
\be
\textstyle
I_{2,m}^{(3)}
=
\sum_{2\leq k,k'\leq k_n^\star}
I_{2,m}^{(3),(k,k')}
\Eq(7.lem9.4) 
\ee
where, 
for $\phi(\CC,\CC')$ as in \eqv(7.Lem7.8),
\be
\Eq(7.lem9.5) 
I_{2,m}^{(3),(k,k')}
\equiv
{ \textstyle
\sum^{(1)}_{\CC,\CC'}\phi(\CC,\CC')\sum^{(2)}_{x, x'}
\sum^{(3)}_{y, y'}
Q_{n,l}^u(x)Q_{n,l'}^u(x')f_n^{\circ,m}(x,x';y,y')
}.
\ee
Here the first sum, $\Sigma^{(1)}$, is over all $\CC\in \GG_k$ and $\CC'\in \GG_{k'}$ such that 
 $\CC\cap\CC'=\emptyset$,
the second one, $\Sigma^{(2)}$, is over all $x\in\CC$ and $x'\in\CC'$, and the third one, $\Sigma^{(3)}$,
is over all $y\in\del\CC$ and $y\in\del\CC'$.
Since $\CC\cap\CC'=\emptyset$, $Q_{n,\CC}^u(x)$ and $Q_{n,\CC'}^u(x')$ are independant random variables for all $x\in\CC$, $x'\in\CC'$. 
Thus we see, using \eqv(7.Lem7.11),  that  for all  $k,k'\geq 2$, 
\bea
\nonumber
\hspace{-8pt}&&\hspace{-8pt}\E\bigl[I_{2,m}^{(3),(k,k')}\bigr]
\\
\Eq(7.lem9.6) 
\hspace{-8pt}&\leq&\hspace{-8pt}
(kk')^2(na_n)^{-2}n^{-c_{\star}[(k-2)+(k'-2)]}
[
W_{n,3}(u)
]^2
{ \textstyle
\sum^{(1)}_{\CC,\CC'}\sum^{(2)}_{x, x'}
\sum^{(3)}_{y, y'}
f_n^{\circ,m}(x,x';y,y')
},
\quad\quad\quad
\eea
where $W_{n,3}(u)$ is given by \eqv(7.Lem7.14)
for some sequences $\bar a_n$, $\bar b_n$ for which the assumptions of Lemma \thv(7.lem6) are verified --
hence it has the properties claimed in the line below \eqv(7.lem9.12).
To deal with the sums in \eqv(7.lem9.6), observe that given any $\CC\in\GG_{k}$, $x\in\CC$, and $y\in\del\CC$,
\bea
\Eq(7.lem9.7) 
f_n^{\circ,m}(x;y)&\equiv&
\textstyle
\sum_{\CC'\in \GG_{k'}}\sum_{x'\in\CC'}\sum_{y'\in\del\CC'}f_n^{\circ,m}(x,x';y,y')
\\
&=&
\Eq(7.lem9.8) 
\textstyle
p_n(x,y)\sum_{y'\in\VV^\circ_n}\sum_{\CC'}^{(4)}\sum_{x'\in\CC'}p_n(y',x')p_n^{\circ,m}(y,y')
\eea
where the sum $\Sigma^{(4)}$ is  over all $\CC'\in \GG_{k'}$ such that $\CC'\cap \del y'\neq \emptyset$. Indeed if
$\CC'\cap \del y'= \emptyset$ then $p_n(y',x')=0$ for all $x'\in\CC'$. Now 
$
\sum_{x'\in\CC'}p_n(y',x')\leq 1
$
while the number of terms in $\Sigma^{(4)}$ is at most $k'!n^{k'}$.
Thus
\be
\Eq(7.lem9.9) 
\textstyle
f_n^{\circ,m}(x;y)
\leq 
k'!n^{k'}p_n(x,y)\sum_{y'\in\VV^\circ_n}p_n^{\circ,m}(y,y')
\leq 
k'!n^{k'}p_n(x,y),
\ee
From this we readily get
\be
\Eq(7.lem9.10) 
{ \textstyle
\sum^{(1)}_{\CC,\CC'}\sum^{(2)}_{x, x'}
\sum^{(3)}_{y, y'}
f_n^{\circ,m}(x,x';y,y')
}
\leq
(k-1)!n^{k-1}k'!n^{k'},
\ee
and inserting this bound in \eqv(7.lem9.6) and  \eqv(7.lem9.4) successively yields
\be
\Eq(7.lem9.14) 
\E\bigl[I_{2,m}^{(3)}\bigr]
\leq
n^{-1}a_n^{-2}2^n
[W_{n,3}(u)
]^2 s^{(2)}_ns^{(3)}_n,
\ee
where $s^{(i)}_n$ is defined in \eqv(7.Lem7.10) and obeys
$
s^{(i)}_n\leq 2^i(1+2^{i+1}n^{-2(c_{\star}-1)})
$
whenever $c_{\star}>2$. Eq.~\eqv(7.lem9.12)  now immediately follows.
Invoking  \eqv(4.prop2.0) of Proposition \thv(4.prop2) we get that on $\O^{\star}$,  
for all but a finite number of indices $n$, if $c_{\star}>2$,
\be
\Eq(7.lem9.11) 
({a_n}/{|\VV^\circ_n|})
\sum_{m=1}^{\ell^\circ_n-1}\E\bigl[I_{2,m}^{(3)}\bigr]
\leq 
2^5(1+o(1))\frac{\ell^\circ_n}{na_n}[W_{n,3}(u)]^2.
\ee
The lemma now  follows  by a first order Tchebychev inequality.
\end{proof}


\subsubsection{Proof of Proposition \thv(7.prop1)}
 \label{7.3.4}

The proof of Proposition \thv(7.prop1) is now a mere formality.
Recall that  $c_{\star}>2$ and that $a_n$ obeys \eqv(7.prop1.0new) for some  $0<\varepsilon<1$.
Choose  $L_1=n^{-1-(c_{\star}-2)/2}$ and $L_2=4\log n$ in Lemma \thv(7.lem5). 
Then $n^2a_nL_2/2^n=o(1)$, $\lim_{n\rightarrow\infty}\phi_n(u,L_1,L_2)\rightarrow 0$, and 
$\sum_n (2n e^{-L_2}+2L_1)<\infty$, so that by 
Lemma \thv(7.lem4), Lemma \thv(7.lem5), and Borel-Cantelli Lemma,  
\be
\Eq(7.prop1.3)
\lim_{n\rightarrow\infty}\bar\nu^\circ_n(u,\infty)=\nu^\dagger(u,\infty),
\quad\P-\text{almost surely},
\ee
for all $u>0$. Because $\bar\nu^\circ_n(u,\infty)$ is a sequence of monotonic functions of $u>0$ whose limit, 
$\nu^\dagger(u,\infty)$, is continuous, \eqv(7.prop1.3) entails the existence of a subset  $\overline\O^{\scriptscriptstyle{\textsf{LLN}}}_1\subset\O$ with the property that 
$\P(\overline\O^{\scriptscriptstyle{\textsf{LLN}}}_1)=1$, and such that on $\overline\O^{\scriptscriptstyle{\textsf{LLN}}}_1$,
\be
\Eq(7.prop1.4)
\lim_{n\rightarrow\infty}\bar\nu^\circ_n(u,\infty)=\nu^\dagger(u,\infty),\quad \forall u>0.
\ee
We prove in the same way, using the monotonicity of $\bar\s^\circ_n(u,\infty)$, Lemma \thv(7.lem7), and Lemma \thv(7.lem8) (with $L_1$ and $L_2$ as above and  $L_3=n^{-2}$, so that $\lim_{n\rightarrow\infty}n\psi_n(u,L_1,L_2,L_3)=0$ and $\sum_n (2n e^{-L_2}+2L_1+2L_2)<\infty$)
that there exists a subset  $\overline\O^{\scriptscriptstyle{\textsf{LLN}}}_2\subset\O$ of full measure
such that, on $\overline\O^{\scriptscriptstyle{\textsf{LLN}}}_2$, 
\be
\Eq(7.prop1.5)
\lim_{n\rightarrow\infty}n\bar\s^\circ_n(u,\infty)=2\nu^\dagger(2u,\infty),\quad \forall u>0.
\ee
Taking
$
\overline\O^{\scriptscriptstyle{\textsf{LLN}}}=
\overline\O^{\scriptscriptstyle{\textsf{LLN}}}_1\cap \overline\O^{\scriptscriptstyle{\textsf{LLN}}}_2
$
completes the proof of Proposition \thv(7.prop1)  .


\subsection{Conclusion of the proof of Theorem \thv(2.theo2).}
 \label{7.5}



It suffices to prove that
under the assumptions of Theorem \thv(2.theo2), Conditions (A1), (A2), and (A3) of 
Theorem \thv(7.theo1)  are verified $\P$-almost surely when $\nu^\dagger$ in Condition (A1) is  as  in \eqv(1.theo2.M2).


\subsubsection{Verification of Conditions (A1) and (A2).}
 \label{7.4.2}

It immediately follows 
  Lemma  \thv(7.lem3), Theorem \thv(7.theo2), and Proposition \thv(7.prop1) 
 that under the assumptions therein, $\P$-almost surely,   for all $u>0$ and all $t>0$,
\bea
\Eq(7.5.1)
&&\lim_{n\rightarrow\infty}\bar\nu_n^{{{\scriptscriptstyle{J^\circ_n}},t}}(u,\infty)
= t\nu^\dagger(u,\infty)\,\,\, \text{in $P^\circ$-probability},\quad\quad
\\
\Eq(7.5.2)
&&\lim_{n\rightarrow\infty}\bar\s_n^{{{\scriptscriptstyle{J^\circ_n}},t}}(u,\infty)
= 0 \,\,\, \text{in $P^\circ$-probability}.
\eea
Conditions (A1) and (A2) are thus satisfied $\P$-almost surely.

\subsubsection{Verification of Condition (A3).}
 \label{7.4.3'}

This still requires a little work.
Given $\e>0$, define
\be
\eta_{n,k}(\e)
\equiv
\Eq(7.4.4)
\frac{a_n}{2^n}\,
{\textstyle 
\sum_{1\leq l\leq L^{\star}}\1_{\{|C^{\star}_{n,l}|=k\}}\sum_{x\in C^{\star}_{n,l}}A_{n,l}(x),\quad k\geq 2,
}
\ee
where, given $x\in C^{\star}_{n,l}$, $1\leq l\leq L^{\star}$, 
\be
\Eq(7.4.3)
\textstyle
A_{n,l}(x)\equiv E_{x}\bigl( \1_{\{b_n^{-1}T^{\star}_{n,l}\leq \e\} }b_n^{-1}T^{\star}_{n,l}\bigr).
\ee
One readily sees that Condition (A3) will be verified $\P$-almost surely if
\be
\textstyle
\lim_{\e\downarrow 0}\limsup_{n\uparrow \infty}\sum_{k\geq 2}\eta_{n,k}(\e)=0,\quad \P\text{-a.s.}.
\Eq(7.4.C3)
\ee
Note that $\eta_{n,k}(\e)$ is of the form \eqv(7.lem4.4) with $Q_{n,l}^u(x)$ replaced by $A_{n,l}(x)$
 and hence, as in \eqv(7.lem4.3), may be written as
\be
\Eq(7.4.5)
\eta_{n,k}(\e)
\equiv
\frac{a_n}{2^n}\,
{\textstyle 
\sum_{\CC\in \GG_k}\prod_{x\in\CC}\chi_n(x)\prod_{x'\in\del\CC}\overline\chi_n(x')\sum_{x\in\CC}
A_{n,\CC}(x)
},
\ee
where $A_{n,\CC}(x)$ stands for $A_{n,l}(x)$ with $C^{\star}_{n,l}\equiv\CC$. 
As in the proof of Lemma \thv(7.lem4) we note that on $\O^{\star}$, 
$
k_n^{\star} \leq\frac{n}{(c_{\star}-2)\log n}
$
for all large enough $n$, and treat the terms $k=2$ and $3\leq k\leq  k_n^{\star}$ separately. Throughout
the proof we set
$\bar a_n=\sqrt{na_n}$, $\bar b_n= b_n(n-1)$, and define
\be
\g_n(\CC')=\min_{x\in\CC}w_n(x)/\bar b_n, \quad \CC'\in\GG_2.
\Eq(7.4.9)
\ee

%
%

\smallskip
\noindent{\textbf{\emph{ $\bullet$ The term $k=2$.}}} 
Let us establish that for all large enough $n$ and small enough $\e$, the mean and variance of $\eta_{n,2}(\e)$ obey
\bea
\Eq(7.4.14)
\E\eta_{n,2}(\e)
&\leq&
2^{-n/6}
+2\e^{1-\left(2\a_c(\frac{\varepsilon}{2})+o(1)\right)},
\\
\Eq(7.4.24)
\E\left(\eta_{n,2}(\e)-\E\eta_{n,2}(\e)\right)^2
&\leq&
2\frac{a_n}{2^{n}}\left[2^{-n/6}
+4\e^{2-\left(2\a_c(\frac{\varepsilon}{2})+o(1)\right)}\right].
\eea

We first prove \eqv(7.4.14).  By Proposition \thv(5.prop1), (i), and integration by parts, for all
$x\in\CC$,
\bea
\Eq(7.4.6)
A_{n,\CC}(x)
&\leq &
\textstyle
\frac{1}{b_n}\sum_{i=0}^{\lfloor b_n\e\rfloor}
\left(1-\sfrac{1}{1+\min_{x\in\CC}w_n(x)/(n-1)}\right)^i
\\
\Eq(7.4.7)
&\leq &
\textstyle
[1+o_{n,1}(1)]\varphi_{\e}\left([1+o_{n,2}(1)]{\min_{x\in\CC}w_n(x)}/{\bar b_n}\right)
\eea
where $|o_{n,i}(1)|\leq \OO(r^{-1}_n\left(\rho^{\star}_n\right))$, $i=1,2$, and 
\be
\Eq(7.4.8)
\varphi_{\e}(y)=y(1-e^{-\e/y}), \quad y\geq 0.
\ee
Plugging \eqv(7.4.7) in \eqv(7.4.5)  yields
\be
\Eq(7.4.10)
\E\eta_{n,2}(\e)
\leq
[1+o_{n,1}(1)]
{\textstyle 
\E\left[
\bar a^2_n\varphi_{\e}\left([1+o_{n,2}(1)]\g_n(\CC)\right)
\1_{\{\g_n(\CC)\geq r_n(\rho^{\star}_n)/ \bar b_n\}}
\right].
}
\ee
Now for $\e>r_n(\rho^{\star}_n)/ \bar b_n$ 
split
$
\1_{\{\g_n(\CC)\geq r_n(\rho^{\star}_n)/ \bar b_n\}}
$
into 
$
\1_{\{\g_n(\CC)\geq \e\}}
+\1_{\{r_n(\rho^{\star}_n)/ \bar b_n\leq \g_n(\CC)<\e \}}
$.
On the one hand, observing that $\varphi_{\e}(y)\leq \e$ for all $y> 0$, we have
\be
\Eq(7.4.11)
\hspace{-.8pt}
\E\left[
\bar a^2_n\varphi_{\e}\left([1+o_{n,2}(1)]\g_n(\CC)\right)
\1_{\{\g_n(\CC)\geq \e\}}
\right]
\leq 
\e \bar a^2_n\P\left(\g_n(\CC)\geq \e\right)
=\e^{1-2\left(\a_c(\frac{\varepsilon}{2})+o(1)\right)},
\ee
where the last equality 
follows from Lemma \thv(7.lem2), (i). On the other hand 
$\varphi_{\e}(y)\leq y$ for all $y> 0$. From this, integration by part,  and \eqv(7.lem2.0) we obtain,
\bea
\nonumber
&&
\textstyle
\E\left[
\bar a^2_n\varphi_{\e}\left([1+o_{n,2}(1)]\g_n(\CC)\right)
\1_{\{r_n(\rho^{\star}_n)/ \bar b_n\leq \g_n(\CC)<\e \}}
\right]
\\
&&
\Eq(7.4.12)
\quad\quad\quad\quad\quad\quad\quad\quad\quad\quad\quad\quad\quad\quad
\leq 
\textstyle
[1+o_{n,2}(1)] \int_{r_n(\rho^{\star}_n)/ \bar b_n}^{\e}F^2_n(y)dy.
\quad\quad\quad\quad\quad\quad
\eea
Given $0<\d<1$, split the domain of integration in \eqv(7.4.12) into 
$[r_n(\rho^{\star}_n)/ \bar b_n, \bar b_n^{-\d}]\cup[\bar b_n^{-\d},\e]$. Using  that $F^2_n(y)\leq \bar a^2_n$
on the first domain, and using Lemma \thv(7.lem2), (ii), on the the second,
\be
\Eq(7.4.13)
\textstyle
\int_{r_n(\rho^{\star}_n)/ \bar b_n}^{\e}F^2_n(y)dy 
\leq 
\bar a^2_n\bar b_n^{-\d}
+
\frac{1+o(1)}{1-2(1-\frac{\d}{2})\a_n}\bigl(\sfrac{1}{1-\d}\bigr)^2\e^{1-2(1-\frac{\d}{2})\a_n},
\ee
where $0\leq \a_n=\a_c(\frac{\varepsilon}{2})+o(1)$.
By definition of $\bar a_n$, $\bar b_n$, \eqv(A1.lem1.1), and
the assumption that 
$\b>2\b_c(\varepsilon/2)$, we get 
$
\bar a^2_n\bar b_n^{-\d}
\leq 
\exp\left\{n\left[
\b^2_c(\varepsilon/2)(1-2\d(1+o(1)))
\right]\right\}
$.
Hence, choosing $\d=2/3$,
$
\int_{r_n(\rho^{\star}_n)/ \bar b_n}^{\e}F^2_n(y)dy 
\leq 
2^{-n/6}
+\frac{9}{1-[2\a_c({\varepsilon}/{2})+o(1)](2/3)}\e^{1-\frac{2}{3}\left[2\a_c({\varepsilon}/{2})+o(1)\right]}
$.
Collecting our bounds we arrive at \eqv(7.4.14).

Turning to the variance we have
\be
\Eq(7.4.25)
\E\left(\eta_{n,2}(\e)-\E\eta_{n,2}(\e)\right)^2
=\left(\frac{a_n}{2^{n}}\right)^2
\textstyle 
\E\left(\sum_{\CC\in \GG_2}[Y_n(\CC)-\E Y_n(\CC)]\right)^2,
\ee
where
$
Y_n(\CC)\equiv\prod_{x\in\CC}\chi_n(x)\prod_{x'\in\del\CC}\overline\chi_n(x')\sum_{x\in\CC}
A_{n,\CC}(x)
$, 
$\CC\in \GG_2$. Observing that
$Y_n(\CC)$ and $Y_n(\CC')$ are independent whenever $\CC\neq \CC'$ and 
$\del\CC\cap \del\CC'=\emptyset$, and that $Y_n(\CC)Y_n(\CC')=0$ whenever 
$\CC\neq \CC'$ and $\CC\cap \CC'\neq \emptyset$, we readily get that
$
\E\left(\eta_{n,2}(\e)-\E\eta_{n,2}(\e)\right)^2\leq I_n^{=}+I_n^{\neq}
$,
\bea
\Eq(7.4.26)
I_n^{=}
&\equiv&
\left(\frac{a_n}{2^{n}}\right)^2
\textstyle
\sum_{\CC\in \GG_2}\E[Y^2_n(\CC)],
\\
\Eq(7.4.27)
I_n^{\neq}&\equiv& 
\left(\frac{a_n}{2^{n}}\right)^2
\textstyle
\sum^{(1)}_{\CC,\CC'}
(\E[Y_n(\CC)Y_n(\CC')]-[\E Y_n(\CC)][\E Y_n(\CC')]),
\eea
where,
as in \eqv(7.Lem7.8),
the sum $\Sigma^{(1)}$ is over all $\CC\in \GG_2$ and $\CC'\in \GG_{2'}$ such that $\dist(\CC,\CC')= 2$.
We bound \eqv(7.4.26) in just the same way as $\E\eta_{n,2}(\e)$, namely,
using \eqv(7.4.7) in \eqv(7.4.26) gives
\be
\Eq(7.4.10')
I_n^{=}
\leq
2\frac{a_n}{2^{n}}[1+o_{n,1}(1)]
{\textstyle 
\E\left[
\bar a^2_n\varphi^2_{\e}\left([1+o_{n,2}(1)]\g_n(\CC)\right)
\1_{\{\g_n(\CC)\geq r_n(\rho^{\star}_n)/ \bar b_n\}}
\right],
}
\ee
and proceeding as in \eqv(7.4.10)-\eqv(7.4.13) to evaluate \eqv(7.4.10), we obtain \eqv(7.4.24).
To Bound $I_n^{\neq}$ note that
\be
\Eq(7.4.28)
I_n^{\neq}=
\left(\frac{a_n}{2^{n}}\right)^2
\textstyle
\sum^{(1)}_{\CC,\CC'}
\E[Z_n(\CC)]\E[Z_n(\CC')]\Delta_n(\CC,\CC')
\ee
where
$
Z_n(\CC)\equiv\prod_{x\in\CC}\chi_n(x)\sum_{x\in\CC}
A_{n,\CC}(x)
$,
and
\bea
\nonumber
\Delta_n(\CC,\CC')&\equiv&
\textstyle
\E[\prod_{y\in\del\CC\cup\del\CC'}\overline\chi_n(y)]-
\E[\prod_{x\in\del\CC}\overline\chi_n(x)]\E [\prod_{x'\in\del\CC'}\overline\chi_n(x')]
\\
\Eq(7.4.29)
&=&(1-n^{-c_{\star}})^3(1-(1-n^{-c_{\star}})).
\eea
Observing that the right hand side of \eqv(7.4.10) (and a fortiori the r.h.s.~of \eqv(7.4.14)) is an upper bound on 
$\bar a_n^2\E[Z_n(\CC)]$, and that the sum $\Sigma^{(1)}$ contains at most $4!n^42^{n-1}$ terms, we obtain
\be
\Eq(7.4.30)
I_n^{\neq}\leq
4!(1-n^{-c_{\star}})^3n^{2-c_{\star}}2^{-n}\left(
2^{-n/6}
+2\e^{1-\left(2\a_c(\frac{\varepsilon}{2})+o(1)\right)}
\right)^2.
\ee
Combining \eqv(7.4.10') and \eqv(7.4.30)  now yields \eqv(7.4.24).

Since $n^2\sum_{n}a_n/2^n<\infty$ it follows from \eqv(7.4.14), Borel-Cantelli Lemma (through a second order Tchebychev inequality), and \eqv(7.4.24) that 
$\lim_{n\uparrow \infty}\eta_{n,2}(\e)= 2\e^{1-2\a_c({\varepsilon}/{2})}$ $\P$-almost surely, for all 
$\varepsilon>0$. 
Observing that $\eta_{n,2}(\e)$ is a monotonic function of $\e$, and  arguing as in the proof 
 of Proposition \thv(7.prop1)  (see \eqv(7.prop1.3)-\eqv(7.prop1.4)), we obtain that
\be
\textstyle
\lim_{\e\downarrow 0}\limsup_{n\uparrow \infty}
\eta_{n,2}(\e)
=0, \quad\P\text{- almost surely}.
\Eq(7.4.C3.1)
\ee

\smallskip
\noindent{\textbf{\emph{$\bullet$ The terms $3\leq k\leq k_n^{\star}$.}}} Note that $\eta_{n}(\e)-\eta_{n,2}(\e)>0$.
Our strategy is  to bound $\E(\eta_{n}(\e)-\eta_{n,2}(\e))$ from above and use a first order Tchebychev inequality to infer from it 
$\P$-a.s.~convergence of $\eta_{n}(\e)-\eta_{n,2}(\e)$ to zero.

As in the proof of Lemma \thv(7.lem4) we denote by
$\varrho_{n,\CC}(0)$, $\varrho_{n,\CC}(1)$, and $\theta^{\star}_{n,\CC}$   the quantities
$\varrho_{n,l}(0)$, $\varrho_{n,l}(1)$, and $\theta^{\star}_{n,l}$ from \eqv(5.lem1.0)-\eqv(5.prop1.1)
with $C^{\star}_{n,l}\equiv\CC$. Similarly $T^{\star}_{n,\CC}$ stands for $T^{\star}_{n,l}$
with $C^{\star}_{n,l}\equiv\CC$.
Decomposing the event
$\{b_n^{-1}T^{\star}_{n,\CC}\leq \e\}$ into
$\AA^{(1)}\cup\AA^{(2)}\cup\AA^{(2)}$
where
$
\AA^{(1)}\equiv\{T^{\star}_{n,\CC}\leq b_n\e<\theta^{\star}_{n,\CC}\}
$,
$
\AA^{(2)}\equiv\{T^{\star}_{n,\CC}<\theta^{\star}_{n,\CC}\leq b_n\e\}
$,
and
$
\AA^{(3)}\equiv\{\theta^{\star}_{n,\CC}<T^{\star}_{n,\CC}\leq b_n\e\}
$,
define
\be
\Eq(7.4.15)
\textstyle
A_{n,\CC}^{(i)}(x)\equiv E_{x}\bigl( b_n^{-1}T^{\star}_{n,\CC}\1_{\AA^{(i)}}\bigr),
\ee
and denote by $\eta^{(i)}_{n,k}(\e)$ the quantity $\eta_{n,k}(\e)$ where $A_{n,\CC}^{(i)}(x)$ is substituted for 
$A_{n,\CC}(x)$. Thus 
\be
\Eq(7.4.15')
\textstyle
0<\eta_{n}(\e)-\eta_{n,2}(\e)=
\sum_{i=1}^{3}\sum_{k=3}^{k_n^{\star}}\eta^{(i)}_{n,k}(\e),\quad
\eta_{n,k}(\e)=\sum_{i=1}^3\eta^{(i)}_{n,k}(\e).
\ee
To bound $\eta_{n}(\e)-\eta_{n,2}(\e)$ it now suffices to bound each $\E\eta^{(i)}_{n,k}(\e)$,
$1\leq i\leq 3$, $3\leq k\leq k_n^{\star}$.

Using the trivial bound
$
A_{n,l}^{(1)}(x)\leq \e1_{\theta^{\star}_{n,\CC}\geq b_n\e}
$,
the term $\E[\eta^{(1)}_{n,k}(\e)]$ is bounded in exactly the same way as $\bar\nu^{\circ,(k),-}_n(u,\infty)$
in \eqv(7.lem4.23)-\eqv(7.lem4.27), namely
\be
\Eq(7.4.16)
\E[\eta^{(1)}_{n,k}(\e)]
\leq  
\e
k (k-2)! n^{-(c_{\star}-1)(k-2)}
\left(
2\b n^2k^5/\e r_n(\rho^{\star}_n)
\right)^{2\a(\varepsilon/2)(1+o(1))}.
\ee
From the bound
$
A_{n,\CC}^{(2)}(x)\leq b_n^{-1}\theta^{\star}_{n,\CC}\1\{b_n^{-1}\theta^{\star}_{n,\CC}\leq \e\}
$,
\eqv(5.lem1.0)-\eqv(5.prop1.1),  and  Lemma \thv(9.lem4),
we get
\bea
\Eq(7.4.17)
A_{n,\CC}^{(2)}(x)
&\leq&
\varrho_{n,\CC}(0)\sfrac{2\b n^2k^5}{\bar b_n r_n(\rho^{\star}_n)}
\1_{\{\varrho_{n,\CC}(0)
\leq \e{\bar b_n r_n(\rho^{\star}_n)}/{2\b n^2k^5}\}}
\\
&\leq&
\textstyle
\sum_{\{x,y\}\in G(\CC)}
\g_n(\{x,y\})\sfrac{2\b n^2k^5}{ r_n(\rho^{\star}_n)}
\1_{\{\g_n(\{x,y\})
\leq \e{ r_n(\rho^{\star}_n)}/{2\b n^2k^5}\}}.
\eea
Inserting this in \eqv(7.4.5) and, again, proceeding  as in  \eqv(7.lem4.23)-\eqv(7.lem4.27), 
\be
\Eq(7.4.18)
\E[\eta^{(2)}_{n,k}(\e)]
\leq 
\frac{\b n k^5 (k-2)!}{ n^{(c_{\star}-1)(k-2)}}
\E\left[\bar  a^2_n (\g_n(\CC')/r_n(\rho^{\star}_n))
\1_{\{1/\bar b_n\leq \g_n(\CC')/r_n(\rho^{\star}_n)
\leq \e / \b n^2
\}}\right].
\ee
An expectation similar to that appearing in \eqv(7.4.18) was estimated in \eqv(7.4.12).
Observing that 
$
r^{-1}_n(\rho^{\star}_n)\int_{r_n(\rho^{\star}_n)/ \bar b_n}^{\e r_n(\rho^{\star}_n)/ \b n^2} F^2_n(y)dy
=\int_{1/ \bar b_n}^{\e / \b n^2} [\tilde a_n\P(w_n(x)\geq v \tilde b_n)]^2dy
$
where
$\tilde b_n=\bar b_nr_n(\rho^{\star}_n)$ and $\tilde a_n\P(w_n(x)\geq \tilde b_n)=1$,
and proceeding as in \eqv(7.4.12)- \eqv(7.4.14), we obtain
\be
\Eq(7.4.19)
\E[\eta^{(2)}_{n,k}(\e)]
\leq 
\frac{2\b nk^6 (k-2)!}{ n^{(c_{\star}-1)(k-2)}}
\left[2^{-n/6}r_n(\rho^{\star}_n)^{-2/3}
+2(\e/ \b n^2)^{1-\left(2\a_c(\frac{\varepsilon}{2})+o(1)\right)}
\right].
\ee
It remains to bound $\E[\eta^{(3)}_{n,k}(\e)]$.
Using Proposition \thv(5.prop1), (ii) and proceeding as in
\eqv(7.4.6)-\eqv(7.4.7), we get that on $\O_0\cap \O^{\star}$, 
for all but a finite number of indices $n$, for all
$x\in\CC$,
\bea
\Eq(7.4.20')
\textstyle
A_{n,\CC}^{(3)}(x)
&\leq&
\textstyle
[1+o(1)]\varphi_{\e}\left([1+o(1)]k\varrho_{n,\CC}(0)/\bar b_n\right)
\\
&\leq&
\Eq(7.4.20)
\textstyle
[1+o(1)]\left( \e1_{\{k\varrho_{n,\CC}(0)<\bar b_n\e\}}
+
k\varrho_{n,\CC}(0)1_{\{k\varrho_{n,\CC}(0)\geq \bar b_n\e\}}
\right)
\eea
where
the last line follows from the bounds $\varphi_{\e}(y)\leq \e$ and $\varphi_{\e}(y)\leq y$, both valid for all $y> 0$.
Comparing the right hand side of \eqv(7.4.20) to the bounds 
\be
\Eq(7.4.22)
A_{n,l}^{(1)}(x)\leq \e\1_{\{\theta^{\star}_{n,\CC}\geq b_n\e\}},\quad
A_{n,\CC}^{(2)}(x)\leq b_n^{-1}\theta^{\star}_{n,\CC}\1_{\{b_n^{-1}\theta^{\star}_{n,\CC}\leq \e\}}
\ee
we just used, one sees that the contributions to $\E[\eta^{(3)}_{n,k}(\e)]$ coming from the two summands in \eqv(7.4.20) can be bounded 
in the same way as $\E[\eta^{(1)}_{n,k}(\e)]$ and $\E[\eta^{(2)}_{n,k}(\e)]$, respectively, 
replacing $\theta^{\star}_{n,\CC}$ by $k\varrho_{n,\CC}(0)$ in \eqv(7.4.22). 
Doing this, it follows from \eqv(7.4.16) and \eqv(7.4.19) that
\be
\Eq(7.4.21)
\hspace{-5.5pt}
\E[\eta^{(3)}_{n,k}(\e)]
\leq  
\frac{ k (k-2)! }{n^{(c_{\star}-1)(k-2)}}\left[
5\e(nk/\e)^{\left(2\a_c(\frac{\varepsilon}{2})+o(1)\right)}
+k2^{-n/6}
+k2\e^{1-\left(2\a_c(\frac{\varepsilon}{2})+o(1)\right)}
\right].
\ee
Finally, collecting \eqv(7.4.16), \eqv(7.4.19), and \eqv(7.4.21), and using \eqv(8.3lem1.sum)
to perform the sum over $k$, we arrive at
\be
\Eq(7.4.23)
\textstyle
\E[\sum_{i=1}^{3}\sum_{k=3}^{k_n^{\star}}\eta^{(i)}_{n,k}(\e)]
\leq
\frac{ c_0 }{n^{(c_{\star}-2)}}(\e/n)^{1-\left(2\a_c(\frac{\varepsilon}{2})+o(1)\right)}
+2^{-n/6}
\ee
for some constant $0<c_0\equiv c_0(\b, \varepsilon)<\infty$. 
Since by assumption $c_{\star}> 3$ and $1-\left(2\a_c(\frac{\varepsilon}{2})+o(1)\right)>0$ for all $n$ large enough,
the r.h.s.~of \eqv(7.4.23) is summabe in $n$. Combining this and \eqv(7.4.15') yields that
 $0<\eta_{n}(\e)-\eta_{n,2}(\e)\rightarrow 0$ $\P$-a.s., for all  $\varepsilon>0$. 
Since each $\eta_{n,k}(\e)$ in \eqv(7.4.5) is a monotonic function of $\e$, so is $\eta_{n}(\e)-\eta_{n,2}(\e)$, and thus, arguing again as in the proof of Proposition \thv(7.prop1),
\be
\textstyle
\lim_{\e\downarrow 0}\limsup_{n\uparrow \infty}
(\eta_{n}(\e)-\eta_{n,2}(\e))=0,\quad \P\text{-a.s.}.
\Eq(7.4.C3.2)
\ee
Since  \eqv(7.4.C3.1) and \eqv(7.4.C3.2) imply \eqv(7.4.C3), Condition (A3) is verified $\P$-almost surely
under the assumptions of Theorem \thv(7.theo1).


Having established that all three conditions of  Theorem \thv(7.theo1)  are satisfied $\P$-almost surely
with $\nu^\dagger$  given by \eqv(1.theo2.M2),
the proof of Theorem \thv(2.theo2) is done.



\section{Convergence of the back end clock process 
below the critical line: proof of Theorem \thv(2.theo5)}
 \label{12}
  
 Since \textsc{becp} below the critical line has a deterministic limit, this case is  simpler than 
that of Theorem \thv(2.theo2). The proof proceeds in two steps.
 By \eqv(2.2.7) and \eqv(2.3.1),
\be
 S_n^\dagger (t)
=b_n^{-1}\sum_{i=0}^{k^\dagger_n(t)-1}\L_n^\dagger(J_n^\dagger(i)),
\Eq(12.1.1)
\ee
where by \eqv(2.2.8), $\L_n^\dagger(J_n^\dagger(i))$ is non zero if and only if 
$J_n^\dagger(i)\in\cup_{1\leq l\leq L^{\star}}C^{\star}_{n,l}$.
The first step consists in counting the number of distinct sets $C^{\star}_{n,l}$ that $J_n^\dagger$ visits along suitably chosen subsequences $0\leq i\leq k^\dagger_n(t)-1$, as well as the typical and maximal number of visits to each set. This information
is then used 
to extract sums of independent random variables from the sum \eqv(12.1.1), and a remainder. In a second step, and after suitable truncation, each of these sums is controlled by a classical mean-variance calculation, and the remainder is shown to be  sub-leading.

 \subsection{Step one}
 \label{10.1}
For simplicity we assume throughout that $\lfloor {k^\circ_n(t)}/{\ell^\circ_n}\rfloor={k^\circ_n(t)}/{\ell^\circ_n}$.
We first decompose the index set 
$
I=\{0,\dots, k^\dagger_n(t)-1 : J_n^\dagger(i)\in\VV^\circ_n\}
$ 
into $\ell^\circ_n$ disjoint subsets,  
$I=\cup_{0\leq j\leq\ell^\circ_n-1}I_j$, defined through 
\be
I_j=\Biggl\{
0\leq i\leq k^\dagger_n(t)-1\,\Big|\, \exists 0\leq i''\leq  \frac{k^\circ_n(t)}{\ell^\circ_n}-1\,
s.t.~\sum_{i'=0}^{i}\1_{\{J_n^\dagger(i')\in\VV^\circ_n\}}=j+i''\ell^\circ_n
\Biggr\}.
\Eq(12.1.2)
\ee
The reason behind this choice is that, by definition of $J_n^\dagger$ and $J_n^\circ$,  \eqv(2.3.0), and \eqv(2.3.3),
\be
\{J_n^\dagger(i), i\in I_j\}\stackrel{d}{=}\{ J_n^\circ(j+i\ell^\circ_n), i=0,\dots,\lfloor {k^\circ_n(t)}/{\ell^\circ_n}\rfloor\}, 
\quad0\leq j\leq\ell^\circ_n-1,
\Eq(12.1.3)
\ee
(where equality holds in distribution) and by the mixing property of Proposition \thv(4.prop1), the chain  $J_n^\circ$ observed along such subsequences is well approximated by an  i.i.d.~sequence. Now clearly, visits of $J_n^\dagger$ to $\cup_{1\leq l\leq L^{\star}}C^{\star}_{n,l}$ can only occur along subsequences of the form $\{i+1: i\in  I_j\}$.
For  $0\leq j\leq\ell^\circ_n-1$, let $N_{n,j}$, respectively $M_{n,j}$, be the number of distinct sets $C^{\star}_{n,l}$, respectively the total number  of sets $C^{\star}_{n,l}$ that the chain $J_n^\dagger$ visits along each subsequence 
$
\{i+1: i\in  I_j\}
$.
Define
\be
\LL'_n\equiv\{1,\dots, L^{\star}\}\setminus\left(\cup_{2\leq k\leq k_n^{\star}}\LL_n(k)\right)
\equiv\cup_{2\leq k\leq k_n^{\star}}\LL'_n(k)
\Eq(12.1.5)
\ee
where, for $2\leq k\leq k_n^{\star}\equiv\max_{1\leq l\leq L^{\star}}|C^{\star}_{n,l}|$, 
\bea
\Eq(12.1.6)
&\LL_n(k)\equiv\{1\leq l\leq L^{\star}\,:\, |C^{\star}_{n,l}|=k,  \del C^{\star}_{n,l}\cap \del C^{\star}_{n,l'}=\emptyset\,
\forall l'\neq l, 1\leq l'\leq L^{\star} \},&
\\
&\LL'_n(k)\equiv\{l\in \LL'_n\,:\, |C^{\star}_{n,l}|=k \}.&
\Eq(12.1.6')
\eea
With these definitions $N_{n,j}=\sum_{k=2}^{k_n^{\star}}N_{n,j}(k)+N'_{n,j}$ and
$M_{n,j}=\sum_{k=2}^{k_n^{\star}}(M_{n,j}(k)+M'_{n,j})$
where
\bea
&&
\hspace{-12pt}
N_{n,j}(k)=\sum_{l\in\LL_n(k)}\1_{\{\exists i\in I_j : J_n^\dagger(i+1)\in C^{\star}_{n,l}\}},\,\,\,
N'_{n,j}=\sum_{l\in\LL'_n}\1_{\{\exists i\in I_j : J_n^\dagger(i+1)\in C^{\star}_{n,l}\}},\,\,\,
\Eq(12.1.7)
\\
&&
\hspace{-12pt}
M_{n,j}(k)=\sum_{l\in\LL_n(k)}\sum_{ i\in I_j}\1_{\{J_n^\dagger(i+1)\in C^{\star}_{n,l}\}},\,\,\,
M'_{n,j}(k)=\sum_{l\in\LL'_n(k)}\sum_{ i\in I_j}\1_{\{J_n^\dagger(i+1)\in C^{\star}_{n,l}\}}.\,\,\,
\Eq(12.1.8)
\eea
Note that the number of sets $C^{\star}_{n,l}$, $l\in\LL_n(k)$,
that are visited at least twice along a given subsequence  $\{i+1: i\in  I_j\}$ is at most $M_{n,j}(k)-N_{n,j}(k)$.
Lastly, let $m_{n,j}$ be the maximum number of visits to a given $C^{\star}_{n,l}$:
\be
m_{n,j}=\max_{1\leq l\leq L^{\star}}
\sum_{ i\in I_j}
\1_{\{J_n^\dagger(i+1)\in C^{\star}_{n,l}\}}.
\Eq(12.1.9)
\ee

Proposition \thv(12.prop1)  below 
yields control over these various quantities.
In order to state it  we need the following definitions.
Given  constants $0<c_k, c'_k<\infty$ and $c_2=1$, let
 the functions $\d_n(2)$ and $\eta_n(k)$, $2\leq k\leq k_n^{\star}$, be defined through.
 \be
\eta_n(k)
=
\frac{c_kk!\lfloor a_nt\rfloor}{\ell^\circ_n 2n^{k(c_{\star}-1)+1}}(1+o(1)),
\,\,\, 
\d_n(2)=\frac{\lfloor a_nt\rfloor}{\ell^\circ_n2^{n}}\eta_n(2),
\Eq(12.prop1.0)
\ee
We know (see \eqv(8.3lem1.5)) that  on $\O^{\star}$, for large enough $n$, $k_n^{\star}\leq {n}/({(c_{\star}-2)\log n})$. 
Given an integer $2<K<\infty$  define, for $2\leq k\leq K$,
\bea
\AA_{n,j,k}^{1,-}&=&
\bigl\{\left|N_{n,j}(k)-\eta_n(k)\right|< n^{2c_{\star}}
\sqrt{\eta_n(k)}
\bigr\},
\Eq(12.prop1.01)
\\
\AA_{n,j,k}^{2,-}&=&
\left\{M_{n,j}(k)-N_{n,j}(k)< (k/n)\eta_n(k)\right\},
\Eq(12.prop1.02)
\\\AA_{n,j,k}^{3,-}&=&
\left\{ M'_{n,j}(k)<c'_knk_n^{\star}\eta_n(k+2)\right\},
\Eq(12.prop1.04)
\eea
and for $K< k\leq k_n^{\star}$,
\bea
\AA_{n,j,k}^{1,+}&=&
\bigl\{N_{n,j}(k)-n^2\eta_n(k)< n^{2c_{\star}+1}
\sqrt{\eta_n(k)}
\bigr\},
\Eq(12.prop1.01')
\\
\AA_{n,j,k}^{2,+}&=&
\left\{M_{n,j}(k)-N_{n,j}(k)< kn\eta_n(k)\right\},
\Eq(12.prop1.02')
\\\AA_{n,j,k}^{3,+}&=&
\left\{ M'_{n,j}(k)< c'_k n^3k_n^{\star}\eta_n(k+2)\right\},
\Eq(12.prop1.04')
\eea
Finally set
$
\AA_{n,j}^{0}=
\bigl\{ \left|[M_{n,j}(2)-N_{n,j}(2)]-\d_n(2)\right|< n^{2c_{\star}}
\sqrt{\d_n(2)}
\bigr\}
$.

\begin{proposition}
   \TH(12.prop1)
Assume that $c_{\star}>3$ and let $2<K<\infty$ be a fixed integer.  
For all $\b>0$,  there exists a subset  $\O^{\scriptscriptstyle{\textsf{BCL}}}\subset\O$
with $\P(\O^{\scriptscriptstyle{\textsf{BCL}}})=1$ such that on 
$\O_1\cap\O^{\star}\cap\O^{\scriptscriptstyle{\textsf{BCL}}}$ 
the following holds: for all $t\in[0,T]$, for all but a finite number of indices $n$,
\be
P^\dagger_{\pi^\circ_n}\left(
\textstyle
\bigcap_{0\leq j\leq\ell^\circ_n-1}\AA_{n,j}^{0}
\bigcap_{i=1}^3\left(
(\bigcap_{2\leq k\leq K}\AA_{n,j,k}^{i,-})\cap(\bigcap_{K< k\leq k_n^{\star}}\AA_{n,j,k}^{i,+})
\right)
\right)
\geq 1- n^{-2(c_{\star}-1)+\bar c}
\Eq(12.prop1.1)
\ee
where $\bar c>0$ is arbitrary and for some constants $0<c_k, c'_k<\infty$ and $c_2=1$.
Moreover,
\be
P^\dagger_{\pi^\circ_n}\left( 
\max_{0\leq j\leq\ell^\circ_n-1}m_{n,j}\geq \sfrac{3}{1-\varepsilon}
\right)\leq 2^{-n}.
\Eq(12.prop1.5)
\ee
  \end{proposition}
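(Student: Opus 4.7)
The overall strategy is to reduce everything to a concentration argument for sums of independent Bernoulli indicators, first on the chain side and then on the environment side. The key observation is the distributional identity \eqv(12.1.3): for each fixed $0\le j\le \ell^\circ_n-1$, the visits of $J_n^\dagger$ indexed by $I_j$ are, in law, the values of $J_n^\circ$ on the arithmetic progression $(j+i\ell^\circ_n)_{i=0,\dots,K-1}$, where $K\equiv k^\circ_n(t)/\ell^\circ_n\sim a_nt/\ell^\circ_n$. Iterating Proposition \thv(4.prop1) along this progression, the joint law of these $K$ samples is within total variation $K\delta_n\le Ka_n2^{-n}$ of the i.i.d.\ product law $(\pi_n^\circ)^{\otimes K}$. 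Since $K\cdot 2^{-n}\le 2^{-n/2}$ under our assumption $a_n\le 2^{n\varepsilon}$, this error is negligible against every bound we need, and we may work from the outset as if the samples were independent with law $\pi^\circ_n$.

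In the i.i.d.\ picture, for $l\in\LL_n(k)$ the single-trial probability of generating a visit to $C^\star_{n,l}$ is
\be
p_l=\sum_{x\in\del C^\star_{n,l}}\pi^\circ_n(x)\frac{|\del x\cap C^\star_{n,l}|}{n}
=\frac{k}{|\VV^\circ_n|}\bigl(1+\OO(1/\log n)\bigr),
\nonumber
\ee
by Proposition \thv(4.prop2) and \eqv(10.lem1.7). Hence, conditionally on the environment, $N_{n,j}(k)$ is a sum of $|\LL_n(k)|$ independent Bernoullis and $\E[N_{n,j}(k)]=|\LL_n(k)|(1-(1-p_l)^K)$. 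A mean/variance calculation on the occupancy variables $\chi_n(x)$, entirely analogous to the Bennett-type arguments used for $S_n(2)$ and $S_n(k)$ in the proof of Lemma \thv(8.3lem1), shows that $|\LL_n(k)|$ concentrates around its expectation and yields a full-measure set $\O^{\scriptscriptstyle{\textsf{BCL}}}$ on which $\E[N_{n,j}(k)]=\eta_n(k)(1+o(1))$ for all $2\le k\le K$, and $\E[N_{n,j}(k)]\le n^2\eta_n(k)$ for $K<k\le k^\star_n$. This is exactly the form of constants $c_k$ in \eqv(12.prop1.0). For the ``bad'' set $\LL'_n(k)$ of components whose boundary touches another component's boundary, an environmental second-moment computation analogous to \eqv(7.Lem7.8)--\eqv(7.Lem7.11) gives $\E|\LL'_n(k)|\le c'_k n\, |\LL_n(k+2)|$, which translates directly into the bound on $\E[M'_{n,j}(k)]$.

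On this good environmental event, the events $\AA^{1,\pm}_{n,j,k}$ follow from a Chebyshev inequality applied to the sum of independent Bernoullis, giving failure probability $\le n^{-4c_\star}$. For $M_{n,j}(k)-N_{n,j}(k)=\sum_{l\in\LL_n(k)}(\xi_l-1)_+$, where $\xi_l\sim\mathrm{Bin}(K,p_l)$, one has $\E[(\xi_l-1)_+]\approx \tfrac12(Kp_l)^2$, so $\E[M_{n,j}(k)-N_{n,j}(k)]\approx \tfrac12 \eta_n(k)\cdot Kp_l \approx (k/n)\eta_n(k)\cdot o(1)$, and the $\AA^{2,\pm}$ events follow from a first-moment Chebyshev. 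The event $\AA^0_{n,j}$ for $k=2$ similarly follows, now with an additional concentration step since $\delta_n(2)$ is the precise mean of a sum of independent Bernoullis. The events $\AA^{3,\pm}$ follow by first-moment Tchebychev from the environmental bound on $\E[M'_{n,j}(k)]$. A union bound over $0\le j\le \ell^\circ_n-1\le n^3$ and over $2\le k\le k^\star_n\le n/\log n$ loses only polynomial factors, so the total failure probability is dominated by the worst single-event bound of order $n^{-2(c_\star-1)+\bar c}$ (which itself comes from the typical scale $n^{-2(c_\star-1)}$ of two independent occupied vertices being close, the same scale governing \eqv(8.3lem2.2) and Theorem \thv(2.theo4)). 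Finally, for the maximum $m_{n,j}$ a union bound gives $P^\dagger_{\pi^\circ_n}(m_{n,j}\ge m)\le L^\star\binom{K}{m}(k^\star_n/|\VV^\circ_n|)^m\le 2^{n}(k^\star_nK/2^n)^m=2^{n(1-m(1-\varepsilon))+o(n)}$; the choice $m=\lceil 3/(1-\varepsilon)\rceil$ then makes this $\le 2^{-2n+o(n)}$, and one more union bound over $j$ yields \eqv(12.prop1.5).

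\emph{Main obstacle.} The delicate point is matching the precise constants $c_k,c'_k$ in \eqv(12.prop1.0) with the Gaussian-combinatorial asymptotics of $\E|\LL_n(k)|$ and of $\E|\LL'_n(k)|$. In particular, the bound on $M'_{n,j}(k)$ requires a sharp count of pairs of connected components whose boundaries intersect, which must be obtained uniformly in $k\le k^\star_n$ so that the Borel--Cantelli step producing $\O^{\scriptscriptstyle{\textsf{BCL}}}$ survives the summation over $k$. This is precisely where the assumption $c_\star>3$ intervenes, via the $k$-summation identity \eqv(8.3lem1.sum), to ensure that the tail contributions from large components are absorbable into the error term in \eqv(12.prop1.1).
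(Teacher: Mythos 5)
Your overall strategy matches the paper's: use the mixing property of Proposition \thv(4.prop1) to reduce the chain to an i.i.d.\ sample along the arithmetic progression $j+i\ell^\circ_n$; prove concentration of the counting variables; separately prove concentration of the environmental quantities $|\LL_n(k)|$, $|\LL'_n(k)|$; and union-bound. The bound on $m_{n,j}$ via a union over subsets is also essentially the paper's argument. However, there is a genuine gap at the central step.

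You assert that ``conditionally on the environment, $N_{n,j}(k)$ is a sum of $|\LL_n(k)|$ independent Bernoullis,'' and you base the Chebyshev step for $\AA^{1,\pm}_{n,j,k}$ (and the concentration for $\AA^0_{n,j}$) on that. This is false: a single sample $\wt J_n^\circ(j+i\ell^\circ_n)$ lies in at most one boundary $\del C^\star_{n,l}$, so the events $\{\exists i:\wt J_n^\circ(j+i\ell^\circ_n)\in\del C^\star_{n,l}\}$ across distinct $l$ are mutually dependent (they are negatively correlated through the exclusive allocation of each trial). The paper avoids this pitfall by reformulating $\wt N_{n,j}(k)$ via a random allocation of $\wt M_{n,j}(k)$ balls into $|\LL_n(k)|$ urns, writing $\wt N_{n,j}(k)=|\LL_n(k)|-U_n$ and invoking the classical empty-urn concentration bound (Lemma \thv(12.lem4)). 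Your claim of independence is the missing piece that the urn framework supplies. It is fixable — you could note that the indicators are negatively associated, so $\Var(N_{n,j}(k))\leq\sum_l\Var(\1_l)\leq\E N_{n,j}(k)$ and Chebyshev still applies, or adopt the urn lemma — but as written the argument does not go through. The same issue recurs for your treatment of $\Delta_{n,j}(k)=\sum_l(\xi_l-1)_+$, where the $\xi_l$ are likewise not independent (though the first-moment bound you use there does not require independence).

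A secondary point: for $k>2$ the single-trial probability of landing in $\del C^\star_{n,l}$ is not uniform over boundary vertices, since some boundary vertices have more than one neighbor in $C^\star_{n,l}$; the paper handles this by splitting $\del C^\star_{n,l}$ into the pieces $\del C^{\star,+}_{n,l}$ and $\del C^{\star,-}_{n,l}$ and running the urn argument on each, with the small piece $\del C^{\star,-}_{n,l}$ controlled by a crude upper bound. Your one-line asymptotic $p_l=k/|\VV^\circ_n|(1+\OO(1/\log n))$ glosses over this; it does not affect the scaling, but it is precisely the decomposition that lets the paper prove the precise inclusion $\AA^{2,\pm}_{n,j,k}$ rather than just an order-of-magnitude estimate.
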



\begin{proof}[Proof of Proposition \thv(12.prop1)] 
The proof of Proposition \thv(12.prop1) hinges on Lemma \thv(12.lem0) below,
that allows to substitute
simpler variables for the variables $N_{n,j}(k)$ and $M_{n,j}(k)$. In order to define them
let $\{\wt J_n^\circ(i), i\geq 0, n>0\}$  and 
$
\left\{X_{n,i}(x), x\in (\cup_{1\leq l\leq L^{\star}}\del C^{\star}_{n,l}), i\geq 1\right\}
$
be two independent arrays of i.i.d.~r.v.'s, defined on  a common probability space $(\wt\O,\wt\FF,\wt P)$, such that
$\wt P(\wt J_n^\circ(i)=x)=\pi^\circ_n(x)$, $x\in\VV^\circ_n$,
and, for each $x\in (\cup_{1\leq l\leq L^{\star}}\del C^{\star}_{n,l})$,
\be
\wt P(X_{n,i}(x)=1)=1-\wt P(X_{n,i}(x)=0)
=n^{-1}|\del x\cap (\cup_{1\leq l\leq L^{\star}}C^{\star}_{n,l})|.
\Eq(12.lem0.1)
\ee
Thus the $X_{n,i}$'s are Bernoulli r.v.'s.
Using these arrays, define the quantities
\bea
&\wt N_{n,j}(k)&
\textstyle\hspace{-7pt}
=\sum_{l\in\LL_n(k)}
\1_{\{\exists 0\leq i\leq \lfloor{k^\circ_n(t)}/{\ell^\circ_n}\rfloor-1:
\wt J_n^\circ(j+i\ell^\circ_n)\in \del C^{\star}_{n,l}, X_{n,i}(\wt J_n^\circ(j+i\ell^\circ_n))=1\}},\quad\quad
\Eq(12.lem0.2)
\\
&\wt M_{n,j}(k)&
\textstyle\hspace{-7pt}
=\sum_{l\in\LL_n(k)}\sum_{i=0}^{{k^\circ_n(t)}/{\ell^\circ_n} -1}
\1_{\{\wt J_n^\circ(j+i\ell^\circ_n)\in \del C^{\star}_{n,l}, X_{n,i}(\wt J_n^\circ(j+i\ell^\circ_n))=1\}}.
\Eq(12.lem0.3)
\eea
Similarly, let $\wt N'_{n,j}(k)$ and $\wt M'_{n,j}(k)$ be defined, respectively, as $\wt N_{n,j}(k)$ and $\wt M_{n,j}(k)$ with 
$\LL_n(k)$ replaced by $\LL'_n(k)$.

\begin{lemma}
   \TH(12.lem0)
For all $0\leq j\leq\ell^\circ_n-1$, all $2\leq k\leq k_n^{\star}$, and any $\eta,\rho, \eta',\rho'>0$,
\bea
P_{\pi^\circ_n}\left(\left|N_{n,j}(k)-\eta\right|\geq \rho\right) 
&\stackrel{d}{=}&
(1+\d_n)^{\lfloor {k^\circ_n(t)}/{\ell^\circ_n}\rfloor}
\wt P\bigl(\bigl|{\wt N}_{n,j}(k)-\eta\bigr|\geq \rho\bigr),
\Eq(12.lem0.4)
\\
P_{\pi^\circ_n}\left(\left|M_{n,j}(k)-\eta'\right|\geq \rho'\right) 
&\stackrel{d}{=}&
(1+\d'_n)^{\lfloor {k^\circ_n(t)}/{\ell^\circ_n}\rfloor}
\wt P\bigl(\bigl|{\wt M}_{n,j}(k)-\eta'\bigr|\geq \rho'\bigr),
\Eq(12.lem0.5)
\eea
for some $0\leq\delta_n,\delta'_n\leq   2^{-n}$, and where equality holds in distribution. The same relations hold
with $M_{n,j}(k)$ and $\wt M_{n,j}(k)$ replaced by $M'_{n,j}(k)$ and $\wt M'_{n,j}(k)$, respectively.
 \end{lemma}

 \begin{proof}  By \eqv(12.1.3) and Corollary \thv(8.0cor2),
for  $0\leq j\leq\ell^\circ_n-1$ and all $2\leq k\leq k_n^{\star}$,
\bea
N_{n,j}(k)&\stackrel{d}{=}&
\textstyle
\sum_{l\in\LL_n(k)}
\1_{\{\exists 0\leq i\leq  {k^\circ_n(t)}/{\ell^\circ_n} -1:
J_n^\circ(j+i\ell^\circ_n)\in \del C^{\star}_{n,l}, J_n^\circ(j+i\ell^\circ_n+1)\in \del C^{\star}_{n,l}\}},
\Eq(12.lem0.6)
\\
M_{n,j}(k)&\stackrel{d}{=}&
\textstyle
\sum_{l\in\LL_n(k)}\sum_{i=0}^{{k^\circ_n(t)}/{\ell^\circ_n} -1}
\1_{\{J_n^\circ(j+i\ell^\circ_n)\in \del C^{\star}_{n,l}, J_n^\circ(j+i\ell^\circ_n+1)\in \del C^{\star}_{n,l}\}}.
\Eq(12.lem0.7)
\eea
Next note that by the Markov property and the mixing property of Proposition \thv(4.prop1) we have,
for all $y_i\in \VV^\circ_n$, $y'_i\in \VV^\circ_n$,  $i=0,\dots,\lfloor {k^\circ_n(t)}/{\ell^\circ_n}\rfloor$,
\bea
&&P^\circ_{\pi^\circ_n}\left(
\cap_{0\leq i\leq \lfloor {k^\circ_n(t)} /{\ell^\circ_n}\rfloor}
\{J_n^\circ(j+i\ell^\circ_n)=y_i,J_n^\circ(j+i\ell^\circ_n+1)=y'_i\}
\right)
\cr
=
&&
\textstyle(1+\d_n)^{\lfloor {k^\circ_n(t)}/{\ell^\circ_n}\rfloor}
\prod_{0\leq i\leq \lfloor {k^\circ_n(t)}/{\ell^\circ_n}\rfloor}\pi^\circ_n(y_i)p^\circ_n(y_i,y'_i),\quad
\Eq(12.lem0.8)
\eea
for some  $0\leq\delta_n\leq   2^{-n}$, where $p^\circ_n(\cdot,\cdot)$ are the transition probabilities \eqv(2.1.5). 
%
%
From this, \eqv(12.lem0.6)-\eqv(12.lem0.7), and the relations \eqv(4.prop4.4) and \eqv(4.prop4.5), the lemma easily follows.
One proves in a similar way that \eqv(12.lem0.7)
holds (for some other $0\leq\delta'_n\leq   2^{-n}$) with $M_{n,j}(k)$ and $\wt M_{n,j}(k)$ replaced by 
$M'_{n,j}(k)$ and $\wt M'_{n,j}(k)$, respectively.
 \end{proof}


Now the probabilities in the right hand sides of \eqv(12.lem0.4) and  \eqv(12.lem0.5) are not hard to evaluate. Set
 \be
 \Delta_{n,j}(k)=\wt M_{n,j}(k)-\wt N_{n,j}(k).
 \Eq(12.lem3.0)
 \ee

\begin{lemma}
   \TH(12.lem3)
Let $2<K<\infty$ be a fixed integer. There exists a subset  $\O^{\scriptscriptstyle{\textsf{BCL}}}\subset\O$
with $\P(\O^{\scriptscriptstyle{\textsf{BCL}}})=1$ such that, on $\O^{\scriptscriptstyle{\textsf{BCL}}}$, for all but a finite number of indices $n$, the following holds: 
there exists $\d_n(2)$, $\eta_n(k)$,  $2\leq k\leq k_n^{\star}$, satisfying
\eqv(12.prop1.0)
such that, for all $0\leq j\leq\ell^\circ_n-1$ and all $m_1,m_2>0$, 
\bea
&&\wt P\bigl(
\bigl|
\Delta_{n,j}(2)-\d_n(2)
\bigr|\geq n^{m_2}
{\textstyle\sqrt{\d_n(2)}}
\bigr)
\leq 2n^{-2m_2},
\Eq(12.lem3.2)
\\
&&
\wt P\bigl(
\bigl|
\wt N_{n,j}(k)-\eta_n(k)
\bigr|\geq n^{m_1}
{\textstyle\sqrt{\eta_{n,j}(2)}}
\bigr)
\leq 4n^{-2m_1},\quad 2\leq k\leq K,\quad
\Eq(12.lem3.1)
\\
&&\wt P\bigl(
\Delta_{n,j}(k)\geq (k/n)\eta_n(k)
\bigr)
\leq  e^{-c''n},\quad 3\leq k\leq K,
\Eq(12.lem3.3)
\\
&&\wt P\bigl(\wt M'_{n,j}(k)\geq 
c'_knk_n^{\star}\eta_n(2+k)
\bigr)
 \leq 2e^{-n^2/4},\quad 2\leq k\leq K,
\Eq(12.lem3.4)
\eea
and for all $0\leq j\leq\ell^\circ_n-1$, all $2<K< k\leq k_n^{\star}$, and all $m_1,m_2>0$, 
\bea
&&\wt P\bigl(
\wt N_{n,j}(k)-n^2\eta_n(k)
\geq n^{m_1+1}
{\textstyle\sqrt{\eta_{n,j}(2)}}
\bigr)
\leq 4n^{-2m_1},
\Eq(12.lem3.1')
\\
&&\wt P\bigl(
\Delta_{n,j}(k)\geq nk\eta_n(k)
\bigr)
\leq  e^{-c''n},
\Eq(12.lem3.3')
\\
&&\wt P\bigl(\wt M'_{n,j}(k)\geq 
c'_kn^3k_n^{\star}\eta_n(2+k)
\bigr)
 \leq 2e^{-n^2/4}.
\Eq(12.lem3.4')
\eea
\end{lemma}

 \begin{proof}   First let $k=2$.
By \eqv(12.1.6),  $|\del x\cap (\cup_{1\leq l\leq L^{\star}}C^{\star}_{n,l})|=1$
for all $x\in\cup_{\LL_n(2)}\del C^{\star}_{n,l}$ so that
\be
P\bigl(\wt J_n^\circ(j+i\ell^\circ_n)\in \del C^{\star}_{n,l}, X_{n,i}(\wt J_n^\circ(j+i\ell^\circ_n))=1\bigr)
=2(1-n^{-1})/|\VV^\circ_{n}|.
\Eq(12.1.15)
\ee
%
The variables $\wt N_{n,j}(2)$ and $\wt M_{n,j}(2)$ can now be expressed using
random allocation of balls into urns:
each set  $C^{\star}_{n,l}$, $l\in\LL_n(2)$, stands for an urn
and a ball is allocated to $C^{\star}_{n,l}$ if the event
$\{\wt J_n^\circ(j+i\ell^\circ_n)\in \del C^{\star}_{n,l}, X_{n,i}(\wt J_n^\circ(j+i\ell^\circ_n))=1\}$ occurs.
The total number of  balls is $\wt M_{n,j}(2)$ and the number of urns is $|\LL_n(2)|$. Then
$
\wt N_{n,j}(2)=|\LL_n(2)|-U_n
$,
where $U_n\equiv U_{n,j}(2)$ denotes the number of empty urns obtained by distributing 
$M_n\equiv \wt M_{n,j}(2)$ balls into  $L_n\equiv |\LL_n(2)|$ urns at random, uniformly. 
The next lemma is classical.

\begin{lemma}[\cite{CT}  Section 9.2, Theorem 3]
   \TH(12.lem4) Set $A_n\equiv \frac{M_n}{L_n}$ and $B_n\equiv(e^{A_n}-A_n-1)^{1/2}$. If $M_n\rightarrow\infty$ and $A_n/L_n\rightarrow 0$ as $n\rightarrow\infty$ then, for all $\d>0$,
\be
 P\left(\left|U_n-E\left[U_n\right]\right|>\d V_n
 \right)\leq \d^{-2},
\Eq(12.lem4.1)
\ee 
where
$
E\left[U_n\right]=L_ne^{-A_n}(1+\OO(\frac{A_n}{L_n}))
$
and 
$
V^2_n=L_ne^{-2A_n}B_n^2(1+o(1))
$.
\end{lemma}

It remains to evaluate the quantities entering in Lemma \thv(12.lem4). The following lemma anticipates our needs.
%
%

\begin{lemma}
   \TH(12.lem5) Let $2< K<\infty$ be a fixed integer. 
   If $c_{\star}>3$, there exists a subset  
   $\O^{\scriptscriptstyle{\textsf{BCL}}}\subset\O$
with $\P(\O^{\scriptscriptstyle{\textsf{BCL}}})=1$ such that, on 
$\O^{\star}\cap\O^{\scriptscriptstyle{\textsf{BCL}}}$, for all but a finite number of indices $n$ the following holds: 
for all $2\leq k\leq K$, 
$
\left|\LL_n(k)\right|=c_k(k-1)!2^{n-1}n^{-k(c_{\star}-1)-1}(1+o(1))
$
and 
$
\left|\LL'_n(k)\right|\leq c'_k(k+2)!2^{n-1}n^{-(k+2)(c_{\star}-1)-1}
$
while  for all $K< k\leq k_n^{\star}$,
$
\left|\LL_n(k)\right|\leq c_k(k-1)!2^{n-1}n^{-k(c_{\star}-1)+1}(1+o(1))
$
and
$
\left|\LL'_n(k)\right|\leq c'_k(k+2)!2^{n-1}n^{-(k+2)(c_{\star}-1)+1}
$
where $0<c_k,c'_k<\infty$ are constants and $c_2=1$.
\end{lemma}

\begin{proof} If $k\leq K<\infty$ 
proceed as in the proof of Lemma \thv(7.lem5) (see \eqv(7.lem5.12)-\eqv(7.lem5.11)) with appropriate choices of the decomposition
\eqv(7.lem5.10')-\eqv(7.lem5.10). Otherwise, use a first order Tchebychev inequality. These are by now routine calculations which we leave out.
Note that working out an estimate on  $|\LL_n(k)|$ when $k\equiv k_n$ is  diverging with $n$
is a hard task, while working out an upper bound on $\E|\LL_n(k)|$ is elementary. 
This is what leads us we distinguish the cases $2\leq k\leq K$ and $K< k\leq k_n^{\star}$ in
Proposition \thv(12.prop1).
\end{proof}

\begin{remark}
It follows from Lemma \thv(12.lem5) that if $c_{\star}>3$, the collection $C^{\star}_{n,l}$, $1\leq l\leq L^{\star}$, is dominated by ``isolated pairs", namely, by sets of size two that are at distance larger than three from all other  
sets. This is not true for smaller values of $c_{\star}$.
\end{remark}


In view of  \eqv(12.1.8)-\eqv(12.1.9), $\wt M_{n,j}(2)$ is a sum of
$\left|\LL_n(2)\right|( \lfloor{k^\circ_n(t)}/{\ell^\circ_n}\rfloor-1)$ i.i.d. Bernoulli r.v.'s with success probability
$2(1-\frac{1}{n})/|\VV^\circ_{n}|$. Thus, on $\O^{\scriptscriptstyle{\textsf{BCL}}}$,
for $n$ large enough,
$
\textstyle
E\wt M_{n,j}(2)=2\frac{\left|\LL_n(2)\right|}{|\VV^\circ_{n}|}(1-\frac{1}{n})( \lfloor\frac{k^\circ_n(t)}{\ell^\circ_n}\rfloor-1)
=\frac{\lfloor a_nt\rfloor}{\ell^\circ_n n^{2c_{\star}-1}}(1+o(1))
$, 
while by Bennett's bound \eqv(7.lem5.7),
\be
\textstyle
P\left(\left|\wt M_{n,j}(2)-E\wt M_{n,j}(2)\right|< n 
\bigl(E\wt M_{n,j}(2)\bigr)^{1/2}\right)\geq 1-2e^{-n^2/4}.
\Eq(12.1.16)
\ee
Conditionnal on the event appearing in the probability \eqv(12.1.16),
$\wt M_{n,j}(2)\rightarrow\infty$ as $n\rightarrow\infty$ and
$
A_n\equiv \frac{M_{n,j}(2)}{|\LL_n(2)|}
=\frac{\lfloor a_nt\rfloor}{\ell^\circ_n 2^{n-1}}(1+o(1))
\leq n^{-3}t 2^{-n(1-\varepsilon)}
\rightarrow 0
$ 
 for all $t\leq T<\infty$ and $1<\varepsilon<1$. Thus $A_n/|\LL_n(2)|\rightarrow 0$ as well. 
Thus the conditions of Lemma \thv(12.lem4) are satisfied, and so, on 
$\O^{\scriptscriptstyle{\textsf{BCL}}}$, for all but a finite number of indices $n$, for all $m_1,m_2>0$,
setting 
$
\Delta_{n,j}(2)\equiv \wt M_{n,j}(2)-\wt N_{n,j}(2)
$,
\bea
&
\textstyle
\wt P\left(
\left|
\wt N_{n,j}(2)-\wt E\wt N_{n,j}(2)
\right|\geq n^{m_1}\bigl(\wt E\wt N_{n,j}(2)\big)^{1/2}
\right)
\leq 2n^{-2m_1}+2e^{-n^2/4},\quad\quad
&
\Eq(12.1.16')
\\
&
\textstyle
\wt P\left(
\left|
\Delta_{n,j}(2)-\wt E\Delta_{n,j}(2)
\right|
\geq 
 n^{m_2}\bigl(\wt E\Delta_{n,j}(2)\big)^{1/2}
\right)
\leq 
 2n^{-2m_2}+2e^{-n^2/4},\quad\quad
 &
\Eq(12.1.17)
\eea
where
$
\wt E\wt N_{n,j}(2)
=
\frac{\lfloor a_nt\rfloor}{\ell^\circ_n n^{2c_{\star}-1}}(1+o(1))
$
$
\wt E\Delta_{n,j}(2)
=
\frac{\lfloor a_nt\rfloor^2}{(\ell^\circ_n)^2 2^{n}n^{2c_{\star}-1}}(1+o(1))
$.
This proves \eqv(12.lem3.2) and \eqv(12.lem3.1) with $k=2$. 

If  $k>2$,  $|\del x\cap (\cup_{1\leq l\leq L^{\star}}C^{\star}_{n,l})|=|\del x\cap C^{\star}_{n,l}|$  
for all $x\in\del C^{\star}_{n,l}$, $l\in\LL_n(k)$, but $|\del x\cap C^{\star}_{n,l}|$ is not always one.  However,  at most $k$ vertices  of $\del C^{\star}_{n,l}$ can have more than one neighbor in $C^{\star}_{n,l}$, and a given vertex cannot have more than $k$ neighbors.
Thus,
denoting by $\del C^{\star,+}_{n,l}$ and $\del C^{\star,-}_{n,l}$ the subsets
where $|\del x\cap C^{\star}_{n,l}|=1$ and 
$|\del x\cap C^{\star}_{n,l}|>1$, respectively,
it follows from \eqv(10.lem1.6) that  on $\O^{\star}$, 
$
|\del C^{\star,+}_{n,l}|\geq |\del C^{\star}_{n,l}|-k>k(n-1)
$
while 
$
|\del C^{\star,+}_{n,l}|\leq k
$.
Then 
$
\wt N_{n,j}^{+}(k) \leq \wt N_{n,j}(k) \leq \wt N_{n,j}^{+}(k)+\wt N_{n,j}^{-}(k)
$
where $\wt N_{n,j}^{\pm}(k)$ are defined as $\wt N_{n,j}(k)$ with $\del C^{\star}_{n,l}$ replaced by
$\del C^{\star,\pm}_{n,l}$. With obvious notation we write
$\wt M_{n,j}^{\pm}(k)$ and $\wt \Delta_{n,j}^{\pm}(k)$.

At this point we must distinguish the cases $k\leq K<\infty$ and $K<k\leq k_n^{\star}$. Assume first that
$k\leq K<\infty$.  To bound $\wt N_{n,j}^{+}(k)$ we proceed exactly as for $\wt N_{n,j}(2)$: 
the r.h.s.~of \eqv(12.1.15) is now equal to $k(1-o(1))/|\VV^\circ_{n}|$,
and we get that 
$\wt N_{n,j}^{+}(k)$ obeys \eqv(12.1.16) - \eqv(12.1.17), under the same assumptions, with
$
\wt E\Delta_{n,j}^{+}(k)
=
\frac{c_k(k-1)!(k\lfloor a_nt\rfloor)^2}{(\ell^\circ_n)^2 2^{n+2}n^{k(c_{\star}-1)+1}}(1+o(1))
$
and
$
\wt E\wt N_{n,j}^{+}(k)
=
\frac{c_kk!\lfloor a_nt\rfloor}{\ell^\circ_n 2n^{k(c_{\star}-1)+1}}(1+o(1))
$.
Using that the l.h.s.~of \eqv(12.1.15) is  bounded above by
$
 k^2(1-o(1))/n|\VV^\circ_{n}|
$
it also follows from  \eqv(12.1.16) that
\be
\textstyle
\wt P\left(\wt M_{n,j}^{-}(k)\geq 
\frac{k}{n}E\wt M_{n,j}(k)+n \bigl(E\wt M_{n,j}(k)\bigr)^{1/2}\right)\leq 2e^{-n^2/4}.
\Eq(12.1.24)
\ee
Combining these estimates and the bound
$
\wt \Delta_{n,j}(k) \leq \wt \Delta_{n,j}^{+}(k)+ \wt M_{n,j}^{-}(k)
$,
 yield
\eqv(12.lem3.1) and \eqv(12.lem3.3)
for all $3\leq k<K$.
%
%
The same arguments apply to the case $K<k\leq k_n^{\star}$ but  since 
Lemma \thv(12.lem5) now only provides an upper bound on $\left|\LL_n(k)\right|$,
they only yield one-sided inequalities, namely, \eqv(12.lem3.1') and \eqv(12.lem3.3').

It remains to bound $M'_{n,j}(k)$. For this we simply use that by \eqv(10.lem1.6), on $\O^{\star}$, the l.h.s.~of \eqv(12.1.15) is  
bounded above by
$
P(\wt J_n^\circ(j+i\ell^\circ_n)\in \del C^{\star}_{n,l})\leq n k_n^{\star}(1-o(1))/|\VV^\circ_{n}|
$
and, proceeding as in the proof of \eqv(12.1.16), we obtain \eqv(12.lem3.4) and  \eqv(12.lem3.4').
The proof of Lemma \thv(12.lem3) is done.
 \end{proof}

Combining the estimates of Lemma \thv(12.lem3) together with \eqv(12.lem0.4)-\eqv(12.lem0.5)
and choosing $m_1=m_2=2c_{\star}$ now easily yields \eqv(12.prop1.1).
To prove \eqv(12.prop1.5) write
\be
P^\dagger_{\pi^\circ_n}\left( 
\textstyle m_{n,j}\geq \kappa
\right)
\leq 
\textstyle\sum_{1\leq l\leq L^{\star}}
P^\dagger_{\pi^\circ_n}\left( 
\sum_{ i\in I_j}
\1_{\{J_n^\dagger(i+1)\in C^{\star}_{n,l}\}}\geq \kappa
\right).
\Eq(12.1.25)
\ee
Using  Corollary \thv(8.0cor2), the mixing property of Proposition \thv(4.prop1), and observing that
$k^\circ_n(t)= \lfloor a_n t\rfloor\leq \lfloor a_n T\rfloor$, one sees that
\eqv(12.1.25) is bounded above by
\be
%
%
(1+o(1))\sum_{1\leq l\leq L^{\star}}P^\dagger_{\pi^\circ_n}\left( 
\textstyle\bigcup_{0\leq i_1<\dots<i_\kappa\leq  {\lfloor a_n T\rfloor}/{\ell^\circ_n} -1}
\bigcap_{k=1}^\kappa\{\wt  J_n^\circ(j+i_k\ell^\circ_n)\in \del C^{\star}_{n,l}, \}
\right).
\Eq(12.1.26)
\ee
Using again that on $\O^{\star}$, 
$
\wt P(\wt J_n^\circ(j+i\ell^\circ_n)\in \del C^{\star}_{n,l})\leq n k_n^{\star}(1-o(1))/|\VV^\circ_{n}|
$,
and since $L^{\star}<2^n$,
\be
P^\dagger_{\pi^\circ_n}\left( 
\textstyle m_{n,j}\geq \kappa
\right)
\leq 
L^{\star}\left(\sfrac{n k_n^{\star}\lfloor a_n T\rfloor}{\ell^\circ_n|\VV^\circ_{n}|}\right)^\kappa
\leq
(1+o(1))\left({T}/{n}\right)^{\kappa}
2^{-n\{\kappa(1-\varepsilon)-1\}}.
\Eq(12.1.27)
\ee
Choosing $\kappa=\sfrac{3}{1-\varepsilon}$ in \eqv(12.1.27) implies \eqv(12.prop1.5).
The proof of Proposition \thv(12.prop1) is done.
 \end{proof}

\subsection{Step two}
 \label{10.2}
 
 Building on Step 1 we now aim at proving that:
 
  \begin{proposition}
   \TH(12.prop2)
Assume that $c_{\star}>3$ and set
$
b_n=a_n\exp(n(\b/2)^2)/(\b\sqrt{\pi n})
$.
For all 
$
0<\b\leq 2\b_c(\varepsilon/2)-12\sqrt{{c_{\star}\log n}/{n}}
$,
all  $t\in[0,T]$,  and large enough $n$, 
\be
\P\left\{
P^\dagger_{\pi^\circ_n}\left(
\textstyle\left|S_n^\dagger(t)-t\right|\geq c_1 \sqrt{t/n}+c_2 t n^{-(c_{\star}-1)}
\right)\leq n^{-2}
\right\}
> 1-n^{-2}
\Eq(12.prop2.1)
\ee
for some constants $c_1, c_2>0$. 
  \end{proposition}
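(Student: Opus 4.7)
The strategy is to work on the intersection of the good event of Proposition \thv(12.prop1) with $\O^\star \cap \O_0 \cap \O_1$, where the combinatorial structure of visits is tightly controlled, and then combine a conditional concentration argument (given the environment) with an almost sure concentration for the environment itself. Writing
\[
b_n S_n^\dagger(t)= \sum_{j=0}^{\ell_n^\circ-1}\sum_{l\in\LL_n(2)}\sum_{v=1}^{M_{n,j,l}(2)}\Lambda_{j,l,v}
\;+\;\Sigma_n^{(3,K)}+\Sigma_n^{(K,k_n^\star)}+\Sigma_n',
\]
where $M_{n,j,l}(2)$ is the number of visits to the isolated pair $C^{\star}_{n,l}$ along subsequence $I_j$, and the remaining terms collect the contributions of visits to sets in $\LL_n(k)$ with $2<k\leq K$, in $\LL_n(k)$ with $K<k\leq k_n^\star$, and in $\LL_n'$, respectively. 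Conditionally on $\FF\vee\s(J_n^\dagger)$, the increments $\Lambda_{j,l,v}$ are independent: those attached to isolated pairs have the explicit geometric distribution \eqv(5.prop1.4), while those in the other three groups are bounded by a geometric law through Proposition \thv(5.prop1), (ii).

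The main term is the pair contribution. For $l\in\LL_n(2)$ put $V_{n,l}=\min\{w_n(x),w_n(y)\}/(n-1)$, so that $\E[\Lambda\mid\text{entry into }C^{\star}_{n,l}]=1+V_{n,l}$. Using the bound $m_{n,j}\leq 3/(1-\varepsilon)$ from \eqv(12.prop1.5) together with the estimates \eqv(12.prop1.01) and \eqv(12.prop1.02), the total number of pair visits is $\sum_{j,l}M_{n,j,l}(2)=\ell_n^\circ\eta_n(2)(1+o(1))$ with probability $\geq 1-n^{-2(c_\star-1)+\bar c}$, and the number of revisits $\sum_j(M_{n,j}(2)-N_{n,j}(2))$ is bounded by $\ell_n^\circ\d_n(2)(1+o(1))$. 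Conditionally on $\FF$ and the positions of $J_n^\circ$, the variance of $b_n^{-1}\Sigma_{\text{pair}}$ is at most
\[
b_n^{-2}\,{\textstyle\sum_{j,l}M_{n,j,l}(2)V_{n,l}(1+V_{n,l})}
\;\lesssim\; b_n^{-2}\,\ell_n^\circ\eta_n(2)\,{\textstyle\sum_{l\in\LL_n(2)}V_{n,l}^2}/|\LL_n(2)|,
\]
and after truncating $V_{n,l}\wedge b_n$ (the untruncated contribution being controlled by the Gaussian tail bound on $\max w_n$), a Bennett-type bound yields conditional fluctuations of order $\sqrt{t/n}$.

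The heart of the proof is showing that the conditional mean satisfies
\[
\E\!\bigl[b_n^{-1}\Sigma_{\text{pair}}\,\big|\,\FF\bigr]= t(1+\OO(n^{-(c_\star-1)})),
\qquad \P\text{-a.s.}
\]
Using \eqv(12.prop1.0), \eqv(12.prop1.01) and Lemma \thv(12.lem5) to replace random counts by deterministic ones, this reduces to a law of large numbers for
$\Xi_n:=|\LL_n(2)|^{-1}\sum_{l\in\LL_n(2)}V_{n,l}$.
The variables $V_{n,l}$, restricted to isolated pairs of $V_n(\rho_n^\star)$, are i.i.d.~up to the weak dependence coming from the non-occupation conditions on neighbouring vertices, and the key computation is
\[
\E\bigl[\min(w_n(x),w_n(y))\,\mathbf 1_{\{\min\geq r_n(\rho_n^\star)\}}\bigr]
=\int_{u_n}^\infty \Phi(-u)^2\,\b\sqrt n\,e^{\b\sqrt n\,u}\,du
=\frac{2\,e^{n\b^2/4}}{\b\sqrt{\pi n}}(1+o(1)),
\]
obtained by completing the square in the exponent and exploiting that $u_n\ll\b\sqrt n/2$ precisely when $\b<2\b_c(\varepsilon/2)-12\sqrt{c_\star\log n/n}$; combining this with the formula for $b_n$ and the expected visit rate $\E[M_{n,l}\mid\FF]\sim 2 a_n t/|\VV^\circ_n|$ produces exactly $b_n t$ on the right-hand side. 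The deviation of $\Xi_n$ from its mean is handled by Bennett's inequality on the sum of the truncated $V_{n,l}$, split along the disjoint families $\GG_2^{j,i}$ introduced in \eqv(7.lem5.10'), combined with a first-order Chebyshev bound on the heavy-tail contribution.

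Finally, the remainder terms are shown to be of order $o(b_n\cdot t n^{-(c_\star-1)})$. For $\Sigma_n^{(3,K)}$ and $\Sigma_n^{(K,k_n^\star)}$ we use the exponential tail of Proposition \thv(5.prop1), (ii) to get $\E[\Lambda\mid |C^\star_{n,l}|=k]\lesssim k\varrho_{n,l}(0)/n$, multiplied by the bound $M_{n,j}(k)\lesssim kn\eta_n(k)$ from \eqv(12.prop1.02')--\eqv(12.prop1.04'); summation over $k$ using \eqv(8.3lem1.sum) gives a geometric gain $n^{-(c_\star-1)}$ per extra vertex. The term $\Sigma_n'$ is treated identically using \eqv(12.prop1.04)--\eqv(12.prop1.04'). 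The main obstacle is the simultaneous control of the product structure $\sum_l M_{n,l}V_{n,l}$: neither factor concentrates very strongly on its own, and one must therefore carefully choose the truncation level for $V_{n,l}$ so that (i) the truncated mean still matches $2b_n/a_n$ up to the error allowed in \eqv(12.prop2.1), and (ii) the truncated second moment is small enough that the quenched Bennett bound produces the $\sqrt{t/n}$ fluctuation; the upper bound $\b\leq 2\b_c(\varepsilon/2)-12\sqrt{c_\star\log n/n}$ enters precisely in quantifying the loss of the subexponential correction in the Gaussian integral.
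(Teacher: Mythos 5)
Your overall strategy — decompose via Proposition \thv(12.prop1), exploit the conditional geometric law of Proposition \thv(5.prop1) (i) for isolated pairs and the exponential bound of (ii) for larger components, truncate, and close with a Bennett/Tchebychev concentration argument combined with a Gaussian-integral computation of the mean — is the same as the paper's (which packages the quenched concentration into Lemmata \thv(12.lem6) and \thv(12.lem7) instead of stating it as a separate chain-step plus environment-LLN). The main-term computation and the handling of revisits and of $\LL'_n$ via the geometric gain in $n^{-(c_\star-1)}$ also match.

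However, there is one genuine gap: your explanation of where the temperature constraint enters is wrong, and as stated it would "prove" the result for all $\b>0$. You claim the Gaussian integral formula requires $u_n\ll\b\sqrt n/2$, "precisely when $\b<2\b_c(\varepsilon/2)-12\sqrt{c_{\star}\log n/n}$". But $u_n(c_\star)\sim\sqrt{2c_{\star}\log n}$ is sub-polynomial while $\b\sqrt n/2$ grows like $\sqrt n$ for any fixed $\b>0$, so $u_n\ll\b\sqrt n/2$ is automatic and is not a constraint on $\b$ at all. The integral identity
\[
\E\bigl[\min(w_n(x),w_n(y))\,\mathbf 1_{\{\min\geq r_n(\rho^{\star}_n)\}}\bigr]=(1+o(1))\,\frac{2\,e^{n\b^2/4}}{\b\sqrt{\pi n}}
\]
is therefore valid for every $\b>0$, and one gets a matching $b_n$ at every temperature. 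The genuine role of the constraint $\b<2\b_c(\varepsilon/2)$ is elsewhere: it is what allows the truncated mean and, crucially, the truncated second moment to concentrate. The Gaussian integrand peaks at $u\approx\b\sqrt n/2$, and the sum $\sum_l M_{n,l}V_{n,l}$ involves on the order of $\ell_n^\circ\eta_n(2)\sim a_n n^{-(2c_\star-1)}$ visited pairs; the law-of-large-numbers is dominated by \emph{typical} values only if that peak location lies strictly below the extremal level associated to a sample of that size (equivalently $\b/\sqrt 2<\b_c(\varepsilon_n)$ in the notation of Lemma \thv(12.lem6), i.e.~$\b<2\b_c(\varepsilon/2)$). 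Above the threshold the same sum is dominated by extreme events — this is exactly the regime of Theorem \thv(2.theo2), where a stable limit appears — and the variance of the truncated sum no longer vanishes, so the Bennett/Tchebychev step fails. In your sketch you should replace the $u_n\ll\b\sqrt n/2$ justification by a check that the tail beyond the truncation level contributes negligibly to both the first and second truncated moments, and that this check is where $\b<2\b_c(\varepsilon/2)-\OO(\sqrt{\log n/n})$ is genuinely used.
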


 \begin{proof}
By definition of the sets $I_j$ of Step 1, assuming again that 
$\lfloor {k^\circ_n(t)}/{\ell^\circ_n}\rfloor={k^\circ_n(t)}/{\ell^\circ_n}$,
\be
S_n^\dagger(t)=\frac{1}{\ell^\circ_n}
\sum_{j=0}^{\ell^\circ_n-1}S_{n,j}^\dagger (t),\quad 
S_{n,j}^\dagger (t)\equiv\frac{\ell^\circ_n}{b_n}\sum_{i\in  I_j}
\L_n^\dagger(J_n^\dagger(i+1)).
\Eq(12.2.1)
\ee
Note that each realisation of the sets
$\{C^{\star}_{n,l},1\leq l\leq L^{\star}\}$
induces a partition of
$\{1,\dots, L^{\star}\}$ into
$\cup_{2\leq k\leq k_n^{\star}}\left(\LL_n(k)\cup \LL'_n(k)\right)$
and, given this partition,
each $I_j$ can be decomposed into disjoint sets,
$
I_j=\cup_{2\leq k\leq k_n^{\star}}(I_{j,0}(k)\cup I_{j,1}(k)\cup I'_j(k))
$,
with the properties that:
\item{(a)}
$
\left\{J_n^\dagger(i+1), i\in I_{j,0}(k)\right\}
$
consists of $N_{n,j}(k)$ visits to distinct sets $C^{\star}_{n,l}, l\in\LL_n(k)$;
\item{(b)}
$
\left\{J_n^\dagger(i+1), i\in I_{j,1}(k)\right\}
$
consists of $M_{n,j}(k)-N_{n,j}(k)$ visits to sets $C^{\star}_{n,l}, l\in\LL_n(k)$, 
each $C^{\star}_{n,l}$ being visited at most $m_{n,j}$ times;
\item{(c)}
$
\left\{J_n^\dagger(i+1), i\in I'_j(k)\right\}
$
consists of $M'_{n,j}(k)$ visits to sets  $C^{\star}_{n,l}, l\in\LL'_n(k)$, 
each $C^{\star}_{n,l}$ being visited at most $m_{n,j}$ times.

Denoting by $S_{n,j,0}^{k}$, $S_{n,j,1}^{k}$, and ${S'}^k_{n,j}$, $2\leq k\leq k_n^{\star}$, the restrictions of the sum in
$S_{n,j}^\dagger$ to, respectively,   $I_{j,0}(k)$, $I_{j,1}(k)$, and $I'_j(k)$, $2\leq k\leq k_n^{\star}$,
we have
\be
\textstyle
S_{n,j}^\dagger(t)=\sum_{k=2}^{k_n^{\star}}
\bigl[S_{n,j,0}^{k}(t)+S_{n,j,1}^{k}(t)+{S'}^k_{n,j}(t)\bigr].
\Eq(12.2.2)
\ee
Observe that by (a), each $S_{n,j,0}^{k}$ is a sum of $N_{n,j}(k)$ independent rv's, 
and by (b) and (c) each $S_{n,j,1}^{k}$, respectively ${S'}^k_{n,j}$, is bounded above by 
$m_{n,j}$ times a sum of (at most) $M_{n,j}(k)-N_{n,j}(k)$, respectively $M'_{n,j}(k)$, independent rv's.
Before making these bounds explicit we state two lemmata.

Let $\P_{\CC}$ denote the conditional distribution $\P(\cdot\mid \{C^{\star}_{n,l}, 1\leq l\leq L^{\star}\})$.

\begin{lemma}
   \TH(12.lem6)
Let $\bar a_n>0$ be an increasing sequence such that
$
0<\bar\varepsilon_n\equiv\frac{\log \bar a_n}{n\log 2}<1
$.   
Given $\{l_i, 1\leq i\leq \lfloor \bar a_nt\rfloor\}\subseteq\LL_n(2)$ let
$(G_{n,i}, 1\leq i\leq \lfloor \bar a_nt\rfloor)$ be independent geometric r.v.'s satisfying
$
P_G(G_{n,i}=k)= p_{n,i}(1-p_{n,i})^{k-1}, k\geq 1
$,
where
$
(p_{n,i})^{-1}=1+(n-1)^{-1}\varrho_{n,l_i}(0)
$
for $\varrho_{n,l_i}(0)$ as in \eqv(5.lem1.0).
%
%
Let $P_G$ be the  joint law of  $(G_{n,i}, 1\leq i\leq \lfloor \bar a_nt\rfloor)$.
Set $\bar\b=\b/\sqrt 2$ and, given a constant $\bar\kappa>0$, set 
$
s_n=n^{2\bar\kappa}e^{n\bar\b\b_c(\bar\varepsilon_n)}
$
and 
$
\bar b_n=\bar a_n\E_{\CC}E_G(G_{n,i}\1_{\{G_{n,i}<s_n\}})
$.
Then, for all 
$
\bar\b\leq \b_c(\bar\varepsilon_n)-\sqrt{{8\bar\kappa\log n}/{n}}
$,
all 
$
t\in[0,T]
$,
and any sequence $\bar\d_n>n^{-\bar\kappa}$,
\be
\nonumber
\P_{\CC}\left\{P_G\left(
\textstyle\left|\frac{1}{\bar b_n}\sum_{i=1}^{\lfloor \bar a_nt\rfloor}G_{n,i}-
\frac{{\lfloor \bar a_nt\rfloor}}{{\bar a_n}}
\right|\geq \sqrt{t\bar\d_n}
\right)
\leq 2\bar\d_n^{-1}n^{-\bar\kappa}
\right\}
\geq
1-n^{-\bar\kappa+2(c_{\star}+1)},
\ee
and
$
\E_{\CC}E_G(G_{n,i}\1_{\{G_{n,i}<s_n\}})=(1+o(1))\frac{n^{2c_{\star}-1}}{\bar\b\sqrt{2\pi n}}e^{n\bar\b^2/2}
$.
\end{lemma}

\begin{lemma}
   \TH(12.lem7)
   Let $\bar a_n>0$ be an increasing sequence such that
$
0<\bar\varepsilon_n\equiv\frac{\log \bar a_n}{n\log 2}<1
$.   
Given $k> 2$ and $\{l_i, 1\leq i\leq \lfloor \bar a_nt\rfloor\}\subseteq\LL_n(k)$ let 
$(Y^k_{n,i}, 1\leq i\leq \lfloor \bar a_nt\rfloor)$ be i.i.d.~integer valued r.v.'s satisfying
$
P_Y(Y^k_{n,i}>m)
\leq e^{-m(n/k\varrho_{n,l_i}(0))(1-o(1))}(1+o(1))
$
for all $m\geq \theta^{\star}_n$, where $\varrho_{n,l_i}(0)$ and $\theta^{\star}_n$ are as in 
\eqv(5.lem1.0)-\eqv(5.prop1.1).
Let $P_Y$ be the joint law of $(Y^k_{n,i}, 1\leq i\leq \lfloor \bar a_nt\rfloor)$.
\item{(i)} Given 
$\bar\kappa>0$, let  $\bar\b$ and $s_n$
be as in Lemma \thv(12.lem6) and set 
$
\bar b_n=\bar a_n\E_{\CC}E_Y(Y^k_{n,i}\1_{\{Y^k_{n,i}<s_n\}})
$. 
Then, for all 
$
\bar\b\leq \b_c(\bar\varepsilon_n)-\sqrt{{8\bar\kappa\log n}/{n}}
$,
all 
$
t\in[0,T]
$,
and any sequence $\bar\d_n>n^{-\bar\kappa}$,
\be
\nonumber
\P_{\CC}\left\{P_Y\left(
\textstyle
\frac{1}{\bar b_n}\sum_{i=1}^{\lfloor \bar a_nt\rfloor}Y^k_{n,i}-
\frac{{\lfloor \bar a_nt\rfloor}}{{\bar a_n}}
\geq \sqrt{t\bar\d_n}
\right)
 \leq 2\bar\d_n^{-1}n^{-\bar\kappa}
\right\}
\geq
1-n^{-\bar\kappa+2(c_{\star}+2)},
\ee
and
$
(1+o(1))\frac{k}{\bar\b\sqrt{\pi n}}n^{2c_{\star}-1}e^{{n\bar \b^2}/{2}}
\leq
\E_{\CC}E_Y(Y^k_{n,i}\1_{\{Y^k_{n,i}<s_n\}})
\leq(1+o(1))\frac{k^2(k-1)}{2\bar\b\sqrt{2\pi n}}n^{2c_{\star}-1}e^{{n\bar \b^2}/{2}}
$.
\item{(ii)} For all $\b>0$, all 
$
t\in[0,T]
$,
 and any  $\bar\kappa',\bar\kappa''>0$, taking 
$
\bar b_n=\bar a_n\E_{\CC}E_Y(Y^k_{n,i})
$,
\be
\nonumber
\P_{\CC}\left\{P_Y\left(
\textstyle
\frac{1}{\bar b_n}\sum_{i=1}^{\lfloor \bar a_nt\rfloor}Y^k_{n,i}\geq tn^{\bar\kappa'+\bar\kappa''}
\right)
 \leq n^{-\bar\kappa'}
\right\}
\geq
1-n^{-\bar\kappa''},
\ee
and
$
(1+o(1))\frac{k}{\bar\b\sqrt{\pi n}}n^{2c_{\star}-1}e^{{n\bar \b^2}/{2}}
\leq
\E_{\CC}E_Y(Y^k_{n,i})
\leq(1+o(1))\frac{k^2(k-1)}{2\bar\b\sqrt{\pi n}}n^{2c_{\star}-1}e^{{n\bar \b^2}/{2}}
$.
\end{lemma}

\begin{proof}[Proof of Lemmata \thv(12.lem6) and \thv(12.lem7)] We only indicate the main lines of the proofs. Consider first 
Lemma \thv(12.lem6). Set $G'_{n,i}=G_{n,i}\1_{\{G_{n,i}<s_n\}}$. Then, for all $\e>0$,
\be
\textstyle
P_G\left(\left|\frac{1}{\bar b_n}\sum_{i=1}^{\lfloor \bar a_nt\rfloor}G_{n,i}-\s_n\right|\geq \e \right)
\leq 
\e^{-2}\bar\s_n+\hat\s_n,
\Eq(12.lem6.2)
\ee
where 
$
\bar b_n\s_n=E_G\sum_{i=1}^{\lfloor \bar a_nt\rfloor}G'_{n,i}
$,
$
\bar b_n^2\bar\s_n=
E_G\sum_{i=1}^{\lfloor \bar a_nt\rfloor}{G'}_{n,i}^2
$,
and
$
\hat \s_n=\sum_{i=1}^{\lfloor \bar a_nt\rfloor}P_G(G_{n,i}>s_n)
$.\hfill\break
Now
$
E_G G'_{n,i}=\frac{1}{p_{n,i}}\left(1-(1-p_{n,i})^{s_n-1}\right)
$,
$
E_G {G'}_{n,i}^2\leq 2\frac{1-p_{n,i}}{p_{n,i}^2}\left(1-(1-p_{n,i})^{s_n+1}\right)
$,
and 
$
P_G(G_{n,i}>s_n)=(1-p_{n,i})^{s_n+1}
$.
Thus each of the $\s_n$'s in \eqv(12.lem6.2) is a sum of a function of the variables
$
\varrho_{n,l_i}(0)
$.
Note that
$
\P_{\CC}\left(\left|
\s_n-\E_{\CC} \s_n
\right|\geq\e'
\right)\leq{\e'}^{-2}\E_{\CC}\bar\s_n
$
and, by first order Tchebychev inequalities,
$
\P_{\CC}(\bar\s_n>\e')\leq {\e'}^{-1}\E_{\CC}\bar\s_n
$
and 
$
\P_{\CC}(\hat\s_n>\e')\leq {\e'}^{-1}\E_{\CC}\hat\s_n
$.
Clearly,
$
\E_{\CC}\s_n={{\lfloor \bar a_nt\rfloor}}/{{\bar a_n}}
$,
while
$
\E_{\CC}\bar\s_n\leq n^{-2\bar\kappa+2c_{\star}+3/2}
$ 
and 
$
\E_{\CC}\hat\s_n\leq  n^{-2(\bar\kappa-1)\b_c(\bar\varepsilon)/\bar\b+2c_{\star}}
$
for all
$
\bar\b\leq \b_c(\bar\varepsilon_n)-\sqrt{{8\bar\kappa\log n}/{n}}
$.
We do not reproduce the lenghty but simple calculations leading to these bounds. They resemble the high temperature moments estimates of the  partition function of the REM: here
 the quantity
\be
p^{-1}_{n,i}=1+(n-1)^{-1}
e^{-\b\max\{(H_n(y),H_n(x))\,|\,\{x,y\}\in C^{\star}_{n,l_i}\}}
\ee
plays the role of the Boltzmann weights,
the gist of the argument being that
 the truncation of the $G_{n,i}$'s at level $s_n$ 
effectively results in a truncation of the $p^{-1}_{n,i}$'s at level $s_n$, and that $s_n$ is, roughly speaking, of the order of the maximum of the $p^{-1}_{n,i}$'s.
Based on this \eqv(12.lem6.2) is easily worked out.

To prove Lemma \thv(12.lem7), (i), replace $G_{n,i}$  by $Y^k_{n,i}$ and $G'_{n,i}$ by $Y'_{n,i}=Y^k_{n,i}\1_{\{Y^k_{n,i}<s_n\}}$  
in the above expressions of $s_n$, $\bar s_n$, and $\hat s_n$, and use
that
$
\varrho_{n,l_i}(0)\geq e^{-\b\max(H_n(y),H_n(x))}
$
and
$
\varrho_{n,l_i}(0)\leq
\sum_{ \{x,y\}\in G(C^{\star}_{n,l_i})} e^{-\b\max(H_n(y),H_n(x))}
$
for any  $\{x,y\}\in G(C^{\star}_{n,l_i})$.
Again
$
\E_{\CC}\s_n={{\lfloor \bar a_nt\rfloor}}/{{\bar a_n}}
$
while 
$
\E_{\CC}\bar\s_n\leq k^{-2}n^{-2\bar\kappa+2c_{\star}+5/2}
$ 
and 
$
\E_{\CC}\hat\s_n\leq  k^{2-\b_c(\bar\varepsilon)/\bar\b}n^{-2(\bar\kappa-1)\b_c(\bar\varepsilon)/\bar\b-1/2+2c_{\star}}
$.
Assertion (ii) of the lemma follows from two first order Tchebychev inequalities applied in a row.
\end{proof}

We are now equipped to estimate the sum \eqv(12.2.2).
Consider first  $S_{n,j,0}^{2}(t)$. By (a), there exists a subset 
$\{l_i, 1\leq i\leq N_{n,j}(2)\}\subseteq\LL_n(2)$ such that
\be
\textstyle
S_{n,j,0}^{2}(t)=\frac{\ell^\circ_n}{b_n}\sum_{i=1}^{N_{n,j}(2)}
\L_n^\dagger(J_n^\dagger(i))\1_{\{J_n^\dagger(i)\in C^{\star}_{n,l_i}\}}
\stackrel{d}{=}\frac{\ell^\circ_n}{b_n}\sum_{i=1}^{N_{n,j}(2)}G_{n,i},
\Eq(12.2.3)
\ee
where $G_{n,i}$ is as in Lemma \thv(12.lem6). The last equality 
follows from Proposition \thv(5.prop1), (i),  and holds in distribution.
In the same way we have, by Proposition \thv(5.prop1), (ii), that on $\O^{\star}\cap\O_0$,
 \be
\textstyle
S_{n,j,0}^{k}(t)
\stackrel{d}{=}\frac{\ell^\circ_n}{b_n}\sum_{i=1}^{N_{n,j}(k)}Y^k_{n,i},\quad 2<k\leq k_n^{\star},
\Eq(12.2.4)
\ee
where $Y^k_{n,i}$ is as in Lemma \thv(12.lem7). 
On the one hand Lemma \thv(12.lem6)  combined with Proposition \thv(12.prop1) 
yields an accurate estimate on the last sum in \eqv(12.2.3) for all 
$
\b\leq 2\b_c(\varepsilon/2)-4\sqrt{{\bar\kappa\log n}/{n}}-5(c_{\star}+1){\log n}/{n}
$.
On the other hand, the sum in the r.h.s.~of  \eqv(12.2.4) can be bounded from above
using  Lemma \thv(12.lem7) and Proposition \thv(12.prop1) . Note however that the temperature threshold
in Lemma \thv(12.lem7), (i), is a decreasing function of $\bar a_n$. Namely, for $\bar a_n=N_{n,j}(k)$, by 
\eqv(12.prop1.01)-\eqv(12.prop1.01'),
$
\b_c(\bar\varepsilon_n)-\sqrt{{8\bar\kappa\log n}/{n}}
=
2\b_c(\varepsilon/2)-4\sqrt{{\bar\kappa\log n}/{n}}-\OO({k\log n}/{n})
$.
We will thus use Lemma \thv(12.lem7), (i), when e.g.~$k\leq n^{1/4}$, and Lemma \thv(12.lem7), (ii), else.
Doing so we obtain that
on $\O_0\cap\O_1\cap\O^{\star}\cap\O^{\scriptscriptstyle{\textsf{BCL}}}$,
choosing 
\be
b_n=\ell^\circ_n\E_{\CC}N_{n,j}(2)E_G(G_{n,i}\1_{\{G_{n,i}<s_n\}})\Big|_{t=1}
=
\textstyle
\frac{a_n}{(\b/\sqrt{2})\sqrt{2\pi n}}e^{n(\b/2)^2}(1+o(1)),
\Eq(12.2.5)
\ee
for all 
$
0<\b\leq 2\b_c(\varepsilon/2)-4\sqrt{{\bar\kappa\log n}/{n}}(1+o(1))
$,
any $\bar\kappa, \bar\kappa', \bar\kappa''>0$, and any $\bar\d_n>n^{-\bar\kappa}$, 
with $\P$- probability larger than 
$1-k_n^{\star}n^{-\bar\kappa+2(c_{\star}+1)}-k_n^{\star}n^{-\bar\kappa''}$,
\be
P^\dagger_{\pi^\circ_n}\left(
\textstyle\left|\sum_{k=2}^{k_n^{\star}}S_{n,j,0}^{k}(t)-t
\right|\geq
 \sqrt{t\bar\d_n}(1+\e_{n,1})+t\e_{n,1}
\right)
\leq 2\bar\d_n^{-1}k_n^{\star}n^{-\bar\kappa}+k_n^{\star}n^{-\bar\kappa'},
\Eq(12.2.6)
\ee
for some constant $c_{j,1}>0$.

Let us now establish that the contributions of the terms $S_{n,j,1}^{k}(t)$ and ${S'}^k_{n,j}(t)$ in \eqv(12.2.2) 
are subleading. 
Consider first $S_{n,j,1}^{k}(t)$, $k>2$.
Let
$\{l_i, 1\leq i\leq \Delta_{n,j}(k)\}\subseteq\LL_n(k)$, $\Delta_{n,j}(k)\leq M_{n,j}(k)-N_{n,j}(k)$, 
be the labels of the distinct  sets $C^{\star}_{n,l}, l\in\LL_n(k)$,  that  $J_n^\dagger(i+1)$ visits 
as $i$ runs through $I_{j,1}(k)$, and let
$
\{\a_{n,l_i},  1\leq i\leq \Delta_{n,j}(k)\}
$
be the number of visits to each $C^{\star}_{n,l_i}$.
Reasoning as in \eqv(12.2.4),
\be
\textstyle
S_{n,j,1}^{k}(t)
\stackrel{d}{=}\frac{\ell^\circ_n}{b_n}\sum_{i=1}^{\Delta_{n,j}(k)}\a_{n,l_i}Y^k_{n,i}
\leq m_{n,j}\frac{\ell^\circ_n}{b_n}\sum_{i=1}^{M_{n,j}(k)-N_{n,j}(k)}Y^k_{n,i},
\Eq(12.2.9)
\ee
where the last inequality is deterministic and follows from the fact that 
$0<\a_{n,l_i}\leq m_{n,j}$ (see \eqv(12.1.9)) and  $Y^k_{n,i}\geq 0$.
In the same way, for $k>2$, the sum ${S'}^k_{n,j}(t)$
obeys
\be
\textstyle
{S'}^k_{n,j}(t)
\stackrel{d}{=}\frac{\ell^\circ_n}{b_n}\sum_{i=1}^{\Delta'_{n,j}(k)}\a'_{n,l_i}Y^k_{n,i}
\leq m_{n,j}\frac{\ell^\circ_n}{b_n}\sum_{i=1}^{M'_{n,j}(k)}Y^k_{n,i},
\Eq(12.2.10)
\ee
for some $0<\a'_{n,l_i}\leq m_{n,j}$ and $\Delta'_{n,j}(k)<M'_{n,j}(k)$. 
If $k=2$ substitute $G_{n,i}$  for $Y^k_{n,i}$ in \eqv(12.2.9) and \eqv(12.2.10).
Using again Lemma \thv(12.lem6), Lemma \thv(12.lem7), 
and Proposition \thv(12.prop1), we obtain that under the assumption of Proposition \thv(12.prop1),  
on $\O_0\cap\O_1\cap\O^{\star}\cap\O^{\scriptscriptstyle{\textsf{BCL}}}$,
for $b_n$ as in \eqv(12.2.5),
for all 
$
0<\b\leq 2\b_c(\varepsilon/2)-4\sqrt{{\bar\kappa\log n}/{n}}(1+o(1))
$,
any $\bar\kappa, \bar\kappa', \bar\kappa''>0$, and any $\bar\d_n>n^{-\bar\kappa}$, 
with $\P$- probability larger than 
$1-2k_n^{\star}n^{-\bar\kappa+2(c_{\star}+1)}-2k_n^{\star}n^{-\bar\kappa''}$,
\be
P^\dagger_{\pi^\circ_n}\left(\textstyle
\sum_{k=2}^{k_n^{\star}}(S_{n,j,1}^{k}(t)+{S'}^k_{n,j}(t))
\geq
(t+ \sqrt{t\bar\d_n})\e_{n,2}
 \right)
\leq 4\bar\d_n^{-1}k_n^{\star}n^{-\bar\kappa}+2_n^{\star}n^{-\bar\kappa'},
\Eq(12.2.11)
\ee
where
$
\e_{n,2}\leq c_{j,2} n^{-c_{\star}}(1+n^{-K(c_{\star}-1)+2})+ a_n2^{-n+1}
+c_{n,3}( n^{-2(c_{\star}-2)}+n^{-K(c_{\star}-1)+4})
$
for some constants $c_{j,2},c_{n,3}>0$. 

Combining \eqv(12.2.6) and \eqv(12.2.11) yields an estimate on $S_{n,j}^\dagger (t)$ for each 
$0\leq j\leq \ell^\circ_n-1$, and inserting in \eqv(12.2.1) we arrive at:

\begin{lemma}
   \TH(12.lem8)
Let $b_n$ be as in \eqv(12.2.5). Under the assumption of Proposition \thv(12.prop1),  
on $\O_0\cap\O_1\cap\O^{\star}\cap\O^{\scriptscriptstyle{\textsf{BCL}}}$,
the following holds:
for all 
$
0<\b\leq 2\b_c(\varepsilon/2)-4\sqrt{{\bar\kappa\log n}/{n}}(1+o(1))
$,
all 
$
t\in[0,T]
$,
any $\bar\kappa, \bar\kappa', \bar\kappa''>0$, and any sequence $\bar\d_n>n^{-\bar\kappa}$,
with $\P$- probability larger than 
$1-4\ell^\circ_nk_n^{\star}n^{-\bar\kappa+2(c_{\star}+1)}-4\ell^\circ_nk_n^{\star}n^{-\bar\kappa''}$,
\be
P^\dagger_{\pi^\circ_n}\left(
\textstyle\left|S_n^\dagger(t)-t
\right|\geq
 \sqrt{t\bar\d_n}(1+\e_{n,0})+t\e_{n,0}
\right)
\leq 6\bar\d_n^{-1}\ell^\circ_nk_n^{\star}n^{-\bar\kappa}+4\ell^\circ_nk_n^{\star}n^{-\bar\kappa'},
\Eq(12.lem8.1)
\ee
$
\e_{n,0}\leq c_{j,0} n^{-(c_{\star}-1)}(1+n^{-K(c_{\star}-1)+2})
$
for some constant $c_{j,0}>0$ and $K>2$.
\end{lemma}

Choosing $\bar\d_n=1/n$, $\bar\kappa>6+2(c_{\star}+1)$, and $\bar\kappa'=\bar\kappa''>6$ in
Lemma \thv(12.lem8), and taking $\bar c=1$ in \eqv(12.prop1.1)  
yields the statement of  Proposition \thv(12.prop2). \end{proof}

\begin{proof}[Proof of Theorem \thv(2.theo5)]
Theorem \thv(2.theo5) immediately follows from Proposition \thv(12.prop2)
since  $S_n^\dagger(t)$ is a monotonous function whose limit is itself continuous.
\end{proof}





 \section{Appendix B: Speed of convergence to Perron projector.}
 \label{B}

All the matrix analysis results quoted in this section are well known and can be found in
\cite{HJ}  (see in particular Subsection 8.5 p.~515 for nonnegative and primitive matrices).

\subsection{The case of nonnegative and primitive matrices}
 \label{B.1}

 Let $A=(a(i,j))_{1\leq i,j\leq N}$ be an $N\times N$ nonnegative and primitive matrix, that is to say, $A\geq 0$ and $A^k>0$ 
 for some $k\geq 1$.
By Perron's theorem 
 its eigenvalues satisfy
\be
\Eq(B1.lem1.1)
\l_0>|\l_1|\geq |\l_2|\geq \dots \geq |\l_{N-1}|\,\,\,\hbox{\text{and}}\,\,\, \l_0>0,
\ee
and there exists a unique vector $u>0$ such that $uA=\l_0u$, and a unique vector $v>0$ such that $Av=\l_0v$, normalized such that 
\be
\Eq(B1.lem1.2)
\textstyle
\sum_{1\leq i\leq N}u(i)=1, \sum_{1\leq i\leq N}u(i)v(i)=1;
\ee
these are  the left and right Perron eigenvectors of $A$. Futhermore, denoting by 
\be
\Eq(B1.lem1.3)
L =(v(i)u(j))_{1\leq i,j\leq N}
\ee
the associated projector, we have $\lim_{m\rightarrow\infty}(\l_0^{-1}A)^m=L$.
The lemma below provides 
an explicit bound on the speed of this convergence under an additionnal symmetry (or ``reversibility'') assumption.
Write $A^m=(a^{(m)}(i,j))_{1\leq i,j\leq N}$.

 \begin{lemma}
   \TH(B1.lem1)
  Assume that 
 there exists a vector 
$\pi_A>0$ such that  $\sum_{1\leq i\leq N}\pi_A(i)=1$, and
 \be
\Eq(B1.lem1.0)
 \pi_A(i)a(i,j)=\pi_A(j)a(j,i)\quad\text{for all } 1\leq i,j\leq N.
\ee
Then, for all $m\in\N$ and all $1\leq i\leq N$, we have
\be
\Eq(B1.lem1.5)
\textstyle
\left|\sum_{1\leq j\leq N}a^{(m)}(i,j)-\l^m_0v(i)\right|
\leq 
|\l_1|^m/\sqrt{\pi_A(i)}.
\ee
 \end{lemma}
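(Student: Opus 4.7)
The plan is to exploit reversibility to symmetrize $A$, diagonalize the resulting symmetric matrix in an orthonormal basis, and then bound the error in terms of the second-largest eigenvalue via Cauchy--Schwarz together with completeness of the eigenbasis.

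First I would introduce the diagonal matrix $D_\pi=\mathrm{diag}(\pi_A(1),\dots,\pi_A(N))$ and set $S=D_\pi^{1/2}AD_\pi^{-1/2}$, with entries $s(i,j)=\sqrt{\pi_A(i)/\pi_A(j)}\,a(i,j)$. The reversibility hypothesis \eqv(B1.lem1.0) makes $S$ symmetric. Being similar to $A$, it shares its eigenvalues $\l_0>|\l_1|\geq\cdots\geq|\l_{N-1}|$, and by the spectral theorem it admits an orthonormal basis of eigenvectors $\{\phi_k\}_{0\leq k\leq N-1}$, so that $S^m=\sum_{k=0}^{N-1}\l_k^m\,\phi_k\phi_k^T$. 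Conjugating back yields
\be
a^{(m)}(i,j)=\sqrt{\pi_A(j)/\pi_A(i)}\sum_{k=0}^{N-1}\l_k^m\phi_k(i)\phi_k(j).
\Eq(B.plan.1)
\ee

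Next I would identify the $k=0$ term with the claimed principal part $\l_0^m v(i)$. The right Perron eigenvector of $S$ associated to $\l_0$ must be proportional to $D_\pi^{1/2}v$, and on the other hand reversibility forces $u(i)=c\,\pi_A(i)v(i)$ for some constant $c>0$; the normalizations $\sum_i u(i)=\sum_i u(i)v(i)=1$ then yield $\sum_i\pi_A(i)v(i)=\sum_i\pi_A(i)v(i)^2=c^{-1}$. Consequently, the unit-norm choice is $\phi_0(i)=\sqrt{c\,\pi_A(i)}\,v(i)$, and a short computation shows that summing the $k=0$ term of \eqv(B.plan.1) over $j$ produces exactly $\l_0^m v(i)$, cleanly separating the dominant contribution.

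Finally, for the remainder I would write
\be
\sum_{j=1}^N a^{(m)}(i,j)-\l_0^m v(i)
=\pi_A(i)^{-1/2}\sum_{k\geq 1}\l_k^m\phi_k(i)\langle w,\phi_k\rangle,
\Eq(B.plan.2)
\ee
where $w\in\R^N$ is the vector with entries $w(j)=\pi_A(j)^{1/2}$, so that $\|w\|_2^2=\sum_j\pi_A(j)=1$. Bounding $|\l_k^m|\leq|\l_1|^m$ for $k\geq 1$ and applying Cauchy--Schwarz together with Bessel's identity (equivalently, completeness of the orthonormal basis $\{\phi_k\}$) gives
\be
\Bigl|\sum_{k\geq 1}\l_k^m\phi_k(i)\langle w,\phi_k\rangle\Bigr|
\leq |\l_1|^m\Bigl(\sum_k\phi_k(i)^2\Bigr)^{1/2}\Bigl(\sum_k\langle w,\phi_k\rangle^2\Bigr)^{1/2}
=|\l_1|^m,
\ee
since the two sums on the right are $\|e_i\|_2^2=1$ and $\|w\|_2^2=1$ respectively. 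Dividing by $\sqrt{\pi_A(i)}$ yields \eqv(B1.lem1.5).

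The argument is essentially algebraic and presents no real obstacle; the only mildly delicate point is keeping track of the two normalizations \eqv(B1.lem1.2) to show that the $k=0$ contribution in \eqv(B.plan.1) reproduces the Perron eigenvector $v(i)$ with exactly the right constant, since this is what makes the bound prefactor depend only on $\pi_A(i)^{-1/2}$ rather than on the size or conditioning of $v$.
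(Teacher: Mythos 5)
Your proof is correct and follows essentially the same approach as the paper: symmetrize $A$ by conjugating with $D_\pi^{1/2}$ (the paper calls the symmetrized matrix $\wh A$ and the diagonal matrix $S$), then control the error term via Cauchy--Schwarz with $\|w\|_2^2=\sum_j\pi_A(j)=1$. The only difference is bookkeeping: the paper works abstractly with the residual operator $R=A-\l_0L$ (citing Horn--Johnson for $R^m=A^m-\l_0^mL$ and $\rho(R)\leq|\l_1|$) and uses $\sum_j(\d_i,\wh R^m\d_j)^2=(\d_i,\wh R^{2m}\d_i)\leq\rho(\wh R)^{2m}$, whereas you expand $\wh A^m$ in an explicit orthonormal eigenbasis and invoke Parseval/completeness — a more self-contained rendering of the same estimate, and your identification of the $k=0$ term with $\l_0^mv(i)$ via the relation $u(i)=c\,\pi_A(i)v(i)$ and the two normalizations \eqv(B1.lem1.2) is exactly the content of the paper's observation that $\hat u=\hat v$.
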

 
 \begin{proof}[Proof of Lemma \thv(B1.lem1)] Let $S$ be the diagonal matrix with diagonal entries 
 $s(i,i)\equiv\sqrt{\pi_A(i)}$. The matrix $\wh A\equiv SAS^{-1}$ has entries 
 $\hat a(i,j)\equiv\sqrt{\pi_A(i)/\pi_A(j)}a(i,j)$ and so, by  \eqv(B1.lem1.0), is symmetric. 
From the identity $\wh A^m=SA^mS^{-1}$ it follows 
that if $A$ is a nonnegative and primitive matrix then so is $\wh A$.
Finally, since $A$ and $\wh A$ are similar they have the same eigenvalues, 
given by \eqv(B1.lem1.1).
For $L$ given by  \eqv(B1.lem1.3) set
$\wh L\equiv SLS^{-1}=(\hat v(i)\hat u(j))_{1\leq i,j\leq N}$
where
$
\hat u\equiv uS^{-1}
$, 
$
\hat v\equiv Sv
$. 
One checks that
$\hat u\wh A=\l_0\hat u$, $\wh A\hat v=\l_0\hat v$ and thus, since $\wh A$ is symmetric, $\hat v\wh A=\l_0\hat v$, $\wh A\hat u=\l_0\hat u$. But this implies that $\hat u=\hat v$ and so, $\wh L$ is symmetric.

Set $R\equiv A-\l_0L$ and $\wh R\equiv SRS^{-1}=\wh A-\l_0\wh L$. 
It results from the above that $\wh R$ is symmetric, and
since $R$ and $\wh R$ are similar they have the same eigenvalues.
The proof of Lemma \thv(B1.lem1) now hinges on the following key facts.
Given a matrix $B$ we denote by $\s(B)$ its spectrum and by $\rho(B)=\max\{|\l| : \l\in\s(B)\}$  its spectral radius. We have:
\item\hskip1truecm (i) for all $m\in\N$, $R^m=A^m-(\l_0L)^m=A^m-\l_0^mL$, and
\item\hskip1truecm (ii) $\rho(A-\l_0L)\leq |\l_1|<\rho(A)=\l_0$.\hfill\break
For a proof see (b), (e), and (h) of Lemma 8.2.7  p.~498 of  \cite{HJ}  together with the remark below Definition 8.5.0, p.~516.

To proceed observe that 
$R^m=S^{-1}\wh R^mS$. 
Thus, denoting by $\d_i$ the unit vector with  components $\d_i(j)=1$ if $i=j$ and $\d_i(j)=0$ else, it follows from (i)
that
\be
\Eq(B1.lem1.6)
a^{(m)}(i,j)-\l_0^mv(i)u(j)=(\d_i,R^m\d_j)=(\d_i,S^{-1}\wh R^mS\d_j)=\sqrt{\frac{\pi_A(j)}{\pi_A(i)}} (\d_i,\wh R^m\d_j).
\ee
Summing \eqv(B1.lem1.6) over $j$ we get, by Cauchy-Schwarz's inequality  and \eqv(B1.lem1.2), that
\be
\Eq(B1.lem1.8)
\textstyle
\left|\sum_{1\leq j\leq N}a^{(m)}(i,j)-\l_0^mv(i)\right|
\leq
\textstyle
\sqrt{1/{\pi_A(i)}}\left[\sum_{1\leq j\leq N}(\d_i,\wh R^m\d_j)^2\right]^{1/2}.
\ee
Since $\wh R$ is symmetric,
$
\sum_{1\leq j\leq N}(\d_i,\wh R^m\d_j)^2
=(\d_i,\wh R^{2m}\d_i)
$.
Now,
\be
\Eq(B1.lem1.7)
\textstyle
(\d_i,\wh R^{2m}\d_i)
\leq \sup_{\|x\|_2=1}(x,\wh R^{2m}x)
\leq (\rho(\wh R))^{2m}=(\rho(R))^{2m}
\leq |\l_1|^{2m}.
\ee
The middle inequality in \eqv(B1.lem1.7) follows from the minimax characterization of the largest eigenvalue of $\wh R^{2m}$; the ensuing equality states that
 having the same eigenvalues, $\wh R$ and $R$ have same spectral radius, and the last inequality is (ii). 
 Combining \eqv(B1.lem1.8) and \eqv(B1.lem1.7)  proves \eqv(B1.lem1.5).
The proof of the lemma is done.
 \end{proof}

\subsection{Irreducible periodic matrices with period 2}
 \label{B.2}
 
Assume that $A$ is an $N\times N$ nonnegative and irreducible matrix, namely,
 $A\geq 0$ and $(I+A)^{N-1}>0$.  Assume further that
  there exists a permutation matrix $B$ such that
\be
\Eq(B2.0)
 B^TAB=
\left(
  \begin{array}{ c c }
     0 & A^{+-} \\
     A^{-+} & 0
  \end{array} \right)\,\,\, \text{and}\,\,\,
   B^TA^2B=
\left(
  \begin{array}{ c c }
     A_2^{++} & 0\\
     0 & A_2^{--}
  \end{array} \right),
\ee
where the square matrices $A_2^{++}\equiv A^{+-}A^{-+}$ and  $A_2^{--}\equiv A^{-+}A^{+-}$ are primitive $N^+\times N^+$, 
respectively $N^-\times N^-$ matrices for some $N^+>0$ and $N^->0$ such that $N^++N^-=N$. This implies that $A$ is periodic with period two, that $A_2^{\pm}$ are aperiodic and, more generally, that powers of $A$ can be analysed in terms of powers of primitive matrices.
To simplify the presentation we assume from now on that $B$ is the identity matrix.

 Part of Perron's theorem generalizes to nonnegative and irreducible matrices:
 denoting by $\l_0$ its largest eigenvalue  we know that $\rho(A)=\l_0>0$, that there exist a unique (up to a normalization) and positive left (respectively, right)  Perron eigenvector associated to $\l_0$,
and that $\l_0$ has multiplicity one. Thus the eigenspace associated to this
 eigenvector is one dimensional. However the largest eigenvalue, $\l_0$, is not the only eigenvalue 
 of maximal modulus, i.e.~satisfying $\rho(A)=\l_0$.
 Indeed  one readily derives from \eqv(B2.0) that the spectrum of $A$ is symmetric: writing
$
 \l_0\geq \l_1\geq \l_2\geq \dots \geq \l_{N-1},
$
 we have
\be
\Eq(B2.1)
\l_i = -\l_{N-i-1},\quad 0\leq i\leq N-1,\,\,\,
\ee
and if $Aw=\l_i w$, $w=(w^+,w^-)$ where $w^\pm$ denote vectors in $\R^{N^\pm}$,
then $A\bar w=-\l_{N-i-1} \bar w$ where $\bar w=(w^+,-w^-)$.
Thus 
\be
\Eq(B2.2)
\rho(A)=\l_0=|\l_{N-1}|>|\l_i|, \quad 1\leq i\leq N-2.
\ee
Using in addition that the  matrices $A_2^{\pm}$ are primitive, one deduces that
there exist unique left and right Perron eigenvectors, $u$ and $v$,
\bea
\Eq(B2.3)
&&uA=\l_0u,\,\,\,u=(u^+,u^-)>0,\\
\Eq(B2.3')
&&Av=\l_0v,\,\,\,v=(v^+,v^-)>0,
\eea
 normalized such that 
\bea
\Eq(B2.4)
&&\textstyle\sum_{1\leq i\leq N^+}u^+(i)=\sum_{1\leq i\leq N^-}u^-(i)=\frac 12,\\
\Eq(B2.4')
&&\textstyle\sum_{1\leq i\leq N^+}u^+(i)v^+(i)=\sum_{1\leq i\leq N^-}u^-(i)v^-(i)=\frac 12,
\eea
and that, setting $\bar u=(u^+,-u^-)$ and $\bar v=(v^+,-v^-)$, 
$\bar uA=-\l_0\bar u$ and $A\bar v=-\l_0\bar v$.

With obvious notation  define 
\be
\Eq(B2.5)
L =(v(i)u(j))_{1\leq i,j\leq N},\,\,\, \overline L=(\bar v(i)\bar u(j))_{1\leq i,j\leq N}.
\ee
The next lemma establishes that 
\be
\Eq(B2.6)
\lim_{m\rightarrow\infty}(\l_0^{-1}A)^{2m}=(L+\overline L),\,\,\,
\lim_{m\rightarrow\infty}(\l_0^{-1}A)^{2m+1}=(L-\overline L),
\ee
and gives bounds on the speed of convergence under the same symmetry assumption as in Lemma \thv(B1.lem1). Note that $L+\overline L$ has $(i,j)$-th entry $2v^\pm(i)u^\pm(j)$ if  $1\leq i, j\leq N^\pm$ and
zero else, and that  $L-\overline L$ has $(i,j)$-th entry $2v^\pm(i)u^\mp(j)$ if $1\leq i\leq N^\pm$ and $1\leq j\leq N^\mp$, and
zero else.

 \begin{lemma}
   \TH(B2.lem1)
  Assume that 
 there exists a vector 
$\pi_A>0$ such that  $\sum_{1\leq i\leq N}\pi_A(i)=1$, and
 \be
\Eq(B2.lem1.0)
 \pi_A(i)a(i,j)=\pi_A(j)a(j,i)\quad\text{for all } 1\leq i,j\leq N.
\ee
Then for all $m\in\N$ we have:
 for all  $1\leq i\leq N^\pm$,
\be
\Eq(B2.lem1.2)
\textstyle
\left|\sum_{1\leq j\leq N^\pm}a^{(2m)}(i,j)-\l_0^{2m}v(i)\right|
\leq 
|\l_1|^{2m}/\sqrt{\pi_A(i)},
\ee
and for all $1\leq i\leq N^\pm$,
\be
\Eq(B2.lem1.4)
\textstyle
\left|\sum_{1\leq j\leq N\mp}a^{(2m+1)}(i,j)-\l_0^{2m+1}v(i)\right|
\leq 
|\l_1|^{2m+1}/\sqrt{\pi_A(i)}.
\ee
 \end{lemma}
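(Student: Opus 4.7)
The plan is to adapt the symmetrization argument from Lemma \thv(B1.lem1) to the period-two setting, the only real novelty being that there are now \emph{two} eigenvalues of maximal modulus, $+\lambda_0$ and $-\lambda_0$, both of which have to be projected out before the spectral-gap estimate is applied. Concretely, I would introduce
$$R \equiv A - \lambda_0(L - \overline L),$$
so that, using $uA = \lambda_0 u$, $Av = \lambda_0 v$, $\bar u A = -\lambda_0 \bar u$, $A\bar v = -\lambda_0 \bar v$, together with the orthogonality relations $L^2 = L$, $\overline L{}^2 = \overline L$, $L\overline L = \overline L L = 0$ (which follow from the normalizations \eqv(B2.4)-\eqv(B2.4') and the block structure of $L,\overline L$), one derives $LA = AL = \lambda_0 L$, $\overline L A = A\overline L = -\lambda_0 \overline L$, and $LR = RL = \overline L R = R\overline L = 0$. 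This yields the clean power expansion
$$A^m \;=\; \lambda_0^m L + (-\lambda_0)^m \overline L + R^m \;=\; \lambda_0^m\bigl(L + (-1)^m \overline L\bigr) + R^m.$$

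Next I would compute the row sums of $L \pm \overline L$ over the relevant index blocks. From \eqv(B2.5), the matrix $L+\overline L$ has $(i,j)$-entry equal to $2v^{\pm}(i)u^{\pm}(j)$ when $i,j \in N^{\pm}$ and vanishes across blocks, while $L-\overline L$ has $(i,j)$-entry $2v^{\pm}(i)u^{\mp}(j)$ when $i \in N^{\pm},\,j \in N^{\mp}$ and vanishes within blocks. Together with the normalization $\sum_{j\in N^\pm} u^\pm(j) = \tfrac12$ from \eqv(B2.4), this gives, for $i \in N^\pm$,
$$\sum_{j\in N^\pm}(L+\overline L)(i,j) \;=\; v(i), \qquad \sum_{j\in N^\mp}(L-\overline L)(i,j) \;=\; v(i),$$
so the right-hand sides of \eqv(B2.lem1.2) and \eqv(B2.lem1.4) reduce to bounding, respectively, $|\sum_{j\in N^\pm}R^{2m}(i,j)|$ and $|\sum_{j\in N^\mp}R^{2m+1}(i,j)|$.

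The remainder estimate is then a verbatim copy of the argument used in Lemma \thv(B1.lem1). Let $S$ be the diagonal matrix with entries $\sqrt{\pi_A(i)}$; reversibility \eqv(B2.lem1.0) makes $\hat A \equiv SAS^{-1}$ symmetric, and the same computation as in that proof shows that $\hat L = SLS^{-1}$ and $\hat{\overline L} = S\overline L S^{-1}$ are symmetric (because $Sv$ and $uS^{-1}$ are respectively right and left Perron eigenvectors of the symmetric $\hat A$, hence proportional; likewise for $\bar v,\bar u$). Thus $\hat R = \hat A - \lambda_0(\hat L - \hat{\overline L})$ is symmetric, and by the analogues of items (i)--(ii) invoked in the proof of Lemma \thv(B1.lem1) its spectrum is $\{\lambda_1,\ldots,\lambda_{N-2},0,0\}$, so $\rho(\hat R) = \rho(R) = |\lambda_1|$. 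Writing $R^m(i,j) = \sqrt{\pi_A(j)/\pi_A(i)}\,(\delta_i,\hat R^m\delta_j)$ and applying Cauchy--Schwarz exactly as in \eqv(B1.lem1.8),
$$\Bigl|\sum_{j}R^m(i,j)\Bigr| \;\leq\; \frac{1}{\sqrt{\pi_A(i)}}\Bigl[\sum_j (\delta_i,\hat R^m \delta_j)^2\Bigr]^{1/2} \;=\; \frac{(\delta_i,\hat R^{2m}\delta_i)^{1/2}}{\sqrt{\pi_A(i)}} \;\leq\; \frac{|\lambda_1|^m}{\sqrt{\pi_A(i)}},$$
where the sum can be restricted to $N^{\pm}$ (even $m$) or $N^{\mp}$ (odd $m$) because $R^m$ inherits from $A^m$ the same block-diagonal (even) or anti-block-diagonal (odd) structure. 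This delivers both \eqv(B2.lem1.2) and \eqv(B2.lem1.4).

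The only genuinely new bookkeeping--the minor obstacle, as it were--is verifying that conjugation by $S$ preserves the symmetry of \emph{both} Perron projectors simultaneously and that, once both $\pm\lambda_0$-components are subtracted, what remains really does have spectral radius $|\lambda_1|$ rather than $\lambda_0$; this is where the period-two symmetry \eqv(B2.1)-\eqv(B2.2) is used in an essential way. Once this is settled the calculation is formally identical to the primitive case already treated.
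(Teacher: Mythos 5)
Your proof is correct and follows the same route as the paper: subtract the rank-two projector $\lambda_0(L-\overline L)$, verify that the remainder $R$ mutually annihilates $L$ and $\overline L$ so that $A^m=\lambda_0^m L+(-\lambda_0)^m\overline L+R^m$, and then rerun the symmetrization and Cauchy--Schwarz estimate from Lemma \thv(B1.lem1). Your careful derivation of $LA=AL=\lambda_0 L$, $\overline L A=A\overline L=-\lambda_0\overline L$, and $LR=RL=\overline L R=R\overline L=0$ in fact repairs a small slip in the paper's sketch (which states $LA=AL=0$, false as written since $LA=\lambda_0 L$), though the paper's resulting identities (i)--(ii) for $R^{2m}$ and $R^{2m+1}$ are nonetheless correct, as your expansion confirms.
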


 \begin{proof}[Proof of Lemma \thv(B2.lem1)] 
One easily checks that for all $m\in\N$,
$L^m=L$, $\overline L^m=\overline L$, $L\overline L=\overline L L=0$, 
$LA=AL=0$, and $\overline LA=A\overline L=0$.
Thus, setting $R=A-\l_0(L-\overline L)$ we have, for all $m\in\N$,
\item\hskip1truecm (i)  $R^{2m}=A^{2m}-(\l_0(L-\overline L))^{2m}=A^{2m}-\l_0^{2m}(L+\overline L)$, 
\item\hskip1truecm (ii) $R^{2m+1}=A^{2m+1}-(\l_0(L-\overline L))^{2m+1}=A^{2m+1}-\l_0^{2m+1}(L-\overline L)$,\hfill\break
and, using \eqv(B2.2), 
\item\hskip1truecm (iii) $\rho(A-\l_0(L-\overline L))\leq \l_1<\rho(A)=\l_0$.\hfill\break
Using this the proof of Lemma \thv(B2.lem1) is a simple rerun of the proof of Lemma \thv(B1.lem1).
\end{proof}


 \hfill\break


\bibliographystyle {abbrv}      

\def\cprime{$'$}

\end{document}